\documentclass[11pt,reqno]{amsart}
\usepackage{amssymb,mathrsfs,color}
\usepackage{tikz}
\usepackage{pgfplots}
\usetikzlibrary{snakes}
%\usepgfplotslibrary{fillbetween}
\usetikzlibrary{intersections, pgfplots.fillbetween}
%\usetikzlibrary{decorations.pathreplacing,angles,quotes}
%\usetikzlibrary{cd} 
%\usetikzlibrary{matrix,arrows,decorations.pathmorphing}
%\allowdisplaybreaks
% ----------------------------------------------------------------
% AMS-LaTeX Paper - modified by S.-J. Oh
% **** -----------------------------------------------------------
\usepackage[margin=1in, letterpaper]{geometry}
\usepackage{enumerate}
\usepackage{graphicx}
\usepackage{xcolor} % A package to add color.
\usepackage{geometry}
\usepackage{amsfonts,amssymb,amsmath}
\usepackage{slashed}
\usepackage{ mathrsfs }
%\usetikzlibrary{arrows,calc}
%\usepackage{relsize}
%\usetikzlibrary{patterns}
%Andy's preamble
\usepackage[titletoc,title]{appendix}
%Pictures
\usepackage{wrapfig}
%\usepackage{tikz}
%\usetikzlibrary{arrows,calc,decorations.pathreplacing}

%\usepackage[notref, notcite]{showkeys} % Shows labels in the .dvi file.
%\usepackage{seqsplit}
%\renewcommand*\showkeyslabelformat[1]{%
%   \fbox{\parbox[t]{\marginparwidth}{\raggedright\normalfont \tiny 
%\ttfamily\seqsplit{#1}}}}

\usepackage{cancel}

\usepackage{cite}

\usepackage{nth}

\usepackage{hyperref}

\usepackage{marginnote}

\usepackage{pgfplots}

\usepackage{dsfont}

%\usepackage{breqn}

%\parindent=0pt
%\allowdisplaybreaks
% COLORS ------------------------------------------------------------
\definecolor{green}{rgb}{0,0.8,0} % Redefines the color green.

%%%%%%%%%%%%%%%%%%%%%%%%%%%%%%%%%%%%%%%%%%%
%%%%%%%%%%%%%%%%%%%%%%%%%%%%%%%%%%%%%%%%%%%

% ----------------------------------------------------------------

\vfuzz2pt % Don't report over-full v-boxes if over-edge is small
\hfuzz2pt % Don't report over-full h-boxes if over-edge is small

%-----------------------------------------------------------------

%colors
\definecolor{deepgreen}{cmyk}{100,0,100,0}
\newcommand{\Blue}[1]{{\color{blue} #1}}

 %math environment shortcuts

\newcommand{\pmat}[1]{\begin{pmatrix} #1 \end{pmatrix}}

\setlength{\marginparwidth}{2cm}

\newcommand{\Del}[1]{}

\numberwithin{equation}{section}

\newtheorem{theorem}{Theorem}[section]
\newtheorem{corollary}[theorem]{Corollary}%[section]
\newtheorem{lemma}[theorem]{Lemma}%[section]
\newtheorem{proposition}[theorem]{Proposition}%[section]
%[section]
%[section]
%[section]
%\newtheorem{rem}[thm]{Remark}
%\theoremstyle{remark}
\newtheorem{remark}[theorem]{Remark}%[section]
\newtheorem{definition}[theorem]{Definition}%[section]
%[section]

%text shortcuts
\newcommand{\mwhere}{{\ \ \text{where} \ \ }}
\newcommand{\mand}{{\ \ \text{and} \ \  }}

% MATH -----------------------------------------------------------

\newcommand{\abs}[1]{\vert#1\vert}

\newcommand{\set}[1]{\{#1\}}

\newcommand{\angles}[2]{\langle #1,#2\rangle}

\newcommand{\jap}[1]{\langle #1\rangle}

%%%%%%%%%%%%%%%%%%%%%%%%%%%%%%%%%%%%%%%%%%%%%%%%%%%%%%%%%%%%%%%%%%%%%%
%%%%%%%%%%%%%%%%%%%%%%%%%%%%%%%%%%%%%%%%%%%%%%%%%%%%%%%%%%%%%%%%%%%%%%

%Special characters

%Roman

%Greek

%Math

%\newcommand{\minus}{\backslash}
\renewcommand{\div}{\mathrm{div}\,}

\DeclareMathOperator{\sgn}{sgn}

%%%%%%%%%%%%%%%%%%%%%%%%%%%%%%%%%%%%%%%%%%%%%%%%%%%%%%%%%%%%%%%%%%%%%%
%%%%%%%%%%%%%%%%%%%%%%%%%%%%%%%%%%%%%%%%%%%%%%%%%%%%%%%%%%%%%%%%%%%%%%

%Bold Characters

%Roman

\newcommand{\bfh}{{\bf h}}

\newcommand{\bfn}{{\bf n}}

\newcommand{\bfp}{{\bf p}}
\newcommand{\bfq}{{\bf q}}

\newcommand{\bfs}{{\bf s}}

\newcommand{\bfH}{{\bf H}}

\newcommand{\bfP}{{\bf P}}

\newcommand{\bfT}{{\bf T}}

%Greek

\newcommand{\bfeta}{\boldsymbol{\eta}}

\newcommand{\bfOmega}{\boldsymbol{\Omega}}

%Roman symbols

%%%%%%%%%%%%%%%%%%%%%%%%%%%%%%%%%%%%%%%%%%%%%%%%%%%%%%%%%%%%%%%%%%%%%%
%%%%%%%%%%%%%%%%%%%%%%%%%%%%%%%%%%%%%%%%%%%%%%%%%%%%%%%%%%%%%%%%%%%%%%

%Overlined Characters

%Roman

%Greek

%%%%%%%%%%%%%%%%%%%%%%%%%%%%%%%%%%%%%%%%%%%%%%%%%%%%%%%%%%%%%%%%%%%%%%
%%%%%%%%%%%%%%%%%%%%%%%%%%%%%%%%%%%%%%%%%%%%%%%%%%%%%%%%%%%%%%%%%%%%%%

%Underlined Characters

%Roman

\newcommand{\abar}{{\underline a}}

\newcommand{\ebar}{{\underline e}}

\renewcommand{\hbar}{{\underline h}}

\newcommand{\ellbar}{{\underline \ell}}
\newcommand{\mbar}{{\underline m}}
\newcommand{\nbar}{{\underline n}}

\newcommand{\rbar}{{\underline r}}

\newcommand{\Hbar}{{\underline H}}

\newcommand{\Lbar}{{\underline L}}

\newcommand{\Rbar}{{\underline R}}

\newcommand{\Xbar}{{\underline X}}

\newcommand{\Zbar}{{\underline Z}}

%Greek

\newcommand{\gammabar}{\underline{\gamma}}

\newcommand{\thetabar}{{\underline{\theta}}}

\newcommand{\xibar}{\underline{\xi}}

\newcommand{\pibar}{\underline{\uppi}}

\newcommand{\varphibar}{\underline{\varphi}}

\newcommand{\Omegabar}{\underline{\Omega}}

%%%%%%%%%%%%%%%%%%%%%%%%%%%%%%%%%%%%%%%%%%%%%%%%%%%%%%%%%%%%%%%%%%%%%%
%%%%%%%%%%%%%%%%%%%%%%%%%%%%%%%%%%%%%%%%%%%%%%%%%%%%%%%%%%%%%%%%%%%%%%

%Blackboard Bold Characters

\newcommand{\bbP}{\mathbb P}

\newcommand{\bbR}{\mathbb R}
\newcommand{\bbS}{\mathbb S}

\newcommand{\bbZ}{\mathbb Z}

%%%%%%%%%%%%%%%%%%%%%%%%%%%%%%%%%%%%%%%%%%%%%%%%%%%%%%%%%%%%%%%%%%%%%%
%%%%%%%%%%%%%%%%%%%%%%%%%%%%%%%%%%%%%%%%%%%%%%%%%%%%%%%%%%%%%%%%%%%%%%

%MathCal Characters

\newcommand{\calA}{\mathcal A}
\newcommand{\calB}{\mathcal B}
\newcommand{\calC}{\mathcal C}
\newcommand{\calD}{\mathcal D}
\newcommand{\calE}{\mathcal E}
\newcommand{\calF}{\mathcal F}

\newcommand{\calH}{\mathcal H}

\newcommand{\calL}{\mathcal L}
\newcommand{\calM}{\mathcal M}
\newcommand{\calN}{\mathcal N}
\newcommand{\calO}{\mathcal O}
\newcommand{\calP}{\mathcal P}
\newcommand{\calQ}{\mathcal Q}
\newcommand{\calR}{\mathcal R}
\newcommand{\calS}{\mathcal S}
\newcommand{\calT}{\mathcal T}

\newcommand{\calX}{\mathcal X}
\newcommand{\calY}{\mathcal Y}
\newcommand{\calZ}{\mathcal Z}

%%%%%%%%%%%%%%%%%%%%%%%%%%%%%%%%%%%%%%%%%%%%%%%%%%%%%%%%%%%%%%%%%%%%%%
%%%%%%%%%%%%%%%%%%%%%%%%%%%%%%%%%%%%%%%%%%%%%%%%%%%%%%%%%%%%%%%%%%%%%%

%MathFrak Characters

\newcommand{\frakf}{\mathfrak f}

\newcommand{\frakm}{\mathfrak m}
\newcommand{\frakn}{\mathfrak n}

\newcommand{\frakz}{\mathfrak z}

%%%%%%%%%%%%%%%%%%%%%%%%%%%%%%%%%%%%%%%%%%%%%%%%%%%%%%%%%%%%%%%%%%%%%%
%%%%%%%%%%%%%%%%%%%%%%%%%%%%%%%%%%%%%%%%%%%%%%%%%%%%%%%%%%%%%%%%%%%%%%

%Tilde Characters

%Roman

\newcommand{\tilb}{{\tilde{b}}}

\newcommand{\tilc}{{\tilde{c}}}

\newcommand{\tile}{{\tilde{e}}}

\newcommand{\tilf}{{\tilde{f}}}

\newcommand{\tilg}{{\tilde{g}}}

\newcommand{\tilh}{{\tilde{h}}}

\newcommand{\tilk}{{\tilde{k}}}

\newcommand{\tiln}{{\tilde{n}}}

\newcommand{\tilq}{{\tilde{q}}}

\newcommand{\tilr}{{\tilde{r}}}

\newcommand{\tilt}{{\tilde{t}}}

\newcommand{\tily}{{\tilde{y}}}

\newcommand{\tilA}{{\tilde{A}}}

\newcommand{\tilB}{{\tilde{B}}}

\newcommand{\tilF}{{\tilde{F}}}

\newcommand{\wtilH}{{\widetilde{H}}}
\newcommand{\tilI}{{\tilde{I}}}

\newcommand{\tilK}{{\tilde{K}}}

\newcommand{\tilL}{{\tilde{L}}}

\newcommand{\tilN}{{\tilde{N}}}
\newcommand{\wtilN}{{\widetilde{N}}}

\newcommand{\tilR}{{\tilde{R}}}
\newcommand{\wtilR}{{\widetilde{R}}}
\newcommand{\tilS}{{\tilde{S}}}
\newcommand{\wtilS}{{\widetilde{S}}}

%Greek

%%%%%%%%%%%%%%%%%%%%%%%%%%%%%%%%%%%%%%%%%%%%%%%%%%%%%%%%%%%%%%%%%%%%%%
%%%%%%%%%%%%%%%%%%%%%%%%%%%%%%%%%%%%%%%%%%%%%%%%%%%%%%%%%%%%%%%%%%%%%%

%Tilde Characters

%Roman

%%%%%%%%%%%%%%%%%%%%%%%%%%%%%%%%%%%%%%%%%%%%%%%%%%%%%%%%%%%%%%%%%%%%%%
%%%%%%%%%%%%%%%%%%%%%%%%%%%%%%%%%%%%%%%%%%%%%%%%%%%%%%%%%%%%%%%%%%%%%%

%Mathscri characters

\newcommand{\scB}{{\mathscr{B}}}
\newcommand{\scC}{{\mathscr{C}}}

\newcommand{\scE}{{\mathscr{E}}}

\newcommand{\scR}{{\mathscr{R}}}

%%%%%%%%%%%%%%%%%%%%%%%%%%%%%%%%%%%%%%%%%%%%%%%%%%%%%%%%%%%%%%%%%%%%%%
%%%%%%%%%%%%%%%%%%%%%%%%%%%%%%%%%%%%%%%%%%%%%%%%%%%%%%%%%%%%%%%%%%%%%%

%Mathring Characters

%Roman

\newcommand{\ringa}{{\mathring a}}
\newcommand{\ringb}{{\mathring b}}
\newcommand{\ringc}{{\mathring c}}

\newcommand{\ringm}{{\mathring m}}

\newcommand{\ringp}{{\mathring p}}

\newcommand{\ringN}{{\mathring N}}

%Greek

\newcommand{\ringpi}{\mathring{\uppi}}

\newcommand{\ringSigma}{\mathring{\Sigma}}

\newcommand{\ringPhi}{\mathring{\Phi}}

\newcommand{\ringpsi}{\mathring{\psi}}
\newcommand{\ringepsilon}{\mathring{\epsilon}}

%%%%%%%%%%%%%%%%%%%%%%%%%%%%%%%%%%%%%%%%%%%%%%%%%%%%%%%%%%%%%%%%%%%%%%
%%%%%%%%%%%%%%%%%%%%%%%%%%%%%%%%%%%%%%%%%%%%%%%%%%%%%%%%%%%%%%%%%%%%%%

%Vector Characters

%Roman

\newcommand{\vecf}{{\vec f}}

\newcommand{\vecu}{{\vec u}}
\newcommand{\vecv}{{\vec v}}

\newcommand{\vecF}{{\vec F}}
\newcommand{\vecG}{{\vec G}}
\newcommand{\vecH}{{\vec H}}

\newcommand{\vecK}{{\vec K}}

\newcommand{\vecN}{{\vec N}}

\newcommand{\vecZ}{{\vec Z}}

%Greek

\newcommand{\vecphi}{\vec{\phi}}
\newcommand{\vecvarphi}{\vec{\varphi}}
\newcommand{\vecfy}{\vec{\varphi}}

\newcommand{\vecpsi}{\vec{\psi}}

\newcommand{\vecomega}{\vec{\omega}}

\newcommand{\barcalC}{{\underline{\calC}}}
\newcommand{\barcalH}{{\underline{\calH}}}

\newcommand{\tilbfP}{{\tilde{\bfP}}}

%%%%%%%%%%%%%%%%%%%%%%%%%%%%%%%%%%%%%%%%%%%%%%%%%%%%%%%%%%%%%%%%%%%%%%
%%%%%%%%%%%%%%%%%%%%%%%%%%%%%%%%%%%%%%%%%%%%%%%%%%%%%%%%%%%%%%%%%%%%%%

%Local commands

\newcommand{\ringsg}{\mathring{\slashed{g}}}
\newcommand{\slambda}{\slashed{\lambda}}

\newcommand{\ringsDelta}{{\mathring{\slashed{\Delta}}}}
\newcommand{\sDelta}{{\slashed{\Delta}}}
\newcommand{\snabla}{{\slashed{\nabla}}}
\newcommand{\spartial}{{\slashed{\partial}}}
\newcommand{\ringsnabla}{{\mathring{\slashed{\nabla}}}}

\newcommand{\ud}{\mathrm{d}}
\newcommand{\udt}{\,\mathrm{d} t}
\newcommand{\udx}{\,\mathrm{d} x}

\newcommand{\dotxi}{{\dot{\xi}}}
\newcommand{\ddotxi}{{\ddot{\xi}}}
\newcommand{\doteta}{{\dot{\eta}}}

\newcommand{\tilpsi}{{\tilde{\psi}}}
\newcommand{\fy}{\varphi}

\newcommand{\tilrho}{{\tilde{\rho}}}
\newcommand{\tilvarphi}{{\tilde{\varphi}}}
\newcommand{\tilfy}{{\tilde{\fy}}}

\newcommand{\tiltau}{{\tilde{\tau}}}
\newcommand{\tilomega}{{\tilde{\omega}}}
\newcommand{\tilOmega}{{\tilde{\Omega}}}
\newcommand{\tilLbar}{{\tilde{\Lbar}}}

\newcommand{\dotell}{{\dot{\ell}}}
\newcommand{\dotphi}{{\dot{\phi}}}
\newcommand{\dotpsi}{{\dot{\psi}}}
\newcommand{\dotfy}{{\dot{\fy}}}

\newcommand{\ddotell}{\ddot{\ell}}

\newcommand{\dotwp}{{\dot{\wp}}}

\newcommand{\ringpartial}{{\mathring{\partial}}}

\newcommand{\far}{{\mathrm{far}}}
\newcommand{\near}{{\mathrm{near}}}

\newcommand{\pert}{{\mathrm{pert}}}
\newcommand{\Int}{{\mathrm{int}}}
\newcommand{\Ext}{{\mathrm{ext}}}
\newcommand{\tilchi}{{\tilde{\chi}}}
\newcommand{\callP}{\calP}

\newcommand{\dotF}{{\dot{F}}}

\newcommand{\temp}{{\mathrm{temp}}}

\newcommand{\tilcalP}{{\widetilde{\calP}}}
\newcommand{\tilcalR}{{\widetilde{\calR}}}
\newcommand{\tilDelta}{{\tilde{\Delta}}}

\newcommand{\tilbfOmegabar}{{\tilde{\underline{\bfOmega}}}}
\newcommand{\bfOmegabar}{{\underline{\bfOmega}}}
\newcommand{\tilhbar}{{\underline{\tilh}}}
\newcommand{\tilnbar}{{\underline{\tiln}}}

\renewcommand{\max}{{\mathrm{max}}}
\renewcommand{\min}{{\mathrm{min}}}
\newcommand{\hyp}{{\mathrm{hyp}}}
\newcommand{\flatt}{{\mathrm{flat}}}
\newcommand{\tilgamma}{{\tilde{\gamma}}}
\newcommand{\Err}{{\mathrm{Err}}}
\newcommand{\ext}{{\mathrm{ext}}}
\newcommand{\supp}{{\mathrm{supp}}}
\newcommand{\Ell}{{\mathrm{ell}}}
\newcommand{\tilpartial}{{\tilde{\partial}}}
\newcommand{\tiluptau}{{\tilde{\uptau}}}
\newcommand{\tiluprho}{{\tilde{\uprho}}}
\newcommand{\tiluptheta}{{\tilde{\uptheta}}}
\newcommand{\tilupsigma}{{\tilde{\upsigma}}}

\newcommand{\ringcalP}{{\mathring{\calP}}}

\newcommand{\pt}{\partial_t}
\newcommand{\pj}{\partial_j}
\newcommand{\pk}{\partial_k}
\newcommand{\pmu}{\partial_\mu}
\newcommand{\pnu}{\partial_\nu} 

\newcommand{\vecwp}{{\wp}}
\newcommand{\dotvecwp}{{\dot{\wp}}}
\newcommand{\veccalN}{{\vec{\calN}}}
\newcommand{\dota}{{\dot{a}}}
\newcommand{\vecbfOmega}{{\vec{\bfOmega}}}
\newcommand{\vecUpomega}{{\vec{\Upomega}}}

\newcommand{\RbfT}{{\bfT}}
\newcommand{\tilupsi}{{\tilde{\uppsi}}}
\newcommand{\euc}{{\mathrm{euc}}}
\newcommand{\cat}{{\mathrm{cat}}}
\newcommand{\tilkappa}{{\tilde{\kappa}}}
\newcommand{\bfUpomega}{{\boldsymbol{\Upomega}}}
\newcommand{\bfupsigma}{{\boldsymbol{\upsigma}}}
\newcommand{\rrho}{\frakz}
\newcommand{\oomega}{\omega}
\newcommand{\secondff}{ {\mathrm{I\!I}} }
\newcommand{\trap}{{\mathrm{trap}}}
\newcommand{\Rone}{{R}}
\newcommand{\tilcalbfP}{{\tilde{\boldsymbol{\calP}}}}

\newcommand{\Reigenfunctioncutoffscale}{{R_1}}
\newcommand{\callO}{\calO}
\newcommand{\bsUpsigma}{{\boldsymbol{\Upsigma}}}
\newcommand{\barbsUpsigma}{{\underline{\bsUpsigma}}}
\newcommand{\tilbsUpsigma}{{\tilde{\boldsymbol{\Upsigma}}}}
\newcommand{\bartilbsUpsigma}{{\underline{\tilbsUpsigma}}}
\newcommand{\HHbar}{\Hbar}
\newcommand{\graph}{{\mathrm{graph}}}
\usepackage{esvect}
\usepackage{upgreek}
\usepackage{enumitem}
\title[Stability of the catenoid for the HVMC equation]{Stability of the catenoid for the hyperbolic vanishing mean curvature equation outside symmetry}

\begin{document}

\begin{abstract} 
We study the problem of stability of the catenoid, which is an asymptotically flat rotationally symmetric minimal surface in Euclidean space, viewed as a stationary solution to the hyperbolic vanishing mean curvature equation in Minkowski space. The latter is a quasilinear wave equation that constitutes the hyperbolic counterpart of the minimal surface equation in Euclidean space. Our main result is the nonlinear asymptotic stability, modulo suitable translation and boost (i.e., modulation), of the $n$-dimensional catenoid with respect to a codimension one set of initial data perturbations without any symmetry assumptions, for $n \geq 5$. The modulation and  the codimension one restriction on the data are necessary and optimal in view of the kernel and the unique simple eigenvalue, respectively, of the stability operator of the catenoid. 

In a broader context, this paper fits in the long tradition of studies of soliton stability problems. From this viewpoint, our aim here is to tackle some new issues that arise due to the quasilinear nature of the underlying hyperbolic equation. Ideas introduced in this paper include a new profile construction  and modulation analysis to track the evolution of the translation and boost parameters of the stationary solution, a new scheme for proving integrated local energy decay for the perturbation in the quasilinear and modulation-theoretic context, and an adaptation of the vectorfield method in the presence of dynamic translations and boosts of the stationary solution.
\end{abstract} 

\thanks{J. L\"uhrmann was supported by NSF grant DMS-1954707. S.-J.~Oh was supported by the Samsung Science and Technology Foundation under Project Number SSTF-BA1702-02, a Sloan Research Fellowship and a NSF CAREER Grant DMS-1945615. S. Shahshahani was supported by the Simons Foundation grant 639284. The authors are grateful to the referees for valuable comments and suggestions.}

\author{Jonas L\"uhrmann}
\author{Sung-Jin Oh}
\author{Sohrab Shahshahani}
\maketitle 

\tableofcontents

%%%%%%%%%%%%%%%%%%%%%%
%%%%%%%%%%%%%%%%%%%%%%
%%%%%%%%%%%%%%%%%%%%%%
\section{Introduction}\label{sec:intro}
%%%%%%%%%%%%%%%%%%%%%%
%%%%%%%%%%%%%%%%%%%%%%
%%%%%%%%%%%%%%%%%%%%%%

\emph{Catenoids} are among the simplest examples of a non-flat minimal hypersurface in Euclidean space. With respect to the Lorentzian generalization of the minimal hypersurface equation, which is a quasilinear wave equation that will be referred to as the \emph{hyperbolic vanishing mean curvature equation} (HVMC equation) in this paper, these minimal hypersurfaces furnish examples of nontrivial asymptotically flat time-independent solutions. From this point of view, a fundamental question is that of the \emph{nonlinear asymptotic stability} of catenoids as solutions to the HVMC equation -- this will be the subject of the present paper.

Our main result is the nonlinear asymptotic stability, modulo suitable translation and boost, of the $n$-dimensional catenoid as a solution to the HVMC equation with respect to a ``codimension-$1$'' set of initial data perturbations without any symmetry assumptions, for $n \geq 5$ (see Theorem~\ref{thm:main-0} below). The codimension-$1$ condition is necessary and sharp, in view of the fact that the linearized HVMC equation around the catenoid admits a one-parameter family of growing solutions corresponding to the negative eigenvalue of the stability operator (second variation of area). The necessity for an adjustment of the translation and boost parameters (i.e., \emph{modulation}) stems from the kernel of the linearized equation arising from Lorentz invariance. Our result extends the pioneering work of Donninger--Krieger--Szeftel--Wong \cite{DKSW}, which considers the same problem in radial symmetry for $n \geq 2$, to the non-symmetric context albeit for $n \geq 5$.

Beyond the intrinsic interest in the asymptotic stability problem for the catenoid, our motivation for this work is to take on specific challenges for soliton stability problems brought on by the quasilinear nature of the corresponding hyperbolic evolution equations.
We are hopeful for applications of our approach in the study of well-known topological solitons arising in quasilinear wave equations such as the Skyrmion for the Skyrme model.

\subsection{Stability Problems for the Hyperbolic Vanishing Mean Curvature Equation}
We begin by giving a precise formulation of the hyperbolic vanishing mean curvature equation. 
Let $(\bbR^{1+(n+1)},\bfeta)$ be the $1+(n+1)$ dimensional Minkowski space with the standard metric
%%%%%%%
%%%%%%%
\begin{align*}
\begin{split}
\bfeta=-\ud X^0\otimes \ud X^0+\ud X^1\otimes \ud X^1+\dots + \ud X^{n+1}\otimes \ud X^{n+1},
\end{split}
\end{align*}
and let $\calM$ be an $n+1$ dimensional connected orientable manifold without boundary. We consider embeddings $\Phi:\calM\to \bbR^{1+{(n+1)}}$ such that the pull-back metric $\Phi^\ast\bfeta$ is Lorentzian (i.e., $\Phi(\calM)$ is timelike), and which satisfy 
%%%%%%%
%%%%%%%
\begin{align}\label{eq:HVMC1}
\begin{split}
\Box_{\Phi^\ast \bfeta}\Phi=0.
\end{split}
\end{align}
The vector $\Box_{\Phi^\ast \bfeta}\Phi$  is the \emph{mean curvature vector} of $\Phi(\calM)$ as a hypersurface in $\bbR^{1+(n+1)}$, and equation~\eqref{eq:HVMC1} is the requirement that this hypersurface have vanishing mean curvature (VMC).  Embeddings satisfying these requirements are called \emph{(timelike) maximal} and equation~\eqref{eq:HVMC1} is referred to as the hyperbolic vanishing mean curvature equation (HVMC equation). When there is no risk of confusion, by a slight abuse of notation, we will often identify $\calM$ with its image $\Phi(\calM)$ and simply refer to $\calM$ as a hypersurface of $\bbR^{1+(n+1)}$. The HVMC equation is the hyperbolic analogue of the elliptic minimal surface equation (or the parabolic mean curvature flow), and arises variationally as the Euler-Lagrange equations of the area functional 
%%%%%%%
%%%%%%%
\begin{align}\label{eq:area1}
\begin{split}
\calA(\Phi)=\int_{\calM}\sqrt{|\det \Phi^\ast\bfeta|}.
\end{split}
\end{align}
Maximal hypersurfaces are also called \emph{membranes} when $n=2$, and \emph{strings} when $n=1$. 

As \eqref{eq:HVMC1} is a system of wave equations, it is natural to consider the associated Cauchy problem which can be described as follows. Given a coordinate patch $U\subseteq \calM$ with coordinates $s=(s^0,\dots,s^n)$, let $U_0:=\{s\in U~\vert~ s^0=0\}\subset U$, and consider two functions $\Phi_0,\Phi_1\colon U_0\to \bbR^{1+(n+1)}$. We assume that $\Phi_0$ is an embedding, that $\Phi_0^\ast \bfeta$ is Riemannian, and that the metric
%%%%%%%
%%%%%%%
\begin{align*}
g_{\mu\nu}:=\begin{cases}
\bfeta(\partial_{\mu}\Phi_0,\partial_\nu\Phi_0),\quad &\mu,\nu=1,\dots,n\\
\bfeta(\Phi_1,\partial_\nu\Phi_0),\quad &\mu=0,\nu=1,\dots n\\
\bfeta(\partial_\mu\Phi_0,\Phi_1),\quad &\mu=1,\dots n, \nu=0\\
\bfeta(\Phi_1,\Phi_1),\quad &\mu=\nu=0
\end{cases}
\end{align*}
satisfies $\sup_{U_0}\det g<0$. We then ask if there is a neighborhood $V\subseteq U$ of $U_0$ such that there is a timelike embedding $\Phi\colon V\to \bbR^{1+(n+1)}$ satisfying \eqref{eq:HVMC1}, as well as $\Phi\vert_{U_0}=\Phi_0$ and $\partial_{0}\Phi\vert_{U_0}=\Phi_1$. Due to the diffeomorphism invariance of the problem, the solution cannot be unique. But, it is shown in \cite{AC2} that this problem admits a solution $\Phi$ and that any two solutions $\Phi$ and $\Psi$ are related by a diffeomorphism. In the present work we are interested in manifolds $\calM$ that can be written as direct products $\calM=\bbR\times M$ (in fact, we will soon restrict attention to the case where $M$ is a catenoid). In this case we use $(t,x)$ to denote points in $\bbR\times M$. Given $\Phi_0:M\to \{0\}\times \bbR^{n+1}\subseteq \bbR^{1+(n+1)}$ and a family of future directed timelike vectors $\Phi_1:M\to \bbR^{1+(n+1)}$, by finite speed of propagation and standard patching arguments, the result of \cite{AC2} implies the existence of an interval $I\ni 0$, and a unique solution $\Phi\colon I\times M\to\bbR^{1+(n+1)}$ to \eqref{eq:HVMC1} such that $\Phi(t,M)\subseteq \{t\}\times\bbR^{n+1}$, $\Phi(0)=\Phi_0$, and $\partial_t\Phi(0)=\Phi_1$.

Having a satisfactory local theory, one can consider the question of global (in time) dynamics of solutions to \eqref{eq:HVMC1}. For instance, in the context of the Cauchy problem formulated on $\bbR\times M$, one can ask if the local solution extends from $I\times M$ to all of $\bbR\times M$, and if so, how it behaves as $t\to\pm\infty$. A special class of maximal hypersurfaces for which the global dynamics are easily described are the products of  Riemannian VMC surfaces in $\bbR^{n+1}$ with $\bbR$. More precisely, if $\Phi_0\colon M\to\bbR^{n+1}$ is a Riemannian embedding with vanishing mean curvature, then $\Phi\colon \bbR\times M\to\bbR^{1+(n+1)}$ given by $\Phi(t,x)=(t,\Phi_0(x))$ satisfies \eqref{eq:HVMC1} with $\Phi(0)=\Phi_0$ and $\partial_t\Phi(0)=(1,0)$.  We refer to such product solutions as \emph{stationary} solutions. 

A natural question regarding the long time dynamics of solutions of \eqref{eq:HVMC1} is the stability of stationary solutions. The simplest case is when $\Phi_0$ is a linear embedding of a hyperplane in $\bbR^{n+1}$. When $n\geq 3$, it was proved in \cite{B1} that small perturbations of a hyperplane solution lead to global solutions which decay back to a hyperplane. A similar result when $n=2$ was later proved in \cite{Lin1}. Analytically, the results in \cite{B1,Lin1} amount to proving global existence and decay estimates for solutions to a system of quasilinear wave equations with small initial data on Minkowski space. From this point of view, hyperplanes can be thought of as the zero solution to~\eqref{eq:HVMC1}.

The first stability result for a non-flat stationary solution of \eqref{eq:HVMC1} is contained in \cite{DonKrie1,DKSW} for the Lorentzian catenoid. The Riemannian catenoid is a VMC surface of revolution in $\bbR^{n+1}$ (see Section~\ref{subsec:Riemcat} for a more detailed description), and the Lorentzian catenoid is the corresponding stationary solution. The authors in \cite{DKSW} consider radial perturbations of the $(1+2)$ dimensional Lorentzian catenoid that satisfy an additional discrete symmetry\footnote{This is an important technical assumption, which avoids the resonances of the linearized operator in dimension two.}. Their main result asserts that if the initial data belong to a codimension one subset in an appropriate topology, then the corresponding solution can be extended globally and converges to a Lorentzian catenoid as $t\to\infty$. The codimension one restriction on the initial data is necessary and sharp (see the comments following Theorem~\ref{thm:main-0}). A similar result for radial perturbations of the Lorentzian helicoid was subsequently obtained in \cite{Marchali1}.  From a PDE point of view, \cite{DKSW,Marchali1} establish the codimension one (asymptotic) stability of a time-independent solution to a quasilinear wave equation on Minkowski space under radial symmetry. 

In this work we prove the codimension one stability of the $(1+n)$ dimensional Lorentzian catenoid in dimensions $n\geq 5$ without any symmetry restrictions on the perturbations. In the previous paragraphs we mentioned the results on the HVMC equation which are most directly related to our work. A more complete account is given in Section~\ref{subsubsec:morehistory} below. Before providing a simplified statement of our main theorem, we describe the catenoid solution in more detail in the next subsection.

%%%%%%%%%%%%%%%%%
%%%%%%%%%%%%%%%%%
\subsection{The Catenoid Solution}\label{subsec:Riemcat}
%%%%%%%%%%%%%%%%%
%%%%%%%%%%%%%%%%%

We recall some basic geometric properties of the catenoid.
In $\bbR^{n+1}$ let $\Xbar=(X',X^{n+1})$ where $X'=(X^1,\dots,X^n)$ denote the first $n$ coordinates and $X^{n+1}$ the last coordinate.  Suppose $I$ is an interval in $\bbR$ (possibly all of $\bbR$), which we identify with the $X^{n+1}$ axis, and let
%%%%%%%
%%%%%%%
\begin{align*}
\begin{split}
\frakf: I \to [1,\infty)
\end{split}
\end{align*}
be a given even function with\footnote{One could also consider $\frakf(0)=\frakf_0>0$, corresponding to a different radius for the neck of the catenoid. But this can be reduced to the case considered here by a rescaling $\Xbar\mapsto\lambda\Xbar$.} $\frakf(0)=1$, $\frakf'(0)=0$. Consider the surface of revolution obtained by rotating the graph of
%%%%%%%
%%%%%%%
\begin{align}\label{eq:catpar}
\begin{split}
X^{n+1} \mapsto X' =(\frakf(X^{n+1}),0,\dots,0)
\end{split}
\end{align}
about the $X^{n+1}$-axis. It can be parameterized as
%%%%%%%
%%%%%%%
\begin{align}\label{eq:Fdef}
\begin{split}
F:I\times \bbS^{n-1}\to \bbR^{n+1},\qquad F(\rrho,\oomega)=(\frakf(\rrho)\Theta(\oomega),\rrho),
\end{split}
\end{align}
where $\Theta:\bbS^{n-1}\to \bbR^{n}$ is the standard embedding of the unit sphere. As a level set our surface is
%%%%%%%
%%%%%%%
\begin{align*}
\begin{split}
\{G(\Xbar):=|X'|^2-\frakf^2(X^{n+1})=0\},
\end{split}
\end{align*}
and the unit outward normal is
%%%%%%%
%%%%%%%
\begin{align}\label{eq:normal}
\begin{split}
\frakn= \frac{\nabla G}{|\nabla G|}= \frac{1}{(1+(\frakf'(\rrho))^2)^{\frac{1}{2}}}(\Theta(\oomega),-\frakf'(\rrho)).
\end{split}
\end{align}
Below we identify $\partial_\rrho$ and $\partial_{a}$, $a\in \{\oomega^1,\dots,\oomega^{n-1}\}$, with their images in $\bbR^{n+1}$ under $dF$, that is,
%%%%%%%
%%%%%%%
\begin{align*}
\begin{split}
\partial_\rrho = (\frakf'\Theta,1),\qquad \partial_{a}= (\frakf \partial_{a}\Theta,0).
\end{split}
\end{align*}
Differentiating \eqref{eq:normal} with respect to the ambient covariant derivative $\nabla$ we find that
%%%%%%%
%%%%%%%
\begin{align*}
\begin{split}
\nabla_{\partial_\rrho}\frakn = -\frac{\frakf''}{(1+(\frakf')^2)^{\frac{3}{2}}}\partial_\rrho,\qquad \nabla_{\partial_a}\frakn = \frac{1}{\frakf(1+(\frakf)^2)^{\frac{1}{2}}}\partial_a.
\end{split}
\end{align*}
From this we see that the second fundamental form $\secondff$ of the surface, as a matrix with components $\angles{\nabla_{\partial_j} \bfn}{\partial_i}$, $i,j\in\{\rrho,\oomega^1,\dots,\oomega^{n-1}\}$, is (here $\ringsg$ denotes the round metric on $\bbS^{n-1}$)
%%%%%%%
%%%%%%%
\begin{align*}
\begin{split}
\secondff=\pmat{\lambda_1(1+(\frakf')^2)&0\\0&\slambda \frakf^2\ringsg},\qquad \lambda_1:=-\frac{\frakf''}{(1+(\frakf')^2)^{\frac{3}{2}}}, \quad \slambda:=\frac{1}{\frakf(1+(\frakf')^2)^{\frac{1}{2}}}.
\end{split}
\end{align*}
It follows that the principal curvatures of the surface are $\{\lambda_1, \dots,\lambda_n\}$ where $\lambda_1$ is as above and $\lambda_j=\slambda$ for $j\geq 2$. The mean curvature of the embedding is then
%%%%%%%
%%%%%%%
\begin{align*}
\begin{split}
\bfH=\frac{1}{n}\Bigg(-\frac{\frakf''}{(1+(\frakf')^2)^{\frac{3}{2}}}+\frac{n-1}{\frakf(1+(\frakf')^2)^{\frac{1}{2}}}\Bigg).
\end{split}
\end{align*}
Therefore, for the surface to have zero mean curvature, $\frakf$ must satisfy the ODE
%%%%%%%%%%%
%%%%%%%%%%%
\begin{align}\label{eq:psiODE}
\frac{\frakf''}{(1+(\frakf')^2)^{\frac{3}{2}}}-\frac{n-1}{\frakf(1+(\frakf')^2)^{\frac{1}{2}}}=0,\qquad \frakf(0)=1,\quad \frakf'(0)=0.
\end{align}
%%%%%%%%%%%
%%%%%%%%%%%
\begin{definition}
The Riemannian Catenoid is the surface of revolution $\barcalC$ in $\bbR^{n+1}$ defined by~\eqref{eq:catpar}, where $\frakf$ satisfies \eqref{eq:psiODE}. The Lorentzian Catenoid is the surface $\calC:=\bbR\times \barcalC$ in the Minkowski space $\bbR^{1+(n+1)}$.
\end{definition}
%%%%%%%%%%%
%%%%%%%%%%%
There is a qualitative difference between the shape of the catenoid in dimension $n=2$ and dimensions $n\geq3$. Indeed, \eqref{eq:psiODE} implies the following ODE for $\rrho$:
%%%%%%%
%%%%%%%
\begin{align*}
\begin{split}
\frac{\ud^2 \rrho}{\ud \frakf^2}+\frac{n-1}{\frakf}\frac{\ud\rrho}{\ud \frakf}+\frac{n-1}{\frakf}\big(\frac{\ud \rrho}{\ud \frakf}\big)^3=0.
\end{split}
\end{align*}
From this, one can derive that $I=\bbR$ when $n=2$ and $I=(-S,S)$ when $n\geq 3$, where (see for instance \cite{Tam-Zhou} for more details on these calculations)
%%%%%%%
%%%%%%%
\begin{align}\label{eq:Sendpointdef1}
\begin{split}
S:=\int_{1}^{\infty}\frac{\ud \frakf}{\sqrt{\frakf^{2(n-1)}-1}}<\infty.
\end{split}
\end{align}
As we will see more explicitly below, one significance of this difference for the analysis is that in dimension $n=2$, the zero modes for the linearized operator, which correspond to the symmetries of the ambient space, are not eigenfunctions (that is, they do not belong to $L^2$), but rather resonances. In this work we will consider only  the high dimensional case $n\geq 5$, where the geometry of the catenoid approaches the flat geometry at a fast rate. To make these statements precise, we compute an expression for the induced metric on $\barcalC$. Using polar coordinates $X'=\Rbar \Omegabar$ for the first $n$ coordinates in the ambient $\bbR^{n+1}$ and denoting the $X^{n+1}$ coordinate by $\Zbar$, the ambient Euclidean metric becomes
%%%%%%%
%%%%%%%
\begin{align*}
\begin{split}
\ud \Zbar^2+ \ud \Rbar^2 + \Rbar^2\ud \Omegabar^2,
\end{split}
\end{align*}
where $\ud \Omegabar^2$ denotes the standard metric on $\bbS^{n-1}$. On $\barcalC\cap\{\Zbar>0\}$ we introduce radial coordinates $(\rbar,\thetabar)\in[1,\infty)\times \bbS^{n-1}$ by 
%%%%%%%
%%%%%%%
\begin{align}\label{eq:dZdr}
\begin{split}
(\rbar,\thetabar)\mapsto (\Zbar= \Zbar(\rbar), \Rbar = \rbar,\Omegabar = \Theta(\thetabar)),\qquad \frac{\ud \Zbar}{\ud \rbar} = \frac{1}{\sqrt{\rbar^{2(n-1)}-1}}.
\end{split}
\end{align} 
The induced Riemannian metric on $\barcalC$ in these coordinates becomes
%%%%%%%
%%%%%%%
\begin{align*}
\begin{split}
\Big(1+\frac{1}{\rbar^{2(n-1)}-1}\Big)\ud \rbar\otimes \ud \rbar+\rbar^2  \ringsg_{ab}\ud\thetabar^a\otimes\ud\thetabar^b.
\end{split}
\end{align*}
Instead of the geometric radial coordinate function $\rbar$, we use $\rho \in(-\infty,\infty)$ with
%%%%%%%
%%%%%%%
\begin{align*}
\begin{split}
\rbar(\rho)=\jap{\rho}:=\sqrt{1+\rho^2}.
\end{split}
\end{align*}
The coordinates $(\rho,\omega)\in \bbR\times \bbS^{n-1}$, where $\omega=\thetabar$, now describe all of $\barcalC$ (not just half) with $Z=Z(\rbar(\rho))$ if $\rho\geq0$ and $Z= - Z(\rbar(\rho))$ if $\rho<0$. The metric on $\barcalC$ in these coordinates becomes
%%%%%%%
%%%%%%%
\begin{align}\label{eq:RiemmCatmetric1}
\begin{split}
g_{\barcalC}:=\frac{\rho^2\jap{\rho}^{2(n-2)}}{\jap{\rho}^{2(n-1)}-1}\ud \rho\otimes\ud \rho+\jap{\rho}^2\ringsg_{ab} \ud \omega^a\otimes\ud\omega^b.
\end{split}
\end{align}
Using $t$ to denote the variable in $\bbR$ in $\calC=\bbR\times \barcalC$, the Lorentzian metric on $\calC$ becomes
%%%%%%%
%%%%%%%
\begin{align*}
\begin{split}
-\ud t\otimes\ud t+\frac{\rho^2\jap{\rho}^{2(n-2)}}{\jap{\rho}^{2(n-1)}-1}\ud \rho\otimes\ud \rho+\jap{\rho}^2 \ringsg_{ab} \ud \omega^a\otimes\ud\omega^b.
\end{split}
\end{align*}
From the second variation of the area functional one can see that the stability, or  linearized, operator for the catenoid as a minimal surface is  $\HHbar:=\Delta_\barcalC+ |\secondff|^2$ (respectively, $\Box_\calC+|\secondff|^2$ in the Lorentzian case), where $\Delta_\barcalC$ denotes the Laplacian on $\barcalC$ (respectively, $\Box_\calC=-\partial_t^2+\Delta_\barcalC$ denotes the d'Alembertian on $\calC$). See for instance \cite{Tam-Zhou,F-CS}. As mentioned earlier, it is shown in \cite{F-CS,Tam-Zhou} that $\HHbar$ admits a unique positive eigenvalue, indicating the instability of the catenoid as a minimal surface:
%%%%%%%
%%%%%%%
\begin{align*}
\begin{split}
\HHbar \varphibar_\mu=\mu^2\varphibar_\mu.
\end{split}
\end{align*}
Heuristically, this instability corresponds to the shrinking of the neck of the catenoid. See for instance~\cite{KL1,DKSW} for more discussion on this. On the other hand, since every translation of $\barcalC$ in the ambient $\bbR^{n+1}$ is another minimal surface (another catenoid), by differentiating in the translation parameter one obtains $n+1$ zero modes of $\HHbar$. Explicitly, in the $(\rho,\omega)$ coordinates above, these are given by
%%%%%%%
%%%%%%%
\begin{align*}
\begin{split}
\ebar_j=\frac{\Theta^j(\omega)}{\jap{\rho}^{n-1}},\quad 1\leq j\leq n,\qquad \ebar_{n+1}=\mathrm{sgn}(\rho)\frac{\sqrt{\jap{\rho}^{2(n-1)}-1}}{\jap{\rho}^{n-1}},
\end{split}
\end{align*}
corresponding to translations in the direction $\frac{\partial}{\partial X^j}$ respectively. In general, the zero mode $\ebar_{n+1}$, corresponding to translation in the direction of the axis of symmetry, does not belong to $L^2$ for any $n\geq 2$. For the other directions, $\ebar_j$ are in $L^2$ when $n\geq3$, while they logarithmically  fail to be in $L^2$ when $n=2$. In the Lorentzian case, the Lorentz boosts of the ambient $\bbR^{1+(n+1)}$ give the additional zero modes $t\ebar_j$ of $\Box_\calC+|\secondff|^2$, which, for $1\leq j\leq n$ and $n\geq 3$, are referred to as the \emph{generalized eigenfunctions} of the linear operator.\footnote{Ambient rotations about the axis of symmetry map $\barcalC$ to itself, so differentiation along the rotation parameter yields the trivial zero mode $\ebar=0$ for $\HHbar$. Similarly for translations along the time axis in the Lorentzian case. When $n\geq 3$, scaling changes the value of $S$ in \eqref{eq:Sendpointdef1} and differentiation in the scaling parameter yields a zero solution which is neither an eigenfunction nor a resonance.} See Section~\ref{subsec:eigenfunctions} for the discussion of eigenfunctions and generalized eigenfunctions in the context of the first order formulation.

We end this subsection by giving a more explicit description of the boosted and translated catenoid, which will be needed for the statement of our theorem. For any $\ell_0\in \bbR^n\backslash\{0\}$ let $P_{\ell_0}$ denote the orthogonal projection in the direction of $\ell_0$ ($P_{\ell_0} v=(|\ell_0|^{-2}\ell_0\cdot v)\ell_0$), and $P_{\ell_0}^\perp$ the orthogonal projection to the complement. Here and below, by a slight abuse of notation, we  view $\bbR^n$ as a subset of $\bbR^{n+1}$ using the embedding  $X'\mapsto (X',0)^\intercal$. The corresponding ambient Lorentz boost $\Lambda_{\ell_0}$, with $0<|\ell_0|<1$, is defined by
%%%%%%%
%%%%%%%
\begin{align}\label{eq:Lambdadef1}
\begin{split}
\Lambda_{\ell_0}=\pmat{\gamma&-\gamma\ell_0^\intercal\\-\gamma \ell_0&A_{\ell_0}},\qquad A_{\ell_0}= \gamma P_{\ell_0}+P_{\ell_0}^\perp,\quad \gamma=\frac{1}{\sqrt{1-|\ell_0|^2}},
\end{split}
\end{align}
and the inverses of $\Lambda_{\ell_0}$ and $A_{\ell_0}$ are
%%%%%%%
%%%%%%%
\begin{align*}
\begin{split}
\Lambda_{\ell_0}^{-1}=\Lambda_{-\ell_0},\qquad A_{\ell_0}^{-1}=\gamma^{-1}P_{\ell_0}+P_{\ell_0}^\perp.
\end{split}
\end{align*}
In what follows, Lorentz boosts are always with respect to a direction vector of length strictly less than one.
The Lorentzian catenoid boosted by $\ell_0\in \bbR^n$ and translated by $a_0\in \bbR^n$ is  the following HVMC submanifold of $\bbR^{1+(n+1)}$,
%%%%%%%
%%%%%%%
\begin{align*}
\begin{split}
\calC_{a_0,\ell_0}:=\{\Lambda_{-\ell_0} X~\vert~ X\in \calC\}+\pmat{0\\a_0}.
\end{split}
\end{align*}

\subsection{First Statement of the Main Result}\label{sec:introfirststatement}
We are now ready to give a first formulation of the main result of this paper. For a manifold $\calX$, we will use the notation $\calT_p\calX$ to denote the tangent space at $p\in \calX$. We also use the parameterization $F\colon I\times \bbS^{n-1}\to \bbR^{n+1}$ introduced in \eqref{eq:Fdef}. By a slight abuse of notation we often identify $\barcalC$ with $I\times \bbS^{n-1}$ and view functions on $I\times \bbS^{n-1}$ as functions on $\barcalC$.

\begin{theorem}\label{thm:main-0} Let $\Phi_0\colon I\times \bbS^{n-1} \to\{0\}\times\bbR^{n+1}$, $n\geq 5$, be an embedding and~$\Phi_1\colon I\times \bbS^{n-1}\to \bbR^{1+(n+1)}$ be a family of future directed timelike vectors such that $\Phi_0=F$ and $\Phi_1=(1,0)$ outside of a compact set. Suppose $\Phi_0$ and $\Phi_1$ belong to an appropriate codimension-1 subset in a suitable topology, and are sufficiently close to $F$ and $(1,0)$, respectively, in this topology. Then there is a unique complete timelike VMC hypersurface $\calM$ in $\bbR^{1+(n+1)}$ such that $\calM\cap \{X^0=0\}=\Phi_0(\barcalC)$ and $\calT_{\Phi_0(p)}\calM$ is spanned by $\ud_p \Phi_0(\calT_p\barcalC)$ and $\Phi_1(p)$ for any $p\in I \times \bbS^{n-1}$. Moreover, there exist $a_0,\ell_0\in \bbR^n$ such that the ambient Euclidean distance between $\calM\cap\{X^0=t\}$ and $\calC_{a_0,\ell_0}\cap \{X^0=t\}$ tends to zero as $t\to\infty$.
\end{theorem}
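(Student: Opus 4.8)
The plan is to set up the problem as a modulated perturbation of the Lorentzian catenoid, and run a bootstrap argument that simultaneously controls the perturbation and the modulation parameters $(a(t),\ell(t))$. First I would write the unknown hypersurface $\calM$ as a normal graph over a time-dependent boosted-and-translated catenoid $\calC_{a(t),\ell(t)}$: in suitable coordinates adapted to $\calC_{a(t),\ell(t)}$, write $\Phi = \Phi_{a(t),\ell(t)} + \psi \,\bfn_{a(t),\ell(t)} + (\text{lower order})$, where $\psi\colon \bbR\times\barcalC\to\bbR$ is the graph function and $\bfn$ the unit normal. Plugging into \eqref{eq:HVMC1} and using the diffeomorphism/gauge freedom to fix a convenient parameterization, one obtains a quasilinear wave equation for $\psi$ of the schematic form $\left(\Box_\calC + |\secondff|^2\right)\psi = \calN(\psi,\partial\psi,\partial^2\psi) + \dot{a}\cdot(\cdots) + \dot\ell\cdot(\cdots) + \ddot{a}\cdot(\cdots)+\ddot\ell\cdot(\cdots)$, where the stability operator $\HHbar = \Box_\calC + |\secondff|^2$ appears as the linear part, the nonlinearity $\calN$ is quadratic-and-higher and quasilinear, and the modulation terms are the source produced by time-differentiating the profile.

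The modulation parameters are then fixed by imposing orthogonality conditions: I would require $\psi(t,\cdot)$ (and $\partial_t\psi$) to be orthogonal, in the natural $L^2(\barcalC)$ inner product, to the zero modes $\ebar_j$, $1\le j\le n$, and the generalized zero modes, which for $n\ge5$ are genuinely in $L^2$ and correspond to the Lorentz boosts and translations. Differentiating these orthogonality conditions in $t$ yields a first-order (in fact second-order, because of the boost/translation pairing) ODE system for $(a,\ell)$ with a source that is quadratically small in the perturbation; this gives $|\dot a| + |\dot\ell| \lesssim \|\psi\|^2$ plus decaying contributions, so the parameters converge: $a(t)\to a_0$, $\ell(t)\to\ell_0$. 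The unstable direction — the eigenfunction $\varphibar_\mu$ with $\HHbar\varphibar_\mu = \mu^2\varphibar_\mu$ — is handled by the codimension-$1$ condition: projecting the equation onto $\varphibar_\mu$ gives a scalar ODE $\ddot g - \mu^2 g = (\text{quadratic error})$, and a Lyapunov–Perron / shooting argument selects the unique codimension-$1$ manifold of initial data on which the stable solution $g(t)\to 0$ exists.

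The core analytic estimate is decay for the perturbation $\psi$ on the orthogonal complement of all these modes. Here I would (i) prove integrated local energy decay (Morawetz-type estimate) for $\HHbar$ on the catenoid, exploiting that for $n\ge5$ the metric \eqref{eq:RiemmCatmetric1} is a fast (rate $\rho^{-2(n-1)}$) perturbation of the flat cylinder/cone metric, and that $|\secondff|^2$ decays like $\rho^{-2(n-1)}$, so the trapping and the potential are both weak; the spectral assumptions (one negative eigenvalue, kernel exactly the $\ebar_j$'s and their $t$-multiples) guarantee coercivity on the orthogonal complement, so no threshold resonance obstructs the Morawetz estimate. Then (ii) upgrade ILED to pointwise decay via the vectorfield method, adapted to the fact that the profile — and hence the background geometry — is itself moving with speed $\dot\ell$; this requires commuting with an approximate Lorentz-boost/translation vectorfield family that is corrected by the modulation, and absorbing the resulting error terms using the smallness of $\dot\ell$. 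Finally (iii) close the bootstrap: the decay of $\psi$ controls the modulation ODE source, the modulation smallness controls the error terms in the $\psi$ equation, and the codimension-$1$ condition kills the unstable mode, so all quantities remain small and decay, yielding $\dist(\calM\cap\{X^0=t\}, \calC_{a_0,\ell_0}\cap\{X^0=t\})\to 0$.

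The main obstacle I expect is step (ii), the adaptation of the vectorfield method in the presence of \emph{dynamic} boosts: the natural commuting vectorfields for decay are the Lorentz boosts of Minkowski space, but once the background catenoid is itself boosted with a time-dependent, a priori only-continuous parameter $\ell(t)$, these vectorfields no longer commute cleanly with the linearized operator, and the commutator errors involve $\dot\ell$ and $\ddot\ell$ multiplying top-order derivatives of $\psi$ — exactly the quasilinear danger zone. Controlling these requires a carefully designed hierarchy of norms in which the modulation terms are always lower order, together with the quasilinear energy estimates that track the dependence of the principal symbol on $\psi$ itself; getting the ILED estimate to be robust enough to survive these perturbations, and to be compatible with the modulation-theoretic framework, is the technical heart of the argument.
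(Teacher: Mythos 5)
Your outline captures the overall modulation-plus-shooting-plus-decay architecture and correctly identifies ILED, the vectorfield method, the codimension-one shooting argument, and the absence of threshold resonances for $n\ge5$. But the proposal has a concrete gap at the very first step, and that gap propagates into the modulation step.

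\textbf{The naive profile over the flat foliation does not work.} You write $\calM$ as a normal graph over $\calC_{a(t),\ell(t)}$ parametrized by the standard $\{X^0=t\}$ slices, so that the profile error is schematically $\dot a\cdot(\cdots)+\dot\ell\cdot(\cdots)$. The difficulty is not that this error fails to decay in $t$ — it does decay once the modulation parameters settle down — but that it does not have the right \emph{spatial} behavior. The catenoid approaches the flat hyperplane only polynomially, and the $\ell$-derivative of the boosted catenoid profile involves a factor $\ell\cdot F \sim \rho$ multiplying the slowly decaying normal $\nu^i\sim\rho^{-(n-1)}$. When you run the $r^p$-weighted vectorfield method and commute the scaling/rotation/translation vectorfields (of size $\sim r$) through the equation, the modulation source is hit by these weighted commutators and picks up unbounded factors of $r$. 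This obstructs exactly the part of the argument you flag as the ``technical heart,'' but for a different reason than the one you name: the obstruction is not the non-commutativity of boosts with the linearized operator (which is indeed an issue, but a manageable one), it is the spatial growth of the commuted source. The paper resolves this by constructing a \emph{moving asymptotically null (hyperboloidal) foliation} $\bsUpsigma_\sigma$ centered on the dynamic trajectory $\xi(\sigma)$ and defining the profile as $\Sigma_\sigma=\calC_\sigma\cap\bsUpsigma_\sigma$; in this geometry the adapted commuting vectorfields applied to the source term do not produce spatial growth. This foliation construction is a new ingredient, not a technical refinement, and your scheme cannot close without it or some substitute for it.

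\textbf{The orthogonality conditions and the modulation ODE change accordingly.} Once the leaves of the foliation are asymptotically null, the global $L^2(\barcalC)$ orthogonality conditions you propose (pairing $\psi$ and $\partial_t\psi$ against the $\ebar_j$ and their $t$-multiples) are no longer compatible with a clean first-order formulation, because the time vector is degenerate near spatial infinity. The paper therefore imposes orthogonality against \emph{truncated} eigenfunctions supported in a large compact set $\{\lvert\rho\rvert\lesssim R_1\}$. A consequence is that the modulation equations for $(\xi,\ell)$ acquire a source that is \emph{linear} in the perturbation, not quadratic as you assert: the linear term comes from the tails of the eigenfunctions cut off at scale $R_1$, and it is only small because of the spatial decay of the eigenfunctions, i.e., it carries a factor $o_{R_1}(1)$. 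This is a different bookkeeping from your ``$\lvert\dot a\rvert+\lvert\dot\ell\rvert\lesssim\lVert\psi\rVert^2$,'' and it introduces a genuine circularity (parameter derivatives enter linearly in the $\psi$-equation and vice versa) that must be broken using the smallness in $R_1$. Relatedly, the quasilinear structure means the raw modulation equations involve two derivatives of $\psi$ and hence would lose a derivative upon iterated time-differentiation; the paper inserts a causal time-smoothing operator $S$ into the orthogonality conditions to avoid this, a device absent from your sketch. None of these are fatal to the \emph{idea} of your argument, but each is a place where the plan as written would stall, and the foliation point is the one that requires a genuinely new construction rather than a technical patch.
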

%%%%%%%%%%%%
%%%%%%%%%%%%
A more precise version of the theorem will be stated as Theorem~\ref{thm:main} below. We pause to make a few comments.
\begin{itemize}[leftmargin=*]
  \item[{\bf{1.}}] In view of the growing mode of the stability operator  (see Section~\ref{subsec:Riemcat}), the codimension-1 restriction on the data in Theorem~\ref{thm:main} is optimal. See for instance \cite{KL1}. However, we do not pursue the question of uniqueness or regularity of the codimension-1 set in the initial data topology. See Item (3) in Section~\ref{subsubsec:introoverallscheme} for more on this point.
  \item [{\bf{2.}}] The fact that the solution approaches a boosted and translated catenoid is related to the presence of a non-trivial kernel and generalized kernel for the linearized operator. As we saw in Section~\ref{subsec:Riemcat}, the kernel and generalized kernel are generated by  the translation and boost symmetries. Therefore, to obtain decay for the perturbative part of the solution we need to choose the translation and boost parameters dynamically (modulation) to stay away from the kernel and generalized kernel. To give a more precise description of how we track these parameters, we need to decompose the solution into a \emph{profile} and a perturbation, and set up a first order formulation of the problem. These aspects are summarized in Sections~\ref{sec:profileintro} and~\ref{subsec:introfirstorderandcodim}. In Remark~\ref{rem:parametersintro1} in Section~\ref{subsec:introsecondthm} we give more precise references for how the parameters are tracked. Translation and boost symmetries are common features of quasilinear soliton stability problems that arise from Lorentz invariant theories. Developing a robust modulation approach for translation invariant quasilinear wave equations is one of the main achievements of this work. In this direction,  our novel profile construction plays a central role. We hope that our methods will find applications in other quasilinear soliton stability problems.
  \item [{\bf{3.}}] The assumption $\Phi_0=F$ and $\Phi_1=(1,0)$ outside a compact set can be replaced by sufficient decay at spatial infinity. Indeed, outside an ambient cone $\scC$ with vertex at $(-R,0)$, with $R$ sufficiently large, the problem reduces to a quasilinear wave equation on Minkowski space. By finite speed of propagation, this problem can be analyzed separately in this region, for instance using the vectorfield $(t-r)^{p} \partial_{t}$. This will lead to suitably decaying and small data on the cone $\scC$ which can be taken as the starting point of the analysis in this paper. Note that in this region the distance between $\calC_{a_j,\ell_j}\cap\{X^0=t\}$ for any $a_j,\ell_j$, $j=1,2$, with $|\ell_j|<1$ decays to zero as $t\to\infty$ in view of the strong asymptotic decay of the catenoid metric to the flat metric.
\item [{\bf{4.}}] We consider dimensions $n\geq5$ in this work, because this range is a more accessible analytic setting to approach some of the structural challenges in quasilinear soliton stability problems. Specifically, this restriction has the following two advantages: (i) The faster spatial decay rate  of the difference between the catenoid and flat metrics (faster decay of the tail of the soliton) amounts to weaker interactions between the profile and the radiation. (ii) The faster time decay of waves in higher dimensions allows us to directly obtain twice integrability of the time derivatives of the boost and translation parameters. This strong decay enters in proving integrated local energy decay for the perturbative part of the solution. See Section~\ref{subsec:ideas-iled}. The case $n=2$ (outside of radial symmetry) poses the additional challenge that the zero modes corresponding to translations and boosts are no longer eigenfunctions, but rather resonances (see Section~\ref{subsec:Riemcat}).  We expect that the general scheme in this paper is applicable to dimensions $n=3,4$ and hope to address these cases in future work.

\item[{\bf{5.}}] The minimal surface equation is widely studied in Riemannian geometry and calculus of variations. In particular, the spectral properties of the stability operator for the catenoid are well-understood. See for instance \cite{F-CS,Tam-Zhou}. This makes our problem a natural starting point for the study of asymptotic stability of solitons in quasilinear wave equations. 
\end{itemize}
%%%%%%%%%%%%
%%%%%%%%%%%%

\subsection{Overall Scheme and Main Difficulties} \label{subsec:ideas-outline}
%%%%%%%%%%%%%%%%%%%%%%
%%%%%%%%%%%%%%%%%%%%%%
In Sections~\ref{sec:profileintro}--\ref{subsec:introoutlinerp} below, we will describe the main ideas for the proof of Theorem~\ref{thm:main-0}. Before we do so, let us begin with an executive summary of the overall scheme, as well as a discussion of the main difficulties in the proof.

\subsubsection{Overall scheme}\label{subsubsec:introoverallscheme} The overall scheme of our proof of Theorem~\ref{thm:main-0} is as follows:
\begin{enumerate}
\item {\it Decomposition of solution.} The basic idea is to make the (formal) decomposition
\begin{equation} \label{eq:basic-decomp}
	\hbox{(Solution)} = \underbrace{\calQ}_{\hbox{profile}} \hbox{``}+\hbox{''} \underbrace{\psi}_{\hbox{perturbation}}
\end{equation}
This decomposition will be made precise in Section~\ref{sec:profileintro}.

A key goal is to show that the perturbation $\psi$ decays to zero as $t \to \infty$ in a suitable sense (see Item~(4) and Section~\ref{subsec:introoutlinerp}). In the absence of any obstructions, the profile $\calQ$ would be the object that we wish to prove the asymptotic stability of -- the catenoid in our case. However, as discussed earlier, the linearized HVMC equation around the catenoid, $(-\partial_{t}^{2} + \underline{H}) \psi = 0$, admits non-decaying solutions, namely (i)~a $1$-dimensional family of exponentially growing solutions, which arises from the simple positive eigenvalue of $\underline{H}$, and (ii)~a $2n$-dimensional family of solutions growing at most linearly in $t$, which arises from the $n$-dimensional kernel of $\underline{H}$ generated by translational symmetries. To avoid these obstructions, we employ the ideas of \emph{modulation} and \emph{shooting}, which we turn to now.

\smallskip

\item {\it Modulation.} To ensure transversality to the $2n$-dimensional family of non-decaying solutions in (ii), we impose $2n$ orthogonality conditions on $\psi$ at each time. To compensate for such a restriction, we allow the profile $\calQ$ to depend on $2n$ time-dependent parameters. Since the family in (ii) arises from translational symmetries, it is natural to introduce an $n$-dimensional \emph{position} vector $\xi(\sigma) = (\xi^{1}, \ldots, \xi^{n})(\sigma)$, an $n$-dimensional \emph{velocity} (or \emph{boost}) vector $\ell(\sigma) = (\ell^{1}, \ldots, \ell^{n})(\sigma)$, a foliation $\sigma$ (whose leaves will represent an appropriate notion of time for our problem) and an \emph{approximate solution} (or \emph{profile}) $\calQ = \calQ(\xi(\cdot), \ell(\cdot))$ to HVMC that represents ``a moving catenoid at position $\xi(\sigma)$ with velocity $\ell(\sigma)$ at each time $\sigma$''. Appropriate choices of the profile $\calQ$, the foliation $\sigma$ and the $2n$ orthogonality conditions would lead, upon combination with the HVMC equation, to $2n$ equations that dictate the evolution of $(\xi(\sigma), \ell(\sigma))$ in terms of $\psi$.

\smallskip

\item {\it Shooting argument.} To avoid the exponential growth stemming from obstruction (i), we further decompose the perturbation $\psi$ as follows:
\begin{equation*}
	\psi = a_{+}(\sigma) Z_{+} + a_{-}(\sigma) Z_{-} + \phi,
\end{equation*}
where $Z_{+}$ and $Z_{-}$ are uniformly bounded functions, $a_{+}(\sigma)$ and $a_{-}(\sigma)$ obey ODEs in $\sigma$ with growing and damping linear parts, respectively, and $\phi$ obeys $2n+2$ orthogonality conditions so as to be transversal (to a sufficient extent) to all possible linear obstructions to decay at each time\footnote{Strictly speaking, the term $a_{-}(\sigma) Z_{-}$ decays forward-in-time, so the reader may wonder why we also took it out of $\phi$ by imposing $2n+2$ orthogonality conditions (as opposed to $2n+1$, the dimension of the space of forward-in-time non-decaying solutions to $(-\partial_{t}^{2} + \underline{H}) \psi = 0$). The reason is that we want $\phi$ to exhibit only dispersive behaviors like $\bbP_{c} \psi$ in the simple example $(-\partial_{t}^{2} + \underline{H}) \psi = 0$.}. If $\calQ$ were simply the Lorentzian catenoid, then we may choose $a_{+} Z_{+} + a_{-} Z_{-}$ and $\phi$ to be the $L^{2}$-orthogonal projections of $\psi$ to the negative eigenvalue and the absolutely continuous spectrum of $\underline{H}$, respectively. 

By analyzing the ODE for $a_{-}$, the modulation equations for $(\xi, \ell)$, and the wave equation for $\phi$ (see (4) below), we will show that $\dot{\xi} - \ell$, $\dot{\ell}$ and $\psi$ decay as long as the unstable mode $a_{+}(\sigma)$ satisfies the so-called \emph{trapping assumption}, which roughly says that $a_{+}(\sigma)$ decays in time. We then employ a topological \emph{shooting argument} to select a family of initial data -- which is codimension $1$ in the sense described below in Theorem~\ref{thm:main} and \eqref{eq:shootingbdata1}, \eqref{eq:codim1} -- such that $a_{+}$ indeed continues to satisfy the trapping assumption for all times $\sigma \geq 0$.

\smallskip

\item {\it Integrated local energy decay and vectorfield method.} Finally, we study the quasilinear wave equation satisfied by $\phi$, which also satisfies $2n+2$ orthogonality conditions. Under suitable bootstrap assumptions (to handle nonlinear terms) and the trapping assumption for $a_{+}$  (see (3)), we prove the pointwise decay of $\phi$ via the following steps:
\begin{align*}
&\hbox{(transversality to linear obstructions)}  \\
&\Rightarrow \hbox{(uniform boundedness of energy and integrated local energy decay (ILED))} \\
& \Rightarrow 
\hbox{(pointwise decay)}
\end{align*}
Here, integrated local energy decay estimates (ILED; also known as Morawetz estimates) refer to, roughly speaking, bounds on integrals of the energy density on spacetime cylinders for finite energy solutions. They are a weak form of dispersive decay. These have the advantage of being $L^{2}$-based and hence being amenable to a wide range of techniques, such as the vectorfield method, Fourier transform, spectral theory etc. A powerful philosophy, that has recently arisen in works \cite{DR1, Tat2, MTT, OlSt} related to the problem of black hole stability, is to view integrated local energy decay as a key intermediate step for obtaining stronger pointwise decay (see also \cite{Tat1, MeTa} for papers in the related context of global Strichartz estimates). Specifically, in our proof we adapt the $r^{p}$-method of Dafermos--Rodnianski \cite{DR1}, extended by Schlue \cite{Schlue1} and Moschidis \cite{Moschidis1}. See Section~\ref{subsec:introoutlinerp} for further discussions.

\end{enumerate}

\subsubsection{Main difficulties}\label{subsubsec:intromaindiff} We face several significant challenges in implementing the above scheme for our problem. A summary of the main difficulties is as follows:
\begin{itemize}
\item {\it (Quasilinearity)} First and foremost, the hyperbolic vanishing mean curvature equation is \emph{quasilinear}. While the stability property of the linearized equation $(-\partial_{t}^{2} + \underline{H}) \phi = f$ around the Lorentzian catenoid $\calC$ is well-understood, upgrading it to the nonlinear asymptotic stability result Theorem~\ref{thm:main-0} involves a number of serious difficulties; specifically, see {\it (Proof of integrated local energy decay)} below. Furthermore, since the highest order term is nonlinear, at various places in the proof we need to be careful to avoid any derivative losses.

\item {\it (Gauge choice)} Another basic point about HVMC is that it is an equation for a geometric object, namely a hypersurface in $\bbR^{1+(n+1)}$. Hence, we need to fix a way of describing the hypersurface by a function to perform any analysis -- this is the well-known problem of \emph{gauge choice}. 

\item {\it (Profile and foliation construction)} In order for the above scheme to work, it is crucial for the profile $\calQ$, representing a moving catenoid at position $\xi(\sigma)$ with velocity $\ell(\sigma)$ at each time $\sigma$, to solve HVMC up to an adequately small error. Unfortunately, the obvious construction based on the standard $t = X^{0}$-foliation would lead to an inappropriately large error. The key issue is the inaccuracy of the construction in the far-away region (i.e., as $\rho \to \pm \infty$), which is fatal due to the slow spatial decay of the catenoid (i.e., mere polynomial decay towards the flat hyperplane as $\rho \to \pm \infty$). As we will see, we are led to consider a different foliation $\sigma$ consisting of \emph{moving asymptotically null leaves}; see Section~\ref{sec:profileintro}.

\item {\it (Proof of integrated local energy decay)} Existing methods \cite{MMT1, MST}, combined with the detailed knowledge of the spectral properties of the stability operator $\underline{H}$ for $\underline{\calC}$ \cite{F-CS, Tam-Zhou}, establish integrated local energy decay for the linearized equation $(-\partial_{t}^{2} + \underline{H}) \phi = f$ around the Lorentzian catenoid $\calC$ when $\phi = \bbP_{c} \phi$ ($L^{2}$-projection to the absolutely continuous spectrum).  See Section~\ref{subsec:iled-catenoid}. Transferring this estimate to the solution $\phi$ to the quasilinear wave equation satisfying our orthogonality conditions, however, is met with several difficulties, such as (i) quasilinearity, (ii) existence of a trapped null geodesic (traveling around the collar $\set{\rho = 0}$ in case of $\calC$), (iii) existence of zero and negative eigenvalues of $\underline{H}$ (what we referred to as linear obstructions to decay) and (iv) nonstationarity of the profile $\calQ$.

\item {\it (Modulation theory and vectorfield method)} Standard modulation theory \cite{Stuart1, Weinstein1} is based on the standard $t = X^{0}$-foliation, whose leaves are flat spacelike hypersurfaces; the method needs to be adapted to the foliation $\sigma$ used in our profile construction. Similarly, the standard $r^{p}$-method utilizes a foliation consisting of \emph{non}-moving asymptotically null leaves \cite{DR1, Schlue1, Moschidis1}, which needs to be adapted to our foliation $\sigma$ of moving asymptotically null leaves. The presence of linear obstructions to decay (i.e., zero and negative eigenvalues for $\underline{H}$) also needs to be incorporated.
\end{itemize}

In Sections~\ref{sec:profileintro}--\ref{subsec:introoutlinerp}, we describe the main ideas in this paper for resolving the above difficulties.

%%%%%%%%%%%%%%%%%%%%%%
%%%%%%%%%%%%%%%%%%%%%%
\subsection{Profile, Foliation and Gauge Construction}\label{sec:profileintro}
%%%%%%%%%%%%%%%%%%%%%%
%%%%%%%%%%%%%%%%%%%%%%
Here we describe our profile $\calQ$ and foliation $\tau$, as well as the gauge we use to express our solution as a scalar function $\psi$ on $\calQ$; these constructions make the basic decomposition \eqref{eq:basic-decomp} precise. Since this decomposition is needed for the discussion of other parts of our proof, we shall give the full construction here.

Let $\xi = \xi(\sigma)$ and $\ell = \ell(\sigma)$ be two functions on an interval  in $\bbR$ with values in $\bbR^n$ and $|\ell|,|\dotxi|<1$. 
%%%%%%%%%%%%%
%%%%%%%%%%%%%
By a slight abuse of notation, we will always view $\xi$ and $\ell$ as vectors both in $\bbR^n$ and in $\bbR^{n+1}$, using the embedding of $\bbR^{n}$ in $\bbR^{n+1}$ given by $X'\mapsto (X',0)^\intercal$. 
Recall that $\barcalC$ denotes the Riemannian catenoid in $\bbR^{n+1}$, and  $\calC=\bbR\times \barcalC$ the product Lorentzian catenoid in $\bbR^{1+(n+1)}$.
%%%%%%%%
%%%%%%%%
Given two functions $\xi(\cdot)$ and $\ell(\cdot)$ as above, let
%%%%%%%
%%%%%%%
\begin{align}\label{eq:calCsigmadefintro1}
\begin{split}
\calC_{\sigma}:=\{\Lambda_{-\ell(\sigma)} X~\vert~ X\in \calC\}+\pmat{0\\\xi(\sigma)-\sigma\ell(\sigma)}.
\end{split}
\end{align}
Note that if $\dotell\equiv0$ and $\xi(\sigma)\equiv \sigma\ell+a_0$ for a constant vector $a_0\in \bbR^n\subseteq\bbR^{n+1}$, then $\calC_\sigma$ is the Lorentzian catenoid obtained by boosting $\calC$ by $\Lambda_{-\ell}$ and then translating the result by $a_0$.

We will assume that $|\ell|,|\dotxi|<1$ and that $|\ell(0)|,|\xi(0)|$, and $|\dotell|$ are sufficiently small\footnote{The smallness assumptions are not essential and are a consequence of how we have set things up. The smallness requirement on $|\dotell|$ is to guarantee that the curve $(\sigma,\xi-\gamma R\ell)$ is timelike. The smallness conditions on $|\ell(0)|$ and $|\xi(0)|$ are so that $\tilbsUpsigma_0$ contains $\scC_{-R}$ in the interior of its future. If we remove these assumptions we simply need to take $R$ larger and replace $R-2$ by $R-C$ for a larger constant $C$ in the definition of $\scC_{-R}$. In our applications the smallness of the initial data and the bootstrap assumptions imply all the smallness conditions required here.}. We will first define a foliation of the interior of the ambient cone (here and below we use the notation $\Xbar=(X^1,\dots,X^n)$)
%%%%%%%
%%%%%%%
\begin{align*}
\begin{split}
\scC_{-R} =\{X\in\bbR^{1+(n+1)}~\vert~  X^0+R-2=|\Xbar|\}
\end{split}
\end{align*}
as $\cup_\sigma \bsUpsigma_\sigma$, and then define the profile of our solution on the leaf $\bsUpsigma_\sigma$ to be $\calC_\sigma\cap\bsUpsigma_\sigma$.  Note that we will restrict attention to compactly supported perturbations\footnote{This simplifying assumption is not essential for the proof. See the third comment following the statement of Theorem~\ref{thm:main-0}.} of $\calC$ (in a suitable sense to be described below), so by finite speed of propagation we already know the form of our solution in the exterior of $\scC_{-R}$. The leaves $\bsUpsigma_\sigma$ will be chosen to be asymptotically null, more precisely hyperboloidal, away from the moving center $\xi(\sigma)$. To define this foliation, we first fix reference hyperboloids defined as (here $\gamma$ is evaluated at~$\sigma$)
%%%%%%%
%%%%%%%
\begin{align*}
\begin{split}
\calH_\sigma = \{y=(y^0,y',y^{n+1})~\vert~y^0-\gamma^{-1}\sigma+R=\sqrt{|y'|^2+1}\}.
\end{split}
\end{align*}
In general, we will denote the restriction to $\{X^{n+1}=S\}$ by an underline, so (a similar construction can be carried out with respect $\{X^{n+1}=-S\}$)
%%%%%%%
%%%%%%%
\begin{align*}
\begin{split}
\barcalH_\sigma = \calH_\sigma\cap\{y^{n+1}=S\}.
\end{split}
\end{align*}
The boosted and translated hyperboloids are denoted by $\tilbsUpsigma_\sigma$, that is (here $\ell$ and $\xi$ are evaluated at~$\sigma$),
%%%%%%%
%%%%%%%
\begin{align}\label{eq:tilSigmadefintro1}
\begin{split}
\tilbsUpsigma_\sigma=\big(\Lambda_{-\ell}\calH_\sigma+(0,\xi-\sigma\ell)^\intercal\big)=\{X~\vert~X^0-\sigma+\gamma R=\sqrt{|X'-\xi+\gamma  R\ell|^2+1}\}.
\end{split}
\end{align}
The restriction of $\tilbsUpsigma_\sigma$ to $\{X^{n+1}=S\}$ is denoted by (here $x=(x^0,x')\in \bbR\times \bbR^{n}$ denote the rectangular coordinates on $\{X^{n+1}=S\}$)
%%%%%%%
%%%%%%%
\begin{align*}
\begin{split}
\bartilbsUpsigma_\sigma:=\tilbsUpsigma_\sigma \cap \{X^{n+1}=S\}=\{x~\vert~x^0-\sigma+\gamma R=\sqrt{|x'-\xi+\gamma  R\ell|^2+1}\}.
\end{split}
\end{align*}
%%%%%%%%%%%%
%%%%%%%%%%%%
\begin{remark}
Let ${\underline{\scC}}_{-R}=\{(x^0,x',S)~\vert~x^0+R-2=|x'|\}$. The fact that $(\sigma,\eta(\sigma))$, with $\eta=\xi-\gamma R\ell$, is a timelike curve (that is, $|\doteta|<1$) implies that $\cup_{\sigma\geq0}\bartilbsUpsigma_\sigma$ gives a foliation of the region $\scR:=\{(x^0,x')\in\{X^{n+1}=S\}~\vert~x^0+\gamma(0)R\geq \sqrt{|x'-\eta(0)|^2+1}\}$, which contains ${\underline{\scC}}_{-R}$ because we have assumed that  $|\xi(0)|$ and $|\ell(0)|$ are small ($\scR$ is also contained in a slightly larger cone ${\underline{\scC}}_{-(R+3)}$). Indeed, if $(x^0,x')\in\scR$ belongs to $\bartilbsUpsigma_{\sigma_2}\cap\bartilbsUpsigma_{\sigma_1}$, $\sigma_2>\sigma_1$, then 
%%%%%%%
%%%%%%%
\begin{align*}
\begin{split}
\sigma_2-\sigma_1&=\sqrt{|x'-\eta(\sigma_1)|^2+1}-\sqrt{|x'-\eta(\sigma_2)|^2+1}\leq \big||x'-\eta(\sigma_1)|-|x'-\eta(\sigma_2)|\big|\leq |\eta(\sigma_2)-\eta(\sigma_1)|\\
&<\sigma_2-\sigma_1,
\end{split}
\end{align*}
which is impossible. Here to pass to the last inequality we have used $|\doteta|<1$. It follows that the map $U:(\sigma,y)=(\sigma,y^0,y')\mapsto \Lambda_{-\ell}y+(0,\xi-\sigma\ell)$ from $\cup_{\sigma\geq0}\barcalH_\sigma$ is a diffeomorphism to its image. To see that it covers all of $\scR$, given $x=(x^0,x')\in\scR$ choose $\sigma_0>x^0+R+5$ and note that $x$ lies between $\bartilbsUpsigma_0=U(\barcalH_0)$ and $\bartilbsUpsigma_{\sigma_0}=U(\barcalH_\sigma)$, and since $U$ is a diffeomorphism onto its image, $x$ must lie on $\bartilbsUpsigma_\sigma$ for some $\sigma\in[0,\sigma_0)$. It follows from this that $\tilbsUpsigma_\sigma$ foliate a region containing $\scC_{-R}$.
\end{remark}
%%%%%%%%%%%%
%%%%%%%%%%%%
We introduce a smoothed out version of the minimum function $\frakm:\bbR\times\bbR\to \bbR$ in the following way. We start by fixing a small number $\delta_{1} > 0$ and a smooth function $\tilde{\frakm} : \bbR \to \bbR$, which equals $0$ for $x < - \delta_{1}$, equals $x$ for $x > \delta_{1}$, and satisfies $\tilde{\frakm}' \in [0, 1]$. We then set $\frakm(x, y) := x + \tilde{\frakm}(y-x)$. By construction, all derivatives of $\frakm$ are bounded and
%%%%%%%
%%%%%%%
\begin{align*}
\begin{split}
\frakm(x,y)= \min(x,y)\qquad \mathrm{when~}|x-y|>\delta_1.
\end{split}
\end{align*}
Define $\sigma,\sigma_\temp: \{-S\leq X^{n+1}\leq S\} \to \bbR$ by 
%%%%%%%
%%%%%%%
\begin{align}\label{eq:sigmadefintro1}
\begin{split}
&\sigma_\temp(X)=\sigma'\qquad \mathrm{if~}X\in \tilbsUpsigma_{\sigma'},\\
&\sigma(X)= \frakm(X^0,\sigma_\temp(X)).
\end{split}
\end{align}
Finally let 
%%%%%%%
%%%%%%%
\begin{align}\label{eq:Sigmadefintro1}
\begin{split}
\bsUpsigma_{\sigma'}=\{X~\vert~\sigma(X)=\sigma'\},\qquad \barbsUpsigma_{\sigma'} = \bsUpsigma_{\sigma'}\cap \{X^{n+1}=S\}.
\end{split}
\end{align}
The transition from $\tilbsUpsigma_\sigma$ to $\bsUpsigma_\sigma$ corresponds to considering an asymptotically null (more precisely, hyperboloidal) foliation only starting at a large radius from the center $\xi(\sigma)$. Note that $\frakm$ has been chosen so that $\cup_\sigma\bsUpsigma_\sigma$ gives a smooth foliation of a region containing $\scC_{-R}$. Indeed, as we have seen, the level sets of $\sigma_\temp$ and $X^0$ provide such foliations, and $d\sigma_\temp$ and $dX^0$ are full rank. Since
%%%%%%%
%%%%%%%
\begin{align*}
\begin{split}
d\sigma = (\partial_{x}\frakm) dX^0+ (\partial_y\frakm) d\sigma_\temp,
\end{split}
\end{align*}
and we have $\partial_{x} \frakm, \partial_{y} \frakm \geq 0$ (since $\tilde{\frakm}' \in [0, 1]$) and $\partial_{x} \frakm + \partial_{y} \frakm = 1$ by construction, we see that $d \sigma$ is also full rank. The hyperboloidal (where $X^0\geq \sigma_\temp+\delta_1$) and flat (where $\sigma_\temp\geq X^0+\delta_1$) parts of these surfaces are denoted by $\bsUpsigma_\sigma^\hyp, \barbsUpsigma_\sigma^\hyp$ and $\bsUpsigma_\sigma^\flatt, \barbsUpsigma_\sigma^\flatt$ respectively. We have thus constructed the \emph{foliation} $\sigma$ adapted to $\xi, \ell$. We define our \emph{profile} as the hypersurface
\begin{align*}
\begin{split}
\calQ := \cup_{\sigma} \Sigma_{\sigma}, \quad \hbox{ where } \Sigma_\sigma:=\calC_\sigma\cap \bsUpsigma_\sigma.
\end{split}
\end{align*}

Next, we \emph{fix a gauge}, that is, describe a parameterization (or embedding) of the VMC hypersurface $\calM$ and a way to measure its deviation from the the profile $\calQ = \cup_\sigma \Sigma_\sigma$.
We will do this by fixing a vector $$N:\cup_{\sigma}\Sigma_\sigma\to\bbR^{1+(n+1)},$$ and defining \emph{the perturbation} $\psi:\cup_{\sigma}\Sigma_\sigma\to \bbR$
by the requirement that (later we will further decompose $\psi$ as in Item (3) of Section~\ref{subsubsec:introoverallscheme}; see also Section~\ref{subsec:introfirstorderandcodim})
%%%%%%%
%%%%%%%
\begin{align}\label{eq:psidefintro1}
\begin{split}
p+\psi(p)N(p)\in \calM,\qquad \forall~ p\in \cup_{\sigma}\Sigma_\sigma.
\end{split}
\end{align}
Under suitable smallness assumptions on the perturbation, this condition determines $\psi$ uniquely (see Lemma~\ref{rem:normalneighborhood}).
If $\dotell\equiv0$ and $\dotxi\equiv \ell$, from the second variation of the area we would expect  that the relevant perturbations are those which are in the direction  of the (spacetime) normal to~$\Sigma_\sigma$. In general, since $\xi$ and $\ell$ do not necessarily obey $\dotell=0$ and $\dotxi=\ell$, there will be additional errors. Nevertheless, the most natural geometric choice for $N$ still seems to be the normal to $\cup_{\sigma}\Sigma_\sigma$, or perhaps $\Lambda_{-\ell}n$, where $n$ is the normal to the straight Lorentzian catenoid~$\calC$. However, for reasons that will be discussed below we choose to work with a less geometric $N$ which is defined as follows. First, if $\Sigma_\sigma\ni p=\Lambda_{-\ell}q+(0,\xi(\sigma)-\sigma\ell(\sigma))^\intercal$ for some $q$ in $\calC$, let $n_\wp(p)=\Lambda_{-\ell(\sigma)}n(q)$, where $n(q)$ is the normal to $\calC$ at $q$. 
Let 
%%%%%%%
%%%%%%%
\begin{align}\label{eq:tilNdefintro1}
\begin{split}
\tilN=\chi \tilN_{\mathrm{int}}+(1-\chi)\frac{\partial}{\partial X^{n+1}}
\end{split}
\end{align}
where $\tilN_{\mathrm{int}}$ is the normal to $\Sigma_{\sigma}$ viewed as a subspace of $\bsUpsigma_{\sigma}$, and where $\chi$ is some cutoff function which is equal to one in $\calC_\sigma\cap\bsUpsigma_\sigma^\flatt$ and equal to zero in $\calC_\sigma\cap\bsUpsigma_\sigma^\hyp$. We then define $N$ to be parallel to $\tilN$ and such that $\bfeta(n_\wp,N)=1$:
%%%%%%%
%%%%%%%
\begin{align}\label{eq:Ndefintro1}
\begin{split}
N\parallel \tilN,\qquad \bfeta(n_\wp,N)=1.
\end{split}
\end{align}

A few remarks are in order about this gauge choice. 
\begin{itemize}[leftmargin=*]
  \item[{\bf{1.}}] In the exterior hyperboloidal region, $N$ is parallel to $\frac{\partial}{\partial X^{n+1}}$. This choice is motivated by the fact that in this region the catenoid looks almost like a hyperplane, so we are in fact parameterizing the VMC hypersurface $\calM$ as a graph over a hyperplane. The advantage is that this simplifies the derivation of the equations and the form of the nonlinear terms. As will become clear in the course of the proof of our main theorem, the precise structure of the nonlinearity is important only in this exterior region where we will be able to treat the difference between a hyperplane and a catenoid perturbatively. We will come back to the normalization of the length of $N$. 
  
  \item[{\bf{2.}}] In the interior our choice of $N$ is crude, but since $\ell$ and $\xi-\sigma\ell$ are small, it is still close to the geometric normal $n_{\wp}$. Our choice in this region is consistent with our general philosophy that besides some spectral information on the linearized operator and appropriate modulation equations for the parameters (which will be a consequence of our first order formulation and orthogonality conditions), precise structures are not so important in the interior region. 
  
  \item[{\bf{3.}}] Finally, the reason for the length normalization of $N$ is that we want the linear part of the equation satisfied by $\psi$ to be (except for errors coming from $\dotell$ and $\dotxi-\ell$ not vanishing) 
%%%%%%%
%%%%%%%
\begin{align*}
\begin{split}
\Box \psi +|\secondff|^2\psi,
\end{split}
\end{align*}
where $\Box$ and $\secondff$ denote, respectively, the wave operator and second fundamental form of $\calC_\sigma$, in the case where $\dotell\equiv0$ and $\dotxi\equiv\ell$. This is important because $\Box+|\secondff|^2$ is precisely the operator $-\partial_{t}^{2} + \underline{H}$ conjugated by the Lorentz transform with parameter $\ell$ (after a suitable translation).
\end{itemize}

To summarize, our profile is defined as $\calQ = \cup_\sigma \Sigma_\sigma$ with $\Sigma_\sigma=\calC_\sigma\cap\bsUpsigma_\sigma$, and $\calC_\sigma$ and $\bsUpsigma_\sigma$ as defined in \eqref{eq:calCsigmadefintro1}, \eqref{eq:tilSigmadefintro1}, \eqref{eq:sigmadefintro1}, \eqref{eq:Sigmadefintro1}, and the perturbation is described by a scalar function $\psi:\cup_\sigma \Sigma_\sigma\to \bbR$ defined by \eqref{eq:psidefintro1}, \eqref{eq:tilNdefintro1}, \eqref{eq:Ndefintro1}. We will denote the hyperboloidal and flat parts of the profile by $\calC_{\hyp}:=\{X^0\geq \sigma_\temp(X)+\delta_1\}$ and $\calC_{\flatt}:=\{X^0\leq \sigma_\temp(X)-\delta_1\}$ respectively. 
%%%%%%%%%%%%
%%%%%%%%%%%%
\begin{remark}
The following simplified picture is helpful when thinking about the foliation and the definition of the profile. Imagine the scenario where we want to decompose a solution $u$ of a semilinear equation $\Box u = F(u)$ in terms of a soliton $Q$ and a remainder $\psi$. Suppose the equation is translation and Lorentz invariant, and let $Q_{\xi,\ell}$ denote the translated and boosted soliton. We foliate the domain by leaves which are flat up to a radius of order $R$ about $\xi(\tau)$, and then become asymptotically null and approach the cone through $(-R,0)^\intercal$ translated by $\xi(\tau)$ and boosted by $\ell(\tau)$, as in the following figure
\begin{center}
\begin{tikzpicture}[scale=0.8,transform shape]
  \draw (-4,0) -- (4,0);
  \draw[->] (0,0) -- (0,3.5) ;
   \coordinate  (A) at (0,0);
\coordinate  (B) at (0.25,0.4);
\coordinate  (C) at (0.5,1.2);
\coordinate  (D) at (0.75,3);
\draw[thick] plot [smooth] coordinates { (A) (B) (C) (D)};
\draw[thick] plot [smooth] coordinates { (-1,0) (A) (1,0)};
\draw[thick] plot [smooth] coordinates { (1,0) (2.25,0.7) (3.35,1.9)};
\draw[thick] plot [smooth] coordinates { (-1,0) (-2.05,0.9) (-3.15,2.3)};
\draw[thick] plot [smooth] coordinates { (-0.75,0.4) (B) (1.25,0.4)};
\draw[thick] plot [smooth] coordinates { (1.25,0.4) (2.4,1.1) (3.6,2.5)};
\draw[thick] plot [smooth] coordinates { (-0.75,0.4) (-1.8,1.3) (-2.9,2.7)};
\draw[thick] plot [smooth] coordinates { (-0.5,1.2) (C) (1.5,1.2)};
\draw[thick] plot [smooth] coordinates { (1.5,1.2) (2.65,2.1) (3.75,3.5)};
\draw[thick] plot [smooth] coordinates { (-0.5,1.2) (-1.55,2.1) (-2.65,3.5)};
\node[above] at (D) {$\xi$};
\node[right] at (3.75,3.5) {$\bsUpsigma_\tau$};
\node[right] at (3.35,1.9) {$\bsUpsigma_0$};
\node[right] at (4,0) {$x^0=0$};
\end{tikzpicture}
\end{center}
Our profile construction corresponds to decomposing the solution as $u=Q_{\xi(\tau),\ell(\tau)}+\psi$ on the leaf~$\bsUpsigma_\tau$.
\end{remark}
%%%%%%%%%%%%
%%%%%%%%%%%%

%%%%%%%%%%%%
%%%%%%%%%%%%

\subsection{First-Order Formulation, Modulation Equations and Selection of a Codimension One Set of Initial Data} \label{subsec:introfirstorderandcodim}

The role of the first-order formulation is to derive the evolution equations for the modulation parameters $\xi$ and $\ell$. The modulation parameters are fixed by imposing the matching number of ``orthogonality conditions'' on the perturbation.  The orthogonality conditions also guarantee that the perturbation stays away from the kernel of the linearized operator. Our approach is based on that of \cite{Stuart1}, which is in turn motivated by \cite{Weinstein1}. 

The first order formulation is closely related to a Hamiltonian formulation of the original Euler-Lagrange equations. To arrive at an adequate first order formulation we need to fix a time function. In our case we have already discussed the foliation and we simply take the time function to be $\sigma$ in \eqref{eq:sigmadefintro1}. This is a degenerate choice because the level sets of $\sigma$ are asymptotically null.  To deal with this, we make use of the observation that the orthogonality conditions may be localized to a large compact set (see for instance \cite{GNT1}), and we impose conditions that involve the perturbation only on the flat part of $\Sigma_{\sigma}$. An implication of localizing the orthogonality conditions is that the perturbation enters linearly in the parameter ODEs. Since the derivatives of parameters also enter linearly in the equation for the perturbation (see Section~\ref{subsec:introoutlinerp}), some care is needed to avoid circularity in the estimates. The key here is that the linear contributions of the perturbation stem from the localization of the eigenfunctions to the complement of a large compact set. Hence, the spatial decay of the eigenfunctions furnishes extra smallness.

Two more technical issues deserve further explanation. In view of the gauge invariance of the problem, the choice of momentum variable for the first order formulation is not obvious.  The proper choice must be such that the orthogonality conditions result in non-degenerate first order ODEs for $\ell$ and $\xi$. We motivate our choice in Section~\ref{susec:momentumvariable}. The derivation of the equations in first order form is rather technical and occupies most of Section~\ref{sec:interior}.

Additionally, due to the quasilinear nature of the HMVC equation, sometimes more derivatives of the modulation parameters arise than we can a priori control in our bootstrap. In principle, it may be possible to use the hyperbolic structure of the equation to solve for the highest order time derivatives in terms of spatial derivatives of the perturbation, and to use integration by parts to avoid the loss of regularity (see for instance \cite[Section~4.1.3]{DKSW}). However, this approach would have to carefully exploit the structure of the equation, which becomes especially difficult in view of the complex form of the equations in the first order formulation. Instead, we modify the orthogonality conditions to obtain \emph{smoothing of the modulation parameters}. This is a robust approach that does not rely on the algebraic structures of the equations. The details are carried out in Section~\ref{subsec:modulationeqs}. Conceptually, we exploit the freedom that while the final values of the parameters are determined by the initial conditions for the HVMC equation, their trajectories are not. Technically, this is achieved by choosing the orthogonality conditions so that $\xi$ and $\ell$ satisfy ODEs of the forms $\dotell=S\calF_{\ell}$ and $\dotxi-\ell=S\calF_\xi$, where $\calF_\ell$ and $\calF_\xi$ depend on the perturbation and its derivatives, and $S$ is a smoothing operator in the time variable. We choose the integral kernel $k(\sigma, \sigma')$ of $S$ to be compactly supported in the range $\sigma' < \sigma$ to preserve the causality of the smoothed-out modulation equations (i.e., $\xi(\sigma), \ell(\sigma)$ are independent of the solution at future times $\sigma' > \sigma$). 

Finally we say a few words about the shooting argument discussed in Item (3) in Section~\ref{subsubsec:introoverallscheme}. The decomposition $\psi=a_{+}Z_{+}+a_{-}Z_{-}+\phi$ is derived in Section~\ref{subsec:unstableint}. The ODEs satisfied by $a_{\pm}$ are given in equation~\eqref{eq:wpoutline10}, and again involve a smoothing operator in the time variable. The trapping assumption is stated in equation~\eqref{eq:a+trap}. Note that this is at the level of the derivative of $a_{+}$. Finally, the standard topological argument (see for instance \cite{MRR1}) is described in Step 2a of the proof of Theorem~\ref{thm:main} in Section~\ref{subsec:proofofmaintheorem}. 
For more background on the construction of center-stable manifolds we refer to \cite{Schlag1,NakSchbook1}.

\subsection{Uniform Boundedness of Energy and Integrated Local Energy Decay} \label{subsec:ideas-iled}
We now discuss the ideas behind our proof of the uniform boundedness and integrated local energy decay estimates for $\phi$. In the case of the linearized equation $(-\partial_{t}^{2} + \underline{H}) \psi = f$ around the Lorentzian catenoid, both bounds follow from existing methods \cite{Tat1, MeTa, MMT1, MST}; see Section~\ref{subsec:iled-catenoid} below. The challenge is to extend these estimates to $\phi$ in our decomposition of the solution. Here, $\phi$ solves the equation
\begin{equation*}
	\calP \phi = f,
\end{equation*}
where $\calP$ is the linearized HMVC operator around $\calQ$ modulo terms involving $\dot{\ell}$ and $\dot{\xi}-\ell$, which are regarded as nonlinearities. The right-hand side $f$ consists of the profile error (i.e., failure of $\calQ$ to solve HVMC; this includes terms linear in $\dot{\ell}$ and $\dot{\xi}-\ell$) and quasilinear nonlinearity. We refer to \eqref{eq:phi1}, \eqref{eq:LEDcalPint1} (interior) and \eqref{eq:abstractexteq1} (exterior) for the precise expressions. We work under a trapping assumption for $a_{+}$ and suitable bootstrap assumptions on $\xi$, $\ell$, $a_{-}$ and $\phi$; see Section~\ref{sec:bootstrap}.

The proof of uniform boundedness of energy for $\phi$ follows by using the global almost stationary vectorfield $\bfT$ (see Section~\ref{sec:GNGC} for the definition) as a vectorfield multiplier, and using the orthogonality conditions to obtain coercivity of the spatial part of the operator $\calP$. To control higher $\bfT$-derivatives, we use $\bfT$ as a commuting vectorfield. Using the equation and elliptic regularity estimates, we may also control higher spatial derivatives. We refer to Proposition~\ref{prop:energyestimate1} for the precise statements and proofs.

The proof of integrated local energy decay for $\phi$ is significantly more difficult due to the reasons discussed in Section~\ref{subsec:ideas-outline}, including 
\begin{enumerate}
\item [(i)] quasilinearity (i.e., occurrence of nonlinear second-order terms),
\item [(ii)] trapping (i.e., existence of an unstable trapped null geodesics along $\set{\rho = 0}$),
\item [(iii)] eigenvalues of the stability operator (i.e., zero and negative eigenvalues of $\underline{H}$),
\item [(iv)] nonstationarity of $\calQ$.
\end{enumerate}
We seek to \emph{divide-and-conquer} these difficulties.

Our first main tool is a \emph{vectorfield multiplier argument} that resembles the proof for the base case $-\partial_{t}^{2} + \underline{H}$ on the Lorentzian catenoid $\calC$ in Section~\ref{subsec:iled-catenoid}, but adapted to our profile $\calQ$. This argument gives the desired integrated local energy decay estimate with an additional lower order term $\| \phi \|_{L^{2}(K)}$ on the right-hand side, where $K$ is a spacetime cylinder around the trajectory $\xi$.  For details, see the proof of Lemma~\ref{lem:phifar1} below. In particular, this argument takes care of issues (i) (quasilinearity) and (ii) (trapping), which are ``high time frequency'' issues. 

To handle the remaining issues we introduce our next key tool, the \emph{time smoothing operator} $P_{\leq N}$, where $N^{-1}$ is the smoothing scale (equivalently, $N$ is the time frequency localization scale). Our initial observation is that $\| \phi - P_{\leq N} \phi \|_{L^{2}(K)}$ is small compared to the left-hand side of integrated local energy decay if $N$ is sufficiently large (see Lemma~\ref{lem:LEDhighfreq1} below), so we only need to control $P_{\leq N} \phi$. We have thus reduced the problem to the consideration of only ``low time frequencies''!

The key benefit of $P_{\leq N}$ is that, by elliptic regularity (for the part of $\calP$ that does not involve $\bfT$), any potentially dangerous second order term $\partial^{2} P_{\leq N} \phi$ may be bounded in terms of $P_{\leq N} \phi$ and $\calP (P_{\leq N} \phi)$. Hence, \emph{the equation $\calP P_{\leq N} \phi = P_{\leq N} f + [\calP, P_{\leq N}] \phi$ may be thought of as }$\calP_{0} P_{\leq N} \phi = \hbox{(perturbative error)}$, where $\calP_{0}$ on the left-hand side is the operator obtained by conjugating $(-\partial_{t}^{2} + \underline{H})$ with the Lorentz transformation with parameter $\underline{\ell}$. In the context of the bootstrap argument, $\underline{\ell}$ would be the final velocity parameter.  This summarizes how issue~(iv) (nonstationarity of $\calQ$) shows up and gets resolved in our proof.

It remains to establish an integrated local energy decay estimate for $P_{\leq N} \phi$, which would in particular control $\| P_{\leq N} \phi \|_{L^{2}(K)}$. As discussed earlier, to obtain such a bound from the properties of $-\partial_{t}^{2} + \underline{H}$, we need $P_{\leq N} \phi$ to satisfy $2n+2$ orthogonality conditions on suitable time slices (in this case, they are boosts of $\{ X^{0} = const \}$ by $\underline{\ell}$). This is issue~(iii) (eigenvalues of the stability operator). Our idea is to use a suitable multiplier argument to \emph{transfer} our orthogonality conditions on $\Sigma_{\sigma}$ to the needed ones; see the proof of Proposition~\ref{prop:LED1}. We remark that, at this point, we need doubly integrable decay rates of $\dot{\xi} - \ell$ and $\dot{\ell}$ to control the error. 
This procedure also requires the right-hand side of the equation to be localized to the flat portion of $\Sigma_{\sigma}$. For this reason, we enact (in fact, twice for technical purposes) the so-called \emph{near-far decomposition} in our proof; see \eqref{eq:phifardef1}, \eqref{eq:calPphinear1}, \eqref{eq:psi-nearfar} and \eqref{eq:psi-nearnear}.

We end this part with a remark on the time smoothing operator $P_{\leq N}$. We define this operator as a smooth cutoff in time frequencies, where the Fourier transform is defined in suitable coordinates. However, unlike the Fourier transform in time, whose definition usually requires taking the Laplace transform first and then considering its analytic continuation, the time smoothing operator is easier to make sense of as an integral operator on physical space.

\subsection{Vectorfield Method for Moving Profile}\label{subsec:introoutlinerp} The final part of the scheme from Section~\ref{subsubsec:introoverallscheme} is proving pointwise decay for the perturbation. For this purpose we use the $r^p$-vectorfield method introduced in \cite{DR1}. This method combines an ILED estimate in a bounded region with vectorfield estimates outside a compact set to obtain pointwise decay. We refer to \cite[Sections 1.1--1.4]{Moschidis1} for a review of the history of the vectorfield method. The $r^p$ method applies to wave equations on asymptotically flat spacetimes. In its simplest form in \cite{DR1} it yields the (interior) pointwise decay rate $t^{-1}$ on $\bbR^{1+n}$. In \cite{Schlue1} the method was extended to give the decay rate $t^{-\frac{3}{2}+\delta}$. A further extension was obtained in \cite{Moschidis1} giving the rate $t^{-\frac{n}{2}}$. We refer to \cite{DR1,Moschidis1} for a general review of the method, and to \cite[Section 9.4]{Moschidis1} for an explanation of the scheme for the improved $t^{-\frac{n}{2}}$ decay. In this work we adapt the method from \cite{Moschidis1}. Our setup differs from the one in \cite{Moschidis1} in a few important respects which we now describe. 

The first new aspect is that our foliation is centered at the trajectory $\xi$ (see Section~\ref{sec:profileintro}). To deal with this, we introduce a null frame that is adapted to the dynamically constructed foliation. We then define our weighted multiplier and commutator vectorfields with respect to this null frame, with spatial weights that are measured from the moving center $\xi$. The remarkable fact is that the wave equation written in the moving null frame has the right structure for the application of the $r^p$-vectorfield method. In particular, because in general $\dotell\neq0$ and $\dotxi\neq \ell$, there will be new error terms with time decay in the wave equation itself, and in the multiplier and commutator identities. The important point is that these errors do not grow as $r\to\infty$, where $r$ denotes the distance to the moving center on a fixed leaf of the foliation (corresponding to \emph{null infinity}), so they can be estimated in our bootstrap argument.  Related to this issue, is the failure of the profile to be an exact solution of the HVMC equation. This implies that the radiation satisfies a wave equation with a source term with time decay. One of the main advantages of our foliation, and the adapted definition of the commutators, is that there is no spatial growth when the commutators fall on the source term.

Another difference of our setup with that of \cite{Moschidis1} is that our linearized operator has an order zero potential. Moreover, the elliptic part of the operator has a nontrivial kernel. These differences become relevant when using the improved decay of higher time derivatives of the perturbation to get improved decay for arbitrary derivatives. In \cite{Moschidis1} this is achieved by viewing the wave equation as an elliptic equation with the time derivatives as source terms, and applying global elliptic estimates. In our context we need a separate argument to deal with the zero order potential and the kernel. These arguments are presented in Lemma~\ref{lem:ellipticestimate1} and Corollary~\ref{cor:higherdL2decay}. The orthogonality conditions from the first order formulation are used here to guarantee that the projection of the perturbation on the kernel has sufficient decay. 

Our modified scheme in dimension $n=5$ gives the pointwise decay $t^{-\frac{9}{4}+\frac{\kappa}{2}}$, $\kappa>0$ arbitrary, for the perturbation. This is different from the rate $t^{-\frac{5}{2}}$ in \cite{Moschidis1}, and we now explain the reason for the discrepancy. Let $\phi$ denote the perturbation. An intermediate step in deriving pointwise decay is proving that $\|\partial^3\phi\|_{L^2}\lesssim t^{-3}$, when at least one of the derivatives is with respect to time. Global elliptic estimates are then applied in \cite{Moschidis1} to conclude that $\|\partial^3\phi\|_{L^2}\lesssim t^{-3}$ for arbitrary derivatives. A similar argument gives $\|\partial^2\phi\|_{L^2}\lesssim t^{-2}$. The $t^{-\frac{5}{2}}$ pointwise decay in \cite{Moschidis1} then follows from the Gagliardo-Nirenberg estimate $\|\phi\|_{L^\infty}\lesssim \|\partial^2\phi\|_{L^2}^{\frac{1}{2}}\|\partial^3\phi\|_{L^2}^{\frac{1}{2}}$. In our case, the equation for $\phi$ contains a source term that depends linearly on the parameter derivatives, which we denote by $\dotwp$. Spatial derivatives do not improve the time decay of this term, so we cannot hope to improve the decay of  $\|\partial^k\phi\|_{L^2}$ beyond the decay of $\dotwp$. On the other hand, the ODEs for the parameters can be used to bound $|\dotwp|$ by a small multiple of $\|\jap{r}^{-c_1}\phi\|_{L^2}$, $c_1<\frac{7}{2}$ (here $r$ denotes the distance to the center $\xi$). Using the elliptic estimates discussed in the previous paragraph (see Lemma~\ref{lem:ellipticestimate1}) we can estimate $\|\jap{r}^{-c_2}\phi\|_{L^2}$, $c_2<\frac{5}{2}$, by~$|\dotwp|$, where the restriction on $c_2$ comes from the order zero potential in the linear operator. Taking $c_1=c_2=\frac{5}{2}-\kappa$ we get the estimate $\|\jap{r}^{-\frac{5}{2}+\kappa}\phi\|_{L^2}\lesssim t^{-\frac{5}{2}+\kappa}$. This sharp estimate can then be used to obtain the non-sharp estimate $\|\partial^3\phi\|_{L^2}\lesssim t^{-\frac{5}{2}+\kappa}$. Combined with Gagliardo-Nirenberg we obtain the pointwise decay $t^{-\frac{9}{4}+\frac{\kappa}{2}}$. Note that if we used elliptic estimates with fractional derivatives (instead of weights) and a fractional Gagliardo-Nirenberg estimate, we could hope to obtain the decay rate $t^{-\frac{5}{2}+\kappa}$. Since the rate $t^{-\frac{9}{4}+\frac{\kappa}{2}}$ is already sufficient to close our bootstrap, we did not further complicate the argument by introducing fractional derivatives.
%%%%%%%%%%%%%%%%%

%%%%%%%%%%%%%
%%%%%%%%%%%%%%%%%%%%%%
%%%%%%%%%%%%%%%%%%%%%%
\subsection{Second Statement of the Main Result}\label{subsec:introsecondthm}
%%%%%%%%%%%%%%%%%%%%%%
%%%%%%%%%%%%%%%%%%%%%%
To state our main result more precisely, we first describe the initial data. Consider two functions
%%%%%%%
%%%%%%%
\begin{align}\label{eq:initialdata1}
\begin{split}
&\psi_0,\psi_1\in C^\infty_0(\barcalC),\qquad \supp~ \psi_0,\psi_1\subseteq \barcalC\cap\{|\Xbar|<R/2\},\\
&\sum_{j=0}^M(\|\jap{\rho}^{1+j}\partial_\Sigma^{1+j}\psi_0\circ F\|_{L^2(\sqrt{|g_\barcalC|}\ud\omega\ud\rho)}+\|\jap{\rho}^{1+j}\partial_\Sigma^{j}\psi_1\circ F\|_{L^2(\sqrt{|g_\barcalC|}\ud\omega\ud\rho)})\leq 1,
\end{split}
\end{align} 
where $M$ is a fixed large constant, $\partial_\Sigma$ denotes $\partial_\rho$ or $\jap{\rho}^{-1}\partial_\omega$, $\partial_\omega$ is a unit size derivative on $\bbS^{n-1}$, and $\ud \omega$ is the standard spherical measure on $\bbS^{n-1}$.
Using the notation introduced in Sections~\ref{subsec:Riemcat} and~\ref{sec:profileintro}, see \eqref{eq:Fdef} and \eqref{eq:tilNdefintro1}, consider
%%%%%%%
%%%%%%%
\begin{align}\label{eq:initialdata2}
\begin{split}
\Phi_0[\psi_0]= F+(\psi_0\circ F) N\circ F,\qquad \Phi_1[\psi_1]=(1,0)+(\psi_1\circ F)N\circ F.
\end{split}
\end{align}
We also let $\tilvarphi_\mu:=\chi \varphibar_\mu$ where the cutoff function $\chi$ is equal to one on $\barcalC\cap\{|\Xbar|<R/3\}$ and supported on $\barcalC\cap\{|\Xbar|<R/2\}$. As discussed earlier, our stability theorem holds under a codimension one condition on the initial data. This condition is given by the vanishing of a certain functional on $(\psi_0,\psi_1)$. But, as the exact form of the vanishing condition is a bit complicated to state at this point, we defer this until Section~\ref{sec:bootstrap}, and simply refer to \eqref{eq:codim1} in the statement of our main theorem. To be precise, the condition \eqref{eq:codim1} singles out a codimension one submanifold in the topology given by the norm in \eqref{eq:initialdata1}. The data set in Theorem~\ref{thm:main} are then parameterized as a graph over this submanifold through the function $b_0$ in the statement of Theorem~\ref{thm:main}. See also Remark~\ref{rem:codim1}.
%%%%%%%%%%%%
%%%%%%%%%%%%
\begin{theorem}\label{thm:main}
 Let $n\geq 5$, and consider $\Phi_0$, $\Phi_1$ as in \eqref{eq:initialdata1}, \eqref{eq:initialdata2}, and assume that $(\epsilon\psi_0,\epsilon\psi_1)$ satisfy \eqref{eq:codim1}. If $\epsilon\geq0$ is sufficiently small, then there exist $b_0\in\bbR$ with $|b_0|\lesssim 1$ and  $\Phi:\bbR\times I \times \bbS^{n-1}\to \bbR^{1+(n+1)}$ satisfying \eqref{eq:HVMC1}, such that $\Phi\vert_{\{t=0\}}=\Phi_0[\epsilon(\psi_0+b_0\tilvarphi_\mu)]$ and $\partial_t\Phi\vert_{\{t=0\}}=\Phi_1[\epsilon(\psi_1-\mu b_0\tilvarphi_\mu)]$. Moreover, there exist $\ell,\xi:\bbR\to \bbR^{n}$ satisfying  $|\ell|,|\dotxi|\lesssim \epsilon$ and
%%%%%%%
%%%%%%%
\begin{align*}
\begin{split}
|\dotell(\sigma)|, |\dotxi(\sigma)-\ell(\sigma)|\to0\qquad \mathrm{as}~\sigma\to\infty,
\end{split}
\end{align*}
such that the image of $\Phi$ can be parameterized as
%%%%%%%
%%%%%%%
\begin{align*}
\begin{split}
\cup_\sigma\Sigma_\sigma\ni p\mapsto p+\psi(p)N(p),
\end{split}
\end{align*}
with $\|\psi\|_{L^\infty(\Sigma_\sigma)}\to0$ as $\sigma\to\infty$. More precisely, there exists a positive $\kappa\ll1$ such that
%%%%%%%
%%%%%%%
\begin{align*}
\begin{split}
|\dotell(\sigma)|,|\dotxi(\sigma)-\ell(\sigma)|\lesssim \epsilon \sigma^{-\frac{5}{2}+\kappa},\qquad \|\psi\|_{L^\infty(\Sigma_\sigma)}\lesssim \epsilon \sigma^{-\frac{9}{4}+\kappa},\qquad \mathrm{as}~\sigma\to\infty.
\end{split}
\end{align*}
\end{theorem}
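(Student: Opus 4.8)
The proof is organized as a single continuity argument in the foliation time $\sigma$, with a topological shooting step layered on top to select the codimension-$1$ data; it realizes the four-part scheme of Section~\ref{subsubsec:introoverallscheme}. \emph{Step 1: setup.} First I would fix the profile $\calQ=\cup_\sigma\Sigma_\sigma$ and the gauge vectorfield $N$ exactly as in Section~\ref{sec:profileintro}, and invoke the local well-posedness and uniqueness result of \cite{AC2}, together with finite speed of propagation and the compact support assumption \eqref{eq:initialdata1}, to produce a local-in-$\sigma$ solution $\Phi$ of \eqref{eq:HVMC1} with the prescribed Cauchy data, parameterized as $p\mapsto p+\psi(p)N(p)$ for $p$ on the leaves $\Sigma_\sigma$. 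The normalization \eqref{eq:Ndefintro1} ensures that the linear part of the equation for $\psi$ is $\Box+|\secondff|^2$ (for $\calC_\sigma$) up to error terms proportional to $\dotell$ and $\dotxi-\ell$. The moving asymptotically null leaves are not a convenience but a necessity: the merely polynomial decay of the catenoid towards the flat hyperplane would make any standard $t=X^0$ construction produce a non-integrable profile error.

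\emph{Step 2: first-order formulation, modulation, and the $a_\pm$ split.} Next I would pass to the first-order (Hamiltonian) formulation with time function $\sigma$ (Section~\ref{sec:interior}), with the momentum variable chosen as in Section~\ref{susec:momentumvariable} so that the $2n$ localized orthogonality conditions produce non-degenerate ODEs for $\xi$ and $\ell$. Crucially, the conditions are modified by a causal time-smoothing operator $S$ so that the parameter equations take the form $\dotell=S\calF_\ell$, $\dotxi-\ell=S\calF_\xi$, with $\calF_\ell,\calF_\xi$ depending on $\psi$ and its derivatives; this \emph{smoothing of the modulation parameters} (Section~\ref{subsec:modulationeqs}) sidesteps the loss of derivatives inherent to the quasilinear problem without appealing to the algebraic structure of the equations. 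I would then split $\psi=a_+Z_++a_-Z_-+\phi$ as in Section~\ref{subsec:unstableint}, so that $\phi$ obeys $2n+2$ orthogonality conditions, $a_\pm$ satisfy smoothed ODEs \eqref{eq:wpoutline10} with growing, resp. damping, linear parts, and impose the trapping assumption \eqref{eq:a+trap} on the derivative of $a_+$. Under the bootstrap assumptions of Section~\ref{sec:bootstrap} and this trapping assumption, the ODE analysis of $a_-$ and of $(\xi,\ell)$ yields the decay of $a_-$, $\dotell$ and $\dotxi-\ell$ in terms of weighted norms of $\phi$; the extra smallness needed to break the circular dependence between the parameter ODEs and the $\phi$ equation comes from the spatial decay of the eigenfunctions in the localized orthogonality conditions.

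\emph{Step 3: energy, ILED, and pointwise decay for $\phi$.} The wave equation for $\phi$ is $\calP\phi=f$ with $f$ the profile error plus quasilinear nonlinearity (see \eqref{eq:phi1}, \eqref{eq:LEDcalPint1}, \eqref{eq:abstractexteq1}). Uniform boundedness of the $\bfT$-energy follows by using the almost-stationary vectorfield $\bfT$ (Section~\ref{sec:GNGC}) as multiplier and commutator, with coercivity of the spatial operator supplied by the orthogonality conditions (Proposition~\ref{prop:energyestimate1}); higher spatial derivatives then come from the equation and elliptic regularity. The heart of the matter is integrated local energy decay for $\phi$, where quasilinearity, the trapped null geodesic along $\{\rho=0\}$, the zero and negative eigenvalues of $\HHbar$, and the nonstationarity of $\calQ$ all appear simultaneously. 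I would divide-and-conquer: (a) a vectorfield multiplier argument adapted to $\calQ$ (proof of Lemma~\ref{lem:phifar1}) gives ILED modulo a lower-order term $\|\phi\|_{L^2(K)}$, disposing of the high-time-frequency issues (quasilinearity, trapping); (b) the time-smoothing operator $P_{\leq N}$ reduces matters to $P_{\leq N}\phi$ (Lemma~\ref{lem:LEDhighfreq1}), where elliptic regularity for the non-$\bfT$ part of $\calP$ converts $\calP P_{\leq N}\phi = P_{\leq N}f+[\calP,P_{\leq N}]\phi$ into $\calP_0 P_{\leq N}\phi=(\text{perturbative error})$ with $\calP_0$ the Lorentz conjugate of $-\partial_t^2+\HHbar$; (c) a further multiplier argument transfers the $\Sigma_\sigma$ orthogonality conditions onto boosts of $\{X^0=\text{const}\}$, after which the linear ILED for $-\partial_t^2+\HHbar$ on the continuous-spectrum projection (Section~\ref{subsec:iled-catenoid}, from \cite{MMT1, MST} and the spectral analysis of \cite{F-CS, Tam-Zhou}) applies, the near-far decompositions \eqref{eq:phifardef1}, \eqref{eq:calPphinear1}, \eqref{eq:psi-nearfar}, \eqref{eq:psi-nearnear} having localized $f$ to the flat part. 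Finally the $r^p$-vectorfield method of \cite{DR1, Schlue1, Moschidis1}, adapted to the moving null frame centered at $\xi$ (Section~\ref{subsec:introoutlinerp}), upgrades ILED to pointwise decay; the zero-order potential and the kernel of the elliptic part are handled via the weighted elliptic estimates of Lemma~\ref{lem:ellipticestimate1} and Corollary~\ref{cor:higherdL2decay}, yielding (after a Gagliardo--Nirenberg interpolation) the rates $\|\psi\|_{L^\infty(\Sigma_\sigma)}\lesssim\epsilon\,\sigma^{-\frac{9}{4}+\kappa}$ and $|\dotell|,|\dotxi-\ell|\lesssim\epsilon\,\sigma^{-\frac{5}{2}+\kappa}$.

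\emph{Step 4: closing the bootstrap and shooting.} The estimates of Step~3, fed back into the $a_-$ ODE and the modulation equations, improve all bootstrap constants except the one governing $a_+$. For that last parameter I would run a standard topological shooting argument as in \cite{MRR1}: the map from the one free constant $b_0$ in the data \eqref{eq:initialdata2} to the exit behavior of $a_+$ is continuous, and a connectedness/degree argument forces the existence of a $b_0$ with $|b_0|\lesssim 1$ for which the trapping assumption \eqref{eq:a+trap} persists for all $\sigma\geq 0$ --- this is precisely the codimension-$1$ condition \eqref{eq:codim1}. Global existence on $\bbR\times I\times\bbS^{n-1}$ and the stated decay then follow by continuation. \emph{The main obstacle} is unquestionably Step~3(b): making the four-way divide-and-conquer work requires that the smoothed modulation equations of Step~2 deliver exactly the doubly-integrable decay of $\dotxi-\ell$ and $\dotell$ needed to absorb the ILED errors produced by the orthogonality transfer and by the nonstationarity of $\calQ$, a tight coupling that is only tractable in the faster-decaying regime $n\geq 5$ and must be unwound carefully inside the bootstrap.
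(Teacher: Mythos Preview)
Your proposal is correct and follows essentially the same approach as the paper: the four-part scheme (profile/gauge setup, first-order formulation with smoothed modulation and $a_\pm$ split, ILED via the divide-and-conquer with time-smoothing $P_{\leq N}$ and near-far decompositions, $r^p$ method adapted to the moving foliation) matches the paper's Sections~\ref{sec:interior}--\ref{sec:exterior}, and your Step~4 reduction to Propositions~\ref{prop:bootstrappar1} and~\ref{prop:bootstrapphi1} plus shooting is exactly the structure of the paper's proof in Section~\ref{subsec:proofofmaintheorem}. The only point you leave slightly implicit is the \emph{outgoing} property \eqref{eq:outgoing1}---that $\frac{d}{d\tau}q^2\geq\mu q^2$ once $|q|$ lies in the annulus $(\tfrac12\lambda,\lambda)$---which is what makes $\tau_{\mathrm{trap}}$ continuous in $q_0$ and hence makes the intermediate-value contradiction go through; but this is standard and you correctly cite \cite{MRR1}.
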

%%%%%%%%%%%%
%%%%%%%%%%%%
More precise decay estimates on $\psi$ and the parameters can be found in Propositions~\ref{prop:bootstrappar1} and~\ref{prop:bootstrapphi1}. We now make a few remarks about Theorem~\ref{thm:main}.
%%%%%%%%%%%%
%%%%%%%%%%%%
\begin{remark}\label{rem:parametersintro1}
It follows from the decay rate of $\dotell$ and $\dotxi-\ell$ that there exist $\abar,\ellbar\in\bbR^n$ such that $\ell(t)\to\ellbar$ and $\xi(t)\to \abar+ \ellbar t$ as $t\to\infty$. In this sense our theorem implies that the solution approaches a fixed, boosted and translated Lorentzian catenoid. The differential equations governing the evolution of the parameters are derived in Section~\ref{subsec:modulationeqs}.
\end{remark}
%%%%%%%%%%%%
%%%%%%%%%%%%
%%%%%%%%%%%%
%%%%%%%%%%%%
%%%%%%%%%%%%
%%%%%%%%%%%%
\begin{remark}\label{rem:codim1}
As mentioned earlier the codimension one condition of the data in Theorem~\ref{thm:main} is optimal. Since the functional $\calZ$ in \eqref{eq:codim1} depends differentiably on $(\psi_0,\psi_1)$ with respect to the norm in \eqref{eq:initialdata1}, condition \eqref{eq:codim1} determines a codimensional one submanifold in the ball of radius $\epsilon$ in this topology.  However, in this work, we do not pursue the question of uniqueness and continuous dependence of $b_0$ on the initial data $\psi_0$ and $\psi_1$.  As a result, we cannot infer that the set of initial data, considered in Theorem~\ref{thm:main} form a codimension one \emph{submanifold} in any topology. If $b_0$ is also a $C^1$ function, then our data set can be viewed as belonging to a codimensional one subset in the commonly used sense.
\end{remark}
%%%%%%%%%%%%
%%%%%%%%%%%%

\subsection{Further Discussions}\label{subsec:discussion}
%%%%%%%%%%%%%%%%%%%%%%
%%%%%%%%%%%%%%%%%%%%%%
Further discussion of related works and subjects are in order.

\subsubsection{Other prior works on the hyperbolic vanishing mean curvature equation}\label{subsubsec:morehistory}

Beyond the previously mentioned result \cite{AC2} on local existence for the HVMC equation for sufficiently smooth initial data, we point out the low regularity local well-posedness results \cite{Ettinger, AIT21}. 
We refer to \cite{Wong1} for the study of local well-posedness in relation to the action principle formulation.
The global nonlinear stability of hyperplanes under the HVMC evolution was considered in \cite{B1, Lin1, Stefanov11, Wong17}. Under symmetric perturbations the nonlinear stability of the Lorentzian catenoid was studied in \cite{KL1, DKSW} and that of the Lorentzian helicoid in \cite{Marchali1}. Simple planar travelling wave solutions to the HVMC equation were proven to be globally nonlinearly stable in \cite{AW20}.
Singularity formation has been analyzed in \cite{NT13, JNO15, Wong18, BMP1}.
For a discussion of the relevance of the HVMC equation in physics, we refer the reader to \cite{AC1, AC2, Hoppe13}.
The Lorentzian constant positive mean-curvature flow has been considered in \cite{Wong2}.

\subsubsection{Comparison with black hole stability}
The present paper concerns nonlinear asymptotic stability of a stationary solution to a multi-dimensional quasilinear wave equation without any symmetry assumptions. Despite obvious differences in the inherent complexities of the underlying PDEs, our main result may be formally compared with the recent colossal works \cite{DHRT1,KlSz1,KlSz2} on the nonlinear asymptotic stability of Kerr and Schwarzschild black holes as stationary solutions to the vacuum Einstein equation, which is a $(3+1)$-dimensional quasilinear wave equation, without any symmetry assumptions.

Our problem and the black hole stability problem share some important features. The nontrivial kernel of the linearized operator around the stationary solution necessitates modulation of some parameters and a suitable choice of gauge (i.e., a way to represent the solution among many equivalent descriptions). In the case of the Schwarzschild black hole, a codimension condition on the initial data naturally appears as in our problem \cite{DHRT1, KlSz1}. At the level of proofs, this paper and the above works share the same basic strategy for proving the pointwise decay of the perturbation, namely, to first prove an integrated local energy decay (or Morawetz) estimate and the uniform boundedness of energy, then to establish pointwise decay by some version of the vectorfield method. Indeed, this powerful strategy was mostly developed in works with the black hole stability problem in mind -- see \cite{DR1}, and also \cite{Tat2, MTT, OlSt}.

Needless to say, our problem is simpler compared to the black hole stability problem in a number of aspects, such as the spatial dimension, the gauge choice (compare our choice described in Section~\ref{sec:profileintro} with \cite{DHRT1, KlSz1, KlSz2}), and the analysis of the linearized problem (compare the discussion in Section~\ref{subsec:Riemcat} with \cite{HHV1,DHR1,ABBM1,HKW1}). Nevertheless, in this paper we satisfactorily resolve a key issue that is shared by many soliton stability problems, but not with the black hole stability problem -- this is the issue of \emph{modulation of the translation and boost parameters}. In our problem, as well as in many soliton stability problems, the stationary solution (the catenoid or the soliton) is defined on a natural ambient spacetime, and it is of interest to track the evolution of the translation and boost parameters in relation to the ambient spacetime. In contrast, in general relativity there is no notion of an ambient spacetime, and the analogous issue is subsumed in the choice of the gauge in the black hole stability problem. As discussed earlier, this issue is resolved in our work by a new construction of a dynamic profile representing a ``moving catenoid,'' the use of localized orthogonality conditions that enables us to utilize a suitable first-order formulation of the equation to derive the evolution equations for the parameters, a robust scheme for establishing integrated local energy decay for perturbations of the dynamic profile from the case of the stationary solution, as well as an adaptation of the $r^{p}$-method for the dynamic profile. In view of the pervasiveness of the same issue in soliton stability problems, we are hopeful that our ideas might be useful elsewhere as well.

\subsubsection{Soliton stability problem for semilinear dispersive equations}
There is a vast literature on the problem of stability of solitons for \emph{semilinear} dispersive equations; for those who are interested, we recommend the excellent survey articles of Kowalczyk--Martel--Mu\~{n}oz \cite{KMM17} and Tao \cite{Tao09} as a good starting point. In relation to this rich and beautiful subject, our aim in this paper is to specifically tackle those challenges that arise from the \emph{quasilinearity} of the equation. Our aim, in turn, is motivated by the conjectured asymptotic stability of some well-known topological solitons solving quasilinear wave equations, such as the Skyrmion for the Skyrme model \cite{MantonSutcliffe}.

%%%%%%%%%%%%%%%%%%%%%%
%%%%%%%%%%%%%%%%%%%%%%
\subsection{Outline of the Paper}\label{subsec:introremainingoutline}
%%%%%%%%%%%%%%%%%%%%%%
%%%%%%%%%%%%%%%%%%%%%%
The remainder of this paper is organized as follows. Section~\ref{sec:prelim} contains the notation and some preliminary results. In Section~\ref{sec:interior} we derive a first order formulation of our problem in terms of a vector unknown $\vecpsi$, for a given set of parameters $\xi$ and $\ell$. We also state the corresponding orthogonality conditions and carry out a further decomposition of $\vecpsi=\vecphi+a_{+}\vecZ_{+}+a_{-}\vecZ_{-}$, by separating the contribution of growing mode of the linearized operator. For this decomposition and our choice of orthogonality conditions, we then derive the modulation equations satisfied by $\xi$, $\ell$, and $a_{\pm}$.

In Section~\ref{sec:coordinates}, we give a more detailed description of the foliation, various coordinates, and vectorfields, again for a given choice of parameters $\xi$ and $\ell$. We also derive expressions for the relevant operators in terms of the described coordinates and vectorfields.

The bootstrap assumptions are stated in Section~\ref{sec:bootstrap}, where, in Propositions~\ref{prop:bootstrappar1} and~\ref{prop:bootstrapphi1} we also give more precise decay estimates than the ones given in Theorem~\ref{thm:main}. The proof of Propositions~\ref{prop:bootstrappar1} and~\ref{prop:bootstrapphi1} will occupy the remaining sections of the paper, and in Section~\ref{sec:bootstrap} we further show that Theorem~\ref{thm:main} follows from the bootstrap propositions.

Section~\ref{sec:parametercontrol} contains the proof of Proposition~\ref{prop:bootstrappar1} which closes the bootstrap assumptions for all parameters except the growing mode $a_{+}$. For the latter, a separate shooting argument is needed, which is carried out in the proof of Theorem~\ref{thm:main} in the last part of Section~\ref{sec:bootstrap}.

The proof of Proposition~\ref{prop:bootstrapphi1} is contained in Sections~\ref{sec:LED} and~\ref{sec:exterior}. Section~\ref{sec:LED} contains a general local energy decay at the linear level. In view of the calculations in Section~\ref{sec:coordinates} and the bootstrap assumptions in Section~\ref{sec:bootstrap}, the assumptions on the linear operator in this estimate are satisfied for us. In Section~\ref{sec:exterior} we use the linear result of Section~\ref{sec:LED}  to prove nonlinear energy and local energy decay estimates. Using these, we also prove $r^p$ weighted energy estimates, which in turn are used to prove decay estimates and Proposition~\ref{prop:bootstrapphi1}.
%%%%%%%%%%%%%%%%%%%%%%
%%%%%%%%%%%%%%%%%%%%%%
%%%%%%%%%%%%%%%%%%%%%%
\section{Preliminaries}\label{sec:prelim}
%%%%%%%%%%%%%%%%%%%%%%
%%%%%%%%%%%%%%%%%%%%%%
%%%%%%%%%%%%%%%%%%%%%%
%%%%%%%%%%%%%%%%%%%%%%
%%%%%%%%%%%%%%%%%%%%%%
\subsection{Notation and Conventions}
%%%%%%%%%%%%%%%%%%%%%%
%%%%%%%%%%%%%%%%%%%%%%
Here we collect some of the notation and conventions that are used repeatedly in this work. This is meant as a reference for the reader, and some of the precise definitions will appear only later in the paper. Some of the notation and conventions which are used more locally in various parts of the paper do not appear in this list.
 \subsubsection{\underline{The profile and the main variables}} $\barcalC$ denotes the Riemannian catenoid with its standard embedding in $\bbR^{n+1}$, and $\calC = \bbR \times \underline{\calC}$ the product Lorentzian catenoid in $\bbR^{1+(n+1)}$. The boost and translation\footnote{To be precise, to leading order $\xi\approx t\ell+a$ where $a$ is a fixed translation parameter.} parameters are denoted by $\ell$ and $\xi$ respectively, where $|\ell|,|\dotxi|<1$. In our applications we will always have $|\ell|,|\dotxi|\ll1$. Here, and below, the dot over a parameter denotes the time derivative. We will also sometimes use a prime $'$ to denote the derivative of a function of a single variable (such as time). Given $\ell$ and $\xi$ as above, the boosted catenoid $\calC_\sigma$ and $\bsUpsigma_\sigma$ are defined as in \eqref{eq:calCsigmadefintro1} and \eqref{eq:Sigmadefintro1}, and the profile is $\cup_\sigma\Sigma_\sigma$, $\Sigma_\sigma=\calC_\sigma\cap\bsUpsigma_\sigma$. The almost normal vector to the profile is denoted by $N:\cup_\sigma\Sigma_\sigma\to \bbR^{1+(n+1)}$, and the perturbation, defined in \eqref{eq:psidefintro1}, by $\psi:\cup_\sigma\Sigma_\sigma\to \bbR$. In the first order formulation, $\vecpsi=(\psi,\dotpsi)$ denotes\footnote{When there is no risk of confusion, we identify row and column vectors in this work. So, for instance, we use both $(\psi,\dotpsi)$ and $(\psi,\dotpsi)^\intercal$ for $\vecpsi$.} the vector form of the perturbation, where $\dotpsi$ is the momentum variable and roughly corresponds to the time derivative of $\psi$. Corresponding to the negative eigenvalue of the linearized operator $-\Delta_\barcalC-|\secondff|^2$ (with eigenfunction $\fy_\mu$, see Section~\ref{subsec:Riemcat}) there are two projection coefficients in the first order formulation: $a_{+}$ denotes the unstable (growing mode) coefficient corresponding to the eigenfunction, and $a_{-}$ the stable (decaying mode) coefficient. The remainder, after subtracting the contribution of the corresponding eigenfunction from $\vecpsi$, is denoted by $\vecphi$ at the vector level (in the first order formulation) and by $\phi$ at the scalar level (see Section~\ref{subsec:unstableint}). We will denote the flat and hyperboloidal parts of the profile by $\calC_{\flatt}:=\{X^0\geq \sigma_\temp(X)+\delta_1\}$ and $\calC_{\hyp}:=\{X^0\leq \sigma_\temp(X)-\delta_1\}$ respectively. We often refer to the region inside a large compact set contained in $\calC_\flatt$ as the interior, and the complement of this region as the exterior.
 \subsubsection{\underline{Parameter derivatives}} We will use $\dotwp$ to denote the parameter derivatives $\dotell$ and $\dotxi-\ell$. When used as a vector, $\dotwp=(\dotell,\dotxi-\ell)^\intercal$ in that order. When used schematically, for instance in estimates or to denote dependence on parameter derivatives, the order will not be important, so for example $O(\dotwp)$ denotes terms that are bounded by $|\dotell|$ or $|\dotxi-\ell|$. The distinction will be clear from the context. More generally, $\dotwp^{(k)}$ denotes a total of $k$ derivatives of the parameters, so for instance $\dotwp^{(2)}$ could be any of $\ddotell$, $|\dotxi-\ell|^2$, $\ddotxi-\dotell$, etc. $\dot{\wp}^{(\leq k)}$ denotes a total of up to $k$, but at least one, parameter derivatives. $\wp^{(\leq k)}$ denotes a total of up to $k$ parameter derivatives, but possibly also an undifferentiated $\ell$. We sometimes also use the notation $\wp$ for $\ell$. Note that $\xi$ itself cannot be written as $\wp$ ($\xi$ is expected to grow linearly in time), but $\dotxi$ can be written as $\dotxi=\dotxi-\ell+\ell$, which is a sum of terms of the form $\dotwp$ and $\wp$. A similar notation is used for $\dota_{\pm}^{(k)}$, $a_{\pm}^{(\leq k)}$, etc. Note that here even the undifferentiated $a_{\pm}$ are expected to have time decay (for the growing mode $a_{+}$ only after appropriately modifying the initial data; see Theorem~\ref{thm:main}). 
 \subsubsection{\underline{Constants}} $\epsilon$ is the smallness parameter for the size of the initial perturbation.  $\kappa$ is a small positive absolute constant which arises in the decay rates in the bootstrap argument; see Section~\ref{sec:bootstrap}. In our bootstrap argument, the energy of the perturbation enters to linear order in estimating the parameter derivatives, and the parameter derivatives enter linearly in the energy estimates. What breaks the circularity is that the linear appearance of the energy of the perturbation in the estimates for the parameter derivatives is always accompanied by a small (but not decaying) constant. This small constant is denoted by $\delta_\wp$ in the bootstrap assumptions of Section~\ref{sec:bootstrap}. The final time of the bootstrap interval is denoted by $\tau_f$. There are also a few large radii that appear in our arguments. $R$ is  a large constant such that the initial data are supported in $\barcalC\cap\{|\Xbar|<R/2\}$; see \eqref{eq:initialdata1}. Also, the transition region from the flat  to hyperboloidal parts of the foliation happens in the region $|\Xbar|\simeq R$; see Section~\ref{sec:profileintro}. The constant $R_1\gg1$ is such that $R_1\ll R$  and such that the support of the test functions\footnote{These are the truncated eigenfunctions of the linearized operator in the first order formulation.} $\vecZ_i$, $i\in\{\pm,1,\dots,2n\}$, in Section~\ref{sec:interior} is contained in $|\rho|\leq R_1$ (see \ref{subsec:modulationeqs}). We will use $\Reigenfunctioncutoffscale$ as an absolute constant and gain smallness in inverse powers of $\Reigenfunctioncutoffscale$. The size of the data, $\epsilon$, is considered small relative to any inverse power of $\Reigenfunctioncutoffscale$. In particular, since in view of the bootstrap assumptions in Section~\ref{sec:bootstrap} we have $|\ell|\lesssim \epsilon$, quantities such as $(\Reigenfunctioncutoffscale)^m|\ell|$ are considered small, for any power $0<m\leq m_{\mathrm{lrg}}$, where $m_{\mathrm{lrg}}$ is fixed a large integer. The smallness of the constant $\delta_\wp$ above is in terms of $\ell$ and inverse powers of $\Reigenfunctioncutoffscale$ (the reason the energy of the perturbation enters linearly in the equation for the parameter derivatives is that the eigenfunctions are truncated at scale $\Reigenfunctioncutoffscale$, so one should expect the error to get smaller for larger $\Reigenfunctioncutoffscale$).
 
 \subsubsection{\underline{Coordinates, derivatives, and vectorfields}}\label{subsec:prelimvfs} We will mainly work with two sets of coordinates: $(t,\rho,\omega)$ in the interior (flat) part of the foliation and $(\tau,r,\theta)$ in the exterior (hyperboloidal) part. The precise definitions are given in Section~\ref{sec:interior} and~\ref{sec:profile2} for the interior, and in Section~\ref{sec:profile2} for the exterior. In addition to these, in a few occasions we will use the global non-geometric coordinates $(\uptau,\uprho,\uptheta)$, see Section~\ref{sec:GNGC}, and the global geometric coordinates $(\tiluptau,\tiluprho,\tiluptheta)$, see Section~\ref{sec:GGC}. $\RbfT$ denotes the global almost stationary vectorfield, which in terms of the global non-geometric coordinates introduced in Section~\ref{sec:GNGC} is given by $\partial_{\uptau}$. In general $\partial$ denotes arbitrary derivatives that have size of order one, and $\partial_\Sigma$ the subset of these derivatives that are tangential to the leaves of the foliations. In the exterior region, $\tilpartial_\Sigma$ denotes derivatives which can be written as a linear combination of $\partial_\Sigma$ and $r^{-1}\RbfT$, with coefficients of size of order one. In general we denote the number of derivatives by a superscript. For instance $\partial_\Sigma^{\leq k}$ means up to $k$ tangential derivatives. There are also a few commutator and multiplier vectorfields which are used in the exterior in Section~\ref{sec:exterior} in the context of proving decay estimates for the perturbation. The precise definitions are given in Section~\ref{sec:profile2}, but we give a brief description here: $L$ and $\Lbar$ are the outgoing and incoming almost null vectorfields. $\Omega$ is the rotation vectorfield. $T$, which is comparable and almost colinear with $\RbfT$, is defined by $T=\frac{1}{2}(L+\Lbar)$. In the exterior region where these vectorfields are defined we use $\tilde{r}L$, $\Omega$, and $T$ as commutators, and use  $X^k$ (when $k=1$ we simply write $X$) to denote an arbitrary string of $k$ such vectorfields. Here $\tilr$ is a geometric radial variable introduced in Section~\ref{sec:profile2}. 
 \subsubsection{\underline{Volume forms}} In general we use $\ud V$ to denote the induced volume form from the ambient space $\bbR^{1+(n+1)}$. If there is any risk of confusion we use a subscript to denote the subset on which the volume form is induced (for instance $\ud V_S$ for the subset $S$). When working in a fixed set of coordinates we sometimes write out the volume form explicitly. In the exterior region, it is sometimes more convenient to use the coordinate volume form for the Minkowski metric rather than the geometric induced one. It will be clear from the bootstrap assumptions that these two volume forms are comparable and therefore various norms defined with respect to them are equivalent. The volume form on the standard unit sphere will be denoted by $\ud\theta$ or $\ud S$ interchangeably (or $\ud \omega$, $\ud \uptheta$, etc, depending on the coordinate system we are using).
\subsubsection{\underline{Cutoffs}} We use the notation $\chi$ for smooth cutoff functions defined on $\cup_\sigma\Sigma_\sigma$ and taking values in $[0,1]$. We may denote the set on which $\chi$ is equal to one by a subscript. For instance $\chi_S$ is equal to one on $S$ and equal to zero outside of a neighborhood of $S$ (we will make the support more precise when needed). For a positive number $c$, $\chi_{\geq c}$ denotes a cutoff which is one in the region $\{|\uprho|\geq c\}$ and equal to zero outside of $\{|\uprho|\geq \frac{c}{2}\}$. Here $\uprho$ is the radial coordinate from the global non-geometric coordinates in Section~\ref{sec:GNGC}. We also define $\chi_{<c}:=1-\chi_{\geq c}$.
\subsubsection{\underline{Dimension}}The main result of this work is valid for dimensions $n\geq 5$. For concreteness we have set $n=5$ in many places (for instance for the decay rates in the bootstrap assumptions) and kept the notation $n$ in other places (for instance in some multiplier identities) where we thought this would add to the clarity of exposition. The reader can set $n=5$ everywhere, and the modifications for higher dimensional cases are minimal.
\subsubsection{\underline{The normal and decay of eigenfunctions}}\label{subsubsec:normal} For the standard Riemannian catenoid as described in Section~\ref{subsec:Riemcat}, and with the notation used there, the normal vector is given by 
%%%%%%%
%%%%%%%
\begin{align*}
\begin{split}
\nu\equiv \nu(z,\omega)=(\frac{\Theta(\omega)}{\jap{z}^{n-1}},\sqrt{1-\jap{z}^{2-2n}}).
\end{split}
\end{align*}
As mentioned in Section~\ref{subsec:Riemcat}, the first $n$ components, $\nu^i=\frac{\Theta^i}{\jap{z}^{n-1}}$, $i=1,\dots,n$,
appear as eigenfunctions of the main linearized operator \Hbar. It is useful to keep in mind that these have decay $\jap{z}^{1-n}$ and satisfy (here $\sqrt{|h|}$ denotes the volume form associated to the metric \eqref{eq:RiemmCatmetric1})
%%%%%%%
%%%%%%%
\begin{align*}
\begin{split}
\int \nu^i\nu^j \sqrt{|h|}\ud \omega\ud z = C\delta^{ij},
\end{split}
\end{align*}
where $\delta^{ij}$ is the Kronecker delta and $C$ is a constant of order one. We also remark that since the metric is asymptotically flat, the eigenfunction $\varphibar_\mu$ from Section~\ref{subsec:Riemcat} is exponentially decaying.
\subsubsection{\underline{Two asymptotic ends}} Many of the estimates and identities in this work are derived only near one of the asymptotic ends of the solution. In all cases, the other asymptotic end can be treated in exactly the same way, possibly with a change of overall sign. This remark applies in particular to many of the vectorfied identities and estimates, for instance in sections~\ref{sec:LED} and~\ref{sec:exterior}. The two ends asymptote to the limiting hyperplanes $X^{n+1}=\pm S$ in the ambient space (see \eqref{eq:Sendpointdef1}).
\subsubsection{\underline{Notation for the second fundamental form}} As discussed in the introduction, the stability (or linearized) operator for the Riemannian catenoid is $-\Delta_\barcalC-|\secondff|^2$, where $\secondff$ denotes the second fundamental form. We will sometimes use the notation $V=|\secondff|^2$ when working with this linearized operator. When proving more general linear estimates (such as local energy decay) we still use $V$ for the potential, and impose conditions on the linearized operator which are satisfied by $-\Delta_\barcalC-|\secondff|^2$. This distinction between the different uses of $V$ will be clear from the context.
\subsubsection{\underline{Exterior parametrization over a hyperplane}}\label{subsubsec:calOnotation} Outside a large compact set, we can parameterize each asymptotic end of the solution as a graph over a hyperplane (for instance the hyperplanes $\{X^{n+1}=\pm S\}$). The function giving this parameterization for the Riemannian catenoid is denoted by $Q$. We use $Q_\wp$ to denote the corresponding function when taking into account the boost and translation parameters, although when there is no risk of confusion we drop $\wp$ from the notation and simply write $Q$. See Section~\ref{sec:coordinates}.
\subsubsection{\underline{The $O$ and $\calO$ notation}} The notation $f=O(g)$ is used as usual to mean $|f|\leq C|g|$ for some constant $C$. The notation $f=o_{\alpha}(g)$ is also used in the usual way to mean that $|f|/|g|$ goes to zero as the parameter $\alpha$ approaches a limiting value which will be clear from the context (usually zero or infinity). We will also use the notation $\calO$ whose meaning we now explain. In order to prove decay estimates for $\phi$ we will commute the equation it satisfies with $\RbfT$ (see Subsection~\ref{subsec:prelimvfs} for the notation). To obtain the desired decay in time, it is important that $k$ applications of $\RbfT$ improve the decay of $\phi$ by $\uptau^{-k}$ for $k\leq 2$ (the upper bound $2$ comes from setting $n=5$). Similarly, we will need improved decay estimates on the time derivatives of the parameters up to two commutations of $\RbfT$. These improved decay rates are reflected in the bootstrap assumptions in Section~\ref{sec:bootstrap} and the estimates in Section~\ref{sec:parametercontrol} (see for instance Proposition~\ref{lem:OmegaiTkphi}). For this, it is important that the various error terms that appear in our estimates have improved time decay up to two orders of differentiation in $\RbfT$. In this process, we also need to commute the equation satisfied by $\phi$ with the weighted derivatives $\tilr L$ and $\Omega$ (see Subsection~\ref{subsec:prelimvfs}), which have size of order $\tilr$, near the asymptotically flat ends. Again, it is important that certain error terms have faster $\tilr$ decay in the exterior region with every application of $L$ and $\frac{1}{\tilr}\Omega$, up to the order of commutation. To capture these improved decay properties we use the notation $\calO$. That is, an error term of the form $\calO(f)$ is still bounded by $|f|$ after any number of differentiations by $\tilr L$ or $\Omega$ in the exterior, and by $\sum_{j\leq k}|\dotwp^{(j)}\RbfT^{k-j}f|$ after $k$ differentiations by $\RbfT^k$ globally. For instance, an error that is denoted by $\calO(\dotwp)$ will still be bounded by $\calO(\dotwp)$ after applications of $\tilr L$ and $\Omega$ in the exterior, and by $\calO(\dotwp^{(k+1)})$ after $k$ applications of $\RbfT$ globally. Also note that more than two differentiations in $\RbfT$ does not change the decay rate, so for instance a term of the form $\calO(\dotwp)$ is still bounded by $\calO(\dotwp^{(3)})$ after $k$ applications of $\RbfT$, $k\geq 3$. That we can bound higher derivatives of the parameters by their lower derivatives is a consequence of the parameter smoothing, which is carried out in Sections~\ref{subsec:modulationeqs} and~\ref{subsec:unstableint} to prevent loss of regularity (see also Section~\ref{sec:parametercontrol} for the corresponding estimates). Even though we start using this notation already in Section~\ref{sec:interior}, the corresponding properties of these error terms follow only after the bootstrap estimates are stated in Section~\ref{sec:bootstrap}. It is worth mentioning that the error terms in sections~\ref{sec:interior} and~\ref{sec:parametercontrol} are always estimated after integrating against a compactly supported function. Therefore, the spatial decay of these terms is not relevant, and are not specified when using the $\calO$ notation there.
%%%%%%%%%%%%%%%%%%%%%%
%%%%%%%%%%%%%%%%%%%%%%
%%%%%%%%%%%%%%%%%%%%%%
\subsection{Local Existence}\label{subsec:lwp}
%%%%%%%%%%%%%%%%%%%%%%
%%%%%%%%%%%%%%%%%%%%%%
%%%%%%%%%%%%%%%%%%%%%%
As mentioned in the introduction, a systematic treatment of local existence for the HVMC equation is contained in \cite{AC1,AC2}. For our purposes it is convenient to also have a formulation with respect to an almost null foliation of the ambient space.  The results of  \cite{AC1,AC2} can be adapted to this setting using the arguments of \cite{Luk1} (see also \cite{Rendall1}) which proves local existence for a class of nonlinear wave equations with characteristic initial data. Without reproducing the details of these arguments, we record the desired corollary of these works for our future reference. Before doing so, we need to introduce some more notation. Recall the definition of the profile $\cup_\sigma(\calC_\sigma\cap \bsUpsigma_\sigma)$ from Section~\ref{sec:profileintro}. Given fixed $\ell_0, \xi_0\in \bbR^n$, let $\mathring{\calC}_\sigma(\ell_0,\xi_0)$ and $\mathring{\bsUpsigma}_\sigma(\ell_0,\xi_0)$ denote the submanifolds of $\bbR^{1+(n+1)}$ corresponding to the choices $\ell\equiv\ell_0$ and $\xi(\tau)\equiv\xi_0+\ell_0\tau$. The corresponding choice of transversal vector $N$ is denoted by $\ringN_{\ell_0,\xi_0}$. Then, for each $\tau>0$ let 
%%%%%%%
%%%%%%%
\begin{align*}
\begin{split}
\calD_0^\tau(\ell_0,\xi_0):=\cup_{\sigma\in[0,\tau)}  \mathring{\Sigma}_\sigma(\ell_0,\xi_0):=\cup_{\sigma\in[0,\tau)} (\mathring{\calC}_\sigma(\ell_0,\xi_0)\cap \mathring{\bsUpsigma}_\sigma(\ell_0,\xi_0)).
\end{split}
\end{align*}
We equip each leaf $\mathring{\Sigma}_\sigma(\ell_0,\xi_0)=\mathring{\calC}_\sigma(\ell_0,\xi_0)\cap \mathring{\bsUpsigma}_\sigma(\ell_0,\xi_0)$ with the (Riemannian) metric induced from the ambient space, and denote the tangential derivatives of size one by $\ringpartial_\Sigma$. The restriction of $\frac{\partial}{\partial X^0}+\ell_0$ to $\calD_0^\tau(\ell_0,\xi_0)$ is denoted by $T$ (note that $\frac{\partial}{\partial X^0}+\ell_0$ is tangent to $\calD_0^\tau(\ell_0,\xi_0)$). We use $\rho$ to denote the distance along $\mathring{\Sigma}_\sigma(\ell_0,\xi_0)$ to $\Xbar=\xi_0+\sigma\ell_0$, with respect to the induced metric.
%%%%%%%%%%%%
%%%%%%%%%%%%
\begin{proposition}\label{prop:LWP}
Let $n\geq 5$, and consider $\ringPhi_0(p)=p+\ringepsilon\ringpsi_0 \ringN_{\ell_0,\xi_0}$, $\ringPhi_1= (1,\ell_0)+\ringepsilon\ringpsi_1 \ringN_{\ell_0,\xi_0}$, where $\ringpsi_j$ are smooth functions on $\ringSigma_0(\ell_0,\xi_0)$ with $\|\ringpartial_\Sigma^k\ringpsi_0\|_{L^2(\Sigma_0(\ell_0,\xi_0))}$ and $\|\jap{\rho}^{-1}\ringpartial_\Sigma^{k-1}\ringpsi_1\|_{L^2(\Sigma_0(\ell_0,\xi_0))}$ finite for $k\leq M$, $M$ sufficiently large. If $|\ell_0|$ and $\ringepsilon>0$ are sufficiently small, then there exists $\tau\gtrsim1$ and a unique smooth function $\ringpsi:\calD_0^{\tau}(\ell_0,\xi_0)\to\bbR$, such that $\ringPhi:\calD_0^{\tau}(\ell_,\xi_0)\to\bbR^{1+(n+1)}$ defined by
%%%%%%%
%%%%%%%
\begin{align}\label{eq:lwppar}
\begin{split}
\ringPhi(p)= p+\ringpsi(p)\ringN_{\ell_0,\xi_0}(p)
\end{split}
\end{align} 
satisfies \eqref{eq:HVMC1}, and $\ringpsi(0)=\ringepsilon\ringpsi_0$, $T\ringpsi(0)=\ringepsilon\ringpsi_1$.
\end{proposition}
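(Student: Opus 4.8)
The statement is a local existence and uniqueness result for the HVMC equation posed with data on a flat leaf $\ringSigma_0(\ell_0,\xi_0)$ of the (frozen-parameter) foliation, with the solution sought in the normal-graph gauge \eqref{eq:lwppar}. The plan is to reduce this to the cited results: the spacelike-initial-data local well-posedness of \cite{AC1,AC2}, patched with the characteristic local well-posedness machinery of \cite{Luk1} (cf.\ \cite{Rendall1}) to accommodate the fact that the leaves $\ringbsUpsigma_\sigma(\ell_0,\xi_0)$ become asymptotically null as $\rho\to\pm\infty$. Since $|\ell_0|$ is small, the frozen-parameter profile $\ringSigma_0(\ell_0,\xi_0)$ is a small Lorentz boost of the standard Lorentzian catenoid $\calC$, the transversal $\ringN_{\ell_0,\xi_0}$ is a small perturbation of the geometric normal, and the normalization $\bfeta(n_\wp, N)=1$ together with the choice \eqref{eq:tilNdefintro1}–\eqref{eq:Ndefintro1} guarantees that the graph gauge is nondegenerate (this is where Lemma~\ref{rem:normalneighborhood} enters: for $\ringepsilon$ small, $p\mapsto p+\ringpsi(p)\ringN_{\ell_0,\xi_0}(p)$ is an embedding and the map $\ringpsi\mapsto$ induced hypersurface is well-defined and invertible).

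First I would record the quasilinear wave equation satisfied by $\ringpsi$: substituting \eqref{eq:lwppar} into \eqref{eq:HVMC1} and using the gauge normalization, one obtains an equation of the schematic form $\ringBox_{g(\ringpsi, \ringpartial\ringpsi)}\ringpsi = \calN(\ringpsi,\ringpartial\ringpsi)$ on $\calD_0^\tau(\ell_0,\xi_0)$, where $g(0,0)$ is the induced Lorentzian metric on $\cup_\sigma \ringSigma_\sigma(\ell_0,\xi_0)$ (a small boost of the metric $-\ud t^2 + \eqref{eq:RiemmCatmetric1}$), with $\ringpsi=0$ an exact solution (the profile with frozen parameters is itself a VMC hypersurface). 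The derivation is exactly the one carried out in detail later in Section~\ref{sec:interior} / Section~\ref{sec:coordinates} specialized to $\dotell\equiv 0$, $\dotxi\equiv\ell_0$, so all the profile-error terms involving $\dotwp$ vanish and the equation is genuinely a quasilinear wave equation with no source. Second, I would split the domain: on the flat portion $\{X^0 \leq \sigma_\temp - \delta_1\}$, $\sigma$ coincides with $X^0$, the leaves are spacelike and uniformly so, the metric is uniformly Lorentzian and asymptotically flat (with the fast decay rates from $n\geq 5$), and the data $(\ringepsilon\ringpsi_0, \ringepsilon\ringpsi_1)$ are in the weighted Sobolev spaces required by \cite{AC2}; hence standard energy estimates plus a fixed-point/continuity argument give a smooth solution on a slab $\sigma\in[0,\tau_0)$, $\tau_0\gtrsim 1$, inside the flat region. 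On the transition and hyperboloidal portions the leaves degenerate to null, so I would instead invoke the characteristic/asymptotically-null existence theory à la \cite{Luk1}: the equation restricted to a null-like foliation is still a regularly hyperbolic system once written in an adapted double-null-type frame (the frame $\tilpartial_\Sigma$, $r^{-1}\RbfT$, $L$, $\Lbar$ introduced later), and local well-posedness with the appropriate (weighted) norms $\|\ringpartial_\Sigma^k\ringpsi_0\|_{L^2}$, $\|\jap{\rho}^{-1}\ringpartial_\Sigma^{k-1}\ringpsi_1\|_{L^2}$ holds for $k\leq M$ with $M$ large. Finite speed of propagation then lets me patch the flat-region solution with the hyperboloidal-region solution along their common domain of dependence, producing a single smooth $\ringpsi$ on $\calD_0^\tau(\ell_0,\xi_0)$ for some $\tau\gtrsim 1$ uniform in the small data; uniqueness follows from uniqueness in each region plus the patching.

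Finally, I would check the initial-data identifications: $\ringPhi(0) = \ringPhi_0$ amounts to $\ringpsi(0) = \ringepsilon\ringpsi_0$, which is built in; and $\partial_{X^0}\ringPhi(0) = \ringPhi_1 = (1,\ell_0) + \ringepsilon\ringpsi_1 \ringN_{\ell_0,\xi_0}$ translates, after differentiating \eqref{eq:lwppar} along $T = \partial_{X^0} + \ell_0$ (which is tangent to $\calD_0^\tau(\ell_0,\xi_0)$ and which equals $\partial_{X^0}$ modulo a vector tangent to $\ringSigma_0$), into $T\ringpsi(0) = \ringepsilon\ringpsi_1$ — the precise matching here uses that on $\ringSigma_0$ the tangential part of the velocity is absorbed into the reparametrization and only the $\ringN_{\ell_0,\xi_0}$-component is dynamically prescribed, exactly as in the Cauchy formulation in the introduction.

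\textbf{Main obstacle.} The genuine difficulty is the patching across the degenerate (asymptotically null) part of the foliation: one must verify that the HVMC equation in the adapted frame near the null leaves falls into the class of equations covered by \cite{Luk1} (regular hyperbolicity, no derivative loss at the degeneration, compatibility of the weighted norms with the energy identities), and that finite speed of propagation applies cleanly so that the flat-region and hyperboloidal-region solutions agree on overlaps. Everything else — the derivation of the scalar equation, the smallness/nondegeneracy of the gauge, the standard spacelike local well-posedness, the initial-data bookkeeping — is routine given the cited works, which is why the authors state the result as a "corollary" of \cite{AC1,AC2,Luk1,Rendall1} without reproducing details.
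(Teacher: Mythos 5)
The paper does not actually give a proof of Proposition~\ref{prop:LWP}: it is stated as a corollary of \cite{AC1,AC2} adapted to the asymptotically null foliation via \cite{Luk1,Rendall1}, explicitly ``without reproducing the details of these arguments.'' Your sketch is a faithful expansion of exactly that approach — same references, same split into a spacelike (flat) region handled by \cite{AC2} and a characteristic (hyperboloidal) region handled \`a la \cite{Luk1}, same finite-speed-of-propagation patching, and you correctly note that $\ringpsi\equiv 0$ is an exact solution when the parameters are frozen so there is no source term — so your proposal matches the paper's intended route.
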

%%%%%%%%%%%%
%%%%%%%%%%%% 
We also want to be able to parameterize the solution given by Proposition~\ref{prop:LWP} as in Section~\ref{sec:profileintro} for other choices of $\ell(\tau)$ and $\xi(\tau)$, with $|\ell|,|\dotxi|\ll1$.  This is possible according to the following normal neighborhood lemma.
%%%%%%%%%%%%
%%%%%%%%%%%%
\begin{lemma}\label{rem:normalneighborhood}
Consider $\ell$ and $\xi$ with $|\ell|$, $|\dotxi|$, and $|\xi(0)-\xi_0|$ sufficiently small, and the solution $\calM$ from Proposition~\ref{prop:LWP} parameterized by \eqref{eq:lwppar} for $\tau\in[0,\tau_0]$. Then condition \eqref{eq:psidefintro1} for $\sigma\in [0,\tau_0]$ determines $\psi$ uniquely and $p\mapsto p+\psi(p) N(p)$ as defined in \eqref{eq:psidefintro1} gives another parameterization of $\calM$.
\end{lemma}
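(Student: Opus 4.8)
The plan is to establish Lemma~\ref{rem:normalneighborhood} as a consequence of the implicit function theorem, applied leafwise in $\sigma$. The content of the statement is that the map $\cup_\sigma \Sigma_\sigma \ni p \mapsto p + \psi(p) N(p)$ can be used to parameterize the \emph{same} hypersurface $\calM$ that was produced in Proposition~\ref{prop:LWP} via the ``reference'' parameterization $q \mapsto q + \ringpsi(q) \ringN_{\ell_0,\xi_0}(q)$ over the leaves $\ringSigma_\sigma(\ell_0,\xi_0)$. The key geometric point is that $N$ is transversal to $\calM$ (indeed $N$ is uniformly close to the geometric normal $n_\wp$ of the profile, and $\calM$ is a small perturbation of the profile), so a tubular neighborhood of $\calM$ is swept out diffeomorphically by the normal-type rays $p + s N(p)$; the function $\psi$ records where each ray hits $\calM$.

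First I would make precise the meaning of ``sufficiently small.'' The solution $\calM$ from Proposition~\ref{prop:LWP} is $C^1$-close (in fact smooth and close in the relevant higher norms) to the stationary profile $\cup_\sigma \ringSigma_\sigma(\ell_0,\xi_0)$, with closeness controlled by $\ringepsilon$. For the new parameters $(\ell,\xi)$ with $|\ell|, |\dotxi|, |\xi(0)-\xi_0|$ small, the new profile $\cup_\sigma \Sigma_\sigma$ and the transversal field $N$ of \eqref{eq:tilNdefintro1}, \eqref{eq:Ndefintro1} are $C^1$-close to the reference ones, with closeness controlled by $|\ell|, |\dotell|, |\dotxi - \ell|, |\xi(0) - \xi_0|$. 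Hence both $\calM$ and the pair $(\cup_\sigma\Sigma_\sigma, N)$ are small perturbations of the \emph{common} stationary reference configuration, and all the relevant bounds are uniform in $\sigma$ (this uses the cylindrical structure $\calC = \bbR \times \barcalC$ and the translation-boost covariance of the construction, so one only needs the estimate on a single leaf).

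Next I would set up the defining equation. For fixed $p \in \Sigma_\sigma$, consider the map $G_p(s) := \dist\big(p + s N(p), \calM\big)$ or, more usefully, write $\calM$ locally as the zero set of a defining function $\calG$ (e.g. adapt the level-set description $\{G = 0\}$ of the catenoid, or use that $\calM$ is a graph over the profile in the $N$-direction plus a bounded correction). Then $\psi(p)$ is characterized by $\calG(p + \psi(p) N(p)) = 0$. At $s = 0$ one has $\calG(p + sN(p))\big|_{s=0} = \calG(p)$, which is $O(\ringepsilon + |\ell| + \dots)$ small because $p \in \Sigma_\sigma$ lies close to $\calM$; and $\frac{d}{ds}\calG(p + sN(p))\big|_{s=0} = \langle \nabla\calG(p), N(p)\rangle$, which is bounded away from zero uniformly in $p$ and $\sigma$ because $N$ is uniformly transversal to $\calM$ (equivalently, uniformly transversal to the profile, and $\calM$ is $C^1$-close to the profile). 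The implicit function theorem, with quantitative constants, then yields a unique small $\psi(p)$ solving the equation, depending smoothly on $p$; uniqueness gives the first assertion of the lemma. For the second assertion — that $p \mapsto p + \psi(p)N(p)$ is actually a parameterization of all of $\calM$ (onto, and a diffeomorphism) — I would argue that the map $\Psi: (\sigma, x) \mapsto p(\sigma,x) + \psi(p(\sigma,x)) N(p(\sigma,x))$ is a local diffeomorphism (its differential is a small perturbation of the differential of the reference parameterization, which is an embedding), and that it is a bijection onto $\calM$ by a connectedness/degree argument: $\calM$ and $\cup_\sigma\Sigma_\sigma$ foliate the same region (both contain $\scC_{-R}$, by the Remark following \eqref{eq:Sigmadefintro1} and its analogue for the reference configuration), $\Psi$ agrees with the known parameterization outside the compact support of the perturbation, and a proper local diffeomorphism between connected manifolds that is a diffeomorphism near the ``ends'' is a global diffeomorphism.

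The main obstacle I expect is not any single estimate but the \emph{uniformity in $\sigma$} together with the matching of the two foliations near the transition region. Concretely: one must check that the transversality constant $\inf_{p}|\langle \nabla\calG(p), N(p)\rangle|$ does not degenerate as $\rho \to \pm\infty$ (where $N$ is forced to be $\partial/\partial X^{n+1}$ by \eqref{eq:tilNdefintro1} and the hypersurface is nearly the hyperplane $\{X^{n+1} = \pm S\}$ — there transversality is clear) and also in the interior and across the cutoff region of $\chi$ (there $N$ is close to $\tilN_{\mathrm{int}}$, the normal within $\bsUpsigma_\sigma$, which is close to the true spacetime normal $n_\wp$ since $\ell, \dotell, \dotxi-\ell$ are small). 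The cleanest way to handle this is to reduce everything, via the boost $\Lambda_{-\ell}$ and translation, to a statement on $\calC$ with $|\ell|$-dependence treated perturbatively, so that the uniformity follows from compactness of $\barcalC/(\text{translation in }\bbR)$ in the $(\rho,\omega)$ chart plus the explicit flat behavior at the ends. Once transversality is uniform, the IFT bounds and the global-diffeomorphism argument are routine; I would not belabor them in the write-up.
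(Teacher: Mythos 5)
Your proposal is correct, but it takes a noticeably different route from the paper's. You phrase $\psi$ as the solution of $\calG(p+\psi(p)N(p))=0$ for a defining function $\calG$ of $\calM$ and invoke the implicit function theorem leafwise, then deal with the surjectivity/bijectivity of $p\mapsto p+\psi(p)N(p)$ by a local-diffeomorphism-plus-degree/connectedness argument. The paper argues more elementarily and avoids both the IFT and any degree-theoretic input: for uniqueness it takes an arbitrary point of intersection $P$ of the line $p+sN(p)$ with $\calM$, writes $P$ on the reference ray $\ringp+s\ringN_{\ell_0,\xi_0}(\ringp)$, uses that $\ringN_{\ell_0,\xi_0}$ is almost normal to $\calM$ to get $\dist(P+s\ringN_{\ell_0,\xi_0}(\ringp),\calM)\gtrsim|s|$, and then transfers this to the ray $P+sN(p)$ via $|N(p)-\ringN_{\ell_0,\xi_0}(\ringp)|\ll1$; for the fact that the map is a parameterization it shows injectivity by the contraction estimate $|p-q|=|\psi(p)N(p)-\psi(q)N(q)|\ll|p-q|$ from the Lipschitz bounds on $\psi N$. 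What your approach buys is a more explicit treatment of existence and surjectivity (the paper leaves these implicit, relying on matching with the reference parameterization outside a compact set), at the cost of introducing the defining function $\calG$, whose existence and uniform transversality you must still justify — which is roughly equivalent to the distance estimate the paper proves directly. The only caveat: your IFT argument by itself establishes uniqueness only among \emph{small} $\psi$; the paper's distance growth estimate rules out \emph{any} second intersection of the ray with $\calM$, which is a strictly stronger statement, though for the purposes of Lemma~\ref{rem:normalneighborhood} (where $\psi$ is part of a decomposition into a profile and a small perturbation) the weaker local uniqueness is all that is used.
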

%%%%%%%%%%%%
%%%%%%%%%%%%
\begin{proof}
To see that $\psi$ is uniquely determined, we need to show that for each $p\in\cup\Sigma_\sigma$, the line $p+sN(p)$ intersects $\calM$ only once. Let $P$ be a point of intersection. By construction, there is a unique point $\ringp\in \cup\ringSigma_\sigma(\ell_0,\xi_0)$ such that $P$ is on the line through $\ringp$ in the direction of $\ringN_{(\ell_0,\xi_0)}(\ringp)$. Moreover, since $\ringN_{(\ell_0,\xi_0)}$ is almost normal to $\calM$, this line satisfies $$\mathrm{dist}(P+s\ringN_{(\ell_0,\xi_0)}(\ringp),\calM)\gtrsim |s|,$$ where $\mathrm{dist}(A,\calP)$ denotes the Euclidean distance from $A$ to $\calM$. But then, since $|N(p)-\ringN_{(\ell_0,\xi_0)}(\ringp)|\ll1$ (which follows from the smallness of $\ell$ and $\ell_0$), 
%%%%%%%
%%%%%%%
\begin{align*}
\begin{split}
\mathrm{dist}(P+sN(p),\calM)\geq \mathrm{dist}(P+s\ringN_{(\ell_0,\xi_0)}(\ringp),\calM)-|s||N(p)-\ringN_{(\ell_0,\xi_0)}(\ringp)|\gtrsim |s|,
\end{split}
\end{align*}
which shows that the line $P+sN(p)$ does not intersect $\calM$ again. To see that we have a parameterization, suppose $p+\psi(p)N(p) = q+\psi(q)N(q)$ for some $p,q\in\cup_\sigma\Sigma_\sigma$. Then by derivative bounds on $\psi$ and $N$,
%%%%%%%
%%%%%%%
\begin{align*}
\begin{split}
|p-q|\leq |\psi(p)N(p)-\psi(q)N(q)|\ll |p-q|,
\end{split}
\end{align*}
which can happen only if $p=q$.
\end{proof}
%%%%%%%%%%%%%%%%%%%%%%
%%%%%%%%%%%%%%%%%%%%%%
%%%%%%%%%%%%%%%%%%%%%%
\subsection{Local Energy Decay for the Product Lorentzian Catenoid} \label{subsec:iled-catenoid}
%%%%%%%%%%%%%%%%%%%%%%
%%%%%%%%%%%%%%%%%%%%%%
%%%%%%%%%%%%%%%%%%%%%%
The notation used in this section is independent of the rest of the paper. We prove a local energy decay (LED) estimate for $\Box+V$, where $\Box$ denotes the wave operator of the product Lorentzian catenoid with metric
%%%%%%%
%%%%%%%
\begin{align*}
\begin{split}
g= -\ud t\otimes\ud t+\frac{\rho^2\jap{\rho}^{2(n-2)}}{\jap{\rho}^{2(n-1)}-1}\ud \rho\otimes\ud\rho+\jap{\rho}^2 \ud \omega\otimes \ud \omega,
\end{split}
\end{align*}
and $V$ is a smooth, time independent, potential satisfying $|V|\lesssim \jap{\rho}^{-6}$. This abstract estimate will be used during the proof of LED in Section~\ref{sec:LED} and the choice of multipliers here motivate the ones made there. To start, let $\psi$ satisfy
%%%%%%%
%%%%%%%
\begin{align}\label{eq:LEDproduct1}
\begin{split}
(\Box+V)\psi=G.
\end{split}
\end{align}
In coordinates, we have
%%%%%%%
%%%%%%%
\begin{align*}
\begin{split}
\Box:=-\partial_t^2+\Delta = -\partial_t^2+g^{\rho\rho}\partial_\rho^2+\frac{\partial_\rho(|g|^{1/2}g^{\rho\rho})}{|g|^{1/2}}\partial_\rho+\jap{\rho}^{-2}\ringsDelta,
\end{split}
\end{align*}
where $\ringsDelta$ denotes the Laplacian on the round sphere of radius one. We will also use the notation $\sDelta=\jap{\rho}^{-2}\ringsDelta$, and similarly for $\ringsnabla$ and $\snabla$.  We use $L^p_x$ to denote the $L^p$ norm on constant $t$ hypersurfaces with respect to the volume form induced by $g$. We will also use the notation $L^p_tL^q_x[t_1,t_2]$ to indicate that the $L^p_t$ norm is calculated over the time interval $[t_1,t_2]$. In this section we use the notation
%%%%%%%
%%%%%%%
\begin{align*}
\begin{split}
&\|\phi\|_{LE[t_1,t_2]}^2:=\|\rho\jap{\rho}^{-\frac{1}{2}(3+\alpha)}\partial_t\phi\|_{L^2_{t,x}[t_1,t_2]}^2+\|\rho\jap{\rho}^{-\frac{1}{2}(5+\alpha)}\phi\|_{L^2_{t,x}[t_1,t_2]}^2+\|\rho\jap{\rho}^{-\frac{3}{2}}\snabla\phi\|_{L^2_{t,x}[t_1,t_2]}^2\\
&\phantom{\|\phi\|_{LE[t_1,t_2]}^2:=}+\|\jap{\rho}^{-\frac{1+\alpha}{2}}\partial_\rho\phi\|_{L^2_{t,x}[t_1,t_2]}^2,\\
 &\|f\|_{LE^\ast[t_1,t_2]}^2:=\|\jap{\rho}^{\frac{1+\alpha}{2}}f\|_{L^2_{t,x}[t_1,t_2]}^2,
\end{split}
\end{align*}
where $0<\alpha\ll1$ is a fixed small constant. Note the degeneracy at $\rho=0$ for the non-radial derivatives in the definition of the $LE$ norm. This degeneracy appears due to the presence of the trapped sphere at $\rho=0$. The energy norm is defined as
%%%%%%%
%%%%%%%
\begin{align*}
\begin{split}
\|\phi\|_E^2=\|\partial_t\phi\|_{L^2_x}^2+\|\partial_x\phi\|_{L^2_x}^2,
\end{split}
\end{align*}
where by definition $\|\partial_x\phi\|_{L^2_x}^2=\int (g^{-1})^{ij}(\partial_i\phi)(\partial_j\phi)\sqrt{|g|}\ud\omega\ud\rho$ (here and in the remainder of this section $i,j$ run over $1,\dots,n$). The $L^2_x$ pairing with respect to $\sqrt{|g|}$ will be denoted by~$\angles{\cdot}{\cdot}$. We also use the following notation to denote the $L^2_{t,x}$ norm over the region $[t_1,t_2]\times\{|\rho|\leq1\}$:
%%%%%%%
%%%%%%%
\begin{align*}
\begin{split}
\|\phi\|_{L^2_{t,x}([t_1,t_2]\times\{|\rho|\leq1\})}.
\end{split}
\end{align*}

We assume  that $\Delta+V$ has spectrum consisting of the absolutely continuous part $(-\infty,0]$ and possibly a finite number of eigenvalues at zero and in $(0,\infty)$. In particular, in dimensions $n\geq 5$ there cannot be any threshold resonances\footnote{By a threshold resonance, we mean a function $\varphi$ belonging to $\calL \calE_{\sharp}^{1}$ (defined in the proof of Proposition~\ref{prop:LEDproduct}) but not to $L^2_x$, such that $(\Delta+V)\varphi=0$. Threshold resonances do not exist in our applications, because the strong spatial decay of $V$ and the difference between the coefficients of $\Delta$ and the Euclidean Laplacian imply that a threshold resonance $\varphi$ must decay at the same rate as the Newtonian potential near each asymptotically flat end (i.e., $\abs{\varphi}  \lesssim |\rho|^{-(n-2)}$ as $\rho \to \pm \infty$). Indeed, this can be seen by writing $\varphi \chi_{>1}(\pm \rho) = \Delta_{\bbR^{n}}^{-1} ( (\Delta - \Delta_{\bbR^{n}} + V) (\varphi \chi_{>1}(\pm \rho)) + [\chi_{>1}(\pm \rho), \Delta] \varphi )$, and noting that the expression inside $\Delta_{\bbR^{n}}^{-1}$ is decaying sufficiently fast thanks to the assumptions on $g$ and $V$, as well as $\varphi \in \calL \calE_{\sharp}^{1}$ (here $\chi_{>1}(s)$ is a smooth function that equals $1$ for $s > 1$ and $0$ for $s < \frac{1}{2}$). In dimensions $n\geq 5$ this implies that $\varphi\in L^2_x$, and gives a contradiction.}. The eigenfunctions are assumed to satisfy the decay rate $\jap{\rho}^{-n+1}$ or faster. We use $\bbP_c$ to denote the projection onto the continuous spectrum of $\Delta+V$ (with respect to the volume form induced by $g$). These conditions are easily verified when $V=|\secondff|^2$ is the squared norm of the second fundamental form of the standard embedding of $\barcalC$ in $\bbR^{n+1}$; see Section~\ref{subsec:Riemcat} and \cite{F-CS,Tam-Zhou}. In the following proposition we use the notations $\|\jap{\partial_t}h\|_X=\|h\|_X+\|\partial_th\|_X$ and $\|h\|_{X+Y}=\inf_{h=h_1+h_2}(\|h_1\|_X+\|h_2\|_Y)$.
%%%%%%%%%%%%
%%%%%%%%%%%
\begin{proposition}\label{prop:LEDproduct}
Suppose $\psi$ satisfies \eqref{eq:LEDproduct1} on a time interval $[t_1,t_2]$. Then for any small constant $\delta>0$ the following estimates hold
%%%%%%
%%%%%%
\begin{align*}
\begin{split}
&\sup_{[t_1,t_2]}\|\bbP_c\psi(t)\|_E\lesssim \|\bbP_c\psi(t_1)\|_{E}+\|\bbP_c G \|_{L^1_tL^2_x[t_1,t_2]},\\
&\sup_{[t_1,t_2]}\|\bbP_c\psi(t)\|_E\lesssim \|\bbP_c\psi(t_1)\|_{E}+C_\delta\|\bbP_cG\|_{LE^\ast[t_1,t_2]}+\delta(\|\bbP_c\psi\|_{LE[t_1,t_2]}+\|\partial_t\bbP_c\psi\|_{L^2_{t,x}([t_1,t_2]\times\{\rho\leq1\})}),\\
&\sup_{[t_1,t_2]}\|\bbP_c\psi(t)\|_E\lesssim \|\bbP_c\psi(t_1)\|_{E}+C_\delta\|\jap{\partial_t}\bbP_cG\|_{LE^\ast([t_1,t_2])}+\|\bbP_cG\|_{L^\infty_t L^2_x[t_1,t_2]}+\delta\|\bbP_c\psi\|_{LE[t_1,t_2]},\\
&\| \bbP_{c} \psi \|_{LE[t_1,t_2]}\lesssim \|\bbP_{c} \psi(t_{1})\|_E + \| \bbP_{c} \psi(t_{2})\|_E+\|\bbP_{c} G\|_{LE^\ast[t_1,t_2]+L^1_tL^2_x[t_1,t_2]}.
\end{split}
\end{align*}
\end{proposition}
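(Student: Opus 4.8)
\textbf{Proof strategy for Proposition~\ref{prop:LEDproduct}.}

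The plan is to prove a master integrated local energy decay (Morawetz) estimate and a master energy estimate for $\bbP_c\psi$ solving $(\Box+V)\psi=g$, and then to derive the four listed inequalities by combining these two and absorbing error terms using the spectral hypotheses on $\Delta+V$. Throughout we may and do replace $\psi$ by $\bbP_c\psi$, since $\bbP_c$ commutes with $\Box+V$ (the potential is time-independent), so $(\Box+V)\bbP_c\psi=\bbP_c g$; thus it suffices to prove all estimates under the assumption that $\psi=\bbP_c\psi$, i.e.\ $\psi$ is spectrally supported away from the eigenvalues, for which one has the coercivity $\angles{(-\Delta-V)\psi}{\psi}\gtrsim \|\partial_x\psi\|_{L^2_x}^2 - C\|\jap{\rho}^{-3}\psi\|_{L^2_x}^2$ and, more importantly, a Hardy-type lower bound controlling a weighted $L^2_x$ norm of $\psi$ by $\|\partial_x\psi\|_{L^2_x}$ with no loss (here the absence of threshold resonances in $n\geq 5$, explained in the footnote, is what makes $\bbP_c$ unnecessary for the Hardy inequality but essential for handling the zero eigenvalue).

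First I would set up the \emph{Morawetz/ILED estimate}. Choose a multiplier of the form $Y = f(\rho)\partial_\rho + \tfrac12 f'(\rho) + h(\rho)$ (a vectorfield plus a lower-order correction), pair $(\Box+V)\psi$ with $Y\psi$, and integrate over the slab $[t_1,t_2]\times\{X^{n+1}=\cdot\}$ with respect to $\sqrt{|g|}\,\ud\omega\,\ud\rho$. The bulk term produces, schematically, $\int f'|\partial_\rho\psi|^2 + \int(\tfrac{f}{\rho}+\cdots)|\snabla\psi|^2 - \tfrac14\int f''' |\psi|^2 + \int(\text{potential terms})|\psi|^2$, together with a boundary term at $t=t_1,t_2$ bounded by the energy. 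The function $f$ must be chosen (i) increasing to control $\partial_\rho\psi$, (ii) so that the angular coefficient degenerates exactly at the trapped sphere $\rho=0$ (this forces $f$ to change sign at $\rho=0$, which is why the $LE$-norm has the $\rho$-weight on the angular and $\partial_t$ pieces), and (iii) with $f/\jap{\rho}$ integrable at infinity so the far-away behavior matches the $\jap{\rho}^{-(3+\alpha)}$, $\jap{\rho}^{-(5+\alpha)}$ weights; a standard choice is $f = \jap{\rho}^{-\alpha}\,\mathrm{sgn}(\rho)\cdot(\text{bounded increasing})$ or a smoothed version thereof. The zeroth-order terms $-\tfrac14 f''' - (\text{stuff})V$ are not positive, but they decay like $\jap{\rho}^{-3}$ relative to the top-order weights, so one absorbs them by the Hardy inequality available on $\bbP_c\psi$ (this is exactly where the hypothesis that there are no eigenvalues other than possibly at $0$, and no threshold resonance, is used — it forces no bad low-frequency behavior). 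To close the degeneracy at $\rho=0$ one also needs a separate small-$\rho$ argument recovering a non-degenerate $\|\partial_t\psi\|_{L^2_{t,x}(\{\rho\leq 1\})}$, which is handled either by a local energy inequality with a cutoff multiplier $\chi(\rho)\partial_t$ plus integration by parts in $t$, or is left as an allowed error term — this is precisely the $\|\partial_t\bbP_c\psi\|_{L^2_{t,x}(\{\rho\leq1\})}$ appearing with a factor $\delta$ in the second listed estimate. The upshot is
\[
\|\psi\|_{LE[t_1,t_2]}^2 \lesssim \sup_{[t_1,t_2]}\|\psi(t)\|_E^2 + \|g\|_{LE^\ast[t_1,t_2]+L^1_tL^2_x[t_1,t_2]}^2,
\]
after a Cauchy--Schwarz on $\int g\cdot Y\psi$ splitting the right-hand side into the $LE^\ast$-dual pairing and an $L^1_tL^2_x$ pairing against the energy flux; this is the fourth inequality.

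Next, for the \emph{energy estimates}, pair $(\Box+V)\psi$ with $\partial_t\psi$ and integrate over $[t_1,t]\times(\cdot)$ to get $\|\psi(t)\|_E^2 \lesssim \|\psi(t_1)\|_E^2 + \big|\int_{t_1}^t\!\!\int g\,\partial_t\psi\big| + \big|\int_{t_1}^t\!\!\int V\,\partial_t\psi\,\psi\big|$ (the latter $V$-term, and the indefiniteness of $\int V|\psi|^2$ hidden in the energy, is controlled using the Hardy inequality on $\bbP_c\psi$ so that $\|\psi(t)\|_E$ is genuinely equivalent to $\|\partial_t\psi\|_{L^2_x}+\|\partial_x\psi\|_{L^2_x}$ up to lower order). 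Estimating $\int g\,\partial_t\psi$ in three different ways gives the three energy inequalities: (a) Cauchy--Schwarz in $x$ then $L^1_t$ gives the first; (b) Cauchy--Schwarz pairing $\jap{\rho}^{(1+\alpha)/2}g$ against $\jap{\rho}^{-(1+\alpha)/2}\partial_t\psi$ and then using $\jap{\rho}^{-(1+\alpha)/2}\partial_t\psi \lesssim \rho^{-1}\cdot \rho\jap{\rho}^{-(3+\alpha)/2}\partial_t\psi + (\text{near }\rho=0)$ to bound it by $\|\psi\|_{LE}+\|\partial_t\psi\|_{L^2_{t,x}(\{\rho\leq1\})}$ with a small constant $\delta$ (balanced by a large $C_\delta$ on the $LE^\ast$ norm of $g$) yields the second; (c) integrating by parts in $t$ to move $\partial_t$ off $\psi$, $\int g\,\partial_t\psi = -\int \partial_t g\,\psi + [\int g\,\psi]_{t_1}^{t}$, then bounding $\int\partial_t g\,\psi$ by $C_\delta\|\partial_t g\|_{LE^\ast}+\delta\|\psi\|_{LE}$ and the boundary term by $\|g\|_{L^\infty_tL^2_x}\sup\|\psi\|_{E}$ (absorbing $\sup\|\psi\|_E$ into the left side via Young) yields the third.

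\textbf{Main obstacle.} The hardest and most delicate point is the design of the Morawetz multiplier so that the bulk quadratic form is coercive \emph{simultaneously} at the trapped sphere $\rho=0$, at the two asymptotically flat ends $\rho\to\pm\infty$, and after subtracting the indefinite zeroth-order contributions of $V$ and $f'''$ — and doing so while keeping the degeneracy at $\rho=0$ exactly of the form encoded in the $LE$ weights, since any mismatch would propagate as a loss in Section~\ref{sec:LED}. The requirement that the estimate hold for $\bbP_c\psi$ (not $\psi$), with a zero eigenvalue present, means the coercivity of the spatial part is only \emph{conditional}, so the absorption of the bad zeroth-order terms must go through the Hardy inequality with the sharp weight $\jap{\rho}^{-3}$ compatible with $|V|\lesssim \jap{\rho}^{-6}$ in dimension $n\geq 5$; getting the powers to match (hence the restriction $n\geq 5$ and the need for ``no threshold resonance'') is where the real work lies. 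The near-$\rho=0$ local energy recovery, and the bookkeeping of which error goes on which side with which of $\delta$ or $C_\delta$, are routine once the multiplier is fixed.
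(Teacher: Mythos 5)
Your strategy for the three energy estimates is essentially the same as the paper's: multiply by $\partial_t\bbP_c\psi$, use orthogonality to the eigenfunctions of $\Delta+V$ for coercivity of the flux, and treat $\int g\,\partial_t\psi$ three ways (straight $L^1_tL^2_x$, duality against $LE^\ast$/$LE$, and integration by parts in $t$). That part is fine, although your phrase ``the Hardy inequality available on $\bbP_c\psi$'' slightly misattributes what is really a spectral coercivity statement (positivity of $-\Delta-V$ on $\mathrm{Ran}\,\bbP_c$), not a weighted Hardy bound.

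There is, however, a genuine gap in your treatment of the fourth (ILED) estimate. You propose to choose a first-order multiplier $Y$ and then to \emph{absorb} the indefinite zeroth-order bulk terms coming from $f'''$ and $V$ into the positive part of the Morawetz bulk by means of a Hardy inequality for $\bbP_c\psi$. This absorption cannot work near the trapped sphere $\rho=0$: there the good bulk terms in the $LE$ norm carry the degeneracy $\rho^2$ (precisely because $Y$ must change sign at $\rho=0$), whereas the bad zeroth-order coefficients $f'''$ and $V$ are $O(1)$ near $\rho=0$, and there is no Hardy/Poincar\'e inequality that bounds $\int_{|\rho|\leq1}|\psi|^2$ by $\int_{|\rho|\leq1}\rho^2|\partial\psi|^2+\dots$. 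Moreover, the spectral positivity of $-\Delta-V$ on $\mathrm{Ran}\,\bbP_c$ is a statement at each fixed time and gives control of the energy flux, not of the space-time bulk $\|\psi\|_{LE}$; inserting it would cost a factor $(t_2-t_1)$. What the paper actually does is prove a weaker version of the fourth estimate with an extra term $\|\bbP_c\psi\|_{L^2_{t,x}(\calB)}$ on the right for a bounded set $\calB$, and then removes this compact error by a Fourier-in-time compactness/contradiction argument (reduction to the resolvent bound $\|v\|_{LE_x}\lesssim\|(\Delta+V-\tau-i\varepsilon)v\|_{LE^*_x}+\|v\|_{L^2(B)}$, extraction of a weak limit, and invocation of the absence of threshold resonances and embedded eigenvalues). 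This is a qualitatively different mechanism than quantitative absorption, and it is exactly where the spectral hypotheses on $\Delta+V$ enter; your proposal never produces nor removes a compact $L^2$ error term, so the argument does not close. A secondary omission: your multiplier $Y=f\partial_\rho+\tfrac12 f'+h$ yields no control of the $\partial_t\psi$ component of the $LE$ bulk; the paper adds a separate scalar multiplier $\gamma(\rho)=\rho^2/\jap{\rho}^{3+\epsilon}$ matched to the degenerate $\partial_t$-weight in $LE$ to recover it.
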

%%%%%%%%%%%
%%%%%%%%%%%
%%%%%%%%%%%%
%%%%%%%%%%%%
\begin{remark}\label{rem:LEDproduct1}
Combining the first and fourth estimates in the proposition we get
 %%%%%%%
%%%%%%%
\begin{align*}
\begin{split}
\sup_{[t_1,t_2]}\|\bbP_c\psi(t)\|_E+\|\bbP_c\psi\|_{LE[t_1,t_2]}\lesssim \|\bbP_c\psi(t_1)\|_{E}+\|\bbP_cG\|_{L^1_tL^2_x[t_1,t_2]}.
\end{split}
\end{align*}
This is sufficient for most applications, but in a few instances we need to estimate $\bbP_c G$ in the $LE^\ast$ norm. For this we need to use the second and third energy estimates in the statement of the proposition. Note that the last term on the right-hand side of the second estimate cannot be absorbed by the $LE$ norm due to the degeneracy of the $LE$ norm at $\rho=0$.
\end{remark}
%%%%%%%%%%%%
%%%%%%%%%%%%
\begin{proof}[Proof of Proposition~\ref{prop:LEDproduct}]
%%%%%%%%%%%
%%%%%%%%%%%
Let $\phi=\bbP_c\psi$ and $f=\bbP_cG$. The first two estimates are standard energy estimates which follow from multiplying the equation by $\partial_t\phi$. The fact that $\phi$ is orthogonal to the eigenfunctions of $\Delta+V$ guarantees that the flux on $\{t=\mathrm{constant}\}$ bounds the energy. For the third estimate we use the same arguments as in the previous estimates but with the following modifications in how we treat the contribution of $f\partial_t\phi$. First we write $f\partial_t\phi=\chi f\partial_t\phi+(1-\chi) f \partial_t\phi$, with $\chi\equiv\chi(\rho)$ a cutoff to the region $\{|\rho|\leq 1\}$ where the $LE$ norm is degenerate. The contribution of $(1-\chi) f \partial_t\phi$ is treated as in the first estimate. The integral of $\chi f\partial_t\phi$ is treated by integration by parts in $t$ and observing that using Hardy and Poincar\'e type inequalities (see for instance the proof of Lemma~\ref{lem:LEDhighfreq1} in Section~\ref{sec:LED}),
%%%%%%%
%%%%%%%
\begin{align*}
\begin{split}
\int_{t_1}^{t_2}\int_{\{|\rho|\leq1\}}\chi(\partial_tf)\phi\udx \udt\leq \delta\|\phi\|_{LE[t_1,t_2]}^2+C_\delta\|\partial_tf\|_{LE^\ast[t_1,t_2]}^2.
\end{split}
\end{align*}
Note that in view of the localization $\chi$, the boundary terms from the integration by parts in $t$ can again be estimated using a Hardy estimate.

Finally, we prove the last estimate in the statement of the proposition. First, observe that this estimate is a trivial consequence of the first estimate in the proposition if $t_{2} - t_{1} \leq 1$; hence it suffices to only consider $t_{2} - t_{1} \geq 1$. Next, we claim that it suffices to establish the following:
\begin{align}\label{eq:LEDproducttemp-sch}
\| \bbP_{c} \psi \|_{LE^{1}_{\sharp}(\bbR)} \lesssim \| \bbP_{c} (\Box + V) \psi \|_{LE^{\ast}_{\sharp}(\bbR)} \qquad \hbox{ for all } \psi \in \calS_{t, x},
\end{align}
where
\begin{align*}
	\| \phi \|_{LE^{1}_{\sharp}[t_{1}, t_{2}]} &:= \| \rho \partial_{t, x} \phi \|_{L^{2}_{t, x}([t_{1}, t_{2}] \times \{ | \rho | < 1 \})} 
		+ \| \partial_{\rho} \phi \|_{L^{2}_{t, x}([t_{1}, t_{2}]  \times \{ | \rho | < 1 \})} \\
	&\phantom{:=} + \sup_{j \geq 1} \left(\| |\rho|^{-\frac{1}{2}} \partial_{t,x} \phi \|_{L^{2}_{t, x}([t_{1}, t_{2}]  \times \{ 2^{j-1} \leq | \rho | < 2^{j} \})} + \| |\rho|^{-\frac{3}{2}} \phi \|_{L^{2}_{t, x}([t_{1}, t_{2}]  \times \{ 2^{j-1} \leq | \rho | < 2^{j} \})} \right), \\
	\| f \|_{LE^{\ast}_{\sharp}[t_{1}, t_{2}]} &:= \| f \|_{L^{2}_{t, x}([t_{1}, t_{2}]  \times \{ | \rho | < 1 \})} 
		+ \sum_{j \geq 1} \| |\rho|^{\frac{1}{2}} f \|_{L^{2}_{t, x}([t_{1}, t_{2}]  \times \{ 2^{j-1} \leq | \rho | < 2^{j} \})},
\end{align*}
and $\calS_{t, x}$ is the class of smooth functions $\psi$ on the product Lorentzian catenoid such that $\partial^{\alpha} \psi$ for every multi-index $\alpha$ is decaying faster than $(1 + t^{2} + \rho^{2})^{-\frac{N}{2}}$ for any $N \geq 0$. We remark that $LE^{1}_{\sharp} [t_{1}, t_{2}] \hookrightarrow LE[t_{1}, t_{2}]$ and $LE^{\ast}[t_{1}, t_{2}] \hookrightarrow LE^{\ast}_{\sharp}[t_{1}, t_{2}]$. The refined spaces $LE^{1}_{\sharp} [t_{1}, t_{2}]$ and $LE^{\ast}_{\sharp}[t_{1}, t_{2}]$ will be used (only) within the present proof in order to borrow sharp estimates proved in \cite{MST, MMT1}.

The key step in the proof of the claim is the construction of a function $\tilde{\phi}$ on $[t_{1}, t_{2}] \times \barcalC$ such that 
\begin{equation*}
	\| \tilde{\phi} \|_{LE^{1}_{\sharp}[t_{1}, t_{2}]} + \| (\Box + V) \tilde{\phi} - f \|_{LE^{\ast}_{\sharp}[t_{1}, t_{2}]} \lesssim \| \phi(t_{1}) \|_{E} + \| \phi(t_{2}) \|_{E} + \| f \|_{LE_{\sharp}^{\ast}[t_{1}, t_{2}]+L^{1}_{t} L^{2}_{x}[t_{1}, t_{2}]},
\end{equation*}
and $(\tilde{\phi}, \partial_{t} \tilde{\phi})(t_{i}) = (\phi, \partial_{t} \phi)(t_{i})$ for $i = 1, 2$. Then, extending $\phi - \bbP_{c} \tilde{\phi}$ by zero outside of $[t_{1}, t_{2}]$ and applying \eqref{eq:LEDproducttemp-sch} (after a straightforward approximation procedure), we would arrive at the last estimate in the statement of the proposition.

The desired $\tilde{\phi}$ can be constructed by following the extension procedure in \cite[Section~7.1]{MST}. Since our setting is slightly different from \cite{MST} (especially since $\phi$ is not compactly supported in space in general due to the presence of $\bbP_{c}$), we sketch the main steps of the construction of $\tilde{\phi}$. First, we introduce a large radius $R_{\mathrm{out}}$ (to be fixed below) and a smooth cutoff function $\chi_{\mathrm{out}}(\rho)$ that equals $1$ when $\abs{\rho} > 100 (t_{2} - t_{1}) + 2 R_{\mathrm{out}} $ and equals $0$ when $\abs{\rho} < 50 (t_{2} - t_{1}) + R_{\mathrm{out}}$, and define $\tilde{\phi}_{\mathrm{out}}$ to be the solution to the Cauchy problem
\begin{equation*}
	(\Box + V) \tilde{\phi}_{\mathrm{out}} = \chi_{\mathrm{out}}(\rho) f, \quad (\tilde{\phi}_{\mathrm{out}}, \partial_{t} \tilde{\phi}_{\mathrm{out}})(t_{1}) = \chi_{out}(\rho) (\phi, \partial_{t} \phi)(t_{1}).
\end{equation*}
Choosing $R_{\mathrm{out}}$ large enough so that $\Box + V$ has propagation speed $\leq 2$ in $\set{\abs{\rho} > R_{\mathrm{out}}}$, we have $\tilde{\phi}_{\mathrm{out}} = 0$ in, say, $[t_{1}, t_{2}] \times \set{\abs{\rho} < R_{\mathrm{out}} + 10(t_{2} - t_{1})}$, and $\tilde{\phi}_{\mathrm{out}} = \phi$ in, say, $[t_{1}, t_{2}] \times \set{\abs{\rho} > 2 R_{\mathrm{out}} + 200 (t_{2} - t_{1})}$ (recall that $(\Box+V) \phi = f$ in $[t_{1}, t_{2}] \times \barcalC$). Taking $R_{\mathrm{out}}$ even larger, we may harmlessly replace $\Box + V$ on $\tilde{\phi}_{\mathrm{out}}$ by a small perturbation of the Minkowski d'Alembertian as $\rho \to +\infty$ and $-\infty$, for which the sharp integrated local energy estimate in \cite[Theorem~1.7]{MST} applies. Thus, we have
\begin{equation*}
	\sup_{t \in [t_{1}, t_{2}]}\| \tilde{\phi}_{\mathrm{out}}(t) \|_{E} + \| \tilde{\phi}_{\mathrm{out}} \|_{LE^{1}_{\sharp}[t_{1}, t_{2}]}
	\lesssim \| \phi(t_{1}) \|_{E} + \| f \|_{LE_{\sharp}^{\ast}[t_{1}, t_{2}]+L^{1}_{t} L^{2}_{x}[t_{1}, t_{2}]}.
\end{equation*}

Next, we introduce $\chi_{\mathrm{in}}(\rho) := 1 - \chi_{\mathrm{out}}(\rho)$ and apply the procedure in \cite[Section~7.1, Eq.~(7.3) and below]{MST} to $\phi - \tilde{\phi}_{\mathrm{out}}$ and $(\Box + V)(\phi - \tilde{\phi}_{\mathrm{out}}) = \chi_{\mathrm{in}} f$. Observe that both $\phi -\tilde{\phi}_{\mathrm{out}}$ and $\chi_{\mathrm{in}} f$ are supported in $[t_{1}, t_{2}] \times \set{\abs{\rho} \lesssim_{R_{\mathrm{out}}} t_{2} - t_{1}}$ as is required in \cite{MST}, since $\phi = \tilde{\phi}_{\mathrm{out}}$ in $[t_{1}, t_{2}] \times \set{\abs{\rho} > 2 R_{\mathrm{out}} + 200 (t_{2} - t_{1})}$, $\chi_{\mathrm{in}}$ is supported in $[t_{1}, t_{2}] \times \set{\abs{\rho} \leq 2 R_{\mathrm{out}} + 100 (t_{2} - t_{1})}$, and $t_{2} - t_{1} \geq 1$. This produces a function $\tilde{\phi}_{\mathrm{in}}$ on $[t_{1}, t_{2}] \times \barcalC$ that satisfies
\begin{align*}
	& \| \tilde{\phi}_{\mathrm{in}} \|_{LE^{1}_{\sharp}[t_{1}, t_{2}]} + \| (\Box + V) \tilde{\phi}_{\mathrm{in}} - \chi_{\mathrm{in}}(\rho) f \|_{LE^{\ast}_{\sharp}[t_{1}, t_{2}]} \\
	&\lesssim \| \phi - \tilde{\phi}_{\mathrm{out}}(t_{1}) \|_{E} + \| \phi - \tilde{\phi}_{\mathrm{out}}(t_{2}) \|_{E} + \| \chi_{\mathrm{in}}(\rho) f \|_{LE_{\sharp}^{\ast}[t_{1}, t_{2}]+L^{1}_{t} L^{2}_{x}[t_{1}, t_{2}]},
\end{align*}
and $(\tilde{\phi}_{\mathrm{in}}, \partial_{t} \tilde{\phi}_{\mathrm{in}})(t_{i}) = (\phi - \tilde{\phi}_{\mathrm{out}}, \partial_{t} (\phi - \tilde{\phi}_{\mathrm{out}}))(t_{i})$ for $i = 1, 2$. Finally, defining $\tilde{\phi} := \tilde{\phi}_{\mathrm{out}} + \tilde{\phi}_{\mathrm{in}}$, it may be readily checked that it satisfies the desired properties stated above.

To establish \eqref{eq:LEDproducttemp-sch}, we start by proving
%%%%%%
%%%%%%
\begin{align}\label{eq:LEDproducttemp1}
\begin{split}
\|\phi\|_{LE_{\sharp}^{1}(\bbR)}\lesssim \|(\Box + V) \phi\|_{LE^\ast_{\sharp}(\bbR)}+\|\phi\|_{L^2_{t,x}(\bbR \times B)} \qquad \hbox{ for all } \phi \in \calS_{t, x}
\end{split}
\end{align}
where $B$ denotes a large compact spatial region. By \cite[Proposition~3.2]{MST} applied to each asymptotically flat end (i.e., $\rho \to +\infty$ and $-\infty$), we obtain
\begin{align} \label{eq:LEDproducttemp1-extr}
	\| \phi \|_{LE_{\sharp, > R}^{1} [t_{1}, t_{2}]} \lesssim \sup_{i=1,2} \| \phi(t_{i}) \|_{E_{>R}} + \| (\Box + V) \phi \|_{LE^{\ast}_{\sharp, >R}[t_{1}, t_{2}]} + R^{-\frac{3}{2}} \| \phi \|_{L^{2}_{t, x}([t_{1}, t_{2}] \times \{ \frac{R}{4} < \abs{\rho} < 2R \} )},
\end{align}
for a sufficiently large $R \in 2^{\bbZ}$, where
\begin{align*}
	\| \phi \|_{LE^{1}_{\sharp, > R}[t_{1}, t_{2}]} &:= \sup_{j \geq \log_{2} R} \left(\| |\rho|^{-\frac{1}{2}} \partial_{t,x} \phi \|_{L^{2}_{t, x}([t_{1}, t_{2}]  \times \{ 2^{j-1} \leq | \rho | < 2^{j} \})} + \| |\rho|^{-\frac{3}{2}} \phi \|_{L^{2}_{t, x}([t_{1}, t_{2}]  \times \{ 2^{j-1} \leq | \rho | < 2^{j} \})} \right), \\
	\| f \|_{LE^{\ast}_{\sharp, > R}[t_{1}, t_{2}]} &:=\sum_{j \geq \log_{2} R} \| |\rho|^{\frac{1}{2}} f \|_{L^{2}_{t, x}([t_{1}, t_{2}]  \times \{ 2^{j-1} \leq | \rho | < 2^{j} \})}, \\
	\| \phi(t) \|_{E_{>R}} &:= \| \partial_{t} \phi(t) \|_{L^{2}(\{ |\rho| > R\})} + \| \partial_{x} \phi(t) \|_{L^{2}(\{ |\rho| > R\})}.
\end{align*}
We supplement this bound with the following bound with non-sharp weights at spatial infinity, with a sharp control of $\phi$ in a bounded spatial region: 
\begin{align} \label{eq:LEDproducttemp1-intr}
	\| \phi \|_{LE [t_{1}, t_{2}]} \lesssim \sup_{i=1,2} \| \phi(t_{i}) \|_{E} + \| (\Box + V) \phi \|_{LE^{\ast}[t_{1}, t_{2}]} + \| \phi \|_{L^{2}_{t, x}([t_{1}, t_{2}] \times B)}.
\end{align}
Observe that the combination of \eqref{eq:LEDproducttemp1-extr} and \eqref{eq:LEDproducttemp1-intr} immediately gives \eqref{eq:LEDproducttemp1}.

To complete the proof of \eqref{eq:LEDproducttemp1}, it only remains to prove \eqref{eq:LEDproducttemp1-intr}. With $\partial_\rho^\ast$ denoting the formal adjoint of $\partial_\rho$ for the  pairing $\angles{\cdot}{\cdot}$, let
\begin{align*}
\begin{split}
Q:=\beta\partial_\rho-\partial_\rho^\ast\beta,\qquad \partial_\rho^\ast=-\partial_\rho-\frac{1}{2}(\partial_\rho\log|g|),
\end{split}
\end{align*}
where $\beta\equiv\beta(\rho)$ is to be chosen. The main positive commutator identity is
%%%%%%%
%%%%%%%
\begin{align}\label{eq:comm1}
\begin{split}
\angles{(\Box+V) \phi}{Q\phi}= -\partial_t\angles{\partial_t\phi}{Q\phi}+\overbrace{ \angles{\Delta \phi}{Q\phi} }^{-\frac{1}{2}\angles{[Q,\Delta]\phi}{\phi} }-\frac{1}{2}\overbrace{\angles{[Q,V]\phi}{\phi}}^{2\angles{(\beta\partial_\rho V)\phi}{\phi}}.
\end{split}
\end{align}
Note that
\[
 Q \phi = (\beta \partial_\rho - \partial_\rho^\ast \beta) \phi = 2 \beta (\partial_\rho \phi) - (\partial_\rho^\ast \beta) \phi.
\]
Then we have 
\begin{equation} \label{equ:DeltaQ}
 \begin{aligned}
  \langle - \Delta \phi, Q \phi \rangle &= \langle \partial_\rho^\ast g^{\rho \rho} \partial_\rho \phi, Q \phi \rangle - \langle \jap{\rho}^{-2} \ringsDelta \phi, Q \phi \rangle \\
  &= \langle \partial_\rho^\ast g^{\rho \rho} \partial_\rho \phi, 2 \beta (\partial_\rho \phi) - (\partial_\rho^\ast \beta) \phi \rangle - \langle \jap{\rho}^{-2} \ringsDelta \phi, 2 \beta (\partial_\rho \phi) - (\partial_\rho^\ast \beta) \phi \rangle.
 \end{aligned}
\end{equation}
For the first term on the right-hand side we integrate by parts repeatedly and rearrange to find that
\begin{align*}
  \langle \partial_\rho^\ast g^{\rho \rho} \partial_\rho \phi, 2 \beta (\partial_\rho \phi) - (\partial_\rho^\ast \beta) \phi \rangle &= \langle g^{\rho \rho} \partial_\rho \phi, (2 (\partial_\rho \beta) - (\partial_\rho^\ast \beta)) \partial_\rho \phi \rangle + \langle g^{\rho \rho} \partial_\rho \phi, 2 \beta (\partial_\rho^2 \phi) \rangle \\
  &\quad- \langle g^{\rho \rho} \partial_\rho \phi, (\partial_\rho \partial_\rho^\ast \beta) \phi \rangle \\
  &= \langle \partial_\rho \phi, ( 2 g^{\rho \rho} (\partial_\rho \beta) - g^{\rho \rho} (\partial_\rho^\ast \beta) + \partial_\rho^\ast ( g^{\rho \rho} \beta ) ) \partial_\rho \phi \rangle \\
  &\quad- \frac{1}{2} \langle g^{\rho \rho} (\partial_\rho \partial_\rho^\ast \beta), \partial_\rho (\phi^2) \rangle \\
  &= \langle ( 2 g^{\rho \rho} (\partial_\rho \beta) - (\partial_\rho g^{\rho \rho}) \beta ) \partial_\rho \phi, \partial_\rho \phi \rangle - \frac{1}{2} \langle (\partial_\rho^\ast g^{\rho \rho} \partial_\rho \partial_\rho^\ast \beta) \phi, \phi \rangle \\
  &= \langle (2 g^{\rho \rho} (\partial_\rho \beta) - (\partial_\rho g^{\rho \rho}) \beta) \partial_\rho \phi, \partial_\rho \phi \rangle + \frac{1}{2} \langle (\Delta \partial_\rho^\ast \beta) \phi, \phi \rangle. 
\end{align*}
Note that $\partial_\rho g^{\rho\rho}=-2\rho^{-3}\jap{\rho}^{-(2n-1)}(\jap{\rho}^{2(n-1)}-\jap{\rho}^2-(n-2)\rho^2)$ has the opposite sign from $\rho$, so if $\beta$ is odd and increasing, then the coefficient of $(\partial_\rho\phi)^2$ is positive. For the second term on the right-hand side of~\eqref{equ:DeltaQ} we obtain that
\begin{align*}
 - \langle \jap{\rho}^{-2} \ringsDelta \phi, 2 \beta (\partial_\rho \phi) - (\partial_\rho^\ast \beta) \phi \rangle &= \langle \jap{\rho}^{-2} \ringsnabla \phi, 2 \beta \ringsnabla \partial_\rho \phi \rangle - \langle \jap{\rho}^{-2} (\partial_\rho^\ast \beta) \ringsnabla \phi, \ringsnabla \phi \rangle \\
 &= \langle \partial_\rho^\ast ( \jap{\rho}^{-2} \beta ) \ringsnabla \phi, \ringsnabla \phi \rangle - \langle \jap{\rho}^{-2} (\partial_\rho^\ast \beta) \ringsnabla \phi, \ringsnabla \phi \rangle \\
 &= 2 \langle {\textstyle \frac{\rho}{\jap{\rho}^4} } \beta \ringsnabla \phi, \ringsnabla \phi \rangle \\
 &= 2 \langle {\textstyle \frac{\rho}{\jap{\rho}^2} } \beta \snabla \phi, \snabla \phi \rangle.
\end{align*}
Putting the above identities together, we arrive at 
\begin{align}\label{eq:comm-hard}
 \langle - \Delta \phi, Q \phi \rangle = \langle (2 g^{\rho \rho} (\partial_\rho \beta) - (\partial_\rho g^{\rho \rho}) \beta) \partial_\rho \phi, \partial_\rho \phi \rangle + 2 \langle {\textstyle \frac{\rho}{\jap{\rho}^2} } \beta \snabla \phi, \snabla \phi \rangle + \frac{1}{2} \langle (\Delta \partial_\rho^\ast \beta) \phi, \phi \rangle.
\end{align}
In view of \eqref{eq:comm-hard}, for \eqref{eq:comm1} we can get  control of the spatial part of the $LE$ norm in \eqref{eq:LEDproducttemp1} with the choice
%%%%%%%
%%%%%%%
\begin{align*}
\begin{split}
\beta:=\chi \rho + K(1-\chi) \beta_E,
\end{split}
\end{align*}
where $K$ is a large constant, $\beta_E=\frac{\rho}{\jap{\rho}}-\alpha\frac{\rho}{\jap{\rho}^{1+\alpha}}$ (an odd increasing function; note that the multiplier $\beta_{E}(r) \partial_{r} - \partial_{r}^{\ast} \beta_{E}(r)$ leads to the proof of LED on the flat (Euclidean) space), and $\chi$ is a radial cut-off to the region $\rho\in[-R,R]$, for some large R, which decays to zero monotonically outside of $[-R,R]$ and is zero on $[-2R,2R]$. The point is that in this way $\rho \beta$ is positive and if $K$ is sufficiently large
%%%%%%%
%%%%%%%
\begin{align*}
\begin{split}
\partial_\rho\beta= -(\partial_\rho\chi)(K \beta_E-\rho)+K(1-\chi)\partial_\rho\beta_E+\chi
\end{split}
\end{align*}
is also positive in the gluing region. Note that for the angular derivative we get a degeneracy of order two at $\rho=0$. In order to control $\partial_t \phi$ in the LED estimate we also use the multiplier identity
%%%%%%%
%%%%%%%
\begin{align}\label{eq:comm2}
\begin{split}
\angles{\Box\phi}{\gamma\phi}=\angles{\gamma\partial_t\phi}{\partial_t\phi}-\angles{\gamma\nabla_g\phi}{\nabla_g\phi}-\partial_t\angles{\gamma\partial_t\phi}{\phi}+\frac{1}{2}\angles{(\Delta\gamma)\phi}{\phi}.
\end{split}
\end{align}
For \eqref{eq:comm2}, the choice $\gamma(\rho)=\frac{\rho^2}{\jap{\rho}^{3+\alpha}}$ gives
%%%%%%%
%%%%%%%
\begin{align*}
\begin{split}
\angles{\frac{\rho^2}{\jap{\rho}^{3+\alpha}}\partial_t\phi}{\partial_t\phi}=&\angles{f-V\phi}{\frac{\rho^2}{\jap{\rho}^{3+\alpha}}\phi}+\angles{\frac{\rho^2}{\jap{\rho}^{3+\alpha}}\nabla_g\phi}{\nabla_g\phi}-\frac{1}{2}\angles{(\Delta\frac{\rho^2}{\jap{\rho}^{3+\alpha}})\phi}{\phi}+\partial_t\angles{\frac{\rho^2}{\jap{\rho}^{3+\alpha}}\partial_t\phi}{\phi}.
\end{split}
\end{align*}
Adding this identity to a suitable multiple of \eqref{eq:comm-hard}, we arrive at \eqref{eq:LEDproducttemp1-intr}. Note that here for the contribution of the last term on the right-hand side of \eqref{eq:comm1} we have used the spatial decay of $\beta\partial_\rho V$. This allows us to bound the error by the last term on the right-hand side of~\eqref{eq:LEDproducttemp1-intr} plus a small term that can be absorbed on the left-hand side of~\eqref{eq:LEDproducttemp1-intr}.

The proof of \eqref{eq:LEDproducttemp-sch} is concluded by removing the $L^{2}_{t, x}$ error in the spatial region $B$ in \eqref{eq:LEDproducttemp1} by a contradiction argument akin to \cite[Proof of Theorem~1.19]{MMT1}. In fact, our situation is a bit simper compared to \cite{MMT1} thanks to the fact that we may directly take the Fourier transform in time for $\phi \in  \bbP_{c} \calS_{t, x}$. More precisely, by the Plancherel theorem in the variable $t$, \eqref{eq:LEDproducttemp-sch} would follow once we establish
\begin{equation} \label{eq:LEDproducttemp-sch-FT}
	\| \bbP_{c} v \|_{\calL \calE^{1}_{\sharp (\tau)}} \lesssim \| (\Delta + V + \tau^{2}) \bbP_{c} v \|_{\calL \calE_{\sharp}^{\ast}} \qquad \hbox{ for all } v \in \calS_{x}, \, \tau \in \bbR,
\end{equation}
where
\begin{align*}
	\| v \|_{\calL \calE^{1}_{\sharp (\tau)}} &:= \| \rho (\tau v, \partial_{x} v) \|_{L^{2}(\{ | \rho | < 1 \})} 
		+ \| \partial_{\rho} v \|_{L^{2}_{x}(\{ | \rho | < 1 \})} \\
	&\phantom{:=} + \sup_{j \geq 1} \left(\| |\rho|^{-\frac{1}{2}} (\tau v, \partial_{x} v)  \|_{L^{2}_{t, x}(\{ 2^{j-1} \leq | \rho | < 2^{j} \})} + \| |\rho|^{-\frac{3}{2}} v\|_{L^{2}_{t, x}(\{ 2^{j-1} \leq | \rho | < 2^{j} \})} \right), \\
	\| f \|_{\calL \calE^{\ast}_{\sharp}} &:= \| f \|_{L^{2}_{x}(\{ | \rho | < 1 \})} 
		+ \sum_{j \geq 1} \| |\rho|^{\frac{1}{2}} f \|_{L^{2}_{x}(\{ 2^{j-1} \leq | \rho | < 2^{j} \})},
\end{align*}
and $\calS_{x}$ is the class of smooth functions $v$ on $\barcalC$ such that $\partial_{x}^{\alpha} \psi$ for every multi-index $\alpha$ is decaying faster than $(1 + \rho^{2})^{-\frac{N}{2}}$ for any $N \geq 0$. Furthermore, fixing $\chi \in C^{\infty}_{c}(\bbR)$, applying \eqref{eq:LEDproducttemp1} to $\phi = a^{\frac{1}{2}} \chi(a t) e^{i \tau t} \bbP_{c} v$ with arbitrarily small $a > 0$ (where we choose $t_{1}$, $t_{2}$ to be outside the support of $\chi(a t)$ so that no boundary terms appear), we obtain
\begin{equation} \label{eq:LEDproducttemp1-FT}
	\| \bbP_{c} v \|_{\calL \calE^{1}_{\sharp (\tau)}} \lesssim \| (\Delta + V + \tau^{2}) \bbP_{c} v \|_{\calL \calE_{\sharp}^{\ast}} + \| \bbP_{c} v \|_{L^{2}_{x}(B)} \qquad \hbox{ for all } v \in \calS_{x}, \, \tau \in \bbR.
\end{equation}

To summarize, our task now is to establish \eqref{eq:LEDproducttemp-sch-FT} using \eqref{eq:LEDproducttemp1-FT}. First, let us show that there exists $\tau_{\ast} > 0$ such that \eqref{eq:LEDproducttemp1-FT} implies \eqref{eq:LEDproducttemp-sch-FT} for $\abs{\tau} \geq \tau_{\ast}$. While this implication does not immediately follow due to the presence of the (degenerate) weight $\rho$ in the region $\{ | \rho | <  1\}$ in the definition of $\calL \calE^{1}_{\sharp (\tau)}$, it may be remedied as follows.  We claim that
\begin{align*}
	\| \bbP_{c} v \|_{L^{2}(B)} &\leq \| \bbP_{c} v \|_{L^{2}(\{ |\rho| < 1 \}}  + C | \tau |^{-1} \| \bbP_{c} v \|_{\calL \calE^{1}_{\sharp(\tau)}}, \\
	\| \bbP_{c} v \|_{L^{2}(\{ |\rho| < 1 \}} &\leq C | \tau |^{-\frac{1}{2}} \| \bbP_{c} v \|_{\calL \calE^{1}_{\sharp(\tau)}}.
\end{align*}
Combining these two inequalities, we may absorb $\| \bbP_{c} v \|_{L^{2}(B)}$ in \eqref{eq:LEDproducttemp1-FT} into the LHS for sufficiently large $| \tau |$, and therefore \eqref{eq:LEDproducttemp-sch-FT} would follow. The first inequality is an obvious consequence of the definition of $\calL \calE^{1}_{\sharp(\tau)}$. To prove the second inequality, we begin with the following calculus inequality (here, $\chi \in C^{\infty}_{c}(\bbR)$):
\begin{equation*}
	\int |\bbP_{c} v|^{2} \chi^{2}(\rho) \, \ud \rho \leq 2\left(\int |\partial_{\rho} \bbP_{c} v|^{2} \chi^{2}(\rho) \, \ud \rho\right)^{\frac{1}{2}} \left(\int |\rho \bbP_{c} v|^{2} \chi^{2}(\rho) \, \ud \rho\right)^{\frac{1}{2}} + 2 \int \rho |\bbP_{c} v|^{2} |\chi (\rho) \partial_{\rho} \chi (\rho)| \, \ud \rho
\end{equation*}
proved using the identity $1 = \partial_{\rho} \rho$ and integration by parts. Let us choose $\chi$ to be a nonnegative cutoff function such that $\chi = 1$ on $\{ |\rho| < 1 \}$ and $\chi = 0$ outside $\{ |\rho| < 2 \}$. Integrating also over the angular variables, the LHS controls $\| \bbP_{c} v \|_{L^{2}(\{|\rho| < 1 \}}$, whereas the RHS is bounded by
\begin{equation*}
	C (\| \partial_{\rho} \bbP_{c} v \|_{L^{2}(\{ |\rho| < 2 \})} + \| \bbP_{c} v \|_{L^{2}(\{ 1 < |\rho| < 2 \})} ) \| \rho \bbP_{c} v \chi \|_{L^{2}}
	\leq C |\tau|^{-1} \| \bbP_{c} v \|_{\calL \calE^{1}_{\sharp (\tau)}}^{2},
\end{equation*}
which proves the desired inequality.

To proceed, assume, for the purpose of contradiction, that \eqref{eq:LEDproducttemp-sch-FT} fails for some $\tau \in (-\tau_{\ast}, \tau_{\ast})$. Then there exist sequences $\{ v_{n} \} \subseteq \calS_{x}$ and $\{\tau_{n} \} \subseteq (-\tau_{\ast}, \tau_{\ast})$, as well as a sequence $\{ \epsilon_{n} \}$ of positive numbers converging to zero, such that
\begin{equation*}
	\| (\Delta + V + \tau_{n}^{2}) \bbP_{c} v_{n} \|_{\calL \calE_{\sharp}^{\ast}} \leq \epsilon_{n} \| \bbP_{c} v_{n} \|_{\calL \calE^{1}_{\sharp (\tau)}}.
\end{equation*}
Since $\{ \tau_{n} \}$ is bounded, by passing to a subsequence, we may assume that $\tau_{n} \to \tau \in \bbR$. Moreover, normalizing $v_{n}$ so that $\| \bbP_{c} v_{n} \|_{\calL \calE^{1}_{\sharp (\tau)}} = 1$ and passing to a subsequence again, we may assume that $\bbP_{c} v_{n} \rightharpoonup v$ with respect to the weak-$\ast$ topology in $\calL \calE^{1}_{\sharp}$. Note that $\bbP_{c} v = v$ since $\bbP_{c}$ is a bounded linear map on $\calL \calE^{1}_{\sharp}$. Moreover, since $\| (\Delta + V + \tau_{n}^{2}) \bbP_{c} v_{n} \|_{\calL \calE_{\sharp}^{\ast}}$ then vanishes as $n \to \infty$, using elliptic regularity and passing to another subsequence if necessary, we may assume that $\bbP_{c} v_{n} \to v$ strongly in $L^{2}_{loc}$. Hence we have produced a function $v$ satisfying
\begin{equation*}
	v \in \calL \calE^{1}_{\sharp}, \quad \bbP_{c} v = v, \quad (\Delta + V + \tau^{2}) v = 0, \quad \| v \|_{L^{2}(B)} \gtrsim 1,
\end{equation*}
where the last lower bound follows from \eqref{eq:LEDproducttemp1-FT}.

At this point, we see that $\tau = 0$ is impossible due to the nonexistence of a resonance at $0$ and $\bbP_{c} v = v$. To rule out $\tau \neq 0$, we follow the argument in \cite[Section~4.1, Step~10]{MMT1} to first establish the following uniform bound for the sequence $v_{n}$ and $j$ sufficiently large (independent of $n$): 
\begin{align*}
	&\| |\rho|^{-\frac{1}{2}} (i \partial_{\rho} \pm \tau_{n}) \bbP_{c} v_{n} \|_{L^{2}(D_{j})}^{2}\\
	&\lesssim \sum_{k=0}^{\infty} 2^{\delta \min\{k-j, 0\}}  \Big[ \| \langle \rho \rangle^{\frac{1}{2}}(\Delta + V - \tau_{n}^{2}) v_{n} \|_{L^{2}(D_{k})} \| \langle \rho \rangle^{-\frac{1}{2}} (\bbP_{c} v_{n}, \partial_{x} \bbP_{c}v_{n}) \|_{L^{2}(D_{k})} \\
	&\phantom{\lesssim \sum_{k=0}^{\infty} 2^{\delta \min\{k-j, 0\}}  \Big[} + \kappa_{k} \| \langle \rho \rangle^{-\frac{1}{2}} (\bbP_{c}v_{n}, \partial_{x} \bbP_{c}v_{n}) \|_{L^{2}(D_{k})}^{2} \Big]
\end{align*}
for some $\delta > 0$ and an absolutely summable sequence $\kappa_{k}$ (derived from asymptotic flatness of $\Delta + V$). Here, $D_{k} = \{ 2^{k-1} \leq |\rho| < 2^{k} \}$ for $k \geq 1$ and $D_{0} = \{ |\rho| < 1 \}$. Observe that, unlike \cite{MMT1}, the spectral parameter $\tau^{2}$ is purely real so the argument applies for both signs $\pm$ (more specifically, using the notation in \cite[Section~4.1, Step~10]{MMT1}, we may take $\epsilon = 0$ and apply the positive commutator argument in each asymptotically flat end to bound $(D_{r} \pm \tau^{\frac{1}{2}}) u$). Taking the limit $n \to \infty$, we obtain
\begin{align*}
\| |\rho|^{-\frac{1}{2}} (i \partial_{\rho} \pm \tau) v \|_{L^{2}(D_{j})}^{2} \lesssim \sum_{k=0}^{\infty} 2^{\delta \min\{k-j, 0\}} \kappa_{k} \to 0 \quad \hbox{ as } j \to \infty.
\end{align*}
Since this bound holds for both signs and $\tau \neq 0$, we obtain $\lim_{j \to \infty} \| |\rho|^{-\frac{1}{2}} v \|_{L^{2}(D_{j})}= 0$. By elliptic regularity and equation $(\Delta + V +\tau^{2}) v = 0$, it furthermore follows that $\lim_{j \to \infty} \| |\rho|^{-\frac{1}{2}} \partial_{x} v \|_{L^{2}(D_{j})}= 0$ (cf.~\cite[Equation (4.9)]{MMT1}). At this point, we can carry out the argument in \cite[Section~4.1, Step~8]{MMT1} to conclude that $v \in L^{2}(\{ \rho > 1\}) \cap L^{2}(\{ \rho < -1\}) $. But then, by Kato's theorem on the absence of embedded eigenvalues for the asymptotically flat self-adjoint operator $\Delta + V$ (or, more directly, a unique continuation argument for $\Delta + V$ at each asymptotically flat end as in \cite{KoTa}), it follows that $v = 0$, which is also impossible. This completes the proof by contradiction of \eqref{eq:LEDproducttemp-sch-FT}. \qedhere
\end{proof}
%%%%%%%%%%%
%%%%%%%%%%%

\begin{remark}
The fourth inequality in Proposition~\ref{prop:LEDproduct} is a version of \emph{two point (integrated) local energy decay} estimate as in  \cite[Eq.~(1.6)]{MST} in the presence of an eigenvalue at zero. As is remarked in \cite{MST}, this inequality holds regardless of the existence of eigenvalues of $\Delta + V$ outside of $(-\infty, 0]$. This is evident from the last part of our proof, where the element $v$ constructed by the contradiction argument automatically solves $(\Delta  + V + \tau^{2}) v = 0$ for some $\tau \in \bbR$.
\end{remark}

%%%%%%%%%%%%%%%%%%%%%%
%%%%%%%%%%%%%%%%%%%%%%
%%%%%%%%%%%%%%%%%%%%%%
\section{Interior}\label{sec:interior}
%%%%%%%%%%%%%%%%%%%%%%
%%%%%%%%%%%%%%%%%%%%%%
%%%%%%%%%%%%%%%%%%%%%%
%%%%%%%%%%%%%%%%%%%%%%

In this section we first define the momentum variable $\dotpsi$ and we derive, in vector form, the first-order equation for $\vecpsi$. Then we introduce suitable orthogonality conditions for the modulation parameters $\ell(t)$ and $\xi(t)$, and derive the equations satisfied by $\dot{\vecwp}=(\dotell,\dotxi-\ell)^\intercal$.
Finally, we enact a further decomposition of the perturbation $\vec{\psi}$ to take into account the unstable mode of the linearized operator.
Throughout this section, all error estimates are to be understood to hold under the bootstrap assumptions~\eqref{eq:a+trap}--\eqref{eq:Tjphienergyb2}. Moreover, we refer to Subsection~\ref{subsubsec:calOnotation} for the definition of the $\calO$ notation.

%%%%%%%%%%%%%%%%%%%
%%%%%%%%%%%%%%%%%%%
%%%%%%%%%%%%%%%%%%%
\subsection{Setup}\label{subsec:setupint1}
%%%%%%%%%%%%%%%%%%%
%%%%%%%%%%%%%%%%%%%
%%%%%%%%%%%%%%%%%%%
Our parametrization for the profile in the flat region $\calC_{\mathrm{flat}} := \{  \sigma_{\text{temp}}(X)\geq  X^0+ \delta_1\}$ is
\begin{equation} \label{equ:definition_profile_flat_region}
 \Psi_\wp(t, \rho, \omega) = \bigl( t, \xi + \gamma^{-1} P_\ell F(\rho, \omega) + P_\ell^\perp F(\rho, \omega) \bigr), 
\end{equation}
where $\xi = \xi(t)$ and $\ell = \ell(t)$ are our time-dependent modulation parameters.

We denote the normal to $\Sigma_t$, in the case where the parameters are treated as fixed, by
\begin{equation*}
 n_\wp := \Lambda_{-\ell} \begin{pmatrix} 0 \\ \nu \end{pmatrix} = \begin{pmatrix} \gamma \ell \cdot \nu \\ A_{-\ell} \nu \end{pmatrix},
\end{equation*}
where $\nu$ is the geometric normal to the Riemannian catenoid $\underline{\calC}$.
Then $\widetilde{N}_{int} = (0, A_{-\ell} \nu)$ is the normal to $\Sigma_t$ viewed as a subspace of $\bsUpsigma_t$. In the interior we define $N$ to be parallel to $\wtilN_{int}$ and such that $\bfeta(n_\wp, N) = 1$, that is,
\begin{equation*}
 N := \begin{pmatrix} 0 \\ |A_{-\ell} \nu|^{-2} A_{-\ell} \nu \end{pmatrix}.
\end{equation*}
Moreover, we write
\begin{equation*}
 W := N - n_\wp.
\end{equation*}
We introduce the scalar perturbation $\psi$ in the interior via the decomposition 
\begin{equation*}
 \Phi = \Psi_\wp + \psi N.
\end{equation*}

Next, we introduce the metric components
\begin{equation*}
 g_{\mu \nu} := \bfeta \bigl( \partial_\mu \Phi, \partial_\nu \Phi \bigr), \quad \quad k_{\mu \nu} := \bfeta \bigl( \partial_\mu \Psi_\wp, \partial_\nu \Psi_\wp \bigr), \quad \quad 0 \leq \mu, \nu \leq n,
\end{equation*}
and the Lagrangian density 
\begin{equation*}
 \calL := \sqrt{|g|} = \sqrt{-\det(g)}.
\end{equation*}
We also introduce 
\begin{align*}
 \Psi_{0;\wp} &= \begin{pmatrix} 1 \\ \ell \end{pmatrix}, \quad \quad \Psi_{j;\wp} = \begin{pmatrix} 0 \\ \gamma^{-1} P_\ell \pj F + P_\ell^\perp \pj F \end{pmatrix}, \quad \quad 1 \leq j \leq n,\\
 h_{\mu \nu} &:= \bfeta \bigl( \Psi_{\mu; \wp}, \Psi_{\nu; \wp} \bigr), \quad \quad 0 \leq \mu, \nu \leq n
\end{align*}
Sometimes we will use the notation 
\begin{align*}
 \partial_\mu \Psi_\wp \Big|_{\substack{\dot{\ell} = 0 \\\dot{\xi} = \ell}} := \Psi_{\mu; \wp}, \quad \quad 0 \leq \mu \leq n.
\end{align*}
{\bf In what follows, we still view $\ell$ as time-dependent in the expressions for $\Psi_{\mu; \wp}$ and $h_{\mu\nu}$.}

\begin{remark} \label{rem:HVMCj_for_Psinu}
 A parametrization of the Lorentzian catenoid boosted by a fixed $\ell \in \bbR^{n+1}$, $|\ell| < 1$, and translated by $a\in \bbR^{n+1}$ (with $\xi$ corresponding to $a+t\ell$) is given by
 \begin{equation*}
  \Gamma_{a,\ell}(t, \rho, \omega) := \bigl( t, a+t\ell + \gamma^{-1} P_\ell F(\rho, \omega) + P_\ell^\perp F(\rho, \omega) \bigr).
 \end{equation*}
 Then $\Gamma_{a,\ell}$ satisfies the HVMC equation
 \begin{equation*}
  \frac{1}{\sqrt{|\kappa|}} \partial_\mu \bigl( \sqrt{|\kappa|} \kappa^{\mu \nu} \pnu \Gamma_{a,\ell} \bigr) = 0
 \end{equation*}
 with $\kappa_{\mu \nu} = \bfeta( \partial_\mu \Gamma_{a,\ell}, \partial_\nu \Gamma_{a,\ell} )$.
 Since the metric coefficients $\kappa_{\mu \nu}$ are time-independent and since by direct computation $\pt \pnu \Gamma_{a,\ell} = 0$ for fixed $\ell$, we in fact have 
 \begin{equation} \label{equ:remark_HVMCj_pj_kappa}
  \pj \bigl( \sqrt{|\kappa|} \kappa^{j \nu} \pnu \Gamma_{a,\ell} \bigr) = 0.
 \end{equation}
 Formally, the expressions for $h_{\mu \nu}$ and $\Psi_{\nu; \wp}$ are the same as those for $\kappa_{\mu \nu}$ and $\pnu \Gamma_{a,\ell}$, only that the parameter $\ell$ is considered time-dependent in $h_{\mu \nu}$ and $\Psi_{\nu; \wp}$, while $\ell$ is considered time-independent in $\kappa_{\mu \nu}$ and $\pnu \Gamma_{a,\ell}$. Since in the preceding equation~\eqref{equ:remark_HVMCj_pj_kappa} only spatial derivatives act from outside, we can conclude that
 \begin{equation*}
 \begin{aligned}
  \pj \bigl( \sqrt{|h|} h^{j\nu} \Psi_{\nu;\wp} \bigr) = 0.
 \end{aligned}
\end{equation*}
\end{remark}

\begin{remark} \label{rem:hinv0j_smallness}
 We point out that the coefficients $(h^{-1})^{0j} = \calO(\ell)$, $1 \leq j \leq n$ are small of order~$\calO(\ell)$. This follows from observing that $h_{00}= -1 + |\ell|^2$, $h_{0j} = \calO(|\ell|)$, and $h_{ij} = \partial_i F \cdot \partial_j F + \calO(|\ell|^2)$. For this reason terms with $(h^{-1})^{0j} $ will not contribute to leading order to the equations.
\end{remark}

\subsection{Definition of the Momentum Variable} \label{susec:momentumvariable}

The HVMC equation $\Box_g \Phi = 0$ for $\Phi = \Psi_\wp + \psi N$ gives rise to a second-order quasilinear wave equation for the scalar $\psi$. In order to be able to formulate first-order modulation equations later on, our first goal is to arrive at a suitable formulation of an associated system of linearized first-order equations for 
\begin{equation}
 \vec{\psi} := \begin{pmatrix} \psi \\ \dot{\psi} \end{pmatrix}
\end{equation}
for a suitably defined momentum variable $\dot{\psi}$. 
We begin by motivating our definition of $\dot{\psi}$. A good starting point is to examine the Euler-Lagrange equation for $\psi$ given by (here and below the notation $\frac{\delta}{\delta\psi}$, etc, simply mean the partial derivative of $\calL$ with respect to the corresponding variable, and not the functional derivative)
\begin{equation} \label{equ:EL_for_phi}
 \partial_t \biggl( \frac{\delta \calL}{\delta (\partial_t \psi)} \biggr) + \partial_j \biggl( \frac{\delta \calL}{\delta (\partial_j \psi)} \biggr) = \frac{\delta \calL}{\delta \psi}.
\end{equation}
It suggests that the quantity $\dot{\psi}$ should be part of 
\begin{equation*}
 \frac{\delta \calL}{\delta (\partial_t \psi)} = \frac12 \sqrt{|g|} (g^{-1})^{\mu \nu} \frac{\delta g_{\mu \nu}}{\delta (\partial_t \psi)}. 
\end{equation*}
Since $g_{\mu \nu} = \bfeta(\partial_\mu \Phi, \partial_\nu \Phi)$ and $\pmu \Phi = \pmu \Psi_\wp + (\pmu \psi) N + \psi \pmu N$, we have 
\begin{equation*}
 \frac{\delta g_{\mu \nu}}{\delta (\partial_t \psi)} = \delta_{\nu 0} \bfeta(\partial_\mu \Phi, N) + \delta_{\mu 0} \bfeta(N, \partial_\nu \Phi),
\end{equation*}
and therefore
\begin{equation*}
 \frac{\delta \calL}{\delta (\partial_t \psi)} = \bfeta( \sqrt{|g|} (g^{-1})^{0 \nu} \partial_\nu \Phi, N).
\end{equation*}
Next, we determine the precise expression for $\frac{\delta \calL}{\delta (\partial_t \psi)}$ up to quadratic (and higher-order) terms in the perturbation $\psi$ and its derivatives $\pmu \psi$. To this end we first record the following expansions
\begin{equation} \label{equ:expansion_ginversemunu}
\begin{aligned}
 (g^{-1})^{\mu \nu} &= (k^{-1})^{\mu \nu} - (k^{-1})^{\mu \alpha} \bfeta( \partial_\alpha \Psi_\wp, (\partial_\beta \psi) N + \psi \partial_\beta N ) (k^{-1})^{\beta \nu} \\
 &\quad \quad - (k^{-1})^{\mu \alpha} \bfeta( (\partial_\alpha \psi) N + \psi (\partial_\alpha N), \partial_\beta \Psi_\wp) \bigr) (k^{-1})^{\beta \nu} + \calO\bigl( (\psi, \pmu \psi)^2 \bigr) 
\end{aligned}
\end{equation}
as well as 
\begin{equation} \label{equ:expansion_sqrtmodg}
\begin{aligned}
 \sqrt{|g|} &= \sqrt{|k|} + \sqrt{|k|} (k^{-1})^{\alpha \beta} \bfeta( \partial_\alpha \Psi_\wp, (\partial_\beta \psi) N + \psi \partial_\beta N ) + \calO\bigl( (\psi, \pmu \psi)^2 \bigr),
\end{aligned}
\end{equation}
Expanding up to terms that are at least quadratic in the perturbation $\psi$ and its derivatives $\pmu \psi$, we have
\begin{equation} \label{equ:expand_delta_L_delta_ptphi}
 \begin{aligned}
  \frac{\delta \calL}{\delta (\partial_t \psi)}  &= \bfeta( \sqrt{|g|} (g^{-1})^{0 \nu} \partial_\nu \Phi, N) \\
  &= \sqrt{|k|} (k^{-1})^{0 \nu} \bfeta( \partial_\nu \Psi_\wp, N) + \bfeta \biggl( \frac{\delta(\sqrt{|g|} (g^{-1})^{0 \nu} \partial_\nu \Phi)}{\delta (\pmu \psi)} \bigg|_{\psi=0} \pmu \psi, N \biggr) \\
  &\quad + \bfeta \biggl( \frac{\delta(\sqrt{|g|} (g^{-1})^{0 \nu} \partial_\nu \Phi)}{\delta \psi} \bigg|_{\psi=0} \psi, N \biggr) + \calE_1,
 \end{aligned}
\end{equation}
where the remainder term $\calE_1$ satisfies $\calE_1 = \calO\bigl( (\psi, \pmu \psi)^2 \bigr)$. 
Here and below, $\vert_{\psi=0}$ means that both $\psi$ and its derivatives are set to zero.
By direct computation, using the expansions \eqref{equ:expansion_ginversemunu} and \eqref{equ:expansion_sqrtmodg}, we obtain
\begin{equation} \label{equ:deltaLdeltaphi_munu_1storder}
\begin{aligned}
 \frac{\delta (\sqrt{|g|} (g^{-1})^{\mu \nu} \pnu \Phi)}{\delta \psi} \bigg|_{\psi=0} &= \sqrt{|k|} (k^{-1})^{\mu \nu} \pnu N + \sqrt{|k|} (k^{-1})^{\mu \nu} (k^{-1})^{\alpha \beta} \bfeta(\partial_\alpha \Psi_\wp, \partial_\beta N) \pnu \Psi_\wp \\
 &\quad - \sqrt{|k|} (k^{-1})^{\mu \alpha} (k^{-1})^{\beta \nu} \bfeta(\partial_\alpha \Psi_\wp, \partial_\beta N) \pnu \Psi_\wp \\
 &\quad - \sqrt{|k|} (k^{-1})^{\mu \alpha} (k^{-1})^{\beta \nu} \bfeta(\partial_\alpha N, \partial_\beta \Psi_\wp) \pnu \Psi_\wp
\end{aligned}
\end{equation}
and 
\begin{equation}  \label{equ:deltaLdeltaphi_0nu_1storder}
\begin{aligned}
 \frac{\delta (\sqrt{|g|} (g^{-1})^{0\nu} \pnu \Phi)}{\delta(\pmu \psi)} \bigg|_{\psi=0} &= \sqrt{|k|} (k^{-1})^{0\mu} N + \sqrt{|k|} (k^{-1})^{\alpha \mu} (k^{-1})^{0\nu} \bfeta(\partial_\alpha \Psi_\wp, N) \pnu \Psi_\wp \\
 &\quad - \sqrt{|k|} (k^{-1})^{0\alpha} (k^{-1})^{\mu\nu} \bfeta(\partial_\alpha \Psi_\wp, N) \pnu \Psi_\wp \\
 &\quad - \sqrt{|k|} (k^{-1})^{0\mu} (k^{-1})^{\beta \nu} \bfeta(N, \partial_\beta \Psi_\wp) \pnu \Psi_\wp.
\end{aligned}
\end{equation}
For later use we also record that
\begin{equation}  \label{equ:deltaLdeltaphi_jnu_1storder}
\begin{aligned}
 \frac{\delta (\sqrt{|g|} (g^{-1})^{j\nu} \pnu \Phi)}{\delta (\pmu \psi)} \bigg|_{\psi=0} &= \sqrt{|k|} (k^{-1})^{j\mu} N + \sqrt{|k|} (k^{-1})^{\alpha \mu} (k^{-1})^{j\nu} \bfeta(\partial_\alpha \Psi_\wp, N) \pnu \Psi_\wp \\
 &\quad - \sqrt{|k|} (k^{-1})^{j\alpha} (k^{-1})^{\mu \nu} \bfeta(\partial_\alpha \Psi_\wp, N) \pnu \Psi_\wp \\
 &\quad - \sqrt{|k|} (k^{-1})^{j\mu} (k^{-1})^{\beta \nu} \bfeta(N, \partial_\beta \Psi_\wp) \pnu \Psi_\wp.
\end{aligned}
\end{equation}

We proceed with a further examination of the terms on the right-hand side of~\eqref{equ:expand_delta_L_delta_ptphi}.
Using that $\bfeta(\pj \Psi_\wp, N) = 0$, we can rewrite the first term on the RHS of \eqref{equ:expand_delta_L_delta_ptphi} as 
\begin{equation}
 \begin{aligned}
  \sqrt{|k|} (k^{-1})^{0 \nu} \bfeta( \partial_\nu \Psi_\wp, N) &= \sqrt{|k|} (k^{-1})^{00} \bfeta( \partial_t \Psi_\wp, N) \\
  &= \sqrt{|h|} (h^{-1})^{00} \bfeta \bigl( (1,\ell), N \bigr) \\
  &\quad + \sqrt{|h|} (h^{-1})^{00} \bfeta \bigl( \pt \Psi_\wp - (1,\ell), N \bigr) \\
  &\quad + \bfeta \Bigl( \bigl( \sqrt{|k|} (k^{-1})^{00} - \sqrt{|h|} (h^{-1})^{00} \bigr) \partial_t \Psi_\wp, N \Bigr) \\
  &= \sqrt{|h|} (h^{-1})^{00} \bfeta \bigl( (1,\ell), N \bigr) \\
  &\quad + \sqrt{|h|} (h^{-1})^{00} \bfeta \Bigl( \bigl( \dot{\ell} \cdot \nabla_\ell + (\dot{\xi}-\ell) \cdot \nabla_\xi \bigr) \Psi_\wp, N \Bigr) + \calE_2,
 \end{aligned}
\end{equation}
where the remainder term satisfies
\begin{equation*}
 \begin{aligned}
  \calE_2 := \bfeta \Bigl( \bigl( \sqrt{|k|} (k^{-1})^{00} - \sqrt{|h|} (h^{-1})^{00} \bigr) \partial_t \Psi_\wp, N \Bigr) = \calO\bigl(\dotwp^2, \dotwp \ell\bigr).
 \end{aligned}
\end{equation*}
Using~\eqref{equ:deltaLdeltaphi_0nu_1storder} we obtain that the second term on the right-hand side of~\eqref{equ:expand_delta_L_delta_ptphi} is explicitly given by
\begin{equation} \label{equ:expand_delta_L_delta_ptphi_2nd_term}
 \begin{aligned}
  &\bfeta \biggl( \frac{\delta(\sqrt{|g|} (g^{-1})^{0 \nu} \partial_\nu \Phi)}{\delta (\pmu \psi)} \bigg|_{\psi=0} \pmu \psi, N \biggr) \\
  &\quad = \sqrt{|k|} (k^{-1})^{0\mu} (\partial_\mu \psi) \bfeta\bigl(N, N - (k^{-1})^{\alpha \beta} \bfeta( N, \partial_\alpha \Psi_\wp ) \partial_\beta \Psi_\wp \bigr).
 \end{aligned}
\end{equation}
Now recall that if $\Psi_\wp$ were a genuine maximal embedding, then $\{ n_\wp, \pnu \Psi_\wp\}$ would form a basis of the ambient space with $(k^{-1})^{\alpha \beta} \bfeta( N, \partial_\alpha \Psi_\wp ) \partial_\beta \Psi_\wp$ denoting the tangential part of $N$. Since $\bfeta(N, n_\wp) = 1$ by construction, the right-hand side of the preceding identity~\eqref{equ:expand_delta_L_delta_ptphi_2nd_term} would then just read $\sqrt{|k|} (k^{-1})^{0\mu} \pmu \psi$. To quantify the difference, we write 
\begin{equation*}
 \begin{aligned}
  (k^{-1})^{\alpha \beta} \bfeta( N, \partial_\alpha \Psi_\wp ) \partial_\beta \Psi_\wp = (h^{-1})^{\alpha \beta} \bfeta( N, \Psi_{\alpha; \wp} ) \Psi_{\beta; \wp} + \calE_{3,1} + \calE_{3,2}
 \end{aligned}
\end{equation*}
with remainder terms
\begin{align*}
 \calE_{3,1} &:= \bigl( (k^{-1})^{\alpha \beta} - (h^{-1})^{\alpha \beta} \bigr) \bfeta( N, \partial_\alpha \Psi_\wp ) \partial_\beta \Psi_\wp, \\
 \calE_{3,2} &:= (h^{-1})^{\alpha \beta} \bfeta\bigl( N, \partial_\alpha \Psi_\wp - \Psi_{\alpha;\wp} \bigr)  \partial_\beta \Psi_\wp + (h^{-1})^{\alpha \beta} \bfeta( N, \Psi_{\alpha;\wp} ) \bigl( \partial_\beta \Psi_\wp - \Psi_{\beta; \wp} \bigr)
\end{align*}
of the form $\calO(\dotwp)$.
Correspondingly, we have
\begin{equation*}
 \begin{aligned}
  \bfeta \biggl( \frac{\delta(\sqrt{|g|} (g^{-1})^{0 \nu} \partial_\nu \Phi)}{\delta (\pmu \psi)} \bigg|_{\psi=0} \pmu \psi, N \biggr) &= \sqrt{|k|} (k^{-1})^{0\mu} \partial_\mu \psi + \calE_3
 \end{aligned}
\end{equation*}
with remainder term
\begin{equation*}
 \begin{aligned}
  \calE_3 := \sqrt{|k|} (k^{-1})^{0\mu} (\partial_\mu \psi) \bfeta\bigl(N, \calE_{3,1} + \calE_{3,2} \bigr) = \calO\bigl( (\partial \psi) \dotwp\bigr).
 \end{aligned}
\end{equation*}

In order to obtain a more favorable structure of the linearized equation for $\vec{\psi}$, it is preferable to remove the linear terms involving $\psi$ from the above candidate for $\dot{\psi}$, that is, in \eqref{equ:expand_delta_L_delta_ptphi}. 
But in order to make sure no second order derivatives of the parameters appear when we calculate $\partial_t\dotpsi$, we replace $k$ by $h$ when subtracting off the linear contributions of $\psi$ in \eqref{equ:expand_delta_L_delta_ptphi}. More specifically, to avoid appearances of $\dotell$, we also only subtract off those expressions where we think of $\ell$ as being time-independent (so for instance $\partial_t N$ should be thought of as zero and we use $\Psi_{\nu; \wp}$ instead of $\partial_\nu \Psi_\wp$). We introduce the following more succinct notation for these terms
\begin{equation} \label{equ:definition_B}
\begin{aligned}
 B \psi &:= \bfeta \biggl( \frac{\delta (\sqrt{|g|} (g^{-1})^{0 \nu} \partial_\nu \Phi)}{\delta \psi} \bigg|_{\substack{\psi = 0 \\ k = h}} \psi, N \biggr) \\
 &= \sqrt{|h|} (h^{-1})^{0j}  \bfeta( \partial_j N, N ) \psi \\
 &\quad - \sqrt{|h|} (h^{-1})^{0 \alpha} (h^{-1})^{j \nu} \bfeta( \Psi_{\alpha; \wp}, \partial_j N ) \bfeta( \Psi_{\nu; \wp}, N ) \psi \\
 &\quad - \sqrt{|h|} (h^{-1})^{0 j} (h^{-1})^{\beta \nu} \bfeta( \partial_j N, \Psi_{\beta; \wp} ) \bfeta( \Psi_{\nu; \wp}, N ) \psi \\ 
 &\quad + \sqrt{|h|} (h^{-1})^{\alpha j} (h^{-1})^{0\nu} \bfeta( \Psi_{\alpha; \wp}, \partial_j N )  \bfeta( \Psi_{\nu; \wp}, N) \psi.
\end{aligned}
\end{equation}
We denote the resulting difference by
\begin{equation}
 \calE_4 := \bfeta \biggl( \frac{\delta (\sqrt{|g|} (g^{-1})^{0 \nu} \partial_\nu \Phi)}{\delta \psi} \bigg|_{\substack{\psi = 0}} \psi, N \biggr) - \bfeta \biggl( \frac{\delta (\sqrt{|g|} (g^{-1})^{0 \nu} \partial_\nu \Phi)}{\delta \psi} \bigg|_{\substack{\psi = 0 \\ k = h}} \psi, N \biggr),
\end{equation}
which satisfies $\calE_4 = \calO( \dotwp \psi )$.

\begin{remark}
The notation $\frac{\delta (\cdot)}{\delta \psi} \big|_{\substack{\psi = 0 \\ k = h}}$ shall indicate that we compute $\frac{\delta}{\delta \psi}$, while the $t$-dependence of $\xi$ and $\ell$ is frozen, i.e., we replace $\dot{\xi}$ by $\ell$ as well as $\dot{\ell}$ by $0$ and we use $\Psi_{\nu; \wp}$ instead of $\partial_\nu \Psi_\wp$ This means that $\partial_t (B \psi)$ will involve at most one $t$-derivative of $\xi$ or $\ell$. Moreover, in those terms $\dot{\xi}$ or $\dot{\ell}$ are always multiplied by $\psi$.
\end{remark}

We arrive at the following definition
\begin{equation}
 \begin{aligned}
  \dot{\psi} := \bfeta \bigl( \sqrt{|g|} (g^{-1})^{0 \nu} \partial_\nu \Phi, N \bigr) - \bfeta \bigl( \sqrt{|h|} (h^{-1})^{00} (1, \ell), N \bigr) - B \psi. 
 \end{aligned}
\end{equation}

\subsection{Relation between $\dot{\psi}$ and $\pt \psi$}

Next, we record the relation between $\dot{\psi}$ and $\pt \psi$.
From the preceding we obtain
\begin{equation*}
 \begin{aligned}
  \dot{\psi} = \sqrt{|h|} (h^{-1})^{00} \bfeta \Bigl( \bigl( \dot{\ell} \cdot \nabla_\ell + (\dot{\xi}-\ell) \cdot \nabla_\xi \bigr) \Psi_\wp, N \Bigr) + \sqrt{|k|} (k^{-1})^{0\mu} \pmu \psi + \calE_1 + \ldots + \calE_4.
 \end{aligned}
\end{equation*}
Solving for $\pt \psi$ yields 
\begin{equation*}
 \begin{aligned}
  \pt \psi &= \frac{1}{\sqrt{|k|} (k^{-1})^{00}} \dot{\psi} - \frac{\sqrt{|k|} (k^{-1})^{0j}}{\sqrt{|k|} (k^{-1})^{00}} \pj \psi \\
  &\quad - \frac{\sqrt{|h|} (h^{-1})^{00}}{\sqrt{|k|} (k^{-1})^{00}} \bfeta \Bigl( \bigl( \dot{\ell} \cdot \nabla_\ell + (\dot{\xi}-\ell) \cdot \nabla_\xi \bigr) \Psi_\wp, N \Bigr) - \frac{1}{\sqrt{|k|} (k^{-1})^{00}} \bigl( \calE_1 + \ldots + \calE_4 \bigr).
 \end{aligned}
\end{equation*}
Upon rewriting the prefactors in terms of $h$, which accrues further errors, we arrive at the relation 
\begin{equation} \label{equ:relation_ptphi_dotphi}
 \begin{aligned}
  \pt \psi &= \frac{1}{\sqrt{|h|} (h^{-1})^{00}} \dot{\psi} - \frac{(h^{-1})^{0j}}{(h^{-1})^{00}} \pj \psi - \bfeta \Bigl( \bigl( \dot{\ell} \cdot \nabla_\ell + (\dot{\xi}-\ell) \cdot \nabla_\xi \bigr) \Psi_\wp, N \Bigr) + f, 
 \end{aligned}
\end{equation}
where 
\begin{equation*}
 \begin{aligned}
  f &:= - \frac{1}{\sqrt{|k|} (k^{-1})^{00}} \bigl( \calE_1 + \ldots + \calE_4 \bigr) + \biggl( \frac{1}{\sqrt{|k|} (k^{-1})^{00}} - \frac{1}{\sqrt{|h|} (h^{-1})^{00}} \biggr) \dot{\psi} \\
  &\quad - \biggl( \frac{\sqrt{|k|} (k^{-1})^{0j}}{\sqrt{|k|} (k^{-1})^{00}} - \frac{\sqrt{|h|} (h^{-1})^{0j}}{\sqrt{|h|} (h^{-1})^{00}} \biggr) \pj \psi \\
  &\quad + \biggl(- \frac{\sqrt{|h|} (h^{-1})^{00}}{\sqrt{|k|} (k^{-1})^{00}} + 1\biggr) \bfeta \Bigl( \bigl( \dot{\ell} \cdot \nabla_\ell + (\dot{\xi}-\ell) \cdot \nabla_\xi \bigr) \Psi_\wp, N \Bigr).
 \end{aligned}
\end{equation*}

\begin{remark}
 Note that the term $f$ still contains $\partial_t \psi$ terms, but those come with additional smallness. Correspondingly, under suitable smallness assumptions we can use the implicit function theorem to solve for $\pt \psi$, as we will do further below.
\end{remark}

\subsection{Computation of $\pt \dot{\psi}$}

Next, we compute the time derivative of $\dot{\psi}$,
\begin{equation} \label{equ:pt_dotphi}
\begin{aligned}
 \pt \dot{\psi} &= \bfeta \Bigl( \pt \bigl( \sqrt{|g|} (g^{-1})^{0 \nu} \partial_\nu \Phi \bigr), N \Bigr) - \bfeta \Bigl( \pt \bigl( \sqrt{|h|} (h^{-1})^{00} (1, \ell) \bigr), N \Bigr) - \pt \bigl( B \psi \bigr) \\
 &\quad \quad + \bfeta \Bigl( \sqrt{|g|} (g^{-1})^{0 \nu} \partial_\nu \Phi - \sqrt{|h|} (h^{-1})^{00} (1, \ell), \pt N \Bigr).
\end{aligned}
\end{equation}
We rewrite the first term on the right-hand side of~\eqref{equ:pt_dotphi} as 
\begin{equation} \label{equ:pt_dotphi_1st_term}
 \begin{aligned}
  \bfeta \Bigl( \pt \bigl( \sqrt{|g|} (g^{-1})^{0 \nu} \partial_\nu \Phi \bigr), N \Bigr) &= \bfeta \Bigl( \pmu \bigl( \sqrt{|g|} (g^{-1})^{\mu \nu} \partial_\nu \Phi \bigr), N \Bigr) - \bfeta \Bigl( \pj \bigl( \sqrt{|g|} (g^{-1})^{j \nu} \partial_\nu \Phi \bigr), N \Bigr) \\
  &= - \bfeta \Bigl( \pj \bigl( \sqrt{|g|} (g^{-1})^{j \nu} \partial_\nu \Phi \bigr) - \pj \bigl( \sqrt{|k|} (k^{-1})^{j \nu} \partial_\nu \Psi_\wp \bigr) , N \Bigr) \\
  &\quad - \bfeta \Bigl( \pj \bigl( \sqrt{|k|} (k^{-1})^{j \nu} \partial_\nu \Psi_\wp \bigr) - \pj \bigl( \sqrt{|h|} (h^{-1})^{j \nu} \Psi_{\nu; \wp} \bigr), N \Bigr), 
 \end{aligned}
\end{equation}
where we used the HVMC equation $\pmu \bigl( \sqrt{|g|} (g^{-1})^{\mu \nu} \partial_\nu \Phi \bigr) = 0$ and that $\pj \bigl( \sqrt{|h|} (h^{-1})^{j \nu} \Psi_{\nu; \wp} \bigr) = 0$, see Remark~\ref{rem:HVMCj_for_Psinu}. Then to leading order the first term on the right-hand side of~\eqref{equ:pt_dotphi_1st_term} is given by
\begin{equation*}
 \begin{aligned}
  &- \bfeta \Bigl( \pj \bigl( \sqrt{|g|} (g^{-1})^{j \nu} \partial_\nu \Phi \bigr) - \pj \bigl( \sqrt{|k|} (k^{-1})^{j \nu} \partial_\nu \Psi_\wp \bigr) , N \Bigr) \\
  &= - \bfeta \biggl( \pj \Bigl( \frac{ \delta (\sqrt{|g|} (g^{-1})^{j \nu} \pnu \Phi) }{\delta (\pmu \psi)} \bigg|_{\substack{\psi = 0, \\ k = h}} \pmu \psi \Bigr), N \biggr) - \bfeta \biggl( \pj \Bigl( \frac{ \delta (\sqrt{|g|} (g^{-1})^{j \nu} \pnu \Phi) }{\delta  \psi} \bigg|_{\substack{\psi = 0, \\ k = h}} \psi \Bigr), N \biggr) + \calE_5 + \calE_6
 \end{aligned}
\end{equation*}
with remainder terms 
\begin{align*}
 \calE_5 &:= \bfeta \biggl( \pj \Bigl( \frac{ \delta (\sqrt{|g|} (g^{-1})^{j \nu} \pnu \Phi) }{\delta (\pmu \psi)} \bigg|_{\substack{\psi = 0, \\ k = h}} \pmu \psi \Bigr), N \biggr) + \bfeta \biggl( \pj \Bigl( \frac{ \delta (\sqrt{|g|} (g^{-1})^{j \nu} \pnu \Phi) }{\delta  \psi} \bigg|_{\substack{\psi = 0, \\ k = h}} \psi \Bigr), N \biggr) \\
 &\quad - \bfeta \biggl( \pj \Bigl( \frac{ \delta (\sqrt{|g|} (g^{-1})^{j \nu} \pnu \Phi) }{\delta (\pmu \psi)} \bigg|_{\psi = 0} \pmu \psi \Bigr), N \biggr) - \bfeta \biggl( \pj \Bigl( \frac{ \delta (\sqrt{|g|} (g^{-1})^{j \nu} \pnu \Phi) }{\delta  \psi} \bigg|_{\psi = 0} \psi \Bigr), N \biggr)
\end{align*}
and 
\begin{align*}
 \calE_6 &:= \bfeta \biggl( \pj \Bigl( \frac{ \delta (\sqrt{|g|} (g^{-1})^{j \nu} \pnu \Phi) }{\delta (\pmu \psi)} \bigg|_{\psi = 0} \pmu \psi \Bigr), N \biggr) + \bfeta \biggl( \pj \Bigl( \frac{ \delta (\sqrt{|g|} (g^{-1})^{j \nu} \pnu \Phi) }{\delta  \psi} \bigg|_{\psi = 0} \psi \Bigr), N \biggr) \\
 &\quad - \bfeta \Bigl( \pj \bigl( \sqrt{|g|} (g^{-1})^{j \nu} \partial_\nu \Phi \bigr) - \pj \bigl( \sqrt{|k|} (k^{-1})^{j \nu} \partial_\nu \Psi_\wp \bigr) , N \Bigr).
\end{align*}
We have 
\begin{equation*}
 \calE_5 = \calO\bigl( \dotwp (\partial \psi), \dotwp (\partial^2 \psi) \bigr) \quad \text{and} \quad \calE_6 = \calO\bigl( (\psi, \partial \psi, \partial^2 \psi)^2 \bigr).
\end{equation*}

The second term on the right-hand side of~\eqref{equ:pt_dotphi_1st_term} is an error term with
\begin{equation*}
 \calE_7 := - \bfeta \Bigl( \pj \bigl( \sqrt{|k|} (k^{-1})^{j \nu} \partial_\nu \Psi_\wp \bigr) - \pj \bigl( \sqrt{|h|} (h^{-1})^{j \nu} \Psi_{\nu; \wp} \bigr), N \Bigr) = \calO\bigl( \dotwp^2, \dotwp \ell \bigr).
\end{equation*}
To see that $\calE_7$ is a quadratic error, we use that $\sqrt{|k|}-\sqrt{|h|} = \calO(|\ell|^2)$ and $(k^{-1})^{ij}-(h^{-1})^{ij} = \calO(|\ell|^2)$. The latter observations follow from Remark~\ref{rem:hinv0j_smallness} and a Taylor expansion.
For the second term on the right-hand side of~\eqref{equ:pt_dotphi} we have 
\begin{equation*}
 \begin{aligned}
  - \bfeta \Bigl( \pt \bigl( \sqrt{|h|} (h^{-1})^{00} (1, \ell) \bigr), N \Bigr) &= - \bfeta \Bigl( \sqrt{|h|} (h^{-1})^{00} (0, \dot{\ell}), N \Bigr) + \calE_8
 \end{aligned}
\end{equation*}
with remainder term 
\begin{equation*}
 \begin{aligned}
  \calE_8 := - \pt \Bigl( \sqrt{|h|} (h^{-1})^{00} \Bigr) \bfeta\bigl( (1,\ell), N \bigr) = \calO\bigl( \dotwp \ell \bigr),
 \end{aligned}
\end{equation*}
and for the third term on the right-hand side of~\eqref{equ:pt_dotphi}, we compute
\begin{equation*}
 \begin{aligned}
  -\pt (B \psi) &= -\bfeta \biggl( \pt \Bigl( \frac{\delta (\sqrt{|g|} (g^{-1})^{0 \nu} \partial_\nu \Phi)}{\delta \psi} \bigg|_{\substack{\psi = 0 \\ k = h}} \psi \Bigr), N \biggr) + \calE_9
 \end{aligned}
\end{equation*}
with
\begin{equation*}
 \begin{aligned}
  \calE_9 := - \bfeta \biggl( \frac{\delta (\sqrt{|g|} (g^{-1})^{0 \nu} \partial_\nu \Phi)}{\delta \psi} \bigg|_{\substack{\psi = 0 \\ k = h}} \psi , \pt N \biggr) = \calO\bigl( \psi \dot{\ell} \bigr).
 \end{aligned}
\end{equation*}
Finally, the fourth term on the right-hand side of~\eqref{equ:pt_dotphi} is again an error term of the form 
\begin{equation*}
 \begin{aligned}
  \calE_{10} := \bfeta \Bigl( \sqrt{|g|} (g^{-1})^{0 \nu} \partial_\nu \Phi - \sqrt{|h|} (h^{-1})^{00} (1, \ell), \pt N \Bigr) = \calO\bigl( (\psi, \partial \psi) \dotwp \bigr) + \calO\bigl( \dotwp^2 \bigr).
 \end{aligned}
\end{equation*}

Combining the preceding expressions, we find that
\begin{equation} \label{equ:pt_dotphi_comp1} 
 \begin{aligned}
  \pt \dot{\psi} &= - \bfeta \biggl( \pj \Bigl( \frac{ \delta (\sqrt{|g|} (g^{-1})^{j \nu} \pnu \Phi) }{\delta (\pmu \psi)} \bigg|_{\substack{\psi = 0, \\ k = h}} \pmu \psi \Bigr), N \biggr) - \bfeta \biggl( \pmu \Bigl( \frac{ \delta (\sqrt{|g|} (g^{-1})^{\mu \nu} \pnu \Phi) }{\delta  \psi} \bigg|_{\substack{\psi = 0, \\ k = h}} \psi \Bigr), N \biggr) \\
  &\quad - \bfeta \Bigl( \sqrt{|h|} (h^{-1})^{00} (0, \dot{\ell}), N \Bigr) + \calE_6 + \ldots + \calE_{10} \\
  &= - \pj \biggl( \bfeta \Bigl( \frac{ \delta (\sqrt{|g|} (g^{-1})^{j \nu} \pnu \Phi) }{\delta (\pmu \psi)} \bigg|_{\substack{\psi = 0, \\ k = h}} \pmu \psi , N \Bigr) \biggr) + \bfeta \Bigl( \frac{ \delta (\sqrt{|g|} (g^{-1})^{j \nu} \pnu \Phi) }{\delta (\pmu \psi)} \bigg|_{\substack{\psi = 0, \\ k = h}} \pmu \psi , \pj N \Bigr) \\
  &\quad - \bfeta \biggl( \frac{ \delta (\sqrt{|g|} (g^{-1})^{\mu \nu} \pnu \Phi) }{\delta  \psi} \bigg|_{\substack{\psi = 0, \\ k = h}} \pmu \psi, N \biggr) - \bfeta \biggl( \pmu \Bigl( \frac{ \delta (\sqrt{|g|} (g^{-1})^{\mu \nu} \pnu \Phi) }{\delta  \psi} \bigg|_{\substack{\psi = 0, \\ k = h}} \Bigr) \psi, N \biggr) \\
  &\quad - \bfeta \Bigl( \sqrt{|h|} (h^{-1})^{00} (0, \dot{\ell}), N \Bigr) + \calE_5 + \ldots + \calE_{10}.
 \end{aligned}
\end{equation}

From \eqref{equ:deltaLdeltaphi_jnu_1storder} we obtain for the first term on the right-hand side of~\eqref{equ:pt_dotphi_comp1} that
\begin{align*}
 &- \pj \biggl( \bfeta \Bigl( \frac{ \delta (\sqrt{|g|} (g^{-1})^{j \nu} \pnu \Phi) }{\delta (\pmu \psi)} \bigg|_{\substack{\psi = 0, \\ k = h}} \pmu \psi , N \Bigr) \biggr) \\
 &= - \pj \Bigl( \sqrt{|h|} (h^{-1})^{j\mu} \bigl( \bfeta(N, N) - (h^{-1})^{\beta \nu} \bfeta(N_\ell, \Psi_{\beta; \wp}) \bfeta(\Psi_{\nu; \wp}, N) \bigr) \pmu \psi \Bigr) \\
 &= - \pj \Bigl( \sqrt{|h|} (h^{-1})^{j\mu} \pmu \psi \Bigr),
\end{align*}
where we used that 
\begin{equation*}
 \begin{aligned}
  \bfeta(N, N) - (h^{-1})^{\beta \nu} \bfeta(N, \Psi_{\beta; \wp}) \bfeta(\Psi_{\nu; \wp}, N) = \bfeta(N, n) = 1.
 \end{aligned}
\end{equation*}
The latter identity follows from the fact that $\{ \Psi_{\mu;\wp}, n_\wp \}$ forms a basis for the ambient space.
Using~\eqref{equ:deltaLdeltaphi_jnu_1storder} and~\eqref{equ:deltaLdeltaphi_munu_1storder}, it follows that the second and third terms on the right-hand side of~\eqref{equ:pt_dotphi_comp1} exactly cancel each other out. 
To evaluate the fourth term on the right-hand side of~\eqref{equ:pt_dotphi_comp1} we first need the following identity.

\begin{lemma}\label{lem:secondff}
 Let $\secondff$ be the second fundamental form of the embedding $\Psi_\wp|_{\substack{\dot{\ell}=0, \dot{\xi}=\ell}}$.
 Then we have 
 \begin{equation}
  \frac{1}{\sqrt{|h|}} \bfeta \biggl( \pj \Bigl( \frac{ \delta (\sqrt{|g|} (g^{-1})^{j \nu} \pnu \Phi) }{\delta  \psi} \bigg|_{\substack{\psi = 0, \\ k = h}} \Bigr), N \biggr) = |\secondff|^2.
 \end{equation}
\end{lemma}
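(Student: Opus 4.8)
The plan is to compute the left-hand side directly using the explicit first-order expansion \eqref{equ:deltaLdeltaphi_munu_1storder} and the basic geometric relations between $N$, $n_\wp$, the tangent frame $\Psi_{\mu;\wp}$ and the second fundamental form of the embedding $\Psi_\wp|_{\dot\ell=0,\dot\xi=\ell}$. Since everything is being evaluated at $\psi=0$ and $k=h$, the embedding in question is a genuine timelike maximal embedding (a boosted, translated catenoid), so $\{\Psi_{\mu;\wp}, n_\wp\}$ is an orthogonal frame, $\bfeta(\Psi_{j;\wp}, N)=0$, $\bfeta(n_\wp,N)=1$, and the standard formulas of hypersurface geometry apply. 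The strategy is: (i) insert \eqref{equ:deltaLdeltaphi_munu_1storder} with $\mu=j$ summed against $\partial_j$ acting from outside; (ii) use the maximality condition $\partial_\mu(\sqrt{|h|}(h^{-1})^{\mu\nu}\Psi_{\nu;\wp})=0$ (cf. Remark~\ref{rem:HVMCj_for_Psinu}, the version with $\pj$ only, together with $\partial_t$-independence) to discard the terms where $\partial_j$ falls on the coefficients $\sqrt{|h|}(h^{-1})^{\cdot\cdot}$; and (iii) pair the result with $N$ and exploit $\bfeta(n_\wp,N)=1$, $\bfeta(\Psi_{j;\wp},N)=0$ together with the Gauss–Weingarten relations to identify what remains as $\sqrt{|h|}|\secondff|^2$.

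Concretely, I would first observe that in \eqref{equ:deltaLdeltaphi_munu_1storder}, after pairing with $N$, each of the four terms simplifies: the terms proportional to $\pnu\Psi_\wp$ (i.e. $\Psi_{\nu;\wp}$) contribute only through $\bfeta(\Psi_{\nu;\wp},N)$, which vanishes for $\nu=j$ spatial and gives a single $\nu=0$ contribution; but since we are ultimately differentiating in $\pj$ and then contracting, and since $\bfeta(\Psi_{0;\wp},N)=\bfeta((1,\ell),N)$ need not vanish, one must keep track of these. The cleaner route is to write $\partial_\beta N = -\secondff_{\beta}{}^{\gamma}\Psi_{\gamma;\wp} + (\text{terms along } n_\wp)$ via the Weingarten formula adapted to the normalization $\bfeta(n_\wp,N)=1$; substituting this into each occurrence of $\partial_j N$ and $\partial_\beta N$ in \eqref{equ:deltaLdeltaphi_munu_1storder} collapses the expression using orthogonality of the frame. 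The identity then reduces to the classical trace computation $\frac{1}{\sqrt{|h|}}\partial_j(\sqrt{|h|}(h^{-1})^{jk}\secondff_{k\ell}(h^{-1})^{\ell m}\Psi_{m;\wp})\cdot$(contraction) $= |\secondff|^2$, precisely the Codazzi/trace identity that for a maximal (zero mean curvature) hypersurface expresses the linearized operator as $\Box + |\secondff|^2$.

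A useful consistency check that should guide the bookkeeping: when $\ell=0$ and $\xi\equiv 0$ so that $\Psi_\wp$ is the straight Lorentzian catenoid $\calC$, the left-hand side must reduce exactly to the zeroth-order coefficient of the stability operator $-\partial_t^2+\underline H = \Box_\calC + |\secondff|^2$ acting on $\psi$, namely $|\secondff|^2$ computed in Section~\ref{subsec:Riemcat}; this both pins down the overall sign and confirms that no spurious first-order terms survive. For general small $\ell$ the answer must still be exactly $|\secondff|^2$ (of the boosted embedding), with no $\ell$-dependent corrections, because $\Box+|\secondff|^2$ is the Lorentz conjugate of $-\partial_t^2+\underline H$ — this is the whole point of the length normalization of $N$ in \eqref{eq:Ndefintro1}, as emphasized in Remark 3 following that equation.

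I expect the main obstacle to be the careful algebraic cancellation in step (ii)–(iii): one must verify that all terms in which $\pj$ falls on a metric coefficient (rather than producing a second fundamental form via $\pj N$) either vanish by the maximality relation $\pj(\sqrt{|h|}(h^{-1})^{j\nu}\Psi_{\nu;\wp})=0$ or cancel in pairs against the explicit $(h^{-1})\otimes(h^{-1})$ contractions in \eqref{equ:deltaLdeltaphi_munu_1storder}, and that the surviving terms reassemble into $\sqrt{|h|}\,\secondff_{jk}(h^{-1})^{jk'}\secondff_{k'\ell}(h^{-1})^{\ell k} = \sqrt{|h|}|\secondff|^2$ and not, say, $(\tr\secondff)^2$ or a mixture. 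The maximality condition $\tr_h\secondff=0$ is what makes the mean-curvature-squared cross terms drop out, so it is essential to invoke it at the right place. Since this is a purely local pointwise computation with a fixed orthogonal frame, no analytic subtleties arise; the proof is a finite, if slightly intricate, exercise in the Gauss–Weingarten calculus, and I would organize it by substituting the Weingarten identity for $\partial N$ once and for all at the outset to minimize the number of terms.
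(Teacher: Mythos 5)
Your high-level orientation is sound, but the central technical step you propose — ``substituting the Weingarten identity for $\partial N$ once and for all'' — is stated incorrectly, and the error is exactly in the place the computation is delicate. The vector $N$ is \emph{not} the unit normal $n_\wp$: it differs from it by a tangential vector $W := N - n_\wp$, and this tangential piece is nontrivial whenever $\ell\neq 0$ (in the interior, $n_\wp=(\gamma\,\ell\cdot\nu,\,A_{-\ell}\nu)$ while $N=(0,|A_{-\ell}\nu|^{-2}A_{-\ell}\nu)$). Writing $W=W^\gamma\Psi_{\gamma;\wp}$, one finds
\begin{equation*}
\partial_\beta N = \bigl(-\secondff_{\beta}{}^{\gamma}+\tilde\nabla_\beta W^{\gamma}\bigr)\Psi_{\gamma;\wp}+\bigl(W^\gamma\secondff_{\beta\gamma}\bigr)\,n_\wp,
\end{equation*}
so your claimed formula $\partial_\beta N=-\secondff_\beta{}^{\gamma}\Psi_{\gamma;\wp}+(\text{terms along }n_\wp)$ omits the $\tilde\nabla_\beta W^\gamma$ contribution. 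Those terms propagate through the four-term expansion \eqref{equ:deltaLdeltaphi_munu_1storder} and, after pairing with $N$, do \emph{not} obviously disappear: cancelling them is the whole point of Steps 2--3 of the paper's proof, where $\Box_h W$ is computed separately (using $\bfeta(\tilde\nabla_\mu\tilde\nabla_\nu\Psi_\wp,\partial_\sigma\Psi_\wp)=0$) and one verifies that the derivative-of-$W$ terms in $\bfeta(\Box_h N,n_\wp)$ are exactly offset by the $\bfeta(\tilde\nabla^\mu\Psi_\wp,\tilde\nabla^\nu N)\bfeta(\tilde\nabla_\mu\tilde\nabla_\nu\Psi_\wp,n_\wp)$ terms, and that the pairing $\bfeta(\cdots,W)$ vanishes identically.

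Your two proposed safety nets also fail to detect this. The $\ell=0$ check is inert precisely because $W\equiv 0$ there, so it cannot reveal a missing $\tilde\nabla W$ contribution. And the Lorentz-conjugacy heuristic is not a proof: the gauge choice $N^0=0$ is not boost-covariant (boosting the $\ell=0$ vector $(0,\nu)$ gives $n_\wp$, not $N$), so the claim ``no $\ell$-dependent corrections'' is not a consequence of invariance — it is a nontrivial cancellation that has to be exhibited. The paper avoids your pitfall entirely by (i) rewriting the left-hand side as $\bfeta(\delta(\Box_g\Phi)/\delta\psi\,|_{\psi=0,k=h},N)$, (ii) invoking the Ruh--Vilms identity $\Box_h n_\wp=-|\secondff|^2 n_\wp$ for the maximal embedding as a black box rather than unpacking the Weingarten formula, and (iii) decomposing $N=n_\wp+W$ and handling $W$ explicitly. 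If you want to retain a Weingarten-style argument you must keep the $\tilde\nabla W$ tangential terms from the start and track them to the end; otherwise follow the paper's route.
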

\begin{proof}
First, we observe that
\begin{equation*}
 \begin{aligned}
  \frac{1}{\sqrt{|h|}} \bfeta \biggl( \pj \Bigl( \frac{ \delta (\sqrt{|g|} (g^{-1})^{j \nu} \pnu \Phi) }{\delta  \psi} \bigg|_{\substack{\psi = 0, \\ k = h}} \Bigr), N \biggr) &= \bfeta \biggl( \frac{\delta \bigl( \Box_g \Phi \bigr)}{\delta \psi} \bigg|_{\substack{\psi = 0, \\ k = h}} , N \biggr).
 \end{aligned}
\end{equation*}
We now split the evaluation of the right-hand side into several steps. In what follows, we write $N = n_\wp + W$, where $W$ denotes the tangential part of $N$. In what follows, $\nabla$ denotes the covariant derivative with respect to the embedding $\Phi$.

\medskip 

\noindent {\it Step 1: Computation of the part of $\Box_g \Phi$ that is linear in $\psi$.}
We begin by expanding
\begin{equation*}
 \begin{aligned}
  \Box_g \Phi &= (g^{-1})^{\mu \nu} \nabla_\mu \nabla_\nu \bigl( \Psi_\wp + \psi N \bigr) \\
  &= \Box_h \Psi_\wp + (\dot{g}^{-1})^{\mu \nu} \nabla_\mu \nabla_\nu \Psi_\wp - (h^{-1})^{\mu \nu} \dot{\Gamma}^\lambda_{\mu \nu} \partial_\lambda \Psi_\wp + (h^{-1})^{\mu \nu} \nabla_\mu \nabla_\nu \bigl( \psi N \bigr)  + \calO \bigl( (\psi, \partial \psi)^2 \bigr) \\
  &= \Box_{h} \Psi_\wp + (\dot{g}^{-1})^{\mu \nu} \nabla_\mu \nabla_\nu \Psi_\wp - (h^{-1})^{\mu \nu} \dot{\Gamma}^\lambda_{\mu \nu} \partial_\lambda \Psi_\wp \\
  &\quad + (h^{-1})^{\mu \nu} \partial_\mu \partial_\nu \bigl( \psi N \bigr) - (h^{-1})^{\mu \nu} \Gamma_{\mu \nu}^\lambda \partial_\lambda \bigl( \psi N \bigr) + \calO \bigl( (\psi, \partial \psi)^2 \bigr),
 \end{aligned}
\end{equation*}
where
\begin{equation*}
 \begin{aligned}
  (\dot{g}^{-1})^{\mu \nu} &= - (h^{-1})^{\mu \alpha} \bfeta( \partial_\alpha \Psi_\wp, \partial_\beta N ) (h^{-1})^{\beta \nu} \psi - (h^{-1})^{\mu \alpha} \bfeta( \partial_\alpha N, \partial_\beta \Psi_\wp ) (h^{-1})^{\beta \nu} \psi + \calO( \partial \psi ) + \calO \bigl( (\psi, \partial \psi)^2 \bigr)
 \end{aligned}
\end{equation*}
and 
\begin{equation*}
 \begin{aligned}
  \dot{\Gamma}_{\mu \nu}^\lambda &= (h^{-1})^{\lambda k} \bigl( \bfeta( \partial_\mu \partial_\nu \Psi_\wp, \partial_k N ) + \bfeta( \partial_k \Psi_\wp, \partial_\mu \partial_\nu N ) \bigr) \psi \\
  &\quad - (h^{-1})^{\lambda \rho} \Gamma_{\mu\nu}^\sigma \bigl( \bfeta( \partial_\rho \Psi_\wp, \partial_\sigma N ) + \bfeta( \partial_\sigma \Psi_\wp, \partial_\rho N ) \bigr) \psi + \calO( \partial \psi ) + \calO \bigl( (\psi, \partial \psi)^2 \bigr).
 \end{aligned}
\end{equation*}
Observe that
\begin{equation*}
 \begin{aligned}
  - (h^{-1})^{\mu \nu} \dot{\Gamma}_{\mu \nu}^\lambda &= - (h^{-1})^{\mu \nu} (h^{-1})^{\lambda k} \bfeta( \partial_\mu \partial_\nu \Psi_\wp, \partial_k N ) \psi + (h^{-1})^{\mu \nu} (h^{-1})^{\lambda \rho} \Gamma_{\mu \nu}^\sigma \bfeta( \partial_\sigma \Psi_\wp, \partial_\rho N ) \\
  &\quad - (h^{-1})^{\mu\nu} (h^{-1})^{\lambda k} \bfeta( \partial_k \Psi_\wp, \partial_\mu \partial_\nu N ) \psi + (h^{-1})^{\mu \nu} (h^{-1})^{\lambda \rho} \Gamma^{\sigma}_{\mu\nu} \bfeta( \partial_\rho \Psi_\wp, \partial_\sigma N ) \\
  &\quad + \calO( \partial \psi ) + \calO \bigl( (\psi, \partial \psi)^2 \bigr) \\
  &= - \bfeta( (h^{-1})^{\mu \nu} \partial_\mu \partial_\nu \Psi_\wp, \nabla^\lambda N ) \psi + \bfeta( (h^{-1})^{\mu \nu} \Gamma_{\mu \nu}^\sigma \partial_\sigma \Psi_\wp, \nabla^\lambda N ) \psi \\
  &\quad - \bfeta( \nabla^\lambda \Psi_\wp, (h^{-1})^{\mu\nu} \partial_\mu \partial_\nu N ) \psi + \bfeta( \nabla^\lambda \Psi_\wp, (h^{-1})^{\mu\nu} \Gamma_{\mu\nu}^\sigma \partial_\sigma N ) \psi \\
  &\quad + \calO( \partial \psi ) + \calO \bigl( (\psi, \partial \psi)^2 \bigr) \\
  &= - \bfeta( \Box_{h} \Psi_\wp, \nabla^\lambda N ) \psi - \bfeta( \nabla^\lambda \Psi_\wp, \Box_h N ) + \calO( \partial \psi ) + \calO \bigl( (\psi, \partial \psi)^2 \bigr).
 \end{aligned}
\end{equation*}

In what follows, we use the notation
\begin{equation*}
\begin{aligned}
 \tilde{\nabla}_\mu \tilde{\nabla}_\nu \Psi_\wp &:= \partial_\mu \partial_\nu \Psi_\wp - (h^{-1})^{\mu \nu} \tilde{\Gamma}_{\mu \nu}^\lambda \partial_\lambda \Psi_\wp, \\
 \tilde{\Gamma}_{\mu \nu}^\lambda &:= \frac12 (h^{-1})^{\lambda \kappa} \bigl( \partial_\mu h_{\kappa \nu} + \partial_\nu h_{\mu\kappa} - \partial_\kappa h_{\mu \nu} \bigr), \\
 \tilde{\nabla}^\mu \Psi_\wp &:= (h^{-1}){}^{\mu\nu} \partial_\nu \Psi_\wp, \\
 [ \tilde{\nabla}_\mu, \tilde{\nabla}_\nu ] \tilde{\nabla}_\lambda \Psi_\wp &= \tilde{R}_{\mu\nu\lambda\sigma} \tilde{\nabla}^\sigma \Psi_\wp, \\
 \tilde{R}_{\mu\nu} &= (h^{-1})^{\lambda \sigma} \tilde{R}_{\mu\lambda\nu\sigma}.
\end{aligned}
\end{equation*}

Correspondingly, we obtain 
\begin{equation*}
 \begin{aligned}
  \frac{\delta \bigl( \Box_g \Phi \bigr)}{\delta \psi} \bigg|_{\substack{\psi = 0, \\ k = h}} &= \Box_h N + \frac{ \delta \bigl( (\dot{g}^{-1})^{\mu \nu} \bigr) }{\delta \psi} \bigg|_{\substack{\psi = 0, \\ k = h}} \tilde{\nabla}_\mu \tilde{\nabla}_\nu \Psi_\wp - \frac{\delta \bigl( (h^{-1})^{\mu \nu} \dot{\Gamma}^\lambda_{\mu \nu} \partial_\lambda \Psi_\wp \bigr)}{\delta \psi} \bigg|_{\substack{\psi = 0, \\ k = h}} \\
  &= \Box_{h} N - \bfeta( \tilde{\nabla}^\mu \Psi_\wp, \tilde{\nabla}^\nu N ) (\tilde{\nabla}_\mu \tilde{\nabla}_\nu \Psi_\wp) - \bfeta( \tilde{\nabla}^\nu \Psi_\wp, \tilde{\nabla}^\mu N ) (\tilde{\nabla}_\mu \tilde{\nabla}_\nu \Psi_\wp) \\
  &\quad - \bfeta( \underbrace{\Box_{h} \Psi_\wp}_{=0}, \partial^\lambda N ) \partial_\lambda \Psi_\wp - \bfeta( \partial^\lambda \Psi_\wp, \Box N ) \partial_\lambda \Psi_\wp \\
  &= \Box_h N - \bfeta( \tilde{\nabla}^\mu \Psi_\wp, \tilde{\nabla}^\nu N ) (\tilde{\nabla}_\mu \tilde{\nabla}_\nu \Psi_\wp) - \bfeta( \tilde{\nabla}^\nu \Psi_\wp, \tilde{\nabla}^\mu N ) (\tilde{\nabla}_\mu \tilde{\nabla}_\nu \Psi_\wp) \\
  &\quad - \bfeta( \tilde{\nabla}^\lambda \Psi_\wp, \Box_h N ) \partial_\lambda \Psi_\wp.
 \end{aligned}
\end{equation*}

\noindent {\it Step 2: Computation of $\bfeta( \Box_h W, n_\wp )$ and $\bfeta( \Box_h N, n_\wp )$.}
Since $W$ is tangential, we may write 
\begin{equation*}
 \begin{aligned}
  W = \bfeta( W, \partial_\mu \Psi_\wp ) \tilde{\nabla}^\mu \Psi_\wp.
 \end{aligned}
\end{equation*}
In what follows we will use that for all $\mu, \nu, \sigma$ 
\begin{equation} \label{equ:vanishing_nabla2_Psi_tested_partial_Psi}
 \begin{aligned}
  \bfeta( \tilde{\nabla}_\mu \tilde{\nabla}_\nu \Psi_\wp, \partial_\sigma \Psi_\wp ) &= 0.
 \end{aligned}
\end{equation}
To see this we expand 
\begin{equation*}
 \begin{aligned}
  \bfeta( \tilde{\nabla}_\mu \tilde{\nabla}_\nu \Psi_\wp, \partial_\sigma \Psi_\wp ) &= \bfeta( \partial_\mu \partial_\nu \Psi_\wp, \partial_\sigma \Psi_\wp ) - \tilde{\Gamma}^\lambda_{\mu \nu} h_{\lambda \sigma} \\
  &= \bfeta( \partial_\mu \partial_\nu \Psi_\wp, \partial_\sigma \Psi_\wp ) - \frac12 (h^{-1})^{\lambda \kappa} \bigl( \partial_\mu h_{\kappa \nu} + \partial_\nu h_{\mu \kappa} - \partial_\kappa h_{\mu \nu} \bigr) h_{\lambda \sigma} \\
  &= \bfeta( \partial_\mu \partial_\nu \Psi_\wp, \partial_\sigma \Psi_\wp ) - \frac12 \bigl( \partial_\mu h_{\sigma \nu} + \partial_\nu h_{\mu \sigma} - \partial_\sigma h_{\mu \nu} \bigr) \\
  &= \bfeta( \partial_\mu \partial_\nu \Psi_\wp, \partial_\sigma \Psi_\wp ) - \bfeta( \partial_\mu \partial_\nu \Psi_\wp, \partial_\sigma \Psi_\wp ) \\
  &= 0.
 \end{aligned}
\end{equation*}
Using~\eqref{equ:vanishing_nabla2_Psi_tested_partial_Psi}, we obtain by direct computation 
\begin{equation*}
 \begin{aligned}
  \Box_h W &= \bfeta( W, \partial_\mu \Psi_\wp ) \tilde{\nabla}^\mu \underbrace{\Box_h \Psi_\wp}_{= \, 0} + \bfeta( W, \partial_\mu \Psi_\wp ) \tilR^{\mu \lambda} \partial_\lambda \Psi_\wp + \bigl( \Box_h \bfeta( W, \partial_\mu \Psi_\wp ) \bigr) \tilde{\nabla}^\mu \Psi_\wp \\
  &\quad + 2 \bfeta( \partial_\nu W, \partial_\mu \Psi_\wp )  \tilde{\nabla}^\nu \tilde{\nabla}^\mu \Psi_\wp + 2 \underbrace{\bfeta( W, \tilde{\nabla}_\nu \tilde{\nabla}_\mu \Psi_\wp )}_{= \, 0} \tilde{\nabla}^\nu \tilde{\nabla}^\mu \Psi_\wp.
 \end{aligned}
\end{equation*}
Note that $\bfeta( W, \tilde{\nabla}_\nu \tilde{\nabla}_\mu \Psi_\wp ) = 0$ follows from \eqref{equ:vanishing_nabla2_Psi_tested_partial_Psi} since $W$ is tangential. 
Testing against $n_\wp$ and inserting the relation $W = N - n_\wp$, we arrive at the identity 
\begin{equation*}
 \begin{aligned}
  \bfeta( \Box_h W, n_\wp ) &= 2 \bfeta( \partial_\nu W, \partial_\mu \Psi_\wp ) \bfeta( \tilde{\nabla}^\nu \tilde{\nabla}^\mu \Psi_\wp, n_\wp ) \\
  &= 2 \bfeta( \partial_\nu N, \partial_\mu \Psi_\wp ) \bfeta( \tilde{\nabla}^\nu \tilde{\nabla}^\mu \Psi_\wp, n_\wp ) - 2 \bfeta( \partial_\nu n_\wp, \partial_\mu \Psi_\wp ) \bfeta( \tilde{\nabla}^\nu \tilde{\nabla}^\mu \Psi_\wp, n_\wp ) \\
  &= 2 \bfeta( \partial_\nu N, \partial_\mu \Psi_\wp ) \bfeta( \tilde{\nabla}^\nu \tilde{\nabla}^\mu \Psi_\wp, n_\wp ) + 2 \bfeta( \partial_\nu n_\wp, \partial_\mu \Psi_\wp ) \bfeta( \tilde{\nabla}^\mu \Psi_\wp, \tilde{\nabla}^\nu n_\wp ).
 \end{aligned}
\end{equation*}
Moreover, using that $\Box_h n_\wp = - |\secondff|^2 n_\wp$ by \cite[Corollary II]{RV70}, we obtain  
\begin{equation*}
 \begin{aligned}
  \bfeta( \Box_h N, n_\wp ) &= \bfeta( \Box_h n_\wp, n_\wp ) + \bfeta( \Box_h W, n_\wp ) \\
  &= - |\secondff|^2 + 2 \bfeta( \partial_\nu N, \partial_\mu \Psi_\wp ) \bfeta( \tilde{\nabla}^\nu \tilde{\nabla}^\mu \Psi_\wp, n_\wp ) + 2 \bfeta( \partial_\nu n_\wp, \partial_\mu \Psi_\wp ) \bfeta( \tilde{\nabla}^\mu \Psi_\wp, \tilde{\nabla}^\nu n_\wp ).
 \end{aligned}
\end{equation*}

\medskip 

\noindent {\it Step 3: Final computation.}
We decompose into
\begin{equation*}
 \begin{aligned}
  \bfeta \biggl( \frac{\delta \bigl( \Box_g \Phi \bigr)}{\delta \psi} \bigg|_{\substack{\psi = 0, \\ k = h}}, N \biggr) = \bfeta \biggl( \frac{\delta \bigl( \Box_g \Phi \bigr)}{\delta \psi} \bigg|_{\substack{\psi = 0, \\ k = h}}, n_\wp \biggr) + \bfeta \biggl( \frac{\delta \bigl( \Box_g \Phi \bigr)}{\delta \psi} \bigg|_{\substack{\psi = 0, \\ k = h}}, W \biggr).
 \end{aligned}
\end{equation*}
Then on the one hand we have
\begin{equation*}
 \begin{aligned}
  \bfeta \biggl( \frac{\delta \bigl( \Box_g \Phi \bigr)}{\delta \psi} \bigg|_{\substack{\psi = 0, \\ k = h}}, n_\wp \biggr) &= \bfeta( \Box N, n_\wp ) - \bfeta( \tilde{\nabla}^\mu \Psi_\wp, \tilde{\nabla}^\nu N ) \bfeta( \tilde{\nabla}_\mu \tilde{\nabla}_\nu \Psi_\wp, n_\wp ) \\
  &\quad - \bfeta( \tilde{\nabla}^\nu \Psi_\wp, \tilde{\nabla}^\mu N ) \bfeta( \tilde{\nabla}_\mu \tilde{\nabla}_\nu \Psi_\wp, n_\wp ) - \bfeta( \tilde{\nabla}^\lambda \Psi_\wp, \Box N ) \underbrace{\bfeta( \partial_\lambda \Psi_\wp, n_\wp )}_{= \, 0} \\
  &= - |\secondff|^2 + 2 \bfeta( \partial_\mu \Psi_\wp, \partial_\nu N ) \bfeta( \tilde{\nabla}^\nu \tilde{\nabla}^\mu \Psi_\wp, n_\wp ) + 2 \bfeta( \partial_\mu \Psi_\wp, \partial_\nu n_\wp ) \bfeta( \tilde{\nabla}^\mu \Psi_\wp, \tilde{\nabla}^\nu n_\wp ) \\
  &\quad - \bfeta( \tilde{\nabla}^\mu \Psi_\wp, \tilde{\nabla}^\nu N ) \bfeta( \tilde{\nabla}_\mu \tilde{\nabla}_\nu \Psi_\wp, n_\wp ) - \bfeta( \tilde{\nabla}^\nu \Psi_\wp, \tilde{\nabla}^\mu N ) \bfeta( \tilde{\nabla}_\mu \tilde{\nabla}_\nu \Psi_\wp, n_\wp ) \\
  &= - |\secondff|^2 + 2 \bfeta( \partial_\mu \Psi_\wp, \partial_\nu n_\wp ) \bfeta( \tilde{\nabla}^\mu \Psi_\wp, \tilde{\nabla}^\nu n_\wp ) \\
  &= |\secondff|^2.
 \end{aligned}
\end{equation*}
On the other hand, using that $\bfeta( \tilde{\nabla}_\mu \tilde{\nabla}_\nu \Psi_\wp, \nabla_\lambda \Psi_\wp ) = 0$, we find 
\begin{equation*}
 \begin{aligned}
  \bfeta \biggl( \frac{\delta \bigl( \Box_g \Phi \bigr)}{\delta \psi} \bigg|_{\substack{\psi = 0, \\ k = h}}, W \biggr) &= \bfeta( \Box_h N, W ) - \bfeta( \tilde{\nabla}^\mu \Psi_\wp, \tilde{\nabla}^\nu N ) \underbrace{\bfeta( \tilde{\nabla}_\mu \tilde{\nabla}_\nu \Psi_\wp, W )}_{= \, 0} \\
  &\quad - \bfeta( \tilde{\nabla}^\nu \Psi_\wp, \tilde{\nabla}^\mu N ) \underbrace{\bfeta( \tilde{\nabla}_\mu \tilde{\nabla}_\nu \Psi_\wp, W )}_{= \, 0} - \underbrace{\bfeta( \tilde{\nabla}^\lambda \Psi_\wp, \Box_h N ) \bfeta( \partial_\lambda \Psi_\wp, W )}_{= \, \bfeta( \Box_h N, W )} \\
  &= 0.
 \end{aligned}
\end{equation*}
This finishes the proof.
\end{proof}

Using the preceding lemma, we find that the fourth term on the right-hand side of~\eqref{equ:pt_dotphi_comp1} simplifies to
\begin{equation*}
 \begin{aligned}
  &- \bfeta \biggl( \pmu \Bigl( \frac{ \delta (\sqrt{|g|} (g^{-1})^{\mu \nu} \pnu \Phi) }{\delta  \psi} \bigg|_{\substack{\psi = 0, \\ k = h}} \Bigr) \psi, N \biggr) \\
  &= - \bfeta \biggl( \pj \Bigl( \frac{ \delta (\sqrt{|g|} (g^{-1})^{j \nu} \pnu \Phi) }{\delta  \psi} \bigg|_{\substack{\psi = 0, \\ k = h}} \Bigr) \psi, N \biggr) - \bfeta \biggl( \pt \Bigl( \frac{ \delta (\sqrt{|g|} (g^{-1})^{0 \nu} \pnu \Phi) }{\delta  \psi} \bigg|_{\substack{\psi = 0, \\ k = h}} \Bigr) \psi, N \biggr) \\
  &= - \sqrt{|h|} |\secondff|^2 \psi + \calE_{11}
 \end{aligned}
\end{equation*}
with remainder term
\begin{equation*}
 \begin{aligned}
  \calE_{11} := - \bfeta \biggl( \pt \Bigl( \frac{ \delta (\sqrt{|g|} (g^{-1})^{0 \nu} \pnu \Phi) }{\delta  \psi} \bigg|_{\substack{\psi = 0, \\ k = h}} \Bigr) \psi, N \biggr) = \calO\bigl( \dotwp \psi \bigr). 
 \end{aligned}
\end{equation*}

We arrive at the equation 
\begin{equation*}
 \begin{aligned}
  \pt \dot{\psi} &= - \pj \bigl( \sqrt{|h|} (h^{-1})^{j\mu} \pmu \psi \bigr) - \sqrt{|h|} |\secondff|^2 \psi - \bfeta \bigl( \sqrt{|h|} (h^{-1})^{00} (0, \dot{\ell}), N \bigr) + \calE_5 + \ldots + \calE_{11}.
 \end{aligned}
\end{equation*}
Finally, inserting for $\pt \psi$ on the right-hand side the relation~\eqref{equ:relation_ptphi_dotphi} between $\dot{\psi}$ and $\pt \psi$, we obtain 
\begin{equation} \label{equ:pt_dotphi_final}
 \begin{aligned}
  \pt \dot{\psi} &= - \sqrt{|h|} L \psi - \pj \biggl( \frac{(h^{-1})^{j0}}{(h^{-1})^{00}} \dot{\psi} \biggr) - \bfeta \bigl( \sqrt{|h|} (h^{-1})^{00} (0, \dot{\ell}), N \bigr) + \dot{f},
 \end{aligned}
\end{equation}
where we introduce the linear operator
\begin{equation*}
 \begin{aligned}
  L := \frac{1}{\sqrt{|h|}} \pj \bigl( \sqrt{|h|} (\underline{h}^{-1})^{jk} \pk \bigr) + |\secondff|^2 
 \end{aligned}
\end{equation*}
with
\begin{equation*}
 \begin{aligned}
  (\underline{h}^{-1})^{jk} := (h^{-1})^{jk} - \frac{(h^{-1})^{j0} (h^{-1})^{0k}}{(h^{-1})^{00}},
 \end{aligned}
\end{equation*}
and where
\begin{equation*}
 \begin{aligned}
  \dot{f} &:= \pj \biggl(  \sqrt{|h|} (h^{-1})^{j0} \bfeta \Bigl( \bigl( \dot{\ell} \cdot \nabla_\ell + (\dot{\xi}-\ell) \cdot \nabla_\xi \bigr) \Psi_\wp, N \Bigr) \biggr) - \pj \bigl(  \sqrt{|h|} (h^{-1})^{j0} f \bigr) + \calE_5 + \ldots + \calE_{11}.
 \end{aligned}
\end{equation*}
Note that the first term in the preceding definition of $\dot{f}$ is of the form $\calO(\ell \dotwp)$ since $(h^{-1})^{j0} = \calO(\ell)$ by Remark~\ref{rem:hinv0j_smallness}, which justifies its treatment as a quadratic error.

\medskip

Introducing the matrix operator 
\begin{equation} \label{equ:definition_matrix_operator}
 \begin{aligned}
  M := \begin{pmatrix}
        - \frac{(h^{-1})^{0j}}{(h^{-1})^{00}} \pj & \frac{1}{\sqrt{|h|} (h^{-1})^{00}} \\ 
        -\sqrt{|h|} L & - \pj \Bigl( \frac{(h^{-1})^{j0}}{(h^{-1})^{00}} \Bigr)
       \end{pmatrix}
 \end{aligned}
\end{equation}
and setting
\begin{equation*}
 \begin{aligned}
  \vec{K} := \begin{pmatrix}
             K \\ \dot{K}
            \end{pmatrix}
          = \begin{pmatrix}
             - \bfeta \Bigl( \bigl( \dot{\ell} \cdot \nabla_\ell + (\dot{\xi}-\ell) \cdot \nabla_\xi \bigr) \Psi_\wp, N \Bigr) \\ - \bfeta \bigl( \sqrt{|h|} (h^{-1})^{00} (0, \dot{\ell}), N \bigr)
            \end{pmatrix}, 
  \qquad 
  \vec{f} := \begin{pmatrix}
              f \\ \dot{f}
             \end{pmatrix},
 \end{aligned}
\end{equation*}
we obtain from \eqref{equ:relation_ptphi_dotphi} and \eqref{equ:pt_dotphi_final} the following first-order formulation of the HVMC equation for $\vec{\psi}$
\begin{equation} \label{equ:first_order_linearized_HVMC}
 \begin{aligned}
  (\pt - M) \vec{\psi} = \vec{K} + \vec{f}.
 \end{aligned}
\end{equation}

\medskip 

We end this subsection by computing the second order equation for $\psi$ coming from~\eqref{equ:first_order_linearized_HVMC}. 
Upon rearranging, we obtain from~\eqref{equ:first_order_linearized_HVMC} that
\begin{equation} \label{equ:dotpsi_from_matrix_equation}
 \dot{\psi} = \sqrt{|h|}(h^{-1})^{0\nu} \partial_\nu \psi - \sqrt{|h|}(h^{-1})^{00} (K+f),
\end{equation}
and 
\begin{align*}
\begin{split}
 \partial_t \dot{\psi} + \partial_j\bigl( \sqrt{|h|}(h^{-1})^{ij} \partial_i\psi \bigr) - \partial_j \biggl( \sqrt{|h|} \frac{(h^{-1})^{0i}(h^{-1})^{0j}}{(h^{-1})^{00}} \partial_i\psi \biggr) + \partial_j \biggl( \frac{(h^{-1})^{0j}}{(h^{-1})^{00}} \dot{\psi} \biggr) + \sqrt{|h|} |\secondff|^2 \psi = \dot{K} + \dot{f}.
\end{split}
\end{align*}
Substituting~\eqref{equ:dotpsi_from_matrix_equation} for $\dot{\psi}$ in the preceding identity, we arrive at
\begin{align*}
\begin{split}
\partial_\mu \bigl( \sqrt{|h|}(h^{-1})^{\mu\nu} \partial_\nu \psi \bigr) + \sqrt{|h|} |\secondff|^2 \psi = (\dot{K} + \dot{f}) + \partial_\nu \bigl(\sqrt{|h|}(h^{-1})^{0\nu} (K+f) \bigr).
\end{split}
\end{align*}
We conclude that if $\vec{\psi}$ satisfies~\eqref{equ:first_order_linearized_HVMC}, then $\psi$ satisfies the wave equation 
\begin{align} \label{eq:phi1}
\begin{split}
 \frac{1}{\sqrt{|h|}} \partial_\mu \bigl( \sqrt{|h|} (h^{-1})^{\mu\nu} \partial_\nu \psi \bigr) + |\secondff|^2 \psi = \frac{1}{\sqrt{|h|}} (\dot{K} + \dot{f}) + \frac{1}{\sqrt{|h|}} \partial_\nu \bigl( \sqrt{|h|}(h^{-1})^{0\nu} (K+f) \bigr).
\end{split}
\end{align}

%%%%%%%%%%%%%%%%%%%%
%%%%%%%%%%%%%%%%%%%%
%%%%%%%%%%%%%%%%%%%%
\subsection{Eigenfunctions} \label{subsec:eigenfunctions}
%%%%%%%%%%%%%%%%%%%%
%%%%%%%%%%%%%%%%%%%%
%%%%%%%%%%%%%%%%%%%%

In this subsection we determine the eigenfunctions and generalized eigenfunctions of the matrix operator $M$ defined in~\eqref{equ:definition_matrix_operator} 
when the parameter $\ell(t)$ is time-independent, i.e., $\ell(t) = \ell$ for some fixed $\ell \in \bbR^{n+1}$ with $|\ell| < 1$, and $\xi(t)=a+t\ell$ for some fixed $ a\in \bbR^{n+1}$. We will denote the particular choice of $\ell$ for which we want to compute the eigenfunctions by $\ell_0$, and use the notation $M_{\ell_0}$ for the corresponding operator.
To find the eigenfunctions and generalized eigenfunctions of $M_{\ell_0}$, we consider the following maximal embeddings
\begin{align*}
\begin{split}
\Psi_{\xi_0,\ell_0} &:= \bigl( t, \xi_0 + \gamma_0^{-1}P_{\ell_0}F+P_{\ell_0}^\perp F \bigr), \qquad \xi_0 = a_0+t\ell_0, \\
\Psi_{\xi,\ell} &:= \bigl( t, \xi + \gamma^{-1} P_\ell F+P_\ell^\perp F \bigr), \qquad \quad \, \xi = a+t\ell,
\end{split}
\end{align*} 
for fixed $(a_0, \ell_0)$ and $(a, \ell)$.
For each $(a,\ell)$ we write
\begin{align*}
 \Psi_{\xi,\ell} = \Psi_{\xi_0,\ell_0} + \psi_{\xi,\ell}N_{\ell_0},
\end{align*}
where $N_{\ell_0}$ is the normal to $\text{im}(\Psi_{\xi_0,\ell_0}(t)) \cap \bsUpsigma_t$ viewed as a subspace of $\bsUpsigma_t$.
Then we have $\psi_{\xi_0,\ell_0} \equiv 0$.
The metric $\Psi_{\xi_0,\ell_0}^\ast\bfeta$ is denoted by $h$ and the metric $\Psi_{\xi,\ell}^\ast\bfeta$ by $g$ (note that in this subsection there is no difference between $h$ and what would be $k$). 
In view of~\eqref{equ:definition_B}, we define
\begin{align*}
\begin{split}
B_0 &:= \sqrt{|h|} (h^{-1})^{0 j}\bfeta(\partial_j N_{\ell_0},N_{\ell_0}) \\
&\qquad - \sqrt{|h|} (h^{-1})^{0 \kappa} (h^{-1})^{\nu j}\bfeta(\partial_\kappa\Psi_{\xi_0,\ell_0},\partial_j N_{\ell_0})\bfeta(\partial_\nu\Psi_{\xi_0,\ell_0},N_{\ell_0}) \\
&\qquad - \sqrt{|h|} (h^{-1})^{0 j} (h^{-1})^{\nu\lambda}\bfeta(\partial_\lambda\Psi_{\xi_0,\ell_0},\partial_j N_{\ell_0})\bfeta(\partial_\nu\Psi_{\xi_0,\ell_0},N_{\ell_0}) \\
&\qquad + \sqrt{|h|} (h^{-1})^{0 \nu} (h^{-1})^{\kappa j}\bfeta(\partial_\kappa\Psi_{\xi_0,\ell_0},\partial_j N_{\ell_0})\bfeta(\partial_\nu\Psi_{\xi_0,\ell_0},N_{\ell_0})
\end{split}
\end{align*}
and let 
\begin{align*}
\begin{split}
\dotpsi_{\xi,\ell} := \bfeta\bigl( \sqrt{|g|} (g^{-1})^{0\nu}\partial_\nu\Psi_{\xi,\ell}, N_{\ell_0} \bigr)  - \bfeta\bigl( \sqrt{|h|} (h^{-1})^{00} (1,\ell_0),N_{\ell_0}\bigr) - B_0 \psi_{\xi,\ell}.
\end{split}
\end{align*}
Then $\dotpsi_{\xi_0,\ell_0} = 0$ and $\vecpsi_{\xi,\ell} = (\psi_{\xi,\ell}, \dotpsi_{\xi,\ell})$ satisfies
\begin{align*}
(\partial_t-M_{\ell_0}) \vecpsi_{\xi,\ell} = \vec{\calF},
\end{align*}
where $\vec{\calF}$ depends at least quadratically on $\vecpsi_{\xi,\ell}$. In particular
\begin{align*}
\begin{split}
\frac{\delta \vec{\calF}}{\delta a} \Big|_{(a,\ell)=(a_0,\ell_0)} = \frac{\delta\vec{\calF}}{\delta\ell}\Big|_{(a,\ell)=(a_0,\ell_0)}=0.
\end{split}
\end{align*} 
It follows that $\frac{\delta\vecpsi_{\xi,\ell}}{\delta a^i}\vert_{(a,\ell)=(a_0,\ell_0)}$ and $\frac{\delta\vecpsi_{\xi,\ell}}{\delta \ell^i}\vert_{(a,\ell)=(a_0,\ell_0)}$ for $1 \leq i \leq n$ are solutions of 
\begin{align*}
 (\partial_t-M_{\ell_0}) \vecfy = 0.
\end{align*}
To compute these parameter derivatives more easily, we also observe that in view of~\eqref{equ:relation_ptphi_dotphi}
\begin{align*}
 \dotpsi_{\xi,\ell} = \sqrt{|h|} h^{0\nu} \partial_\nu \psi_{\xi,\ell} + \tilf,
\end{align*}
where $\tilf$ depends quadratically on $\psi_{\xi,\ell}$, whence $\frac{\delta \tilf}{\delta a^i}\vert_{(a,\ell)=(a_0,\ell_0)}=\frac{\delta\tilf}{\delta\ell^i}\vert_{(a,\ell)=(a_0,\ell_0)}=0$. Moreover, we note that $\frac{\delta}{\delta a^i}$ is the same as $\frac{\delta}{\delta\xi^i}$ and 
\begin{align*}
\begin{split}
 \frac{\delta \psi_{\xi,\ell}}{\delta \xi^i} \Bigr|_{\ell=\ell_0} = \bfeta\Bigl(\frac{\delta\Psi_{\xi,\ell}}{\delta \xi^i} \Bigr|_{\ell=\ell_0},|N_{\ell_0}|^{-2}N_{\ell_0}\Bigr), \qquad \frac{\delta \psi_{\xi,\ell}}{\delta \ell^i} \Bigr|_{\ell=\ell_0} = \bfeta\Bigl(\frac{\delta\Psi_{\xi,\ell}}{\delta \ell^i} \Bigr|_{\ell=\ell_0},|N_{\ell_0}|^{-2}N_{\ell_0}\Bigr).
\end{split}
\end{align*}
With these observations we compute
\begin{align*}
\begin{split}
 \vecfy_i = \begin{pmatrix} \fy_i \\ \dotfy_i \end{pmatrix} := \begin{pmatrix} \frac{\delta\psi_{\xi,\ell}}{\delta\xi^i} \bigl|_{\ell=\ell_0} \\ \frac{\delta\dotpsi_{\xi,\ell}}{\delta\xi^i} \bigl|_{\ell=\ell_0} \end{pmatrix} = \begin{pmatrix} \frac{\delta\psi_{\xi,\ell}}{\delta\xi^i} \bigl|_{\ell=\ell_0} \\ \sqrt{|h|} (h^{-1})^{0\nu} \bigl|_{\ell=\ell_0} \partial_\nu\frac{\delta\psi_{\xi,\ell}}{\delta\xi^i} \bigl|_{\ell=\ell_0} \end{pmatrix}, \quad 1 \leq i \leq n,
\end{split}
\end{align*}
to be 
\begin{equation*}
 \begin{aligned}
  \fy_i = |\ell_0|^{-2} (\gamma_0 - 1) (\ell_0 \cdot \nu) \ell_0^i + \nu^i, \qquad \dotfy_i = \sqrt{|h|} (h^{-1})^{0j} \bigl|_{\ell=\ell_0} \partial_j \fy_i
 \end{aligned}
\end{equation*}
with 
\begin{equation*}
 \gamma_0 = (1-|\ell_0|^2)^{-\frac12}.
\end{equation*}
Since $\vecfy_i$ is independent of $t$, we conclude that
\begin{align*}
 M_{\ell_0} \vecfy_i=0, \qquad 1 \leq i \leq n.
\end{align*}
Similarly, for the derivatives with respect to $\ell^i$ we have
(below we have used the fact that the first $n$ components of $\nu$ and $F$ are proportional, to simplify a bit) 
\begin{align*}
\begin{split}
 \frac{\delta \psi_{\xi,\ell}}{\delta \ell^i} \Bigr|_{\ell=\ell_0} = \bfeta\Bigl(\frac{\delta\Psi_{\xi,\ell}}{\delta \ell^i} \Bigr|_{\ell=\ell_0}, |N_{\ell_0}|^{-2}N_{\ell_0}\Bigr) 
 &= -\gamma_0(\ell_0 \cdot F) \nu^i - |\ell_0|^{-2} \gamma_0 (\gamma_0-1) (\ell_0 \cdot F)(\ell_0 \cdot \nu) \ell_0^i + t\fy_i.
\end{split}
\end{align*}
We set 
\begin{align*}
\begin{split}
 \fy_{n+i} := -\gamma_0 (\ell_0 \cdot F)\nu^i-|\ell_0|^{-2} \gamma_0 (\gamma_0-1)(\ell_0 \cdot F)(\ell_0 \cdot \nu) \ell_0^i, \quad 1 \leq i \leq n,
\end{split}
\end{align*}
and thus have 
\begin{equation*}
 \frac{\delta \psi_{\xi,\ell}}{\delta \ell^i} \Bigr|_{\ell=\ell_0} = \fy_{n+i} + t \fy_i, \quad 1 \leq i \leq n.
\end{equation*}
It follows that
\begin{align*}
\begin{split}
 \frac{\delta \dotpsi_{\xi,\ell}}{\delta \ell^i} \Bigr|_{\ell=\ell_0} &= \sqrt{|h|} (h^{-1})^{0\nu} \partial_\nu \Bigl( \frac{\delta \psi_{\xi,\ell}}{\delta \ell^i} \Bigr) \Bigr|_{\ell=\ell_0} \\
 &= \sqrt{|h|}(h^{-1})^{0j} \bigr|_{\ell=\ell_0} \partial_j \fy_{n+i} + \sqrt{|h|}(h^{-1})^{00} \bigr|_{\ell=\ell_0} \fy_i + t \sqrt{|h|} (h^{-1})^{0j} \bigr|_{\ell=\ell_0} \partial_j \fy_{i} \\
 &= \sqrt{|h|}(h^{-1})^{0j} \bigr|_{\ell=\ell_0} \partial_j \fy_{n+i} + \sqrt{|h|}(h^{-1})^{00} \bigr|_{\ell=\ell_0} \fy_i + t \dotfy_i.
\end{split}
\end{align*}
Hence, upon defining 
\begin{equation*}
 \dotfy_{n+i} := \sqrt{|h|}(h^{-1})^{0j} \bigr|_{\ell=\ell_0} \partial_j \fy_{n+i} + \sqrt{|h|}(h^{-1})^{00} \bigr|_{\ell=\ell_0} \fy_i, \quad 1 \leq i \leq n,
\end{equation*}
as well as 
\begin{equation*}
 \vecfy_{n+i} = \begin{pmatrix} \fy_{n+i} \\ \dotfy_{n+i} \end{pmatrix}, \quad 1 \leq i \leq n,
\end{equation*}
we have
\begin{equation*}
 \begin{pmatrix} \frac{\delta\psi_{\xi,\ell}}{\delta\ell^i} \Bigr|_{\ell=\ell_0} \\ \frac{\delta\dotpsi_{\xi,\ell}}{\delta\ell^i} \Bigr|_{\ell=\ell_0}  \end{pmatrix} = \vecfy_{n+i} + t \vecfy_i, \quad 1 \leq i \leq n.
\end{equation*}

Now recall that we have shown for time-independent $\ell = \ell_0$,
\begin{align*}
\begin{split}
(\partial_t - M_{\ell_0}) \begin{pmatrix} \frac{\delta\psi_{\xi,\ell}}{\delta\ell^i} \Bigr|_{\ell=\ell_0} \\ \frac{\delta\dotpsi_{\xi,\ell}}{\delta\ell^i} \Bigr|_{\ell=\ell_0} \end{pmatrix} = 0, \quad 1 \leq i \leq n.
\end{split}
\end{align*}
Since $\vecfy_i$ satisfies $(\partial_t-M_{\ell_0}) \vecfy_i = M_{\ell_0} \vecfy_i=0$ and since $\vecfy_{n+i}$ satisfies $\partial_t\vecfy_{n+i}=0$, we conclude 
\begin{align*}
 M_{\ell_0} \vecfy_{n+i}=\vecfy_i, \quad 1 \leq i \leq n.
\end{align*}

%%%%%%%%%%%%%%%%%%%%
%%%%%%%%%%%%%%%%%%%%
%%%%%%%%%%%%%%%%%%%%
\subsection{Modulation Equations}\label{subsec:modulationeqs}
%%%%%%%%%%%%%%%%%%%%
%%%%%%%%%%%%%%%%%%%%
%%%%%%%%%%%%%%%%%%%%

In this section we revert to the notation used before Section~\ref{subsec:eigenfunctions}. In particular, $\ell = \ell(t)$ and $\xi=\xi(t)$ are assumed to be time-dependent again, and $\xi(t)$ is no longer assumed to be of the form $a+t\ell$.

We begin with the definition of the symplectic form.
To this end we introduce the operator
\begin{equation*}
 \begin{aligned}
  J = \begin{pmatrix} 0 & 1 \\ -1 & 0 \end{pmatrix}.
 \end{aligned}
\end{equation*}
Note that $J^\ast = - J$, in the sense that $(J\vecu)\cdot\vecv=-\vecu\cdot(J\vecv)$.
We define the symplectic form $\bfOmega$ as
\begin{equation*}
 \begin{aligned}
  \bfOmega(\vec{u}, \vec{v}) := \langle \vec{u}, J \vec{v} \rangle, \quad \vec{u} = \begin{pmatrix} u_1 \\ u_2 \end{pmatrix}, \quad \vec{v} = \begin{pmatrix} v_1 \\ v_2 \end{pmatrix},
 \end{aligned}
\end{equation*}
where 
\begin{equation*}
 \langle \vec{u}, \vec{v} \rangle = \int \bigl( u_1 v_1 + u_2 v_2 \bigr) \, \ud \omega \, \ud \rho. 
\end{equation*}
We emphasize that the reason why we are using $\ud\omega \,\ud \rho$ for the volume form is that in our applications, $\sqrt{| h|}$ is incorporated in the definition of $\vecpsi$.
Recall the definition of the matrix operator
\begin{equation*}
 \begin{aligned}
  M := \begin{pmatrix}
        - \frac{(h^{-1})^{0j}}{(h^{-1})^{00}} \pj & \frac{1}{\sqrt{|h|} (h^{-1})^{00}} \\ 
        -\sqrt{|h|} L & - \pj \Bigl( \frac{(h^{-1})^{j0}}{(h^{-1})^{00}} \Bigr)
       \end{pmatrix}.
 \end{aligned}
\end{equation*}
Its adjoint with respect to the inner product $\langle \vec{u}, \vec{v} \rangle$ is given by 
\begin{equation*}
 \begin{aligned}
  M^\ast := \begin{pmatrix}
              \pj \Bigl( \frac{(h^{-1})^{j0}}{(h^{-1})^{00}} \Bigr) & -\sqrt{|h|} L \\
              \frac{1}{\sqrt{|h|} (h^{-1})^{00}} & \frac{(h^{-1})^{0j}}{(h^{-1})^{00}} \pj
       \end{pmatrix}.
 \end{aligned}
\end{equation*}
Then we have 
\begin{equation*}
 \begin{aligned}
  J M + M^\ast J = 0.
 \end{aligned}
\end{equation*}
In particular, it follows that
\begin{equation*}
 \bfOmega(\vecu, M \vecv) = - \bfOmega(M\vecu, \vecv).
\end{equation*}

Motivated by the discussion in Subsection~\ref{subsec:eigenfunctions}, we define for arbitrary $\ell \in \bbR^n$, $|\ell| < 1$, 
\begin{equation*}
 \begin{aligned}
  \vecfy_i := \begin{pmatrix} \fy_i \\ \dotfy_i \end{pmatrix}, \quad \vecfy_{n+i} := \begin{pmatrix} \fy_{n+i} \\ \dotfy_{n+i} \end{pmatrix}, \quad 1 \leq i \leq n,
 \end{aligned}
\end{equation*}
where 
\begin{equation*}
 \begin{aligned}
  \fy_i &:= |\ell|^{-2}(\gamma-1)(\ell\cdot\nu)\ell^i+\nu^i, \\
  \dotfy_i &:= \sqrt{|h|}(h^{-1})^{0j}\partial_j\fy_i, \\
  \fy_{n+i} &:= -\gamma(\ell\cdot F)\nu^i-|\ell|^{-2}\gamma(\gamma-1)(\ell\cdot F)(\ell\cdot\nu)\ell^i, \\
  \dotfy_{n+i} &:= \sqrt{|h|}(h^{-1})^{0j}\partial_j\fy_{n+i}+\sqrt{|h|}(h^{-1})^{00}\fy_i.
 \end{aligned}
\end{equation*}
Recall that here we no longer assume that $\frac{\ud\xi}{\ud t}=\ell$ and $\frac{\ud\ell}{\ud t}=0$.
From Subsection~\ref{subsec:eigenfunctions} we still obtain 
\begin{equation*}
 M \vecfy_i = 0, \quad M \vecfy_{n+i} = \vecfy_i, \quad 1 \leq i \leq n,
\end{equation*}
but $\vecfy_i$ and $\vecfy_{n+i}$ are no longer elements of the kernel, respectively generalized kernel, of $(\partial_t - M)$.

Next, we introduce truncated versions of the generalized eigenfunctions given by
\begin{equation*}
 \vecZ_i := \chi \vecvarphi_i, \quad \vecZ_{n+i} := \chi \vecvarphi_{n+i}, \quad i = 1, \ldots, n,
\end{equation*}
where the smooth cut-off function $\chi \in C_c^\infty(\bbR)$ satisfies $\chi(\rho) = 1$ for $|\rho| \leq \Reigenfunctioncutoffscale$ and $\chi(\rho) = 0$ for $|\rho| \geq 2\Reigenfunctioncutoffscale$.
Then, using \eqref{equ:first_order_linearized_HVMC}, we find for $i=1, \ldots, n$ that
\begin{equation*}
 \begin{aligned}
  \partial_t \bigl( \bfOmega( \vecpsi, \vecZ_i ) \bigr) &= \bfOmega(\vecK, \vecZ_i) + \bfOmega(\vecf, \vecZ_i) + \bfOmega(\vecpsi, (\pt-M) \vecZ_i), \\
  \partial_t \bigl( \bfOmega( \vecpsi, \vecZ_{n+i} ) \bigr) &= \bfOmega(\vecK, \vecZ_{n+i}) + \bfOmega(\vecf, \vecZ_{n+i}) + \bfOmega(\psi, (\pt-M)\vecZ_{n+i}).
 \end{aligned}
\end{equation*}
We determine the leading order behavior of $\bfOmega( \vecpsi, \vecZ_i )$ and $\bfOmega( \vecpsi, \vecZ_{n+i} )$.
To this end we first observe that to leading order
\begin{equation*}
 \begin{aligned}
  K &= -(\dot{\xi}-\ell) \cdot \bigl( \nu + \calO(|\ell|^2) \nu \bigr) + \dot{\ell} \cdot \calO(|\ell|), \\
  \dot{K} &= -\sqrt{|h|} (h^{-1})^{00} \dot{\ell} \cdot \bigl( \nu + \calO(|\ell|^2) \nu \bigr), 
 \end{aligned}
\end{equation*}
and 
\begin{equation*}
 \begin{aligned}
  \varphi_i &= \nu^i + \calO(|\ell|^2), \\
  \dot{\varphi}_i &= \sqrt{|h|} (h^{-1})^{0j} \partial_j \varphi_i = \calO(|\ell|), \\
  \varphi_{n+i} &= \calO(|\ell|), \\
  \dot{\varphi}_{n+i} &= \sqrt{|h|} (h^{-1})^{00} \nu^i + \calO(|\ell|).
 \end{aligned}
\end{equation*}
We obtain corresponding leading order expressions for $\vecZ_i$ and $\vecZ_{n+i}$.
Thus, we find that to leading order 
\begin{equation*}
 \begin{aligned}
  \bfOmega(\vecK, \vecZ_i) &= \sum_j \dot{\ell}_j ( d_{ij} + r_{ij} ) + \sum_j (\dot{\xi}_j - \ell_j) b_{ij}, \\
  \bfOmega(\vecK, \vecZ_{n+i}) &= \sum_j \dot{\ell}_j \tilb_{ij} - \sum_j (\dot{\xi}_j - \ell_j) ( d_{ij} + \tilr_{ij})
 \end{aligned}
\end{equation*}
with (see Section~\ref{subsubsec:normal})
\begin{equation} \label{equ:derivation_modul_equ_def_dij}
  d_{ij} = \int \chi \nu^i \nu^j \sqrt{|h|} (h^{-1})^{00} \, \ud \rho \, \ud \omega \, \simeq \, \left\{ \begin{aligned} 1, \quad i = j, \\
  o_{\Reigenfunctioncutoffscale,\ell}(1), \quad i \neq j \end{aligned} \right.
\end{equation}
and 
\begin{equation*}
 r_{ij} = b_{ij} = \tilr_{ij} = \tilb_{ij} = \calO(|\ell|).
\end{equation*}
Below we denote by $D$ the $n \times n$ matrix with entries $d_{ij}$ defined in~\eqref{equ:derivation_modul_equ_def_dij}. Clearly, $D$ is invertible for small $|\ell|$ and sufficiently large $\Reigenfunctioncutoffscale$.
Parts of the quantities $\bfOmega(\vecf, \vec{Z}_i)$ and $\bfOmega(\vecf, \vec{Z}_{n+i})$ are (at least) linear in $\dotwp$ with coefficients that are $\calO(\dotwp, \ell, \psi, \partial \psi)$. Note that not the entire quantities $\bfOmega(\vecf, \vec{Z}_i)$ and $\bfOmega(\vecf, \vec{Z}_{n+i})$ contain a factor of $\dotwp$, for instance $\calE_1$ does not. Only terms with $\dotwp$ will eventually need to be moved to the left-hand side of the ODEs for $\wp$.
Finally, we have 
\begin{equation*}
 \begin{aligned}
  \bfOmega(\vecpsi, (\partial_t - M) \vecZ_i) &= \bfOmega(\vecpsi, \chi \dot{\ell} \cdot \nabla_\ell \vec{Z}_i) - \bfOmega(\vecpsi, M(\chi \vec{Z}_i)), \\
  \bfOmega(\vecpsi, (\partial_t - M) \vecZ_{n+i}) &= \bfOmega(\vecpsi, \chi \dot{\ell} \cdot \nabla_\ell \vec{Z}_{n+i}) - \bfOmega(\vecpsi, M(\chi \vec{Z}_{n+i})).
 \end{aligned}
\end{equation*}

Putting the preceding observations together, we have found that schematically
\begin{equation} \label{equ:derivation_modul_equ1}
 \begin{aligned}
  \partial_t \vecbfOmega + \vecN = \begin{pmatrix} D + R & R \\ R & D + R \end{pmatrix} \dotwp + \vecH,
 \end{aligned}
\end{equation}
where (recall the notation from Section~\ref{subsec:prelimvfs})
\begin{equation*}
 \begin{aligned}
  \vecbfOmega &= \bigl( \bfOmega(\vecpsi, \vecZ_1), \ldots, \bfOmega(\vecpsi, \vecZ_{2n}) \bigr), \\
  \vecN &= \bigl( \bfOmega(\vecpsi, M \vecZ_1), \ldots, \bfOmega(\vecpsi, M \vecZ_{2n}) \bigr), \\
  R &= \calO(\psi, \partial \psi, \partial_\Sigma \partial \psi, \dotwp, \ell), \\
  \vecH &= \calO\bigl( (\psi, \partial \psi, \partial_\Sigma \partial \psi)^2 \bigr).
 \end{aligned}
\end{equation*}
Note that here we have separated $\vecN$ from $\vecH$ to emphasize that $\vecN$, which contains the linear contribution in $\vecpsi$, is the principal source term for $\partial_t\vecbfOmega$.

In what follows, we would like to view~\eqref{equ:derivation_modul_equ1} as an equation entirely in terms of $\vecpsi = (\psi, \dotpsi)$, $\dotwp$, and $\ell$. However, at this point the right-hand side of~\eqref{equ:derivation_modul_equ1} still involves $\partial_t \psi$. To remedy this, we use the relation~\eqref{equ:relation_ptphi_dotphi} between $\partial_t \psi$ and $\dotpsi$ to replace any $\partial_t \psi$ terms on the right-hand side of~\eqref{equ:derivation_modul_equ1}. However, some care has to be taken here because the quadratic error term $f$ in~\eqref{equ:relation_ptphi_dotphi} still contains terms involving $\partial_t \psi$, but they are of the form $\calO(\psi, \partial \psi, \ell, \dotwp) \partial_t \psi$.  
We can therefore use the implicit function theorem to infer from the relation~\eqref{equ:relation_ptphi_dotphi} that for sufficiently small $\psi$, $\partial \psi$, $\partial_\Sigma \partial \psi$, $\ell$, and $\dotwp$, we can write 
\begin{equation} \label{equ:derivation_modul_equ1_addon}
 \partial_t \psi = \frac{1}{\sqrt{|h|} (h^{-1})^{00}} \dot{\psi} - \frac{(h^{-1})^{0j}}{(h^{-1})^{00}} \pj \psi - \bfeta \Bigl( \bigl( \dot{\ell} \cdot \nabla_\ell + (\dot{\xi}-\ell) \cdot \nabla_\xi \bigr) \Psi_\wp, N \Bigr) + \calO\bigl( (\partial^{\leq 1}_\Sigma \vecpsi,  \dotwp)^2,  \ell\dotwp\bigr). 
\end{equation}

\begin{remark}
The smallness required for this application of the implicit function theorem will follow from the assumptions on the initial data and our bootstrap assumptions in Section~\ref{sec:bootstrap}, and will be assumed in the remainder of this section.
\end{remark}

Inserting the relation~\eqref{equ:derivation_modul_equ1_addon} into the right-hand side of~\eqref{equ:derivation_modul_equ1} and rearranging, we obtain
\begin{equation} \label{equ:derivation_modul_equ2}
 \begin{aligned}
  \partial_t \vecbfOmega + \vecN = \begin{pmatrix} D + \wtilR & \wtilR \\ \wtilR & D + \wtilR \end{pmatrix} \dotwp + \vec{\wtilH} =: \vecF(\partial^{\leq 2}_\Sigma \vecpsi, \ell, \dotwp),
 \end{aligned}
\end{equation}
where $\wtilR$ and $\wtilH$ are smooth functions of the form 
\begin{equation*}
 \begin{aligned}
  \wtilR &= \calO(\partial^{\leq 2}_\Sigma \vecpsi, \ell, \dotwp), \\
  \vec{\wtilH} &= \calO\bigl( (\partial^{\leq 2}_\Sigma \vecpsi)^2\bigr).
 \end{aligned}
\end{equation*}
As a consequence of the implicit function theorem, there exists a smooth function $\vecG$ such that for small $x$, $y$,  $w$, and $q$, we have $q = \vecF(x,y,w)$ if and only if $w = \vecG(x,y,q)$. Moreover, in view of the structure of $\vecF$ defined in~\eqref{equ:derivation_modul_equ2}, $\vecG$ must then also satisfy (uniformly for all sufficiently small $x$, $y$, and $q$) the estimate 
\begin{equation} \label{equ:derivation_modul_equ_bound_from_IFT}
  |\vecG(x,y,q)| \lesssim |x| + |q|.
\end{equation}
Thus, assuming $\partial_\Sigma^{\leq2} \vec{\psi}$, $\ell$, and $\dotwp$ are sufficiently small (in a pointwise sense) as remarked above, \eqref{equ:derivation_modul_equ2} holds if and only if we have 
\begin{equation} \label{equ:derivation_modul_equ3}
 \begin{aligned}
  \dotwp = \vecG(\partial_\Sigma^{\leq2} \vec{\psi}, \ell, \partial_t \vecbfOmega + \vecN).
 \end{aligned}
\end{equation}
Note that in this equation for $\dotwp$ second-order derivatives of $\vecpsi$ appear in the source term. This has the potential danger that after differentiating in $t$ (that is, for after eventually commuting the equation with the maximum number of $\partial_t$ derivatives), there is a loss of regularity. To avoid this issue, one can try to replace \eqref{equ:derivation_modul_equ3} by a smoothed out version of it. To achieve this, we could try to impose orthogonality conditions for $\vecbfOmega$ that lead to a differential equation of the form
\begin{equation}
 \begin{aligned}
  \dotwp = \vecG( S\partial_\Sigma^{\leq2} \vec{\psi}, \ell, S \vecN ),
 \end{aligned}
\end{equation}
where $S$ is a smoothing operator (in time) that will be defined shortly. While this seems feasible in principle, technically, it seems simpler (see \eqref{equ:derivation_modul_equ5} below) to allow more flexibility in the final differential equation for $\dotwp$, and to let it be of the form
\begin{equation}\label{equ:derivation_modul_equ3_alt}
 \begin{aligned}
  \dotwp = \vecG( S\partial_\Sigma^{\leq2} \vec{\psi}, \ell, S\vecN - \beta \vecomega),
 \end{aligned}
\end{equation}
where $\beta > 0$ and $\beta \vecomega$ is no larger than $\vecN$. 
Comparing with \eqref{equ:derivation_modul_equ3}, this is equivalent to
\begin{equation}
 \begin{aligned}
  \partial_t \vecbfOmega + \vecN = \vecF(\partial_\Sigma^{\leq2} \vec{\psi}, \ell, \vecG( S\partial_\Sigma^{\leq2} \vec{\psi}, \ell, S\vecN - \beta \vecomega) ).
 \end{aligned}
\end{equation}
Using that $\vecF(0,\ell, \vecG(0,\ell, q)) = q$, this is also equivalent to
\begin{equation}
 \begin{aligned}
  \partial_t \vecbfOmega = (S-I) \vecN - \beta \vecomega - \vecF_\omega, 
 \end{aligned}
\end{equation}
where 
\begin{equation}\label{eq:Fomegadef1}
 \begin{aligned}
  \vecF_\omega(\partial_\Sigma^{\leq2} \vec{\psi}, \ell, S\vecN-\beta\vecomega) &:= \vecF(0,\ell, \vecG(0,\ell, S\vecN-\beta\vecomega)) - \vecF(\partial_\Sigma^{\leq2} \vec{\psi}, \ell, \vecG(S\partial_\Sigma^{\leq2} \vec{\psi}, \ell, S\vecN-\beta\vecomega))
 \end{aligned}
\end{equation}
has additional smallness of order $\calO( \partial_\Sigma^{\leq2} \vec{\psi} )$. Indeed, by~\eqref{equ:derivation_modul_equ_bound_from_IFT} it follows from the preceding that 
\begin{equation}\label{eq:Fomegaestimate1}
 \begin{aligned}
  |\vecF_\omega(\partial_\Sigma^{\leq2} \vec{\psi}, \ell, S\vecN-\beta\vecomega)| \lesssim |\partial_\Sigma^{\leq2} \vec{\psi}| \bigl( |S\partial_\Sigma^{\leq2} \vec{\psi}| + |S\vecN-\beta\vecomega| \bigr).
 \end{aligned}
\end{equation}

We now seek to impose a decomposition 
\begin{equation} \label{equ:derivation_modul_equ4}
 \begin{aligned}
  \vecbfOmega = \vecUpomega + \vecomega
 \end{aligned}
\end{equation}
such that 
\begin{equation} \label{equ:derivation_modul_equ5}
 \begin{aligned}
  \partial_t\vecUpomega &= (S-I)(\vecN+\vecF_\omega), \\
  \partial_t\vecomega &= -\beta\vecomega-S\vecF_\omega.
 \end{aligned}
\end{equation}
The motivation for this decomposition can be explained as follows. We view $\vecN$ as the main source term for $\partial_t\vecbfOmega$. As we will see momentarily, the smoothing operator $S$ can be chosen to be almost local in time, so that for instance $S \psi$ and $\psi$ will satisfy comparable decay estimates in time. We will also see that with this choice, we can write $S-I=\partial_t\tilS$ where $\tilS$ is another almost local (but not smoothing) operator, so comparing with \eqref{equ:derivation_modul_equ5} we see that $\vecUpomega$ is of the same order as $\vecN$. This captures the main contribution to $\vecbfOmega$. For the remainder  $\vecomega$ we no longer have the structure $S-I$, but \eqref{equ:derivation_modul_equ3_alt} was conceived so that the equation in \eqref{equ:derivation_modul_equ5} satisfied by $\vecomega$ comes with the damping term $-\beta \vecomega$, which can be used to estimate $\vecomega$ in terms of $S\vecF_\omega$. The details of this argument are presented in Section~\ref{sec:parametercontrol}.
To achieve \eqref{equ:derivation_modul_equ4} and \eqref{equ:derivation_modul_equ5}, we will invoke the implicit function theorem on suitable Banach spaces of time-dependent curves on a time interval $J = [0, \tau_0]$.

We first introduce the definition of the smoothing operator (in time) $S$. Let $k \in C_c^\infty(\bbR)$ be a smooth bump function supported in the interval $[0,1]$ such that $\int_0^1 k(s) \, \ud s = 1$. For a given (locally integrable) function $h(t)$ defined for $t \geq -1$, we set 
\begin{equation*}
  (S h)(t) := \int_\bbR \chi_{[-1,\infty)}(s) h(s) k(t-s) \, \ud s, \quad t \geq -1.
\end{equation*}
Then $(S h)(t)$ is a smooth function for all $t \geq 0$. 
We also define an associated operator $\widetilde{S}$ with the property that $(S-I) h = \frac{\ud}{\ud t} (\widetilde{S} h)$. 
To this end we set $\tilde{k}(r) := 0$ for $r < 0$ and $\tilde{k}(r) := -\int_r^\infty k(s) \, \ud s$ for $r \geq 0$. Note that $\tilde{k}(r)$ is also supported in the interval $[0,1]$. Then the operator 
\begin{equation*}
 (\widetilde{S} h)(t) := \int_\bbR \chi_{[-1,\infty)}(s) h(s) \tilde{k}(t-s) \, \ud s, \quad t \geq -1,
\end{equation*}
has the desired properties.

Let $\Phi$ be a sufficiently regular (for instance $C^5$) solution to HVMC. Given $C^1$ curves $\xi(t)$ and $\ell(t)$ defined on some time interval $J = [0,\tau_0]$ in the domain of definition of $\Phi$, we denote by $\Psi_{\wp}$ the associated profile in the flat region, defined as in~\eqref{equ:definition_profile_flat_region}. We let 
\begin{equation}
 \psi_{\xi,\ell} := \bfeta( \Phi - \Psi_\wp, n_\wp ),
\end{equation}
and correspondingly define $\dot{\psi}_{\xi,\ell}$ as in~\eqref{equ:relation_ptphi_dotphi}. 

First, we trivially extend $\xi(t)$, $\ell(t)$, and $\partial_\Sigma^{\leq 2} \vec{\psi}_{\xi,\ell}$ to times $-1 \leq t \leq 0$. 
Then we define $\vec{\omega}(t)$ to be the solution of
\begin{equation}\label{eq:omegasolutionform1}
 \left\{ \begin{aligned}
  \vec{\omega}(t) &= - \int_0^t e^{-\beta(t-s)} S \vecF_\omega(\partial_\Sigma^{\leq 2} \vec{\psi}_{\xi,\ell}, \ell, S\vecN-\vecomega)(s) \, \ud s, &\quad t \geq 0, \\
  \vec{\omega}(t) &= 0, \quad &t < 0.
 \end{aligned} \right.
\end{equation}
We point out that since $\vecF_\omega$ satisfies the quadratic estimate \eqref{eq:Fomegaestimate1}, the existence of a solution $\vecomega$ to \eqref{eq:omegasolutionform1} can be shown by a fixed point argument.
Observe that $\vec{\omega}$ has additional smallness in view of the definition of $\vecF_\omega$.
Next, in view of \eqref{equ:derivation_modul_equ5}, we define 
\begin{equation}\label{eq:upOmegasolutionform1}
 \vecUpomega(t) := \wtilS \bigl( \vecN+\vecF_\omega \bigr)(t).
\end{equation}
Finally, we define\footnote{To eventually apply the implicit function theorem to $\Upsilon$ we need to specify its domain of definition. This can be done for instance as follows. Let $X=C^5_c(\{|\rho|< \tilR_1\};\bbR^{1+(n+1}))$ denote the space of $5$ times continuously differentiable functions (of $(\rho,\omega)$) supported in $\{|\rho|\leq \tilR_1\}$ for some large $\tilR_1$. We let
%%%%%%%
%%%%%%%
\begin{align*}
\begin{split}
\calX=C([-1,\tau_0];X)\cap C^1([-1,\tau_0];X),\quad \calY=C([-1,\tau_0];\bbR)\cap C^1([-1,\tau_0];\bbR).
\end{split}
\end{align*}
Then we can view $\Upsilon$ as a function
%%%%%%%
%%%%%%%
\begin{align*}
\begin{split}
\Upsilon: \calX\times \calY^{2n}\to \calY^{2n},
\end{split}
\end{align*}
where $\calY^{2n}$ represents the $2n$ components of $\ell$ and $\xi$. Note that by construction, if $\tilR_1$ is chosen such that $\chi$ in the definition of $\vecZ_i$ is supported on $\{|\rho|\leq \tilR_1\}$, then $\Upsilon(\chi \Psi_{0,0},0,0)=0$.
}
\begin{align*}
 \vec\Upsilon(\Phi, \xi, \ell) := \vecbfOmega(\vecpsi_{\xi,\ell})(t) - \vecUpomega(t) - \vecomega(t).
\end{align*}
Observe that by definition 
\begin{equation*}
 \vec\Upsilon(\Psi_{0,0},0,0) = 0,
\end{equation*}
where $\Psi_{0,0}(t,\rho,\omega) = (t, F(\rho,\omega))$ is the parametrization of the standard Lorentzian catenoid.

We now want to show that the Fr\'echet derivative $D_{\xi, \ell} \vec\Upsilon(\Psi_{0,0},0,0)$ is invertible.
Then the existence of the decomposition~\eqref{equ:derivation_modul_equ4} satisfying~\eqref{equ:derivation_modul_equ5} follows from the implicit function theorem under our bootstrap assumptions.
To this end, we observe that by the preceding definitions and in view of the computations in Subsection~\ref{subsec:eigenfunctions}, we have for $1 \leq i, j \leq n$ that 
\begin{equation*}
 \begin{aligned}
  \frac{\delta \bigl( \bfOmega(\vecpsi_{\xi,\ell}, \vecZ_i) \bigr)}{\delta \xi_j} \bigg|_{(\xi, \ell)=(0,0)} &= \bfOmega(\vecvarphi_j, \vecZ_i) \big|_{\ell=0} = 0, \\ 
  \frac{\delta \bigl( \bfOmega(\vecpsi_{\xi,\ell}, \vecZ_i) \bigr)}{\delta \ell_j} \bigg|_{(\xi, \ell)=(0,0)} &= \bfOmega(\vecvarphi_{n+j}, \vecZ_i) \big|_{\ell=0} = - \int \chi \nu^j \nu^i \sqrt{|h|}\big|_{\ell=0} \, \ud \omega \, \ud \rho, \\
  \frac{\delta \bigl( \bfOmega(\vecpsi_{\xi,\ell}, \vecZ_{n+i}) \bigr)}{\delta \xi_j} \bigg|_{(\xi, \ell)=(0,0)} &= \bfOmega(\vecvarphi_j, \vecZ_{n+i}) \big|_{\ell=0} = \int \chi \nu^j \nu^i \sqrt{|h|}\big|_{\ell=0} \, \ud \omega \, \ud \rho, \\
  \frac{\delta \bigl( \bfOmega(\vecpsi_{\xi,\ell}, \vecZ_{n+i}) \bigr)}{\delta \ell_j} \bigg|_{(\xi, \ell)=(0,0)} &= \bfOmega(\vecvarphi_{n+j}, \vecZ_{n+i}) \big|_{\ell=0} = 0.
 \end{aligned}
\end{equation*}
This determines the contributions of $\vecbfOmega(\vecpsi_{\xi,\ell})(t)$ to the Fr\'echet derivative $D_{\xi, \ell} \vec\Upsilon(\Psi_{0,0},0,0)$.
Since $\vecF_\omega$ has additional smallness, $\vec{\omega}$ does not contribute to $D_{\xi, \ell} \vec\Upsilon(\Psi_{0,0},0,0)$, and we only need to examine the part $(\widetilde{S} \vecN)(t)$ in $\vecUpomega(t)$ more carefully. Since $M \vecvarphi_i = 0$ for $1 \leq i \leq n$, and $\vecZ_i = \chi \vecvarphi_i$, we have 
that $M\vecZ_i$ is supported in $\{ |\rho| \simeq \Reigenfunctioncutoffscale\}$ and is of size $\calO(\Reigenfunctioncutoffscale^{-n+1},\ell)$. Thus, we find
\begin{equation*}
 \begin{aligned}
  \frac{\delta \bigl( \bfOmega(\vecpsi_{\xi,\ell}, M \vecZ_i) \bigr)}{\delta \xi_j} \bigg|_{(\xi, \ell)=(0,0)} = \frac{\delta \bigl( \bfOmega(\vecpsi_{\xi,\ell}, M \vecZ_i) \bigr)}{\delta \ell_j} \bigg|_{(\xi, \ell)=(0,0)} = o_\Reigenfunctioncutoffscale(1).
 \end{aligned}
\end{equation*}
Since $M \vecvarphi_{n+i} = \vecvarphi_i$, $1 \leq i \leq n$, we have $M \vecZ_{n+i} = \vecZ_i + \text{error}$ up to an error term that is supported in $\{ |\rho| \simeq \Reigenfunctioncutoffscale \}$ and is of size $\calO(\Reigenfunctioncutoffscale^{-2}, \ell)$. 
Thus,
\begin{equation*}
 \begin{aligned}
    \frac{\delta \bigl( \bfOmega(\vecpsi_{\xi,\ell}, M \vecZ_{n+i}) \bigr)}{\delta \xi_j} \bigg|_{(\ell,\xi)=(0,0)} &= o_\Reigenfunctioncutoffscale(1), \\
  \frac{\delta \bigl( \bfOmega(\vecpsi_{\xi,\ell}, M \vecZ_{n+i}) \bigr)}{\delta \ell_j} \bigg|_{(\ell,\xi)=(0,0)} &= \bfOmega(\vecvarphi_{n+j}, \vecZ_i) \big|_{\ell=0} + o_\Reigenfunctioncutoffscale(1) = - \int \chi \nu^j \nu^i \sqrt{|h|}\big|_{\ell=0} \, \ud \omega \, \ud \rho + o_\Reigenfunctioncutoffscale(1).
 \end{aligned}
\end{equation*}
It follows that the Fr\'echet derivative $D_{\xi, \ell} \vec\Upsilon(\Psi_{0,0},0,0)$ is a map of the form 
\begin{equation} \label{equ:frechet_derivative_form_schematic}
 \begin{aligned}
  \pmat{\delta\xi\\\delta\ell}\mapsto C \begin{bmatrix} 0 & Id \\ Id & Id \end{bmatrix}  \pmat{\delta\xi\\\delta\ell}+ \tilS \tilA  \pmat{\delta\xi\\\delta\ell},
 \end{aligned}
\end{equation}
where $C$ is a constant of order one as in Section~\ref{subsubsec:normal} and $\tilA$ is a time independent matrix of size~$o_\Reigenfunctioncutoffscale(1)$.
Clearly, the map~\eqref{equ:frechet_derivative_form_schematic} is invertible as a linear map of Banach spaces for sufficiently large $\Reigenfunctioncutoffscale$. Thus, in view of~\eqref{equ:frechet_derivative_form_schematic} the Fr\'echet derivative $D_{\xi, \ell} \vec\Upsilon(\Psi_{0,0},0,0)$ is invertible for sufficiently large $\Reigenfunctioncutoffscale$.

\subsection{Controlling the Unstable Mode} \label{subsec:unstableint}

Finally, we need to take into account the exponential instablity caused by the positive eigenvalue of the linearized operator $L$. 
At this point we assume that the modulation parameters $\ell(t)$ and $\xi(t)$ have been determined in terms of $\vecpsi$, so we treat these as given and enact a further decomposition of the perturbation $\vecpsi$.

Starting from the first-order equation~\eqref{equ:first_order_linearized_HVMC} and inserting the relation~\eqref{equ:derivation_modul_equ1_addon} between $\partial_t \psi$ and $\dot{\psi}$ (furnished by the implicit function theorem), we obtain a first-order evolution equation for the perturbation $\vec{\psi}$ of the form
\begin{equation} \label{equ:unstable_mode_first_order_psi_equation}
 (\partial_t - M) \vec{\psi} = \vec{F}_1\bigl(\partial_\Sigma^{\leq 2} \vec{\psi}, \ell, \dotvecwp\bigr).
\end{equation}
Recall from Section~\ref{sec:intro} that the linearized operator $\HHbar := \Delta_\barcalC + |\secondff|^2$ of the Riemannian catenoid has a positive eigenvalue $\mu^2 > 0$ with associated (exponentially decaying) eigenfunction $\varphibar_\mu$.
For the operator $M$ we introduce the time-independent ``almost eigenfunctions'' 
\begin{equation*}
 \vec{Z}_\pm := c_\pm \bigl( \chi \varphibar_\mu, \mp \mu \sqrt{|h|}\big|_{\ell=0} \chi \varphibar_\mu \bigr),
\end{equation*}
where $\chi \in C_c^\infty(\bbR)$ is the previously introduced smooth cut-off to $|\rho| \lesssim \Reigenfunctioncutoffscale$ and where $c_\pm$ are normalization constants such that 
\begin{equation*}
 \bfOmega(\vec{Z}_+, \vec{Z}_-) = - \bfOmega(\vec{Z}_-, \vec{Z}_+) = 1.
\end{equation*}
Then we have 
\begin{equation*}
 M \vecZ_\pm = \pm \mu \vecZ_\pm + \calE_\pm,
\end{equation*}
where the errors $\calE_\pm$ consist of terms that are supported around $|\rho| \simeq \Reigenfunctioncutoffscale$ or that have additional smallness in terms of the parameter $\ell$. 

We now enact a decomposition of $\vec{\psi}$ into 
\begin{equation} \label{equ:unstable_mode_decomposition}
 \vec{\psi} = \vec{\phi} + a_+ \vec{Z}_+ + a_- \vec{Z}_-,
\end{equation}
where the time-dependent parameters $a_+(t)$ and $a_-(t)$ will be defined shortly by imposing suitable orthogonality conditions. 
Inserting~\eqref{equ:unstable_mode_decomposition} into \eqref{equ:unstable_mode_first_order_psi_equation}, we find
\begin{align}\label{eq:wpoutline6}
\begin{split}
(a_{+}'-\mu a_{+})\vecZ_{+}+(a_{-}'+\mu a_{-})\vecZ_{-} &= a_{+}(M\vecZ_{+}-\mu\vecZ_{+})+a_{-}(M\vecZ_{-}+\mu\vecZ_{-}) \\
&\quad \quad -(\partial_t-M)\vecphi + \vecF_1(\partial_\Sigma^{\leq 2} \vec{\psi}, \ell, \dot{\vecwp}).
\end{split}
\end{align}
Note that the dependence of $\vecF_1(\partial_\Sigma^{\leq 2} \vec{\psi}, \ell, \dot{\vecwp})$ on $\vecpsi$ comes with additional smallness. Thus, if we replace $\vecpsi$ in the definition of $\vecF_1(\partial_\Sigma^{\leq 2} \vec{\psi}, \ell, \dot{\vecwp})$ in terms of $\vecphi$ and $a_{\pm}$, the terms involving $a_{\pm}$ come with extra smallness. Also note that at this point we have already determined the parameters $\ell(t)$ and $\xi(t)$, so we treat these as given. 
Now taking the $\bfOmega$ inner product of \eqref{eq:wpoutline6} with $\vecZ_{-}$, multiplying by $e^{-\mu t}$, and recalling that 
%%%%%%%
%%%%%%%
\begin{align*}
\begin{split}
\bfOmega(M\vecphi,\vecZ_{-})=-\bfOmega(\vecphi,M\vecZ_{-})=\mu\bfOmega(\vecphi,\vecZ_{-})-\bfOmega(\vecphi,M\vecZ_{-}+\mu\vecZ_{-}),
\end{split}
\end{align*}
we get
%%%%%%%
%%%%%%%
\begin{align}\label{eq:wpoutline7}
\begin{split}
\frac{\ud}{\ud t}\big(e^{-\mu t}a_{+}\big)=-\frac{\ud}{\ud t}\big(e^{-\mu t}\bfOmega(\vecphi,\vecZ_{-})\big)-e^{-\mu t}F_{+},
\end{split}
\end{align}
where
%%%%%%%
%%%%%%%
\begin{align}\label{eq:Fplusdef}
\begin{split}
-F_{+} &:= \bfOmega(\vecF_1,\vecZ_{-})-\bfOmega(\vecphi,M\vecZ_{-}+\mu\vecZ_{-})+a_{+}\bfOmega(M\vecZ_{+}-\mu\vecZ_{+},\vecZ_{+})\\
&\quad+a_{-}\bfOmega(M\vecZ_{-}+\mu\vecZ_{-},\vecZ_{-}).
\end{split}
\end{align}
Similarly, taking the $\bfOmega$ inner product of \eqref{eq:wpoutline6} with $\vecZ_{+}$ and multiplying by $e^{\mu t}$, we get
%%%%%%%
%%%%%%%
\begin{align}\label{eq:wpoutline8}
\begin{split}
\frac{\ud}{\ud t}\big(e^{\mu t}a_{-}\big)=\frac{\ud}{\ud t}\big(e^{\mu t}\bfOmega(\vecphi,\vecZ_{+})\big)+e^{\mu t}F_{-},
\end{split}
\end{align}
where
%%%%%%%
%%%%%%%
\begin{align*}
\begin{split}
-F_{-}=\bfOmega(\vecF_1,\vecZ_{+})-\bfOmega(\vecphi,M\vecZ_{+}-\mu\vecZ_{+})+a_{+}\bfOmega(M\vecZ_{+}-\mu\vecZ_{+},\vecZ_{+})+a_{-}\bfOmega(M\vecZ_{-}+\mu\vecZ_{-},\vecZ_{+}).
\end{split}
\end{align*}
Motivated by \eqref{eq:wpoutline7} and \eqref{eq:wpoutline8}, we require the orthogonality conditions 
%%%%%%%
%%%%%%%
\begin{align} \label{eq:wpoutline9}
\begin{split}
\bfOmega(\vecphi,\vecZ_{-}) = e^{\mu t}\tilS(e^{-\mu t}F_{+}), \qquad \bfOmega(\vecphi,\vecZ_{+}) = e^{-\mu t}\tilS(e^{\mu t}F_{-}).
\end{split}
\end{align}
In view of \eqref{eq:wpoutline7} and \eqref{eq:wpoutline8} and recalling that $\partial_t \tilS = S-Id$, the orthogonality conditions~\eqref{eq:wpoutline9} lead to the following equations for $a_{\pm}$
%%%%%%%
%%%%%%%
\begin{align}\label{eq:wpoutline10}
\begin{split}
\frac{\ud}{\ud t}\big(e^{-\mu t}a_{+}\big)=-S(e^{-\mu t}F_{+}),\qquad \frac{\ud}{\ud t}\big(e^{\mu t}a_{-}\big)=S(e^{\mu t}F_{-}).
\end{split}
\end{align}
Finally note that derivatives commute nicely with \eqref{eq:wpoutline9} in the sense that (using the product rule and the fact that $[\frac{\ud}{\ud t},\tilS]=0$)
%%%%%%%
%%%%%%%
\begin{align}\label{eq:wpoutline11}
\begin{split}
\frac{\ud}{\ud t}\bfOmega(\vecphi,\vecZ_{-})=e^{\mu t}\tilS(e^{-\mu t}F_{+}'),\qquad \frac{\ud}{\ud t}\bfOmega(\vecphi,\vecZ_{+})=e^{-\mu t}\tilS(e^{\mu t}F_{-}'),
\end{split}
\end{align}
and similarly for higher derivatives.

To conclude, we briefly explain how to obtain the decomposition~\eqref{equ:unstable_mode_decomposition} satisfying the orthogonality conditions \eqref{eq:wpoutline9}.
Given $\vec{\psi}$ and $a_\pm(t)$ on some time interval $J = [0,\tau_0]$, we first trivially extend these to times $-1 \leq t \leq 0$. 
Then we consider 
\begin{equation*}
 \Upsilon_\mu(\vec{\psi}, a_+, a_-) := \begin{pmatrix} \bfOmega\bigl( \vec{\psi} - a_+ \vec{Z}_+ - a_- \vec{Z}_-, \vec{Z}_- \bigr) - e^{\mu t}\tilS(e^{-\mu t}F_{+}) \\ \bfOmega\bigl( \vec{\psi} - a_+ \vec{Z}_+ - a_- \vec{Z}_-, \vec{Z}_+ \bigr) - e^{-\mu t}\tilS(e^{\mu t}F_{-}) \end{pmatrix}.
\end{equation*}
Note that the orthogonality conditions~\eqref{eq:wpoutline9} are equivalent to $\Upsilon_\mu(\vec{\psi}, a_+, a_-) = (0,0)$. We have $\Upsilon_\mu(0,0,0) = 0$ and we compute the Fr\'echet derivative
\begin{equation*}
 D_{a_+, a_-}\Upsilon_\mu(0,0,0) = \begin{pmatrix} -1 & 0 \\ 0 & 1 \end{pmatrix} + o_{\Reigenfunctioncutoffscale,\ell}(1).
\end{equation*}
Hence, for given $\vec{\psi}$ satisfying suitable bootstrap assumptions, the existence of the decomposition~\eqref{equ:unstable_mode_decomposition} obeying the orthogonality conditions~\eqref{eq:wpoutline9} follows (for sufficiently large $\Reigenfunctioncutoffscale$) from the implicit function theorem.

%%%%%%%%%%%%%%%%%%%%%%
%%%%%%%%%%%%%%%%%%%%%%
%%%%%%%%%%%%%%%%%%%%%%
\section{Coordinates, Vectorfields, and a More Precise Description of the Profile} \label{sec:coordinates}
%%%%%%%%%%%%%%%%%%%%%%
%%%%%%%%%%%%%%%%%%%%%%
%%%%%%%%%%%%%%%%%%%%%%

Given $\ell$,$\xi$, and $a_{\pm}$, we give a more detailed description of the foliation and the profile. Moreover, we obtain various expressions for the linear operator acting on $\phi$. Our starting point is to derive a parameterization of the profile.
%%%%%%%%%%%%%%
%%%%%%%%%%%%%%
\subsection{Parameterization of the Profile}\label{sec:profile2}
%%%%%%%%%%%%%%
%%%%%%%%%%%%%%
We give separate parameterizations of the profile $\cup_\sigma \Sigma_\sigma$ in the interior and exterior regions.
In the interior,  our parameterization is the same as in the first order formulation in Section~\ref{sec:interior}. That is, we parameterize the profile as
%%%%%%%
%%%%%%%
\begin{align*}
\begin{split}
(t,\xi+\gamma^{-1}P_\ell F(\rho,\omega)+P_\ell^\perp F(\rho,\omega)),
\end{split}
\end{align*}
where $\ell$, $\xi$, and $\gamma$ are functions of $t$. According to the definition of the profile in Section~\ref{sec:profileintro}, and with the notation used there, this parameterization is valid in the flat region $\calC_{\flatt}=\{X^0\geq \sigma_\temp(X)+\delta_1\}$, where as usual $X=(X^0,\dots X^{n+1})$ are the rectangular coordinates in the ambient $\bbR^{1+(n+1)}$.

In the exterior we eventually want to parameterize the VMC surface as a graph over a hyperplane, so we start by parameterizing the profile itself as a graph. For this, let the function $x^0(\sigma,x')$ be defined by the requirement that $(x^0(\sigma,x'),x')\in \barbsUpsigma_\sigma$. Note that in the hyperboloidal region $\calC_{\hyp}=\{X^0\leq \sigma_\temp(X)-\delta_1\}$
%%%%%%%
%%%%%%%
\begin{align}\label{eq:x0hyptemp1}
\begin{split}
x^0(\sigma,x')=\sigma-\gamma R+\sqrt{|x'-\xi+\gamma R\ell|^2+1},
\end{split}
\end{align}
while in the flat region $\{X^0\geq \sigma_\temp(\sigma)+\delta_1\}$
%%%%%%%
%%%%%%%
\begin{align*}
\begin{split}
x^0(\sigma,x')=\sigma.
\end{split}
\end{align*}
The expression for $x^0$ in the intermediate region $\{|X^0-\sigma_{\temp}(X)|<\delta_1\}$ is not explicit and depends on the choice of the smoothed out minimum function $\frakm$ in Section~\ref{sec:profileintro}. With $x^0$ determined, we define the function $\calQ(x^0,x')$ by the requirement that $(x^0(\sigma,x'),x',\calQ(x^0(\sigma,x'),x'))\in\Sigma_\sigma$. The map
%%%%%%%
%%%%%%%
\begin{align*}
\begin{split}
(\sigma,x')\mapsto (x^0(\sigma,x'),x',\calQ(x^0(\sigma,x'),x'))
\end{split}
\end{align*}
is then a parameterization of the profile in the exterior region $\{|x'|\gg1\}$ (more precisely, this parameterization is valid in a neighborhood of the support of $1-\chi$ in the definition \eqref{eq:tilNdefintro1} of $N$). We now want to derive more explicit expressions for $\calQ$ in the flat and hyperboloidal parts of $\bsUpsigma_\sigma$. First, for reasons that will become clear momentarily, we let
%%%%%%%
%%%%%%%
\begin{align*}
\begin{split}
\eta = \xi-\gamma R\ell,
\end{split}
\end{align*}
and define the non-geometric polar coordinates $(\tau,r,\theta)$ by
%%%%%%%
%%%%%%%
\begin{align*}
\begin{split}
\tau = \sigma-\gamma(\sigma)R\qquad \mand\qquad x'=\eta(\tau)+r\Theta(\theta).
\end{split}
\end{align*}
Here $\Theta$ denotes the standard parameterization of $\bbS^{n-1}\subseteq\bbR^n$, and here and in what follows, by a slight abuse of notation, we simply write $\eta(\tau)$ for $\eta(\sigma(\tau))$ (and similarly for other parameters $\ell$, $\xi$, etc.). Differentiation with respect to $\tau$ is denoted by a prime, and differentiation with respect to $\sigma$ by a dot, so for instance
%%%%%%%
%%%%%%%
\begin{align*}
\begin{split}
\eta' = \frac{\ud }{\ud \tau}\eta(\tau)= \frac{\ud\sigma}{\ud \tau}\frac{\ud}{\ud \sigma}\eta(\sigma)\vert_{\sigma=\sigma(\tau)},\qquad \doteta= \frac{\ud}{\ud \sigma}\eta(\sigma).
\end{split}
\end{align*}
Note that
%%%%%%%
%%%%%%%
\begin{align*}
\begin{split}
\frac{\ud\sigma}{\ud \tau}=1-\gamma' R\simeq 1.
\end{split}
\end{align*}
In the hyperboloidal region $\calC_\hyp$ we have
%%%%%%%
%%%%%%%
\begin{align*}
\begin{cases}
x^0=\tau+ \jap{r}\\
x'=\eta(\tau)+r\Theta(\theta)
\end{cases},
\end{align*}
while in the flat region $\calC_\flatt$
%%%%%%%
%%%%%%%
\begin{align*}
\begin{cases}
x^0=\tau+ \gamma R\\
x'=\eta(\tau)+r\Theta(\theta)
\end{cases}.
\end{align*}
In general, our parameterization of the profile in polar coordinates becomes
%%%%%%%
%%%%%%%
\begin{align*}
\begin{split}
(x^0(\tau,r),\eta(\tau)+r\Theta(\theta),\calQ(x^0(\tau,r),\eta(\tau)+r\Theta(\theta))).
\end{split}
\end{align*}
Now to motivate our definition of the polar coordinates, we investigate the form of $\calQ$ in the hyperboloidal region more closely. Note that if $(x^0,x')\in \Sigma_\sigma$, is given by (recall the definition of $\tilbsUpsigma_\sigma$ from Section~\ref{sec:profileintro})
%%%%%%%
%%%%%%%
\begin{align*}
\begin{split}
\pmat{x^0\\ x'}=\Lambda_{-\ell}\pmat{y^0\\y'}+\pmat{0\\\xi-\sigma\ell},
\end{split}
\end{align*}
then using \eqref{eq:x0hyptemp1}, 
%%%%%%%
%%%%%%%
\begin{align*}
\begin{split}
&y^0=\gamma^{-1}(\sigma-\gamma R) +\gamma(\sqrt{1+|x'-(\xi-\gamma R\ell)|^2}-(x'-(\xi-\gamma R\ell))\cdot \ell),\\
&y'= A_\ell(x'-(\xi-\gamma R\ell))-\gamma \sqrt{1+|x'-(\xi-\gamma R\ell)|^2} \ell,
\end{split}
\end{align*}
and the profile parameterization becomes 
%%%%%%%
%%%%%%%
\begin{align}\label{eq:graphpar1}
\begin{split}
(x^0,x',Q(A_\ell(x'-\eta(\sigma))-\gamma\ell \sqrt{1+|x'-\eta(\sigma)|^2})).
\end{split}
\end{align}
Here $Q(y)=Q(|y|)$ is given by the parameterization of the Riemannian catenoid and satisfies the ODE (after identifying it with a function of a single variable)
%%%%%%%
%%%%%%%
\begin{align}\label{eq:QRiem1}
\begin{split}
Q''(\tilr)+\frac{n-1}{\tilr}Q'(\tilr)-(1+(Q'(\tilr))^2)^{-1}(Q'(\tilr))^2Q''(\tilr)=0.
\end{split}
\end{align}
Therefore, in our polar coordinates these expressions take the simple forms
%%%%%%%
%%%%%%%
\begin{align}\label{eq:yx1}
\begin{split}
&y^0=\gamma^{-1}\tau +\gamma\jap{r}(1-\frac{r}{\jap{r}}\Theta\cdot\ell),\\
&y'=A_\ell(r\Theta-\jap{r}\ell)=rA_\ell\Theta-\gamma \jap{r}\ell,
\end{split}
\end{align}
and
%%%%%%%
%%%%%%%
\begin{align}\label{eq:extpar1}
\begin{split}
(\tau+\jap{r},\eta(\tau)+r\Theta,Q(rA_\ell\Theta-\gamma\jap{r}\ell)).
\end{split}
\end{align}
Sometimes we write $Q_\wp$ for $Q(rA_\ell\Theta-\gamma\jap{r}\ell)$ to emphasize the dependence on the parameters. For future use, we also record the following coordinate change formulas in the hyperboloidal region:
%%%%%%%
%%%%%%%
\begin{align}\label{eq:extchangeofvars1}
\begin{split}
&\partial_\tau = \partial_{x^0}+\eta',\\
&\partial_r = \frac{r}{\jap{r}}\partial_{x^0}+\Theta,\\
&\partial_a= r\Theta_a, ~a=1,\dots n-1,
\end{split}
\end{align}
and
%%%%%%%
%%%%%%%
\begin{align}\label{eq:extchangeofvars2}
\begin{split}
&\partial_{x^0}=\frac{1}{1-\frac{r}{\jap{r}}\Theta\cdot\eta'}\partial_\tau-\frac{\eta'\cdot\Theta}{1-\frac{r}{\jap{r}}\Theta\cdot\eta'}\partial_r-\frac{(\ringsg^{-1})^{ab}\Theta_b\cdot\eta'}{r(1-\frac{r}{\jap{r}}\Theta\cdot\eta')}\partial_a,\\
&\partial_{x^j}=-\frac{r\Theta^j}{\jap{r}(1-\frac{r}{\jap{r}}\Theta\cdot\eta')}\partial_\tau+\frac{\Theta^j}{1-\frac{r}{\jap{r}}\Theta\cdot\eta'}\partial_r+\Big(\frac{(\ringsg^{-1})^{ab}\Theta_b^j}{r}+\frac{(\ringsg^{-1})^{ab}\Theta_b\cdot\eta' \Theta^j}{\jap{r}(1-\frac{r}{\jap{r}}\Theta\cdot\eta')}\Big)\partial_a.
\end{split}
\end{align}

Next, we want to define the rotation and outgoing and incoming null vectorfields $\Omega$, $L$, $\Lbar$ and  the geometric radial function $\tilr$, corresponding to the Minkowski metric, at a point on $\barbsUpsigma_\sigma$. These will play an important role in the analysis in the exterior region, and will appear in our bootstrap assumptions in Section~\ref{sec:bootstrap} below. Let $(x^0,x')=(\tau+\jap{r},r\Theta+\eta)$ be a point on the hyperboloidal part of $\barbsUpsigma_\sigma$. With $\tau=\tau(\sigma)$, let $(y^0,y')$ be related to $(x^0,x')$ by \eqref{eq:yx1}. Note that by construction
%%%%%%%
%%%%%%%
\begin{align*}
\begin{split}
\pmat{x^0\\x'}= \Big(\Lambda_{-\ell}\pmat{y^0\\y'} +\pmat{0\\-\sigma\ell(\sigma)+\xi(\sigma)}\Big)\Big{\vert}_{\sigma=\sigma(\tau)}.
\end{split}
\end{align*}
We define our $L$, $\Lbar$, $\Omega$, and $T$ as the push forward by $\Lambda_{-\ell}$ of the corresponding vectorfields in the $y$ coordinates. That is, 
%%%%%%%
%%%%%%%
\begin{align}\label{eq:VFdef0}
\begin{split}
& T=\Lambda_{-\ell}\partial_{y^0},\\
&L= \Lambda_{-\ell}(\partial_{y^0}+\frac{y^i}{|y'|}\partial_{y^i}),\\
&\Lbar = \Lambda_{-\ell}(\partial_{y^0}-\frac{y^i}{|y'|}\partial_{y^i}),\\
&\Omega_{jk}=\Lambda_{-\ell}(y^j\partial_{y^k}-y^k\partial_{y^j}).
\end{split}
\end{align}
Using \eqref{eq:extchangeofvars2} we can find the following coordinate representations for these vectorfields:
%%%%%%%
%%%%%%%
\begin{align}\label{eq:VFexpansion1}
\begin{split}
&T=\callO(1)\partial_\tau+\callO(\dotwp)\partial_r+\callO(\dotwp r^{-1})\partial_a,\\
&L=\callO(r^{-2})\partial_\tau+\callO(1)\partial_r+\callO(r^{-3})\partial_a,\\
&\Lbar = \callO(1)\partial_\tau+\callO(1)\partial_r+\callO(\dotwp r^{-1}+r^{-3})\partial_a,\\
&\Omega_{jk}=\callO(\wp r)\partial_r+\callO(1) \partial_a.
\end{split}
\end{align}
Using the notation $\spartial^a=(\ringsg^{-1})^{ab}\partial_b$, for $T$ and $L$ also record the following more precise expressions
%%%%%%%
%%%%%%%
\begin{align}\label{eq:Tprecise1}
\begin{split}
T&= \gamma\Big(1+\frac{r\Theta\cdot(\eta'-\ell)}{\jap{r}(1-\frac{r}{\jap{r}}\Theta\cdot\eta')}\Big)\partial_\tau-\frac{\gamma(\eta'-\ell)\cdot\Theta}{1-\frac{r}{\jap{r}}\Theta\cdot\eta'}\partial_r\\
&\quad-\frac{\gamma}{r(1-\frac{r}{\jap{r}}\Theta\cdot\eta')}\big((\Theta\cdot\ell)\spartial^a\Theta\cdot(\eta'-\ell)-(\spartial^a\Theta\cdot\ell)\Theta\cdot(\eta'-\ell)-\spartial^a\Theta\cdot(\eta'-\ell)\big)\partial_a,\\
\end{split}
\end{align} 
and
%%%%%%%
%%%%%%%
\begin{align}\label{eq:Lprecise1}
\begin{split}
L&=(\frac{1}{2}\gamma^{-1}(1-\Theta\cdot\ell)^{-1}(1-\Theta\cdot\eta')^{-1}r^{-2}+\callO(r^{-4}))\partial_\tau\\
&\quad+(\gamma^{-1}(1-\Theta\cdot\ell)^{-1}+\callO(r^{-2}))\partial_r+\callO(r^{-3})\partial_a.
\end{split}
\end{align}
By inverting these relations we can also express the the coordinate derivatives in the $(\tau,r,\theta)$ coordinates in terms of $L$, $\Lbar$, and $\Omega$:
%%%%%%%
%%%%%%%
\begin{align}\label{eq:VFbasis1}
\begin{split}
&\partial_\tau=\callO(1)L+\callO(1)\Lbar,\\
&\partial_r= \callO(1)L+\callO(r^{-2})\Lbar+\callO( r^{-3})\Omega,\\
&\partial_\theta=\callO(\wp r)L+\callO(1)\Omega+\callO(\wp r^{-1})\Lbar.
\end{split}
\end{align}
Similarly, the geometric radial function is defined in terms of the $y$ variables as $\tilr=|y'|$ which in radial coordinates reads
%%%%%%%
%%%%%%%
\begin{align}\label{eq:tilrprecise1}
\begin{split}
\tilr =|rA_\ell\Theta-\gamma \jap{r}\ell|=\gamma(1-\Theta\cdot\ell)r+\callO(r^{-1}).
\end{split}
\end{align}
In particular $\tilr\simeq r$. 
%%%%%%%%%%
%%%%%%%%%%%
%%%%%%%%%%%
\subsection{Parameterization of the VMC Surface and Derivation of the Equations}
%%%%%%%%%%%
%%%%%%%%%%%
Recall from Section~\ref{sec:profileintro}, that to derive a parameterization of the VMC surface we first introduced an almost-normal vectorfield 
%%%%%%%
%%%%%%%
\begin{align*}
\begin{split}
N:\cup_\sigma \Sigma_\sigma\to \bbR^{1+(n+1)},
\end{split}
\end{align*}
and then defined
%%%%%%%
%%%%%%%
\begin{align*}
\begin{split}
\psi:\cup_{\sigma}\Sigma_\sigma\to \bbR
\end{split}
\end{align*}
by the requirement that $p+\psi(p)N(p)\in \calM$ for all $p\in \cup_{\sigma}\Sigma_\sigma$ (see also Lemma~\ref{rem:normalneighborhood}).  Then in Section~\ref{sec:interior} we further decomposed $\psi$ as
%%%%%%%
%%%%%%%
\begin{align*}
\begin{split}
\psi=\phi+a_\mu Z_\mu,\qquad a_\mu:=a_{+}+a_{-}.
\end{split}
\end{align*}
%%%%%%%%%%%%
%%%%%%%%%%%%
In view of the compact support of $Z_\mu$, the functions $\phi$ and $\psi$ agree outside a compact region of $\calC_{\flatt}$.  In the hyperboloidal region  $\calC_{\hyp}$  we will sometimes work with the following renormalized version of $\phi$:
%%%%%%%
%%%%%%%
\begin{align*}
\begin{split}
\varphi:=\angles{\phi N}{ \partial_{X^{n+1}}}.
\end{split}
\end{align*}
The advantage of $\varphi$ is that it satisfies a simpler equation, while the advantage of $\phi$ is that the linear part of the equation satisfied by it has the form which is familiar from the second variation of the area functional. The exact relation between $\phi$ and $\varphi$ can be calculated as follows. In the region  $\calC_{\hyp}$ the normal $n_\wp$ is given by
%%%%%%%
%%%%%%%
\begin{align}\label{eq:nwpext1}
\begin{split}
n_\wp= (1+(m^{-1})^{\mu\nu} Q_\mu Q_\nu)^{-\frac{1}{2}}((-m^{-1})^{\mu\nu}Q_\mu \partial_\nu,1).
\end{split}
\end{align}
Here indices run over the coordinates $(x^0,x')$, and (see \eqref{eq:graphpar1})
%%%%%%%
%%%%%%%
\begin{align*}
\begin{split}
Q_\nu = Q'\big(A_\ell(x'-\eta(\sigma))-\gamma\ell \sqrt{1+|x'-\eta(\sigma)|^2}\big)\partial_\nu \big(A_\ell(x'-\eta(\sigma))-\gamma\ell \sqrt{1+|x'-\eta(\sigma)|^2}\big),
\end{split}
\end{align*}
with the convention, as in Section~\ref{sec:interior}, that $\partial_{x^0}\eta =\ell$, $\partial_{x^0}\ell=0$, $\partial_{x'}\ell=\partial_{x'}\eta=0$.
It follows from the normalization $\bfeta(N,n_\wp)=1$ that in this region 
%%%%%%%
%%%%%%%
\begin{align*}
\begin{split}
N= (1+(m^{-1})^{\mu\nu} Q_\mu Q_\nu)^{\frac{1}{2}}\frac{\partial}{\partial X^{n+1}},
\end{split}
\end{align*}
and hence
%%%%%%%
%%%%%%%
\begin{align}\label{eq:varphiphi1}
\begin{split}
\varphi =  s\phi,\qquad s:=(1+(m^{-1})^{\mu\nu} Q_\mu Q_\nu)^{\frac{1}{2}}.
\end{split}
\end{align}
Since $(m^{-1})^{\mu\nu} Q_\mu Q_\nu=O(r^{-2(n-	1)})$, the relation \eqref{eq:varphiphi1} implies that the various energy norms for $\phi$ and $\varphi$ are equivalent.

In the remainder of this section we give more explicit expressions for $N$ using the parameterizations introduced in the previous section, and derive the equations satisfied by $\psi$, $\phi$, and $\varphi$ in the respective coordinates. Finally, we introduce a set of global coordinates and discuss the structure of the linearized operator in these various coordinate systems.
%%%%%%%%%%%
%%%%%%%%%%% 
\subsubsection{Interior Non-Geometric Coordinates}\label{sec:INGC}
%%%%%%%%%%%
%%%%%%%%%%%
Here $N$ is defined to lie on the $\{X^0=\mathrm{constant}\}$ hypersurfaces of the ambient space, and the equations satisfied by $\psi$ and $\phi$ can be read off from the first order formulation. In particular, according to \eqref{eq:phi1}, in the coordinate system $(t,\rho,\omega)$ introduced in Section~\ref{sec:interior} and in the region where $\chi\equiv1$ in \eqref{eq:tilNdefintro1}, the linear part of the equation satisfied by $\phi$ can be written as
%%%%%%%
%%%%%%%
\begin{align}\label{eq:LEDcalPint1}
\begin{split}
\calP \phi=\frac{1}{\sqrt{|h|}}\partial_\mu(\sqrt{|h|}(h^{-1})^{\mu\nu}\partial_\nu\phi)+V\phi+a^{\mu\nu}\partial^2_{\mu\nu}\phi+b^\mu\partial_\mu\phi+c\phi,
\end{split}
\end{align}
where $a$ is symmetric and $a,b,c=\callO(\dotwp^{\leq 2})$, and $V=|\secondff|^2$.

Besides the coordinates introduced above, in the flat part of the foliation we will often use the coordinates $(\tilt,\tilrho,\tilomega)$ defined by (note that this is a valid change of variables in the flat region where $\rho$ is bounded)
%%%%%%%
%%%%%%%
\begin{align}\label{eq:ttilt1}
\begin{split}
\tilt=\gamma^{-1}(t)t-\ell(t)\cdot F(\rho,\omega),\quad \tilrho=\rho,\quad \tilomega=\omega.
\end{split}
\end{align}
The corresponding coordinate derivatives are related as follows:
%%%%%%%
%%%%%%%
\begin{align}\label{eq:ttilt2}
\begin{split}
&\partial_t=(1+\upkappa)\partial_\tilt,\quad \partial_\rho=\ell\cdot F_\rho\partial_\tilt+\partial_\tilrho,\quad \partial_\omega= \ell\cdot F_\omega\partial_\tilt+\partial_\tilomega,\\
&\partial_\tilt=(1+\upkappa)^{-1}\partial_t,\quad \partial_\tilrho=-(1+\upkappa)^{-1}\ell\cdot F_\rho\partial_t+\partial_\rho,\quad \partial_\tilomega=-(1+\upkappa)^{-1}\ell\cdot F_\omega\partial_t+\partial_\omega,
\end{split}
\end{align}
where
%%%%%%%
%%%%%%%
\begin{align}\label{eq:kappadef1}
\begin{split}
\upkappa:=t\frac{\ud}{\ud t}\gamma^{-1}-\dotell\cdot F.
\end{split}
\end{align}
Note that by these relations $\det\frac{\partial(\tilt,\tilrho,\tilomega)}{\partial(t,\rho,\omega)}=1+\upkappa$. In the calculations in the flat region we will often use both the $(t,\rho,\omega)$ and the $(\tilt,\tilrho,\tilomega)$ coordinates. To emphasize which coordinate system is being used in each calculation, we will use a tilde to indicate that calculations are being done in the $(\tilt,\tilrho,\tilomega)$ coordinates. For instance, we write $h_{\mu\nu}$ for the components of $h$ in the $(t,\rho,\omega)$ coordinates and $\tilh_{\mu\nu}$ for its components in the $(\tilt,\tilrho,\tilomega)$ coordinates, and similarly for $|h|$ and $|\tilh|$.
%%%%%%%%%%%
%%%%%%%%%%% 
\subsubsection{Exterior Non-Geometric Coordinates}\label{sec:ENGC}
%%%%%%%%%%%
%%%%%%%%%%%
Since $\cup_{\sigma}\Sigma_\sigma$ can be parameterized by $\cup_{\sigma}\barbsUpsigma_\sigma$, by a slight abuse of notation we will often view $N$, $\nu$, $\phi$, and $\varphi$ as functions on $\cup_{\sigma}\barbsUpsigma_\sigma$ in the exterior region, and use coordinates, such as $(x^0,x')$ or $(\tau,r,\theta)$, as their arguments. In the graph formulation, the requirement that $\calM$ be a VMC surface is equivalent to the following PDEs for $u= Q_\wp+\varphi$:
%%%%%%%
%%%%%%%
\begin{align}\label{eq:uMinkowskieq1}
\begin{split}
\nabla_\mu\Big(\frac{\nabla^\mu u}{\sqrt{1+\nabla^\alpha u \nabla_\alpha u}}\Big)=\frac{1}{\sqrt{|m|}}\partial_\mu\Big(\frac{\sqrt{|m|}(m^{-1})^{\mu\nu}\partial_\nu u}{\sqrt{1+(m^{-1})^{\alpha\beta}\partial_\alpha u\partial_\beta u}}\Big)=0.
\end{split}
\end{align}
Here $m$ denotes the Minkowski metric
%%%%%%%
%%%%%%%
\begin{align*}
\begin{split}
m=-\ud x^0\otimes \ud x^0+\sum_{i=1}^n \ud x^i\otimes \ud x^i,
\end{split}
\end{align*}
and $\nabla$ the corresponding covariant derivative. The equation for $u$ can be expanded as
%%%%%%%
%%%%%%%
\begin{align}\label{eq:uext1}
\begin{split}
\Box_m u -(1+\nabla^\alpha u \nabla_\alpha u )^{-1}(\nabla^\nu u )(\nabla^\mu u) \nabla_\nu \nabla_\mu u=0.
\end{split}
\end{align}
Plugging in the decomposition for $u$ we arrive at the following equation for $\varphi$ and $\phi$ (see \eqref{eq:varphiphi1}):
%%%%%%%
%%%%%%%
\begin{align}\label{eq:abstractexteq1}
\begin{split}
s\calP\phi=\callP_\graph\varphi=\sum_{i=0}^3\calF_i,\qquad \calP:=\calP_\graph+s^{-1}[\calP_\graph,s].
\end{split}
\end{align}
Here $\calF_i$ denotes inhomogeneous terms of order $i$ in $\varphi$. A more explicit expression for $\calP$ is derived in \eqref{eq:extgeomlin1} below. One advantage of working with $\varphi$ rather than $\phi$ is that $\calF_i$ on the right-hand side are easier to compute. The source term $\calF_0$,which is independent of $\varphi$ (but depends on the derivatives of the parameters), is calculated in Lemma~\ref{lem:sourceext1} below.  The linear operator $\calP$ is given by (here $Q\equiv Q_{\wp}$)
%%%%%%%
%%%%%%%
\begin{align}\label{eq:callP1}
\begin{split}
\callP_\graph&=\Box_m-(1+\nabla^\alpha Q \nabla_\alpha Q)^{-1}(\nabla^\mu Q)(\nabla^\nu Q)\nabla_{\mu}\nabla_\nu-2(1+\nabla^\alpha Q \nabla_\alpha Q)^{-1}(\nabla^{\mu}\nabla^\nu Q)(\nabla_\mu Q)\nabla_\nu\\
&\quad+2(1+\nabla^\alpha Q\nabla_\alpha Q)^{-2}(\nabla^\nu Q)(\nabla^\mu Q)(\nabla^\lambda Q)(\nabla_\lambda \nabla_\mu Q)\nabla_\nu.
\end{split}
\end{align}
The quadratic and cubic terms are (here $Q\equiv Q_{\wp}$)
%%%%%%%
%%%%%%%
\begin{align}\label{eq:calF2_1}
\begin{split}
\calF_2&=-\frac{2\nabla^\mu Q\nabla^\nu \varphi\nabla^2_{\mu\nu}\varphi}{1+\nabla^\alpha u \nabla_\alpha u}-\frac{\nabla^2_{\mu\nu}Q\nabla^\mu\varphi\nabla^\nu\varphi}{1+\nabla_\alpha u \nabla^\alpha u}+\frac{\nabla^\mu Q \nabla^\nu Q \nabla^2_{\mu\nu}Q \nabla^\beta \varphi\nabla_\beta \varphi}{(1+\nabla^\alpha u \nabla_\alpha u)(1+\nabla^\alpha Q \nabla_\alpha Q)}\\
&\quad +\frac{2\nabla^\beta Q\nabla_\beta \varphi(\nabla^\mu Q\nabla^\nu Q \nabla^2_{\mu\nu} \varphi+2\nabla^2_{\mu\nu}Q\nabla^\mu Q \nabla^\nu \varphi)}{(1+\nabla^\alpha u \nabla_\alpha u)(1+\nabla^\alpha Q\nabla_\alpha Q)}-\frac{4(\nabla^\mu Q\nabla^\nu Q\nabla^2_{\mu\nu}Q)(\nabla^\beta Q\nabla_\beta \varphi)^2}{(1+\nabla^\alpha u\nabla_\alpha u)(1+\nabla^\alpha Q\nabla_\alpha Q)^2},
\end{split}
\end{align}
and
%%%%%%%
%%%%%%%
\begin{align}\label{eq:calF3_1}
\begin{split}
\calF_3&=-\frac{\nabla^\mu\varphi\nabla^\nu\varphi\nabla^2_{\mu\nu}\varphi}{1+\nabla^\alpha u \nabla_\alpha u}-\frac{\nabla^\beta\varphi\nabla_\beta\varphi(\nabla^\mu Q\nabla^\nu Q \nabla^2_{\mu\nu} \varphi+2\nabla^2_{\mu\nu}Q\nabla^\mu Q \nabla^\nu \varphi)}{(1+\nabla^\alpha u \nabla_\alpha u)(1+\nabla^\alpha Q\nabla_\alpha Q)}\\
&\quad+\frac{2(\nabla^\mu Q \nabla^\nu Q\nabla^2_{\mu\nu}Q)(\nabla^\beta Q \nabla_\beta\varphi)(\nabla^\gamma\varphi\nabla_\gamma\varphi)}{(1+\nabla^\alpha u \nabla_\alpha u)(1+\nabla^\alpha Q\nabla_\alpha Q)^2}.
\end{split}
\end{align}
%%%%%%%%%%
%%%%%%%%%%
In view of \eqref{eq:callP1}, the linearized operator $\callP$ has the expansion
%%%%%%%
%%%%%%%
\begin{align}\label{eq:callP2}
\begin{split}
\callP_\graph\psi&=\Box_m\psi+\Err_{\callP}(\psi),
\end{split}
\end{align}
where for some bounded symmetric coefficient  $p_2$ and a bounded coefficient $p_1$,
%%%%%%%
%%%%%%%
\begin{align}\label{eq:ErrcallP1}
\begin{split}
\Err_{\callP}\psi&= \jap{r}^{-4}p_2^{\mu\nu}\partial^2_{\mu\nu}\psi+\jap{r}^{-5}p_1^\mu \partial_\mu\psi.
\end{split}
\end{align}

Due to the asymptotic flatness of the metric of the catenoid, the Minkowski wave operator $\Box_m$  will play a major role in the exterior analysis. For this reason, it is convenient to derive expressions for $m$, $m^{-1}$, and $\Box_m$ in the $(\tau,r,\theta)$ coordinates. Starting with $m$, we have
%%%%%%%
%%%%%%%
\begin{align}
m&=-\tilgamma^{-2}\ud\tau\otimes \ud\tau-(1-\jap{r}^{-2}+\Theta\cdot\eta')(\ud\tau\otimes\ud r +\ud r\otimes \ud \tau)+r\Theta_a\cdot\eta'(\ud\tau\otimes \ud\theta^a+\ud \theta^a\otimes \ud\tau)\nonumber\\
&\quad +\jap{r}^{-2} \ud r\otimes \ud r + r^2\ringsg_{ab}\ud\theta^a\otimes \ud \theta^b,\label{eq:mform1}
\end{align}
where we have used the notation $\tilgamma=(1-|\eta'|^2)^{-\frac{1}{2}}$. In matrix form this is
%%%%%%%
%%%%%%%
\begin{align}\label{eq:mmatrix1}
\begin{split}
m= m_0+\jap{r}^{-2}m_1=\pmat{-\tilgamma^{-2}&-1+\Theta\cdot\eta'&r\Theta_\theta\cdot\eta'\\-1+\Theta\cdot\eta'&0&0\\r\Theta_\theta\cdot\eta'&0&r^2\ringsg}+\jap{r}^{-2}\pmat{0&1&0\\1&1&0\\0&0&0}.
\end{split}
\end{align}
It follows that (here $|m|=|\det m|$)\footnote{Here we have used the fact that $|\Theta_\theta\cdot\eta'|^2=1-(\Theta\cdot\eta')^2$, where$
|\Theta_\theta\cdot\eta'|^2=\sum_{a=1}^{n-1}(\Theta_a\cdot \eta') \det \ringsg_a$, and where
$\ringsg_a$ is $\ringsg$ with the $a^\mathrm{th}$ column replaced by $\Theta_\theta\cdot\eta'=(\Theta_1\cdot\eta',\dots,\Theta_{n-1}\cdot\eta')^\intercal$.}
%%%%%%%
%%%%%%%
\begin{align}\label{eq:mdvol1}
\begin{split}
| m|^{\frac{1}{2}} = (1-\Theta\cdot\eta')r^{n-1}|\ringsg|^{\frac{1}{2}}(1+\jap{r}^{-2}\frac{\tilgamma^{-2}+(1-(\Theta\cdot\eta')^2)}{(1-\Theta\cdot\eta')^2}).
\end{split}
\end{align}
The inverse $m^{-1}$ can be calculated using \eqref{eq:extchangeofvars2} (in this formula $\Theta^a=(\ringsg^{-1})^{ab}\Theta_b$):
%%%%%%%
%%%%%%%
\begin{align*}
\begin{split}
m^{-1}&=\pmat{\frac{-\jap{r}^{-2}}{(1-\frac{r}{\jap{r}}\Theta\cdot\eta')^2}&\frac{-1+\Theta\cdot\eta'+\jap{r}^{-2}}{(1-\frac{r}{\jap{r}}\Theta\cdot\eta')^2}&\frac{\jap{r}^{-2}\Theta^\theta\cdot\eta'}{r(1-\frac{r}{\jap{r}}\Theta\cdot\eta')^2}\\\frac{-1+\Theta\cdot\eta'+\jap{r}^{-2}}{(1-\frac{r}{\jap{r}}\Theta\cdot\eta')^2}&\frac{1-(\Theta\cdot\eta')^2}{(1-\frac{r}{\jap{r}}\Theta\cdot\eta')^2}&\frac{(1-\Theta\cdot\eta'-\jap{r}^{-2})\Theta^\theta\cdot\eta'}{r(1-\frac{r}{\jap{r}}\Theta\cdot\eta')^2}\\\frac{\jap{r}^{-2}\Theta^\theta\cdot\eta'}{r(1-\frac{r}{\jap{r}}\Theta\cdot\eta')^2}&\frac{(1-\Theta\cdot\eta'-\jap{r}^{-2})\Theta^\theta\cdot\eta'}{r(1-\frac{r}{\jap{r}}\Theta\cdot\eta')^2}&\frac{\ringsg^{-1}}{r^2}-\frac{\jap{r}^{-2}\Theta^a\cdot\eta'\Theta^b\cdot\eta'}{r^2(1-\frac{r}{\jap{r}}\Theta\cdot\eta')^2}}.
\end{split}
\end{align*}
Expanding in powers of $r$, we can write this as
%%%%%%%
%%%%%%%
\begin{align}\label{eq:minvdecomp1}
\begin{split}
m^{-1}=m_0^{-1}+\jap{r}^{-2}m_1,
\end{split}
\end{align}
where 
%%%%%%%%
%%%%%%%%
\begin{align}\label{eq:m02inv1}
\begin{split}
m_0^{-1}=\pmat{0&\frac{-1}{1-\Theta\cdot\eta'}&0\\\frac{-1}{1-\Theta\cdot\eta'}&\frac{1+\Theta\cdot\eta'}{1-\Theta\cdot\eta'}&\frac{\Theta^\theta\cdot\eta'}{r(1-\Theta\cdot\eta')}\\0&\frac{\Theta^\theta\cdot\eta'}{r(1-\Theta\cdot\eta')}&r^{-2}\ringsg^{-1}},
\end{split}
\end{align} 
and $m_1$ is a matrix of size $\callO(1)$. Next, to derive an expression for the wave operator $\Box_m$ we write
%%%%%%%
%%%%%%%
\begin{align*}
\begin{split}
\Box_m\psi&=\frac{1}{\sqrt{|m|}}\partial_\mu(\sqrt{|m|}\partial_\nu\psi)\\
&=\frac{1}{\sqrt{|m_0|}}\partial_\mu (\sqrt{|m_0|}\partial_\nu\psi)+\jap{r}^{-2}m_1^{\mu\nu}\partial_{\mu\nu}^2\psi\\
&\quad+\frac{1}{\sqrt{|m|}}\partial_\mu(\sqrt{|m_0|}((m^{-1})^{\mu\nu}-(m_0^{-1})^{\mu\nu})+(\sqrt{|m|}-\sqrt{|m_0|})(m^{-1})^{\mu\nu})\partial_\nu\psi\\
&\quad +(|m|^{-\frac{1}{2}}-|m_0|^{-\frac{1}{2}})\partial_\mu(\sqrt{|m_0|}(m_0^{-1})^{\mu\nu})\partial_\nu\psi.
\end{split}
\end{align*}
Using the fact that $\sqrt{|m_0|}(m_0^{-1})^{\tau\nu}$ is independent of $\tau$, we arrive at the expression
%%%%%%%
%%%%%%%
\begin{align}\label{eq:Boxm1}
\begin{split}
\Box_m\psi&=2(m_0^{-1})^{\tau r}\partial^2_{\tau r}\psi+\frac{n-1}{r}(m_0^{-1})^{\tau r}\partial_\tau\psi+2(m_0^{-1})^{\theta r}\partial^2_{\theta r}\psi\\
&\quad + (m_0^{-1})^{rr}\partial^2_r\psi+\frac{n-1}{r}(m_0^{-1})^{rr}\partial_r\psi+\frac{1}{\sqrt{|m_0|}}\partial_\theta(\sqrt{|m_0|}(m_0^{-1})^{\theta r})\partial_r\psi\\
&\quad
+\frac{1}{r^2}\sDelta_{\bbS^{n-1}}\psi+\frac{n-2}{r}(m_0^{-1})^{\theta r}\partial_\theta\psi+\frac{\Theta_\theta\cdot\eta'}{r^2(1+\Theta\cdot\eta')}\partial_\theta\psi+\Err_{\Box_m}(\psi),
\end{split}
\end{align}
where 
%%%%%%%
%%%%%%%
\begin{align}\label{eq:ErrBoxm1}
\begin{split}
\Err_{\Box_m}\psi&= \jap{r}^{-2}m_1^{\mu\nu}\partial^2_{\mu\nu}\psi+(\callO(r^{-2}|\dotwp|)+\callO(r^{-3}))\partial_\tau\psi+(\callO(r^{-2}|\dotwp|) +\callO(r^{-3}))\partial_r\psi\\
&\quad+\callO((r^{-3}|\dotwp|)+\callO(r^{-4}))\partial_\theta\psi.
\end{split}
\end{align}
%%%%%%%%%%%%%
%%%%%%%%%%%%%
\begin{remark}
When working in the exterior region, it is often easier to work with the metric $m$ rather than the induced metric on the leaves of the foliation, and treat the difference as an error. In such cases we will often also use the volume form coming from the coordinate expression for $m$. Due to the asymptotic flatness of the induced metric this volume form is comparable in size with the geometrically induced volume form, and therefore the various inequalities we derive remain valid if we change to the geometric volume form. 
\end{remark}
%%%%%%%%%%%%%
%%%%%%%%%%%%%

For future reference, we end this subsection by deriving a geometric expression for the linear operator in the equation satisfied by $\phi$ (rather than $\varphi$). Recall equation \eqref{eq:uMinkowskieq1}, where $u=Q+\varphi$. Writing (see \eqref{eq:varphiphi1})
%%%%%%%
%%%%%%%
\begin{align*}
\begin{split}
\varphi= s\phi,\qquad s=\sqrt{1+(m^{-1})^{\alpha\beta} Q_\alpha Q_\beta},
\end{split}
\end{align*}
our goal is to derive the linear part of \eqref{eq:uMinkowskieq1} in terms of $\phi$. The computation is similar to those in Section~\ref{sec:interior} and we will be brief
%%%%%%%
%%%%%%%
\begin{align}\label{eq:kmetrixextdef1}
\begin{split}
h_{\alpha\beta}= \bfeta((\partial_\alpha,Q_\alpha), (\partial_\beta, Q_\beta))= m_{\alpha\beta}+Q_\alpha  Q_\beta.
\end{split}
\end{align} 
Then by direct computation 
%%%%%%%
%%%%%%%
\begin{align}\label{eq:exteriorgeomlintemp1}
\begin{split}
(h^{-1})^{\alpha\beta} = (m^{-1})^{\alpha\beta}-\frac{ Q^\alpha  Q^\beta}{1+|\nabla Q|^2},\qquad |h|= |m|(1+|\nabla Q|^2),
\end{split}
\end{align}
where we have used the notation $|\nabla Q|^2:= (m^{-1})^{\alpha\beta} Q_\alpha  Q_\beta$ and $ Q^\alpha:=(m^{-1})^{\alpha\beta} Q_\beta$. It follows from the expression for $h^{-1}$ in \eqref{eq:exteriorgeomlintemp1} that the linear part of \eqref{eq:uMinkowskieq1} is
%%%%%%%
%%%%%%%
\begin{align}\label{eq:exteriorgeomlintemp2}
\begin{split}
\Box_h \phi+s^{-1}(\Box_h s-2s^{-1}(h^{-1})^{\mu\nu}\partial_\mu s\partial_\nu s)\phi+\callO(\dotwp^{\leq 2}r^{-6})\partial^{\leq 2}\phi.
\end{split}
\end{align}
Now we claim that in the case when $Q$ corresponds to a true parameterization of a boosted and translated catenoid (that is, when $\dotell=0$ and $\dotxi=\ell$), the coefficient of $\phi$ above is precisely $V=|\secondff|^2$. To see this, recall from Lemma~\ref{lem:secondff} (more precisely from \cite[Corollary II]{RV70}) that in this case we would have
%%%%%%%
%%%%%%%
\begin{align*}
\begin{split}
\Box_hn_\wp=-|\secondff|^2n_\wp.
\end{split}
\end{align*}
Since, with $N=s(0,1)$, we have $\bfeta(n_{\wp},N)=1$, it follows from the expression \eqref{eq:nwpext1} that in this case (that is, when $\dotell=0$ and $\dotxi=\ell$)
%%%%%%%
%%%%%%%
\begin{align*}
\begin{split}
-|\secondff|^2=\eta(\Box_h n_\wp,N)= s\Box_h s^{-1}=-s^{-1}\Box_hs+2s^{-2}(h^{-1})^{\mu\nu}\partial_\mu s\partial_\nu s, 
\end{split}
\end{align*}
which proves our claim. Returning to \eqref{eq:exteriorgeomlintemp2}, since the only errors come from when derivatives fall on parameters, we see that the linear part of \eqref{eq:uMinkowskieq1} in terms of $\phi$ is, with $V=|\secondff|^2$,
%%%%%%%
%%%%%%%
\begin{align}\label{eq:extgeomlin1}
\begin{split}
\Box_h\phi + V\phi +\callO(\dotwp^{\leq 2}r^{-6})\partial^{\leq2}\phi.
\end{split}
\end{align}
%%%%%%%%%%
%%%%%%%%%%
%%%%%%%%%%
\subsubsection{Global Non-Geometric Coordinates}\label{sec:GNGC}
%%%%%%%%%%
%%%%%%%%%%
%%%%%%%%%%
Under the assumption that $\wp$ is sufficiently small, we introduce a global set of coordinates $(\uptau,\uprho,\uptheta)$ which glue the interior and exterior coordinates introduced earlier. The smallness of $\wp$ will be guaranteed by the choice of initial data and the bootstrap assumptions (to be described in Section~\ref{sec:bootstrap}). The procedure is as follows. Let $\Psi_\Int$ and $\Psi_\Ext$, defined on overlapping open sets $V_\Int$ and $V_\Ext$, denote the coordinate maps associated to the rectangular coordinates of $(t,\rho,\omega)$ and $(\sigma,r,\theta)$ respectively. More precisely, for a point $p=(t,\xi(t)+\gamma^{-1}P_{\ell(t)} F(\rho,\omega)+P_{\ell(t)}^\perp F(\rho,\omega))$ in $V_\Int \cap \big(\cup \Sigma_\sigma)$ let
%%%%%%%
%%%%%%%
\begin{align*}
\begin{split}
\Psi_\Int(p)= (y^0,\dots,y^n), \qquad y^0=t, ~y^i= \rho\Theta^i(\omega); i=1,\dots,n.
\end{split}
\end{align*}
The coordinate map $\Psi_\Ext$ is defined similarly. In the overlapping region which is, by assumption, contained in $\calC_\flatt$, for a point $p=(\sigma,\eta(\sigma)+r\Theta(\theta))$,
%%%%%%%
%%%%%%%
\begin{align*}
\Psi_\Ext(p)=(y^0,\dots,y^n),\qquad y^0=\sigma,~y^i=r\Theta^i(\theta); i=1,\dots,n.
\end{align*}
Let $\chi$ be a cutoff function which is supported in $V_\Int$ and is equal to one on $V_\Int\backslash(V_\Int\cap V_\Ext)$. We define the global rectangular coordinates by 
%%%%%%%
%%%%%%%
\begin{align*}
\begin{split}
y=\Psi(p):= \chi(p)\Psi_\Int(p)+(1-\chi(p))\Psi_\Ext(p),
\end{split}
\end{align*}
and the global polar coordinates by expressing $y$ in polar coordinates $(\uptau,\uprho,\uptheta)$. To see that $\Psi$ defines a coordinate map we need to check that $d\Psi(p)$ is invertible for all $p\in V_\Int\cap V_\Ext$ and that $\Psi$ is one to one. For the derivatives, it suffices to show that $d(\Psi\circ \Psi^{-1}_\Int)$ is invertible. By the definition of $\Psi$,
%%%%%%%
%%%%%%%
\begin{align*}
\begin{split}
\Psi\circ\Psi_\Int^{-1}(x)=\chi\circ\Psi^{-1}_\Int(x)x+(1-\chi\circ\Psi^{-1}_\Int(x))\Psi_\ext\circ\Psi_\Int^{-1}(x),
\end{split}
\end{align*}
so (here $I$ denotes the identity matrix)
%%%%%%%
%%%%%%%
\begin{align*}
\begin{split}
d(\Psi\circ\Psi_\Int^{-1})(x)&=d(\Psi_\Ext\circ\Psi_\Int^{-1})(x)\\
&\quad+\chi\circ\Psi_\Int(x)(I-d(\Psi_\Ext\circ\Psi_\Int^{-1})(x))+\nabla(\chi\circ\Psi_\Int^{-1}(x)) (x-\Psi_\Ext\circ\Psi_\Int^{-1}(x)).
\end{split}
\end{align*}
Since $x-\Psi_\Ext\circ\Psi_\Int^{-1}(x)$ is small for $x\in \Psi_\Int(V_\Int\cap V_\Ext)$, it suffices to show that $I-d(\Psi_\Ext\circ\Psi_\Int^{-1})(x)$ is also small for such $x$. To compute the derivative we write $(\tau,z)$ for $\Psi_\Ext\circ\Psi_\Int^{-1}(x)$ and $(t,y)$ for $x$. Then according to the above formulas for $\Psi_\Int$ and $\Psi_\Ext$, $(t,y)$ and $(\sigma,z)$ are related by
%%%%%%%
%%%%%%%
\begin{align*}
\begin{split}
t= \sigma,\quad \xi(t)+\gamma^{-1}(t) P_{\ell(t)}(\jap{y}|y|^{-1}y)+P_{\ell(t)}^\perp(\jap{y}|y|^{-1}y)= \eta(\sigma)+z.
\end{split}
\end{align*}
The desired invertibility then follows by implicitly differentiating these relations to get
%%%%%%%
%%%%%%%
\begin{align*}
\begin{split}
&\frac{\partial\sigma}{\partial t}=1,\quad \frac{\partial\sigma}{\partial y^j}=0,\\
&\frac{\partial z}{\partial t}+\eta'(\sigma)\frac{\partial\sigma}{\partial t}=-\gamma'(t)\gamma^{-2}(t)P_\ell(\jap{y}|y|^{-1}y)+(\gamma^{-1}-1)\frac{\jap{y}}{|y|}\big(\frac{y\cdot\ell}{|\ell|^2}\dotell+\frac{y\cdot\dotell}{|\ell|^2}\ell-2\frac{(\dotell\cdot\ell)(y\cdot\ell)}{|\ell|^4}\ell\big)+\xi'(t),\\
&\frac{\partial z}{\partial y^j}=-\gamma^{-1}P_\ell(|y|^{-3}\jap{y}^{-1}y^jy)-P_\ell^\perp(|y|^{-3}\jap{y}^{-1}y^jy)+\gamma^{-1}P_\ell(\jap{y}|y|^{-1}e_j)+P_\ell(\jap{y}|y|^{-1}e_j),
\end{split}
\end{align*}
and using the smallness of $\ell$ and $\eta'(\tau)-\xi'(t)$ (by assumption). The fact that $\Psi$ is one to one can be shown using similar considerations. 

We also remark that since in the overlapping region $t(p)=\sigma(p) = X^0(p)$, the coordinate $\uptau$ satisfies $\uptau=t=\sigma$. The normalized vectorfield $$\RbfT:=\partial_\uptau$$ plays a distinguished role in this work a globally defined almost Killing and unit timelike vectorfield. Note that $T$, defined in \eqref{eq:VFdef0} in the exterior, and $\bfT$ differ only by terms which have $\uptau$ decay (see~\eqref{eq:Tprecise1}).

%%%%%%%%%%%%
%%%%%%%%%%%%
\begin{remark}\label{rem:nongeomglobal1}
Since the global coordinates $(\uptau,\uprho,\uptheta)$ agree with the coordinates introduced in the previous two subsections in the respective regions, and in view of the invariant form $\Box+V$ appearing in \eqref{eq:LEDcalPint1} and \eqref{eq:extgeomlin1}, by inspection of the calculations in the interior and exterior regions (see \eqref{eq:LEDcalPint1}, \eqref{eq:varphiphi1},  \eqref{eq:callP2}, \eqref{eq:ErrcallP1}, \eqref{eq:Boxm1}, \eqref{eq:ErrBoxm1}, \eqref{eq:extgeomlin1}), the operator $\calP$ in \eqref{eq:LEDcalPint1} and \eqref{eq:callP2} satisfies the following properties:
\begin{enumerate}
\item $\calP$ admits the decomposition
%%%%%%%
%%%%%%%
\begin{align*}
\begin{split}
\calP=\calP_\uptau+\calP_\Ell,
\end{split}
\end{align*}
where $\calP_\Ell$ is elliptic and does not contain $\partial_{\uptau}$ derivatives. $\calP_\Ell$ can be further decomposed as
%%%%%%%
%%%%%%%
\begin{align*}
\begin{split}
\calP_\Ell=\Delta_\barcalC+V+\calP_\Ell^{\pert},
\end{split}
\end{align*}
where
\begin{align*}
\begin{split}
\Delta_\barcalC=\frac{1}{\jap{\uprho}^{n-1}|F_{\uprho}|}\partial_\uprho(\jap{\uprho}^{n-1}|F_\uprho|^{-1}\partial_\rho)+\frac{1}{\jap{\uprho}^2}\ringsDelta,
\end{split}
\end{align*}
and (recall the notation from Section~\ref{subsec:prelimvfs})
%%%%%%%
%%%%%%%
\begin{align*}
\begin{split}
\calP_\Ell^\pert=o_{\wp,\Rone}(1)(\partial_\Sigma^2+\jap{\uprho}^{-1}\partial_\Sigma+\jap{\uprho}^{-2})+\callO(\dotwp)\partial_\Sigma+\callO(\dotwp)\jap{\uprho}^{-2}.
\end{split}
\end{align*}
Here $o_{\wp,\Rone}(1)$ denotes coefficients which are $\callO(\wp)$ or can be made arbitrarily small by taking $\Rone$ (the transition region from the flat to hyperboloidal foliation) large. Finally, $\calP_\uptau$ has the form
%%%%%%%
%%%%%%%
\begin{align*}
\begin{split}
\calP_\uptau=\callO(1)\partial \RbfT+\callO(\jap{\uprho^{-1}})\RbfT.
\end{split}
\end{align*}
\item If $|\dotwp^{\leq 2}|\lesssim \epsilon \uptau^{-\gamma-1}$, for some $\gamma>1$, then the operator $\calP$ takes the form
%%%%%%%
%%%%%%%
\begin{align*}
\begin{split}
\calP\psi=\uppi_q^{\mu\nu}\partial^2_{\mu\nu}\psi+\uppi_l^\mu\partial_\mu\psi+\uppi_c\psi,
\end{split}
\end{align*}
where the coefficients satisfy the following properties. In view of the invariant form $\Box_h+V$ appearing in \eqref{eq:LEDcalPint1} and \eqref{eq:extgeomlin1}, let
%%%%%%%
%%%%%%%
\begin{align*}
\begin{split}
&\pibar_q^{\mu\nu}=(h^{-1})^{\mu\nu}\vert_{\uptau=t_2}, \quad \pibar_l^\nu=|h|^{-\frac{1}{2}}\partial_\mu(|h|^{\frac{1}{2}}(h^{-1})^{\mu\nu})\vert_{\uptau=t_2},\quad \pibar_c=V\vert_{\substack{\dot{\ell} = 0 \\ \dot{\xi} = \ell\\ \uptau=t_2}},\\
& \ringpi_q=\uppi_q-\pibar,\quad \ringpi_l=\uppi_l-\pibar,\quad \ringpi_c=\uppi_c-\pibar,
\end{split}
\end{align*}
and correspondingly decompose $\calP$ as $\calP=\calP_0+\calP_\pert$ with
%%%%%%%
%%%%%%%
\begin{align}\label{eq:calPP0Ppertdecomp1}
\begin{split}
\calP_0=\pibar_q^{\mu\nu}\partial^2_{\mu\nu}+\pibar_l^\mu\partial_\mu+\pibar_c, \qquad \calP_\pert=\ringpi_q^{\mu\nu}\partial^2_{\mu\nu}+\ringpi_l^\mu\partial_\mu+\ringpi_c.
\end{split}
\end{align}
Then $|\ringpi_q|, |\ringpi_l|, |\ringpi_c|\lesssim \jap{\uptau}^{-\gamma}$, and, with $y$ denoting the spatial variables $(\uprho,\uptheta)$,
%%%%%%%
%%%%%%%
\begin{align*}
\begin{split}
\sup_y(\jap{\uprho}^{2}|\ringpi_q^{\uptau\uptau}|+|\ringpi_q^{\uptau y}|+|\ringpi_q^{yy}|)\lesssim \epsilon \jap{\uptau}^{-\gamma}. 
\end{split}
\end{align*}
Moreover, in the hyperboloidal part of the foliation, $\calP_\pert$ has the more precise structure
%%%%%%%
%%%%%%%
\begin{align*}
\begin{split}
\ringa\partial_\uptau(\partial_\uprho+\frac{n-1}{2\uprho})+\ringa^{\mu\nu}\partial_{\mu\nu}+\ringb^\mu \partial_\mu +\ringc,
\end{split}
\end{align*}
where,
%%%%%%%
%%%%%%%
\begin{align}\label{eq:calPP0Ppertdecomp2}
\begin{split}
&|\ringa|, |\ringa^{yy}|\lesssim \epsilon \jap{\uptau}^{-\gamma}, \quad  |\partial_y\ringa^{yy}|, |\ringa^{\uptau y}|, |\ringb^y|\lesssim \epsilon\jap{\uptau}^{-\gamma}\uprho^{-1},\quad |\ringa^{\uptau\uptau}|, |\ringb^\uptau|\lesssim \epsilon \jap{\uptau}^{-\gamma}\uprho^{-2},\\
&|\ringc|\lesssim \epsilon\jap{\uptau}^{-\gamma} \uprho^{-4}.
\end{split}
\end{align} 
\item $\calP$ can be written as 
%%%%%%%
%%%%%%%
\begin{align*}
\begin{split}
\calP\uppsi=\frac{1}{\sqrt{|\bfh|}}\partial_\mu (\sqrt{|\bfh|}(\bfh^{-1})^{\mu\nu}\partial_\nu\uppsi)+V\uppsi+\tilcalbfP, 
\end{split}
\end{align*}
where the Lorentzian metric $\bfh$ agrees with $h$ in \eqref{eq:LEDcalPint1} in the region where $(\uptau,\uprho,\uptheta)$ agrees with $(t,\rho,\omega)$, and with $m$ in \eqref{eq:mform1} in the region where $(\uptau,\uprho,\uptheta)$ agrees with $(\tau,r,\theta)$. Moreover, the perturbative part $\tilcalbfP$ can be written as
%%%%%%%
%%%%%%%
\begin{align*}
\begin{split}
\tilcalbfP= \bfp^{\mu\nu}\partial_{\mu\nu}^2+\bfq^\mu \partial_\mu+\bfs
\end{split}
\end{align*}
for a symmetric tensor $\bfp$, a vectorfield $\bfq$, and a scalar $\bfs$ satisfying (in the notation of item~(1) above)
%%%%%%%
%%%%%%%
\begin{align*}
\begin{split}
\jap{\uprho}^2|\bfp|+\jap{\uprho}^3|\bfq|+\jap{\uprho}^4|\bfs|=o_{\wp,\Rone}(1)+\callO(\dotwp^{\leq 2}).
\end{split}
\end{align*}
\end{enumerate}
\end{remark}
%%%%%%%%%%%%
%%%%%%%%%%%%

%%%%%%%%%%
%%%%%%%%%%
%%%%%%%%%%
\subsubsection{Global Geometric Coordinates}\label{sec:GGC}
%%%%%%%%%%
%%%%%%%%%%
%%%%%%%%%%
Here we introduce a new set of coordinates $(\tiluptau,\tiluprho,\tiluptheta)$ to which we refer as \emph{geometric global coordinates}. Their main property of interest to us is that the operator $\calP_0$ introduced in Remark~\ref{rem:nongeomglobal1} above has the following expression in these coordinates:
%%%%%%%
%%%%%%%
\begin{align*}
\begin{split}
\calP_0=-\partial_{\tiluptau}^2+\tilDelta+V(\tilrho),
\end{split}
\end{align*}
Here, with $\ringsDelta$ denoting the Laplacian on the round sphere $\bbS^{n-1}$,
%%%%%%%
%%%%%%%
\begin{align*}
\begin{split}
\tilDelta=\frac{1}{\jap{\tiluprho}^{n-1}|F_{\tiluprho}|}\partial_\tiluprho(\jap{\tiluprho}^{n-1}|F_\tiluprho|^{-1}\partial_\tiluprho)+\frac{1}{\jap{\tiluprho}^2}\ringsDelta.
\end{split}
\end{align*}
The geometric global coordinates can be defined as follows. Let $\ellbar=\ell(t_2)$, $\xibar=\xi(t_2)$ and $\gammabar=\gamma(t_2)$. With these choices we consider two parameterizations of the catenoid defined in \eqref{eq:calCsigmadefintro1}, where $\ell\equiv \ellbar$ and $\xi\equiv\xibar$. The first parameterization is exactly by the non-geometric global coordinates $(\uptau,\uprho,\uptheta)$ from Section~\ref{sec:GNGC}, corresponding the choice of parameters $\ell\equiv \ellbar$, $\xi\equiv\xibar+\sigma\ellbar$. The second parameterization is simply
%%%%%%%
%%%%%%%
\begin{align*}
\begin{split}
\Lambda_{-\ellbar}(\tiluptau,F(\tiluprho,\tiluptheta))+(0,\xibar),
\end{split}
\end{align*}
where $F$ and $\Lambda$ are as in \eqref{eq:Fdef} and \eqref{eq:Lambdadef1}. The coordinate change between $(\uptau,\uprho,\uptheta)$ and $(\tiluptau,\tiluprho,\tiluptheta)$ is then obtained by equating the $X^i$, $i=0,\dots,n$ coordinates of the ambient space with respect to these two parameterizations. The desired form of $\calP_0$ follows from the coordinate invariance of this operator.  Explicit formulas for the coordinate transformation can also be given in the regions where the non-geometric coordinates agree with the coordinates $(t,\rho,\omega)$ and $(\tau,r,\theta)$, and are given respectively by 
%%%%%%%
%%%%%%%
\begin{align}\label{eq:coordinatetransformationrelation1}
\begin{split}
\tiluptau=\gammabar^{-1}\uptau-\ellbar\cdot F(\uprho,\uptheta), \quad \tiluprho=\uprho,\quad \tiluptheta=\uptheta
\end{split}
\end{align}
in the interior, and by
%%%%%%%
%%%%%%%
\begin{align*}
\begin{split}
\uptau+\jap{\uprho}=\gammabar\tiluptau+\gammabar\ellbar\cdot F(\tiluprho,\tiluptheta), \quad \uptau\ellbar+\uprho\Theta(\uptheta)=\gammabar\tiluptau\ellbar+A_\ellbar F(\tiluprho,\tiluptheta)
\end{split}
\end{align*}
in the exterior.
%%%%%%%%%%%%
%%%%%%%%%%%%
%%%%%%%%%%%%%%%%%%%%%%
%%%%%%%%%%%%%%%%%%%%%%
%%%%%%%%%%%%%%%%%%%%%%
\section{Main Bootstrap Argument and the Proof of  Theorem~\ref{thm:main}}\label{sec:bootstrap}
%%%%%%%%%%%%%%%%%%%%%%
%%%%%%%%%%%%%%%%%%%%%%
%%%%%%%%%%%%%%%%%%%%%%

In the first part of this section we set up the bootstrap assumptions and state the propositions which assert that the bootstrap regime is trapped. These  are Propositions~\ref{prop:bootstrappar1} and~\ref{prop:bootstrapphi1}, and their proofs will occupy most of the remainder of the paper. In the rest of this section we will prove Theorem~\ref{thm:main} assuming Propositions~\ref{prop:bootstrappar1} and~\ref{prop:bootstrapphi1}. 

For concreteness, we set $n=5$ for the remainder of the paper, but our arguments are easily adaptable to higher dimensions. We can now state our bootstrap assumptions. We assume that there exist $\xi,\ell$ defined on $[0,\tau_f)$ and a parameterization \eqref{eq:psidefintro1} for $\tau\in[0,\tau_f)$ such that the orthogonality conditions \eqref{equ:derivation_modul_equ4} and \eqref{eq:wpoutline9} are satisfied. We also assume that the following \emph{trapping assumption} (see \eqref{eq:Fplusdef} for the definition of $F_{+}$ and \eqref{equ:unstable_mode_decomposition} for the relation between $\psi$ and $\phi$): 
\begin{equation} \label{eq:a+trap}
|\mu a_{+}(\tau)-e^{\mu \tau }S(e^{-\mu \tau}F_{+})|\leq  C_{trap} \delta_{\wp} \epsilon\tau^{-3},
\end{equation}
and the following estimates hold for all $\tau,\sigma_1,\sigma_2\in[0,\tau_f)$ (recall from Section~\ref{subsec:prelimvfs} that $\tilpartial_\Sigma$ denotes size one tangential derivatives $\partial_\Sigma$ or $\jap{\uprho}^{-1} \RbfT$ and that in the exterior $X$ denotes any of the vecotrfields $\tilr L$, $\Omega$, or $T$ introduced in Section~\ref{sec:profile2}):
%%%%%%%
%%%%%%%
\begin{align}
&|a_{+}^{(k)}|\leq 2 C_{k} \delta_{\wp}\epsilon \tau^{-\frac{5}{2}+\kappa}, \quad \forall  k\geq 0.\label{eq:a+b1}\\
&|a_{-}^{(k)}|\leq 2C_k \delta_{\wp} \epsilon\tau^{-\frac{5}{2}+\kappa},\quad \forall  k\geq 0.\label{eq:a-b1}\\
&|\dotwp^{(k)}|\leq 2C_k \delta_{\wp}\epsilon \tau^{-\frac{5}{2}+\kappa},\quad \forall  k\geq 1.\label{eq:wpb1}\\
&|\phi|+\chi_{\geq R}|X^k\phi|\leq 2C\epsilon\tau^{-\frac{9}{4}+\frac{\kappa}{2}},\quad k\leq M-7.\label{eq:phiptwiseb1}\\
&|\partial^k\phi|+\chi_{\geq R}|\partial^{k-j}X^j\phi|\leq 2C\epsilon\tau^{-\frac{5}{2}+\kappa},\quad 1\leq k\leq M-8,~j<k.\label{eq:dphiptwiseb1}\\
&\chi_{\geq R}|\partial^{k-j}X^j\phi|\leq 2C\epsilon\jap{r}^{-\frac{3}{2}}\tau^{-1+\kappa},\quad 1\leq k\leq M-8,~j<k.\label{eq:dphiptwiseb2}\\
&\chi_{\geq R}|\partial^{k-j}X^j\phi|\leq 2C\epsilon\jap{r}^{-2}\tau^{-\frac{1}{2}+\kappa},\quad 1\leq k\leq M-8,~j<k.\label{eq:dphiptwiseb3}\\
&\| \chi_{\leq R}\partial^k\phi\|_{L^2(\Sigma_\tau)}+\|\jap{r}^{-\frac{5}{2}+\kappa}(\chi_{\geq R}X^k\phi)\|_{L^2(\Sigma_\tau)}\leq 2C \epsilon\tau^{-\frac{5}{2}+\kappa},\quad 0\leq k\leq M-4.\label{eq:dphiL2b1}
\end{align}
\begin{align}
&\|\partial^2_\Sigma (\chi_{\leq R}\partial^k\RbfT\phi)\|_{L^2(\Sigma_\tau)}+\|\partial^2_\Sigma(\chi_{\geq R} X^k\RbfT\phi)\|_{L^2(\Sigma_\tau)}\leq 2C \epsilon\tau^{-3},\quad 0\leq k \leq M-4.\label{eq:d2Tphienergyb1}\\
&\|\chi_{\leq R}\partial_\Sigma \partial^{k}\RbfT^j\phi\|_{L^2(\Sigma_\tau)}+\|\chi_{\geq R} \tilpartial_\Sigma X^{k}\RbfT^j\phi\|_{L^2(\Sigma_\tau)}\leq 2C \epsilon\tau^{-1-j},\quad 0\leq j\leq 2,~ k+3j \leq M-2.\label{eq:Tjphienergyb1}\\
&\|\chi_{\geq R}r^{p} (\partial_r+\frac{n-1}{2r}) X^{k}\RbfT^j\phi\|_{L^2(\Sigma_\tau)}\leq 2C \epsilon\tau^{-1-j+\frac{p}{2}},\quad 0\leq p\leq 2,~ k+3j \leq M-2, 0\leq j\leq 2.\label{eq:Tjphienergyb2}
\end{align}
We close our bootstrap assumptions in a few steps. First, in Proposition~\ref{prop:bootstrappar1}, we close the bootstrap assumptions for the parameters, but with a suboptimal rate for $\mu a_{+}-e^{\mu \tau}S(e^{-\mu\tau}F_{+})$ in \eqref{eq:a+trap}. We then use this in Proposition~\ref{prop:bootstrapphi1} to improve the bootstrap bounds on $\phi$.  Finally, in the proof of Theorem~\ref{thm:main} we show that the initial data and parameters can be chosen such that the trapping assumption \eqref{eq:a+trap} is satisfied. Before stating Propositions~\ref{prop:bootstrappar1} and~\ref{prop:bootstrapphi1}, we remark that in view of equation~\eqref{eq:wpoutline10} satisfied by $a_{+}$ (see also \eqref{eq:higheraplustemp1}), the trapping assumption \eqref{eq:a+trap} is stated at the level of the derivative of $a_{+}$. The reason for this is that we need improved decay for the derivative to close the remaining bootstrap assumptions.
%%%%%%%%%%%
%%%%%%%%%%%
\begin{proposition}\label{prop:bootstrappar1}
Suppose the estimates \eqref{eq:a+trap}--\eqref{eq:Tjphienergyb2} and orthogonality conditions \eqref{equ:derivation_modul_equ4} and \eqref{eq:wpoutline9} are satisfied. If $\epsilon$ is sufficiently small and $C,C_k$ appearing on the right-hand side of \eqref{eq:a+trap}--\eqref{eq:Tjphienergyb2} are sufficiently large (compared to $C_{trap}$), then the following improved estimates hold:
\begin{align} 
&|a_{+}^{(k)}|\leq C_{k} \delta_{\wp} \epsilon\tau^{-\frac{5}{2}+\kappa}, \quad \forall  k\geq 0.\label{eq:a+1}\\
&|a_{-}^{(k)}|\leq C_k \delta_{\wp} \epsilon\tau^{-\frac{5}{2}+\kappa},\quad \forall  k\geq 0.\label{eq:a-1}\\
&|\dotwp^{(k)}|\leq C_k \delta_{\wp} \epsilon\tau^{-\frac{5}{2}+\kappa},\quad \forall  k\geq 1.\label{eq:wp1}
\end{align}
\end{proposition}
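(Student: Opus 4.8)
\textbf{Proof plan for Proposition~\ref{prop:bootstrappar1}.}
The plan is to close the bootstrap for the modulation parameters $\dotwp = (\dotell, \dotxi-\ell)$ and the projection coefficients $a_\pm$, treating the estimates~\eqref{eq:phiptwiseb1}--\eqref{eq:Tjphienergyb2} on $\phi$ as given input, together with the weak form of the trapping assumption~\eqref{eq:a+trap}. The overall philosophy, already signalled in Sections~\ref{subsec:introfirstorderandcodim} and~\ref{subsubsec:morehistory}, is that the energy of $\phi$ enters the equations for the parameters \emph{only through a prefactor of $\delta_\wp$}, while the parameter derivatives enter the $\phi$-equation; the circularity is broken because $\delta_\wp$ is small in inverse powers of $\Reigenfunctioncutoffscale$. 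Since the present proposition does not touch the $\phi$-equation, the only circularity to worry about is internal to the parameter system, and this is resolved by the damping/smoothing structure set up in Section~\ref{subsec:modulationeqs}.

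First I would recall from~\eqref{equ:derivation_modul_equ3_alt}--\eqref{equ:derivation_modul_equ5} the structure $\dotwp = \vecG(S\partial_\Sigma^{\leq 2}\vecpsi, \ell, S\vecN - \beta\vecomega)$ together with the decomposition $\vecbfOmega = \vecUpomega + \vecomega$ and the evolution equations $\partial_t \vecUpomega = (S-I)(\vecN + \vecF_\omega)$, $\partial_t\vecomega = -\beta\vecomega - S\vecF_\omega$. The key algebraic point, to be verified, is that $\vecN = (\bfOmega(\vecpsi, M\vecZ_1),\ldots)$ is controlled by the localized energy of $\vecpsi$ on $\{|\rho|\lesssim\Reigenfunctioncutoffscale\}$ with a gain: since $M\vecZ_i$ and $M\vecZ_{n+i}-\vecZ_i$ are supported in $\{|\rho|\simeq\Reigenfunctioncutoffscale\}$ and of size $\callO(\Reigenfunctioncutoffscale^{-n+1},\Reigenfunctioncutoffscale^{-2},\ell)$ (as computed in Section~\ref{subsec:modulationeqs}), we get $|\vecN|\lesssim \delta_\wp \|\chi_{\lesssim \Reigenfunctioncutoffscale}\vecpsi\|$, and using $\vecpsi = \vecphi + a_+\vecZ_+ + a_-\vecZ_-$ together with~\eqref{eq:dphiL2b1} and the bootstrap bounds~\eqref{eq:a+b1},~\eqref{eq:a-b1}, this is $\lesssim \delta_\wp\epsilon\tau^{-5/2+\kappa}$. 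The $\vecF_\omega$ term is quadratically small by~\eqref{eq:Fomegaestimate1}, hence harmless; similarly the $\vecF_1$-contributions to $\vecN$ are either quadratic in $\vecpsi$ or carry a factor $\dotwp$ with small coefficient, so they can be absorbed. Then $S$ almost-local in time and $\partial_t\tilS = S - I$ give $|\vecUpomega|\lesssim \delta_\wp\epsilon\tau^{-5/2+\kappa}$ directly, while the damping equation for $\vecomega$ yields $|\vecomega|\lesssim \sup_{s\leq\tau}|S\vecF_\omega(s)|\lesssim\delta_\wp^2\epsilon\tau^{-5/2+\kappa}$ via a Gronwall/convolution estimate against $e^{-\beta(t-s)}$. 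Feeding these back through~\eqref{equ:derivation_modul_equ_bound_from_IFT} gives $|\dotwp|\lesssim |S\partial_\Sigma^{\leq 2}\vecpsi| + |S\vecN - \beta\vecomega|\lesssim\delta_\wp\epsilon\tau^{-5/2+\kappa}$, which improves~\eqref{eq:wpb1} for $k=1$.

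For the higher derivatives $k\geq 2$ in~\eqref{eq:wp1}, the point of the time-smoothing is precisely that differentiating $\dotwp = \vecG(S(\cdots))$ in $t$ never costs regularity: since $[\tfrac{\ud}{\ud t}, S] = 0$ and $[\tfrac{\ud}{\ud t},\tilS]=0$, each extra $\partial_t$ either lands on $S\partial_\Sigma^{\leq2}\vecpsi$ — producing $S\partial_t\partial_\Sigma^{\leq2}\vecpsi$, which by $\RbfT\approx\partial_t$ modulo lower-order terms is controlled by~\eqref{eq:d2Tphienergyb1},~\eqref{eq:Tjphienergyb1} and gives the improved rate $\tau^{-3}$ for one more derivative — or on $S\vecN$, in which case $\partial_t\vecN$ again involves $\partial_t\vecpsi = \RbfT\vecpsi + \cdots$ localized near $\Reigenfunctioncutoffscale$; the bootstrap hierarchy on $\RbfT^j\phi$ in~\eqref{eq:Tjphienergyb1} then closes the induction on $k$. (One subtlety: $\partial_t\vecpsi$ is not directly a bootstrapped quantity, so one first converts $\partial_t$ to $\RbfT$ plus spatial derivatives using the relation~\eqref{equ:derivation_modul_equ1_addon}, which is legitimate under the smallness already established.) The bounds~\eqref{eq:a+1},~\eqref{eq:a-1} for $a_\pm$ are then closed from the ODEs~\eqref{eq:wpoutline10}: for $a_-$ one integrates $\tfrac{\ud}{\ud t}(e^{\mu t}a_-) = S(e^{\mu t}F_-)$ against the stable exponential, exactly as in a damped ODE, using $|F_-|\lesssim \delta_\wp\epsilon\tau^{-5/2+\kappa}$ (which follows from the same estimates on $\vecN$, $\vecF_1$ and the $a_\pm$ bootstrap bounds); for $a_+$ the trapping assumption~\eqref{eq:a+trap} \emph{is} the statement that $\mu a_+ = e^{\mu\tau}S(e^{-\mu\tau}F_+) + \callO(\delta_\wp\epsilon\tau^{-3})$, and $e^{\mu\tau}S(e^{-\mu\tau}F_+)$ is an almost-local-in-time average of $F_+$, hence also $\lesssim\delta_\wp\epsilon\tau^{-5/2+\kappa}$; differentiating via~\eqref{eq:wpoutline11} handles $k\geq1$. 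Throughout, the constants $C,C_k$ are chosen large relative to $C_{trap}$ and the implied constants, and $\epsilon$ small relative to inverse powers of $\Reigenfunctioncutoffscale$, so that the factor-of-$2$ in the bootstrap bounds is genuinely improved.

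\textbf{Main obstacle.} The hard part is not any single estimate but the careful bookkeeping needed to verify that \emph{every} term on the right-hand side of the parameter equations carries either a factor $\delta_\wp$ (from the $\Reigenfunctioncutoffscale$-scale localization of $M\vecZ_i$, $M\vecZ_{n+i}-\vecZ_i$, and of the eigenfunction tails), or a factor $\dotwp$ with a coefficient that is itself $\callO(\ell,\psi,\partial\psi)$, or is genuinely quadratic — with no leftover term that is merely $\callO(\epsilon\tau^{-5/2+\kappa})$ without the extra $\delta_\wp$. This requires revisiting the precise forms of $\vec f = (f, \dot f)$ from Section~\ref{sec:interior} (in particular $\calE_1,\ldots,\calE_{11}$), checking that the $\calE_j$ that are \emph{not} proportional to $\dotwp$ (such as $\calE_1$) are at least quadratic in $\vecpsi$ and thus absorbable, and tracking the $\calO$-notation through the commutations with $\RbfT$. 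The time-smoothing bookkeeping — confirming $S-I = \partial_t\tilS$ with $\tilS$ bounded and almost local, so that $\vecUpomega$ inherits the decay of $\vecN$ rather than a worse rate — is the other place where care is essential, but it is structural rather than computational.
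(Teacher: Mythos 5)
Your proposal follows essentially the same route as the paper's proof of Proposition~\ref{prop:bootstrappar1} (which proceeds via Lemma~\ref{lem:vecomega1} for $\vecomega$ and the truncation estimates of Lemma~\ref{lem:Omega1}, then uses the smoothing operator $S$ to treat all higher derivatives and the ODE/trapping structure for $a_\pm$), and your ``Main obstacle'' paragraph correctly identifies where the real bookkeeping lives. Two points, one of which is a genuine gap in the plan as written.

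The gap is in the chain $|\dotwp|\lesssim |S\partial_\Sigma^{\leq 2}\vecpsi| + |S\vecN - \beta\vecomega|\lesssim\delta_\wp\epsilon\tau^{-\frac52+\kappa}$. The first inequality is legitimate from~\eqref{equ:derivation_modul_equ_bound_from_IFT}, but the second is false: $|S\partial_\Sigma^{\leq 2}\vecpsi|$ is controlled by the localized energy of $\vecpsi$ on $\{|\rho|\lesssim\Reigenfunctioncutoffscale\}$, which by~\eqref{eq:dphiL2b1} is only $\lesssim\epsilon\tau^{-\frac52+\kappa}$ \emph{without} the factor $\delta_\wp$. The crude bound $|\vecG(x,y,q)|\lesssim|x|+|q|$ therefore cannot close the bootstrap by itself; taken at face value it would only reproduce~\eqref{eq:wpb1}, not improve it. The fix (which is what the paper does) is to look inside $\vecG$ rather than use the generic bound: the entire linear-in-$\vecpsi$ contribution to $\vecF_\wp=\vecG$ comes from the entries $\bfOmega(\vecpsi,M\vecZ_i)$, $\bfOmega(\vecpsi,M\vecZ_{n+i})$ of $q=\partial_t\vecbfOmega+\vecN$, and these do gain $\delta_\wp$ because $M\vecZ_i$ and $M\vecZ_{n+i}-\vecZ_i$ are supported on $\{|\rho|\simeq\Reigenfunctioncutoffscale\}$ and small (the computation in~\eqref{eq:lemOmega1temp1}--\eqref{eq:lemOmega1temp2}). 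The remaining dependence of $\vecG$ on $x=\partial_\Sigma^{\leq2}\vecpsi$ is at least quadratic ($\wtilR\dotwp$ and $\wtilH$ are $\calO(x^2,x\ell,x\dotwp)$), hence $\lesssim\epsilon^2\tau^{-5+2\kappa}\ll\delta_\wp\epsilon\tau^{-\frac52+\kappa}$, using that $\epsilon$ is small relative to any inverse power of $\Reigenfunctioncutoffscale$. Your ``Main obstacle'' paragraph is effectively describing this split, but the displayed chain in the main argument contradicts it; the two need to be reconciled.

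A smaller inaccuracy: your bound $|\vecomega|\lesssim\delta_\wp^2\epsilon\tau^{-\frac52+\kappa}$ attributes a $\delta_\wp^2$ factor to $\vecomega$ that is not there. The paper's Lemma~\ref{lem:vecomega1} gives $|\vecomega^{(k)}|\lesssim\epsilon^2\jap{t}^{-\frac92+\kappa}$: the smallness of $\vecomega$ comes purely from $\vecF_\omega$ being genuinely quadratic (cf.~\eqref{eq:Fomegaestimate1}), not from eigenfunction truncation. This is harmless (quadratic is already more than enough), but the mechanism is different from what you describe, and conflating the $\delta_\wp$-type gain with the $\epsilon^2$-type gain is precisely the sort of bookkeeping error that the ``Main obstacle'' paragraph warns about.
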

%%%%%%%%%%%
%%%%%%%%%%%
%%%%%%%%%%%
%%%%%%%%%%%
\begin{proposition}\label{prop:bootstrapphi1}
Suppose the estimates \eqref{eq:a+trap}--\eqref{eq:Tjphienergyb2} orthogonality conditions \eqref{equ:derivation_modul_equ4} and \eqref{eq:wpoutline9} are satisfied. If $\epsilon$ is sufficiently small and $C,C_k$ appearing on the right-hand side of \eqref{eq:a+trap}--\eqref{eq:Tjphienergyb2} are sufficiently large (compared to $C_{trap}$), then the following improved estimates hold:
%%%%%%%
%%%%%%%
\begin{align}
&|\phi|+\chi_{\geq R}|X^k\phi|\leq C\epsilon\tau^{-\frac{9}{4}+\frac{\kappa}{2}},\quad k\leq M-7.\label{eq:phiptwise1}\\
&|\partial^k\phi|+\chi_{\geq R}|\partial^{k-j}X^j\phi|\leq C\epsilon\tau^{-\frac{5}{2}+\kappa},\quad 1\leq k\leq M-8,~j<k.\label{eq:dphiptwise1}\\
&|\partial^k\phi|+\chi_{\geq R}|\partial^{k-j}X^j\phi|\leq C\epsilon\jap{r}^{-\frac{3}{2}}\tau^{-1+\kappa},\quad 1\leq k\leq M-8,~j<k.\label{eq:dphiptwise2}\\
&|\partial^k\phi|+\chi_{\geq R}|\partial^{k-j}X^j\phi|\leq C\epsilon\jap{r}^{-2}\tau^{-\frac{1}{2}+\kappa},\quad 1\leq k\leq M-8,~j<k.\label{eq:dphiptwise3}\\
&\| \jap{r}^{-\frac{5}{2}+\kappa}(\chi_{\leq R}\partial^k\phi)\|_{L^2(\Sigma_\tau)}+\|\jap{r}^{-\frac{5}{2}+\kappa}(\chi_{\geq R}X^k\phi)\|_{L^2(\Sigma_\tau)}\leq C \epsilon\tau^{-\frac{5}{2}+\kappa},\quad 0\leq k\leq M-4.\label{eq:dphiL21}\\
&\|\partial^2_\Sigma (\chi_{\leq R}\RbfT\partial^k\phi)\|_{L^2(\Sigma_\tau)}+\|\partial^2_\Sigma(\chi_{\geq R}\RbfT X^k\phi)\|_{L^2(\Sigma_\tau)}\leq C \epsilon\tau^{-3},\quad 0\leq k \leq M-4.\label{eq:d2Tphienergy1}\\
&\|\chi_{\leq R}\partial_\Sigma \partial^{k}\RbfT^j\phi\|_{L^2(\Sigma_\tau)}+\|\chi_{\geq R} \tilpartial_\Sigma X^{k}\RbfT^j\phi\|_{L^2(\Sigma_\tau)}\leq C \epsilon\tau^{-1-j},\quad 0\leq j\leq 2,~k+3j \leq M-2.\label{eq:Tjphienergy1}\\
&\|\chi_{\geq R}r^{p} (\partial_r+\frac{n-1}{2r}) X^{k}\RbfT^j\phi\|_{L^2(\Sigma_\tau)}\leq C \epsilon\tau^{-1-j+\frac{p}{2}},\quad 0\leq p\leq 2,~ k+3j \leq M-2, 0\leq j\leq 2.\label{eq:Tjphienergy2}
\end{align}
\end{proposition}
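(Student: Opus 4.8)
\textbf{Proof proposal for Proposition~\ref{prop:bootstrapphi1}.}

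The plan is to run the chain of implications advertised in Section~\ref{subsubsec:introoverallscheme}, item (4): starting from the orthogonality conditions and the trapping assumption (and using Proposition~\ref{prop:bootstrappar1} to control the modulation parameters, which at this point may be treated as known quantities satisfying \eqref{eq:wp1}), we first establish energy boundedness and integrated local energy decay (ILED) for $\phi$, then upgrade these to the $r^p$-weighted hierarchy, and finally convert the weighted energies into the pointwise decay rates. Throughout, the source term $\vecf$ on the right-hand side of the $\phi$-equation \eqref{eq:phi1}, \eqref{eq:abstractexteq1} is split into a profile error (linear in $\dotwp$, hence controlled by \eqref{eq:wp1}) and genuinely nonlinear terms (controlled by inserting the bootstrap bounds \eqref{eq:phiptwiseb1}--\eqref{eq:Tjphienergyb2}); the smallness of $\epsilon$ and the extra constant $\delta_\wp$ break the circularity between the parameter estimates and the $\phi$-estimates.

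First I would prove uniform boundedness of energy and ILED for $\phi$. Using the global almost-stationary vectorfield $\RbfT$ as a multiplier, together with the coercivity furnished by the $2n+2$ orthogonality conditions (which push $\phi$ away from the zero and negative eigenmodes of $\HHbar$), one obtains a basic energy inequality; commuting with $\RbfT$ and then using the elliptic structure of $\calP_\Ell$ (Remark~\ref{rem:nongeomglobal1}(1)) and elliptic regularity recovers the higher-order energies, giving \eqref{eq:Tjphienergy1} at the top of the hierarchy. For ILED, I would combine a vectorfield-multiplier argument modeled on Proposition~\ref{prop:LEDproduct} (adapted to the moving profile $\calQ$ via the null frame of Section~\ref{sec:profile2}), which handles the trapped sphere $\{\rho=0\}$ and the quasilinear top-order terms at the cost of a lower-order term $\|\phi\|_{L^2(K)}$, with the time-smoothing operator $P_{\leq N}$: the high-frequency remainder $\|\phi-P_{\leq N}\phi\|_{L^2(K)}$ is negligible, and for $P_{\leq N}\phi$ elliptic regularity reduces matters to the stationary operator $\calP_0$, whose ILED estimate (via Proposition~\ref{prop:LEDproduct} applied after transferring the orthogonality conditions to boosted time slices $\{X^0=\mathrm{const}\}$, using doubly-integrable decay of $\dotwp$ from \eqref{eq:wp1}) closes the bound. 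This yields \eqref{eq:dphiL21}, and with $\RbfT$-commutation the improved rates \eqref{eq:d2Tphienergy1}.

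Next I would run the $r^p$-method of Moschidis, adapted to the foliation $\cup_\sigma\bsUpsigma_\sigma$ centered at $\xi$. Writing the $\phi$-equation in the moving null frame $(L,\Lbar,\Omega,T)$, the $r^p$-multiplier $r^p(\partial_r+\tfrac{n-1}{2r})$ and the commutators $\tilr L,\Omega,T$ produce the weighted energy hierarchy \eqref{eq:Tjphienergy2}; the new error terms arising because $\dotell\neq0$, $\dotxi\neq\ell$ carry $\uptau$-decay and, crucially, no spatial growth (this is the point of the $\calO$-notation of Section~\ref{subsubsec:calOnotation}), so they are absorbed using \eqref{eq:wp1} and the bootstrap assumptions. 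Iterating the $r^p$-hierarchy down the dyadic scale in $\uptau$ and interpolating with the ILED estimate gives the $L^2$-decay; combined with the elliptic estimates of Lemma~\ref{lem:ellipticestimate1} and Corollary~\ref{cor:higherdL2decay} (to handle the order-zero potential $V$ and the kernel of $\calP_\Ell$, with the kernel-projection of $\phi$ controlled via the orthogonality conditions) one gets $\|\jap r^{-5/2+\kappa}\phi\|_{L^2}\lesssim\epsilon\uptau^{-5/2+\delta}$ and then $\|\partial^3\phi\|_{L^2}\lesssim\epsilon\uptau^{-5/2+\kappa}$, $\|\partial^2\phi\|_{L^2}\lesssim\epsilon\uptau^{-2}$; a Gagliardo--Nirenberg inequality $\|\phi\|_{L^\infty}\lesssim\|\partial^2\phi\|_{L^2}^{1/2}\|\partial^3\phi\|_{L^2}^{1/2}$ then produces the pointwise rate $\uptau^{-9/4+\kappa/2}$ in \eqref{eq:phiptwise1}, and the analogous Sobolev/Gagliardo--Nirenberg estimates on $\Sigma_\tau$ give \eqref{eq:dphiptwise1}--\eqref{eq:dphiptwise3}. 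The exterior pointwise bounds with $\jap r$-weights follow from the weighted energies \eqref{eq:Tjphienergy2} and the commuted versions, using that in the exterior $X^k$-vectorfields gain $r$-weights.

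The main obstacle I expect is the ILED step, specifically reconciling the quasilinear top-order terms and the trapping at $\{\rho=0\}$ with the transfer of the orthogonality conditions: the multiplier argument that handles trapping necessarily leaves behind a non-absorbable $\|\phi\|_{L^2(K)}$ term (because the $LE$ norm degenerates at $\rho=0$, cf.\ Remark~\ref{rem:LEDproduct1}), and removing it requires the time-smoothing reduction to $\calP_0$ plus a delicate near--far decomposition (used twice, per \eqref{eq:phifardef1}, \eqref{eq:calPphinear1}, \eqref{eq:psi-nearfar}, \eqref{eq:psi-nearnear}) so that the inhomogeneity is supported in the flat region where the stationary spectral theory of $\HHbar$ applies. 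Keeping track of the derivative count (so that elliptic regularity does not lose more derivatives than the parameter smoothing of Sections~\ref{subsec:modulationeqs}, \ref{subsec:unstableint} can afford) and ensuring the $\dotwp$-errors are genuinely doubly integrable — which is exactly why $n\geq5$ is imposed — are the technical crux.
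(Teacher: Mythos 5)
Your proposal is correct and traces essentially the same route as the paper's proof: Proposition~\ref{prop:bootstrappar1} is used to treat $\dotwp$ as known, energy boundedness and ILED are established via the $\RbfT$-multiplier together with the time-smoothing/near--far reductions of Section~\ref{sec:LED}, the $r^p$-hierarchy (Lemma~\ref{lem:rpmult1} and Lemmas~\ref{lem:energytau2decay}, \ref{lem:energyhighertaudecay}) is iterated down dyadic time intervals to obtain $\tau$-decay of the weighted energies, and the elliptic estimates of Lemma~\ref{lem:ellipticestimate1}/Corollary~\ref{cor:higherdL2decay} plus the Gagliardo--Nirenberg inequality of Lemma~\ref{lem:GN} convert these into the stated pointwise and $L^2$ decay. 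The only minor imprecision is in how the exterior weighted pointwise bounds \eqref{eq:dphiptwise2}--\eqref{eq:dphiptwise3} are obtained: the paper derives them via the fundamental theorem of calculus in $r$ combined with a trace inequality and the Sobolev embedding on the geodesic spheres $\Sigma_\tau\cap\{r=r_1\}$, rather than directly from the $r$-weight gain of the commutators $X^k$, but the two viewpoints are closely linked and your description of the mechanism is faithful.
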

%%%%%%%%%%%
%%%%%%%%%%%
%%%%%%%%%%%
%%%%%%%%%%%
Propositions~\ref{prop:bootstrappar1} and \ref{prop:bootstrapphi1} will be proved in Sections~\ref{sec:parametercontrol} and \ref{sec:exterior} respectively. Observe that Proposition~\ref{prop:bootstrappar1} does \emph{not} improve the trapping assumption \eqref{eq:a+trap}, but only the bound \eqref{eq:a+b1}. We will employ a topological (shooting) argument to find a global solution for which~\eqref{eq:a+trap} holds for all times, by choosing the initial data appropriately.
%%%%%%%%%%%
%%%%%%%%%%%

%%%%%%%%%%%%%%
%%%%%%%%%%%%%%
\subsection{Proof of Theorem~\ref{thm:main}}\label{subsec:proofofmaintheorem}
%%%%%%%%%%%%%%
%%%%%%%%%%%%%%

Assuming Propositions~\ref{prop:bootstrappar1} and~\ref{prop:bootstrapphi1}, we will prove Theorem~\ref{thm:main}. 
%%%%%%%%%%%%
%%%%%%%%%%%%
\begin{proof}[Proof of Theorem~\ref{thm:main}]
 The proof consists of two steps as we now explain. Given $(\psi_0,\psi_1)$ for each $b$ (see the statement of Theorem~\ref{thm:main}) we let $\tau_f(b)$ be the maximal time on which there is a solution parameterized as in \eqref{eq:psidefintro1} such that the bootstrap assumptions \eqref{eq:a+trap}--\eqref{eq:Tjphienergyb2} and the orthogonality conditions \eqref{equ:derivation_modul_equ4} and \eqref{eq:wpoutline9} are satisfied. By local well-posedness (Proposition~\ref{prop:LWP}), the normal neighborhood lemma (Lemma~\ref{rem:normalneighborhood}), and the implicit function theorem arguments in Sections~\ref{subsec:modulationeqs} and~\ref{subsec:unstableint}, we know that $\tau_f(b)$ is strictly positive for each choice of $b$. Our goal is to show that $\tau_f(b)$ is infinite for some choice of $b$. Suppose $\tau_f(b)$ is finite for all choices of $b$. In the first step we show that condition \eqref{eq:a+trap} must get saturated, that is the inequality must be an equality, at $\tau=\tau_f$. In the second step, we show that if $(\psi_0,\psi_1)$ satisfy a certain codimension one condition, equation \eqref{eq:codim1}, then there is a choice of $b$ for which \eqref{eq:a+trap} is not saturated by $\tau_f(b)$, and this is the desired contradiction.

Before turning to the details of Steps 1 and 2, we clarify one point. Because of the nonlocal nature of the orthogonality conditions \eqref{equ:derivation_modul_equ4} and \eqref{eq:wpoutline9} (coming from the smoothing operator $S$), in order to apply the implicit function theorem to get parameters which guarantee \eqref{equ:derivation_modul_equ4} and \eqref{eq:wpoutline9}, we first need to define $\psi$ for $\tau\in[-1,0]$. For this, we fix an extension procedure which is continuous (say with respect to some finite regularity Sobolev norm) in $(\psi_0,\psi_1)$, for instance $\psi_0(t)=\psi_0+t\psi_1$ and $\psi_1(t)=\psi_1$, and work with this fixed extension throughout the proof.
 
{\underline{\emph{Step 1.}}}
Fix $b$ and let $\tau^\ast\in(0,\infty)$ be such that the bootstrap conditions described above (including the orthogonality conditions and the parameterization \eqref{eq:psidefintro1}) are satisfied on $[0,\tau^\ast]$. We want to show that if \eqref{eq:a+trap} is strict on $[0,\tau^\ast]$ then $\tau_f(b)>\tau^\ast$, where as above $\tau_f(b)$ is the maximal time on which the bootstrap conditions are satisfied. By Propositions~\ref{prop:bootstrappar1} and~\ref{prop:bootstrapphi1} (with $\tau_f$ replaced by $\tau^\ast$) we can improve the bootstrap assumptions \eqref{eq:a+b1}--\eqref{eq:Tjphienergyb2} on $[0,\tau^\ast]$. Now suppose \eqref{eq:a+trap} is strict on $[0,\tau^\ast]$. By Proposition~\ref{prop:LWP} applied with $\ell_0$ and $\xi_0$ fixed at values of $\ell$ and $\xi$ close to $\tau^\ast$, we can extend the solution on an interval of size of order one beyond $\tau^\ast$. Applying Lemma~\ref{rem:normalneighborhood} and the implicit function theorems in Sections~\ref{subsec:modulationeqs} and~\ref{subsec:unstableint} (note that while the details of the proofs there were carried out for zero $\ell_0$ and $\xi_0$, identical arguments can be used for other choices) we can extend $\xi$ and $\ell$ and the parameterization \eqref{eq:psidefintro1} beyond $\tau^\ast$ such that the orthogonality conditions \eqref{equ:derivation_modul_equ4} and \eqref{eq:wpoutline9} are still satisfied. Moreover, since \eqref{eq:a+trap} is strict on $[0,\tau^\ast]$, by continuity it is still satisfied on a larger interval. It follows that on this larger interval all the bootstrap conditions are satisfied and hence $\tau_f(b)>\tau^\ast$.  

{\underline{\emph{Step 2.}}} Assume, for contradiction, that $\tau_f(b)$ is finite for every choice of $b$. To simplify notation let
%%%%%%%
%%%%%%%
\begin{align*}
\begin{split}
q(\tau):=\mu a_{+}(\tau)-e^{\mu \tau}S(e^{-\mu \tau}F_{+}(\tau)),\qquad \uplambda(\tau):=C_\mathrm{trap}\delta_\wp\epsilon \jap{\tau}^{-3},\qquad \uplambda_0:=C_\mathrm{trap}\delta_\wp\epsilon.
\end{split}
\end{align*}
Note that $q=\dota_{+}$. 

{\underline{\emph{Step 2a.}}} We claim that if $(\psi_0,\psi_1)$ satisfy an orthogonality condition (see \eqref{eq:codim1}), then for each $|q_0|\leq \lambda_0$ there is a choice of $b$ in a neighborhood of zero for which $q(0)=q_0$. From Section~\ref{subsec:unstableint}, the orthogonality condition \eqref{eq:wpoutline9} determines $a_{+}$ such that
%%%%%%%
%%%%%%%
\begin{align*}
\begin{split}
a_+(t)=\bfOmega(\vecpsi,\vecZ_{-})-e^{\mu t}\tilS(e^{-\mu t}F_{+}).
\end{split}
\end{align*}
Recalling the extension procedure to $[-1,0]$ described at the beginning of the proof of Theorem~\ref{thm:main}, we define a map  $\calZ:C^\infty(\barcalC)\times C^\infty(\barcalC)\times I\to \bbR$, where $I$ is a neighborhood of zero in $\bbR$, by
%%%%%%%
%%%%%%%
\begin{align*}
\begin{split}
\calZ(\psi_0,\psi_1,b)=q(0),
\end{split}
\end{align*}
where $q$ is determined using initial data
%%%%%%%
%%%%%%%
\begin{align}\label{eq:shootingbdata1}
\begin{split}
 \Phi\vert_{\{t=0\}}=\Phi_0[\epsilon(\psi_0+b\tilvarphi_\mu)]\mand \partial_t\Phi\vert_{\{t=0\}}=\Phi_1[\epsilon(\psi_1-\mu b\tilvarphi_\mu)],
\end{split}
\end{align}
as in the statement of Theorem~\ref{thm:main}. We then restrict attention to $(\psi_0,\psi_1)$ satisfying the codimension one condition
%%%%%%%
%%%%%%%
\begin{align}\label{eq:codim1}
\begin{split}
\calZ(\psi_0,\psi_1,0)=0.
\end{split}
\end{align}
Note that $\calZ$ depends nonlinearly on $(\psi_0,\psi_1)$ because of $F_{+}$. Now since $\bfOmega((\tilvarphi_\mu,-\mu\tilvarphi_\mu)^{\intercal},\vecZ_{-})\simeq 1$, by \eqref{eq:shootingbdata1} and a similar argument as for the implicit function theorem in Section~\ref{subsec:unstableint}, we see that $\big|\frac{\partial q_0}{\partial b}\vert_{(\psi_0,\psi_1,0)}\big|\gtrsim1$. Our claim then follows from the implicit function theorem and \eqref{eq:codim1}.
 
{\underline{\emph{Step 2b.}}} By Step 1 and Step 2a, and our contradiction assumption, for every choice of $q_0$ there is $\tau_\trap(q_0)$ such that the corresponding solution satisfies $|q(\tau)|<\uplambda(\tau)$ for $\tau<\tau_\trap(q_0)$ and $|q(\tau_\trap(q_0))|=\uplambda(\tau_\trap(q_0))$. We use a standard shooting argument (see for instance \cite{DKSW,OP1}) to derive a contradiction from this. The main observation is that if 
$
\frac{1}{2}\uplambda(\tau)<| q(\tau)|<\uplambda(\tau),
$
for some $\tau\leq \tau_f$, then 
%%%%%%%
%%%%%%%
\begin{align}\label{eq:outgoing1}
\begin{split}
\frac{\ud}{\ud\tau}q^2(\tau)\geq \mu q^2(\tau).
\end{split}
\end{align}
Indeed, rewriting the equation $\frac{\ud}{\ud \tau}(e^{-\mu \tau}\dota_{+})=-S(e^{-\mu \tau}\dotF_{+})$ as
%%%%%%%
%%%%%%%
\begin{align*}
\begin{split}
\dot{q}(\tau)= \mu q(\tau)-e^{\mu\tau} S(e^{-\mu \tau}\dotF_{+}(\tau)),
\end{split}
\end{align*}
and multiplying by $2q(\tau)$, the first term on the right gives $2\mu q^2(\tau)$. On the other hand, by the arguments in Section~\ref{sec:parametercontrol},
%%%%%%%
%%%%%%%
\begin{align*}
\begin{split}
|e^{\mu\tau} S(e^{-\mu \tau}\dotF_{+}(\tau))|\leq c \uplambda(\tau)< c q(\tau),
\end{split}
\end{align*}
for some $c\ll \mu$, proving \eqref{eq:outgoing1}.  We will show that the map $\Lambda:(-\uplambda_0,\uplambda_0)\to\{\pm\uplambda_0\}$, $\Lambda(q_0)=q(\tau_\trap(q_0))\jap{\tau_\trap(q_0)}^3$ is continuous. Since, by \eqref{eq:outgoing1}, $\Lambda(q_0)=-\uplambda_0$ if $q_0$ is close to $-\uplambda_0$ and $\Lambda(q_0)=\uplambda_0$ if $q_0$ is close to $\uplambda_0$, the continuity of $\Lambda$ contradicts the intermediate value theorem. By continuous dependence on initial data, it suffices to prove that $\tau_\trap(\cdot)$ is continuous. Fix $q_0\in(-\uplambda_0,\uplambda_0)$ and let $q$ denote the corresponding solution. By \eqref{eq:outgoing1}, given $\upvarepsilon>0$ there exists $\updelta\in(0,1)$ such that if $(1-\updelta)\uplambda(\tau)<|q(\tau)|<\uplambda(\tau)$ for some $\tau<\tau_f$, then $|\tau_\trap(q_0)-\tau|<\upvarepsilon$. Let $\tau_1<\tau_f$ be such that $(1-\updelta^2)\uplambda(\tau_1)<|q(\tau_1)|<(1-\updelta^3)\uplambda(\tau_1)$, and note that if $q_1$ is sufficiently close to $q_0$ then the solution $\tilq$ corresponding to $q_1$ satisfies $(1-\updelta)\uplambda(\tau_1)<|\tilq(\tau_1)|<\uplambda(\tau_1)$, and hence $|\tau_\trap(q_0)-\tau_\trap(q_1)|\leq |\tau_\trap(q_0)-\tau_1|+|\tau_\trap(q_1)-\tau_1|<2\upvarepsilon$.
\end{proof}
%%%%%%%%%%%%
%%%%%%%%%%%%

%%%%%%%%%%%%%%%%%%%
%%%%%%%%%%%%%%%%%%%
%%%%%%%%%%%%%%%%%%%
\section{Parameter Control}\label{sec:parametercontrol}
%%%%%%%%%%%%%%%%%%%
%%%%%%%%%%%%%%%%%%%
%%%%%%%%%%%%%%%%%%%
In this section we prove Proposition~\ref{prop:bootstrappar1}. In the process we will also derive estimates on $\bfOmega_i(\RbfT^k\phi):=\bfOmega(\RbfT^k\vecphi,\vecZ_i)$, $i\in\{\pm\mu,1,\dots,2n\}$, $k\geq0$, which are of independent interest for the local-energy decay estimate. Recall equations~\eqref{equ:derivation_modul_equ3_alt} and \eqref{eq:wpoutline10} from Section~\ref{sec:interior} for $\dotvecwp=(\dotell,\dotxi-\ell)^{\intercal}$, $a_{-}$, and $a_{+}$,
%%%%%%%
%%%%%%%
\begin{align}\label{eq:pareqsrepeat1}
\begin{split}
\dotvecwp = \vecF_{\wp}(S\vecpsi,\ell,S\vecN-\beta\vecomega),\qquad \frac{\ud}{\ud t}(e^{-\mu t}a_{+})=-S(e^{-\mu t}F_{+}),\qquad \frac{\ud}{\ud t}(e^{\mu t}a_{-})=S(e^{\mu t}F_{-}),
\end{split}
\end{align}
where by a slight abuse of notation we have suppressed the derivatives on $\vecpsi$ in \eqref{equ:derivation_modul_equ3_alt} and written  $\vecF_{\wp}(S\vecpsi,\ell,S\veccalN_\wp-\beta\vecomega):= \vecG( S\partial_\Sigma^{\leq2} \vec{\psi}, \ell, S\vecN - \beta \vecomega)$. Here $\vecomega$ is defined as in \eqref{equ:derivation_modul_equ5} (see also \eqref{eq:omegasolutionform1}) as the solution of
%%%%%%%
%%%%%%%
\begin{align}\label{eq:vecomegarepeat1}
\begin{cases}
\partial_t\vecomega+\beta\vecomega=-S\vecF_\omega(\vecpsi,\ell,\vecomega)\quad &t> 0\\
\vecomega=0\quad &t\leq0
\end{cases},
\end{align}
 with $\vecF_\omega$ as in \eqref{eq:Fomegadef1} (where again we have suppressed the derivatives on $\vecpsi$ in the notation). In view of the spatial support of the test functions in imposing orthogonality conditions, all the integrations appearing in the definitions of $\vecF_\wp$, $F_{\pm}$, and $\vecF_\omega$ are over the region $\{\rho\leq \Reigenfunctioncutoffscale\}$. We will also need the equations for $\bfOmega_i(\psi)=\bfOmega(\vecpsi,\vecZ_i)$, $i=1,\dots,2n$, and $\bfOmega_{\pm}(\phi)=\bfOmega(\vecphi,\vecZ_{\pm}^\mu)$. First, recall that $\bfOmega_i(\psi)=\Upomega_i+\omega_i$, where $\vecomega=(\omega_1,\dots,\omega_{2n})$ is as in \eqref{eq:vecomegarepeat1} and $\vec{\Upomega}=(\Upomega_1,\dots\Upomega_{2n})$ satisfies
%%%%%%%
%%%%%%%
\begin{align}\label{eq:Upomegarepeat1}
\begin{split}
\vec{\Upomega}=\tilS(\vecN+\vecF_\omega),
\end{split}
\end{align}
with $\vecF_\omega$ and $N$ are as above (see \eqref{equ:derivation_modul_equ5} and \eqref{eq:upOmegasolutionform1}). Similarly, with $\tilF_{\pm}=(S-I)F_{\pm}$,\footnote{To be precise we should write $(S(e^{-\mu \cdot}\tilF_{+}))(t)$ instead of $S(e^{-\mu t}\tilF_{+}(t))$.}
%%%%%%%
%%%%%%%
\begin{align}\label{eq:bfOmegapmrepeat1}
\begin{split}
&\bfOmega_{+}(\phi)(t)=e^{\mu t}S(e^{-\mu t}\tilF_{+}(t)),\\
&\bfOmega_{-}(\phi)(t)=e^{-\mu t}S(e^{\mu t}\tilF_{-}(t)),
\end{split}
\end{align}
and
%%%%%%%
%%%%%%%
\begin{align}\label{eq:bfOmegapmrepeat2}
\begin{split}
&\frac{\ud}{\ud t}\bfOmega_{+}(\phi)(t)=e^{\mu t}S(e^{-\mu t}\tilF_{+}'(t)),\\
&\frac{\ud^2}{\udt^2}\bfOmega_{+}(\phi)(t)=e^{\mu t}S(e^{-\mu t}\tilF_{+}''(t)),
\end{split}
\end{align}
and
%%%%%%%
%%%%%%%
\begin{align}\label{eq:bfOmegapmrepeat3}
\begin{split}
&\frac{\ud}{\ud t}\bfOmega_{-}(\phi)(t)=e^{-\mu t}S(e^{\mu t}\tilF_{-}'(t)),\\
&\frac{\ud^2}{\udt^2}\bfOmega_{-}(\phi)(t)=e^{-\mu t}S(e^{\mu t}\tilF_{-}''(t)).
\end{split}
\end{align}
Finally recall that the smoothing operator $S$ and the operator $\tilS$ are given by
%%%%%%%
%%%%%%%
\begin{align*}
\begin{split}
Sf(t)=\int K_S(s)f(t-s)\ud s,\qquad \tilS f(t)=\int K_\tilS(s)f(t-s)\ud s,
\end{split}
\end{align*}
where the smooth kernel $K_S$ and the non-smooth kernel $K_\tilS$ are supported in $[0,1]$. In particular, if $|f(t)|\lesssim \jap{t}^{-\gamma}$ for some $\gamma>0$, then we also have $$|\tilS f(t)|+ |Sf(t)|\lesssim \jap{t}^{-\gamma}.$$ We will use this observation in this section without further mention. Our starting point  is to estimate $\vecomega$.
%%%%%%%%%%
%%%%%%%%%%
\begin{lemma}\label{lem:vecomega1}
Under the bootstrap assumptions \eqref{eq:a+trap}--\eqref{eq:Tjphienergyb2}, if $\Reigenfunctioncutoffscale$ is sufficiently large, then
%%%%%%%
%%%%%%%
\begin{align*}
\begin{split}
\big(\frac{\ud}{\ud t}\big)^k\vecomega\lesssim \epsilon^2 \jap{t}^{-\frac{9}{2}+\kappa},\quad \forall k\geq 0.
\end{split}
\end{align*}
\end{lemma}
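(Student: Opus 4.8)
The plan is to estimate $\vec{\omega}$ directly from its defining integral equation~\eqref{eq:vecomegarepeat1}, using the smallness of $\vec{F}_\omega$ supplied by~\eqref{eq:Fomegaestimate1} together with the bootstrap bounds. First I would recall that $\vec{\omega}(t)$ solves $\partial_t \vec{\omega} + \beta \vec{\omega} = -S\vec{F}_\omega(\vec\psi,\ell,\vec\omega)$ for $t>0$ with $\vec{\omega} \equiv 0$ for $t \leq 0$, so that $\vec{\omega}(t) = -\int_0^t e^{-\beta(t-s)} (S\vec{F}_\omega)(s)\,\ud s$, and that by~\eqref{eq:Fomegaestimate1} we have the pointwise bound $|\vec{F}_\omega(\partial_\Sigma^{\leq 2}\vec\psi,\ell,S\vec N - \beta\vec\omega)| \lesssim |\partial_\Sigma^{\leq 2}\vec\psi|\bigl(|\partial_\Sigma^{\leq 2}\vec\psi| + |S\vec N - \beta\vec\omega|\bigr)$. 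Since all integrations in the definition of $\vec{F}_\omega$ are over the compact region $\{\rho \leq \Reigenfunctioncutoffscale\}$, the relevant norm of $\partial_\Sigma^{\leq 2}\vec\psi$ on this region is controlled by the bootstrap assumptions: splitting $\vec\psi = \vec\phi + a_+ \vec Z_+ + a_- \vec Z_-$, the $\vec\phi$-part is bounded via~\eqref{eq:phiptwiseb1}, \eqref{eq:dphiptwiseb1}, \eqref{eq:dphiL2b1} (on the compactly supported region the weighted and unweighted $L^2$ norms are comparable), and the $a_\pm$-parts by~\eqref{eq:a+b1}, \eqref{eq:a-b1}; hence $\|\partial_\Sigma^{\leq 2}\vec\psi\|_{L^2(\{\rho\leq \Reigenfunctioncutoffscale\})} \lesssim \epsilon\, \jap{t}^{-\frac94 + \frac\kappa2}$ (the worst rate coming from $\phi$ itself). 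Also $|S\vec N - \beta\vec\omega| \lesssim |\dotwp| + |\vec\omega| \lesssim \epsilon\,\jap{t}^{-\frac52+\kappa}$ using~\eqref{eq:wpb1} and a crude a priori bound on $\vec\omega$. Therefore $|(S\vec F_\omega)(s)| \lesssim \epsilon^2 \jap{s}^{-\frac92+\kappa}$ (using that $S$ preserves polynomial decay rates, as noted in the text), and inserting this into the Duhamel formula and using $\beta > 0$ gives $|\vec\omega(t)| \lesssim \int_0^t e^{-\beta(t-s)} \epsilon^2 \jap{s}^{-\frac92+\kappa}\,\ud s \lesssim \epsilon^2 \jap{t}^{-\frac92+\kappa}$, which is the $k=0$ case.

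For the higher derivatives $k \geq 1$, the plan is to differentiate the integral equation in $t$. Because the kernel $K_S$ of $S$ is smooth and compactly supported in $[0,1]$, each $t$-derivative falling on $S\vec F_\omega$ can be moved onto the kernel, so $\frac{\ud^k}{\ud t^k}(S\vec F_\omega)$ is again an operator of the same type applied to $\vec F_\omega$ (no loss of regularity on $\vec\psi$), and the relevant object is still controlled by the bootstrap norms of $\partial_\Sigma^{\leq 2}\vec\psi$ on the compact region. Concretely, differentiating $\partial_t\vec\omega = -\beta\vec\omega - S\vec F_\omega$ repeatedly expresses $\frac{\ud^k}{\ud t^k}\vec\omega$ in terms of $\vec\omega$ and $\frac{\ud^j}{\ud t^j}(S\vec F_\omega)$ for $j \leq k-1$; an induction on $k$ combined with the bound $|\frac{\ud^j}{\ud t^j}(S\vec F_\omega)(s)| \lesssim \epsilon^2 \jap{s}^{-\frac92+\kappa}$ closes the estimate. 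The decay rate of $\frac{\ud^j}{\ud t^j}(S\vec F_\omega)$ is unchanged because, thanks to the compact support and smoothness of $K_S$ (equivalently, the $\calO$-notation convention in Section~\ref{subsubsec:calOnotation} that higher $\RbfT$-derivatives of error terms do not worsen the rate), all the constituent factors $\RbfT^{\leq j}\partial_\Sigma^{\leq 2}\vec\psi$, $\RbfT^{\leq j}\ell$, $\RbfT^{\leq j}\dotwp$ obey the same decay as their undifferentiated counterparts by~\eqref{eq:phiptwiseb1}--\eqref{eq:Tjphienergyb2} and~\eqref{eq:a+b1}--\eqref{eq:wpb1}.

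The main obstacle I anticipate is purely bookkeeping: making sure that when $t$-derivatives hit $S\vec F_\omega$ they genuinely land on the smooth kernel rather than on $\vec\psi$ (which would cost derivatives we do not control), and checking that the $\vec F_\omega$-estimate~\eqref{eq:Fomegaestimate1} — which is stated pointwise in time — can be upgraded to the $L^2_x$-in-space / $\jap{t}^{-\frac92+\kappa}$-in-time statement needed, using that the spatial domain is compact so that the precise spatial weights in~\eqref{eq:dphiL2b1}, \eqref{eq:phiptwiseb1} are irrelevant there (as remarked at the end of Section~\ref{subsubsec:calOnotation}). The role of taking $\Reigenfunctioncutoffscale$ large is only to guarantee the implicit function theorem constructions of Sections~\ref{subsec:modulationeqs} and~\ref{subsec:unstableint} are valid and that the various $o_{\Reigenfunctioncutoffscale}(1)$ coefficients are small; it does not enter the decay rate. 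Once these points are handled, the damping term $-\beta\vec\omega$ does all the work and the quadratic smallness $\epsilon^2$ together with the rate $\jap{t}^{-\frac92+\kappa}$ (which is twice the worst $\phi$-rate $-\frac94+\frac\kappa2$) follows immediately.
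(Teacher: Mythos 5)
Your proposal follows essentially the same route as the paper: integrate the ODE for $\vecomega$ via Duhamel, feed the bootstrap decay rates into the quadratic bound~\eqref{eq:Fomegaestimate1} to get $|S\vecF_\omega(s)| \lesssim \epsilon^2 \jap{s}^{-9/2+\kappa}$, and absorb each $t$-derivative into the smooth kernel of $S$ so that the $k\geq 1$ cases reduce to the $k=0$ case. The one point you gloss over, and which the paper explicitly flags, is that $\vecpsi = (\psi, \dotpsi)$ and $\vecphi = (\phi, \dotphi)$ carry a momentum component; the bootstrap bounds you cite (\eqref{eq:phiptwiseb1}, \eqref{eq:dphiptwiseb1}, \eqref{eq:dphiL2b1}) control $\phi$ and its derivatives but not $\dotphi$ directly. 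The paper bridges this by invoking the first component of~\eqref{eq:wpoutline6} to deduce $|\dotphi| \lesssim |\partial\phi| + |\dotwp| + |\phi|^2$, reducing the control of $\dotphi$ to quantities already covered by the bootstrap. With that observation inserted, your argument and the paper's coincide.
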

%%%%%%%%%%
%%%%%%%%%%
\begin{proof}
Integrating equation \eqref{eq:vecomegarepeat1} gives
%%%%%%%
%%%%%%%
\begin{align*}
\begin{split}
\vecomega(t)=\int_0^t e^{-\beta(t-s)}(S\vecF_\omega(\vecpsi,\ell,\vecomega))(s)\ud s.
\end{split}
\end{align*}
Recalling that $|\vecF_\omega(x,p,w)|\lesssim |x| (|x|+|w|)$ (see \eqref{eq:Fomegaestimate1}), the desired estimate for $k=0$ follows from the assumptions \eqref{eq:a+trap}--\eqref{eq:Tjphienergyb2}. Here a point that deserves further clarification is the relation between $\vecpsi$ and $\phi$ and its derivatives. First, note that by writing $\vecpsi=\vecphi+a_{+}\vecZ_\mu^{+}+a_{-}\vecZ_\mu^{-}$ and using the bootstrap assumptions on $a_{\pm}$, we can reduce the estimate on $\vecpsi$ to that on $\vecphi$. Then observe that by the first component of equation \eqref{eq:wpoutline6} (viewed as an equation for $\vecphi$) we can estimate $|\dotphi|\lesssim |\partial\phi|+|\dotvecwp|+|\phi|^2$. Finally, the higher order estimates $k\geq 1$ follow by exactly the same argument after differentiating equation \eqref{eq:vecomegarepeat1} and absorbing the time derivatives by the smoothing operator $S$.
\end{proof}
%%%%%%%%%%
%%%%%%%%%%
For the proof of Proposition~\ref{prop:bootstrappar1} we also need to prove some estimates for $\bfOmega_i(\phi)$.
%%%%%%%%%%
%%%%%%%%%%
\begin{lemma}\label{lem:Omega1}
Under the bootstrap assumptions \eqref{eq:a+trap}--\eqref{eq:Tjphienergyb2} and for $\Reigenfunctioncutoffscale$ sufficiently large,
%%%%%%%
%%%%%%%
\begin{align*}
\begin{split}
|\bfOmega_i(\phi)|\lesssim o_{\Reigenfunctioncutoffscale}(1)\epsilon\jap{t}^{-\frac{5}{2}+\kappa}+\callO(\wp)\epsilon\jap{t}^{-\frac{5}{2}+\kappa}+\epsilon\jap{t}^{-\frac{9}{2}+\kappa},\quad i\in\{\pm,1,\dots,2n\},
\end{split}
\end{align*}
where $o_{\Reigenfunctioncutoffscale}(1)$ denotes a constant that goes to zero as $\Reigenfunctioncutoffscale\to \infty$.
\end{lemma}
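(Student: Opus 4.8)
\textbf{Proof plan for Lemma~\ref{lem:Omega1}.} The plan is to analyze separately the equations satisfied by $\bfOmega_i(\phi)$ for $i\in\{1,\dots,2n\}$ (the translation/boost modes) and for $i\in\{\pm\}$ (the unstable/stable modes), since these are governed by the different identities recorded above. For the modes $i=1,\dots,2n$, I would start from the decomposition $\bfOmega_i(\psi) = \Upomega_i + \omega_i$ together with $\bfOmega_i(\phi) = \bfOmega_i(\psi) - a_+\bfOmega_i(\vecZ_+^\mu\text{-type terms}) - a_-\bfOmega_i(\dots)$; more precisely, since $\vecpsi = \vecphi + a_+\vecZ_+ + a_-\vecZ_-$, one has $\bfOmega_i(\phi) = \bfOmega_i(\psi) - a_+\bfOmega(\vecZ_+,\vecZ_i) - a_-\bfOmega(\vecZ_-,\vecZ_i)$, and the $a_\pm$ contributions are controlled by \eqref{eq:a+trap}--\eqref{eq:a-b1} and are of size $\lesssim \delta_\wp\epsilon\jap{t}^{-5/2+\kappa}$, which fits within the $\callO(\wp)\epsilon\jap{t}^{-5/2+\kappa}$ term (absorbing $\delta_\wp$ into the $\callO(\wp)$ or $o_{\Reigenfunctioncutoffscale}(1)$ factor). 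It remains to bound $\Upomega_i$ and $\omega_i$. The term $\omega_i$ is already handled: Lemma~\ref{lem:vecomega1} gives $|\omega_i| \lesssim \epsilon^2\jap{t}^{-9/2+\kappa}$, which is subsumed in the last term $\epsilon\jap{t}^{-9/2+\kappa}$ (using $\epsilon$ small). For $\Upomega_i = \tilS(\vecN + \vecF_\omega)_i$, I would use that $\tilS$ preserves polynomial decay rates and that, by the support and smallness properties of $M\vecZ_i$ established in Section~\ref{subsec:modulationeqs} (namely $M\vecZ_i$ is supported in $\{|\rho|\simeq\Reigenfunctioncutoffscale\}$ and of size $\callO(\Reigenfunctioncutoffscale^{-n+1},\ell)$), the component $\vecN_i = \bfOmega(\vecpsi, M\vecZ_i)$ is bounded by $(o_{\Reigenfunctioncutoffscale}(1) + \callO(\wp))\|\partial_\Sigma^{\leq 2}\vecpsi\|_{L^2(\{|\rho|\simeq\Reigenfunctioncutoffscale\})} \lesssim (o_{\Reigenfunctioncutoffscale}(1)+\callO(\wp))\epsilon\jap{t}^{-5/2+\kappa}$ by the bootstrap assumptions (here again reducing $\vecpsi$ to $\vecphi$ and $a_\pm$ as in the proof of Lemma~\ref{lem:vecomega1}, and using \eqref{eq:dphiL2b1}). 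The term $\vecF_\omega$ is quadratically small in $\partial_\Sigma^{\leq 2}\vecpsi$ by \eqref{eq:Fomegaestimate1}, hence contributes $\lesssim \epsilon^2\jap{t}^{-5/2+\kappa}$, which after using $\epsilon \ll 1$ is absorbed into the $o_{\Reigenfunctioncutoffscale}(1)\epsilon\jap{t}^{-5/2+\kappa}$ (or the last term); this gives the claim for $i=1,\dots,2n$.

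For $i\in\{\pm\}$, I would start from \eqref{eq:bfOmegapmrepeat1}: $\bfOmega_+(\phi)(t) = e^{\mu t}S(e^{-\mu\cdot}\tilF_+)(t)$ and similarly for $\bfOmega_-(\phi)$, where $\tilF_\pm = (S-I)F_\pm$. The key point is that $S$ has kernel supported in $[0,1]$, so $e^{\mu t}S(e^{-\mu \cdot}g)(t) = e^{\mu t}\int_0^1 K_S(s)e^{-\mu(t-s)}g(t-s)\,\ud s = \int_0^1 K_S(s)e^{\mu s}g(t-s)\,\ud s$, which is bounded by $C\sup_{s\in[t-1,t]}|g(s)|$ with $C$ depending only on $\mu$; thus the exponential weights are harmless and we merely need $|\tilF_\pm(t)| \lesssim \text{(the desired bound)}$. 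Then I would expand $F_\pm$ using \eqref{eq:Fplusdef} (and the analogous formula for $F_-$): each is a sum of $\bfOmega(\vecF_1,\vecZ_\mp)$, $\bfOmega(\vecphi, M\vecZ_\mp + \mu\vecZ_\mp)$, and terms $a_\pm\bfOmega(M\vecZ_\pm - \mu\vecZ_\pm,\vecZ_\pm)$, $a_\mp\bfOmega(M\vecZ_\mp + \mu\vecZ_\mp,\vecZ_\pm)$. Since $MZ_\pm = \pm\mu Z_\pm + \calE_\pm$ with $\calE_\pm$ supported near $|\rho|\simeq\Reigenfunctioncutoffscale$ or carrying extra $\ell$-smallness, the combinations $M\vecZ_\pm \mp \mu\vecZ_\pm = \calE_\pm$ have coefficients that are $o_{\Reigenfunctioncutoffscale}(1) + \callO(\wp)$; pairing with $\vecphi$ and using \eqref{eq:dphiL2b1} gives $|\bfOmega(\vecphi, \calE_\mp)| \lesssim (o_{\Reigenfunctioncutoffscale}(1)+\callO(\wp))\epsilon\jap{t}^{-5/2+\kappa}$. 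The $a_\pm$-terms are controlled by the bootstrap assumptions on $a_\pm$, of size $\lesssim\delta_\wp\epsilon\jap{t}^{-5/2+\kappa}$, which sits inside the $\callO(\wp)$-controlled contribution. Finally $\bfOmega(\vecF_1,\vecZ_\mp)$: recalling that $\vecF_1$ depends on $\vecpsi$ only through quadratically-small terms plus the profile-error terms that are (at least) linear in $\dotwp$ with small coefficients — more precisely $\vecF_1 = \vecK + \vecf$ where $\vecK = \callO(\dotwp)$ and $\vecf$ collects $\calE_1,\dots$ which are quadratic in $(\psi,\partial\psi,\partial^2\psi)$ — this term is bounded by $\callO(\wp)\epsilon\jap{t}^{-5/2+\kappa} + \epsilon^2\jap{t}^{-5/2+\kappa}$, the latter absorbed using $\epsilon$ small. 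Then $\tilF_\pm = (S-I)F_\pm$ inherits the same bounds since $S-I$ preserves decay rates (it equals $\partial_t\tilS$, but more simply: $(S-I)$ applied to a bounded-by-$\jap{t}^{-\gamma}$ function stays bounded by $\jap{t}^{-\gamma}$). Collecting all contributions yields the stated estimate for $i\in\{\pm\}$.

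I expect the main obstacle to be bookkeeping the precise source of each small factor: one must keep track of which error contributions genuinely carry an $o_{\Reigenfunctioncutoffscale}(1)$ (those arising from the truncation of the eigenfunctions at scale $\Reigenfunctioncutoffscale$, i.e. terms supported near $|\rho|\simeq\Reigenfunctioncutoffscale$ and decaying like $\Reigenfunctioncutoffscale^{-n+1}$ or $\Reigenfunctioncutoffscale^{-2}$), which carry a $\callO(\wp)$ (those proportional to $\ell$ or $\dotwp$ in the metric perturbation), and which are genuinely of faster time decay $\jap{t}^{-9/2+\kappa}$ (namely $\vecomega$ and the quadratic-in-$\phi$ pieces after one power of $\epsilon$ is traded, though strictly those are only $\jap{t}^{-5/2+\kappa}$ with an extra $\epsilon$ rather than $\jap{t}^{-9/2}$; the genuine $\jap{t}^{-9/2}$ comes only from $\vecomega$). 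A secondary subtlety is the reduction of $\vecpsi$-norms to $\vecphi$-norms and of $\dotphi$ to spatial derivatives of $\phi$ plus $\dotwp$, exactly as in the proof of Lemma~\ref{lem:vecomega1}; this must be invoked consistently so that all right-hand sides are expressed in the quantities controlled by the bootstrap. None of these steps is deep, but the lemma's usefulness downstream depends on the $o_{\Reigenfunctioncutoffscale}(1)$ versus $\callO(\wp)$ distinction being correctly attributed, so care is needed there.
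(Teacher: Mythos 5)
Your proposal follows essentially the same route as the paper's proof: decompose $\bfOmega_i(\psi) = \Upomega_i + \omega_i$, use Lemma~\ref{lem:vecomega1} for $\omega_i$, bound $\Upomega_i$ via $\tilS(\vecN + \vecF_\omega)$, extract $o_{\Reigenfunctioncutoffscale}(1)$ smallness from the support/decay of $M\vecZ_i$ together with \eqref{eq:dphiL2b1}, pass from $\vecpsi$ to $\vecphi$ via the $a_\pm$ decomposition, and for $i\in\{\pm\}$ expand $F_\pm$ from \eqref{eq:Fplusdef} and use that $S$ has compactly supported kernel to neutralize the exponential weights. So the overall structure and the key ideas are the ones the paper uses.

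There is, however, one genuine mistake in the estimate you write for the components $\Upomega_{n+i}$, $i=1,\dots,n$. You claim uniformly for all $i\in\{1,\dots,2n\}$ that $\vecN_i = \bfOmega(\vecpsi, M\vecZ_i)$ is bounded by $(o_{\Reigenfunctioncutoffscale}(1)+\callO(\wp))\|\partial_\Sigma^{\leq 2}\vecpsi\|_{L^2(\{|\rho|\simeq\Reigenfunctioncutoffscale\})}$. This is correct for $i=1,\dots,n$, because $M\vecfy_i = 0$ and hence $M\vecZ_i$ is a pure commutator term, supported near $\{|\rho|\simeq\Reigenfunctioncutoffscale\}$ and of size $\callO(\Reigenfunctioncutoffscale^{-n+1},\ell)$. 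But for the boost modes one has $M\vecfy_{n+i} = \vecfy_i$, so $M\vecZ_{n+i} = \vecZ_i + (\text{commutator error near }|\rho|\simeq\Reigenfunctioncutoffscale)$; the leading piece $\vecZ_i$ is supported throughout $\{|\rho|\lesssim\Reigenfunctioncutoffscale\}$, is not small, and gives $\bfOmega(\vecpsi, M\vecZ_{n+i}) = \bfOmega(\vecpsi,\vecZ_i) + o_{\Reigenfunctioncutoffscale}(1)\cdots$. Thus the bound for $\Upomega_{n+i}$ is not a one-step estimate on a small tail integral; you must first establish the bound for $\bfOmega_i(\vecpsi) = \Upomega_i + \omega_i$ (which you have, from the first half of the argument) and then feed it back in. The paper handles the two index ranges separately for exactly this reason; your write-up collapses them and the claimed direct bound fails for $i\in\{n+1,\dots,2n\}$. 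This is easy to repair, but as stated the step is wrong.

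One smaller point of comparison: for the passage from $\vecpsi$ to $\vecphi$ you invoke the bootstrap smallness $\delta_\wp$ of $a_\pm$, whereas the paper instead uses the $o_{\Reigenfunctioncutoffscale}(1)$ smallness of the pairings $\angles{\vecZ_i}{\vecZ_\pm}$. Both arguments are valid here and yield the same conclusion, but the paper's variant is structurally cleaner since it does not make the bound contingent on the particular value of $\delta_\wp$.
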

%%%%%%%%%%
%%%%%%%%%%
\begin{proof}
Starting with $\bfOmega_1(\psi),\dots,\bfOmega_{2n}(\psi)$, observe that in view of Lemma~\ref{lem:vecomega1} it suffices to estimate $\vec{\Upomega}$. For this we distinguish between the first $n$ components and the last $n$ components of $\vec{\Upomega}$ by representing them as $\Upomega_i$ and $\Upomega_{n+i}$, respectively, with $i=1,\dots,n$. Appealing again to Lemma~\ref{lem:vecomega1} and the bootstrap assumptions \eqref{eq:a+trap}--\eqref{eq:Tjphienergyb2}, the only terms on the right-hand side of \eqref{eq:Upomegarepeat1} which need special treatment are the linear terms in $\vec{\psi}$. To use the bootstrap assumptions to draw this conclusion, note that one can account for the difference between $\vecpsi$ and $\phi$ and its derivatives in the same way as in the proof of Lemma~\ref{lem:vecomega1}. For the linear terms, recall from the form of $\vecN$ from Section~\ref{sec:interior} (see \eqref{equ:derivation_modul_equ1}) that the only term that is not bounded by $\callO(\wp)\epsilon\jap{t}^{-\frac{5}{2}+\kappa}$ directly by the bootstrap assumptions is $\bfOmega(\vecpsi,M\vecZ_i)$. But, $M\vecZ_i$ would be zero, if it were not for the cutoff function in the definition of $\vecZ_i$. Treating the difference between $\vecpsi$ and $\phi$ as before, since every term in the definition of $\vecZ_i$ comes with a decay of $r^{-n+1}$ or a factor of $\ell$, our task reduces to estimating $\Reigenfunctioncutoffscale^{-1}\angles{\chi_{\{r\simeq \Reigenfunctioncutoffscale\}}\partial\phi}{\callO(r^{-n+1})}$. This is then bounded by (recall that $n=5$)
%%%%%%%
%%%%%%%
\begin{align}\label{eq:lemOmega1temp1}
\begin{split}
\Reigenfunctioncutoffscale^{-1}\|\chi_{\{r\simeq \Reigenfunctioncutoffscale\}}r^{-\frac{5}{2}+\kappa}\partial\phi\|_{L^2}\Big(\int_{\{r\simeq \Reigenfunctioncutoffscale\}}r^{-5+1+5-2\kappa}\ud r\Big)^{\frac{1}{2}}\lesssim \epsilon \Reigenfunctioncutoffscale^{-\kappa}\jap{t}^{-\frac{5}{2}+\kappa},
\end{split}
\end{align}
where for the last estimate we have used the bootstrap assumption \eqref{eq:dphiL2b1}. The case of $\Upomega_{n+i}$ is similar, with the difference that now we need to estimate $\bfOmega(\vecpsi,M\vecZ_{n+i})$ instead of $\bfOmega(\vecpsi,M\vecZ_i)$. Since $M\vecZ_{n+i}$ would be $\vecZ_{i}$ if it were not for the cutoffs in the definitions of $\vecZ_i$ and $\vecZ_{n+i}$, this leads to estimating $\bfOmega(\vecpsi,\vecZ_i)$ and $\Reigenfunctioncutoffscale^{-2}\angles{\chi_{\{r\simeq \Reigenfunctioncutoffscale\}}\phi}{\callO(r^{-n+1})}$. The first term was already treated above, and the second term is bounded, using \eqref{eq:dphiL2b1}, as (recall that $n=5$)
%%%%%%%
%%%%%%%
\begin{align}\label{eq:lemOmega1temp2}
\begin{split}
\Reigenfunctioncutoffscale^{-2}\|\chi_{\{r\simeq \Reigenfunctioncutoffscale\}}r^{-\frac{5}{2}+\kappa}\phi\|_{L^2}\Big(\int_{\{r\simeq \Reigenfunctioncutoffscale\}}r^{-5+1+5-2\kappa}\ud r\Big)^{\frac{1}{2}}\lesssim \epsilon \Reigenfunctioncutoffscale^{-1-\kappa}\jap{t}^{-\frac{5}{2}+\kappa}.
\end{split}
\end{align}
This proves the estimates for $\bfOmega_{i}(\psi)$ and the passage to $\bfOmega(\phi)$ is again by decomposing $\vecpsi$ in terms of $\vecphi$ and $a_{\pm}$ and observing the extra smallness in $\Reigenfunctioncutoffscale$ coming from $\angles{\vecZ_i}{\vecZ_{\pm}}$, $i=1,\dots,2n$.  The estimates for $\bfOmega_{\pm}(\phi)$ using \eqref{eq:bfOmegapmrepeat1} are similar, where again we use the smallness of $\angles{\vecZ_i}{\vecZ_{\pm}}$, $i=1,\dots,2n$, when estimating the linear contributions of $\dotwp$ in $F_{\pm}$.
\end{proof}
%%%%%%%%%%
%%%%%%%%%%
We are now ready to prove Proposition~\ref{prop:bootstrappar1}.
%%%%%%%%%%
%%%%%%%%%%
\begin{proof}[Proof of Proposition~\ref{prop:bootstrappar1}]
We start with the estimates for $\dotvecwp$ for which we use the first equation in~\eqref{eq:pareqsrepeat1}. As in the proof of Lemma~\ref{lem:vecomega1}, in view of the presence of the smoothing operator $S$, the higher derivatives are treated in the same way as $\dotwp$. For $\dotwp$, in view of the estimate for $\vecomega$ from Lemma~\ref{lem:vecomega1} and the bootstrap assumptions \eqref{eq:a+trap}--\eqref{eq:Tjphienergyb2}, the only terms on the right-hand side of the equation $\dotvecwp=\vecF_\wp$ which need special attention are the linear terms in $\vecpsi$. Here the difference between $\vecpsi$ and $\phi$ and its derivatives is accounted for in the same way as in the proof of Lemma~\ref{lem:vecomega1}. Turning to these linear terms, recall that as above (see \eqref{equ:derivation_modul_equ1}) they are given by $\bfOmega(\vecpsi,M\vecZ_i)$ and $\bfOmega(\vecpsi,M\vecZ_{n+i})$, $i=1,\dots,n$. But, these can be estimated exactly as in \eqref{eq:lemOmega1temp1} and \eqref{eq:lemOmega1temp2}. The estimate for $a_{-}$ is similar, where now we use the last equation in \eqref{eq:pareqsrepeat1} which leads to the representation 
%%%%%%%
%%%%%%%
\begin{align*}
\begin{split}
a_{-}(t)=a_{-}(0)e^{-\mu t}+\int_0^te^{-\mu t}S(e^{\mu s}F_{-}(s))\ud s.
\end{split}
\end{align*}
The first term already has better decay than we need. For the second term we can again consider the linear and quadratic and higher order contributions of $F_{-}$ separately, and gain smallness in $\Reigenfunctioncutoffscale$ for the linear terms from the smallness of $\angles{\vecZ_i}{\vecZ_{\pm}}$. The higher derivatives are treated similarly, where in the case where derivatives fall on $SF_{-}$ we can absorb them in the smoothing operator $S$. To estimate $a_{+}$, we use the triangle inequality and the bootstrap assumption \eqref{eq:a+trap} to bound,
%%%%%%%
%%%%%%%
\begin{align*}
\begin{split}
|a_{+}(t)|\lesssim \delta_{\wp}t^{-3}+e^{\mu t}S(e^{-\mu t}F_{+}).
\end{split}
\end{align*}
The desired estimate then follows by estimating $F_{+}$ using similar considerations as for $F_{-}$. The higher derivative estimates for $a_{+}$ also follow similarly by using equation \eqref{eq:pareqsrepeat1} to express $\dota_{+}$ algebraically in terms of $a_{+}$ and $F_{+}$ as $\dota_{+}=\mu a_{+}+e^{\mu t}S(e^{\mu t}F_{+})$.
\end{proof}
%%%%%%%%%%
%%%%%%%%%%
In addition to Proposition~\ref{prop:bootstrappar1}, we will also need some integrated estimates on $\bfOmega_i(\RbfT^k\phi)$, $a_\pm^{(k)}$, and $\dotwp^{(k)}$ for our local-energy decay estimate. These estimates are the content of the next lemma.
%%%%%%%%%%
%%%%%%%%%%
\begin{lemma}\label{lem:OmegaiTkphi}
Under the bootstrap assumptions \eqref{eq:a+trap}--\eqref{eq:Tjphienergyb2}, if $\Reigenfunctioncutoffscale$ is sufficiently large, then for $k=1,2$, $j\geq 0$, and $i\in\{\pm,1,\dots,2n\}$,
%%%%%%%
%%%%%%%
\begin{align}\label{eq:OmegaiTkphi}
\begin{split}
&\|\bfOmega_i(\RbfT^k\phi)\|_{L^2([t_1,t_2])}+\|\dota_{\pm}^{(k+j)}\|_{L^2([t_1,t_2])}+\|\dotwp^{(k+1+j)}\|_{L^2([t_1,t_2])}\\
&\lesssim (\delta_\wp+o_{\Reigenfunctioncutoffscale}(1)+\callO(\wp))\epsilon \jap{t_1}^{-3}+\epsilon^2\jap{t_1}^{-\frac{9}{2}+\kappa}\\
&\quad+(o_{\Reigenfunctioncutoffscale}(1)+\callO(\wp))(\|\RbfT^k\phi\|_{LE([t_1,t_2])}+\sup_{t_1\leq \tau\leq t_2}\|\RbfT^k\phi\|_{E(\Sigma_\tau)}),\\
\end{split}
\end{align}
where $o_{\Reigenfunctioncutoffscale}(1)$ denotes a constant that goes to zero as $\Reigenfunctioncutoffscale\to \infty$. With the same notation and for $k>2$
%%%%%%%
%%%%%%%
\begin{align*}
\begin{split}
&\|\bfOmega_i(\RbfT^k\phi)\|_{L^2([t_1,t_2])}+\|\dota_{\pm}^{(k+j)}\|_{L^2([t_1,t_2])}+\|\dotwp^{(k+1+j)}\|_{L^2([t_1,t_2])}\\
&\lesssim (\delta_\wp+o_{\Reigenfunctioncutoffscale}(1)+\callO(\wp))\epsilon \jap{t_1}^{-3}+\epsilon^2\jap{t_1}^{-\frac{9}{2}+\kappa}\\
&\quad+(o_{\Reigenfunctioncutoffscale}(1)+\callO(\wp))\sum_{j=2}^k(\|\RbfT^j\phi\|_{LE([t_1,t_2])}+\sup_{t_1\leq \tau\leq t_2}\|\RbfT^j\phi\|_{E(\Sigma_\tau)}).\\
\end{split}
\end{align*}
\end{lemma}
%%%%%%%%%%
%%%%%%%%%%
\begin{proof}
We start with the estimate for $\bfOmega_i(\RbfT^k\psi)$, $i\in\{1,\dots,n\}$. This is achieved by differentiating the expression $\bfOmega_i(\psi)=\omega_i+\Upomega_i$. The desired estimate is then derived similarly to the proof of Lemma~\ref{lem:Omega1}. Indeed, using the bootstrap assumptions to estimate the quadratic and higher order terms by $\epsilon^2t_1^{-\frac{9}{2}+\kappa}$, we see that except for the contribution of $\bfOmega(\RbfT^k\vecpsi,M\vecZ_i)$ the remaining terms are bounded by 
%%%%%%%
%%%%%%%
\begin{align*}
\begin{split}
\callO(\wp)\|\RbfT^k\phi\|_{L^2([t_1,t_2])}+\callO(\wp)\|\dota_{\pm}^{(k)}\|_{L^2([t_1,t_2])}+\callO(\wp)\|\dotwp^{k+1}\|_{L^2([t_1,t_2])}.
\end{split}
\end{align*}
Here as usual we have expressed $\vecpsi$ in terms of $\phi$ and its derivatives as well as $a_{\pm}$ and $\dotwp$. Note that the last two terms above can be absorbed on the left-hand side of \eqref{eq:OmegaiTkphi}. Similarly, using the smallness of $MZ=\vecZ_i$, $i=1,\dots,n$, we can estimate the contribution of $\bfOmega_i(\RbfT^k\vecpsi,M\vecZ_i)$ by 
%%%%%%%
%%%%%%%
\begin{align*}
\begin{split}
o_{\Reigenfunctioncutoffscale}(1)\|\RbfT^k\phi\|_{L^2([t_1,t_2])}+o_{\Reigenfunctioncutoffscale}(1)\|\dota_{\pm}^{(k)}\|_{L^2([t_1,t_2])}+o_{\Reigenfunctioncutoffscale}(1)\|\dotwp^{k+1}\|_{L^2([t_1,t_2])}.
\end{split}
\end{align*}
Note that even though $\tilS$ in \eqref{eq:Upomegarepeat1} is not a smoothing operator, since only $\vecpsi$ and not $\RbfT\vecpsi$ appears in $\vecN$ and $\vecF_\omega$ (and similarly in $\tilF_{\pm}$ in the discussion for $\bfOmega_{\pm}(\RbfT^k\phi)$ below) and spatial derivatives can be integrated by parts to the lower order terms, there is no loss of regularity in these estimates. The passage from $\bfOmega_i(\RbfT^k\psi)$ to $\bfOmega_i(\RbfT^k\phi)$ follows as usual. The estimate for $\bfOmega_{n+i}(\RbfT^k\phi)$ now also follows from that of $\bfOmega_{i}(\RbfT^k\phi)$ in the same way as in the proof of Lemma~\ref{lem:Omega1}. The estimates for $\bfOmega_{\pm}(\RbfT^k\phi)$ are proved similarly where now we use the differentiated equations \eqref{eq:bfOmegapmrepeat2} and \eqref{eq:bfOmegapmrepeat3}. Once the estimates for $\bfOmega_i(\RbfT^k\phi)$, $i\in\{\pm,1,\dots,2n\}$, are established, the estimates for $\dota_{-}^{(k+j)}$ and $\dotwp^{(k+1+j)}$ follow as in the proof of Proposition~\ref{prop:bootstrappar1} by differentiating the corresponding equations in \eqref{eq:pareqsrepeat1}. As usual, any excess derivatives can be absorbed by the smoothing operator $S$. The argument for $\dota_{+}$ is more delicate, and it is here that the improved decay in \eqref{eq:a+trap} comes in. Differentiating the differential equation for $a_{+}$ from  \eqref{eq:pareqsrepeat1}, and using the notation $\dotF_{+}=\frac{\ud}{\ud t}F_{+}$, gives
%%%%%%%
%%%%%%%
\begin{align*}
\begin{split}
\frac{\ud}{\ud t}(e^{-\mu t}\dota_{+})=-S(e^{-\mu t}\dotF_{+}).
\end{split}
\end{align*}
Integrating from the final bootstrap time $\tau_f$ and using the algebraic relation $\dota_{+}=\mu a_{+}-e^{\mu t}S(e^{-\mu t}F_{+})$ (which is a rewriting of the differential equation for $a_{+}$), for any $t\leq \tau_f$ we get
%%%%%%%
%%%%%%%
\begin{align}\label{eq:higheraplustemp1}
\begin{split}
\dota_{+}(t)=(\mu a_{+}(\tau_f)-e^{\mu\tau_f}S(e^{-\mu \tau_f}F_{+}(\tau_f)))e^{-\mu(\tau_f-t)}-\int_{t}^{\tau_f}e^{-\mu (s-t)}(e^{\mu s}S(e^{-\mu s}\dotF_{+}(s)))\ud s.
\end{split}
\end{align}
The desired estimate for $\dota_{+}$ now follows from \eqref{eq:a+trap} and an application of Schur's test with the kernel $e^{-\mu(s-t)}\chi_{s\geq t}$, where we use similar considerations as before to bound $\dotF_{+}$. The higher order estimates for $a_{+}$ are proved similarly by differentiating equation \eqref{eq:higheraplustemp1}. For this purpose note that differentiation of the integral on the right-hand side of \eqref{eq:higheraplustemp1} gives
%%%%%%%
%%%%%%%
\begin{align*}
\begin{split}
&e^{\mu t}S(e^{-\mu t}\dotF_{+}(t))+\int_t^{\tau_f} \big(\frac{\ud}{\ud s}e^{-\mu(s-t)}\big)e^{\mu s}S(e^{-\mu s}\dotF_{+}(s))\ud s\\
&=e^{-\mu(\tau_f-t)}e^{\mu \tau_f}S(e^{-\mu \tau_f}\dotF_{+}(\tau_f))-\int_t^{\tau_f} e^{-\mu(s-t)}e^{\mu s}S(e^{-\mu s}\ddot{F}_{+}(s))\ud s.
\end{split}
\end{align*}
This can be estimated by the same considerations as above, where for the boundary term we also use \eqref{eq:d2Tphienergyb1} with $\partial_\Sigma^2$ replaced by $\jap{\rho}^{-2}$ (which can be done in dimension $5$ and higher; see for instance Lemma~\ref{lem:ellipticestimate1}.)
\end{proof}
%%%%%%%%%%
%%%%%%%%%%

%%%%%%%%%%%%%%%%%%%
%%%%%%%%%%%%%%%%%%%
%%%%%%%%%%%%%%%%%%%
\section{Local Energy Decay}\label{sec:LED}
%%%%%%%%%%%%%%%%%%%
%%%%%%%%%%%%%%%%%%%
%%%%%%%%%%%%%%%%%%%
In this section we prove linear energy and local energy decay estimates. The relatively straightforward nonlinear applications are postponed to the next section after the nonlinearity and source terms of the equation are calculated more carefully. 

For any $\tau_1<\tau_2$, let
%%%%%%%
%%%%%%%
\begin{align*}
\begin{split}
\Sigma_{\tau_1}^{\tau_2}= \cup_{\tau=\tau_1}^{\tau_2}\Sigma_\tau.
\end{split}
\end{align*}
and consider two functions $\uppsi,f:\Sigma_0^T\to \bbR$, satisfying
%%%%%%%
%%%%%%%
\begin{align}\label{eq:LEDlinearmodel1}
\begin{split}
\calP\uppsi=f.
\end{split}
\end{align}
We make the following assumptions on $\calP$, which are consistent with the linear operator arising in our problem: In the global non-geometric coordinates from Section~\ref{sec:GNGC}, $\calP$ satisfies the properties stated in Remark~\ref{rem:nongeomglobal1}, while $\calP_0$ in Remark~\ref{rem:nongeomglobal1} takes the form given in Section~\ref{sec:GGC} in the global geometric coordinates defined there. In the interior coordinates from Section~\ref{sec:INGC} and the exterior coordinate from Section~\ref{sec:ENGC}, we assume that $\calP$ takes the forms \eqref{eq:LEDcalPint1} and \eqref{eq:abstractexteq1}, \eqref{eq:callP2}, \eqref{eq:ErrcallP1}, \eqref{eq:extgeomlin1} respectively. We use $K_\Int$ and $K_\ext$ to denote two large compact regions in $\Sigma_0^T$ with $K_\ext\subseteq K_\Int$, such that the coordinates $(t,\rho,\omega)$ (Section~\ref{sec:INGC}) are defined in a neighborhood $U_\Int$ of $K_\Int$, and the coordinates $(\tau,\rho,\theta)$ (Section~\ref{sec:ENGC}) are defined in a neighborhood of $\overline{K_\ext^c}$. We assume that $\Reigenfunctioncutoffscale$ in Section~\ref{sec:interior} (see for instance Section~\ref{subsec:modulationeqs}) is such that the region $\{\rho\leq \Reigenfunctioncutoffscale\}$ is much larger than $K_\Int$.

For any $\tau$ the energy norm of $\uppsi$ on $\Sigma_\tau$ is defined by
%%%%%%%
%%%%%%%
\begin{align}\label{eq:standardenergydef1}
\begin{split}
E[\uppsi](\tau)\equiv\|\uppsi\|_{E(\Sigma_\tau)}^2&:=\int_{\Sigma_\tau}\chi_{\leq \tilR}(|\partial\uppsi|^2+\jap{\rho}^{-2}|\uppsi|^2)\ud V\\
&\quad+\int_{\Sigma_\tau}\chi_{\geq \tilR}(|\partial_\Sigma \uppsi|^2+r^{-2}|T\uppsi|^2+r^{-2}|\uppsi|^2) \ud V.
\end{split}
\end{align}
Here $\chi_{\geq \tilR}$ is a cutoff function supported in $\calC_\hyp$, for some fixed large $\tilR\gg1$, and $\chi_{\leq \tilR}=1-\chi_{\geq \tilR}$. The local energy norm on any (space-time) region $\calR$ of the domain of definition of $\uppsi$ is defined by
%%%%%%%
%%%%%%%
\begin{align*}
\begin{split}
\|\uppsi\|_{LE(\calR)}^2=\int_{\calR}\chi_{\leq \tilR} ((\rho\tilpsi)^2+(\rho\partial \tilpsi)^2+( \partial_\rho\tilpsi)^2)\ud V+\int_{\calR}\chi_{\geq \tilR}( r^{-3-\alpha}\uppsi^2+r^{-1-\alpha}(\partial\uppsi)^2)\ud V.
\end{split}
\end{align*}
Here $0<\alpha\ll 1$ is a fixed small positive number, and $\rho$ and $r$ are the radial coordinates introduced in Section~\ref{sec:coordinates}. The dual local energy norm  is defined by 
%%%%%%%
%%%%%%%
\begin{align*}
\begin{split}
\|f\|_{LE^\ast(\calR)}^2=\int_{\calR}\chi_{\leq \tilR} f^2\ud V+\int_{\calR}\chi_{\geq \tilR}r^{1+\alpha}f^2\ud V.
\end{split}
\end{align*}
We use the notation
%%%%%%%
%%%%%%%
\begin{align*}
\begin{split}
\|\uppsi\|_{L^pL^q(\Sigma_{\tau_1}^{\tau_2})}=\Big(\int_{\tau_1}^{\tau_2}\|\uppsi\|_{L^q(\Sigma_\uptau)}^p\ud \tau\Big)^{\frac{1}{p}},
\end{split}
\end{align*}
with the usual modificaion when $p=\infty$. When $p=q$ we simply write $\|\uppsi\|_{L^p(\Sigma_{\tau_1}^{\tau_2})}$, and similarly with $\Sigma_{\tau_1}^{{\tau_2}}$ replaced by any other region. We also occasionally use the notation
%%%%%%%
%%%%%%%
\begin{align*}
\begin{split}
\angles{\uppsi_1}{\uppsi_2}\equiv\angles{\uppsi_1}{\uppsi_2}_{\Sigma_{\tau}}=\int_{\Sigma_{\tau}}\uppsi_1\uppsi_2\,\ud V_{\Sigma_\tau}.
\end{split}
\end{align*} Since our focus in this section is on linear estimates, we introduce $\bfUpomega_k$ as a linear proxy for $\bfOmega_k$  (which was defined nonlinearly in terms of $\vecpsi$ and $\vecphi$ in Section~\ref{sec:interior}):
%%%%%%%
%%%%%%%
\begin{align*}
\begin{split}
&\bfUpomega_k(\uppsi)(c)=-\int_{\{\uptau= c\}}Z_kn^\alpha\partial_\alpha \uppsi \sqrt{|h|}\ud y,\quad \bfUpomega_{n+k}(\uppsi(c))=\int_{\{\uptau= c\}}\uppsi Z_kn^\alpha\partial_\alpha \tiluptau \sqrt{|h|}\ud y,\quad k=1,\dots,n,\\
&\bfUpomega_{\mu}^{\pm}(\uppsi)(c)=\int_{\{\uptau= c\}}(\pm\mu \uppsi Z_\mu \partial_\alpha \tiluptau-Z_\mu \partial_\alpha \uppsi)n^\alpha \sqrt{|h|}\ud y.
\end{split}
\end{align*}
Here $n$ denotes the normal to $\Sigma_c$ with respect to $h$, and $y$ denotes the spatial variables (say $(\uprho,\uptheta)$) on $\Sigma_c$. Our goal in this section is to prove the following two estimates. The first is the energy estimate. 
%%%%%%%%%%%%%
%%%%%%%%%%%%%
\begin{proposition}\label{prop:energyestimate1}
Suppose $\uppsi$ satisfies $\calP\uppsi=f$, and $\sum_{k\in\{\pm\mu,1,\dots,2n\}}|\bfUpomega_k(\uppsi(t))|\leq \delta\|\uppsi\|_{E(\Sigma_t)}$. Then if $\delta$ is sufficiently small, for any $t_1<t_2$ and $\varepsilon\ll 1$, $\uppsi$ satisfies the estimates
%%%%%%%
%%%%%%%
\begin{align}\label{eq:linenergyestimate1}
\begin{split}
&\sup_{t_1\leq t\leq t_2}\|\uppsi\|_{E(\Sigma_{t})}\lesssim \|\uppsi\|_{E(\Sigma_{t_1})}+C_\varepsilon \|f\|_{L^1L^2(\Sigma_{t_1}^{t_2})},\\
&\sup_{t_1\leq t\leq t_2}\|\uppsi\|_{E(\Sigma_{t})}\lesssim \|\uppsi\|_{E(\Sigma_{t_1})}+C_\varepsilon \|f\|_{LE^\ast(\Sigma_{t_1}^{t_2})}+\|\RbfT f\|_{LE^\ast(\Sigma_{t_1}^{t_2})}\\
&\phantom{\sup_{t_1\leq t\leq t_2}\|\uppsi\|_{E(\Sigma_{t})}\lesssim }+\|f\|_{L^\infty L^2(\Sigma_{t_1}^{t_2})}+\varepsilon \|\uppsi\|_{LE(\Sigma_{t_1}^{t_2})}.
\end{split}
\end{align}
\end{proposition}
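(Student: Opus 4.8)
\textbf{Proof strategy for Proposition~\ref{prop:energyestimate1}.} The plan is to run a multiplier argument with the global almost-stationary vectorfield $\RbfT = \partial_\uptau$ (Section~\ref{sec:GNGC}), following the template of the base-case energy estimates in Proposition~\ref{prop:LEDproduct}, but now with two new complications: the profile is nonstationary (so $\calP$ picks up the perturbative errors described in Remark~\ref{rem:nongeomglobal1}, item (1)), and $\calP_\Ell$ has a nontrivial kernel and a negative eigenvalue, so the spatial part of the energy is not coercive on its own. The first step is to write $\calP = \calP_\uptau + \calP_\Ell$ as in Remark~\ref{rem:nongeomglobal1}(1) and multiply $\calP\uppsi = f$ by (essentially) $\RbfT\uppsi$, integrating over $\Sigma_{t_1}^{t_2}$ against the volume form $\ud V$. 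Because $\RbfT$ is almost Killing (unit timelike, and $T - \bfT$ has $\uptau$-decay by \eqref{eq:Tprecise1}), the bulk terms produced by the principal part $\frac{1}{\sqrt{|\bfh|}}\partial_\mu(\sqrt{|\bfh|}(\bfh^{-1})^{\mu\nu}\partial_\nu\uppsi) + V\uppsi$ are, up to boundary terms, controlled by $\|\uppsi\|_{E(\Sigma_\tau)}^2$ plus the contributions of $\tilcalbfP$ (Remark~\ref{rem:nongeomglobal1}(3)), whose coefficients carry $o_{\wp,\Rone}(1) + \callO(\dotwp^{\leq 2})$ smallness and so can either be absorbed into the left-hand side (for the $o_{\wp,\Rone}(1)$ piece, using that $\Rone$ is large) or bounded by $\varepsilon\|\uppsi\|_{LE(\Sigma_{t_1}^{t_2})}^2$ plus the source terms involving $\dotwp$, which are handled via the parameter bounds.

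\textbf{Coercivity via the orthogonality hypothesis.} The key structural point is the hypothesis $\sum_{k}|\bfUpomega_k(\uppsi(t))| \leq \delta\|\uppsi\|_{E(\Sigma_t)}$, which says $\uppsi$ is almost orthogonal (on each leaf) to the generalized eigenfunctions and to $\vecZ_\pm$. I would use this exactly as in the proof of Proposition~\ref{prop:LEDproduct}: on a constant-$\uptau$ slice, decompose $\uppsi = \bbP_c\uppsi + (\text{projection onto kernel and negative eigenmode})$, observe that the orthogonality hypothesis forces the projection onto the bad modes to be $\lesssim \delta\|\uppsi\|_{E(\Sigma_t)}$ in energy, and conclude that the flux on $\{\uptau = \text{const}\}$ associated to $\calP_0$ bounds $\|\bbP_c\uppsi\|_E^2$, hence (for $\delta$ small) bounds $\|\uppsi\|_{E(\Sigma_t)}^2$ up to a small multiple of itself. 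Here one needs to transfer the orthogonality from the $\bfUpomega_k$ (defined on the $\Sigma_\tau$ leaves with the $\bfh$-normal) to the flat slices $\{X^0 = \text{const}\}$ on which the spectral theory of $\underline H$ lives; the nonstationarity error in this transfer is $\callO(\dotwp^{\leq 2})$-small and doubly time-integrable under the bootstrap assumptions, so it contributes a controllable error. Combining the coercivity on slices with the bulk estimate, Grönwall (in $t$) closes the first inequality in \eqref{eq:linenergyestimate1}, with the right-hand side $f$ paired against $\RbfT\uppsi$ estimated crudely by $\|f\|_{L^1L^2}\sup_t\|\uppsi\|_{E(\Sigma_t)}$.

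\textbf{The $LE^\ast$ version.} For the second estimate in \eqref{eq:linenergyestimate1} the only change is in how the source term $\langle f, \RbfT\uppsi\rangle$ is handled: $\RbfT\uppsi$ is not controlled in $LE$ (there is a degeneracy, just as in Remark~\ref{rem:LEDproduct1}), so one integrates by parts in $\uptau$ to move $\RbfT$ off of $\uppsi$, producing a boundary term $\langle f, \uppsi\rangle|_{t_1}^{t_2}$ bounded by $\|f\|_{L^\infty L^2}\sup_t\|\uppsi\|_{E(\Sigma_t)}$, a bulk term $\langle \RbfT f, \uppsi\rangle$ handled by $\|\RbfT f\|_{LE^\ast}\|\uppsi\|_{LE}$, and a term where $\RbfT$ hits the coefficients of the volume form / metric, which is lower order and bounded by $C_\varepsilon\|f\|_{LE^\ast}^2 + \varepsilon\|\uppsi\|_{LE}^2$ after Cauchy--Schwarz with a Hardy/Poincaré inequality near the trapped sphere (exactly the step cited in the proof of Proposition~\ref{prop:LEDproduct}, via Lemma~\ref{lem:LEDhighfreq1}). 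Absorbing the $\varepsilon\|\uppsi\|_{LE}$ terms into the right-hand side as indicated gives the claimed bound.

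\textbf{Main obstacle.} I expect the delicate point to be the coercivity-transfer step: making rigorous the claim that the leaf-wise orthogonality conditions $\bfUpomega_k(\uppsi(t))$ small imply positivity of the $\calP_0$-flux through $\Sigma_t$, given that the leaves are asymptotically null rather than spacelike and that the true operator on $\Sigma_t$ differs from $\calP_0$ by the nonstationary perturbation. One has to commute through the change of coordinates from $(\uptau,\uprho,\uptheta)$ to the geometric global coordinates $(\tiluptau,\tiluprho,\tiluptheta)$ (Section~\ref{sec:GGC}) in which $\calP_0 = -\partial_{\tiluptau}^2 + \tilDelta + V$, invoke the absence of threshold resonances and the decay $\jap{\uprho}^{-n+1}$ of the eigenfunctions (so that the relevant projections are bounded in the weighted spaces, using $n \geq 5$), and keep careful track that all the errors generated are genuinely small — either in $o_{\wp,\Rone}(1)$, in $\callO(\dotwp^{\leq 2})$, or absorbable by the $\varepsilon$ and $\delta$ parameters. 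Everything else is a routine adaptation of the Proposition~\ref{prop:LEDproduct} argument.
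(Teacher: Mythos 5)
Your overall strategy matches the paper's proof: multiply $\calP\uppsi = f$ by $\partial_\uptau\uppsi\sqrt{|\bfh|}$ (the $\RbfT$-multiplier), use the almost-Killing property of $\RbfT$ together with the $\uptau$-decay of $\partial_\uptau\bfh$ and of the $\tilcalbfP$ coefficients to control the bulk, use the orthogonality hypothesis to get coercivity of the spatial part of the flux, and — for the second estimate — integrate by parts in $\uptau$ in the source pairing $\langle f,\RbfT\uppsi\rangle$ to produce $\|f\|_{L^\infty L^2}$, $\|\RbfT f\|_{LE^\ast}$, and absorbable $\varepsilon\|\uppsi\|_{LE}$ terms. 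All of that is what the paper does.

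Where you diverge is in how you treat the coercivity step, and in fact you flag this as the ``main obstacle'' when the paper's version of it is considerably lighter than what you propose. You suggest transferring the orthogonality conditions $\bfUpomega_k(\uppsi(t))$ from $\Sigma_\tau$ to flat $\{X^0 = \mathrm{const}\}$ slices, passing through the geometric coordinates $(\tiluptau,\tiluprho,\tiluptheta)$ of Section~\ref{sec:GGC} where $\calP_0 = -\partial_{\tiluptau}^2 + \tilDelta + V$, and tracking the nonstationary errors in this transfer. That machinery is indeed used in the paper — but in the proof of Proposition~\ref{prop:LED1}, not here. For the energy estimate the paper avoids any such transfer: it writes $\uppsi = \uppsi^\perp + \sum_i \langle\uppsi,\Zbar_i\rangle_{\Sigma_\tau}\Zbar_i$ \emph{directly on each leaf $\Sigma_\tau$}, with $\Zbar_i = \chi\varphi_i$ compactly supported truncated eigenfunctions; the flux of the $\RbfT$-multiplier through $\Sigma_\tau$ controls the energy of $\uppsi^\perp$, and the hypothesis $|\bfUpomega_k(\uppsi)| \leq \delta\|\uppsi\|_E$ bounds the bad-mode coefficients by a small multiple of the total energy, so for $\delta$ small the flux controls $\|\uppsi\|_{E(\Sigma_\tau)}^2$ outright. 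The asymptotic nullness of the leaves is harmless at this stage — the flux of a timelike multiplier through an asymptotically null hypersurface is still non-negative, and the non-coercivity issue is a purely local one near the collar, which the truncated eigenfunctions resolve leaf by leaf. So your argument would work, but it routes through the more technical change-of-slice machinery unnecessarily; the paper's direct leaf-wise projection is the shorter path, and is the one you should implement.

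The only other small imprecision: the term in $\tilcalbfP$ that needs care is $\ringa(\partial_\uprho + \frac{n-1}{2\uprho})\partial_\uptau\uppsi$, which the paper handles by one integration by parts exploiting that $|\partial_\uprho(\ringa\sqrt{|\bfh|}) - \frac{n-1}{\uprho}\ringa\sqrt{|\bfh|}| \lesssim \uptau^{-\gamma}\uprho^{-2}$; you absorb this into the general remark that $\tilcalbfP$ has small decaying coefficients, which is correct but glosses over the one term that needs the structural integration-by-parts.
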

%%%%%%%%%%%%%
%%%%%%%%%%%%%
The second estimate we will prove in this section is a local energy decay (LED) estimate.
%%%%%%%%%%%%%
%%%%%%%%%%%%%
\begin{proposition}\label{prop:LED1}
Suppose $\uppsi$ satisfies $\calP\uppsi=f$, and $\sum_{k\in\{\pm\mu,1,\dots,2n\}}|\bfUpomega_k(\uppsi(t))|\leq \delta\|\uppsi\|_{E(\Sigma_t)}$. Then for any $t_1<t_2$ and $\varepsilon\ll 1$, $\uppsi$ satisfies the estimates
%%%%%%%
%%%%%%%
\begin{align*}
\begin{split}
&\|\uppsi\|_{LE(\Sigma_{t_1}^{t_2})}\lesssim \sum_{k\in\{\pm\mu,1,\dots,2n\}}\|\bfUpomega_k(\uppsi)\|_{L^2([t_1,t_2])}+\|\uppsi\|_{E(\Sigma_{t_1})}+\|f\|_{L^1L^2(\Sigma_{t_1}^{t_2})},\\
&\|\uppsi\|_{LE(\Sigma_{t_1}^{t_2})}\lesssim \sum_{k\in\{\pm\mu,1,\dots,2n\}}\|\bfUpomega_k(\uppsi)\|_{L^2([t_1,t_2])}+\|\uppsi\|_{E(\Sigma_{t_1})}+\|f\|_{LE^\ast(\Sigma_{t_1}^{t_2})}\\
&\phantom{\|\uppsi\|_{LE(\Sigma_{t_1}^{t_2})}\lesssim}+\|\RbfT f\|_{LE^\ast(\Sigma_{t_1}^{t_2})}+\|f\|_{L^\infty L^2(\Sigma_{t_1}^{t_2})}.
\end{split}
\end{align*}
\end{proposition}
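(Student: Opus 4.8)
The strategy is to combine the abstract linear estimates on the product Lorentzian catenoid from Proposition~\ref{prop:LEDproduct} with the near-far decomposition and a transfer of the orthogonality conditions $\bfUpomega_k(\uppsi)$ from the leaves $\Sigma_\tau$ to the flat slices needed to apply Proposition~\ref{prop:LEDproduct}. I would first dispose of the high-frequency issues (quasilinearity and trapping) by a direct vectorfield multiplier argument adapted from the computation behind Proposition~\ref{prop:LEDproduct} (the choices of $\beta = \chi\rho + K(1-\chi)\beta_E$ and $\gamma(\rho) = \rho^2\jap{\rho}^{-3-\epsilon}$), carried out on the foliation $\Sigma_\tau$ rather than on $\{t = \mathrm{const}\}$. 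Because $\calP$ agrees with $\Box_h + V$ up to the perturbative terms $\calP_\Ell^\pert$ and $\calP_\uptau$ of Remark~\ref{rem:nongeomglobal1}(1), whose coefficients are $o_{\wp,\Rone}(1)$ or $\callO(\dotwp)$, these extra terms can be absorbed into the left-hand side (for the $o_{\wp,\Rone}(1)$ pieces, taking $\Rone$ large) or into $\|f\|_{LE^\ast}$ and the lower-order term $\|\uppsi\|_{L^2(K)}$ (for the $\callO(\dotwp)$ pieces). This yields the provisional bound
\begin{align*}
\|\uppsi\|_{LE(\Sigma_{t_1}^{t_2})} \lesssim \|\uppsi\|_{E(\Sigma_{t_1})} + \|f\|_{LE^\ast(\Sigma_{t_1}^{t_2}) + L^1L^2(\Sigma_{t_1}^{t_2})} + \|\uppsi\|_{L^2(K)},
\end{align*}
where $K$ is a spacetime cylinder around the center $\xi$; this is the analogue of \eqref{eq:LEDproducttemp1}, and it already takes care of difficulties (i) and (ii).

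Next I would remove the lower-order term $\|\uppsi\|_{L^2(K)}$. Rather than the contradiction argument used in Proposition~\ref{prop:LEDproduct} (which is unavailable here because $\calP$ is nonstationary), I would introduce the time-smoothing operator $P_{\leq N}$ as in Section~\ref{subsec:ideas-iled}: one shows $\|\uppsi - P_{\leq N}\uppsi\|_{L^2(K)}$ is small relative to the left-hand side once $N$ is large, so it suffices to control $P_{\leq N}\uppsi$. Commuting $P_{\leq N}$ past $\calP$, the equation becomes $\calP_0 P_{\leq N}\uppsi = P_{\leq N}f + [\calP, P_{\leq N}]\uppsi + (\calP_0 - \calP)P_{\leq N}\uppsi$, and by elliptic regularity for the spatial part $\calP_\Ell$ the dangerous second-order terms $\partial^2 P_{\leq N}\uppsi$ are controlled by $P_{\leq N}\uppsi$ and the right-hand side. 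Here $\calP_0$ is the conjugate of $-\partial_t^2 + \underline{H}$ by the Lorentz transformation with parameter $\underline{\ell} = \ell(t_2)$, so in the global geometric coordinates of Section~\ref{sec:GGC} it has the form $-\partial_{\tiluptau}^2 + \tilDelta + V$, and Proposition~\ref{prop:LEDproduct} applies to $\bbP_c$ of its solutions. The remaining task is to transfer the given conditions $\sum_k|\bfUpomega_k(\uppsi)(t)| \leq \delta\|\uppsi\|_{E(\Sigma_t)}$ into the $2n+2$ orthogonality conditions against the $L^2$-eigenfunctions of $\underline{H}$ (boosted to the $\underline{\ell}$-slices) that are needed for $\bbP_c P_{\leq N}\uppsi \approx P_{\leq N}\uppsi$; this is done by a multiplier argument, integrating the equation against the generalized eigenfunctions $\vecfy_i$ and exploiting that the difference between $\Sigma_\tau$-integration and $\{\tiluptau = \mathrm{const}\}$-integration produces error terms controlled by the doubly-integrable decay $\|\dotwp\|_{L^2}$, $\|\dotell\|_{L^2}$ supplied by the bootstrap assumptions (through Lemma~\ref{lem:OmegaiTkphi}). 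Applying Proposition~\ref{prop:LEDproduct} to $\bbP_c P_{\leq N}\uppsi$ then yields $\|P_{\leq N}\uppsi\|_{LE} \lesssim \sum_k\|\bfUpomega_k(\uppsi)\|_{L^2([t_1,t_2])} + \|\uppsi\|_{E(\Sigma_{t_1})} + \|f\|_{LE^\ast + L^1L^2}$, and combining with the provisional bound closes the first inequality of the Proposition.

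The second inequality, with $\|f\|_{LE^\ast}$ replaced by $\|f\|_{LE^\ast} + \|\RbfT f\|_{LE^\ast} + \|f\|_{L^\infty L^2}$ on the right, follows along the same lines but using the second and third energy estimates of Proposition~\ref{prop:LEDproduct} in place of the first (Remark~\ref{rem:LEDproduct1} explains why the $\RbfT f$ term is needed to handle the degeneracy of the $LE$ norm at $\rho = 0$), together with the energy estimate Proposition~\ref{prop:energyestimate1} to supply $\sup_t\|\uppsi\|_{E(\Sigma_t)}$; the near-far decomposition is enacted (twice, for technical reasons, as in \eqref{eq:phifardef1}--\eqref{eq:psi-nearnear}) so that the right-hand side of the equation fed into Proposition~\ref{prop:LEDproduct} is supported in the flat region. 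I expect the main obstacle to be the orthogonality-transfer step: one must show that imposing the $\Sigma_\tau$-conditions $\bfUpomega_k$ (which live on the moving, asymptotically null leaves) genuinely controls the $L^2$-projection of $P_{\leq N}\uppsi$ onto the kernel and negative eigenspace of $\underline{H}$ on the boosted flat slices, with all the discrepancy errors — coming from the mismatch of the two foliations, from the time-smoothing commutators $[\calP, P_{\leq N}]$, and from $\calP - \calP_0$ — being genuinely absorbable, which requires the doubly-integrable decay of $\dotwp$ and careful bookkeeping of the $\Rone$- and $\wp$-smallness.
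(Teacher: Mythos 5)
Your plan is correct and tracks the paper's own proof essentially step by step: near-far decomposition with the auxiliary potential $V_\far$ (Lemmas~\ref{lem:phifar1} and~\ref{lem:phinearfar1}), absorption of high time frequencies by $P_{\leq N}$ (Lemma~\ref{lem:LEDhighfreq1}), commutator control via elliptic regularity for $\calP_\Ell$ (Lemma~\ref{lem:LEDcommutator1}), a second near-far decomposition with respect to $\calP_0$ so that the forcing fed into Proposition~\ref{prop:LEDproduct} is compactly supported, and the transfer of the $\bfUpomega_k(\uppsi)$ conditions on $\Sigma_\tau$ to the boosted slices $\{\tiluptau=\mathrm{const}\}$ via a divergence identity over the sandwich regions $\calT_\tiluptau$ with a Schur-test bound controlled by the smallness of $|\ell|$ and the decay of $\dotwp$. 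The only notable imprecision is ordering: in the paper the first near-far split (with $V_\far$) precedes the $P_{\leq N}$ reduction so that $\uppsi_\near$ has zero data and compactly supported forcing throughout, whereas you state the provisional multiplier bound on $\uppsi$ directly — that bound is still true, but one would then need to reintroduce the split before the orthogonality-transfer step exactly as you indicate, and indeed this is what makes the Schur-test estimate and the application of Proposition~\ref{prop:LEDproduct} work.
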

%%%%%%%%%%%%%
%%%%%%%%%%%%%
\begin{remark}\label{rem:Omegalinear1}
As mentioned earlier $\bfUpomega_k$ is a linear substitute for $\bfOmega_k$. It is easy to see from our proofs that in Propositions~\ref{prop:energyestimate1} and~\ref{prop:LED1} one can replace $\bfUpomega_k$ by any other choice $\tilde{\bfUpomega}_k$ as long as $\|\tilde{\bfUpomega}_k(\uppsi)-\bfUpomega_k(\uppsi)\|_{L^2([t_1,t_2])}$ is bounded by a small multiple of the $LE$ norm of $\uppsi$. In our nonlinear applications we will use this observation to apply these propositions with $\bfUpomega_k$ replaced by $\bfOmega_k$. The condition $|\bfOmega_k|(\uppsi(t))\leq \delta\|\uppsi\|_{E(\Sigma_t)}$ will always be satisfied in our applications as a consequence of the orthogonality conditions. See for instance the arguments in Lemmas~\ref{lem:Omega1} and~\ref{lem:OmegaiTkphi}.
\end{remark}
%%%%%%%%%%%%%
%%%%%%%%%%%%%
%%%%%%%%%%%%%
%%%%%%%%%%%%%
\begin{remark}\label{rem:fLED1}
The proof of Proposition~\ref{prop:LED1} requires several multiplier identities. In applications, where we consider the equation after commuting derivatives, we may want to perform some integration by parts in the term $fQ\uppsi$, where $Q\uppsi$ denotes the multiplier, before placing $f$ in $LE^\ast(\Sigma_{t_1}^{t_2})$ or $L^1L^2(\Sigma_{t_1}^{t_2})$. This is the case for instance where $f$ is of the form $\partial_\Sigma^2g$, where $g$ denotes the unknown with fewer commuted derivatives. While such integration by parts manipulations are not explicitly contained  in  the statement of Proposition~\ref{prop:LED1}, they can be easily incorporated by an inspection of the proof. Specifically, they can be performed in the treatment of equation \eqref{eq:phifardef1} in Lemma~\ref{lem:phifar1}.
\end{remark}
%%%%%%%%%%%%
%%%%%%%%%%%%
\begin{remark}
The explanation for the second estimate in \eqref{eq:linenergyestimate1} is the same as for the corresponding estimate in Proposition~\ref{prop:LEDproduct}. See Remark~\ref{rem:LEDproduct1}.
\end{remark}
%%%%%%%%%%%%
%%%%%%%%%%%%
%%%%%%%%%%%%%
%%%%%%%%%%%%%
We start with the proof of Proposition~\ref{prop:energyestimate1}.
%%%%%%%%%%%%%
%%%%%%%%%%%%%
\begin{proof}[Proof of Proposition~\ref{prop:energyestimate1}]
Recall from Remark~\ref{rem:nongeomglobal1}, part (3), that in the global coordinates $(\uptau,\uprho,\upomega)$
%%%%%%%
%%%%%%%
\begin{align*}
\begin{split}
\calP\uppsi=\frac{1}{\sqrt{|\bfh|}}\partial_\mu (\sqrt{|\bfh|}(\bfh^{-1})^{\mu\nu}\partial_\nu\uppsi)+V\uppsi+\tilcalbfP, 
\end{split}
\end{align*}
where $\tilcalbfP$ has the structure given in Remark~\ref{rem:nongeomglobal1}, and $|\partial_\uptau \bfh|\lesssim \epsilon \uptau^{-\gamma}$ for some $\gamma>1$. We multiply equation \eqref{eq:LEDlinearmodel1} by $\partial_\uptau\uppsi\sqrt{|\bfh|}$ and integrate. Note that the contribution of $f\partial_\uptau\uppsi$ can be estimated by the right-hand side of each estimate in \eqref{eq:linenergyestimate1} plus a small multiple of the corresponding left-hand side, as in the proof of Proposition~\ref{prop:LEDproduct}. The main term in $\calP\uppsi \partial_\uptau\uppsi\sqrt{|\bfh|}$ is
%%%%%%%
%%%%%%%
\begin{align}\label{eq:linenergyestimatetemp1}
\begin{split}
(\calP-\tilcalbfP)\uppsi\partial_\uptau\uppsi \sqrt{|\bfh|}&=\partial_\mu\big(\sqrt{|\bfh|}(\bfh^{-1})^{\mu\nu}\partial_\mu\uppsi\partial_\uptau\uppsi\big)+\frac{1}{2}\partial_\uptau\big(\sqrt{|\bfh|}V\uppsi^2-\sqrt{|\bfh|}(\bfh^{-1})^{\mu\nu}\partial_\mu\uppsi\partial_\nu\uppsi\big)\\
&\quad+\partial_\uptau(\sqrt{|\bfh|}(\bfh^{-1})^{\mu\nu})\partial_\mu\uppsi\partial_\nu\uppsi.
\end{split}
\end{align}
In view of the assumption on $\bfUpomega_k(\uppsi)$, the first line gives us the desired control of the energy of $\uppsi$. Indeed, we can write $\uppsi=\uppsi^\perp+\sum_k\angles{\uppsi}{\Zbar_i}_{\Sigma_\tau}\Zbar_i$ where $\Zbar_i$  denote truncated eigenfunctions $\chi \varphi_i$ of $\Delta_\barcalC+V$ supported in some region $\{\uprho\leq \uprho_1\}$ with $\uprho_1$ large (specifically, $\uprho_1\geq \Reigenfunctioncutoffscale$), normalized to have $L^2$ norm equal to one, and where $\angles{\uppsi}{Z_i}_{\Sigma_\tau}=\int_{\Sigma_\tau} \uppsi \Zbar_i \ud V$. The first line of \eqref{eq:linenergyestimatetemp1} then bounds the energy of $\uppsi^\perp$ and the energy of $\uppsi$ can be bounded in terms of that of $\uppsi^\perp$ using the assumption on $\bfUpomega_k(\uppsi)$. The second line of \eqref{eq:linenergyestimatetemp1} can be absorbed by a small multiple of the energy in view of the $\uptau$ decay of $\partial_\uptau(\sqrt{|\bfh|}(\bfh^{-1})^{\mu\nu})$. Here note that in view of the form of $\bfh$ from Remark~\ref{rem:nongeomglobal1} the terms in $\partial_\uptau(\sqrt{|\bfh|}(\bfh^{-1})^{\mu\nu})$ where at least one of $\mu,\nu$ is $\uptau$, in particular $\partial_\uptau(\sqrt{|\bfh|}(\bfh^{-1})^{\uptau\uprho})$, come with extra $\uprho$ decay which allows us to bound the corresponding errors in the exterior region by the energy. Finally the contribution of $\tilcalbfP\uppsi$ can again be bounded by a small multiple of the energy in view of the $\uptau$ decay of the coefficients of $\tilcalbfP$. Here the only term in $\tilcalbfP$ that needs special attention is $\ringa (\partial_\uprho+\frac{n-1}{2\uprho})\partial_\uptau\uppsi$ which, after integration by parts, yields
%%%%%%%
%%%%%%%
\begin{align*}
\begin{split}
\frac{1}{2}\big(\partial_\uprho(\ringa \sqrt{|\bfh|})-\frac{n-1}{\rho}\ringa\sqrt{|\bfh|}\big)(\partial_\uptau\uppsi)^2.
\end{split}
\end{align*} 
Since $|\partial_\uprho(\ringa \sqrt{|\bfh|})-\frac{n-1}{\rho}\ringa\sqrt{|\bfh|}|\lesssim \uptau^{-\gamma}\uprho^{-2}$  for large $\uprho$, this contribution can be bounded by the energy as well.
\end{proof}
%%%%%%%%%%%%%
%%%%%%%%%%%%%
We next turn to the proof of Proposition~\ref{prop:LED1}. The proofs of the two estimates in this proposition are different only in which energy estimate from Proposition~\ref{prop:energyestimate1} we use to bound the fluxes that come up in the integration by parts, so we give the proof only for the first estimate. Our starting point is a local energy decay estimate allowing for an $L^2$ error in a bounded region. For this we need to define some cutoff functions and auxiliary potentials. We fix $\chi_1\equiv \chi_1(\tiluprho)$ to be a smooth, non-decreasing, non-negative cutoff function supported in $U_\ext$ that is equal to one on $\overline{K_\ext^c}$ and satisfies $(\sgn \tiluprho) \frac{\ud}{\ud \tiluprho} \chi_{1} \leq 0$. Similarly, $\chi_2\equiv\chi_2(\rho)$ is a smooth, non-negative cutoff supported in $U_\Int$ that is equal to one on $K_\Int$. Let $V_\temp\equiv V_\temp(\rho)$ be a compactly supported, non-negative, smooth potential such that 
%%%%%%%
%%%%%%%
\begin{align*}
\begin{split}
\supp\,V_\temp \subseteq U_\Int,\qquad (\sgn\rho)\frac{\ud}{\ud\rho}V_\temp(\rho)\leq0~\mathrm{in~}U_\Int,\qquad (\sgn\rho)\frac{\ud}{\ud\rho}V_\temp(\rho)\leq -v_0<0~\mathrm{in~} K_\Int,
\end{split}
\end{align*}
and for some large constant $M$ to be fixed later, let
%%%%%%%
%%%%%%%
\begin{align}\label{eq:Vfardef1}
\begin{split}
V_\far:=MV_\temp.
\end{split}
\end{align}
Let $\uppsi_\far$ be the solution to
%%%%%%%
%%%%%%%
\begin{align}\label{eq:phifardef1}
\begin{split}
(\calP-V_\far)\uppsi_\far = f,\qquad (\uppsi_\far,\partial_\uptau\uppsi_\far)\vert_{\Sigma_{t_1}}=(\uppsi,\partial_\uptau\uppsi)\vert_{\Sigma_{t_1}},
\end{split}
\end{align}
and $\uppsi_\near:=\uppsi-\uppsi_\far$. Note that $\uppsi_\near$ satisfies
%%%%%%%
%%%%%%%
\begin{align}\label{eq:calPphinear1}
\begin{split}
\calP\uppsi_\near=-V_\far\uppsi_\far,\qquad (\uppsi_\far,\partial_\uptau\uppsi_\near)\vert_{\Sigma_{t_1}}=(0,0).
\end{split}
\end{align}
%%%%%%%%%%%%
%%%%%%%%%%%%
\begin{lemma}\label{lem:phifar1}
$\uppsi_\far$ and $\uppsi_\near$ as defined above satisfy 
%%%%%%%
%%%%%%%
\begin{align}\label{eq:lemphifarbound1}
\begin{split}
\|\uppsi_\far\|_{LE(\Sigma_{t_1}^{t_2})}\lesssim  \|\uppsi\|_{E(\Sigma_{t_1})}+ \|f\|_{L^1L^2(\Sigma_{t_1}^{t_2})},
\end{split}
\end{align}
and 
%%%%%%%
%%%%%%%
\begin{align}\label{eq:lemphifarbound2}
\begin{split}
\|\uppsi_\near\|_{LE(\Sigma_{t_1}^{t_2})}&\lesssim \|\uppsi\|_{E(\Sigma_{t_1})}+ \|f\|_{L^1L^2(\Sigma_{t_1}^{t_2})}+\|\uppsi_\near\|_{L^2(K_\Ext)}.
\end{split}
\end{align}
\end{lemma}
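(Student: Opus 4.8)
\textbf{Proof plan for Lemma~\ref{lem:phifar1}.}

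The strategy is to establish a local energy decay estimate for the modified operator $\calP - V_\far$ \emph{without} any bounded-region error term, exploiting that the extra damping potential $V_\far$ (with $M$ chosen large) is designed to absorb precisely the indefinite contribution near the trapped sphere that forces the $L^2(K_\Ext)$ error in the general case. First I would run a vectorfield multiplier argument directly adapted to the foliation $\Sigma_\tau$, in the spirit of the proof of Proposition~\ref{prop:LEDproduct}: use a multiplier of the form $Q\uppsi_\far = \chi_{\leq\tilR}\,(\beta\partial_\rho - \partial_\rho^\ast\beta)\uppsi_\far + \chi_{\geq\tilR}\,(\text{$r^p$-type radial multiplier})\,\uppsi_\far$ together with the lower-order multipliers $\gamma(\rho)\uppsi_\far$ and a small multiple of $\partial_\uptau\uppsi_\far$. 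The interior multiplier is taken as in \eqref{eq:comm-hard}, \eqref{eq:comm1}, \eqref{eq:comm2}: choose $\beta = \chi\rho + K(1-\chi)\beta_E$ as there. The point is that the term $\langle [Q, V_\far]\uppsi_\far, \uppsi_\far\rangle = 2\langle(\beta\partial_\rho V_\far)\uppsi_\far,\uppsi_\far\rangle$ now contributes with a \emph{favorable} sign on $K_\Int$, because $(\sgn\rho)\partial_\rho V_\temp \le -v_0 < 0$ there and $\beta$ is chosen so that $\rho\beta \ge 0$; taking $M$ large enough in \eqref{eq:Vfardef1} makes this positive term dominate the bounded-region contributions (the curvature/potential commutator $\frac{1}{2}\langle(\Delta\partial_\rho^\ast\beta)\uppsi_\far,\uppsi_\far\rangle$, the angular-degeneracy terms, and the perturbative errors $\calP_\Ell^\pert$, $\calP_\uptau$ from Remark~\ref{rem:nongeomglobal1}). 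This is exactly how the $\|\uppsi\|_{L^2(K_\Ext)}$ error is removed. The exterior region is handled by the $r^p$-multiplier as in Section~\ref{subsec:iled-catenoid} and contributes no bounded-region error, while the nonstationary errors (coefficients $a,b,c,\ringa,\ldots$ of size $\callO(\dotwp^{\le 2})$, which by the bootstrap \eqref{eq:wpb1} decay in $\uptau$) are absorbed into the left-hand side after integrating, since they come with either a smallness factor $\epsilon$ or $\uptau$-decay. The quasilinear second-order errors $a^{\mu\nu}\partial^2_{\mu\nu}\phi$ are likewise handled by the multiplier argument because $a = \callO(\dotwp^{\le 2})$ is small; this is the same mechanism described in the introduction (Section~\ref{subsec:ideas-iled}) for taking care of issues (i) and (ii). The fluxes on $\Sigma_{t_1}$ and $\Sigma_{t_2}$ generated by the $\partial_\uptau$-multiplier and the integration by parts are controlled by $\|\uppsi_\far\|_{E(\Sigma_{t_1})}^2 + \|\uppsi_\far\|_{E(\Sigma_{t_2})}^2$; by the first energy estimate of Proposition~\ref{prop:energyestimate1} applied to $\calP-V_\far$ (noting that $V_\far$ is a bounded compactly supported potential so the hypotheses on $\calP$ are unaffected, and that the orthogonality-type smallness is not needed here since we may simply Cauchy–Schwarz the $V_\far$ term back into the energy), these are bounded by $\|\uppsi_\far\|_{E(\Sigma_{t_1})} + \|f\|_{L^1L^2(\Sigma_{t_1}^{t_2})}$, and $\uppsi_\far\vert_{\Sigma_{t_1}} = \uppsi\vert_{\Sigma_{t_1}}$ by \eqref{eq:phifardef1}. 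Putting the interior and exterior pieces together with $M$ fixed large yields \eqref{eq:lemphifarbound1}. Remark~\ref{rem:fLED1} applies here: if $f$ is of the form $\partial_\Sigma^2 g$, one integrates by parts in $f Q\uppsi_\far$ before estimating, which is exactly the step localized to this lemma.

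For \eqref{eq:lemphifarbound2}, I would apply the standard local energy decay estimate \emph{with} the bounded-region error — that is, the estimate \eqref{eq:LEDproducttemp1}-type bound that the multiplier argument gives for the \emph{unmodified} operator $\calP$ — to $\uppsi_\near$, which by \eqref{eq:calPphinear1} solves $\calP\uppsi_\near = -V_\far\uppsi_\far$ with vanishing data on $\Sigma_{t_1}$. This gives
\begin{align*}
\|\uppsi_\near\|_{LE(\Sigma_{t_1}^{t_2})} \lesssim \|\uppsi_\near\|_{E(\Sigma_{t_1})} + \|V_\far\uppsi_\far\|_{L^1L^2(\Sigma_{t_1}^{t_2})} + \|\uppsi_\near\|_{L^2(K_\Ext)},
\end{align*}
where $\|\uppsi_\near\|_{E(\Sigma_{t_1})} = 0$, and $V_\far$ is supported in $U_\Int \subseteq$ a fixed compact set, so $\|V_\far\uppsi_\far\|_{L^1L^2(\Sigma_{t_1}^{t_2})} \lesssim \|\uppsi_\far\|_{L^1 L^2(U_\Int)} \lesssim \|\uppsi_\far\|_{LE(\Sigma_{t_1}^{t_2})}$ (the $LE$ norm controls a weighted $L^2$ on any fixed compact region, with room to spare since we can even bound $L^2_{\uptau,x}$ there, hence after Cauchy–Schwarz in $\uptau$ a slightly-worse-than-$L^2$ bound — but here the compact support makes the $L^1_\uptau L^2_x$ norm comparable to the $L^2_{\uptau,x}$ norm times $(t_2-t_1)^{1/2}$; to avoid the time-length loss one instead keeps $\|V_\far \uppsi_\far\|_{LE^\ast}$ or $L^1L^2$ directly by noting $\uppsi_\far$ already satisfies an $LE$ bound and $V_\far$ has rapid decay built in). Cleaner: bound $\|V_\far\uppsi_\far\|_{L^1L^2(\Sigma_{t_1}^{t_2})}$ by first using the energy bound $\sup_\uptau\|\uppsi_\far\|_{E(\Sigma_\uptau)} \lesssim \|\uppsi\|_{E(\Sigma_{t_1})} + \|f\|_{L^1L^2}$ from Proposition~\ref{prop:energyestimate1} together with the compact support of $V_\far$ — but this introduces an $L^1_\uptau$-in-time factor. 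The honest route, which is what I would write, is: $\uppsi_\far$ obeys \eqref{eq:lemphifarbound1}, so the compactly-supported quantity $V_\far\uppsi_\far$ satisfies $\|V_\far\uppsi_\far\|_{LE^\ast(\Sigma_{t_1}^{t_2})} \lesssim \|\uppsi_\far\|_{LE(\Sigma_{t_1}^{t_2})} \lesssim \|\uppsi\|_{E(\Sigma_{t_1})} + \|f\|_{L^1L^2(\Sigma_{t_1}^{t_2})}$, and one uses the $LE^\ast$-version of the local energy decay estimate for $\uppsi_\near$. Finally, combining these gives \eqref{eq:lemphifarbound2} with the genuinely necessary $\|\uppsi_\near\|_{L^2(K_\Ext)}$ term retained, since the damping in $\calP$ alone (without $V_\far$) does not control the trapped-sphere contribution.

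\textbf{Main obstacle.} The delicate point is choosing the damping constant $M$ in \eqref{eq:Vfardef1} large enough that the good commutator term $2\langle(\beta\partial_\rho V_\far)\uppsi_\far, \uppsi_\far\rangle$ genuinely dominates \emph{all} the competing bounded-region contributions — in particular the zeroth-order commutator $\frac{1}{2}\langle(\Delta\partial_\rho^\ast\beta)\uppsi_\far,\uppsi_\far\rangle$ coming from the interior multiplier and the errors from $\calP_\Ell^\pert$ and $\calP_\uptau$ — \emph{uniformly} in the choice of the other large parameters ($K$ in $\beta$, $\tilR$, $\Reigenfunctioncutoffscale$) and without creating circularity. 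One must fix the multiplier $\beta$ (hence $K$) and the cutoff scales first, so that $\langle(\Delta\partial_\rho^\ast\beta)\uppsi_\far,\uppsi_\far\rangle$ and the perturbative errors are bounded by a fixed constant times a fixed weighted $L^2$ norm of $\uppsi_\far$ on a fixed compact set, and only then choose $M = M(\beta, K, \tilR)$ large; one also needs the angular-derivative degeneracy at $\rho = 0$ (the order-two vanishing noted after \eqref{eq:comm-hard}) to be compatible with the $LE$ norm's built-in degeneracy, which it is by construction. A secondary technical nuisance is the bookkeeping in \eqref{eq:lemphifarbound2} to avoid a spurious factor of $(t_2-t_1)^{1/2}$ when passing the compactly supported source $V_\far\uppsi_\far$ between the $L^1L^2$ and $LE^\ast$ norms; this is resolved, as indicated, by keeping everything in the $LE$/$LE^\ast$ scale where the compact support of $V_\far$ gives the needed gain.
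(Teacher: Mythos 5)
Your proposal is correct and follows essentially the same route as the paper's proof: the paper also runs separate interior and exterior multiplier arguments (the interior multiplier $Q\phi=q^\mu\partial_\mu\phi+|\tilh|^{-1/2}\partial_\mu(|\tilh|^{1/2}q^\mu\phi)$ with $q^\rho=\beta_2=\rho\chi_2$ is your $\beta\partial_\rho-\partial_\rho^\ast\beta$ in disguise, and the exterior multiplier is $Q=-2\beta_1(\partial_\tiluprho-\partial_\tiluptau)+(\beta_1'+\tfrac{n-1}{\tiluprho}\beta_1)$), produces the good term $M\phi^2$ on a fixed compact set from $\tilq^\lambda\partial_\lambda U$ exactly as you say, and absorbs the exterior cut-off error $\iint_{\supp\chi_1'}|\upphi|^2$ by taking $M$ large after the cutoff scales are fixed; the $\uppsi_\near$ bound is likewise obtained by re-running the multiplier argument without $V_\far$ (retaining the $L^2(K_\Ext)$ error) and controlling $\|V_\far\uppsi_\far\|_{LE^\ast}$ via \eqref{eq:lemphifarbound1}. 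Your ``main obstacle'' discussion correctly identifies the one genuine subtlety (ordering of the large-constant choices).
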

%%%%%%%%%%%%
%%%%%%%%%%%%
\begin{proof}
The proof consists of two multiplier arguments, one in the exterior and one in the interior. The proofs for the estimates for $\uppsi_\near$ and $\uppsi_\far$ are similar so we carry out the details for $\uppsi_\far$ which is slightly more involved. 
To simplify notation we write $\upphi$ for $\uppsi_\far$ and $U$ for $V_\far-V$ in the remainder of the proof. In addition to the smooth cutoffs $\chi_1$ and $\chi_2$ introduced above, we will write $\chi_A$ to denote an appropriate cutoff with support in a set $A$. Starting with the exterior we use \eqref{eq:callP2} and \eqref{eq:ErrcallP1} to write the equation in the exterior as (to be precise, we have used the conjugation \eqref{eq:varphiphi1} and $\upphi$ corresponds to the conjugated variable, but the estimates are easy to transfer between the conjugated and original variables) 
%%%%%%%
%%%%%%%
\begin{align}\label{eq:psifarLEDtemp0}
\begin{split}
\Box_m\upphi-U\upphi+\Err_\calP(\upphi)=f.
\end{split}
\end{align}
Let $Q$ be the multiplier defined in the $(\tiluptau,\tiluprho,\tiluptheta)$ coordinates, relative to the parameter values at $t_2$, as
%%%%%%%
%%%%%%%
\begin{align}\label{eq:psifarLEDtemp0.5}
\begin{split}
Q=-2\beta_1(\partial_\tiluprho-\partial_\tiluptau)+(\beta_1'+\frac{n-1}{\tiluprho}\beta_1),
\end{split}
\end{align}
where (here $\chi_1$ is as defined before the statement of Lemma~\ref{lem:phifar1})
%%%%%%%
%%%%%%%
\begin{align*}
\begin{split}
\beta_1\equiv \beta_1(\tiluprho) = (\frac{\tiluprho}{\jap{\tiluprho}}-\frac{\delta \tiluprho}{\jap{\tilrho}^{1+\alpha}})\chi_1(\tiluprho),
\end{split}
\end{align*}
for suitable small constants $\alpha$ and $\delta$. We multiply equation \eqref{eq:psifarLEDtemp0} by $Q\upphi |m|^{\frac{1}{2}}$. Note that, except for the cutoff $\chi_1$, this choice of $Q$ is the standard multiplier for the proof of LED on Minkowski space near each asymptotically flat end $\tiluprho \to \pm \infty$. As usual, for concreteness, we focus on the end $\tiluprho \to \infty$. The main contribution comes from $\Box_m-U$. By direct computation, for any vectorfield $a^\mu\partial_\mu$ (here we use $i,j$ to denote tangential partial derivatives with respect to $\uprho$ and $\uptheta$),
%%%%%%%
%%%%%%%
\begin{align*}
(\Box_m-U) \upphi a^\lambda\partial_\lambda\upphi |m|^{\frac{1}{2}}
&=-\frac{1}{2}\partial_\uptau(|m|^{\frac{1}{2}}(m^{-1})^{ij}a^\tau \partial_i\upphi\partial_j\upphi-|m|^{\frac{1}{2}}U a^\tau \phi^2)-\frac{1}{2}\partial_\mu(U a^\mu|m|^{\frac{1}{2}})\upphi^2\\
&\quad+\partial_i((m^{-1})^{i\nu}|m|^{\frac{1}{2}}a^j\partial_\nu\upphi \partial_j\upphi-\frac{1}{2}(m^{-1})^{\mu\nu}|m|^{\frac{1}{2}}a^i\partial_\mu\upphi\partial_\nu\upphi+\frac{1}{2}|m|^{\frac{1}{2}}U a^j \phi^2)\\
&\quad+\frac{1}{2}\partial_\lambda((m^{-1})^{\mu\nu}|m|^{\frac{1}{2}}a^\lambda)\partial_\mu\upphi\partial_\nu\upphi-(m^{-1})^{\mu\nu}|m|^{\frac{1}{2}}(\partial_\mu a^\lambda)\partial_\nu\upphi\partial_\lambda\upphi,
\end{align*}
and for any scalar function $S$,
%%%%%%%
%%%%%%%
\begin{align*}
(\Box_m-U)\upphi S\upphi |m|^{\frac{1}{2}}
&=\partial_\tau(|m|^{\frac{1}{2}}(m^{-1})^{\tau\nu}\partial_\nu\upphi S\upphi-\frac{1}{2}|m|^{\frac{1}{2}}(m^{-1})^{\tau\nu}\partial_\nu S\upphi^2)\\
&\quad+\partial_j(|m|^{\frac{1}{2}}(m^{-1})^{j\nu}\partial_\nu\upphi S\upphi-\frac{1}{2}|m|^{\frac{1}{2}}(m^{-1})^{j\nu}\partial_\nu S\upphi^2)\\
&\quad+\frac{1}{2}|m|^{\frac{1}{2}}(\Box_m S)\upphi^2-|m|^{\frac{1}{2}}S(m^{-1})^{\mu\nu}\partial_\mu\upphi\partial_\nu\upphi-|m|^{\frac{1}{2}}US\upphi^2.
\end{align*}
We apply and add these identities in the $(\uptau,\uprho,\uptheta)$ coordinates with $a^\mu$ and $S$ determined by $Q$ above. It follows with $B^\mu[\upphi]$ determined through these identities,
%%%%%%%
%%%%%%%
\begin{align}\label{eq:psifarLEDtemp1.5}
\begin{split}
fQ\upphi|m|^{\frac{1}{2}}&=\partial_\mu B^\mu[\upphi]+\calP_\pert\upphi Q\upphi |m|^{\frac{1}{2}}-|m|^{\frac{1}{2}}(|m|^{-\frac{1}{2}}\frac{1}{2}\partial_\mu(U a^\mu|m|^{\frac{1}{2}})\upphi^2-US\upphi^2)\\
&\quad+|m|^{\frac{1}{2}}\big(\frac{1}{2}|m|^{-\frac{1}{2}}\partial_\lambda((m^{-1})^{\mu\nu}|m|^{\frac{1}{2}}a^\lambda)\partial_\mu\upphi\partial_\nu\upphi-(m^{-1})^{\mu\nu}(\partial_\mu a^\lambda)\partial_\nu\upphi\partial_\lambda\upphi\big)\\
&\quad +|m|^{\frac{1}{2}}(\frac{1}{2}(\Box_m S)\upphi^2-S(m^{-1})^{\mu\nu}\partial_\mu\upphi\partial_\nu\upphi).
\end{split}
\end{align}
The contribution of $B^\mu[\upphi]$ can be bounded by the energy (see \eqref{eq:minvdecomp1} and \eqref{eq:m02inv1} for the form of $m$).  To calculate the bulk terms, we first write $m=\mbar+\ringm$ where $\mbar$ is defined by freezing the $\uptau$ values of the coefficients at $\uptau=t_2$, and $\ringm:=m-\mbar$. In view of  \eqref{eq:minvdecomp1} and \eqref{eq:m02inv1}, and the $\uptau$ decay of $\ringm$, the contribution of $\ringm$ is bounded by the energy. Here note that the term $|m|^{\frac{1}{2}}(m^{-1})^{\uptau\uprho}$, which could lead to a transversal derivative with no spatial decay on the leaves $\Sigma_\uptau$, is independent of $\uptau$ to leading order in $\uprho$, so its leading order contribution to $\ringm$ vanishes (see \eqref{eq:mdvol1}, \eqref{eq:m02inv1}). For the contribution of $\mbar$, except for the multiplicative $|\mbar|^{\frac{1}{2}}$, the expression of the bulk terms is coordinate invariant, so we can calculate in the $(\tiluptau,\tiluprho,\tiluptheta)$ coordinates. But then, using the asymptotic flatness of the metric in these coordinates, and the fact that $\chi_1'\geq 0$, if $\delta$ is sufficiently small the last two lines of \eqref{eq:psifarLEDtemp1.5} give control of 
%%%%%%%
%%%%%%%
\begin{align}\label{eq:psifarLEDtemp1.75}
\begin{split}
-\uprho^{-1-\alpha}((\partial_\uprho\upphi)^2+(\uprho^{-1}\partial_\uptheta\upphi)^2+(\uprho^{-1}\upphi)^2).
\end{split}
\end{align}
Using similar considerations, the contributions of $\calP_\pert$ and $U$ to \eqref{eq:psifarLEDtemp1.5} can be bounded by the energy and a small multiple of the LE norm of $\upphi$. To get control of the remaining derivative $\partial
_\uptau\upphi$, we again multiply \eqref{eq:psifarLEDtemp0} by $\beta_1'\upphi |m|^{\frac{1}{2}}$, and manipulate as above to get, for the appropriate choice of $\tilB^\mu[\upphi]$,
%%%%%%%
%%%%%%%
\begin{align}\label{eq:psifarLEDtemp2.5}
\begin{split}
f\beta_1'\upphi |m|^{\frac{1}{2}}&=\partial_\mu \tilB^\mu[\upphi]+\calP_\pert\beta_1'\upphi |m|^{\frac{1}{2}}+|m|^{\frac{1}{2}}U\beta_1'\upphi^2\\
&\quad+|m|^{\frac{1}{2}}(\frac{1}{2}(\Box_m \beta_1')\upphi^2-\beta_1'(m^{-1})^{\mu\nu}\partial_\mu\upphi\partial_\nu\upphi).
\end{split}
\end{align}
Using similar arguments as above, and as in the standard Minkowski computation, this gives control of $\uprho^{-1-\alpha}(\partial_\uptau\upphi)^2$ in terms of \eqref{eq:psifarLEDtemp1.75}.
%%%%%%%%
%%%%%%%%
Note also that by a similar argument as in the proof of Proposition~\ref{prop:energyestimate1} we can prove an energy estimate for equation \eqref{eq:phifardef1}. Adding a suitably large multiple of \eqref{eq:psifarLEDtemp1.5} and the energy identity for \eqref{eq:phifardef1} to \eqref{eq:psifarLEDtemp2.5}, for a small constant $\varepsilon$ depending on the support of $\chi_1$, we get (note that the induced volume form and $\sqrt{|m|}$ are comparable in the support of $\chi_1$)
%%%%%%%
%%%%%%%
\begin{align}\label{eq:psifarLEDtemp3}
\begin{split}
\iint_{\Sigma_{t_1}^{t_2}}\chi_1 \tilr^{-1-\alpha}((\partial\upphi)^2+(\tilr^{-1}\upphi)^2)\ud V &\lesssim \|f\|^2_{L^1L^2(\Sigma_{t_1}^{t_2})}+\varepsilon \|\upphi\|^2_{LE(\Sigma_{t_1}^{t_2})}\\
&\quad+\iint_{\Sigma_{t_1}^{t_2}\cap \,\supp \chi_1'}|\upphi|^2\ud V.
\end{split}
\end{align}
For the interior we multiply the equation by two multipliers $Q\phi$ and $P\phi$ of the forms, 
%%%%%%%
%%%%%%%
\begin{align*}
\begin{split}
Q\phi :=q^\mu\partial_\mu\phi +|\tilh|^{-\frac{1}{2}}\partial_\mu (|\tilh|^{\frac{1}{2}}q^\mu \phi)\qquad\mand\qquad P\phi:= p\phi,
\end{split}
\end{align*}
where (recall the relations \eqref{eq:ttilt1}, \eqref{eq:ttilt2}, \eqref{eq:kappadef1})
%%%%%%%
%%%%%%%
\begin{align*}
\begin{split}
q^\mu=\delta_\rho^\mu\beta_2(\rho)-\gamma(t)\delta_{t}^\mu \ell(t)\cdot F_\rho(\rho,\omega) (1-\upkappa(1+\upkappa)^{-1})\beta_2(\rho), \qquad p= p_0 \rho^2\chi_2(\rho).
\end{split}
\end{align*}
Here  $p_0$ is a constant to be fixed later (and with $\chi_2$ as defined before the statement of Lemma~\ref{lem:phifar1}),
%%%%%%%
%%%%%%%
\begin{align*}
\begin{split}
\beta_2(\rho)=\rho \chi_2(\rho).
\end{split}
\end{align*}
It is helpful to keep in mind that $\ell\cdot F_\rho=\frac{\rho}{\jap{\rho}}\ell\cdot\Theta$ vanishes at $\rho=0$. Note that in the $(\tilt,\tilrho,\tilomega)$ coordinates $\tilq^\mu\partial_\mu =\beta_2 \partial_\tilrho$ (recall that in our notation we use $\tilq^\mu$ to denote the components of the vectorfield $q$ in the $(\tilt,\tilrho,\tilomega)$ coordinates). Also, recalling \eqref{eq:LEDcalPint1} we use the following notation $\ringcalP=a^{\mu\nu}\partial^2_{\mu\nu}+b^\mu \partial_\mu + c$ for the perturbation.
For the principal part of $\calP$ we can change variables to $(\tilt,\tilrho,\tilomega)$ to get
%%%%%%%
%%%%%%%
\begin{align}\label{eq:LEDintbulk1}
\begin{split}
(\Box_h -U)\phi Q\phi |\tilh|^{\frac{1}{2}}&= \partial_\mu\Big(2|\tilh|^{\frac{1}{2}}(\tilh^{-1})^{\mu\nu}\tilq^\lambda \partial_\lambda\phi\partial_\nu\phi-|\tilh|^{\frac{1}{2}}(\tilh^{-1})^{\lambda\nu}\tilq^\mu\partial_\lambda\phi\partial_\nu\phi-|\tilh|^{\frac{1}{2}}U\tilq^\lambda\phi^2\Big)\\
&\quad+\partial_\mu\Big((\tilh^{-1})^{\mu\nu}\partial_\lambda(|\tilh|^{\frac{1}{2}}\tilq^\lambda)\phi\partial_\nu\phi-\frac{1}{2}(\tilh^{-1})^{\mu\nu}\partial_\nu(|\tilh|^{-\frac{1}{2}}\partial_\lambda(|\tilh|^{\frac{1}{2}}\tilq^\lambda))|\tilh|^{\frac{1}{2}}\phi^2\Big)\\
&\quad-2\big((\tilh^{-1})^{\lambda\nu}(\partial_\lambda\tilq^\mu)-\frac{1}{2}\tilq^\lambda\partial_\lambda(\tilh^{-1})^{\mu\nu}\big)\partial_\mu\phi\partial_\nu\phi|\tilh|^{\frac{1}{2}}\\
&\quad+\frac{1}{2}\Box_\tilh(|\tilh|^{-\frac{1}{2}}\partial_\lambda(|\tilh|^{\frac{1}{2}}\tilq^\lambda))\phi^2|\tilh|^{\frac{1}{2}}+(\tilq^\lambda\partial_\lambda U)\phi^2|\tilh|^{\frac{1}{2}},
\end{split}
\end{align}
and
%%%%%%%
%%%%%%%
\begin{align}\label{eq:LEDintbulk2}
\begin{split}
(\Box_h-U) \phi P\phi |\tilh|^{\frac{1}{2}}&= \partial_\mu\big(|\tilh|^{\frac{1}{2}}(\tilh^{-1})^{\mu\nu}p\phi\partial_\nu\phi-\frac{1}{2}|\tilh|^{\frac{1}{2}}(\tilh^{-1})^{\mu\nu}\partial_\mu p \phi^2\big)\\
&\quad-p(\tilh)^{-1}\partial_\mu\phi\partial_\nu\phi|\tilh|^{\frac{1}{2}}+\big(\frac{1}{2}\Box_\tilh p-Up\big)\phi^2|\tilh|^{\frac{1}{2}}.
\end{split}
\end{align}
Recalling that, in view of \eqref{eq:ttilt2}, $(1+\upkappa)^{-1}|h|^{\frac{1}{2}}=|\tilh|^{\frac{1}{2}}$, by adding a small multiple $\epsilon_M$ of \eqref{eq:LEDintbulk2} to \eqref{eq:LEDintbulk1} and multiplying by $(1+\upkappa)^{-1}$ we get, for some constant $c_M=o(M)$, 
%%%%%%%
%%%%%%%
\begin{align}\label{eq:LEDintbulk3}
\begin{split}
(U-\Box_h) \phi (Q\phi+\epsilon_MP\phi) |h|^{\frac{1}{2}}&\geq C\chi_1(c_M\rho^2(T\phi)^2+(\partial_\Sigma\phi)^2+M\phi^2)\\
&\quad-O(1)\chi_{U_\Int\backslash K_\Int}((\partial\phi)^2+(\phi)^2)\\
&\quad+O(\epsilon \,t^{-5/4})\chi_{U_\Int}((\partial\phi)^2+(\phi/\rho)^2)\\
&\quad+\partial(O(1)\chi_{U_\Int}(\partial\phi)^2+O(1)\chi_{U_\Int}(\phi/\rho)^2).
\end{split}
\end{align}
In a similar manner, using the $t$ decay of $a,b,c$, we can see that
%%%%%%%
%%%%%%%
\begin{align}\label{eq:LEDintbulk4}
\begin{split}
P \phi (Q\phi+\epsilon_MP\phi) |h|^{\frac{1}{2}}&=O(\epsilon \,t^{-9/4})\chi_{U_\Int}((\partial\phi)^2+(\phi/\rho)^2)\\
&\quad+\partial(O(\epsilon)\chi_{U_\Int}(\partial\phi)^2+O(\epsilon)\chi_{U_\Int}(\phi/\rho)^2).
\end{split}
\end{align}
The desired estimate now follows by integrating \eqref{eq:LEDintbulk3} and \eqref{eq:LEDintbulk4} (with respect to the measure $\ud t \ud\rho \ud\omega$) and combining with the energy identity for \eqref{eq:phifardef1} and a suitably large multiple (independent of $M$) of \eqref{eq:psifarLEDtemp3}. Here note that the bulk error terms in \eqref{eq:LEDintbulk3} and \eqref{eq:LEDintbulk4} are absorbed by the suitably large multiple of \eqref{eq:psifarLEDtemp3}, while the bulk $L^2(\supp \chi_1')$ error terms in the latter are absorbed by \eqref{eq:LEDintbulk3} if $M$ is chosen sufficiently large. The argument for \eqref{eq:lemphifarbound2} is similar, where now we incur some $L^2$ errors in a compact region in view of the absence of $V_\far$ from the left-hand side of \eqref{eq:calPphinear1}. The contribution of the source term in \eqref{eq:calPphinear1} is bounded in $LE^\ast$ using the decay of $V_\far$ and \eqref{eq:lemphifarbound1}.
\end{proof}
%%%%%%%%%%%%
%%%%%%%%%%%%
%%%%%%%%%%%%
%%%%%%%%%%%%
At this point we use the coordinates $(\uptau,\uprho,\uptheta)$ and the decomposition $\calP=\calP_0+\calP_\pert$ (see the opening paragraphs of this section). For a globally defined function $u$, the frequency projections $P_{\leq N_0}$ and $P_N$ are defined by (suppressing the spatial variables $(\uprho,\uptheta)$)
%%%%%%%
%%%%%%%
\begin{align*}
\begin{split}
P_{\leq N_0}u(\uptau)=\int_{-\infty}^{\infty}2^{N_0}\chi(2^{N_0}\uptau')u(\uptau-\uptau')\ud \uptau',\qquad P_{N}u(\uptau)=\int_{-\infty}^{\infty}2^{N_0}\tilchi(2^{N_0}\uptau')u(\uptau-\uptau')\ud \uptau',
\end{split}
\end{align*}
where as usual $\hat\chi(\hat\uptau)=\int_{-\infty}^{\infty}\chi(\uptau)e^{-i\hat\uptau\uptau}\ud \uptau$ and $\hat{\tilchi}(\hat\uptau)=\int_{-\infty}^{\infty}\chi(\uptau)e^{-i\hat\uptau\uptau}\ud \uptau$ are supported in $\{|\hat\uptau|\lesssim 1\}$ and $\{|\hat\uptau|\simeq 1\}$ respectively. 

In order to apply frequency projections, we need to extend $\uppsi$, $\uppsi_\far$, and $\uppsi_\near$. For this, we view these as functions of the variables $(\uptau,\uprho,\upomega)$  and extend them outside of their current domain of definition by requiring that they satisfy
%%%%%%%
%%%%%%%
\begin{align*}
\begin{split}
\calP\uppsi-V_\far \uppsi=0,\qquad \calP\uppsi_\far- V_\far\uppsi_\far=0,
\end{split}
\end{align*}
in $( \Sigma_{t_1}^{t_2})^c$. Here $\calP$ is extended outside the original domain using the decomposition \eqref{eq:calPP0Ppertdecomp1}, by extending the coefficients of $\calP_0$ independently of $\uptau$ and those of $\calP_\pert$ smoothy such that the estimates \eqref{eq:calPP0Ppertdecomp1} are still satisfied.
It follows that $\uppsi_\near:=\uppsi-\uppsi_\far$ satisfies 
%%%%%%%
%%%%%%%
\begin{align*}
\begin{split}
\calP\uppsi_\near -V_\far \uppsi_\near=0,
\end{split}
\end{align*}
outside its original domain of definition. By a similar argument as in the proof of Lemma~\ref{lem:phifar1}, we can then replace $\|\uppsi_\far\|_{LE(\Sigma_{t_1}^{t_2})}$ and $\|\uppsi_{\near}\|_{LE(\Sigma_{t_1}^{t_2})}$ in the estimates \eqref{eq:lemphifarbound1} and \eqref{eq:lemphifarbound2} by $\|\uppsi_\far\|_{LE}$ and $\|\uppsi_{\near}\|_{LE}$, respectively, where $$LE\equiv LE(\cup_{\uptau}\Sigma_\uptau).$$
%%%%%%%%%%%%
%%%%%%%%%%%%
In view of Lemma~\ref{lem:phifar1} our task has reduced to estimating $\|\uppsi_\near\|_{L^2(K_\Ext)}$. The high frequency part of this error can already be absorbed by the $LE$ norm as shown in the next lemma.
%%%%%%%%%%%%
%%%%%%%%%%%%
\begin{lemma}\label{lem:LEDhighfreq1}
Given $\delta>0$, if $N_0$ is sufficiently large then $P_{>N_0}\uppsi_\near:=\uppsi_\near - P_{\leq N_0}\uppsi_\near$ satisfies
%%%%%%%
%%%%%%%
\begin{align*}
\begin{split}
\|P_{>N_0}\uppsi_\near\|_{L^2(K_{\Ext})}\leq \delta \|\uppsi_\near\|_{LE}.
\end{split}
\end{align*}
\end{lemma}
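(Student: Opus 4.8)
\textbf{Proof plan for Lemma~\ref{lem:LEDhighfreq1}.} The plan is to exploit the fact that $P_{>N_0}$ localizes to time frequencies $\gtrsim 2^{N_0}$, so that on $P_{>N_0}\uppsi_\near$ the operator $\RbfT = \partial_\uptau$ is ``elliptic of order one'' and can be inverted at a cost of $2^{-N_0}$. Concretely, writing $u := \uppsi_\near$, I would first record that $u$ satisfies $(\calP - V_\far) u = 0$ on all of $\cup_\uptau \Sigma_\uptau$ after the extension performed just before the statement, and that the frequency projections commute with nothing exactly but only up to the $\uptau$-dependence of the coefficients of $\calP$. The key algebraic identity is that $\RbfT P_{>N_0}$ is, up to the convolution structure, the Fourier multiplier $i\hat\uptau \cdot (1 - \hat\chi(2^{-N_0}\hat\uptau))$, which is supported in $\{|\hat\uptau| \gtrsim 2^{N_0}\}$; hence there is a bounded-on-$L^2$ multiplier $\Lambda_{N_0}$ with $P_{>N_0} = 2^{-N_0}\Lambda_{N_0}\RbfT P_{>N_0}$, and more generally $P_{>N_0} = 2^{-2N_0}\Lambda_{N_0}'\RbfT^2 P_{>N_0}$, where $\Lambda_{N_0}, \Lambda_{N_0}'$ are convolution operators in $\uptau$ with $L^1$ kernels of norm $O(1)$ uniformly in $N_0$.

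Next I would use the equation to trade the two $\RbfT$-derivatives for spatial derivatives plus lower-order terms. Using the form $\calP = \calP_\uptau + \calP_\Ell$ from Remark~\ref{rem:nongeomglobal1}(1), with $\calP_\Ell$ elliptic and free of $\partial_\uptau$, and $\calP_\uptau = \callO(1)\partial\RbfT + \callO(\jap{\uprho}^{-1})\RbfT$, the relation $(\calP - V_\far)u = 0$ lets me solve algebraically for the top-order $\RbfT$-content of $u$: schematically $\RbfT(\text{stuff}) = -\calP_\Ell u + V_\far u + (\text{l.o.t.})$. The cleanest route is probably to apply $P_{>N_0}$ to the equation, commute it past the coefficients (the commutator $[\calP, P_{>N_0}]$ gains a factor $\jap{\uptau}^{-\gamma}$ from the $\uptau$-regularity of the coefficients and is harmless), and then estimate $\|P_{>N_0} u\|_{L^2(K_\Ext)}$ by $2^{-2N_0}\|\RbfT^2 P_{>N_0} u\|_{L^2(\tilde K)}$ on a slightly enlarged compact set $\tilde K$; using the equation twice, $\RbfT^2 P_{>N_0}u$ is controlled by $\partial^2_\Sigma P_{>N_0}u$ and lower order, and then elliptic regularity for $\calP_\Ell$ on compact sets (interior estimates, losing only a fixed constant number of derivatives on a larger set) together with Hardy/Poincar\'e-type inequalities to handle the zeroth-order and $\jap{\uprho}^{-1}$-weighted terms bounds everything by $2^{-cN_0}$ times the full energy $\|u\|_{E}$ on $\tilde K$, which is in turn $\lesssim \|u\|_{LE}$ since $K_\Ext$ and $\tilde K$ are compact and contained in the region where $LE$ controls all derivatives without weight degeneration (away from $\rho = 0$ one has no degeneracy, and even the degeneracy at $\rho = 0$ costs only powers of a bounded quantity). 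Choosing $N_0$ large enough that $2^{-cN_0} \le \delta$ finishes the argument.

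The main obstacle I anticipate is bookkeeping the commutators $[\calP, P_{>N_0}]$ and making sure the error they generate is genuinely controlled by $\|u\|_{LE}$ (not by something like $\|\RbfT u\|_{LE}$, which would be circular). This is where the $\uptau$-decay $|\partial_\uptau \bfh| \lesssim \epsilon\uptau^{-\gamma}$ and the structure in Remark~\ref{rem:nongeomglobal1} are essential: the commutator kernel in $\uptau$ has the same $L^1$-bounded convolution structure but the coefficient differences across the (short, scale $2^{-N_0}$) convolution window contribute an extra smallness, so $\|[\calP, P_{>N_0}]u\|_{L^2(\tilde K)} \lesssim 2^{-N_0}\|u\|_{E(\tilde K)} + (\text{decaying})\|u\|_{E(\tilde K)}$. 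A secondary technical point is that the extension of $u$ outside $\Sigma_{t_1}^{t_2}$ was made precisely so that the time-convolution projections are well-defined; I would note that the elliptic estimates above are local in $\uptau$ and so are insensitive to the exact nature of this extension, exactly as in the argument that replaced $\|\uppsi_\far\|_{LE(\Sigma_{t_1}^{t_2})}$ by $\|\uppsi_\far\|_{LE}$ before the statement. Finally, one must check that the weight-free $L^2(K_\Ext)$ norm on the left is what the interior multiplier argument of Lemma~\ref{lem:phifar1} actually needs (it is, since there the error term entered as $\|\uppsi_\near\|_{L^2(K_\Ext)}$), so no additional weights need be tracked.
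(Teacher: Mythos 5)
Your opening observation is correct and is the same ingredient the paper uses: $P_{>N_0}$ localizes to time frequencies $\gtrsim 2^{N_0}$, so one may write $P_{>N_0} = 2^{-N_0}\Lambda_{N_0}\RbfT P_{>N_0}$ with $\Lambda_{N_0}$ uniformly $L^2$-bounded and thus gain a small factor. However, the route you propose after that point contains a genuine error and is also unnecessary. The step that fails is the final claim that $\|u\|_{E(\tilde K)} \lesssim \|u\|_{LE}$ because ``the degeneracy at $\rho=0$ costs only powers of a bounded quantity.'' This is exactly backwards: the $LE$ norm in the interior controls $\rho\uppsi_\near$, $\rho\partial\uppsi_\near$, and $\partial_\rho\uppsi_\near$ but \emph{not} $\uppsi_\near$, $\RbfT\uppsi_\near$, or $\partial_\omega\uppsi_\near$ without a $\rho$ weight. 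Since $K_\Ext$ necessarily contains the trapped region $\{\rho = 0\}$ (that is precisely where the multiplier argument of Lemma~\ref{lem:phifar1} loses positivity, which is what generates the $\|\uppsi_\near\|_{L^2(K_\Ext)}$ error in the first place), the full local energy $\|u\|_{E(\tilde K)}$ is strictly stronger than $\|u\|_{LE}$ and is not controlled by it. In fact, if $\|u\|_{E(\tilde K)}\lesssim\|u\|_{LE}$ were true, the lemma would be trivial with $\delta$ a fixed constant and the high-frequency projection would be irrelevant; the whole point of the lemma is to circumvent the degeneracy of $LE$ at $\rho=0$. For the same reason, the elliptic-regularity route via the equation cannot be carried through as stated: $\calP_\Ell$ involves the angular Laplacian $\jap{\rho}^{-2}\ringsDelta$, so inverting it near $\rho=0$ would require control of angular derivatives of $\uppsi_\near$ without a $\rho$ weight, which $LE$ does not provide.

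The paper's actual proof is much shorter and does not touch the equation, the operator $\calP_\Ell$, or any commutator with $\calP$. It isolates the one derivative that $LE$ does control without weight degeneration, namely $\partial_\rho$. Writing $u = P_{>N_0}\uppsi_\near$ and integrating the identity $u^2 = (\partial_\rho\rho)u^2$ by parts in $\rho$ (against a cutoff $\chi$) produces two terms: one supported away from $\rho=0$ (where an extra $\rho$ can be inserted for free) and one of the form $\rho\,u\,\partial_\rho u$, which is absorbed by Young's inequality into $C_\varepsilon(\rho u)^2 + \varepsilon(\partial_\rho u)^2$. The $(\partial_\rho u)^2$ piece is directly bounded by $\|\uppsi_\near\|_{LE}^2$ since $P_{>N_0}$ commutes with $\partial_\rho$ and is bounded on $L^2_\uptau$, while the $(\rho u)^2$ pieces are of the form $\|P_{>N_0}(\rho\uppsi_\near)\|_{L^2}^2$, and \emph{here} the high-frequency gain is used: this is $\lesssim 2^{-2N_0}\|\rho\,\partial_\uptau\uppsi_\near\|_{L^2}^2 \lesssim 2^{-2N_0}\|\uppsi_\near\|_{LE}^2$. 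Choosing $N_0$ large to beat the $C_\varepsilon$ finishes the proof. You should retain the frequency-localization idea but discard the elliptic-regularity machinery, and in general be careful not to pass from $LE$ back to a weight-free compact-set energy: that inequality is false in the interior precisely because of the trapping at $\rho=0$.
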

%%%%%%%%%%%%
%%%%%%%%%%%%
\begin{proof}
This would be immediate from the definition of $P_{\geq N_0}$ and $\|\cdot\|_{LE}$ if we had $\|P_{>N_0}\rho\uppsi_\near\|_{L^2(K_\Ext)}$ instead of $\|P_{>N_0}\uppsi_\near\|_{L^2(K_\Ext)}$ (note that in $K_\Ext$ the coordinates $(t,\rho,\omega)$ and $(\uptau,\uprho,\uptheta)$ agree). To insert the extra factor of $\rho$ we argue as follows. Let $u=P_{> N_0}\uppsi$. Then, with $\chi\equiv \chi(\rho)$ an appropriate cutoff, we need to estimate (recall that $|\tilh|\simeq 1$ in $K_\Ext$, in particular with no vanishing at $\rho=0$) 
%%%%%%%
%%%%%%%
\begin{align*}
\begin{split}
&\int_{t_1}^{t_2}\int_{\bbS^{n-1}}\int_{-\infty}^\infty u^2\chi \ud \rho\,\ud\omega\ud t= \int_{t_1}^{t_2}\int_{\bbS^{n-1}}\int_{-\infty}^\infty (\partial_\rho\rho)u^2\chi \ud \rho\,\ud\omega\ud t\\
&=-\int_{t_1}^{t_2}\int_{\bbS^{n-1}}\int_{-\infty}^{\infty} \rho u^2\partial_\rho\chi\ud \rho\,\ud\omega \ud t-2\int_{t_1}^{t_2}\int_{\bbS^{n-1}}\int_{-\infty}^{\infty}\rho u \partial_\rho u \chi \ud \rho\,\ud\omega \ud t.
\end{split}
\end{align*}
The first integral is supported away from $\{\rho=0\}$, so we can insert a factor of $\rho$ making it an acceptable $L^2$ error. For the second integral we use
%%%%%%%
%%%%%%%
\begin{align*}
\begin{split}
|\rho u \partial_\rho u| \leq C_\epsilon \rho^2 u^2 + \epsilon (\partial_\rho u)^2.
\end{split}
\end{align*}
Since the coefficient of $\partial_\rho u$ in the LE norm is non-degenerate we can absorb the last term on the right (note that $P_{> N_0}$ and $\partial_\rho$ commute and that $P_{>N_0}$ is bounded in $LE$). The first term on the right is exactly the term we had hoped for.
\end{proof}
%%%%%%%%%%%%
%%%%%%%%%%%%
It follows that 
%%%%%%%
%%%%%%%
\begin{align}\label{eq:phinearLEtemp1}
\begin{split}
\|\uppsi_\near\|_{LE}\lesssim  \|\uppsi\|_{E(\Sigma_{t_1})}+ \|f\|_{L^1L^2(\Sigma_{t_1}^{t_2})}+\|P_{\leq N_0}\uppsi_\near\|_{L^2(K_\Ext)}.
\end{split}
\end{align}
Now to estimate $P_{\leq N_0}\uppsi_\near$ we again apply the near-far decomposition, this time with respect to $\calP_0$. First note that
%%%%%%%
%%%%%%%
\begin{align*}
\begin{split}
\|P_{\leq N_0}\uppsi_\near\|_{L^2(K_\Ext)}\lesssim \|P_{\leq N_0}\uppsi_\near\|_{LE}.
\end{split}
\end{align*}
Let $\uppsi_{\near,\far}$ be defined by (note that the operator on the left-hand side is applied to $\uppsi_{\near,\far}$ while on right-hand side $\uppsi_\near$ and $\uppsi_\far$ appear and not $\uppsi_{\near,\far}$)
%%%%%%%
%%%%%%%
\begin{equation} \label{eq:psi-nearfar}
\begin{split}
\begin{cases}(\calP_0-V_\far)\uppsi_{\near,\far} = -\calP_\pert\uppsi_\near-V_\far\uppsi_\far\quad&\mathrm{in~}\Sigma_{t_1}^{t_2}\\
(\calP_0-V_\far)\uppsi_{\near,\far} =-\calP_\pert\uppsi_\near\quad&\mathrm{in~}(\Sigma_{t_1}^{t_2})^c
\end{cases},\qquad \uppsi_{\near,\far}\vert_{\Sigma_{t_1}}=0,
\end{split}
\end{equation}
so that $\uppsi_{\near,\near}:=\uppsi_\near-\uppsi_{\near,\far}$ satisfies
%%%%%%%
%%%%%%%
\begin{equation} \label{eq:psi-nearnear}
\begin{split}
\begin{cases}\calP_0\uppsi_{\near,\near}=-V_\far\uppsi_{\near,\far}\quad&\mathrm{~in~} \Sigma_{t_1}^{t_2}\\
(\calP_0-V_\far)\uppsi_{\near,\near}=0\quad&\mathrm{~in~} (\Sigma_{t_1}^{t_2})^c
\end{cases},
\qquad \uppsi_{\near,\near}\vert_{\Sigma_{t_1}}=0.
\end{split}
\end{equation}
Since $P_{\leq N_0}$ commutes with $\calP_0$ and $V_\far$, we also have
%%%%%%%
%%%%%%%
\begin{align*}
\begin{split}
P_{\leq N_0}\uppsi_\near = P_{\leq N_0}\uppsi_{\near,\near}+P_{\leq N_0}\uppsi_{\near,\far}
\end{split}
\end{align*}
and
%%%%%%%
%%%%%%%
\begin{align*}
\begin{split}
&(\calP_0-V_\far)P_{\leq N_0}\uppsi_{\near,\far}=P_{\leq N_0}f_{\near,\far},\\
&\calP_0P_{\leq N_0}\uppsi_{\near,\near} =P_{\leq N_0}f_{\near,\near},
\end{split}
\end{align*}
where
%%%%%%%
%%%%%%%
\begin{align*}
\begin{split}
f_{\near,\far}:=\begin{cases}-\calP_\pert\uppsi_\near-V_\far\uppsi_\far\quad&\mathrm{in~}\Sigma_{t_1}^{t_2}\\ -\calP_\pert\uppsi_\near\quad&\mathrm{in~}(\Sigma_{t_1}^{t_2})^c\end{cases},\quad f_{\near,\near}:=\begin{cases}-V_\far\uppsi_{\near,\far}\quad&\mathrm{in~}\Sigma_{t_1}^{t_2}\\ V_\far\uppsi_{\near,\near}\quad&\mathrm{in~}(\Sigma_{t_1}^{t_2})^c\end{cases}.
\end{split}
\end{align*}
By a slight abuse of notation we will sometimes write
%%%%%%%
%%%%%%%
\begin{align*}
\begin{split}
-P_{\leq N_0}(\calP_\pert \uppsi_\near)-P_{\leq N_0}(V_\far \uppsi_\far)
\end{split}
\end{align*}
for $P_{\leq N_0}f_{\near,\far}$, and write the equation for $P_{\leq N_0}\uppsi_{\near,\far}$ simply as
%%%%%%%
%%%%%%%
\begin{align*}
\begin{split}
(\calP_0-V_\far)P_{\leq N_0}\uppsi_{\near,\far}=-P_{\leq N_0}(\calP_\pert \uppsi_\near)-P_{\leq N_0}(V_\far \uppsi_\far).
\end{split}
\end{align*}
The proof of the following lemma will occupy much of the remainder of this section.
%%%%%%%%%%%
%%%%%%%%%%%
\begin{lemma}\label{lem:phinearfar1}
$\uppsi_{\near,\far}$ satisfies
%%%%%%%
%%%%%%%
\begin{align*}
\begin{split}
\|P_{\leq N_0}\uppsi_{\near,\far}\|_{L^2(K_\Ext)}\lesssim \|f\|_{L^1L^2(\Sigma_{t_1}^{t_2})}+\epsilon \sup_{t_1\leq t \leq t_2}\|\uppsi\|_{E(\Sigma_{t})}+\epsilon\|\uppsi_\near\|_{LE(\Sigma_{t_1}^{t_2})}.
\end{split}
\end{align*}
\end{lemma}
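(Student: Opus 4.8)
\textbf{Proof plan for Lemma~\ref{lem:phinearfar1}.}

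The plan is to estimate $P_{\leq N_0} \uppsi_{\near,\far}$ directly through an energy estimate for the \emph{unperturbed} operator $\calP_0 - V_\far$, exploiting that in the global geometric coordinates $(\tiluptau,\tiluprho,\tiluptheta)$ this operator is simply $-\partial_{\tiluptau}^2 + \tilDelta + V - V_\far$, for which the base-case analysis of Proposition~\ref{prop:LEDproduct} applies verbatim (with $V$ replaced by $V - V_\far$, whose sign does not affect the commutator argument since $V_\far \geq 0$ only improves coercivity). First I would record that, since $P_{\leq N_0}$ commutes with $\calP_0$ and $V_\far$, the function $P_{\leq N_0}\uppsi_{\near,\far}$ solves $(\calP_0 - V_\far) P_{\leq N_0}\uppsi_{\near,\far} = -P_{\leq N_0}(\calP_\pert \uppsi_\near) - P_{\leq N_0}(V_\far \uppsi_\far)$ with vanishing data on $\Sigma_{t_1}$. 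Then, using the standard Morawetz/energy estimates for $-\partial_{\tiluptau}^2 + \tilDelta + (V - V_\far)$ (i.e.\ Proposition~\ref{prop:LEDproduct}, applied to $\bbP_c$-projected data, which is harmless here because the orthogonality-type hypotheses in Propositions~\ref{prop:energyestimate1}--\ref{prop:LED1} control the discrete-spectrum part and $V_\far$ pushes the relevant eigenvalue away), I would bound $\| P_{\leq N_0}\uppsi_{\near,\far}\|_{L^2(K_\Ext)} \lesssim \| P_{\leq N_0}\uppsi_{\near,\far}\|_{LE} \lesssim \| \text{RHS} \|_{LE^\ast + L^1 L^2}$, where the right-hand side norm is taken over the full slab $\cup_{\uptau}\Sigma_\uptau$ after the extension.

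The three contributions to the right-hand side are handled as follows. For $P_{\leq N_0}(V_\far \uppsi_\far)$: since $V_\far$ is compactly supported in $U_\Int$ and $P_{\leq N_0}$ is bounded on $L^2_{t,x}$ and almost local in time, this term is controlled by $\| \uppsi_\far\|_{LE}$ on a compact region, which by Lemma~\ref{lem:phifar1} (estimate \eqref{eq:lemphifarbound1}, promoted to the global $LE$ norm as done just after that lemma) is $\lesssim \|\uppsi\|_{E(\Sigma_{t_1})} + \|f\|_{L^1 L^2(\Sigma_{t_1}^{t_2})}$. The crucial term is $P_{\leq N_0}(\calP_\pert \uppsi_\near)$: here I use that $\calP_\pert = \calP - \calP_0$ has coefficients that are $o_{\wp, \Rone}(1) + \callO(\dotwp^{\leq 2})$ (Remark~\ref{rem:nongeomglobal1}, part (1), and the more precise hyperboloidal structure \eqref{eq:calPP0Ppertdecomp2}). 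The smallness $o_{\wp,\Rone}(1)$ combined with $\epsilon \gg$ any inverse power of $\Rone$ gives the factor $\epsilon$ in front; the terms involving $\dotwp^{\leq 2}$ are bounded using the bootstrap assumptions \eqref{eq:a+trap}--\eqref{eq:Tjphienergyb2} (in particular $|\dotwp^{(\leq 2)}|\lesssim \epsilon \tau^{-5/2+\kappa}$, which is in particular $\lesssim \epsilon$). One must be careful that $\calP_\pert$ contains genuine second-order derivatives $\partial_\Sigma^2 \uppsi_\near$, which are \emph{not} controlled by $\|\uppsi_\near\|_{LE}$; this is precisely where the frequency truncation $P_{\leq N_0}$ saves the day, since after projecting I can write $P_{\leq N_0}(\calP_\pert \uppsi_\near)$ and move the derivatives around — for the $\partial_{\tiluptau}$-derivatives use that $\partial_{\tiluptau} P_{\leq N_0}$ gains a factor $N_0$ but costs nothing in regularity (the output stays at frequency $\lesssim N_0$), and for the spatial derivatives integrate by parts inside the multiplier pairing against $Q P_{\leq N_0}\uppsi_{\near,\far}$ (cf.\ Remark~\ref{rem:fLED1}), reducing everything to $\|\partial_\Sigma \uppsi_\near\|_{LE^\ast}$-type quantities that, in the compact regions where $\calP_\pert$'s $o_{\wp,\Rone}(1)$ coefficients are not yet small, are still fine because there $\Err_\calP$ has the favorable $\jap{r}^{-4}$-type decay \eqref{eq:ErrcallP1}; alternatively use elliptic regularity for $\calP_0$ on $P_{\leq N_0}\uppsi_\near$ to trade the bad second-order term for $\|P_{\leq N_0}\uppsi_\near\|_{LE}$ and $\| \calP_0 P_{\leq N_0}\uppsi_\near\|_{LE^\ast}$, the latter re-expressed via the equation.

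The main obstacle I anticipate is bookkeeping the second-order term $\partial_\Sigma^2 \uppsi_\near$ in $\calP_\pert \uppsi_\near$ without losing a derivative: the clean way is to never estimate $\calP_\pert \uppsi_\near$ as a source in $LE^\ast$ directly, but rather, after applying $P_{\leq N_0}$, to use the elliptic estimate for $\calP_0$ in the global geometric coordinates — i.e.\ any $\partial^2 P_{\leq N_0} \uppsi_{\near}$ is bounded by $P_{\leq N_0}\uppsi_\near$, $\partial_{\tiluptau}^2 P_{\leq N_0}\uppsi_\near$ (which is $\lesssim N_0^2 P_{\leq N_0}\uppsi_\near$ in $L^2$ since the frequency is capped), and $\calP_0 P_{\leq N_0}\uppsi_\near = P_{\leq N_0}(\calP_0 \uppsi_\near) = P_{\leq N_0}(f - \calP_\pert \uppsi_\near)$, the last term of which is again $\calP_\pert \uppsi_\near$ — this self-referential loop is closed by using the smallness $\epsilon$ to absorb $\|\uppsi_\near\|_{LE}$ back, and then invoking Lemma~\ref{lem:phifar1} together with \eqref{eq:phinearLEtemp1} and Lemma~\ref{lem:LEDhighfreq1} (which have already reduced $\|\uppsi_\near\|_{LE}$ to $\|\uppsi\|_{E(\Sigma_{t_1})} + \|f\|_{L^1 L^2} + \|P_{\leq N_0}\uppsi_\near\|_{L^2(K_\Ext)}$, and the last piece splits as $\|P_{\leq N_0}\uppsi_{\near,\near}\|_{L^2(K_\Ext)} + \|P_{\leq N_0}\uppsi_{\near,\far}\|_{L^2(K_\Ext)}$). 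Thus the final bound will be stated with $\sup_{t_1 \le t \le t_2}\|\uppsi\|_{E(\Sigma_t)}$ and $\|\uppsi_\near\|_{LE(\Sigma_{t_1}^{t_2})}$ both carrying a factor $\epsilon$, as in the statement, with the understanding that in the subsequent combination of this lemma with the analogous estimate for $\uppsi_{\near,\near}$ these $\epsilon$-terms get absorbed.
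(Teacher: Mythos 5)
Your high-level strategy — isolating the equation $(\calP_0 - V_\far)P_{\leq N_0}\uppsi_{\near,\far} = -P_{\leq N_0}(\calP_\pert\uppsi_\near) - P_{\leq N_0}(V_\far\uppsi_\far)$, bounding the $V_\far\uppsi_\far$ piece via compact support and Lemma~\ref{lem:phifar1}, and using the smallness of $\calP_\pert$ plus the frequency cap to handle the $\calP_\pert\uppsi_\near$ piece — is essentially the paper's. (The paper runs a direct multiplier argument with $Q_j^\Int$, $Q_j^\Ext$ for $\calP_0 - V_\far$ rather than invoking Proposition~\ref{prop:LEDproduct} in the $(\tiluptau,\tiluprho)$ coordinates; for this lemma the orthogonality machinery is not needed because $V_\far$ does away with the $L^2$-error in the multiplier estimate, as in Lemma~\ref{lem:phifar1}, so you are right that no $\bbP_c$ bookkeeping is required here.) But there are two genuine gaps in the technical execution.

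First, the self-referential loop you describe — replacing $\partial^2_\Sigma P_{\leq N_0}\uppsi_\near$ by $\calP_\Ell P_{\leq N_0}\uppsi_\near$ via elliptic regularity, then re-expanding via the equation — does not immediately close, because what actually appears is the commutator $[P_{\leq N_0}, \ringpi_q^{\mu\nu}]\partial^2_{\mu\nu}\uppsi_\near$, and the coefficients $\ringpi_q^{\mu\nu}$ are not uniformly small in a way that lets a crude $N_0^{-1}$-gain close: they decay only in $\uptau$, and the remaining factor $\partial^2\uppsi_\near$ (un-truncated) is not controlled by $\|\uppsi_\near\|_{LE}$. The paper isolates this as a separate commutator lemma (Lemma~\ref{lem:LEDcommutator1}), whose proof itself iterates the elliptic estimate for $\calP_\Ell$ through a Littlewood--Paley decomposition in time and uses an absorbtion of the quantity $\calA$ being estimated. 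Without something equivalent your plan has a derivative loss at this step.

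Second, the exterior hyperboloidal region hides an obstruction you have not addressed. The multipliers there have a $\beta^\tau\partial_\tau$ component, but on the hyperboloidal leaves $\partial_\tau$ is a transversal (not tangential) derivative and is therefore \emph{not} controlled by the energy flux. The dangerous term in $\calP_\pert\uppsi_\near$ is $\ringa\,\partial_\tau(\partial_r+\tfrac{n-1}{2r})\uppsi_\near$; paired with $\beta^\tau\partial_\tau P_{\leq N_0}\uppsi_{\near,\far}$ there are two $\partial_\tau$'s and no place to absorb them using the $\uptau$-decay of $\ringa$ alone. The paper handles this (Case~2 of the proof) by first proving an auxiliary weighted estimate of the form $\|\chi_{\tilK^c}\,r^{(1+\alpha)/2}(\partial_r+\tfrac{n-1}{2r})\uppsi_\near\|_{L^\infty_\tau L^2_y} \lesssim \|\uppsi_\near\|_{LE} + \sup_\tau\|\uppsi_\near\|_E + \|\uppsi_\far\|_{LE}$, obtained via an $r^p$-type multiplier (same mechanism as Lemma~\ref{lem:rpmult1}). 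Your proposal does not anticipate the need for this extra estimate, and without it the exterior part of the bound does not close.
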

%%%%%%%%%%%
%%%%%%%%%%%
We postpone the proof of this lemma and proceed to prove Proposition~\ref{prop:LED1} using its statement. 
%%%%%%%%%%%
%%%%%%%%%%%
\begin{proof}[Proof of Proposition~\ref{prop:LED1}]
Throughout the proof, we use an underline to denote the parameters, or other functions depending on the parameters, with values fixed at $\uptau=t_2$. So for instance we write $\ellbar=\ell(t_2)$ and $\hbar$ for $h$ with $\ell$ replaced by $\underline{\ell}$. Let $\Reigenfunctioncutoffscale>0$ be a large constant (see for instance Section~\ref{subsec:modulationeqs}) so that $$K_\Ext\subseteq \calR_{t_1}^{t_2}:=\cup_{\uptau\in [t_1,t_2]}\Sigma_\uptau\cap\{\uprho\leq \Reigenfunctioncutoffscale\}.$$ In view of \eqref{eq:phinearLEtemp1} and Lemmas~\ref{lem:phifar1} and~\ref{lem:phinearfar1}, it suffices for us to prove the following estimate
%%%%%%%
%%%%%%%
\begin{align}\label{eq:phinearneartemp1}
\begin{split}
\|P_{\leq N_0}\uppsi_{\near,\near}\|_{L^2(K_\Ext)}&\lesssim \sum_k\|\bfUpomega_k(\uppsi)\|_{L^2_\uptau([t_1,t_2])}+ \epsilon \|\uppsi\|_{LE(\calR_{t_1}^{t_2})}\\
&\quad+\|\uppsi_\far\|_{LE}+\|\uppsi_{\near,\far}\|_{LE}+\|\uppsi\|_{LE((\Sigma_{t_1}^{t_2})^c)}.
\end{split}
\end{align}
To simplify notation let $u=P_{\leq N_0}\uppsi_{\near,\near}$ and $g=P_{\leq N_0}f_{\near,\near}$, so that the equation
%%%%%%%
%%%%%%%
\begin{align*}
\begin{split}
\calP_0 u = g
\end{split}
\end{align*}
is satisfied globally. 
We also recall that in the coordinates $(\tiluptau,\tiluprho,\tiluptheta)$ the operator $\calP_0$ takes the form
%%%%%%%
%%%%%%%
\begin{align*}
\begin{split}
\calP_0 = -\partial_\tiluptau^2+\tilDelta +V(\tiluprho),
\end{split}
\end{align*}
where $\tilDelta$ denotes the Laplacian on the Riemannian Catenoid in polar coordinates:
%%%%%%%
%%%%%%%
\begin{align*}
\begin{split}
\tilDelta=\frac{1}{\jap{\tiluprho}^{n-1}|F_{\tiluprho}|}\partial_\tiluprho(\jap{\tiluprho}^{n-1}|F_\tiluprho|^{-1}\partial_\rho)+\frac{1}{\jap{\tiluprho}^2}\ringsDelta.
\end{split}
\end{align*} 
By \eqref{eq:coordinatetransformationrelation1}, in the region $\{\uprho\leq \Reigenfunctioncutoffscale\}$ the two coordinates are related by
%%%%%%%
%%%%%%%
\begin{align*}
\begin{split}
\tiluptau=\gammabar^{-1}\uptau-\ellbar\cdot F(\uprho,\uptheta),\quad \tiluprho=\uprho,\quad \tiluptheta=\uptheta.
\end{split}
\end{align*}
We will use $\tily$ for the spatial coordinates $(\tiluprho,\tiluptheta)$ and use $\angles{\cdot}{\cdot}_\tily$ for the $L^2$ pairing with respect to\footnote{Following our convention in this section, by a slight abuse of notation, we write $\sqrt{|\tilhbar|}$ rather than $\sqrt{|\hbar|}$ to emphasize that we are working in the $(\tiluptau,\tiluprho,\tiluptheta)$ coordinates.} $\sqrt{|\hbar|}\ud \tily$  on the $\{\tiluptau=\mathrm{constant}\}$ hypersurfaces. On these hypersurfaces we define the spectral projection $\bbP_c$ by
%%%%%%%
%%%%%%%
\begin{align*}
\begin{split}
u= \bbP_cu+\sum_{j=1}^{n} \angles{u}{\uppsi_j}_\tily \uppsi_j+\angles{u}{\uppsi_\mu}_\tily \uppsi_\mu,
\end{split}
\end{align*}
where $\uppsi_j$, $j=1,\dots,n$, denote the eigenfunctions of $\tilDelta+V$ with eigenvalue zero, and $\uppsi_{\mu}$ the eigenfunction with eigenvalue $-\mu^2<0$. In what follows, unless otherwise specified, when summing over the eigenfunctions $\uppsi_j$ we always let $j$ vary over $\{\mu,1,\dots,n\}$ without distinguishing between the zero and $-\mu^2$ eigenvalues. As in the figure below,
\begin{center}
\begin{tikzpicture}[scale=1,transform shape]
  \draw[->] (0,-0.25) -- (0,2) node[right] {$\uptau$};
  \draw[name path= C, red, very thick,decorate] (-1,0.5) -- (1,0.5) node[right] {$\uptau=t_1$};
  \draw[name path = D, red, very thick,-,decorate]  (-1,1) node[left] {$\uptau=t_2$}-- (1,1) ;
  \draw[red,very thick] (1,1) -- (1,0.5);
  \draw[red, very thick] (-1,1) -- (-1,0.5) (-0.5,0.76) node{$\calR_{t_1}^{t_2}$};
    \tikzfillbetween[of=C and D]{red, opacity=0.1};
    \coordinate  (A) at (-3,2.25);
\coordinate  (B) at (1,1);
\coordinate  (C) at (3,0.75);
\draw[name path=O, thick,blue] plot [smooth] coordinates { (A) (B) (C) };
    \coordinate  (D) at (-3,1.2);
\coordinate  (E) at (-1,0.5);
\coordinate  (F) at (3,-0.25);
\draw[name path = U, thick,blue] plot [smooth] coordinates { (D) (E) (F) };
\node[right] at (C) {$\Blue{\tiluptau=\tilt_2}$};
\node[right] at (F) {$\Blue{\tiluptau=\tilt_1}$};
\tikzfillbetween[of=O and U]{blue, opacity=0.1};
\end{tikzpicture}
\end{center}
let $\tilcalR_{\tilt_1}^{\tilt_2}=\{\tilt_1\leq \tiluptau\leq \tilt_2\}$ be the smallest infinite rectangle containing $\calR_{t_1}^{t_2}$, and observe that (note that the implicit constant is independent of $\Reigenfunctioncutoffscale$ and rather depends on the size of $K_\Ext$ which we can choose to be much smaller than $\Reigenfunctioncutoffscale$)
%%%%%%%
%%%%%%%
\begin{align*}
\begin{split}
\|u\|_{L^2(K_\Ext)}\lesssim\|u\|_{LE(\tilcalR_{\tilt_1}^{\tilt_2})}.
\end{split}
\end{align*}
Let $a_j:=\angles{u}{\uppsi_j}_\tily$, $a_\mu:=\angles{u}{\uppsi_\mu}_\tily$ and $a_j':=\frac{\ud}{\ud \tiluptau}\angles{u}{\uppsi_j}_\tily$, $a_\mu':=\frac{\ud}{\ud \tiluptau}\angles{u}{\uppsi_\mu}_\tily$, and denote by $\tilI$ the time interval $\{\tilt_1\leq \tiluptau \leq \tilt_2\}$. We now apply the LED estimate Proposition~\ref{prop:LEDproduct}, using the second and fourth estimates in the statement there. Note that since $u=P_{\leq N_0}\uppsi_{\near,\near}$, we can drop the time derivative from the last term on the right-hand side of the second estimate in  Proposition~\ref{prop:LEDproduct} and absorb the corresponding error by the left-hand side of the fourth estimate. Using this argument (recall that $j$ varies over $\{\mu,1,\dots,n\}$),
%%%%%%%
%%%%%%%
\begin{align}\label{eq:orthLEtemp1}
\begin{split}
\|u\|_{LE(\tilcalR_{\tilt_1}^{\tilt_2})}&\lesssim \|\bbP_cu\|_{LE(\tilcalR_{\tilt_1}^{\tilt_2})}+\sum_j(\|a_j\|_{L^2_\tilt (\tilI)}+\|a_j'\|_{L^2_\tilt (\tilI)})\\
&\lesssim \|\bbP_cg\|_{LE^\ast(\tilcalR_{\tilt_1}^{\tilt_2})}+\sum_j(\|a_j\|_{L^2_\tilt (\tilI)}+\|a_j'\|_{L^2_\tilt (\tilI)})\\
&\lesssim  \|g\|_{L^2(\tilcalR_{\tilt_1}^{\tilt_2})}+\sum_j(\|a_j\|_{L^2_\tilt (\tilI)}+\|a_j'\|_{L^2_\tilt (\tilI)}).
\end{split}
\end{align}
Here, to pass to the last line, we have used that $\jap{\tily}^{\frac{1+\alpha}{2}}\uppsi_j\in L^2_\tily$ (which holds for $n\geq 4$) to bound
%%%%%%%
%%%%%%%
\begin{align*}
\begin{split}
\|\bbP_cg\|_{LE^\ast(\tilcalR_{\tilt_1}^{\tilt_2})}\leq \|g\|_{LE^\ast(\tilcalR_{\tilt_1}^{\tilt_2})}+\sum_j\|\angles{g}{\uppsi_j}_\tily\|_{L^2_\tilt[\tilt_1,\tilt_2]}\|\jap{y}^{\frac{1+\alpha}{2}}\uppsi_j\|_{L^2_\tily}\lesssim  \|g\|_{L^2(\tilcalR_{\tilt_1}^{\tilt_2})}.
\end{split}
\end{align*}
To treat the last term on the right-hand side of \eqref{eq:orthLEtemp1} we introduce some more notation. For $k=\mu,1,\dots,n$, let $\Zbar_k=\chi \uppsi_k$ where $\chi\equiv \chi(\tiluprho)$ is supported in $\{\tiluprho<\Reigenfunctioncutoffscale/2\}$. Then let
%%%%%%%
%%%%%%%
\begin{align}\label{eq:tilbfOmegabardef1}
\begin{split}
&\tilbfOmegabar_k(v):=-\angles{\partial_\tiluptau v}{\Zbar_k}_\tily,\qquad\tilbfOmegabar_{n+k}(v):=\angles{v}{\Zbar_k}_\tily,\quad k=1,\dots,n\\
&\tilbfOmegabar^{+}_{\mu}(v):=\angles{v}{\mu \Zbar_\mu}_\tily-\angles{\partial_\tiluptau v}{\Zbar_\mu}_\tily,\qquad\tilbfOmegabar^{-}_{\mu}(v):=-\angles{v}{\mu \Zbar_\mu}_\tily-\angles{\partial_\tiluptau v}{\Zbar_\mu}_\tily.
\end{split}
\end{align}
For any $c$ we define $\calT_c$ to be the intersection of $\{\uprho\leq \Reigenfunctioncutoffscale\}$ with the region bounded between $\{\tiluptau=c\}$ and $\{\uptau=\gammabar c\}$, and let $\calT_{c,1}:=\calT_c\cap\{\uptau\leq \gammabar c\}$ and $\calT_{c,2}:=\calT_c\cap\{\uptau\geq \gammabar c\}$. See the figure below. 
\begin{center}\begin{tikzpicture}[scale=1,transform shape]
  \draw[->] (0,-1.5) -- (0,1.5) node[right] {$\uptau$};
  \draw[name path = O, anchor=center,red, very thick,decorate] (-2,0)--(-1.5,0) node[above,black]{$\calT_{c,2}$}-- (1.5,0) node[below,black]{$\calT_{c,1}$} -- (2,0);
  \draw[red, very thick,decorate] (-2.25,0)--(-2,0) ;
  \draw[red, very thick,decorate] (2,0) -- (2.25,0)node[right] {$\uptau=\gammabar c$};
  \draw[dashed,thick] (2,1)node[left]{$\uprho=\Reigenfunctioncutoffscale$}--(2,-1.5);
  \draw[dashed,thick] (-2,1)--(-2,-1)node[right]{$\uprho=-\Reigenfunctioncutoffscale$};
\coordinate (A) at (-2,1);
\coordinate (B) at (0,0);
\coordinate (C) at (2,-1.5);
\draw [name path = U, thick,blue] plot [smooth] coordinates { (A) (B) (C) };
\node[right] at (C) {$\Blue{\tiluptau=c}$};
\tikzfillbetween[of=O and U]{blue, opacity=0.1};
\end{tikzpicture}
\end{center}
Finally, with $\nbar$ denoting the normal (with respect to $\hbar$) to $\{\uptau=c\}$, we let
%%%%%%%
%%%%%%%
\begin{align*}
\begin{split}
&\bfOmegabar_k(v)(c)=-\int_{\{\uptau= c\}}\Zbar_k\nbar^\alpha\partial_\alpha v \sqrt{|\hbar|}\ud y,\quad \bfOmegabar_{n+k}(v(c))=\int_{\{\uptau= c\}}v\Zbar_k\nbar^\alpha\partial_\alpha \tiluptau \sqrt{|\hbar|}\ud y,\quad k=1,\dots,n,\\
&\bfOmegabar_{\mu}^{\pm}(v)(c)=\int_{\{\uptau= c\}}(\pm\mu v \Zbar_\mu \partial_\alpha \tiluptau-\Zbar_\mu \partial_\alpha v)\nbar^\alpha \sqrt{|\hbar|}\ud y.
\end{split}
\end{align*}
Returning to the last term on the right-hand side of \eqref{eq:orthLEtemp1} note that
%%%%%%%
%%%%%%%
\begin{align*}
\begin{split}
&\tilbfOmegabar_k(u)=\tilbfOmegabar_k(\bbP_cu)-\sum_j a_j'\angles{\uppsi_j}{\Zbar_k}_\tily,\quad \tilbfOmegabar_{n+k}(u)=\tilbfOmegabar_{n+k}(\bbP_cu)+\sum_j a_j\angles{\uppsi_j}{\Zbar_k}_\tily,\quad k=1,\dots,n,\\
&\tilbfOmegabar_{\mu}^{\pm}(u)=\tilbfOmegabar_{\mu}^{\pm}(\bbP_cu)+\sum_j\big(\pm\mu a_j\angles{\uppsi_j}{ \Zbar_{\mu}}_\tily-a_j'\angles{\uppsi_j}{\Zbar_{\mu}}_\tily\big).
\end{split}
\end{align*}
Viewing this as a linear system for $a_j,a_j'$, $j\in \{\mu,1,\dots,n\}$, with invertible coefficient matrix, and since $\Zbar_k$, $k\in \{\mu,1,\dots,n\}$, are compactly supported, we get (below, each sum in $k$ is over~$\pm\mu,1,\dots,2n$)
%%%%%%%
%%%%%%%
\begin{align*}
\begin{split}
\sum_j(\|a_j\|_{L^2_\tiluptau (\tilI)}+\|a_j'\|_{L^2_\tiluptau (\tilI)})&\lesssim \sum_k(\|\tilbfOmegabar_k(u)\|_{L^2_\tiluptau(\tilI)}+\|\tilbfOmegabar_{k}(\bbP_cu)\|_{L^2_\tilt(\tilI)})\\
&\lesssim \|\bbP_cu\|_{LE(\tilcalR_{\tilt_1}^{\tilt_2})}+\sum_k\|\tilbfOmegabar_k(u)\|_{L^2_\tiluptau(\tilI)}\\
&\lesssim \|g\|_{L^2(\tilcalR_{\tilt_1}^{\tilt_2})}+\sum_k\|\tilbfOmegabar_k(u)\|_{L^2_\tiluptau(\tilI)},
\end{split}
\end{align*}
where to pass to the last line we have argued as in \eqref{eq:orthLEtemp1}. 
It remains to estimate $\|\tilbfOmegabar_k(u)\|_{L^2_\tiluptau(\tilI)}$. 
Note that by the divergence theorem (note that if $\tilnbar$ denotes the normal to $\{\tiluptau=\mathrm{constant}\}$, then $\partial_{\tiluptau}v$ in \eqref{eq:tilbfOmegabardef1} can be written as $\tilnbar^\alpha\partial_\alpha v$), for $k=1,\dots,n$,
%%%%%%%
%%%%%%%
\begin{align}\label{eq:orthodivtemp1}
\begin{split}
\tilbfOmegabar_k(u)(\tiluptau)=\bfOmegabar_k(u)(\gammabar\tiluptau)+\iint_{\calT_{\tiluptau}}(\Zbar_k\calP_0u-V\Zbar_k u+(\tilhbar^{-1})^{\alpha\beta}\partial_\alpha u \partial_\beta \Zbar_k)\sqrt{|\tilhbar|}\ud \tily \ud \tiluptau',
\end{split}
\end{align}
so
%%%%%%%
%%%%%%%
\begin{align}\label{eq:tilOmegabarL2temp1}
\begin{split}
\|\tilbfOmegabar_k(u)\|_{L^2_\tiluptau(\tilI)}&\leq \|\bfOmegabar_k(u)(\gammabar\tiluptau)\|_{L^2_\tiluptau(\tilI)}\\
&\quad+\Big\|\iint_{\calT_{\tiluptau}}(\Zbar_kg-V\Zbar_k u+(\tilhbar^{-1})^{\alpha\beta}\partial_\alpha u \partial_\beta \Zbar_k)\sqrt{|\tilhbar|}\ud \tily \ud \tiluptau'\Big\|_{L^2_\tiluptau(\tilI)}.
\end{split}
\end{align}
For the first term, after a change of variables, and recalling that $u=P_{\leq N_0}\phi_{\near,\near}= P_{\leq N_0}(\uppsi-\uppsi_\far-\uppsi_{\near,\far})$,
%%%%%%%
%%%%%%%
\begin{align*}
\begin{split}
\|\bfOmegabar_k(u)(\gammabar\tiluptau)\|_{L^2_\tiluptau(\tilI)}&\lesssim \|\bfOmegabar_k(u)(\uptau)\|_{L^2_\uptau([\gammabar\tilt_1,\gammabar\tilt_2])}\\
&\lesssim \|\bfOmegabar_k(P_{\leq N_0}\uppsi)\|_{L^2_\uptau([\gammabar\tilt_1,\gammabar\tilt_2])}+\|\bfOmegabar_k(P_{\leq N_0}(\uppsi_{\far}+\uppsi_{\near,\far}))\|_{L^2_\uptau([\gammabar\tilt_1,\gammabar\tilt_2])}\\
&\lesssim \|\bfOmegabar_k(P_{\leq N_0}\uppsi)\|_{L^2_t([\gammabar\tilt_1,\gammabar\tilt_2])}+\|\uppsi_\far\|_{LE}+\|\uppsi_{\near,\far}\|_{LE}.
\end{split}
\end{align*}
Here to pass to the last line we have used Lemmas~\ref{lem:phifar1} and~\ref{lem:phinearfar1}. To estimate $\|\bfOmegabar_k(P_{\leq N_0}\uppsi)\|_{L^2_t([\gammabar\tilt_1,\gammabar\tilt_2])}$ note that since $\Zbar_k$ and $\hbar$ are independent of $\uptau$ (recall that $(\uprho,\uptheta)=(\tiluprho,\tiluptheta)$ in the support of $\Zbar_k$), 
%%%%%%%
%%%%%%%
\begin{align*}
\begin{split}
\|\bfOmegabar_k(P_{\leq N_0}\uppsi)\|_{L^2_\uptau([\gammabar\tilt_1,\gammabar\tilt_2])}&=\|P_{\leq N_0}\bfOmegabar_k(\uppsi)\|_{L^2_\uptau([\gammabar\tilt_1,\gammabar\tilt_2])}\lesssim \|\bfOmegabar_k(\uppsi)\|_{L^2_\uptau}\\
&\leq \|\bfOmegabar_k(\uppsi)-\bfUpomega_k(\uppsi)\|_{L^2_\uptau([t_1,t_2])}+\|\bfUpomega_k(\uppsi)\|_{L^2_\uptau([t_1,t_2])}+\|\uppsi\|_{LE((\Sigma_{t_1}^{t_2})^c)}.
\end{split}
\end{align*}
Since
%%%%%%%
%%%%%%%
\begin{align*}
\begin{split}
\|\bfOmegabar_k(\uppsi)-\bfUpomega_k(\uppsi)\|_{L^2_\uptau([t_1,t_2])}\lesssim \epsilon \|\uppsi\|_{LE(\calR_{t_1}^{t_2})},
\end{split}
\end{align*}
combining the last few estimates we get,
%%%%%%%
%%%%%%%
\begin{align}\label{eq:tilOmegabarL2temp2}
\begin{split}
\|\bfOmegabar_k(u)(\gammabar\tilt)\|_{L^2_\tiluptau(\tilI)}&\lesssim \|\bfUpomega_k(\uppsi)\|_{L^2_\uptau([t_1,t_2])}+ \epsilon \|\uppsi\|_{LE(\calR_{t_1}^{t_2})}\\
&\quad+\|\uppsi_\far\|_{LE}+\|\uppsi_{\near,\far}\|_{LE}+\|\uppsi\|_{LE((\Sigma_{t_1}^{t_2})^c)}.
\end{split}
\end{align}
To treat the second term on the right in \eqref{eq:tilOmegabarL2temp1} we write $\calT_\tiluptau=\calT_{\tiluptau,1}\cup \calT_{\tiluptau,2}$ and treat the two regions separately. The estimates in these regions are similar so we carry out the details only for $\calT_{\tiluptau,1}$. For each $c$ define $\tilupsigma_\max(c)$ minimally and $\tilupsigma_\min(c)$ maximally such that $\tiluptau\in[\tilupsigma_\min(c),\tilupsigma_\max(c)]$ in $\calT_{c,1}$ and let
%%%%%%%
%%%%%%%
\begin{align*}
\begin{split}
\tilupsigma_\max:=\sup_{\tiluptau\in[\tilt_1,\tilt_2]}\tilupsigma_\max(\tiluptau)\quad\mand\quad\tilupsigma_{\min}:=\inf_{\tiluptau\in[\tilt_1,\tilt_2]}\tilupsigma_\min(\tiluptau).
\end{split}
\end{align*}
For each $\tilupsigma$ let
%%%%%%%
%%%%%%%
\begin{align*}
\begin{split}
w(\sigma):=\int_{\{\tiluptau=\tilupsigma\}}\big|\Zbar_kg-V\Zbar_k u+(\tilhbar^{-1})^{\alpha\beta}\partial_\alpha u \partial_\beta \Zbar_k\big|\sqrt{|\tilhbar|}\ud \tily.
\end{split}
\end{align*}
We can then bound the contribution of $\calT_{\tiluptau,1}$ to the last term on the right in \eqref{eq:tilOmegabarL2temp1} as
%%%%%%%
%%%%%%%
\begin{align*}
\begin{split}
\Big(\int_{\tilt_3}^{\tilt_4}\Big(\int_{\tilupsigma_\min(\tiluptau)}^{\tilupsigma_\max(\tiluptau)}w(\tilupsigma)\ud \tilupsigma\Big)^2\ud \tiluptau\Big)^{\frac{1}{2}}=\Big(\int_{\tilt_1}^{\tilt_2}\Big(\int_{\tilupsigma_\min}^{\tilupsigma_{\max}}\chi_{\{\tilupsigma_{\min}(\tiluptau)\leq \tilupsigma\leq\tilupsigma_\max(\tiluptau)\}}w(\tilupsigma)\ud \tilupsigma\Big)^2\ud \tiluptau\Big)^{\frac{1}{2}},
\end{split}
\end{align*}
where we have used $\chi_S$ to denote the characteristic function of a set $S$. Applying Schur's test and noting that
%%%%%%%
%%%%%%%
\begin{align*}
\begin{split}
\|\chi_{\{\tilupsigma_{\min}(\tiluptau)\leq \tilupsigma\leq\tilupsigma_\max(\tiluptau)\}}\|_{L^\infty_\tiluptau L^1_\tilupsigma\cap L^\infty_\tilupsigma L_\tiluptau^1}\lesssim_{\Reigenfunctioncutoffscale}|\ell|\lesssim \epsilon,
\end{split}
\end{align*}
we get
%%%%%%%
%%%%%%%
\begin{align*}
\begin{split}
&\Big\|\iint_{\calT_{\tilt,1}}\big(\Zbar_kg-V\Zbar_k u+(\tilhbar^{-1})^{\alpha\beta}\partial_\alpha u \partial_\beta \Zbar_k\big)\sqrt{|\tilhbar|}\ud \tily \ud \tiluptau'\Big\|_{L^2_\tiluptau(\tilI)}\\
&\lesssim \epsilon \Big(\int_{\tilupsigma_\min}^{\tilupsigma_\max}\Big(\int_{\{\tiluptau=\tiluptau'\}} \big|\Zbar_kg-V\Zbar_k u+(\tilhbar^{-1})^{\alpha\beta}\partial_\alpha u \partial_\beta \Zbar_k\big|\sqrt{|\tilhbar|}\ud \tily\Big)^2\ud\tiluptau'\Big)^{\frac{1}{2}}\\
&\lesssim \epsilon\big(\|u\|_{LE}+\|g\|_{L^2(\{\uprho\leq \Reigenfunctioncutoffscale\}}\big).
\end{split}
\end{align*}
Combining with \eqref{eq:tilOmegabarL2temp1} and \eqref{eq:tilOmegabarL2temp2} we have shown that 
%%%%%%%
%%%%%%%
\begin{align*}
\begin{split}
\|\tilbfOmegabar_k(u)\|_{L^2_\tiluptau(\tilI)}&\lesssim\bfUpomega_k(\uppsi)\|_{L^2_\uptau([t_1,t_2])}+ \epsilon \|\uppsi\|_{LE(\calR_{t_1}^{t_2})}\\
&\quad+\|\uppsi_\far\|_{LE}+\|\uppsi_{\near,\far}\|_{LE}+\|\uppsi\|_{LE((\Sigma_{t_1}^{t_2})^c)}.
\end{split}
\end{align*}
The estimate for $\|\tilbfOmegabar_{n+k}(u)\|_{L^2_\tiluptau(\tilI)}$ is similar except that when using the divergence identity to relate $\tilbfOmegabar_k(u)(\tiluptau)$ and $\bfOmegabar(u)(\gammabar\tiluptau)$, analogously to \eqref{eq:orthodivtemp1}, we need to integrate the quantity
%%%%%%%
%%%%%%%
\begin{align*}
\begin{split}
v\Zbar_k\Box \tiluptau+(\tilhbar^{-1})^{\alpha\beta}\partial_\alpha\tiluptau\partial_\beta(v\Zbar_k)
\end{split}
\end{align*}
over $\calT_{\tiluptau}$. The estimates for $\tilbfOmegabar_{\mu}^{\pm}(u)$ are obtained similarly, completing the proof of \eqref{eq:phinearneartemp1}.
\end{proof}
%%%%%%%%%%%
%%%%%%%%%%%
It remains to prove Lemma~\ref{lem:phinearfar1}. For this we will use the following technical lemma.
%%%%%%%%%%%
%%%%%%%%%%%
\begin{lemma}\label{lem:LEDcommutator1}
Suppose $a$, $b$, and $c$, satisfy
%%%%%%%
%%%%%%%
\begin{align*}
\begin{split}
\sup_y(\jap{\uprho}^{\frac{1+\alpha}{2}}|a|+|b|+|c|)\lesssim \epsilon \jap{\uptau}^{-\gamma}, 
\end{split}
\end{align*}
for some $\gamma>1$. Then
%%%%%%%
%%%%%%%
\begin{align*}
\begin{split}
\calX&:=\|[P_{\leq N_0},a]\partial^2_\uptau \uppsi_\near\|_{L^1_\uptau L^2_y\cap L^2_{\uptau,y}(I)}^2+\|[P_{\leq N_0},b]\partial^2_y \uppsi_\near\|_{L^1_\uptau L^2_y\cap L^2_{\uptau,y}(I)}^2\\
&\quad+\|[P_{\leq N_0},c]\partial^2_{\uptau y} \uppsi_\near\|_{L^1_\uptau L^2_y\cap L^2_{\uptau,y}(I)}^2
\end{split}
\end{align*}
satisfies
%%%%%%%
%%%%%%%
\begin{align*}
\begin{split}
\calX\lesssim \epsilon \|\uppsi_\near\|_{LE}^2+\epsilon\|\uppsi_\far\|_{LE(I)}^2+\epsilon\sup_\uptau \|\uppsi_\near\|_{E(\Sigma_\uptau)}^2.
\end{split}
\end{align*}
\end{lemma}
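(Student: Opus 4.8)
The plan is to estimate the commutator terms by exploiting the almost-locality of $P_{\leq N_0}$ together with the $\uptau$-decay of the coefficients $a,b,c$. The key structural point, which is why this lemma is stated and proved at this particular place in the argument, is that $\uppsi_\near$ appears as the object being frequency-localized, and we have a good handle on $\uppsi_\near$ through Lemma~\ref{lem:phifar1} (the $LE$ bound) and through an energy estimate analogous to Proposition~\ref{prop:energyestimate1} applied to equation~\eqref{eq:calPphinear1}. I would organize the proof around the elementary commutator identity
\begin{align*}
\begin{split}
[P_{\leq N_0},a]v(\uptau) = \int 2^{N_0}\chi(2^{N_0}\uptau')\bigl(a(\uptau-\uptau')-a(\uptau)\bigr) v(\uptau-\uptau')\,\ud\uptau',
\end{split}
\end{align*}
and similarly for $b$ and $c$. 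Since $\chi$ is a fixed bump function, the kernel is concentrated at $|\uptau'|\lesssim 2^{-N_0}$, so $|a(\uptau-\uptau')-a(\uptau)| \lesssim 2^{-N_0}\sup|\partial_\uptau a|$. The crucial gain is therefore a factor of $2^{-N_0}$, which, however, is accompanied by a loss of two derivatives falling on $\uppsi_\near$ (we are commuting against $\partial^2\uppsi_\near$). So the naive count is a loss of $2^{N_0}$ in the high frequencies; the resolution is \emph{not} to absorb all the $\partial^2$ onto $\uppsi_\near$ but to split $\partial^2 \uppsi_\near = P_{\leq N_0}\partial^2\uppsi_\near + P_{>N_0}\partial^2 \uppsi_\near$ and note that on the low-frequency piece one $\uptau$-derivative costs only $2^{N_0}$, while on the high-frequency piece one integrates by parts in $\uptau$ to move derivatives off of $\uppsi_\near$. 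Concretely, for the worst term $[P_{\leq N_0},a]\partial_\uptau^2\uppsi_\near$, I would write it (schematically) as $\partial_\uptau\bigl([P_{\leq N_0},a]\partial_\uptau\uppsi_\near\bigr) - [P_{\leq N_0},\partial_\uptau a]\partial_\uptau\uppsi_\near$, so that in $L^2_{\uptau,y}$ (respectively $L^1_\uptau L^2_y$) the outer $\partial_\uptau$ is harmless after a further integration by parts against the multiplier in the ambient LED argument (this is the point of Remark~\ref{rem:fLED1}, which explicitly sanctions such integration-by-parts manipulations inside the proof of Lemma~\ref{lem:phifar1}), and the remaining term has only one derivative on $\uppsi_\near$ and still carries the $\uptau$-decaying coefficient $\partial_\uptau a = O(\epsilon\jap{\uptau}^{-\gamma-1})$ or $a$ itself of size $O(\epsilon\jap{\uptau}^{-\gamma})$.

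With this reduction, the estimates become: each commutator term is bounded by $\epsilon \jap{\uptau}^{-\gamma}$ times (at most) one derivative of $\uppsi_\near$, integrated with the appropriate spatial weight $\jap{\uprho}^{\frac{1+\alpha}{2}}$ coming from the hypothesis on $a$. Since $\gamma>1$, $\jap{\uptau}^{-\gamma}\in L^2_\uptau\cap L^1_\uptau$ with norm $O(1)$, so a Cauchy–Schwarz in $\uptau$ converts $\|\epsilon\jap{\uptau}^{-\gamma}\cdot\partial\uppsi_\near\|_{L^1_\uptau L^2_y}$ and $\|\epsilon\jap{\uptau}^{-\gamma}\cdot\partial\uppsi_\near\|_{L^2_{\uptau,y}}$ into $\epsilon^{1/2}$ (or $\epsilon$, tracking powers carefully) times a mixed norm of $\partial\uppsi_\near$ with the weight $\jap{\uprho}^{-\frac{1+\alpha}{2}}$—which is precisely $\|\uppsi_\near\|_{LE}$ up to the degeneracy at $\uprho=0$ (and in the interior region, where $a,b,c$ are the interior perturbation coefficients, the weight is $\chi_{\leq\tilR}$, again controlled by $\|\uppsi_\near\|_{LE}$). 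For the pieces where the integration by parts in $\uptau$ produces a boundary term at $\uptau=t_1$ or $\uptau=t_2$, or where one needs the full second derivative $\partial_\uptau^2\uppsi_\near$ estimated pointwise in $\uptau$, I would invoke the energy estimate for~\eqref{eq:calPphinear1}: writing $\partial_\uptau^2\uppsi_\near$ via the equation $\calP\uppsi_\near = -V_\far\uppsi_\far$ in terms of spatial second derivatives and lower order terms, elliptic regularity on each leaf bounds $\|\partial^{\leq 2}\uppsi_\near\|_{L^2_y(\Sigma_\uptau)}$ by $\|\uppsi_\near\|_{E(\Sigma_\uptau)} + \|\uppsi_\far\|_{E(\Sigma_\uptau)} + \|f\|_{L^2_y(\Sigma_\uptau)}$, and the source contributions are already accounted for on the right-hand side of the claimed bound (via $\|\uppsi_\far\|_{LE(I)}$ after using the decay of $V_\far$ and Lemma~\ref{lem:phifar1}).

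The main obstacle I anticipate is the \emph{bookkeeping of the mixed commutator} $[P_{\leq N_0},c]\partial_{\uptau y}^2\uppsi_\near$, where one derivative is temporal and one spatial: here the integration-by-parts trick must be applied in $\uptau$ only (moving the $\partial_\uptau$ out), leaving a genuine spatial first derivative $\partial_y\uppsi_\near$ which is exactly what $\|\uppsi_\near\|_{LE}$ controls in the exterior, but one must be careful that near $\uprho=0$ the LE-norm degenerates in the \emph{angular} directions (not the radial one), so the spatial derivative appearing should be paired against a weight that is non-degenerate there—this is ensured because $c$ is supported where the LED multiplier is coercive, or, in the interior, because the interior part of the LE-norm $\|\chi_{\leq\tilR}(\rho\partial\tilpsi)^2 + (\partial_\rho\tilpsi)^2\|$ only degenerates by a factor $\rho$, which is harmless since $c$ itself carries no compensating singularity. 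A second, more routine obstacle is verifying that $P_{\leq N_0}$ is bounded on $LE$ and on the weighted $L^2_y$ spaces uniformly in $N_0$—this follows because $P_{\leq N_0}$ is convolution in $\uptau$ by an $L^1$-normalized kernel and the weights depend only on the spatial variables—so it can be dispatched quickly. Once these points are handled, summing the three contributions and collecting the $\epsilon$ powers gives $\calX \lesssim \epsilon\|\uppsi_\near\|_{LE}^2 + \epsilon\|\uppsi_\far\|_{LE(I)}^2 + \epsilon\sup_\uptau\|\uppsi_\near\|_{E(\Sigma_\uptau)}^2$, as claimed.
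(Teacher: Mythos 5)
Your proposal correctly identifies the key structural mechanism — the commutator $[P_{\leq N_0},a]$ effectively trades a $\uptau$-derivative off $\uppsi_\near$ onto the decaying coefficient $a$ — but the two concrete devices you invoke to execute this are not viable for proving the lemma as stated, and the paper's actual proof takes a substantially different route.

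The first gap is your reliance on writing $[P_{\leq N_0},a]\partial_\uptau^2\uppsi_\near = \partial_\uptau\bigl([P_{\leq N_0},a]\partial_\uptau\uppsi_\near\bigr) - [P_{\leq N_0},\partial_\uptau a]\partial_\uptau\uppsi_\near$ and then "integrating the outer $\partial_\uptau$ by parts against the multiplier, citing Remark~\ref{rem:fLED1}." The lemma asserts a clean bound on the $L^1_\uptau L^2_y\cap L^2_{\uptau,y}$ norm of the commutator term — a statement about the size of a function, not about a weak pairing. Remark~\ref{rem:fLED1} licenses integration by parts in the $\langle f, Q\uppsi\rangle$ pairing inside the proof of Proposition~\ref{prop:LED1}; it does not allow you to move a derivative out of a norm. (And even in the pairing setting, moving $\partial_\uptau$ onto $Q\uppsi_{\near,\far}$ puts two $\uptau$-derivatives there, which is not controlled.) The second gap is the claim that elliptic regularity on each leaf bounds $\|\partial^{\leq 2}\uppsi_\near\|_{L^2_y(\Sigma_\uptau)}$ by energy plus source data. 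Elliptic regularity bounds $\partial^2$ in terms of $\calP_\Ell\uppsi_\near$; but $\calP_\Ell\uppsi_\near = -V_\far\uppsi_\far - \calP_\uptau\uppsi_\near$, and $\calP_\uptau = \calO(1)\,\partial\RbfT + \ldots$ reintroduces a full second derivative $\partial\partial_\uptau\uppsi_\near$ on the right, so this step is circular as stated.

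What the paper actually does is a full dyadic decomposition in $\uptau$-frequency: the quantity $\calX$ is dominated by an enlarged quantity $\calA$ that includes all $[P_N,\cdot]$ pieces for $N\geq N_0$; then each $[P_N,a]\partial_\uptau^2\uppsi_\near$ is split according to the relative frequencies of $a$ and $\uppsi_\near$ (as in \eqref{eq:comfreqdecomp1}), and the weighted commutator estimate \eqref{eq:commtempgen1} is applied with different allocations of $\jap{\uptau}$-weights and derivatives in each regime. The "lost" $\uptau$-derivatives land either on $a$ (for which the $\calO$ notation guarantees preserved decay under differentiation) or on the frequency-localized $\uppsi_\near$, where they cost at most a power of the frequency that is exactly cancelled by the $2^{-N}$ commutator gain. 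For the purely spatial term $b\,\partial_y^2$, the paper does use elliptic regularity, but not naively: it replaces $\partial_y^2$ by $\calP_\Ell$, then by $\calP - \calP_\uptau$ and the equation $\calP\uppsi_\near = -V_\far\uppsi_\far$, and crucially absorbs the resulting commutator error $[\calP_\Ell, P_N]\uppsi_\near$ back into $\calA$ — this self-absorption is possible because the paper observes at the outset that the coefficients of $\calP_\pert$ themselves satisfy the hypotheses on $a,b,c$, so $\calA$ controls those commutator errors by a small multiple of itself. Your proposal has no analogue of this dyadic decomposition or of the self-improving $\calA$ bookkeeping, and without them the two-derivative loss is not genuinely resolved.
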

%%%%%%%%%%%
%%%%%%%%%%%
\begin{proof}
To simplify notation we will write $\phi$ instead of $\uppsi_\near$ during the proof. Note that since the coefficients $\ringpi_q^{\mu\nu}$ of $\calP_\pert$ satisfy the conditions assumed on $a$, $b$, $c$, we can simultaneously carry out our estimates with $a$, $b$, $c$, replaced by $\ringpi_q^{\mu\nu}$. This will allow us to absorb small multiplies of the quantity we are trying to estimate. With this in mind, to simplify notation, we simply assume that $a$, $b$, and $c$ are equal to $\ringpi_q^{\uptau\uptau}$, $\ringpi_q^{yy}$, and $\ringpi_q^{\uptau y}$, respectively. We will repeatedly use the following standard weighted commutator estimate
%%%%%%%
%%%%%%%
\begin{align}\label{eq:commtempgen1}
\begin{split}
\|[P_{N},h]g\|_{L^r_\uptau}\lesssim 2^{-N}(1+2^{-\beta N})\|\jap{\uptau}\partial_\uptau h\|_{L^q_\uptau}\|\jap{\uptau}^{-\beta}g\|_{L^p_\uptau},\qquad \frac{1}{p}+\frac{1}{q}=\frac{1}{r}.
\end{split}
\end{align}
Indeed, with $\chi_N(\uptau)=2^N\chi(2^N\uptau)$, for an appropriate Schwartz function $\chi$ we have
%%%%%%%
%%%%%%%
\begin{align*}
\begin{split}
\|[P_{N},h]g\|_{L^r_\uptau}&\leq 2^{-N}\int_{\bbR}\int_0^1\chi_N(s)\|\big(\jap{\uptau-s}^\alpha h'(\uptau-ts))(\jap{\uptau-s}^{-\alpha}g(\uptau-s))\|_{L^r_\uptau}\ud t \ud s\\
&\leq 2^{-N}\int_{\bbR} \chi_N(s)\int_0^1\|\jap{\uptau}^\beta h\|_{L^q_\uptau}\|\jap{\uptau}^{-\beta}g\|_{L^p_\uptau}\|\jap{\uptau-(1-t)s}^\beta\jap{\uptau}^{-\beta}\|_{L^\infty_\uptau}\ud t \ud s\\
&\lesssim 2^{-N}\|\jap{\uptau}^\beta h'\|_{L^q_\uptau}\|\jap{\uptau}^{-\beta}g\|_{L^p_\uptau}\int_{\bbR}\chi_N(s)(1+2^{-\beta N}|s|^\beta)\ud s,
\end{split}
\end{align*}
which proves the commutator estimate. In our applications $N\geq 1$ and we use $N$ instead of $ N^{-1}(1+N^{-\beta})$. Also note that the same estimate holds if $P_N$ is replaced by $P_{\leq N}$, $\bfP_N$, or $\bfP_{\leq N}$.

Let
%%%%%%%
%%%%%%%
\begin{align}\label{eq:calAcommdef1}
\begin{split}
\calA&:=\|[P_{\leq N_0},a]\partial^2_\uptau \phi\|_{L^1_\uptau L^2_y\cap L^2_{\uptau,y}(I)}^2+\sum_{N\geq N_0}\|[P_N,a]\partial^2_\uptau\phi\|_{L^1_\uptau L^2_y\cap L^2_{\uptau,y}(I)}^2\\
&\quad+\|[P_{\leq N_0},b]\partial^2_y \phi\|_{L^1_\uptau L^2_y\cap L^2_{\uptau,y}(I)}^2+\sum_{N\geq N_0}\|[P_N,b]\partial^2_y\phi\|_{L^1_\uptau L^2_y\cap L^2_{\uptau,y}(I)}^2\\
&\quad+\|[P_{\leq N_0},c]\partial^2_{\uptau y} \phi\|_{L^1_\uptau L^2_y\cap L^2_{\uptau,y}(I)}^2+\sum_{N\geq N_0}\|[P_N,c]\partial^2_{\uptau y}\phi\|_{L^1_\uptau L^2_y\cap L^2_{\uptau,y}(I)}^2.
\end{split}
\end{align}
Since $\calA\geq \calX$, it suffices to prove the estimate (the extra terms are added to absorb error terms that arise in the estimates)
%%%%%%%
%%%%%%%
\begin{align}\label{eq:calAbound}
\begin{split}
\calA\lesssim  \epsilon \|\uppsi_\near\|_{LE}^2+\epsilon\|\uppsi_\far\|_{LE(I)}^2+\epsilon\sup_\uptau \|\uppsi_\near\|_{E(\Sigma_\uptau)}^2.
\end{split}
\end{align}
Let us start with the second terms on each line of \eqref{eq:calAcommdef1}. Let $\bfP_N=\sum_{M=N-4}^{M+4}P_M$ and $\bfP_{\leq N}=P_{\leq N+4}$ be fattened projections, and decompose
%%%%%%%
%%%%%%%
\begin{align}\label{eq:comfreqdecomp1}
\begin{split}
[P_N,a]\partial^2_\uptau\phi&= \sum_{M=N+1}^{N+4}[P_N, \bfP_{\leq N}a]\partial^2_\uptau P_M\phi+[P_N,\bfP_{N}a]\partial^2_\uptau P_{\leq N}\phi+\sum_{M> N+4}[P_N,\bfP_Ma]\partial^2_\uptau P_M\phi\\
&\quad+[P_N,P_{\leq N+4}a]\partial^2_\uptau P_{\leq N}\phi+\sum_{M>N+4}[P_N,P_Ma]\partial^2_\uptau P_{\leq N+3}\phi,
\end{split}
\end{align}
with similar decompositions for $[P_N,b]\partial^2_y\phi$ and $[P_N,c]\partial^2_{\uptau y}\phi$. With $\alpha_0:=\frac{1}{2}(1+\alpha)$ and $\tilbfP_N=\sum_{M=N+1}^{N+4}P_M$, the contribution of the first term on the right-hand side of \eqref{eq:comfreqdecomp1} is bounded as 
%%%%%%%
%%%%%%%
\begin{align*}
\begin{split}
\sum_{N\geq N_0}\|[P_N, \bfP_{\leq N}a]\partial^2_\tau\tilbfP_N\phi\|_{L^2_{\uptau,y}}^2&\lesssim \sum_{N\geq N_0} 2^{-2N}\| \jap{\uprho}^{\alpha_0}\partial_\tau \bfP_{\leq N} a\|_{L^\infty_{\uptau,y}}^2\|\jap{\uprho}^{-\alpha_0}\partial^2_\tau \tilbfP_N\phi\|_{L^2_{\uptau,y}}^2\\
&\lesssim \epsilon\sum_{N\geq N_0}\|\jap{\uprho}^{-\alpha_0}\partial_\tau \tilbfP_{N}\phi\|_{L^2_{\uptau,y}}^2\lesssim \epsilon \|\phi\|_{LE}^2.
\end{split}
\end{align*}
The $L^1_\uptau L^2_y$ estimate is similar. The corresponding term for $c$ is bounded as
%%%%%%%
%%%%%%%
\begin{align*}
\begin{split}
\sum_{N\geq N_0}\|[P_N, \bfP_{\leq N}c]\partial^2_{\uptau,y}\tilbfP_N\phi\|_{L^1_\uptau L^2_y}^2&\lesssim \sum_{N\geq N_0} 2^{-2N}\| \jap{\uptau}^{\frac{1}{2}+\delta}\partial_\uptau \bfP_{\leq N} c\|_{L^2_\uptau L^\infty_{y}}^2\|\jap{\uptau}^{-\frac{1}{2}-\delta}\partial^2_{\uptau y} \tilbfP_N\phi\|_{L^2_{\uptau,y}}^2\\
&\lesssim \epsilon\sum_{N\geq N_0}2^{-2N}\big(\|\partial_\uptau \tilbfP_{N}\jap{\uptau}^{-\frac{1}{2}-\delta}\partial_y\phi\|_{L^2_{\uptau,y}}^2+\|[\jap{\uptau}^{-\frac{1}{2}-\delta},\tilbfP_N \partial_\uptau]\partial_y\phi\|_{L^2_{\uptau,y}}^2\big)\\
&\lesssim \epsilon \|\jap{\uptau}^{-\frac{1}{2}-\delta}\partial_y\phi\|_{L^2_{\uptau,y}}+\delta\sum_{N\geq N_0}2^{-2N}\|\partial_\uptau\jap{\uptau}^{-\frac{1}{2}-\delta}\|_{L^2_\uptau L^\infty_y}^2\|\partial_y\phi\|_{L^\infty_\uptau L^2_y}^2\\
&\lesssim \epsilon \|\partial_y\phi\|_{L^\infty_\uptau L^2_y}^2,
\end{split}
\end{align*}
which is bounded by the energy. In this calculation $[\jap{\uptau}^{-\frac{1}{2}-\delta},\bfP_N \partial_\uptau]$ was treated in a similar way as in the proof of \eqref{eq:commtempgen1}.  The estimate for the same sum in $L^2_{\tau,x}$ is the same, except that in the first step we place $\jap{\tau}^{\frac{1}{2}+\epsilon}\partial_\tau \bfP_{\leq N} c$ in $L^\infty_{\tau,x}$ instead of $L^2_\tau L^\infty_x$. Let us now turn to the more difficult term $b$:
%%%%%%%
%%%%%%%
\begin{align*}
\begin{split}
\sum_{N\geq N_0}\|[P_N, \bfP_{\leq N}b]\partial^2_y\tilbfP_N\phi\|_{L^1_\uptau L^2_y}^2&\lesssim \sum_{N\geq N_0} 2^{-2N}\| \jap{\uptau}^{\frac{1}{2}+\delta}\partial_\uptau \bfP_{\leq N} b\|_{L^2_\uptau L^\infty_{y}}^2\|\jap{\uptau}^{-\frac{1}{2}-\delta}\partial^2_{ y} \tilbfP_N\phi\|_{L^2_{\uptau,y}}^2\\
&\lesssim\epsilon\sum_{N \geq N_0}2^{-2N}\|\jap{\uptau}^{-\frac{1}{2}-\delta}\calP_\Ell \tilbfP_N\phi\|_{L^2_{\uptau,y}}^2\\
&\lesssim\epsilon\sum_{N_\geq N_0}2^{-2N}\Big(\|\jap{\uptau}^{-\frac{1}{2}-\delta}[\calP_\Ell ,\tilbfP_N]\phi\|_{L^2_{\uptau,y}}^2+\|\jap{\uptau}^{-\frac{1}{2}-\delta}\tilbfP_N \calP_\uptau\phi\|_{L^2_{\uptau,y}}^2\\
&\phantom{\lesssim\epsilon\sum_{N_\geq N_0}N^{-2}\Big(}+\|\jap{\uptau}^{-\frac{1}{2}-\delta}\tilbfP_N \calP\phi\|_{L^2_{\uptau,y}}^2\Big).
\end{split}
\end{align*}
The last term above is bounded by the right-hand side of \eqref{eq:calAbound} using the equation for $\phi=\uppsi_\near$. In the line before last, the first term can be absorbed by $\calA$, while the second term is treated as in the contribution of $a$ and $c$ above. The contribution of $\sum_{N\geq N_0}\|[P_N, \bfP_{\leq N}b]\partial^2_x\bfP_N\phi\|_{L^2_{\tau ,x}}^2$ can be handled similarly where now we bound $\jap{\tau}^{\frac{1}{2}+\epsilon}\partial_\tau \bfP_{\leq N} b$ in $L^{\infty}_{\uptau,y}$, instead of $L^2_\uptau L^\infty_y$, in the first step. 

We next consider the second term in the decomposition \eqref{eq:comfreqdecomp1}. For $a$ we have
%%%%%%%
%%%%%%%
\begin{align*}
\begin{split}
\sum_{N\geq N_0}\|[P_N,\bfP_{N}a]\partial^2_\tau P_{\leq N}\phi\|_{L^2_\uptau L^2_y}^2&\lesssim \sum_{N\geq N_0} 2^{-2N}\|\jap{\uprho}^{\alpha_0} 2^N\partial_\tau \bfP_{ N} a\|_{L^\infty_{\uptau,y}}^2\|\jap{\uprho}^{-\alpha_0}\partial_\uptau P_{\leq N}\phi\|_{L^2_{\uptau,y} }^2\\
&\lesssim \|\jap{\uprho}^{\alpha_0}\partial^2_\uptau a\|_{L^\infty_{\uptau,y}}^2\|\jap{\uprho}^{-\alpha_0}\partial_\uptau\phi\|_{L^2_{\uptau,y}}^2\sum_{N\geq N_0} 2^{-2N}\lesssim \epsilon \|\phi\|_{LE}^2.
\end{split}
\end{align*}
The estimate in $L^1_\uptau L^2_y$ is similar. For $c$,
%%%%%%%
%%%%%%%
\begin{align*}
\begin{split}
\sum_{N\geq N_0}\|[P_N, \bfP_{ N}c]\partial^2_{\uptau,y}P_{\leq N}\phi\|_{L^1_\uptau L^2_y}^2&\lesssim \sum_{N\geq N_0} 2^{-2N}\| \jap{\uptau}^{\frac{1}{2}+\epsilon}\partial_\uptau \bfP_{N} c\|_{L^2_\uptau L^\infty_{y}}^2\|\jap{\uptau}^{-\frac{1}{2}-\epsilon}\partial^2_{\uptau y} P_{\leq N}\phi\|_{L^2_{\uptau,y}}^2\\
&\lesssim \sum_{N\geq N_0}2^{-2N}\|\partial_\uptau P_{\leq N}\jap{\uptau}^{-\frac{1}{2}-\epsilon}\partial_y\phi\|_{L^2_{\uptau,y}}^2\|\jap{\uptau}^{\frac{1}{2}+\epsilon}\partial_\uptau\bfP_Nc\|_{L^2_\uptau L^\infty_y}^2\\
&\quad+\sum_{N\geq N_0}2^{-2N}\|[\jap{\uptau}^{-\frac{1}{2}-\epsilon},P_{\leq N} \partial_\uptau]\partial_y\phi\|_{L^2_{\uptau,y}}^2\|\jap{\uptau}^{\frac{1}{2}+\epsilon}\partial_\uptau\bfP_Nc\|_{L^2_\uptau L^\infty_y}^2\\
&\lesssim\|\partial_y\phi\|_{L^\infty_\uptau L^2_y}^2\sum_{N\geq N_0}\|\jap{\uptau}^{\frac{1}{2}+\epsilon}\bfP_Nc\|_{L^2_\uptau L^\infty_y}^2\lesssim \epsilon \|\partial_y\phi\|_{L^\infty_\uptau L^2_y}^2.
\end{split}
\end{align*}
The $L^2_{\uptau,y}$ estimate is similar. For $b$ we again use elliptic estimates and argue more carefully as
%%%%%%%
%%%%%%%
\begin{align*}
\begin{split}
\sum_{N\geq N_0}\|[P_N, \bfP_{ N}b]\partial^2_y P_{\leq N}\phi\|_{L^1_\uptau L^2_y}^2&\lesssim \sum_{N\geq N_0} 2^{-2N}\| \jap{\uptau}^{\frac{1}{2}+\delta}\partial_\uptau \bfP_{N} b\|_{L^2_\uptau L^\infty_{y}}^2 \|\jap{\uptau}^{-\frac{1}{2}-\delta}[\calP_{\Ell} P_{\leq N}]\phi\|_{L^2_{\uptau,y}}^2\\
&\quad+ \sum_{N\geq N_0} 2^{-2N}\| \jap{\uptau}^{\frac{1}{2}+\delta}\partial_\uptau \bfP_{N} b\|_{L^2_\uptau L^\infty_{y}}^2 \|\jap{\uptau}^{-\frac{1}{2}-\delta} P_{\leq N}\calP_\uptau\phi\|_{L^2_{\uptau,y}}^2\\
&\quad+ \sum_{N\geq N_0} 2^{-2N}\| \jap{\uptau}^{\frac{1}{2}+\delta}\partial_\uptau \bfP_{N} b\|_{L^2_\uptau L^\infty_{y}}^2 \|\jap{\uptau}^{-\frac{1}{2}-\delta}P_{\leq N}\calP\phi\|_{L^2_{\uptau,y}}^2.
\end{split}
\end{align*}
The last two lines can be handled as in the case of $a$ and $c$ above and using the equation for $\phi=\uppsi_\near$. The first line is bounded by
%%%%%%%%
%%%%%%%%
%%%%%%%
%%%%%%%
\begin{align*}
\begin{split}
&\sum_{N\geq N_0} 2^{-2N}\| \jap{\uptau}^{\frac{1}{2}+\epsilon}\partial_\uptau \bfP_{N} b\|_{L^2_\uptau L^\infty_{y}}^2 \|[\calP_{\Ell}, P_{\leq N_0}]\phi\|_{L^2_{\uptau,y}}^2\\
&+\sum_{N\geq N_0}\sum_{N_0< M\leq N} 2^{-2N}\| \jap{\uptau}^{\frac{1}{2}+\epsilon}\partial_\uptau \bfP_{N} b\|_{L^2_\uptau L^\infty_{y}}^2 \|[\calP_{\Ell}, P_{M}]\phi\|_{L^2_{\uptau,y}}^2\\
&\lesssim \calA \sum_{N\geq N_0}2^{-2N}\| \jap{\tau}^{\frac{1}{2}+\epsilon}\partial_\tau \bfP_{N} b\|_{L^2_\tau L^\infty_{x}}^2\lesssim \epsilon \calA,
\end{split}
\end{align*}
which can be absorbed. The estimate in $\sum_{N\geq N_0}\|[P_N, \bfP_{ N}b]\partial^2_yP_{\leq N}\phi\|_{ L^2_{\uptau,y}}^2$ is similar.

We consider the third term in the decomposition \eqref{eq:comfreqdecomp1} next. For $a$ we have
%%%%%%%
%%%%%%%
\begin{align*}
\begin{split}
\sum_{N\geq N_0}\Big\|\sum_{M> N+4}[P_N,\bfP_Ma]\partial^2_\uptau P_M\phi\Big\|_{L^2_{\uptau,y}}^2&\lesssim \sum_{N\geq N_0}\Big(\sum_{M> N}2^{M-N}\|\jap{\uprho}^{\alpha_0}\partial_\uptau\bfP_Ma\|_{L^\infty_{\uptau,y}}\|\jap{-\uprho}^{\alpha_0}\partial_\uptau P_M\phi\|_{L^2_{\uptau,y}}\Big)^2\\
&\lesssim \|\jap{r}^{\alpha}\partial_\uptau^3a\|_{L^\infty_{\uptau,y}}^2\|\phi\|_{LE}^2\sum_{N\geq N_0}2^{-4N}\lesssim \epsilon \|\phi\|_{LE}^2.
\end{split}
\end{align*}
The estimate in $L^1_{\uptau}L^2_y$ is similar. For $c$,
%%%%%%%
%%%%%%%
\begin{align*}
\begin{split}
&\sum_{N\geq N_0}\Big\|\sum_{M> N+4}[P_N, \bfP_{M}c]\partial^2_{\uptau,y}P_{M}\phi\Big\|_{L^1_\uptau L^2_y}^2\\
&\lesssim \sum_{N\geq N_0} N^{-2}\Big(\sum_{M> N}M\| \jap{\uptau}^{\frac{1}{2}+\delta}\partial_\uptau \bfP_{M} c\|_{L^2_\uptau L^\infty_{y}}\|\jap{\uptau}^{-\frac{1}{2}-\delta}\partial_{ y} \bfP_{M}\phi\|_{L^2_{\uptau,y}}\Big)^2\\
&\lesssim \|\jap{\uptau}^{\frac{1}{2}+\epsilon}\partial_\uptau^2c\|_{L^2_\uptau L^\infty_y}^2\|\partial_y\phi\|_{L^\infty_\uptau L^2_y}^2\lesssim \epsilon\|\partial_y\phi\|_{L^\infty_\uptau L^2_y}^2,
\end{split}
\end{align*}
and the contribution in $L^2_{\uptau,y}$ is bounded in a similar way. For $b$ we again apply elliptic estimates. Except for the commutator with $\calP_\Ell$, the resulting terms can be bounded as was done for $a$ and $c$, and using the equation for $\phi=\uppsi_\near$, and the commutator term is bounded, using similar arguments as earlier, as (for the $L^1_\uptau L^2_y$ estimate; the $L^2_{\uptau,y}$ estimate is similar)
%%%%%%%
%%%%%%%
\begin{align*}
\begin{split}
&\sum_{N\geq N_0} N^{-2}\Big(\sum_{M\geq N}\|\jap{\uptau}^{\frac{1}{2}+\epsilon}\partial_\uptau \bfP_{M} b\|_{L^2_\uptau L^\infty_{y}} \|[\calP_{\Ell}, P_{M}]\phi\|_{L^2_{\uptau,y}}\Big)^2\\
&\lesssim \|\jap{\uptau}^{\frac{1}{2}+\epsilon}\partial_\uptau^2b\|_{L^2_\uptau L^\infty_y}^2\sum_{M> N_0}\|[\calP_\Ell,P_M]\phi\|_{L^2_{\uptau,y}}^2\lesssim \epsilon \calA,
\end{split}
\end{align*}
which can be absorbed.

The first term on the second line of \eqref{eq:comfreqdecomp1} can be further decomposed as
%%%%%%%
%%%%%%%
\begin{align*}
\begin{split}
\sum_{N\geq N_0}[P_N,P_{\leq N+4}a]\partial_{\uptau}^2P_{\leq N}\phi&=\sum_{N=N_0}^{N_0+10}[P_{N},P_{\leq N +4}a]P_{\leq N}\partial_{\uptau}^2\phi+\sum_{N>N_0+10}[P_N,\bfP_Na]\partial_{\uptau}^2P_{\leq N-4}\phi\\
&\quad+\sum_{N>N_0+10}\sum_{M=N-3}^{N}[P_N,P_{\leq N+3}a]\partial_{\uptau}^2P_M\phi,
\end{split}
\end{align*}
with similar decompositions for the contributions of $b$ and $c$. Each of the terms above can then be bounded using similar arguments to the earlier ones. Similarly, the sum over $N\geq N_0$ of the last term in \eqref{eq:comfreqdecomp1} can be decomposed as (with similar decompositions for $b$ and $c$ contributions)
%%%%%%%
%%%%%%%
\begin{align*}
\begin{split}
\sum_{N=N_0}^{N_0+10}\sum_{M>N+4}[P_N,P_Ma]\partial_{\uptau}^2P_{\leq N+3}\phi+\sum_{N=N_0}^{N_0+10}\sum_{M>N+4}\sum_{L=N-3}^{N+3}[P_N,P_Ma]\partial_{\uptau}^2P_{L}\phi,
\end{split}
\end{align*}
and each of these terms can be estimated as before. 

It remains to estimate the first term on each line of the definition of $\calA$. For this we use the following decomposition, where $\tilbfP_{\leq N_0}=P_{\leq N_0+2}$,
%%%%%%%
%%%%%%%
\begin{align}\label{eq:comfreqdecomp2}
\begin{split}
[P_{\leq N_0},a]\partial^2_\uptau\phi&=[P_{\leq N_0}, \bfP_{\leq N_0}a]\partial^2_\uptau \tilbfP_{\leq N_0}\phi+\sum_{N>N_0+2}[P_{\leq N_0},\bfP_Na]\partial^2_\uptau P_N\phi\\
&\quad+\sum_{N>N_0+4}[P_{\leq N_0},P_Na]\partial^2_\uptau \tilbfP_{\leq N_0}\phi,
\end{split}
\end{align}
and similarly for $b$ and $c$. The contribution of the first term is bounded as
%%%%%%%
%%%%%%%
\begin{align*}
\begin{split}
\|[P_{\leq N_0}, \bfP_{\leq N_0}a]\partial^2_\uptau\tilbfP_{\leq N_0}\phi\|_{L^2_{\uptau,y}}^2\lesssim \|\jap{\uprho}^{\alpha_0} \partial_\uptau \bfP_{\leq N_0}a\|_{L^\infty_{\uptau,y}}^2\|\jap{\uprho}^{-\alpha_0}\partial_\uptau^2\tilbfP_{\leq N_0}\phi\|_{L^2_{\uptau,y}}^2\lesssim \epsilon \|\phi\|_{LE}^2,
\end{split}
\end{align*}
with the $L^1_\uptau L^2_y$ estimate being similar. The corresponding contribution for $c$ is bounded as
%%%%%%%
%%%%%%%
\begin{align*}
\begin{split}
\|[P_{\leq N_0}, \bfP_{\leq N_0}c]\partial^2_{\uptau y}\tilbfP_{\leq N_0}\phi\|_{L^1_\uptau L^2_y}\lesssim \|\jap{\uptau}^{\frac{1}{2}+\epsilon}\partial_\uptau \bfP_{\leq N_0}c\|_{L^2_\uptau L^\infty_y}^2\|\jap{\uptau}^{-\frac{1}{2}-\epsilon}\partial_{\uptau y}\tilbfP_{\leq N_0}\phi\|_{L^2_{\uptau,y}}\lesssim \epsilon \|\partial_y\phi\|_{L^\infty_\uptau L^2_y}^2,
\end{split}
\end{align*}
and the corresponding estimate in $L^2_{\tau,x}$ is handled similarly. For $b$ we apply elliptic estimates and as usual use similar arguments as for $a$ and $c$ above as well as the equation for $\phi=\uppsi_\near$ to handle all terms except the commutator with $\calP_\Ell$, while this commutator error is  bounded by $\epsilon \calA$ which can be absorbed.

Turning to the second term in \eqref{eq:comfreqdecomp2}, the contribution of $a$ in $L^2_{\uptau,y}$ is bounded as
%%%%%%%
%%%%%%%
\begin{align*}
\begin{split}
\Big\|\sum_{N> N_0+2}[P_{\leq N_0},\bfP_Na]\partial^2_\tau P_N\phi\Big\|_{L^2_{\uptau ,y}}^2&\lesssim \Big(\sum_{N> N_0}\|\jap{\uprho}^{\alpha_0}\partial_\tau N\bfP_Na\|_{L^\infty_{\tau,y}}\|\jap{\uprho}^{-\alpha_0}\partial_\tau P_N\phi\|_{L^2_{\uptau,y} }\Big)^2\\
&\lesssim \|\jap{\uprho}^{\alpha_0}\partial_\tau^3a\|_{L^\infty_{\tau,y}}^2\|\phi\|_{LE}^2\lesssim \epsilon \|\phi\|_{LE}^2,
\end{split}
\end{align*}
and the $L^1_\uptau L^2_y$ contribution is similar. For $c$
%%%%%%%
%%%%%%%
\begin{align*}
\begin{split}
\Big\|\sum_{N> N_0+2}[P_{\leq N_0},\bfP_Nc]\partial^2_{\uptau y}P_N\phi\Big\|_{L^1_\uptau L^2_y}&\lesssim \Big(\sum_{N\geq N_0}\|\jap{\uptau}^{\frac{1}{2}+\delta}2^N\partial_\uptau \bfP_{N}c\|_{L^2_\uptau L^\infty_y}\|\jap{\uptau}^{-\frac{1}{2}-\delta}\partial_yP_N\phi\|_{L_{\uptau,y}^2}\Big)^2\\
&\lesssim \epsilon \|\partial_y\phi\|_{L^\infty_\uptau L^2_y}^2.
\end{split}
\end{align*}
The estimate for the corresponding term in $L^2_{\uptau,y}$ is similar. The contribution of $b$ is also handled using elliptic estimates as usual. Finally, the last term in \eqref{eq:comfreqdecomp2} can also be handled using similar arguments as above, and we omit the details. This completes the proof of \eqref{eq:calAbound}. 
\end{proof}
%%%%%%%%%%%
%%%%%%%%%%%
We can now prove Lemma~\ref{lem:phinearfar1}.
%%%%%%%%%%%
%%%%%%%%%%%
\begin{proof}[Proof of Lemma~\ref{lem:phinearfar1}]
In this proof, when working in the $(\uptau,\uprho,\upomega)$ coordinates, we will use $y$ for the coordinates $(\uprho,\upomega)$. The notation $\iint$ is used for integration over $\Sigma_{t_1}^{t_2}$. We will also use the notation $I=[t_1,t_2]$ with $LE(I)=LE(\Sigma_{t_1}^{t_2})$ and $LE=LE(\cup_{\uptau}\Sigma_{\uptau})$. Note that it suffices to estimate $\|P_{\leq N_0}\uppsi_{\near,\far}\|_{LE(I)}$. By a similar multiplier argument as in the proof of Lemma~\ref{lem:phifar1}, and with multipliers $Q_j^\Int P_{\leq N_0}\uppsi_{\near,\far}$ and $Q_j^\Ext P_{\leq N_0}\uppsi_{\near,\far}$, $j=1,2$, we get (recall that, by a slight abuse of notation, we write $P_{\leq N_0}\calP_\pert\uppsi_{\near}$ for $P_{\leq N_0}\uppsi$, where $\uppsi=\calP_\pert\uppsi_{\near}$ in~$\Sigma_{t_1}^{t_2}$ and $\uppsi=0$ in $(\Sigma_{t_1}^{t_2})^c$) 
%%%%%%%
%%%%%%%
\begin{align*}
\begin{split}
\|P_{\leq N_0}\uppsi_{\near,\far}\|_{LE(I)}^2&\lesssim \|V_\far\uppsi_\far\|_{LE^\ast(I)}^2+\sum_{j=1}^2\Big|\iint (P_{\leq N_0}\calP_\pert\uppsi_\near)(Q^\Int_jP_{\leq N_0}\uppsi_{\near,\far}))\sqrt{|\hbar|}\ud y\ud\tau\Big|\\
&\quad +\sum_{j=1}^2\Big|\iint (P_{\leq N_0}\calP_\pert\uppsi_\near)(Q^\Ext_jP_{\leq N_0}\uppsi_{\near,\far}))\sqrt{|\hbar|}\ud y\ud\tau\Big|\\
&\lesssim\|\uppsi\|_{E(\Sigma_{t_1})}^2+\|f\|_{L^1L^2(\Sigma_{\tau_1}^{\tau_2})}^2\\
&\quad+\Big|\iint (P_{\leq N_0}\calP_\pert\uppsi_\near)(QP_{\leq N_0}\uppsi_{\near,\far}))\sqrt{|\hbar|}\ud y\ud\tau\Big|\\
&\quad +\sum_{j=1}^2\Big|\iint (P_{\leq N_0}\calP_\pert\uppsi_\near)(Q^\Ext_jP_{\leq N_0}\uppsi_{\near,\far}))\sqrt{|\hbar|}\ud y\ud\tau\Big|.
\end{split}
\end{align*}
Here $Q_1^\Int$ and $Q_1^\Ext$ denote the first order interior and exterior multipliers (see \eqref{eq:psifarLEDtemp1.5} and \eqref{eq:LEDintbulk1}), respectively, and $Q_2^\Int$ and $Q_2^\Ext$ are the corresponding order zero multipliers (see \eqref{eq:psifarLEDtemp2.5} and \eqref{eq:LEDintbulk2}; we have not used the notation $P$ for the order zero multipliers to prevent confusion with the frequency projections). Therefore, our task is reduced to estimating 
%%%%%%%
%%%%%%%
\begin{align}\label{eq:mainerror}
\begin{split}
\Big|\iint (P_{\leq N_0}\calP_\pert\uppsi_\near)(Q_j^{\Int,\Ext} P_{\leq N_0}\uppsi_{\near,\far})\sqrt{|\hbar|}\ud y\ud\tau\Big|.
\end{split}
\end{align}
For the interior we place both terms in $LE$ where for $P_{\leq N_0}\calP_\pert \uppsi_{\near}$ we use the frequency projection to remove one derivative. More precisely, let $\chi_K\equiv\chi_K(\uprho)$ be a cutoff to a large compact region $K$ containing the supports of $Q_j^\Int$, and let $\chi_{K^c}=1-\chi_K$. Then (note that we can always insert a factor of $\uprho$ in the order zero terms using the same argument as in Lemma~\ref{lem:LEDhighfreq1})
%%%%%%%
%%%%%%%
\begin{align*}
\begin{split}
&\Big|\iint \chi_K(P_{\leq N_0}\calP_\pert\uppsi_\near)(Q_j^\Int P_{\leq N_0}\uppsi_{\near,\far}))\sqrt{|\hbar|}\ud y\ud\tau\Big|\\
&\lesssim \delta\|P_{\leq N_0}\uppsi_{\near,\far}\|_{LE(I)}^2+C_\delta\iint(P_{\leq N_0}(\chi_K\calP_\pert\uppsi_\near))^2\sqrt{|\hbar|}\ud y\ud\tau.
\end{split}
\end{align*}
The first term can be absorbed if $\delta$ is small. For the last term we write $\calP_\pert$  as $$\calP_\pert=\ringpi_q^{\mu\nu}\partial^2_{\mu\nu}+\ringpi_l^\mu\partial_\mu+\ringpi_c,$$
where $|\ringpi_q|,|\ringpi_l|,|\ringpi_c|\lesssim \epsilon$. The contribution of $\ringpi_l^\mu\partial_\mu + \ringpi_c$ is then already bounded by $\epsilon\|\uppsi_\near\|_{LE}^2$, which is admissible. For the second order part, if at least one of $\mu$ or $\nu$ is $\uptau$ then we can use the frequency projection to drop a $\partial_\uptau$ derivative and argue as before. When both derivatives are with respect to the spatial variables $y$, we use elliptic estimates using $\calP$. That is, let
%%%%%%%
%%%%%%%
\begin{align*}
\begin{split}
\calP= \calP_\uptau+\calP_\Ell,
\end{split}
\end{align*}
where $\calP_\uptau$ contains the terms with at least one $\partial_\uptau$ derivative and $\calP_\Ell$ is the remainder which is elliptic. Then, with $\tilchi_K\equiv\tilchi_K(\uprho)$ a cutoff with slightly larger support than $\chi_K$, recalling equation~\eqref{eq:calPphinear1}, and using elliptic regularity, we can bound $\|P_{\leq N_0}(\chi_K\ringpi^{yy}\partial_y^2\uppsi_\near)\|_{L^2_{\tau,y}(I)}$ by
%%%%%%%
%%%%%%%
\begin{align*}
\begin{split}
& \|[P_{\leq N_0},\chi_K \ringpi^{yy}\partial_y^2]\uppsi_\near\|_{L^2_{\tau,y}(I)}+\epsilon\|\tilchi_K\calP_\Ell P_{\leq N_0}\uppsi_\near\|_{L^2_{\tau,y}(I)}+\epsilon\|\tilchi_K P_{\leq N_0}\uppsi_\near\|_{L^2_{\tau,y}(I)}\\
&\lesssim \|[P_{\leq N_0},\chi_K \ringpi^{yy}\partial_y^2]\uppsi_\near\|_{L^2_{\tau,y}(I)}+\epsilon\|[\tilchi_K\calP_\Ell,P_{\leq N_0}]\uppsi_\near\|_{L^2_{\tau,y}(I)}\\
&\quad+\epsilon\|P_{\leq N_0}(\tilchi_K\calP_\uptau\uppsi_\near)\|_{L^2_{\tau,y}(I)}+\epsilon\|P_{\leq N_0}(\tilchi_KV_\far\uppsi_\far)\|_{L^2_{\tau,y}(I)}+\epsilon\|\tilchi_K P_{\leq N_0}\uppsi_\near\|_{L^2_{\tau,y}(I)}.
\end{split}
\end{align*}
The last line contributes an admissible error by using the frequency projection to drop $\partial_\uptau$ derivatives in the first term and using Lemma~\ref{lem:phifar1} for the second term. The line before last with the commutators also contributes an admissible error by Lemma~\ref{lem:LEDcommutator1}. 

For the exterior, note that by choosing the compact set $K$ above sufficiently large, we may assume that the coordinates $(\uptau,\uprho,\upomega)$ and $(\tau,r,\theta)$ agree in $K^c$. We will therefore use the notation $(\tau,r,\theta)$ instead of  $(\uptau,\uprho,\upomega)$. In this region we need an extra weighted energy estimate for $\uppsi_\near$ which allows us to put more $r$ weights on $(\partial_r+\frac{n-1}{2r})\uppsi_\near$ (see \underline{\emph{Case 2}} below). The details are as follows. First recall that $\calP_\pert$ is of the form
%%%%%%%
%%%%%%%
\begin{align*}
\begin{split}
\ringa\partial_\tau(\partial_r+\frac{n-1}{2r})+\ringa^{\mu\nu}\partial_{\mu\nu}+\ringb^\mu \partial_\mu +\ringc,
\end{split}
\end{align*}
where for some $\gamma>1$,
%%%%%%%
%%%%%%%
\begin{align*}
\begin{split}
|\ringa|, |\ringa^{yy}|,\lesssim \epsilon \tau^{-\gamma}, \quad  |\partial_y\ringa^{yy}|, |\ringa^{\tau y}|, |\ringb^y|\lesssim \epsilon\tau^{-\gamma}r^{-1},\quad |\ringa^{\tau\tau}|, |\ringb^\tau|\lesssim \epsilon \tau^{-\gamma}r^{-2},\quad |\ringc|\lesssim \epsilon\tau^{-\gamma} r^{-4}.
\end{split}
\end{align*} 
while $Q_j^\Ext$ have the structure
\begin{align*}
\begin{split}
Q_j^\Ext=\beta^\tau \partial_\tau+\beta^y \partial_y +\frac{\beta}{r},
\end{split}
\end{align*}
with
%%%%%%%
%%%%%%%
\begin{align*}
\begin{split}
|\beta|, |\beta^\tau|, |\beta^y|\lesssim 1, \quad |\partial_y\beta|, |\partial_y\beta^\tau|, |\partial_y\beta^y|\lesssim r^{-1-\alpha},
\end{split}
\end{align*}
and $\beta$, $\beta^\tau$, $\beta^y$ supported outside of some large compact set $\tilK$. We now consider the contribution of all combinations to \eqref{eq:mainerror}. Below we will use the shorthand notation $\angles{f}{g}=\iint fg \sqrt{\hbar}\ud y \ud \uptau.$

{\underline{\emph{Case 1:}}} First we consider the contribution of $\partial_\tau$ in $Q_j^\Ext$ and every term except $2a\partial_\tau(\partial_r+\frac{n-1}{2r})$ in $\calP_\pert$. For $\ringa^{yy}$, after one integration by parts in $\partial_y$ we get
%%%%%%% 
%%%%%%%
\begin{align*}
\begin{split}
&|\angles{P_{\leq N_0}\ringa^{yy}\partial_y^2\uppsi_\near}{\beta^\tau\partial_\tau P_{\leq N_0}\uppsi_{\near,\far}}|\\
&\lesssim \angles{|P_{\leq N_0}\ringa^{yy}\partial_r\uppsi_\near|}{|\partial_\tau P_{\leq N_0}\partial_r\uppsi_{\near,\far}|}+ \angles{|P_{\leq N_0}\ringa^{yy}\partial_r\uppsi_\near|}{|\partial_\tau P_{\leq N_0}r^{-1}\uppsi_{\near,\far}|}\\
&\quad+ \angles{|P_{\leq N_0} r\partial_y\ringa^{yy}\partial_r\uppsi_\near|}{|\partial_\tau P_{\leq N_0}r^{-1}\uppsi_{\near,\far}|}+ \epsilon(\sup_{\tau}\|\uppsi_\near\|_{E}+\sup_{\tau\in I}\|P_{\leq N_0}\uppsi_{\near,\far}\|_E)\\
&\lesssim \epsilon(\sup_{\tau}\|\uppsi_\near\|_{E}+\sup_{\tau\in I}\|P_{\leq N_0}\uppsi_{\near,\far}\|_E),
\end{split}
\end{align*}
where in the last estimate we have used $P_{\leq N_0}$ to drop the $\tau$ derivatives and the $\tau$ decay of $\ringa^{yy}$ and $r\partial_y\ringa^{yy}$ to integrate in $\tau$. The contributions of the other terms in $\ringa^{\mu\nu}\partial_{\mu\nu}$, $\ringb^\mu \partial_\mu$, $\ringc$ are handled similarly, where instead of integrating by parts we use the decay of $r$ and move one factor of $r^{-1}$ to $\uppsi_{\near,\far}$.

{\underline{\emph{Case 2:}}} For the contribution of $\partial_\tau$ to $Q_j^\Ext$ and $2\ringa\partial_\tau(\partial_r+\frac{n-1}{2r})$ to $\calP_\pert$ we use the $\tau$ decay and smallness of $\ringa$ to estimate (here for $\uppsi_{\near,\far}$ we do not drop $\partial_\tau$)
%%%%%%%
%%%%%%%
\begin{align*}
\begin{split}
&|\angles{P_{\leq N_0}\ringa\partial_\tau(\partial_r+\frac{n-1}{2r})\uppsi_\near}{\beta\partial_\tau P_{\leq N_0}\uppsi_{\near,\far}}|\\
&\lesssim \epsilon\|P_{\leq N_0}\uppsi_{\near,\far}\|_{LE(I)}^2+\epsilon^{-1}\|\chi_{\tilK^c}P_{\leq N_0}\ringa\partial_\tau r^{\frac{1+\alpha}{2}}(\partial_r+\frac{n-1}{2r})\uppsi_\near\|_{L^2_{\tau,y}(I)}^2\\
&\lesssim \epsilon\|P_{\leq N_0}\uppsi_{\near,\far}\|_{LE(I)}^2+\epsilon  \sup_{\tau}\|\chi_{\tilK^c}r^{\frac{1+\alpha}{2}}(\partial_r+\frac{n-1}{2r})\uppsi_\near\|_{L^2_y}^2.
\end{split}
\end{align*}
For the last term we will need a weighted energy estimate for $\uppsi_\near$, which we will discuss below.

{\underline{\emph{Case 3:}}} Next we consider the contribution of $\beta^y\partial_y+r^{-1}\beta$ to $Q_j^\Ext$ and the contribution of every term except $\ringa^{yy}\partial^2_y$ to $\calP_\pert$. Here for $\uppsi_{\near,\far}$ we simply drop $P_{\leq N_0}$ while for $\uppsi_\near$ we use $P_{\leq N_0}$ to drop one $\partial_\tau$ in the second order terms in $\calP_\pert$. Using the decay and smallness of the coefficients of $\calP_\pert$, the corresponding contributions are then bounded by $$\epsilon(\sup_{\tau}\|\uppsi_\near\|_E^2+\sup_{\tau\in I}\|P_{\leq N_0}\uppsi_{\near,\far}\|_E^2).$$

{\underline{\emph{Case 4:}}} For the contribution of $\ringa\partial^2_y$ to $\calP_\pert$ and $\beta^y\partial_u+r^{-1}\beta$ to $Q_j^\Ext$ we use the equation $\calP\uppsi_\near = -V_\far \uppsi_\far$ and elliptic estimates for $\calP_\Ell$. Note that, unlike the case of the interior above, in view of the $r$ decay of order zero term in $\calP_\Ell$ we do not need to add an $L^2_y$ term when applying elliptic estimates for $\calP_\Ell$. Using this observation and Lemma~\ref{lem:LEDcommutator1}, and with $\tilK_1$ a compact region contained in $\tilK$, the corresponding contribution is bounded by
%%%%%%%
%%%%%%%
\begin{align*}
\begin{split}
&\|P_{\leq N_0}\uppsi_{\near,\far}\|_{L^\infty_\tau E(I)}\|\chi_{\tilK^c}P_{\leq N_0}\ringa^{yy}\partial_y^2\uppsi_\near\|_{L_\tau^1L_y^2(I)}\\
&\lesssim \|P_{\leq N_0}\uppsi_{\near,\far}\|_{L^\infty_\tau E(I)}(\|\chi_{\tilK^c}[P_{\leq N_0},\ringa^{yy}]\partial_y^2\uppsi_\near\|_{L_\tau^1L_y^2(I)}+\epsilon\|\tau^{-\gamma}\chi_{\tilK^c}\partial_y^2P_{\leq N_0}\uppsi_{\near}\|_{L^1_\tau L^2_y(I)})\\
&\lesssim \|P_{\leq N_0}\uppsi_{\near,\far}\|_{L^\infty_\tau E(I)}(\|\chi_{\tilK^c}[P_{\leq N_0},\ringa^{yy}]\partial_y^2\uppsi_\near\|_{L_\tau^1L_y^2(I)}+\epsilon\|\tau^{-\gamma}\chi_{\tilK^c_1}\calP_\Ell P_{\leq N_0}\uppsi_{\near}\|_{L^1_\tau L^2_y(I)})\\
&\lesssim \epsilon\|P_{\leq N_0}\uppsi_{\near,\far}\|_{L^\infty_\tau E(I)}^2+\epsilon^{-1} \|\chi_{\tilK^c}[P_{\leq N_0},\ringa^{yy}]\partial_y^2\uppsi_\near\|_{L_\tau^1L_y^2(I)}^2+\epsilon\|\tau^{-\gamma} P_{\leq N_0}V_\far\uppsi_{\far}\|_{L^1_\tau L_y^2(I)}^2\\
&\quad+\epsilon\|\tau^{-\gamma}P_{\leq N_0}\chi_{\tilK_1^c}\calP_\tau\uppsi_\near\|_{L^1_\tau E(I)}^2+\epsilon \|\tau^{-\gamma}[\chi_{\tilK^c}\calP_\Ell, P_{\leq N_0}]\uppsi_{\near}\|_{L^1_\tau L_y^2(I)}^2\\
&\lesssim \epsilon(\sup_{\tau}\|\uppsi_\near\|_E^2+\sup_{\tau\in I}\|P_{\leq N_0}\uppsi_{\near,\far}\|_E^2+\sup_{\tau\in I}\|\uppsi_\far\|_E^2+\|\uppsi_\far\|_{LE}^2).
\end{split}
\end{align*}

Putting everything together we have shown that
%%%%%%%
%%%%%%%
\begin{align*}
\begin{split}
\|P_{\leq N_0}\uppsi_{\near,\far}\|_{LE(I)}&\lesssim \|f\|_{LE^\ast(I)}+\|\uppsi\|_{L^\infty_\tau E(I)}+\epsilon\|\uppsi_{\near}\|_{LE}+\epsilon\|P_{\leq N_0}\uppsi_{\near\,\far}\|_{L^\infty_\tau E(I)}\\
&\quad+\epsilon\|\chi_{\tilK^c}r^{\frac{1+\alpha}{2}}(\partial_r+\frac{n-1}{2r})\uppsi_\near\|_{L^\infty_\tau L^2_y}.
\end{split}
\end{align*}
Using a similar argument with the multiplier $\partial_\tau P_{\leq N_0}\uppsi_{\near,\far}$, we can also prove an energy estimate for $P_{\leq N_0}\uppsi_{\near,\far}$ which allows us to absorb the last term on the first line above, and get
%%%%%%%
%%%%%%%
\begin{align}\label{eq:LEDnearcircletemp1}
\begin{split}
\|P_{\leq N_0}\uppsi_{\near,\far}\|_{LE(I)}&\lesssim \|f\|_{LE^\ast(I)}+\|\uppsi\|_{L^\infty_\tau E(I)}+\epsilon\|\uppsi_{\near}\|_{LE}\\
&\quad+\epsilon\|\chi_{\tilK^c}r^{\frac{1+\alpha}{2}}(\partial_r+\frac{n-1}{2r})\uppsi_\near\|_{L^\infty_\tau L^2_y}.
\end{split}
\end{align}
It remains to control $\|\chi_{\tilK^c} r^{\frac{1+\alpha}{2}}(\partial_r+\frac{n-1}{2r})\uppsi_\near\|_{L^\infty_\tau L^2_y}$. This is achieved by the same argument as we will later use to prove $r^p$-energy estimates in the exterior. The more complete version of the argument is worked out in Lemma~\ref{lem:rpmult1} below, which holds independently of the results in this section. Without repeating the details, multiplying the equation $\calP\uppsi_\near=-V_\far\uppsi_\far$ by $\chi_{\tilK^c}r^p(\partial_r+\frac{n-1}{2r})\uppsi_\near$ (alternatively, by $\chi_{\tilK^c}L(\tilr^{\frac{n-1}{2}}\uppsi_\near)$), with $1+\alpha\leq p \leq 2$, and a few integration by parts yield the estimate (recall that $V_\far$ is compactly supported)
%%%%%%%
%%%%%%%
\begin{align*}
\begin{split}
\|\chi_{\tilK^c}r^{p/2}(\partial_r+\frac{n-1}{2r})\uppsi_\near\|_{L^\infty_\tau L^2_y}&\lesssim \|\uppsi_\near\|_{LE}+ \|\uppsi_\near\|_{L^\infty_\tau E}\\
&\quad+|\angles{V_\far \uppsi_\far}{\chi_{\tilK^c} r^{p}(\partial_r+\frac{n-1}{2r})\uppsi_\near}_{L^2_{\tau,y}}|\\
&\lesssim \|\uppsi_\near\|_{LE}+ \|\uppsi_\near\|_{L^\infty_\tau E}+\|\uppsi_\far\|_{LE}.
\end{split}
\end{align*}
Plugging this back into \eqref{eq:LEDnearcircletemp1} completes the proof of the lemma.
\end{proof}
%%%%%%%%%%%
%%%%%%%%%%%

%%%%%%%%%%%%%%%%%%%%%%
%%%%%%%%%%%%%%%%%%%%%%
%%%%%%%%%%%%%%%%%%%%%%
\section{Exterior}\label{sec:exterior}
%%%%%%%%%%%%%%%%%%%%%%
%%%%%%%%%%%%%%%%%%%%%%
%%%%%%%%%%%%%%%%%%%%%%
%%%%%%%%%%%%%%
%%%%%%%%%%%%%%
This section contains the proof of the decay estimates on $\phi$. We will use the $r^p$ weighted vectorfield method to derive decay estimates for the energy of $\phi$ and improved decay for the energy of $T^k\phi$, $k=1,2$. Then using elliptic and interpolation estimates we use the decay of the energies to deduce pointwise bounds, and in particular complete the proof of Proposition~\ref{prop:bootstrapphi1}. We will start by deriving some commutator formulas and expressing the equation in terms of the vectofields $L$, $\Lbar$, $\Omega$. 

%%%%%%%%%%%%%%%
%%%%%%%%%%%%%%%
%%%%%%%%%%%%%%%
\subsection{Frame Decomposition of the Operator and Commutation Relations}
%%%%%%%%%%%%%%%
%%%%%%%%%%%%%%%
%%%%%%%%%%%%%%%

We use the relations derived in Section~\ref{sec:profile2} to calculate the commutators among the vectorfields $T$, $L$, $\Lbar$, $\Omega$. The calculations in this subsection are valid in the hyperboloidal region $\calC_\hyp$, where these vectorfields are defined.
%%%%%%%%%
%%%%%%%%%
\begin{lemma}\label{lem:commutators1}
%%%%%%%
%%%%%%%
The following commutation relations hold among the vectorfields $L,\Lbar,\Omega$:
\begin{align}\label{eq:VFcomm1}
\begin{split}
&[\Lbar,L]= \callO(\dotwp^{\leq 2})L+\callO(\dotwp^{\leq 2}r^{-2})\Lbar+\callO(\dotwp^{\leq 2} r^{-2})\Omega,\\
&[\Lbar,\Omega]= \callO(\dotwp)\Omega+\callO(\dotwp r) L+\callO(\dotwp)\Lbar,\\
&[L,\Omega]=\callO(\dotwp r^{-1})L+\callO(\dotwp r^{-2})\Omega+\callO(\dotwp r^{-2})\Lbar,\\
&[\Omega_{ij},\Omega_{k\ell}]=\delta_{[ki}\Omega_{\ell j]},\\
&[T,L]=-[T,\Lbar]= \callO(\dotwp^{\leq 2})L+\callO(\dotwp^{\leq 2}r^{-2})\Lbar+\callO(\dotwp^{\leq 2} r^{-2})\Omega,\\
&[T,\Omega]=\callO(\dotwp)\Omega+\callO(\dotwp r) L+\callO(\dotwp)\Lbar.\\
\end{split}
\end{align}
\end{lemma}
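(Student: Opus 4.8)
The plan is to compute each commutator directly from the explicit coordinate expressions for the vectorfields derived in Section~\ref{sec:profile2}. The key structural input is that each of $L$, $\Lbar$, $\Omega$, $T$ is defined in \eqref{eq:VFdef0} as the pushforward by $\Lambda_{-\ell}$ of a \emph{constant-coefficient} Minkowski vectorfield in the $y = (y^0, y')$ coordinates: $T$ is $\partial_{y^0}$, $L$ and $\Lbar$ are the standard null pair $\partial_{y^0} \pm \frac{y^i}{|y'|}\partial_{y^i}$, and $\Omega_{jk} = y^j \partial_{y^k} - y^k \partial_{y^j}$. If $\ell$ were constant, $\Lambda_{-\ell}$ would be a fixed linear map and the pushforwards would satisfy exactly the flat commutation relations: $[L, \Lbar] = $ (terms supported on the light cone, i.e. $O(r^{-1})$ weights on $L$), $[\Lbar, \Omega] \sim \Omega + r L + \Lbar$, $[L,\Omega] \sim r^{-1}L + r^{-2}\Omega + r^{-2}\Lbar$, $[\Omega_{ij}, \Omega_{k\ell}]$ the $\mathfrak{so}(n)$ relations, and $[T, L] = -[T,\Lbar]$ of the same type as $[L,\Lbar]$, $[T,\Omega] \sim \Omega + rL + \Lbar$. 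So the content of the lemma is precisely the claim that the \emph{corrections} coming from $\dotell \ne 0$ (and from $\dotxi \ne \ell$, i.e. $\eta' \ne 0$) enter with a factor of $\dotwp^{(\leq 2)}$ and otherwise respect the same $r$-weights.

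Concretely, I would proceed as follows. First, write $X = \Lambda_{-\ell}\bar X$ where $\bar X$ is the corresponding flat vectorfield in $y$; then for two such vectorfields $X = \Lambda_{-\ell}\bar X$, $Y = \Lambda_{-\ell}\bar Y$ one has $[X,Y] = \Lambda_{-\ell}[\bar X, \bar Y] + (\text{terms where the vectorfields hit the $\ell(\sigma)$-dependence of }\Lambda_{-\ell})$. The first piece is handled by the flat identities (pushed forward, and then re-expanded in the $(\tau, r, \theta)$ frame using \eqref{eq:VFbasis1}, which is where the $r$-weights in the statement come from). For the second piece, observe that $\sigma = \sigma(\tau)$ depends on the base point, so differentiating $\Lambda_{-\ell(\sigma)}$ along any of the vectorfields produces $\partial_\sigma \Lambda_{-\ell} = \Lambda'_{-\ell}\cdot \dotell$ times the $\sigma$-component of that vectorfield; and from \eqref{eq:VFexpansion1}, $\frac{d\sigma}{d\tau} \simeq 1$ so each of $T, L, \Lbar, \Omega$ applied to $\sigma$ is $O(1)$, $O(r^{-2})$, $O(1)$, $O(\wp r)$ respectively — but crucially the resulting correction carries the explicit factor $\dotell$. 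For the double-dot terms $\dotwp^{(\leq 2)}$: these appear because $[T,L]$ etc. involve quantities like $\eta'' $ or $(\eta')^2$ when one differentiates the already-$\eta'$-dependent coordinate expressions \eqref{eq:VFexpansion1}--\eqref{eq:tilrprecise1}; here I would use that $\eta = \xi - \gamma R \ell$ so $\eta' = \xi' - (\gamma R)'\ell - \gamma R \ell' = (\dotxi - \ell) + O(\wp)\cdot\dotwp + \ldots$, hence $\eta', \eta''$ are schematically $\dotwp^{(\leq 1)}$ and $\dotwp^{(\leq 2)}$ respectively modulo lower-order factors of $\wp$, and everything is absorbed into the $\callO$ notation (Subsection~\ref{subsubsec:calOnotation}), which is designed precisely to track that further $\tilr L$, $\Omega$, $\RbfT$ differentiations do not worsen the weights.

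I would organize the write-up as six short computations, one per line of \eqref{eq:VFcomm1}. For $[\Omega_{ij}, \Omega_{k\ell}]$ the flat identity is exact (the $\Omega$'s are pushforwards of linear vectorfields and $\Lambda_{-\ell}$ is linear, so there is no $\dotwp$ correction at all — consistent with the stated exact relation). For $[\Lbar, L]$, $[L,\Omega]$, $[\Lbar,\Omega]$ I would substitute the precise expansions \eqref{eq:VFexpansion1} (and \eqref{eq:Tprecise1}, \eqref{eq:Lprecise1}, \eqref{eq:tilrprecise1} where the leading behavior needs to be pinned down), compute the coordinate-component commutators in $(\tau, r, \theta)$, and then re-express the result back in the $L, \Lbar, \Omega$ basis via \eqref{eq:VFbasis1}, reading off the weights. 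For $[T, L] = -[T,\Lbar]$ the sign-flip antisymmetry is immediate from the flat identities $[\partial_{y^0}, \partial_{y^0} \pm \frac{y^i}{|y'|}\partial_{y^i}] = \pm[\partial_{y^0}, \frac{y^i}{|y'|}\partial_{y^i}]$ (and $\partial_{y^0}\frac{y^i}{|y'|} = 0$, so the flat part vanishes and only the $\ell$-derivative correction survives, which is why it matches the type of $[\Lbar, L]$); the $\dotwp$-dependence comes entirely from $T, L, \Lbar$ hitting $\Lambda_{-\ell(\sigma(\tau))}$. The main obstacle — really the only delicate point — is bookkeeping: making sure that in each correction term the differentiation does in fact always land on a parameter (producing $\dotwp$ or a higher derivative), and that no term arises with worse $r$-growth than claimed, in particular verifying the cancellation in $[T,L]$ and $[T,\Lbar]$ that makes the flat leading part drop out. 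Everything else is routine substitution into the formulas already established in Section~\ref{sec:profile2}.
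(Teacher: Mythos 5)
Your starting point—view $L$, $\Lbar$, $\Omega$, $T$ as $\Lambda_{-\ell(\sigma)}$ applied to flat Minkowski vectorfields in $y$-coordinates, and isolate the corrections coming from the spatial dependence of $\ell(\sigma)$—is reasonable, but your ``flat commutation relations'' are wrong, and this breaks the plan. In flat Minkowski all of $[\tilde L,\tilde\Lbar]$, $[\tilde L,\tilde\Omega]$, $[\tilde\Lbar,\tilde\Omega]$, $[\tilde T,\tilde L]$, $[\tilde T,\tilde\Omega]$ vanish \emph{identically}: rotations commute with $\partial_{y^0}$ and with the radial derivative $\partial_{|y'|}$, and $[\partial_{y^0},\partial_{|y'|}]=0$, so only the $\mathfrak{so}(n)$ relation $[\tilde\Omega_{ij},\tilde\Omega_{k\ell}]$ is nontrivial. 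If $\ell$ were constant, every commutator in \eqref{eq:VFcomm1} other than the fourth line would be exactly zero; your claims that $[\Lbar,\Omega]\sim\Omega+rL+\Lbar$ or $[L,\Omega]\sim r^{-1}L+r^{-2}\Omega+r^{-2}\Lbar$ ``already in the flat case'' are incorrect—those are the sizes of the $\dotwp$-corrections, which are the entire content of the lemma. In practice this matters: if you substitute the expansions \eqref{eq:VFexpansion1} and ``read off weights'' as you propose, the schematic $\callO$-bounds will generate apparent $\callO(\wp)$ or $\callO(1)$ contributions without a $\dotwp$ factor, and nothing in your plan sees the cancellations that remove them.

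Relatedly, your explanation of why $[\Omega_{ij},\Omega_{k\ell}]$ has no $\dotwp$-correction—``the $\Omega$'s are pushforwards of linear vectorfields and $\Lambda_{-\ell}$ is linear''—misidentifies the mechanism, and the assertion $\Omega\sigma=\callO(\wp r)$ is false. The correct fact, and the key geometric input in the paper's proof, is that $\tilde\Omega_{jk}$ is \emph{tangent to the reference hyperboloids $\calH_\sigma$}: it preserves $y^0$ and $|y'|$, hence $\sigma$, so $\Omega\sigma=0$ exactly (this is why $\Omega$ has no $\partial_\tau$ component in \eqref{eq:VFexpansion1}). Tangentiality is what makes $[\Omega_{ij},\Omega_{k\ell}]=\Lambda_{-\ell}[\tilde\Omega_{ij},\tilde\Omega_{k\ell}]$ hold with no correction: $\Omega$ never differentiates the $\sigma$-dependence of $\Lambda_{-\ell}$. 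Linearity plays no role. The paper extends this to $[L,\Omega]$ by writing $\tilde L=\tilde L_{\mathrm{temp}}+\tilde c\,\partial_{y^0}$ with $\tilde L_{\mathrm{temp}}$ chosen tangent to $\calH_\sigma$ and $\tilde c=1-\frac{|y'|}{y^0-\gamma^{-1}\tau}=\callO(\tilr^{-2})$; together with $[\tilde L,\tilde\Omega]=0$ this yields the clean identity $[L,\Omega]=c\,[T,\Omega]$ with $c=\callO(r^{-2})$, which is exactly where the extra $r^{-2}$ weight on the third line of \eqref{eq:VFcomm1} (relative to the second and sixth) comes from, and it is invisible to a brute-force substitution into the $\callO$-level expansions. (The paper also halves the work by noting $\Lbar=2T-L$, so $[\Lbar,L]=2[T,L]$ and $[\Lbar,\Omega]$, $[T,L]$, $[T,\Lbar]$ all follow from the entries already established.)
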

%%%%%%%%%%
%%%%%%%%%%
\begin{proof}
Starting with $[\Lbar,L]$, note that since $\Lbar = 2T-L$ this commutator is the same as $2[T,L]$. The desired structure then follows from the relations \eqref{eq:VFexpansion1}, \eqref{eq:Tprecise1}, and \eqref{eq:VFbasis1}. For $[\Omega_{ij},\Omega_{k\ell}]$ the desired relation follows from the fact that $\Omega_{ij}=\Lambda_\ell \tilOmega_{ij}$, where $\tilOmega_{ij}=y^i\partial_{y^j}-y^j\partial_{y^i}$ are tangential to the reference hyperboloids $\calH_\sigma$. This tangentiality implies that $[\Omega_{ij},\Omega_{k\ell}]=\Lambda_\ell[\tilOmega_{ij},\tilOmega_{k\ell}]$. By the same reasoning, to compute the commutator $[L,\Omega]$ we decompose $\tilL=\partial_{y^0}+\frac{y^i}{|y'|}\partial_{y^i}$ as (recall that $T= \Lambda_\ell \partial_{y^0}$)
%%%%%%%
%%%%%%%
\begin{align*}
\begin{split}
\tilL = (1-\tilc)\partial_{y^0}+\frac{y^i}{|y'|}\partial_{y^i}+\tilc\, \partial_{y^0},
\end{split}
\end{align*}
where $\tilc$ is chosen so that $\tilL_\temp:=\tilL-\tilc\,\partial_{y^0}$ is tangential to $\calH_\sigma$. Let $c=\tilc(y(\tau,r,\theta))$ where $y$ is as in \eqref{eq:yx1}. Since $\tilOmega$ is tangential to $\calH_\sigma$, it follows that $\Omega( c(y(\tau,r,\theta))) = (\tilOmega\tilc)(y(\tau,r,\theta))$, and therefore
%%%%%%%
%%%%%%%
\begin{align*}
\begin{split}
[L,\Omega] &= \Lambda_\ell[\tilL_\temp,\tilOmega]+[cT,\Omega]=\Lambda_\ell[\tilL_\temp,\tilOmega]+(\Omega c) T+c[T,\Omega]\\
&=\Lambda_\ell([\tilL_\temp,\tilOmega]+(\tilOmega \tilc) \partial_{y^0})+c[T,\Omega]=\Lambda_\ell[\tilL,\tilOmega]+c[T,\Omega]\\
&=c[T,\Omega].
\end{split}
\end{align*}
To find $\tilc$, recall that the defining equation for $\sigma=\sigma(\tau)$ is $(y^0-\gamma^{-1}\tau)^2-|y'|^2=1$, and therefore $\tilc$ must be such that $\tilL_\temp$ is Minkowski perpendicular to $(y^0-\gamma^{-1}\tau,y')$. It follows that
%%%%%%%
%%%%%%%
\begin{align*}
\begin{split}
\tilc = 1-\frac{|y'|}{y^0-\gamma^{-1}\tau}=\callO(\tilr^{-2}),
\end{split}
\end{align*}
where the last estimate follows from \eqref{eq:yx1}. The last two observations together give $[L,\Omega]=\callO(r^{-2})[T,\Omega]$, and the desired expansion follows from \eqref{eq:VFexpansion1}, \eqref{eq:Tprecise1}, and \eqref{eq:VFbasis1}. Finally, the expansions for $[\Lbar,\Omega]$, $[T,L]$, $[T,\Lbar]$, and $[T,\Omega]$ follow from the previous ones and the observation that $\Lbar=2T-L$.
\end{proof}
%%%%%%%%%%
%%%%%%%%%%
With these preparations we can turn to the calculation of the wave operator $\Box_m$ in terms of $\{L, \Lbar, \Omega\}$. 
%%%%%%%%%%
%%%%%%%%%%
\begin{lemma}\label{lem:BoxVF1}
For any function $\uppsi$
%%%%%%%
%%%%%%%
\begin{align}\label{eq:BoxVF1}
\begin{split}
\Box_m\uppsi&= -\Lbar L \uppsi -\frac{n-1}{2\tilr}(\Lbar-L)\uppsi+\frac{1}{\tilr^2}\sum_{\Omega}\Omega^2 \uppsi\\
&\quad+\callO(\dotwp^{\leq 2})L\uppsi+\callO(\dotwp^{\leq 2} r^{-2})\Lbar \uppsi+\callO(\dotwp^{\leq 2} r^{-2})\Omega \uppsi,
\end{split}
\end{align}
and, with $\tilupsi:=\tilr^{\frac{n-1}{2}}\uppsi$,
%%%%%%%
%%%%%%%
\begin{align}\label{eq:BoxVF2}
\begin{split}
\tilr^{\frac{n-1}{2}}\Box_m\uppsi&= -\Lbar L \tilupsi+\frac{1}{\tilr^2}\sum_{\Omega}\Omega^2\tilupsi-\frac{(n-1)(n-3)}{4\tilr^2}\tilupsi\\
&\quad+\callO(\dotwp^{\leq 2})L\tilupsi+\callO(\dotwp^{\leq 2} r^{-2})\Lbar\tilupsi+\callO(\dotwp^{\leq 2} r^{-2})\Omega\tilupsi +\callO(\dotwp^{\leq 2}r^{-2})\tilupsi.
\end{split}
\end{align}
\end{lemma}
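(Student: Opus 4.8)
The strategy is a direct computation that rewrites the Minkowski wave operator $\Box_m$ in the null frame $\{L,\Lbar,\Omega\}$ adapted to the moving hyperboloidal foliation, by comparing it with the expression for $\Box_m$ in the $(\tau,r,\theta)$ coordinates already derived in~\eqref{eq:Boxm1}--\eqref{eq:ErrBoxm1}. First I would express the ``model'' part of $\Box_m$ — the part one gets by freezing the parameters (i.e.\ treating $\eta' = 0$, equivalently working with the straight Minkowski metric written in polar coordinates centered at a fixed point) — in terms of $L$, $\Lbar$, $\Omega$. For the exact Minkowski metric in standard polar coordinates $(\tilt,\tilr,\theta)$ one has the classical identity $\Box_m = -\partial_\tilt^2 + \partial_\tilr^2 + \frac{n-1}{\tilr}\partial_\tilr + \frac{1}{\tilr^2}\ringsDelta$, and with $\partial_u = \frac12(\partial_\tilt - \partial_\tilr)$, $\partial_v = \frac12(\partial_\tilt+\partial_\tilr)$ (so that $L \leftrightarrow \partial_v$, $\Lbar \leftrightarrow \partial_u$ up to the boost $\Lambda_{-\ell}$ and lower-order corrections in $\tilr^{-2}$) this becomes $\Box_m = -\partial_u\partial_v - \frac{n-1}{\tilr}\,\tfrac12(\partial_u - \partial_v) + \frac{1}{\tilr^2}\ringsDelta$, which matches the principal terms $-\Lbar L \uppsi - \frac{n-1}{2\tilr}(\Lbar - L)\uppsi + \frac{1}{\tilr^2}\sum_\Omega \Omega^2\uppsi$ claimed in~\eqref{eq:BoxVF1}. (Here I use that $\sum_\Omega \Omega^2$ equals $\ringsDelta$ on the unit sphere up to the boost; since $\Omega_{ij} = \Lambda_{-\ell}\tilOmega_{ij}$ and $\sum_{i<j}\tilOmega_{ij}^2$ is the spherical Laplacian in the $y$-variables, this is exact on the reference hyperboloids.)

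Next I would account for the discrepancy between the exact frame identity and $\Box_m$ as it actually appears. There are three sources of error, all of which must be shown to have the schematic form $\callO(\dotwp^{\leq 2})L + \callO(\dotwp^{\leq 2} r^{-2})\Lbar + \callO(\dotwp^{\leq 2} r^{-2})\Omega$. The first is $\Err_{\Box_m}$ from~\eqref{eq:ErrBoxm1}, consisting of the $\jap{r}^{-2}m_1^{\mu\nu}\partial^2_{\mu\nu}$ term and lower-order terms with coefficients $\callO(r^{-2}|\dotwp|) + \callO(r^{-3})$ on $\partial_\tau$, $\partial_r$ and $\callO(r^{-3}|\dotwp|) + \callO(r^{-4})$ on $\partial_\theta$; rewriting the coordinate derivatives via~\eqref{eq:VFbasis1} converts these into exactly the advertised $r$-decaying combinations of $L,\Lbar,\Omega$ (with the second-order $m_1$-term being handled by noting it is already $\jap{r}^{-2}$ times bounded second derivatives, which one expands and regroups). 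The second source is the fact that the ``frozen-parameter'' metric differs from $m$ by terms carrying $\eta'$, hence $\dotwp$, factors (since $\eta = \xi - \gamma R\ell$ so $\eta'$ is a combination of $\dotxi - \ell$, $\ell\dotell$, i.e.\ of $\dotwp$-type and $\ell\,\dotwp$-type quantities); these produce the $\callO(\dotwp^{\leq 2})L$ contribution, the one error term that is \emph{not} $r$-decaying but is instead small because it carries parameter derivatives. The third source is the commutators: whenever one replaces an iterated coordinate derivative by iterated frame vectorfields, commutator terms from Lemma~\ref{lem:commutators1} appear, and those commutators are precisely of the form $\callO(\dotwp^{\leq 2})L + \callO(\dotwp^{\leq 2}r^{-2})\Lbar + \callO(\dotwp^{\leq 2}r^{-2})\Omega$, so they are absorbed into the error. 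Keeping careful track of which terms get one versus two parameter derivatives (the ``$\leq 2$'' in $\dotwp^{\leq 2}$) requires attention but is bookkeeping once the commutator lemma is in hand.

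For the second identity~\eqref{eq:BoxVF2}, I would conjugate by $\tilr^{\frac{n-1}{2}}$. Setting $\tilupsi = \tilr^{\frac{n-1}{2}}\uppsi$, the key algebraic fact is that $\tilr^{\frac{n-1}{2}}\big(-\Lbar L - \frac{n-1}{2\tilr}(\Lbar - L)\big)\tilr^{-\frac{n-1}{2}} = -\Lbar L - \frac{(n-1)(n-3)}{4\tilr^2}$, which is the standard computation: $L(\tilr^{-\frac{n-1}{2}}\tilupsi) = \tilr^{-\frac{n-1}{2}}(L\tilupsi - \frac{n-1}{2\tilr}(L\tilr)\tilupsi)$, and $L\tilr = 1 + \callO(r^{-2})$ (from~\eqref{eq:tilrprecise1}, $L\tilr$ equals $\gamma^{-1}(1-\Theta\cdot\ell)^{-1}$ times the derivative structure, which to leading order is $1$), similarly $\Lbar \tilr = -1 + \callO(r^{-2})$; iterating and cancelling the $\tilr^{-1}$-order terms against the $\frac{n-1}{2\tilr}(\Lbar - L)$ term leaves exactly the $-\frac{(n-1)(n-3)}{4\tilr^2}$ potential. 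The correction terms where $L\tilr$ or $\Lbar\tilr$ deviate from $\pm 1$ by $\callO(r^{-2})$, and where $L$ or $\Lbar$ hit $\tilr$ inside $\Omega^2$ or produce $\ell$-dependent factors, contribute to the $\callO(\dotwp^{\leq 2}r^{-2})\tilupsi$ and the frame-error terms; again the angular part $\tilr^{-2}\sum_\Omega \Omega^2$ is conjugation-invariant since $\Omega$ commutes with multiplication by $\tilr$ up to $\callO(\dotwp r)$-type errors from $\Omega\tilr$ (by~\eqref{eq:VFexpansion1}, $\Omega \sim \callO(\wp r)\partial_r + \callO(1)\partial_a$, and $\partial_r\tilr \sim 1$, so $\Omega\tilr = \callO(\wp r)$, giving errors that fit the stated form after dividing by $\tilr^2$). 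The main obstacle I anticipate is not any single hard estimate but rather the careful verification that every error generated — from $\Err_{\Box_m}$, from the $\eta'$-dependence of the metric, from the commutators, and from the conjugation — lands in the \emph{claimed} schematic class with the correct powers of $r$ and the correct number ($\leq 2$) of parameter derivatives; in particular one must check that no error term carrying zero parameter derivatives survives except those already absorbed into the principal part, and that the $\callO(\dotwp^{\leq 2})L$ term (the one with no $r$-decay) really does always come with at least one $\dotwp$. This is a finite but somewhat delicate tracking exercise, streamlined considerably by the $\callO$-notation conventions of Subsection~\ref{subsubsec:calOnotation} and by Lemma~\ref{lem:commutators1}.
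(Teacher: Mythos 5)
Your proposal takes a genuinely different route from the paper. The paper works intrinsically: it computes $\Box_m$ in the orthonormal frame $\{\tilL,L,e_A\}$ via $\Box_m\uppsi = (m^{-1})^{IJ}(D_ID_J - \Gamma^K_{IJ}D_K)\uppsi$ and uses the Koszul formula together with Lemma~\ref{lem:commutators1} to compute the connection coefficients. You instead propose to start from the explicit coordinate formula \eqref{eq:Boxm1}--\eqref{eq:ErrBoxm1} and re-express coordinate derivatives through \eqref{eq:VFbasis1}. Both routes are computations, but the frame/Koszul route has a decisive advantage at second order: because $\{L,\Lbar,e_A\}$ is a null-orthonormal frame, the principal (second-order) part of $\Box_m$ is \emph{exactly} $-\Lbar L + \sum_A D_A^2$ with no residual second-order error, which is why the error class in \eqref{eq:BoxVF1} can be purely first-order. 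In your approach, the piece $\jap{r}^{-2}m_1^{\mu\nu}\partial^2_{\mu\nu}$ from $\Err_{\Box_m}$ is a genuinely second-order term; saying it is ``already $\jap{r}^{-2}$ times bounded second derivatives, which one expands and regroups'' does not by itself explain how it is absorbed, since \eqref{eq:BoxVF1} contains no second-order error. You would need to show that, after converting to the frame, the leftover second-order products of $L,\Lbar,\Omega$ recombine exactly into the principal terms; this is true but it is a substantive check that your sketch omits.

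The more serious gap is in your treatment of \eqref{eq:BoxVF2}. You assert $L\tilr = 1 + \callO(r^{-2})$ and $\Lbar\tilr = -1 + \callO(r^{-2})$ and claim that the resulting corrections ``contribute to the $\callO(\dotwp^{\leq 2}r^{-2})\tilupsi$'' error. This is not quite right: from \eqref{eq:Lprecise1} and \eqref{eq:tilrprecise1}, the $\partial_\tau$-component of $L$ is $\callO(r^{-2})$ and $\partial_\tau\tilr = \callO(\dotwp\, r)$, so in fact $L\tilr - 1 = \callO(\dotwp\, r^{-1}) + \callO(r^{-2})$; the $\callO(\dotwp r^{-1})$ piece is one power of $r$ larger than what you stated. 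Consequently, after conjugating by $\tilr^{(n-1)/2}$, one encounters terms that are each only $\callO(\dotwp^{\leq 2}r^{-1})\tilupsi$ — \emph{not} acceptable as written, because the error class in \eqref{eq:BoxVF2} requires $\callO(\dotwp^{\leq 2}r^{-2})\tilupsi$. The paper flags this explicitly and exhibits a cancellation: the problematic terms $\frac{n-1}{2}\tilupsi(\Lbar-\tilr^{-1})\tilr^{-1}$ and $\uppsi[L,\Lbar]^L L\tilr^{(n-1)/2}$ combine to $\frac{n-1}{2\tilr^2}(L\tilr -1)\tilupsi + \frac{n-1}{\tilr}[L,T]^L(L\tilr-1)\tilupsi + \frac{n-1}{\tilr}([L,T]^L-\tilr^{-1}T\tilr)\tilupsi$, which after inspection is $\callO(\dotwp^{\leq 2}r^{-2})\tilupsi$. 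Your sketch does not anticipate this cancellation, and without it the bookkeeping you describe would leave you stuck with an error term one power of $r$ too large, at which point \eqref{eq:BoxVF2} could not be closed. Any version of your coordinate approach would have to surface and justify this same cancellation.
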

%%%%%%%%%%
%%%%%%%%%%
\begin{proof}
We work with an orthonormal frame $\{e_I\}=\{\tilL, L, e_A;~a=1,2\}$, where $e_A=\Lambda_\ell \tile_A$, and $\{\tile_1,\tile_2\}$ is a local orthonormal frame for the reference spheres on $\calH_\sigma$. With $\Gamma_{IJ}^K$ the corresponding connection coefficients, that is $\nabla_{e_I}e_J=\Gamma_{IJ}^Ke_K$, and $D_I$ denoting scalar differentiation along $e_I$, the wave operator can be written as
%%%%%%%
%%%%%%%
\begin{align}\label{eq:BoxVFtemp1}
\begin{split}
\Box_m \uppsi&=(m^{-1})^{IJ} (D_I D_J-\Gamma_{IJ}^KD_K)\uppsi\\
&=-\frac{1}{2}\Lbar L  \uppsi-\frac{1}{2}L\Lbar \uppsi + \sum_A D_A^2 \uppsi-(m^{-1})^{IJ}(\Gamma^L_{IJ}L\uppsi+\Gamma^\Lbar_{IJ}\Lbar \uppsi+\Gamma^A_{IJ}D_A\uppsi )\\
&=-\Lbar L  \uppsi-\frac{1}{2}[L,\Lbar] \uppsi + \frac{1}{\tilr^2}\sum_\Omega \Omega^2 \uppsi-(m^{-1})^{IJ}(\Gamma^L_{IJ}L\uppsi+\Gamma^\Lbar_{IJ}\Lbar \uppsi+\Gamma^A_{IJ}D_A\uppsi ).
\end{split}
\end{align}
Here to pass to the last line we have used the fact that $e_A$ are tangential to the foliation, so $\sum_A D_A^2\uppsi(y(\tau,r,\theta))= \sum_A (\tile_A^2\uppsi)(y(\tau,r,\theta))$. To calculate the connection coefficients we use Koszul's formula, which for an orthonormal frame reads
%%%%%%%
%%%%%%%
\begin{align*}
\begin{split}
\Gamma_{IJ}^M= \frac{1}{2}(m^{-1})^{KM}(m([e_I,e_J],e_K)-m([e_I,e_K],e_J)-m([e_J,e_K],e_I)).
\end{split}
\end{align*}
These can now be computed using Lemma~\ref{lem:commutators1}. Here note that the commutators with $e_A$ can be calculated in the same way as in the proof of Lemma~\ref{lem:commutators1}, by noting that $e_A$ is a linear combination of $\Omega_{ij}$ with coefficients of size $r^{-1}$. In particular, with $\tilL,\tilLbar$ such that $L=\Lambda_\ell \tilL$ and $\Lbar=\Lambda_\ell \tilLbar$ (see~\eqref{eq:VFdef0}), 
%%%%%%%
%%%%%%%
\begin{align*}
\begin{split}
&[\Lbar,e_A]= \Lambda_\ell[\tilLbar,\tile_A]+\callO(\dotwp r^{-1})\Omega+\callO(\dotwp ) L+\callO(\dotwp r^{-1})\Lbar,\\
&[L,e_A]=\Lambda_\ell[\tilL,\tile_A]+\callO(\dotwp r^{-2})L+\callO(\dotwp r^{-3})\Omega+\callO(\dotwp r^{-3})\Lbar,\\
&[e_A,e_B]=\Lambda_\ell[\tile_A,\tile_B].
\end{split}
\end{align*}
It follows from this and Lemma~\ref{lem:commutators1} that
%%%%%%%
%%%%%%%
\begin{align*}
\begin{split}
&\Gamma^L_{\Lbar L}=-\frac{1}{2}m([\Lbar,L],\Lbar)=\callO(\dotwp^{\leq 2}),\\
&\Gamma^L_{L\Lbar}=-\frac{1}{4}(m([L,\Lbar],\Lbar)-m([L,\Lbar],\Lbar))=0,\\
&\Gamma^{L}_{AA}=\frac{1}{2}m([e_A,\Lbar],e_A).
\end{split}
\end{align*}
For the last term, for the purpose of deriving \eqref{eq:BoxVF1} it suffices to observe that $m([e_A,\Lbar],e_A)=m([\tile_A,\tilLbar],\tile_A)+\callO(\dotwp)$. But, for \eqref{eq:BoxVF2} we will need the better estimate
%%%%%%%
%%%%%%%
\begin{align}\label{eq:GammaLAAtemp1}
\begin{split}
\Gamma^{L}_{AA}= \frac{1}{2}m([\tile_A,\tilLbar],\tile_A)+\callO(\dotwp r^{-1}).
\end{split}
\end{align}
To prove \eqref{eq:GammaLAAtemp1}, using our usual notation as in the proof of Lemma~\ref{lem:commutators1}, note that 
%%%%%%%
%%%%%%%
\begin{align*}
\begin{split}
\frac{1}{2}m([e_A,\Lbar],e_A)=m([e_A,T],e_A)-\frac{1}{2}m([e_A,L],e_A)=m([e_A,T],e_A)+\frac{1}{2}m([\tile_A,\tilLbar],\tile_A)+\callO(\dotwp r^{-2}).
\end{split}
\end{align*}
For the first term on the right, we write $[e_A,T]=\big(e_A(T^\mu)-T(e_A^\mu)\big)\partial_\mu$. On the other hand, in view of the expansions \eqref{eq:VFbasis1}, we have $m(\partial_r,e_A)=\callO(r^{-2})$ and $m(\partial_\tau,e_A)=0$. It then follows from~\eqref{eq:Tprecise1} and the expansion $e_A=\callO(r^{-1})\partial_a+\callO(1)\partial_r$ that
%%%%%%%
%%%%%%%
\begin{align*}
\begin{split}
m((e_AT^\mu)\partial_\mu,e_A)=\callO(\dotwp r^{-1}),
\end{split}
\end{align*}
and
%%%%%%%
%%%%%%%
\begin{align*}
\begin{split}
m(T(e_A^\mu)\partial_\mu,e_A)=\callO(\dotwp r^{-1})+\callO(1)m((\partial_\tau e_A^\mu)\partial_\mu,e_A).
\end{split}
\end{align*}
For the last term observe that by \eqref{eq:extchangeofvars1}, the metric components $m_{rr}=1-\frac{r}{\jap{r}}$, $m_{ra}=0$, and $m_{ab}=r^2\ringsg_{ab}$ are independent of $\tau$, so since $m(e_A,e_A)=1$ and $e_A=e_A^r\partial_r+e_A^a\partial_a$,
%%%%%%%
%%%%%%%
\begin{align*}
\begin{split}
m((\partial_\tau e_A^\mu)\partial_\mu,e_A)=m_{\mu\nu}e_A^\nu\partial_\tau e_A^\mu=\frac{1}{2}\partial_\tau (m_{\mu\nu}e_A^\mu e_A^\nu)=0.
\end{split}
\end{align*}
This completes the proof of \eqref{eq:GammaLAAtemp1}. Returning to the other connection coefficients, by the Koszul formula,
%%%%%%%
%%%%%%%
\begin{align*}
\begin{split}
&\Gamma_{\Lbar L}^\Lbar=0,\quad \Gamma_{L\Lbar}^\Lbar=\frac{1}{2}m([\Lbar,L],L)=\callO(\dotwp^{\leq 2} r^{-2}),\\
&\Gamma_{AA}^\Lbar=\frac{1}{2}m([e_A,L],e_A)=\frac{1}{2}m([\tile_A,\tilL],\tile_A)+\callO(\dotwp r^{-2}),
\end{split}
\end{align*}
and
%%%%%%%
%%%%%%%
\begin{align*}
\begin{split}
&\Gamma^B_{L\Lbar}=\frac{1}{2}(m([L,\Lbar],e_B)-m([L,e_B],\Lbar)-m([\Lbar,e_B],L))=\callO(\dotwp r^{-1})\\
&\Gamma^B_{L\Lbar}=\frac{1}{2}(m([\Lbar,L],e_B)-m([\Lbar,e_B],L)-m([L,e_B],\Lbar))=\callO(\dotwp r^{-1}),\\
&\Gamma^B_{AA}=-m([\tile_A,\tile_B],\tile_A).
\end{split}
\end{align*}
Inserting the expressions we have derived for the connection coefficients into \eqref{eq:BoxVFtemp1} and using the relations \eqref{eq:VFcomm1} gives
%%%%%%%
%%%%%%%
\begin{align}\label{eq:BoxVFtemp2}
\begin{split}
\Box_m\uppsi&= -\Lbar L \uppsi -\frac{n-1}{2\tilr}(\Lbar-L)\uppsi+\frac{1}{\tilr^2}\sum_{\Omega}\Omega^2 \uppsi-[L,\Lbar]^LL\uppsi\\
&\quad+\callO(\dotwp^{\leq 2}r^{-1})L\uppsi+\callO(\dotwp^{\leq 2} r^{-2})\Lbar \uppsi+\callO(\dotwp^{\leq 2} r^{-2})\Omega \uppsi,
\end{split}
\end{align}
The expansion \eqref{eq:BoxVF1} follows from \eqref{eq:BoxVFtemp2} and the fact that, by \eqref{eq:VFcomm1}, $[L,\Lbar]^L=\callO(\dotwp^{\leq2})$, but we will need a more precise expression for this commutator to derive \eqref{eq:BoxVF2}. To  prove \eqref{eq:BoxVF2} first note that, with the notation $\tilupsi=\tilr^{\frac{n-1}{2}}\uppsi$,
%%%%%%%
%%%%%%%
\begin{align*}
\begin{split}
\tilr^{\frac{n-1}{2}}\Box \uppsi=\Box\tilupsi-\Box(\tilr^{\frac{n-1}{2}})\uppsi - 2 m^{IJ} (e_I \tilr^{\frac{n-1}{2}})(e_J\uppsi)=:I+II+III.
\end{split}
\end{align*}
In expanding the terms $I$, $II$, $III$ we use the notation $\Err$ to denote error terms which are acceptable on the right-hand side of \eqref{eq:BoxVF2}. Starting with $I$, by \eqref{eq:BoxVF1},
%%%%%%%
%%%%%%%
\begin{align*}
\begin{split}
I=-\Lbar L\tilupsi+\frac{1}{\tilr^2}\sum\Omega^2\tilupsi-\frac{n-1}{2\tilr}\tilr^{\frac{n-1}{2}}(\Lbar-L)\uppsi-\frac{n-1}{2\tilr}\uppsi(\Lbar-L)\tilr^{\frac{n-1}{2}}+\Err.
\end{split}
\end{align*}
For $II$ we use \eqref{eq:VFcomm1}, the more precise expression \eqref{eq:BoxVFtemp2} for $\Box_m$ (applied to $\tilr^{\frac{n-1}{2}}$), and the usual decomposition $L=L_\temp+\callO(r^{-2})T$, with $L_\temp\tilr=1$, to write
%%%%%%%
%%%%%%%
\begin{align*}
\begin{split}
II&=\frac{n-1}{2}\uppsi\Lbar \tilr^{\frac{n-1}{2}-1}+\frac{n-1}{2\tilr} \uppsi(\Lbar-L)\tilr^{\frac{n-1}{2}}+\Err\\
&=\frac{n-1}{2\tilr}\uppsi(\Lbar-L)\tilr^{\frac{n-1}{2}}-\frac{(n-1)(n-3)}{4\tilr^2}\tilupsi+\frac{n-1}{2\tilr} \uppsi(\Lbar+\frac{n-1}{2\tilr})\tilr^{\frac{n-1}{2}}\\
&\quad+\frac{n-1}{2}\tilupsi(\Lbar-\frac{1}{\tilr})\tilr^{-1}+\uppsi[L,\Lbar]^LL\tilr^{\frac{n-1}{2}}+\Err.
\end{split}
\end{align*}
To treat the last line as an error, we need to use the more precise expansions \eqref{eq:Lprecise1} and \eqref{eq:tilrprecise1} to get (note that each term by itself is only $\callO(\dotwp^{\leq 2}r^{-1})\tilupsi$)
%%%%%%%
%%%%%%%
\begin{align*}
\begin{split}
&\frac{n-1}{2}\tilupsi(\Lbar-\frac{1}{\tilr})\tilr^{-1}+\uppsi[L,\Lbar]^LL\tilr^{\frac{n-1}{2}}\\
&=\frac{n-1}{2\tilr^2}(L\tilr -1)\tilupsi+\frac{n-1}{\tilr}[L,T]^L(L\tilr-1)\tilupsi+\frac{n-1}{\tilr}([L,T]^L-\tilr^{-1}T\tilr)\tilupsi=\callO(\dotwp^{\leq 2}r^{-2})\tilupsi,
\end{split}
\end{align*}
and hence
%%%%%%%
%%%%%%%
\begin{align*}
\begin{split}
II=\frac{n-1}{2\tilr}\uppsi(\Lbar-L)\tilr^{\frac{n-1}{2}}-\frac{(n-1)(n-3)}{4\tilr^2}\tilupsi+\frac{n-1}{2\tilr}\uppsi(\Lbar+\frac{n-1}{2\tilr})\tilr^{\frac{n-1}{2}}+\Err.
\end{split}
\end{align*}
For the term $III$, since $e_A\tilr=0$,
%%%%%%%
%%%%%%%
\begin{align*}
\begin{split}
III&=(L\tilr^{\frac{n-1}{2}})\Lbar \uppsi+(\Lbar \tilr^{\frac{n-1}{2}})L\uppsi\\
&=\frac{n-1}{2\tilr}\tilr^{\frac{n-1}{2}}(\Lbar-L)\uppsi+(L\uppsi)(\Lbar+\frac{n-1}{2\tilr})\tilr^{\frac{n-1}{2}}+\Err.
\end{split}
\end{align*}
Equation \eqref{eq:BoxVF2} now follows by adding the expansions for $I$, $II$, and $III$, and using the observation that $\tilr^{\frac{n-1}{2}}(L+\frac{n-1}{2\tilr})\uppsi=L\tilupsi+\Err$.
\end{proof}
%%%%%%%%%%
%%%%%%%%%%
Lemma~\ref{lem:BoxVF1} and equations \eqref{eq:callP2} and \eqref{eq:ErrcallP1} yield the following representation for $\calP_\graph \uppsi$:
%%%%%%%
%%%%%%%
\begin{align}
\tilr^{\frac{n-1}{2}}\calP_\graph \uppsi&= -(1+\callO(r^{-4}))\Lbar L\tilupsi+\frac{1+\callO(r^{-5})}{\tilr^2}\sum\Omega^2\tilupsi-\frac{(n-1)(n-3)}{4\tilr^2}\tilupsi\nonumber\\
&\quad+\callO(r^{-4})L^2\tilupsi+\callO(r^{-4})\Lbar^2\tilupsi+\callO(r^{-5})\Omega L\tilupsi+\callO(r^{-5})\Lbar\Omega\tilupsi \label{eq:calPVF1}\\
&\quad+\callO(\dotwp+r^{-5})L\tilupsi+\callO(\dotwp r^{-2}+r^{-5})\Lbar\tilupsi+\callO(\dotwp r^{-2}+r^{-6})\Omega\tilupsi+\callO(\dotwp r^{-2}+r^{-6})\tilupsi.\nonumber
\end{align}
This is the representation we will use to derive multiplier identities in the exterior. Before starting on these multiplier identities, we calculate the equations satisfied by higher order derivatives of $\tilupsi$. Our goal is to calculate the analogous equation to \eqref{eq:calPVF1} satisfied by $\tilr L$, $T$, and $\Omega$ applied to $\tilupsi$. Since, due of the presence of parameters, these vectorfields do not commute, we start by deriving an estimate for the commutator of a string of them, valid in the hyperboloidal region where they are defined.
%%%%%%%%%%
%%%%%%%%%%
\begin{lemma}\label{lem:commutators2}
If $X_1,\dots,X_k\in\{\tilr L,\Omega,T\}$ are $k$ vectorfields with $k_1$ factors of $\tilr L$, $k_2$ factors of $\Omega$ and $k_3$ factors of $T$, then for any function $\uppsi$,
%%%%%%%
%%%%%%%
\begin{align*}
\begin{split}
X_k\dots X_1\uppsi= (\tilr L)^{k_1}\Omega^{k_2} T^{k_3}\uppsi+\sum_{j_1+j_2+j_3\leq k-1}\callO(\dotwp^{\leq 2k-2(j_1+j_2+j_3)})(\tilr L)^{j_1}\Omega^{j_2}T^{j_3}\uppsi.
\end{split}
\end{align*}
\end{lemma}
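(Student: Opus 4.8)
\textbf{Proof plan for Lemma~\ref{lem:commutators2}.}

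The plan is to prove this by induction on $k$, the total number of vectorfields, using the commutator relations already established in Lemma~\ref{lem:commutators1}. The base case $k=1$ is trivial, and $k=0$ is vacuous. Assume the statement holds for all strings of length at most $k-1$. Given a string $X_k \cdots X_1$ of length $k$, I would first apply the inductive hypothesis to $X_{k-1}\cdots X_1 \uppsi$, writing it as $(\tilr L)^{k_1'}\Omega^{k_2'}T^{k_3'}\uppsi$ plus lower-order error terms of the stated form, where $(k_1',k_2',k_3')$ counts the occurrences of each vectorfield among $X_1,\dots,X_{k-1}$. Applying $X_k$ to this and using linearity reduces the problem to two tasks: (i) commuting the single outermost vectorfield $X_k$ past the ``ordered monomial'' $(\tilr L)^{k_1'}\Omega^{k_2'}T^{k_3'}$ to bring it into its canonical position, and (ii) checking that $X_k$ applied to each error term $\callO(\dotwp^{\leq 2k-2-2(j_1+j_2+j_3)})(\tilr L)^{j_1}\Omega^{j_2}T^{j_3}\uppsi$ again has the required form.

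For task (i), the key is that each elementary commutator in Lemma~\ref{lem:commutators1} produces vectorfields from the same set $\{\tilr L, \Omega, T\}$ (up to bounded coefficients and powers of $\tilr$ that are absorbed into the normalization $\tilr L$ rather than $L$), with coefficients of the schematic form $\callO(\dotwp)$ or $\callO(\dotwp^{\leq 2})$ — that is, \emph{two} orders of parameter derivatives are gained for each commutation, per the $\calO$-notation conventions in Subsection~\ref{subsubsec:calOnotation}. So moving $X_k$ one slot to the right either leaves the monomial unchanged (when the two vectorfields commute trivially, e.g.\ $\Omega$ with $\Omega$ only up to the structure constants, or when $X_k$ is already in canonical position) or replaces a length-$k$ ordered monomial by a length-$(k-1)$ monomial times $\callO(\dotwp^{\leq 2})$. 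Iterating this at most $k-1$ times to push $X_k$ into its designated block, one collects a finite sum of terms: the canonical monomial $(\tilr L)^{k_1}\Omega^{k_2}T^{k_3}\uppsi$ (with the new total exponents), plus terms with one fewer vectorfield each carrying an extra $\callO(\dotwp^{\leq 2})$, plus — crucially — terms where the commutator coefficient \emph{itself} gets differentiated by subsequent vectorfields; but since that coefficient is of type $\calO(\dotwp)$, differentiating it by $\tilr L$ or $\Omega$ keeps it $\callO(\dotwp)$ in the exterior and by $T$ upgrades it to $\callO(\dotwp^{(\leq 2)})$, exactly matching the bookkeeping $\dotwp^{\leq 2k-2(j_1+j_2+j_3)}$. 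One should track carefully that when a term has $j_1+j_2+j_3 = m$ vectorfields applied to $\uppsi$, the accumulated parameter-derivative budget is at most $2(k-m)$: each of the $k-m$ ``lost'' vectorfields contributed a commutator worth $\le 2$ parameter derivatives, either as an explicit factor or as differentiations landing on an existing $\dotwp$-coefficient.

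For task (ii), applying $X_k$ to an error term $\callO(\dotwp^{\leq 2k-2-2(j_1+j_2+j_3)})(\tilr L)^{j_1}\Omega^{j_2}T^{j_3}\uppsi$ either lands on the monomial — giving $(\tilr L)^{j_1+\cdots}\Omega^{\cdots}T^{\cdots}\uppsi$ (possibly after a further commutation costing $\callO(\dotwp^{\leq 2})$, which only improves the budget) — or lands on the $\callO(\dotwp^{\leq 2k-2-2(j_1+j_2+j_3)})$ coefficient, where by the $\calO$-notation it becomes $\callO(\dotwp^{\leq 2k-1-2(j_1+j_2+j_3)})$, i.e.\ still bounded by $\callO(\dotwp^{\leq 2k-2(j_1+j_2+j_3)})$; in all cases the result is a sum of terms of the asserted shape with $j_1+j_2+j_3 \le k-1$. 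Combining tasks (i) and (ii) closes the induction.

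The main obstacle I anticipate is purely organizational rather than conceptual: one must be scrupulous about the exponents and the parameter-derivative budget when commutators cascade, in particular making sure that the ``worst'' term — where all $k-m$ commutators dump their parameter derivatives onto a single surviving coefficient — still respects the bound $\dotwp^{\leq 2(k-m)}$, and that no hidden powers of $\tilr$ escape the $\tilr L$ normalization (here one relies on $\tilr \simeq r$ from \eqref{eq:tilrprecise1} and on the precise expansions \eqref{eq:VFexpansion1}, \eqref{eq:VFbasis1}, so that, e.g., $[\tilr L,\Omega]$ and $[T,\Omega]$ only produce $\tilr L$, $\Omega$, $\Lbar = 2T - L$ with coefficients of size $\callO(\dotwp)$ and no net growth in $\tilr$). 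Since everything takes place in the hyperboloidal region $\calC_\hyp$ where all of $\tilr L$, $\Omega$, $T$ are defined and smooth, there are no domain issues, and the argument is a finite induction with uniformly bounded combinatorial complexity at each stage.
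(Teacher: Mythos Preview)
Your proposal is correct and follows essentially the same approach as the paper: induction on $k$, applying the hypothesis to $X_{k-1}\cdots X_1\uppsi$, then handling separately the main monomial (your task (i)) and the error terms (your task (ii)) using the commutator relations from Lemma~\ref{lem:commutators1}. The paper organizes task (i) slightly differently---it performs a single commutation of $X_k$ with the leftmost factor of the ordered monomial and then re-invokes the induction hypothesis on the resulting length-$(k-1)$ strings, rather than iterating elementary swaps---but this is a cosmetic difference and your parameter-derivative bookkeeping $\dotwp^{\leq 2(k-m)}$ is exactly right.
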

%%%%%%%%%%
%%%%%%%%%%
\begin{proof}
The proof is by induction on $k$. For $k=2$ the statement follows from Lemma~\ref{lem:commutators1}. For the induction step, suppose there are $\tilk_1$, $\tilk_2$, and $\tilk_3$ factors of $\tilr L$, $\Omega$, and $T$, respectively, among $X_1,\dots,X_{k-1}$. Then
%%%%%%%
%%%%%%%
\begin{align*}
\begin{split}
X_k\dots X_1\uppsi=X_k(\tilr L)^{k_1}\Omega^k_2 T^{k_3}\uppsi+X_k\sum_{j_1+j_2+j_3\leq k-2}\callO(\dotwp^{\leq 2k-2-2(j_1+j_2+j_3)})(\tilr L)^{j_1}\Omega^{j_2}T^{j_3}\uppsi=I+II.
\end{split}
\end{align*}
The term $II$ can be put in the desired form using the induction hypothesis. For the first term, if $X_k=\tilr L$, if $X_k=\Omega$ and $\tilk_1=0$, or if $X_k=T$ and $\tilk_1=\tilk_2=0$, then this is already of the desired form. If $X_k=\Omega$ and $\tilk_1\neq0$, then by Lemma~\ref{lem:commutators1} the first term is 
%%%%%%%
%%%%%%%
\begin{align*}
\begin{split}
(\tilr L) \Omega (\tilr L)^{\tilk_1-1}\Omega^{\tilk_2} T^{\tilk_3}\uppsi+(\callO(\dotwp r^{-1})\tilr L +\callO(\dotwp r^{-2})\Omega+\callO(\dotwp r^{-2})T)(\tilr L)^{\tilk_1-1}\Omega^{\tilk_2} T^{\tilk_3}\uppsi.
\end{split}
\end{align*}
The second term can again be put in the desired form by the induction hypothesis. Similarly by the induction hypothesis we can rearrange the first $k-1$ derivatives in the first term to put this in the desired form. If $X_k=T$ and $\tilk_1=0$ but $\tilk_2\neq 0$ then by Lemma~\ref{lem:commutators1} we can write $I$ as
%%%%%%%
%%%%%%%
\begin{align*}
\begin{split}
\Omega T\Omega^{\tilk_2-1}T^{\tilk_1}+(\callO(\dotwp)\tilr L + \callO(\dotwp)\Omega+\callO(\dotwp)T)\Omega^{\tilk_2-1}T^{\tilk_1}\uppsi,
\end{split}
\end{align*}
which, by the induction hypothesis, can be arranged into the desired form, using the same argument as above. The case where $X_k=T$ and $\tilk_1\neq0$ is similar.
\end{proof}
%%%%%%%%%%
%%%%%%%%%%
In view of Lemma~\ref{lem:commutators2}, in order to estimate $X_k\dots X_1\uppsi$, with $X_i$ as in the lemma, it suffices to consider only the rearrangement $(\tilr L)^{k_1}\Omega^{k_2}T^{k_3}\uppsi$, so it suffices to consider commutators with \eqref{eq:calPVF1} in this order. This commutator is calculated in the next lemma. For this, we let $\tilcalP$ denote the non-perturbative part of the operator on the right-hand side of \eqref{eq:calPVF1}, that is,
%%%%%%%
%%%%%%%
\begin{align*}
\begin{split}
\tilcalP:=-\Lbar L+\frac{1}{\tilr^2}\sum\Omega^2-\frac{(n-1)(n-3)}{4\tilr^2}.
\end{split}
\end{align*}
%%%%%%%%%%
%%%%%%%%%%
\begin{lemma}\label{lem:higherordereqs1}
For any function $\uppsi$ and any integers $k_1,k_2,k_3\geq0$, and with $\tilupsi=\tilr^{\frac{n-1}{2}}\uppsi$ and $k=k_1+k_2+k_3$,
%%%%%%%%
%%%%%%%%
%%%%%%%
%%%%%%%
\begin{equation}\label{eq:higher1}
(\tilr L+1)^{k_1}\Omega^{k_2}T^{k_3}(\tilr^{\frac{n-1}{2}}\calP_\graph \uppsi)=\tilcalP((\tilr L)^{k_1}\Omega^{k_2}T^{k_3}\tilupsi)+\Err_{k_1,k_2,k_3}[\tilupsi],\\
\end{equation}
where
%%%%%%%
%%%%%%%
\begin{align}
\Err_{k_1,k_2,k_3}[\tilupsi]&=\sum_{j=0}^{k_1-1}c_{j,k_1}\tilcalP_1((\tilr L)^{j}\Omega^{k_2}T^{k_3}\tilupsi)+\callO(r^{-4})L^2(\tilr L)^{k_1}\Omega^{k_2}T^{k_3}\tilupsi+\callO(r^{-4})\Lbar^2(\tilr L)^{k_1}\Omega^{k_2}T^{k_3}\tilupsi\nonumber\\
&\quad+\callO(r^{-5})\Omega L(\tilr L)^{k_1}\Omega^{k_2}T^{k_3}\tilupsi+\callO(r^{-5})\Lbar\Omega(\tilr L)^{k_1}\Omega^{k_2}T^{k_3}\tilupsi\nonumber\\
&\quad+\callO(\dotwp^{\leq 2k+2}+r^{-5})\sum_{j_1+j_2+j_3\leq k}L(\tilr L)^{j_1}\Omega^{j_2}T^{j_3}\tilupsi\nonumber\\
&\quad+\callO(\dotwp^{\leq 2k+2} r^{-2}+r^{-4})\sum_{j_1+j_2+j_3\leq k}\Lbar(\tilr L)^{j_1}\Omega^{j_2}T^{j_3}\tilupsi\label{eq:errorhigher1}\\
&\quad+\callO(\dotwp^{\leq 2k+2} r^{-2}+r^{-6})\sum_{j_1+j_2+j_3\leq k}\Omega(\tilr L)^{j_1}\Omega^{j_2}T^{j_3}\tilupsi\nonumber\\
&\quad+\callO(\dotwp^{\leq 2k+2} r^{-2}+r^{-5})\sum_{j_1+j_2+j_3\leq k}(\tilr L)^{j_1}\Omega^{j_2}T^{j_3}\tilupsi,\nonumber
\end{align}
for some constants $c_{j,k_1}$ (which are nonzero only if $k_1\geq 1$), with $c_{k_1-1,k_1}=-k_1$, and where
%%%%%%%
%%%%%%%
\begin{align*}
\begin{split}
\tilcalP_1=LL+\frac{1}{\tilr^2}\sum\Omega^2-\frac{(n-1)(n-3)}{4\tilr^2}.
\end{split}
\end{align*}
\end{lemma}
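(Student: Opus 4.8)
The plan is to prove \eqref{eq:higher1} by induction on $k = k_1+k_2+k_3$, with the base case $k=0$ being exactly the representation \eqref{eq:calPVF1} (upon identifying $\tilcalP$ with the non-perturbative part and absorbing everything else into $\Err_{0,0,0}$). For the inductive step, assuming the identity holds for all multi-indices of total order $k-1$, I would apply one more vectorfield — $T$, $\Omega$, or $\tilr L + 1$ as appropriate — to both sides of the order-$(k-1)$ identity and track how it interacts with $\tilcalP$ and with the error terms. The cleanest bookkeeping is to treat the three cases according to which vectorfield is applied outermost, and within the $\tilr L$ case to exploit the conjugation identity relating $(\tilr L + 1)$ acting on the outside to $\tilr L$ acting on the argument of $\tilcalP$; this is the source of the shift from $(\tilr L + 1)^{k_1}$ on the left-hand side to $(\tilr L)^{k_1}$ inside $\tilcalP$, and of the $\tilcalP_1$-type terms (note $\tilcalP_1 = \tilcalP + \Lbar L + LL$, i.e. replacing $-\Lbar L$ by $+LL$, which is what one produces upon commuting $\tilr L$ through $-\Lbar L = -(2T - L)L$).

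The heart of the matter is the commutator $[\tilr L, \tilcalP]$ and, more generally, $[X, \tilcalP]$ for $X \in \{T, \Omega, \tilr L\}$. Here I would use Lemma~\ref{lem:commutators1} for the commutators among $L, \Lbar, \Omega, T$, together with the precise expansions \eqref{eq:Lprecise1}, \eqref{eq:tilrprecise1} and the radial structure of $\tilcalP$, to show that each such commutator, when expressed in the frame $\{L, \Lbar, \Omega\}$, has coefficients that either carry a factor of $\dotwp^{(\leq 2)}$ or gain a power of $r^{-1}$ relative to the corresponding coefficient in \eqref{eq:calPVF1}. Crucially, $[\tilr L, \tilcalP]$ has a nonperturbative piece: commuting $\tilr L$ past $-\Lbar L$ produces $-[\tilr L, 2T-L]L$ and the rescaling of $\tilr^{-2}$ terms, which accounts for the explicit $\tilcalP_1$ summands with $c_{k_1-1,k_1} = -k_1$ (the $-k_1$ comes from $[\tilr L, \tilr^{-2}]$-type contributions and from iterating the conjugation). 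Every other piece of every commutator falls into one of the $\callO(\dotwp^{\leq 2k+2})$ or $\callO(r^{-4})$–$\callO(r^{-6})$ buckets in \eqref{eq:errorhigher1}; one has to check the $r$-weights match, using that $\Omega$ and $T$ do not change $\tilr$-weights while $\tilr L$ raises them by one, and that applying $X$ to an error coefficient either leaves it of the same size (for $\Omega$, $T$ in the $\callO$ sense, cf.\ Subsection~\ref{subsubsec:calOnotation}) or improves the $r$-decay (for $\tilr L$), while applying $X$ to an already-present parameter-derivative factor raises the order $\dotwp^{(j)} \to \dotwp^{(j+1)}$, consistent with the $2k+2$ exponent bound.

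There is also a lower-order contribution to handle: when $X$ hits the error terms $\Err_{k_1',k_2',k_3'}$ from the inductive hypothesis rather than the main $\tilcalP$-term, one must verify the result still fits the template for $\Err_{k_1,k_2,k_3}$. This is routine but requires care with the $L^2$, $\Lbar^2$, $\Omega L$, $\Lbar\Omega$ second-order error terms: applying $X$ to, say, $\callO(r^{-4})L^2(\tilr L)^{j}\Omega^{k_2}T^{k_3}\tilupsi$ and commuting $X$ inward produces either the same type of term at order $k$ (if $X = \tilr L$ raises the weight, $r^{-4} \cdot \tilr$ is still controlled because we only need the claimed weights, and one uses Lemma~\ref{lem:commutators1} to reorder) or lower-order terms that are absorbed. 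The main obstacle I anticipate is precisely this bookkeeping of the second-order error terms under repeated commutation — ensuring no term secretly has a worse $r$-weight or $\dotwp$-order than \eqref{eq:errorhigher1} permits — together with pinning down the exact combinatorial coefficients $c_{j,k_1}$ (for which only $c_{k_1-1,k_1} = -k_1$ matters for later use, so the other $c_{j,k_1}$ can be left implicit). Everything else reduces to the frame computations already carried out in Lemmas~\ref{lem:commutators1}, \ref{lem:BoxVF1}, and \ref{lem:commutators2}, and to the structural estimates \eqref{eq:callP2}–\eqref{eq:ErrcallP1}.
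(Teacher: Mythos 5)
Your plan is correct and matches the paper's proof in essentially every respect: an induction on $k$ applied term-by-term to the representation \eqref{eq:calPVF1}, proceeding from the inside out ($T$, then $\Omega$, then $\tilr L+1$) and using Lemmas \ref{lem:commutators1} and \ref{lem:commutators2} to reorder frame vectorfields, with the key algebraic point being the conjugation identity $(\tilr L+1)\tilcalP\Phi = \tilcalP(\tilr L+1)\Phi - \tilcalP\Phi - \tilcalP_1\Phi + \callO(\dotwp)$ (equivalently, the identity $\tilcalP_1 = \tilcalP + \Lbar L + LL$ you invoke), whose iteration yields the recursion $c_{k_1,k_1+1} = -1 + c_{k_1-1,k_1}$ with $c_{0,1}=-1$ and hence $c_{k_1-1,k_1}=-k_1$.
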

%%%%%%%%%%
%%%%%%%%%%
\begin{proof}
Note that the terms involving $\tilcalP_1$ are what would come up by commuting $(\tilr L+1)^{k_1}$ if the parameters were treated as fixed. The proof is by induction on $k$, applied to every term on the right-hand side of \eqref{eq:calPVF1}. The treatment of the different terms is similar, so here we present the details only for $\Lbar L\tilupsi$. Starting with $T$, by Lemma~\ref{lem:commutators1}, and recalling that $\Lbar=2T-L$,
%%%%%%%
%%%%%%%
\begin{align*}
\begin{split}
T\Lbar L\tilupsi&= \Lbar L T\tilupsi+[T,\Lbar]L\tilupsi+\Lbar[T,L]\tilupsi\\
&=\Lbar L T\tilupsi+\callO(\dotwp^{\leq2})L L\tilupsi + \callO(\dotwp^{\leq 2})TL\tilupsi+\callO(\dotwp^{\leq2})\Lbar T\tilupsi+\callO(\dotwp^{\leq2}r^{-2})\Lbar\Omega\tilupsi+\callO(\dotwp^{\leq2}r^{-2})\Omega L\tilupsi\\
&\quad+\callO(\dotwp^{\leq3})L\tilupsi+\callO(\dotwp^{\leq3}r^{-2})\Lbar\tilupsi+\callO(\dotwp^{\leq3}r^{-2})\Omega\tilupsi\\
&=\Lbar L T\tilupsi+\callO(\dotwp^{\leq2})L L\tilupsi + \callO(\dotwp^{\leq 2})LT\tilupsi+\callO(\dotwp^{\leq2})\Lbar T\tilupsi+\callO(\dotwp^{\leq2}r^{-2})\Lbar\Omega\tilupsi+\callO(\dotwp^{\leq2}r^{-2})\Omega L\tilupsi\\
&\quad+\callO(\dotwp^{\leq3})L\tilupsi+\callO(\dotwp^{\leq3}r^{-2})\Lbar\tilupsi+\callO(\dotwp^{\leq3}r^{-2})\Omega\tilupsi,
\end{split}
\end{align*}
which has the desired structure. We can then inductively apply this same identity together with Lemmas~\ref{lem:commutators1} and~\ref{lem:commutators2} to conclude that
%%%%%%%
%%%%%%%
\begin{align}\label{eq:hotemp1}
\begin{split}
T^{k_3}\Lbar L\tilupsi=\Lbar LT^{k_3}\tilupsi+\Err,
\end{split}
\end{align}
where $\Err$ has the structure given in the statement of the lemma. Next we apply $\Omega$ to \eqref{eq:hotemp1}. Note that $\Omega$ applied to $\Err$ in \eqref{eq:hotemp1} has the desired structure by Lemmas~\ref{lem:commutators1} and~\ref{lem:commutators2}. For the main term, again by Lemma~\ref{lem:commutators1}, and with $\Psi=T^k\tilupsi$,
%%%%%%%
%%%%%%%
\begin{align*}
\begin{split}
\Omega\Lbar L \Psi&= \Lbar L \Omega\Psi+[\Omega,\Lbar]L\Psi+\Lbar[\Omega,L]\Psi\\
&=\Lbar L \Omega\Psi+\callO(\dotwp)\tilr L L\Psi+\callO(\dotwp)\Omega L\Psi+\callO(\dotwp)TL\Psi+\callO(\dotwp r^{-2})\Lbar\Omega\Psi+\callO(\dotwp r^{-2})\Lbar T\Psi\\
&\quad+\callO(\dotwp^{\leq2}r^{-1})L\Psi+\callO(\dotwp^{\leq2}r^{-2})\Omega\Psi+\callO(\dotwp^{\leq 2}r^{-2})\Lbar\Psi\\
&=\Lbar L \Omega\Psi+\callO(\dotwp)L(\tilr L)\Psi+\callO(\dotwp) L\Omega\Psi+\callO(\dotwp)LT\Psi+\callO(\dotwp r^{-2})\Lbar\Omega\Psi+\callO(\dotwp r^{-2})\Lbar T\Psi\\
&\quad+\callO(\dotwp^{\leq3})L\Psi+\callO(\dotwp^{\leq3}r^{-2})\Omega\Psi+\callO(\dotwp^{\leq 3}r^{-2})\Lbar\Psi,
\end{split}
\end{align*}
which has the desired structure. Here we have used the fact that $\tilr L^2\Psi= L(\tilr L)\Psi+\callO(1+\dotwp )L\Psi$. As for $T^{k_3}$ we can apply this identity inductively and use Lemmas~\ref{lem:commutators1} and~\ref{lem:commutators2} to conclude that
%%%%%%%
%%%%%%%
\begin{align}\label{eq:hotemp2}
\begin{split}
\Omega^{k_2}T^{k_3}\Lbar L\tilupsi=\Lbar L\Omega^{k_2}T^{k_3}\tilupsi+\Err,
\end{split}
\end{align}
where $\Err$ has the structure given in the statement of the lemma. Finally we apply $\tilr L+1$ to \eqref{eq:hotemp2}. Again by Lemmas~\ref{lem:commutators1} and~\ref{lem:commutators2} the contribution of $\Err$ in \eqref{eq:hotemp2} has the desired form. For the main term we have, using Lemma~\ref{lem:commutators1} and with $\Psi=\Omega^{k_2}T^{k_1}\tilupsi$,
%%%%%%%
%%%%%%%
\begin{align*}
\begin{split}
(\tilr L+1)\Lbar L \Psi&= \Lbar L(\tilr L+1)\Psi+\tilr[L,\Lbar]L\Psi+[\tilr,\Lbar]L^2\Psi+\Lbar([\tilr,L]L\Psi)\\
&=\Lbar L(\tilr L+1)\Psi-\Lbar L\Psi+LL\Psi+\callO(\dotwp^{\leq2})\tilr L L\Psi+\callO(\dotwp r^{-1})TL\Psi\\
&\quad+\callO(\dotwp^{\leq2}r^{-1})\Omega L\Psi+\callO(\dotwp r^{-2})L\Psi\\
&=\Lbar L(\tilr L+1)\Psi-\Lbar L\Psi+LL\Psi+\callO(\dotwp^{\leq2})L (\tilr L)\Psi+\callO(\dotwp r^{-1})LT\Psi\\
&\quad+\callO(\dotwp^{\leq2}r^{-1}) L\Omega\Psi+\callO(\dotwp^{\leq3})L\Psi+\callO(\dotwp^{\leq3}r^{-2})\Lbar\Psi+\callO(\dotwp^{\leq3}r^{-3})\Omega\Psi.
\end{split}
\end{align*}
The terms $-\Lbar L\Psi+LL\Psi$ will contribute to the terms involving $\calP_1$ on the right-hand side of \eqref{eq:higher1} and the remaining terms have the expected form. The desired structure now follows by inductively applying this identity and using Lemmas~\ref{lem:commutators1} and~\ref{lem:commutators2}. Here the commutators with $L^2\Psi$ and the remaining terms on the right-hand side of \eqref{eq:calPVF1} are treated inductively in a similar way as with $\Lbar L$ above.
\end{proof}
%%%%%%%%%%
%%%%%%%%%%
%%%%%%%%%%%%
%%%%%%%%%%%%
%%%%%%%%%%%%
%%%%%%%%%%%%
We end this subsection by deriving expansions for the source and the cubic terms in equation \eqref{eq:uext1}. The cubic term refers to the part of the term (recall that $\varphi$ and $\phi$ are related by the conjugation \eqref{eq:varphiphi1})
%%%%%%%
%%%%%%%
\begin{align}\label{eq:purelycubic1}
\begin{split}
\frac{\nabla^\mu\varphi\nabla^\nu\varphi}{1+\nabla^\alpha v \nabla_\alpha v}\nabla^2_{\mu\nu}\varphi
\end{split}
\end{align}
in \eqref{eq:calF3_1} where no factors of $Q$ appear, which we expect to be the most difficult term in the nonlinearity. Recall from \eqref{eq:extpar1}, that in the exterior region
%%%%%%%
%%%%%%%
\begin{align*}
\begin{split}
Q\equiv Q_\wp=Q(rA_\ell\Theta-\gamma\jap{r}\ell)=Q(\tilr).
\end{split}
\end{align*}
In view of \eqref{eq:uext1} the source term $\calF_0$ is given by
%%%%%%%
%%%%%%%
\begin{align}\label{eq:sourcetermrp1}
\begin{split}
\calF_0=\Box_m Q-(1+\nabla^\alpha Q \nabla_\alpha Q)^{-1}\nabla^\mu Q\nabla^\nu Q\nabla^2_{\mu\nu}Q.
\end{split}
\end{align}
The more precise structure of $\calF_0$ is calculated in the next lemma.
%%%%%%%%%%%
%%%%%%%%%%%
\begin{lemma}\label{lem:sourceext1}
The source term $\calF_0$ satisfies the following estimate in the hyperboloidal region $\calC_\hyp$:
%%%%%%%
%%%%%%%
\begin{align*}
\begin{split}
\RbfT^k\calF_0= \callO(\dotwp^{1+k\leq\cdot\leq 3+k}r^{-n+1}),\quad k=1,2,3.
\end{split}
\end{align*}
\end{lemma}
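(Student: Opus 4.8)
The plan is to exploit the fact that $\calF_0$ measures the failure of the boosted-translated catenoid profile $Q_\wp$ to solve HVMC, and that this failure is entirely due to the time-dependence of the modulation parameters $\ell(\cdot), \xi(\cdot)$. Concretely, recall from Remark~\ref{rem:HVMCj_for_Psinu} (in its exterior/graph incarnation) that if $\ell$ and $\xi$ were frozen with $\dotell=0$ and $\dotxi=\ell$, then $Q\equiv Q_\wp$ would be an exact solution of the graph form \eqref{eq:uMinkowskieq1} of HVMC, i.e., $\calF_0 = 0$. Hence every term in $\calF_0$, as written in \eqref{eq:sourcetermrp1}, must come with at least one factor of $\dotwp = (\dotell, \dotxi - \ell)$. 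The first step is therefore to make this structural observation precise: writing out $\Box_m Q$ and $(1+\nabla^\alpha Q\nabla_\alpha Q)^{-1}\nabla^\mu Q\nabla^\nu Q\nabla^2_{\mu\nu}Q$ in the $(\tau,r,\theta)$ coordinates using \eqref{eq:extpar1}, \eqref{eq:extchangeofvars1}, \eqref{eq:extchangeofvars2}, one sees that $Q = Q(\tilr)$ with $\tilr = |rA_\ell\Theta - \gamma\jap{r}\ell|$, so all $\tau$-dependence of $Q$ enters only through $\ell(\tau)$ and $\eta(\tau) = \xi(\tau) - \gamma R\ell(\tau)$. Differentiating $Q(\tilr)$ in $\tau$ produces $Q'(\tilr)\partial_\tau\tilr$, and $\partial_\tau\tilr$ is a combination of $\dotell$ (and, via the coordinate derivatives \eqref{eq:extchangeofvars2}, factors of $\eta' = \frac{\ud\sigma}{\ud\tau}\doteta$, which by $\eta = \xi - \gamma R\ell$ is itself $\callO(\dotxi - \ell) + \callO(\dotell)$ up to harmless $\callO(1)$ coefficients). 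So any time-derivative hitting $Q$ extracts a $\dotwp$.

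The second step is to organize the cancellation. Rather than track the first-order-in-$\dotwp$ term by brute force, I would argue as follows: let $\ringQ$ denote the profile with parameters frozen at a value $(\ellbar, \xibar)$, so $\calF_0[\ringQ] = 0$ by Remark~\ref{rem:HVMCj_for_Psinu}. Then $\calF_0 = \calF_0[Q_\wp] - \calF_0[\ringQ]$ is a difference that can be expanded along the path of parameters; since the map $(\ell,\xi)\mapsto (\text{HVMC operator applied to } Q_{\ell,\xi})$ vanishes identically when $\dotell = 0, \dotxi = \ell$, its variation is governed by $(\dot\ell\cdot\nabla_\ell + (\dotxi-\ell)\cdot\nabla_\xi)$ acting on the profile and its derivatives — this is exactly the mechanism already used in Section~\ref{sec:interior} (e.g., in the computation of $\pt\dotpsi$ and the error terms $\calE_7, \calE_8$, etc.). This yields $\calF_0 = \callO(\dotwp) + \callO(\dotwp^2)$ schematically, i.e., $\calF_0 = \callO(\dotwp^{1\leq\cdot\leq 2})$ to begin with, with the decay in $r$ tracked separately. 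For the spatial decay: since $Q(\tilr)$ solves the Riemannian catenoid ODE \eqref{eq:QRiem1}, one has $Q'(\tilr) = \callO(\tilr^{-(n-1)})$ and all higher derivatives decay at least one power faster; combined with $\tilr \simeq r$ from \eqref{eq:tilrprecise1} and the explicit forms of $m^{-1}$ and $\Box_m$ in \eqref{eq:minvdecomp1}, \eqref{eq:Boxm1}, every term in \eqref{eq:sourcetermrp1} that survives carries the weight $r^{-(n-1)}$ coming from the (single, lowest-order) surviving $Q$-derivative times the $\dotwp$ factor. This gives $\calF_0 = \callO(\dotwp^{1\leq\cdot\leq 2} r^{-(n-1)})$, i.e., the $k=0$ version of the claimed estimate (with the range $1\leq\cdot\leq 2$; the stated $k=1,2,3$ version follows after commuting $\RbfT$).

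The third step is the $\RbfT$-commutation. By the definition of the $\calO$-notation in Subsection~\ref{subsubsec:calOnotation}, a term of the form $\callO(\dotwp^{1\leq\cdot\leq 2} r^{-(n-1)})$ automatically becomes $\callO(\dotwp^{(1+k)\leq\cdot\leq(2+k)} r^{-(n-1)})$ after $k$ applications of $\RbfT = \partial_\uptau$, since each $\RbfT$ either differentiates a parameter factor (raising the differentiation order by one) or differentiates a coefficient, which by the bounds on the coefficients of the metric and of $\Box_m$ (and the fact that $Q$-derivatives in $\tau$ produce further $\dotwp$ factors) again produces terms of the same or higher $\dotwp$-order with the same $r$-weight. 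One does need to check that $\RbfT$ acting on the $r$-weight $r^{-(n-1)}$ does not degrade the decay — it does not, because $\RbfT = \partial_\uptau$ annihilates $r$ in the $(\tau, r, \theta)$ coordinates (and the discrepancy between $\RbfT$ and $T$ from \eqref{eq:Tprecise1} only adds terms with $\dotwp$ coefficients and no loss in $r$). Applying this with $n = 5$ gives $\RbfT^k\calF_0 = \callO(\dotwp^{(1+k)\leq\cdot\leq(3+k)} r^{-4})$ for $k = 1, 2, 3$ (the upper range $3+k$ rather than $2+k$ absorbs the products of lower-order $\dotwp$-terms that arise from the Leibniz rule when several $\RbfT$'s land on different factors, e.g.\ $\dotwp\cdot\dotwp^{(k)}$), which is precisely the statement.

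The main obstacle is the second step: making the ``every surviving term carries a $\dotwp$'' cancellation rigorous and simultaneously keeping sharp track of the $r$-weights, since the naive expansion of $\Box_m Q$ and the quadratic term $(1+|\nabla Q|^2)^{-1}\nabla^\mu Q\nabla^\nu Q\nabla^2_{\mu\nu}Q$ in the non-orthonormal $(\tau, r, \theta)$ frame produces many terms, and one must verify that the pieces with the worst $r$-behavior are exactly those that cancel when $\dotell = 0, \dotxi = \ell$. The cleanest way around this is to do the computation in the geometric variables where $Q = Q(\tilr)$ and the Riemannian catenoid ODE \eqref{eq:QRiem1} can be invoked directly — i.e., to split $\calF_0$ as (contribution computed as if parameters are frozen, which is $0$ by the ODE) plus (commutator terms from the coordinate change and the $\tau$-dependence of $\ell, \eta$), and then estimate the second group term-by-term using $|Q^{(j)}(\tilr)|\lesssim \tilr^{-(n-1)}$ for $j\geq 1$, $\tilr\simeq r$, and the coefficient bounds from \eqref{eq:mmatrix1}, \eqref{eq:minvdecomp1}, \eqref{eq:Boxm1}, \eqref{eq:ErrBoxm1}. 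This is essentially the same bookkeeping as in the interior first-order computation, and I would present it in that style, relegating the routine algebra to the reader.
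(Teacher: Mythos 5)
Your proposal follows essentially the same approach as the paper: both exploit that $\calF_0$ vanishes when $\dot\ell = 0$, $\dot\xi = \ell$ (via the Riemannian catenoid ODE~\eqref{eq:QRiem1} and $Q = Q(\tilr)$, so $\Omega Q = 0$), extract at least one $\dotwp$ factor from every surviving term, and get the spatial weight from $|Q^{(j)}(\tilr)| \lesssim \tilr^{-(n-1)}$ together with $\tilr \simeq r$. The paper streamlines the cancellation bookkeeping by plugging $Q$ into the null-frame expansion of Lemma~\ref{lem:BoxVF1} to read off $\calF_0 = \calO(\dotwp^{\leq 3})Q' + \calO(\dotwp r)Q''$ directly, which is the ``geometric variable'' computation you describe but do not carry out; this also resolves your slightly-off range ($1\leq\cdot\leq 2$ at $k=0$ should be $1\leq\cdot\leq 3$, coming from the $\dotwp^{\leq 3}$ coefficient of $Q'$, so no Leibniz patching is needed).
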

%%%%%%%%%%%
%%%%%%%%%%%
\begin{proof}
Recall that if $Q$ were a maximal embedding then $\calF_0$ would vanish. In particular (by a slight abuse of notation we are identifying $Q(y)=Q(|y|)$ with a function of a single variable), $Q$ satisfies equation \eqref{eq:QRiem1}.
Moreover, $\Omega Q=0$ by construction. It follow from these facts and Lemma~\ref{lem:BoxVF1} that
%%%%%%%
%%%%%%%
\begin{align*}
\begin{split}
\calF_0=\callO(\dotwp^{\leq 3})Q'+\callO(\dotwp r)Q'',
\end{split}
\end{align*}
which proves the desired bound for $k=0$. The higher order bounds are obtained similarly by differentiating the equation.
\end{proof}
%%%%%%%%%%%
%%%%%%%%%%%
Turning to \eqref{eq:purelycubic1}, we have the following expansion of the purely cubic part of this nonlinearity.
%%%%%%%%%%%
%%%%%%%%%%%
\begin{lemma}\label{lem:purelycubic1}
$\nabla^\mu\varphi\nabla^\nu\varphi\nabla^2_{\mu\nu}\varphi$ can be written as a linear combination of terms of the following forms in the hyperboloidal region $\calC_\hyp$:
\begin{enumerate}
\item Quasilinear terms: $(L\varphi)^2\Lbar^2\varphi$, $(L\varphi\Lbar\varphi)\Lbar L\varphi$, $(\Lbar\varphi)^2L^2\varphi$, $(L\varphi e_A\varphi)e_A\Lbar\varphi$, $(\Lbar\varphi e_A\varphi)e_AL\varphi$, $(e_A\varphi e_B\varphi)e_Ae_B\varphi$.
\item Semilinear terms: $\callO(\dotwp^{\leq 2}r^{-3})(\Lbar\varphi)^2L\fy$, $\callO(\dotwp r^{-3})e_A\varphi$, $\callO(\dotwp^{\leq 2})(L\fy)^2\Lbar\fy$, \\$\callO(\dotwp^{\leq2}r^{-1})\Lbar\fy L\fy e_A \fy$, $\callO(\dotwp r^{-1})\Lbar\fy e_A\fy e_B\fy$, $\callO(\dotwp)L\fy e_A\fy e_B\fy$, $\callO(1)e_A\fy e_B\fy e_C\fy$.
\end{enumerate}
\end{lemma}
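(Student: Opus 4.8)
\textbf{Proof proposal for Lemma~\ref{lem:purelycubic1}.}

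The plan is to start from the coordinate-invariant expression $\nabla^\mu\varphi\nabla^\nu\varphi\nabla^2_{\mu\nu}\varphi$ and expand everything in the null frame $\{L,\Lbar,e_A\}$ introduced in Section~\ref{sec:profile2}, using the (approximate) null-frame expansion of the Minkowski metric $m$ in the $(\tau,r,\theta)$ coordinates from~\eqref{eq:mmatrix1}--\eqref{eq:minvdecomp1}. First I would write $\nabla^\mu\varphi = (m^{-1})^{\mu\nu}\partial_\nu\varphi$ and, using that in the orthonormal frame $\{\tilL,L,e_A\}$ the metric has the standard null structure $m(L,\Lbar) = -2 + \callO(r^{-2})$, $m(L,L)=m(\Lbar,\Lbar)=\callO(r^{-2})$, $m(e_A,e_B)=\delta_{AB}$, $m(L,e_A)=m(\Lbar,e_A)=\callO(r^{-2})$, express the contravariant gradient as
\[
\nabla\varphi = -\tfrac12(L\varphi)\Lbar - \tfrac12(\Lbar\varphi)L + \sum_A (e_A\varphi)e_A + \callO(r^{-2})\bigl(L\varphi\cdot L + \Lbar\varphi\cdot \Lbar + e_A\varphi\cdot(L,\Lbar)\bigr).
\]
Contracting $\nabla^\mu\varphi\,\nabla^\nu\varphi$ with $\nabla^2_{\mu\nu}\varphi$ then produces, as leading terms, precisely the six quasilinear combinations listed in item~(1) (up to the harmless overall constants coming from the $-\tfrac12$'s and the near-null inner products, which I would absorb into the phrase ``linear combination of''); the cross terms in which one factor carries an $\callO(r^{-2})$ coefficient feed into item~(2). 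The second derivative $\nabla^2_{\mu\nu}\varphi$ is handled by writing $\nabla^2_{\mu\nu}\varphi = \partial_\mu\partial_\nu\varphi - \Gamma^\lambda_{\mu\nu}\partial_\lambda\varphi$ and using the connection-coefficient computation already carried out in the proof of Lemma~\ref{lem:BoxVF1} (the Koszul-formula bookkeeping there gives exactly the $\callO(\dotwp)$ and $\callO(r^{-2})$ sizes of all the $\Gamma$'s in this frame). Any term where a Christoffel symbol appears contributes a lower-order factor (one fewer derivative on $\varphi$, with a coefficient that is $\callO(\dotwp)$ or $\callO(r^{-k})$), and these are exactly the shapes collected in item~(2).

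The bookkeeping proceeds in three nested layers, mirroring the structure of the frame expansion of $\Box_m$ in Lemma~\ref{lem:BoxVF1}. Layer one: substitute the expansion of $\nabla\varphi$ into the outer two factors, generating a main part (all three factors in ``good'' frame components) plus error parts (at least one factor with an $\callO(r^{-2})$ coefficient). Layer two: expand $\nabla^2\varphi$ using $\nabla^2 = \partial^2 - \Gamma\partial$, and insert the sizes of the $\Gamma^K_{IJ}$'s from the proof of Lemma~\ref{lem:BoxVF1}; the $\partial^2$ part with good outer factors yields item~(1), while the $\Gamma\partial$ part and the outer-factor errors yield item~(2). Layer three: for the $\partial^2\varphi$ piece I need to convert coordinate second derivatives into second derivatives in the frame vectorfields $L,\Lbar,e_A$; this is again just the commutator/connection data of Lemma~\ref{lem:commutators1} and Lemma~\ref{lem:BoxVF1}, and it only produces terms already on the list. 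Throughout, I would track the $\dotwp$ powers carefully: a factor of $\dotwp$ enters precisely when a frame vectorfield or a metric/inverse-metric coefficient is differentiated in $\tau$ (equivalently, when a commutator from Lemma~\ref{lem:commutators1} is invoked), which is why the semilinear remainders come with the weights $\callO(\dotwp^{\leq 2}r^{-3})$, $\callO(\dotwp r^{-3})$, $\callO(\dotwp^{\leq 2})$, etc., depending on how many derivatives have been moved off and onto parameters; the purely geometric (parameter-independent) errors come with pure $r$-decay, e.g.\ the $\callO(\dotwp r^{-3})e_A\varphi$ slot absorbs both a genuine $\dotwp$-error and, after further decay, the angular leftover.

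I expect the main obstacle to be organizational rather than conceptual: there is no single hard estimate, but one must be disciplined about which terms are ``main'' (kept as quasilinear, item~(1)) and which are demoted to ``semilinear'' (item~(2)), and in particular about matching the exact $r$-weights and $\dotwp$-powers claimed. The subtlety is that $e_A\varphi$ appears in item~(2) with coefficient only $\callO(\dotwp r^{-3})$ and \emph{no} factors of $\varphi$ other than itself linearly; tracing how a cubic expression collapses to something linear in $\varphi$ with a small coefficient requires carefully combining the $\callO(r^{-2})$ errors in $m^{-1}$ (from~\eqref{eq:minvdecomp1}) with the vanishing of $\Omega Q=0$-type structures and with the fact (used already in Lemma~\ref{lem:sourceext1}) that $Q$ and hence its derivatives decay like $r^{-n+1}$; but since $Q$ does not enter the purely cubic term, that particular collapse must instead come from repeatedly using the near-null relations and commutators, which is exactly where the $\dotwp$ factors get generated. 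A secondary point of care is that $\tilr$ and $r$ differ, but by~\eqref{eq:tilrprecise1} they are comparable with $\callO(r^{-1})$ corrections, so replacing $\tilr$-weights by $r$-weights (or vice versa) in all the error slots is harmless. No derivative loss occurs because every term on the list has at most two derivatives on the highest-order factor, matching the order of $\nabla^2\varphi$.
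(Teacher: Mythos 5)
Your approach is the paper's approach: the paper writes the cubic term directly in the frame $\{L,\Lbar,e_A\}$ as
\[
(m^{-1})^{II'}(m^{-1})^{JJ'}(D_{I'}\fy \, D_{J'}\fy)\bigl(D_{I}D_J\fy-\Gamma_{IJ}^K D_K\fy\bigr),
\]
reads off item~(1) from the $D_I D_J\fy$ piece, and obtains item~(2) by inserting the sizes of the connection coefficients $\Gamma_{IJ}^K$, which it computes via the Koszul formula exactly as you describe. So the overall strategy is right and matches.

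Two small points are off, neither fatal. First, you add $\callO(r^{-2})$ corrections to the frame inner products ($m(L,\Lbar)=-2+\callO(r^{-2})$, etc.), but there are none: $L,\Lbar,e_A$ are defined in \eqref{eq:VFdef0} by applying the pointwise Lorentz map $\Lambda_{-\ell}$ to an exact Minkowski null/orthonormal frame, and $\Lambda_{-\ell}$ preserves $m$, so $m(L,\Lbar)=-2$, $m(L,L)=m(\Lbar,\Lbar)=0$, $m(L,e_A)=m(\Lbar,e_A)=0$, $m(e_A,e_B)=\delta_{AB}$ hold exactly. (The $r^{-2}$'s and $\dotwp$'s in Lemmas~\ref{lem:commutators1} and~\ref{lem:BoxVF1} come from \emph{commutators} of the frame fields, i.e.\ from the connection coefficients, not from the metric components; this is why only the $\Gamma_{IJ}^K$ carry weights, while the $(m^{-1})^{IJ}$ are constants. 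Your ``layer one'' produces no errors.) Relatedly, the proof of Lemma~\ref{lem:BoxVF1} only computes the traces $(m^{-1})^{IJ}\Gamma^K_{IJ}$; for the cubic term you need the individual $\Gamma^K_{IJ}$ for all $I,J$, e.g.\ $\Gamma^K_{LL}$, $\Gamma^K_{AL}$, $\Gamma^K_{AB}$, which are additional Koszul computations not carried out there.

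Second, the term you flag as subtle, $\callO(\dotwp\, r^{-3})e_A\varphi$, cannot arise from a homogeneous cubic expression in $\partial\varphi,\partial^2\varphi$ at all, and your proposed mechanism (``collapse via near-null relations and commutators'') would have to lower the degree in $\varphi$, which frame changes and commutations never do. This slot is evidently a typographical slip in the statement: it comes from contracting $\Gamma_{LL}^A=\callO(\dotwp\, r^{-3})$ with the two outer factors $(m^{-1})^{L\Lbar}D_\Lbar\fy$, and should read $\callO(\dotwp\, r^{-3})(\Lbar\varphi)^2 e_A\varphi$. You were right to be suspicious of it; the correct resolution is to identify it as such rather than invent a degree-lowering mechanism.
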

%%%%%%%%%%%
%%%%%%%%%%%
\begin{proof}
This follows by writing this expression as
%%%%%%%
%%%%%%%
\begin{align*}
\begin{split}
(m^{-1})^{II'}(m^{-1})^{JJ'}(D_{I'}\fy D_{J'}\fy)(D_{I}D_J\fy-\Gamma_{IJ}^KD_K\fy).
\end{split}
\end{align*}
The relevant connection coefficients can be calculated using the Koszul formula as in the proof of Lemma~\ref{eq:BoxVF2} and are given by
%%%%%%%
%%%%%%%
\begin{align*}
\begin{split}
&\Gamma_{LL}^\Lbar=0,\quad \Gamma_{LL}^L=\callO(\dotwp^{\leq 2}r^{-2}),\quad \Gamma_{LL}^A=\callO(\dotwp r^{-3}),\quad \Gamma_{\Lbar L}^\Lbar=0, \quad\Gamma_{\Lbar L}^L=\callO(\dotwp^{\leq2}), \quad\Gamma_{\Lbar L}^\Lbar=0,\\
&\Gamma_{\Lbar L}^A=\callO(\dotwp^{\leq 2}r^{-1}),\quad \Gamma_{A L}^L=\callO(\dotwp^{-2}r^{-1}),\quad \Gamma_{AL}^\Lbar=0,\quad \Gamma_{AL}^B=\callO(\dotwp r^{-1}), \quad \Gamma_{A\Lbar}^\Lbar=\callO(\dotwp^{\leq 2}r^{-1}),\\
&\Gamma_{A\Lbar}^L=0,\quad \Gamma_{A\Lbar}^B=\callO(\dotwp), \quad \Gamma_{AB}^L=\callO(\dotwp),\quad \Gamma_{AB}^\Lbar=\callO(\dotwp r^{-1}), \quad \Gamma_{AB}^C=\callO(1).\qedhere
\end{split}
\end{align*}
\end{proof}
%%%%%%%%%%%
%%%%%%%%%%%
%%%%%%%%%%%%%%
%%%%%%%%%%%%%%
\subsection{The Main $r^p$ Multiplier Identity}  
%%%%%%%%%%%%%%
%%%%%%%%%%%%%%
This section contains the main $r^p$ multiplier identity for $\calP_\graph$. Recall that this operator arises when using the conjugated variable $\varphi$ defined in terms of $\phi$ in \eqref{eq:varphiphi1}. Since we will be interested in the exterior hyperboloidal region, we  fix a cutoff function $\chi_{\geq \tilR}$ supported in the region $\{r\geq \tilR\}\subseteq \calC_\hyp$. We will also use the notation $\chi_{\leq \tilR}=1-\chi_{\geq \tilR}$. Given $\uppsi$ with
%%%%%%%
%%%%%%%
\begin{align*}
\begin{split}
\calP_\graph \uppsi=f,
\end{split}
\end{align*}
we let $\tilupsi=\tilr^{\frac{n-1}{2}}\uppsi$ and $\tilf=\tilr^{\frac{n-1}{2}}f$ as usual. Suppose $X_1,\dots,X_k$, $k=k_1+k_2+k_3$, are a collection of vectorfields from $\{\tilr L, \Omega, T\}$, with $X_1,\dots X_{k_3}=T$, $X_{k_3+1},\dots X_{k_3+k_2}=\Omega$, and~$X_{k_3+k_2+1},\dots X_k=\tilr L$. We let
%%%%%%%
%%%%%%%
\begin{align}\label{eq:tilphikdef1}
\begin{split}
\tilupsi_k=X_k\dots X_1\tilupsi,\qquad \tilf_k = X_k\dots X_1\tilf,
\end{split}
\end{align}
and if the precise choice of the vectorfields is important we write
%%%%%%%
%%%%%%%
\begin{align}\label{eq:tilphikdef2}
\begin{split}
\tilupsi_k=\tilupsi_{k_1,k_2,k_2},\qquad \tilf_{k}=\tilf_{k_1,k_2,k_3}.
\end{split}
\end{align}
The basic $r^p$ boundary and bulk energies are defined as follows. For any $p\in[0,2]$, 
%%%%%%%
%%%%%%%
\begin{align}\label{eq:rpenergiesdef1}
\begin{split}
&\calE^p_k(\sigma)\equiv\calE_k^p[\uppsi](\sigma):=\int_{\Sigma_\sigma} \chi_{\geq\tilR} \tilr^p (L\tilupsi_k)^2 \ud \theta \ud r,\\
&\calB_{k}^{p}(\sigma_1,\sigma_2)\equiv\calB_{k}^{p}[\uppsi](\sigma_1,\sigma_2):=\int_{\sigma_1}^{\sigma_2}\int_{\Sigma_\tau}\chi_{\geq\tilR}\tilr^{p-1}\big((L\tilupsi_k)^2+\big(\frac{2-p}{\tilr^2}\big)(|\Omega\tilupsi_k|^2+\tilupsi_k^2)\big)\ud S \ud r \ud\tau.
\end{split}
\end{align}
When there is a need to distinguish between the vectorfields applied to $\tilupsi$ we write
%%%%%%%
%%%%%%%
\begin{align*}
\begin{split}
\calE_{k_1,k_2,k_3}^p(\sigma) \mand \calB_{k_1,k_2,k_3}^p(\sigma_1,\sigma_2)
\end{split}
\end{align*}
for the corresponding energies. We also define the standard energy (note that the definition agrees with $E$ in \eqref{eq:standardenergydef1} when $k=0$)
%%%%%%% 
%%%%%%%
\begin{align*}
\begin{split}
E_k(\tau)&\equiv E_k[\uppsi](\tau)\\
&:=\int_{\Sigma_\tau}\chi_{\leq \tilR}(|\partial\partial^k\uppsi|^2+\jap{\rho}^{-2}|\partial^k\uppsi|^2)\ud V+\int_{\Sigma_\tau}\chi_{\geq \tilR}(|\partial_\Sigma X^k \uppsi|^2+r^{-2}|TX^k\uppsi|^2+r^{-2}|X^k\uppsi|^2) \ud V.
\end{split}
\end{align*}
%%%%%%%%%%%
%%%%%%%%%%%
\begin{lemma}\label{lem:rpmult1}
Suppose $\calP_\graph \uppsi=f$ and let $\tilupsi=\tilr^{\frac{n-1}{2}}\uppsi$ and $\tilf=\tilr^{\frac{n-1}{2}}f$. If the bootstrap assumptions \eqref{eq:a+trap}--\eqref{eq:wpb1} hold, then with the notation introduced in \eqref{eq:tilphikdef1}, \eqref{eq:tilphikdef1}, \eqref{eq:rpenergiesdef1}, with $k=k_1+k_2+k_3$, and for any $0\leq p\leq 2$ and any $\tau_1<\tau_2$, 
%%%%%%%
%%%%%%%
\begin{align}\label{eq:rpmult1}
\begin{split}
&\sum_{j\leq k_1}\big(\sup_{\tau\in[\tau_1,\tau_2]}\calE_{j,k_2,k_3}^p(\tau)+\calB_{j,k_2,k_3}^{p-1}(\tau_1,\tau_2)\big)\\
&\leq C\sum_{j\leq k_1}\calE_{j,k_2,k_3}^{p}(\tau_1)+C\sum_{j\leq k_1}\int_{\tau_1}^{\tau_2}\int_{\Sigma_\tau}\chi_{\geq\tilR}\tilr^p \tilf_k (L\tilupsi_{j,k_2,k_3} +\callO(r^{-5})\Omega\tilupsi_{j,k_2,k_3})\ud \theta \ud r \ud\tau\\
&\quad+C_\tilR\int_{\tau_1}^{\tau_2}\int_{\Sigma_\tau}|\partial\chi_{\geq\tilR}|(|\partial_{\tau,x}\tilupsi_k|^2+|\tilupsi_k|^2)\ud \theta \ud r \ud \tau\\
&\quad+C\sum_{j\leq k}\sup_{\tau\in[\tau_1,\tau_2]}E_j(\tau)+C\delta \sup_{\tau\in[\tau_1,\tau_2]}\sum_{j\leq k}\calE_j^p(\tau)\\
&\quad+C\delta\sum_{j\leq k}\int_{\tau_1}^{\tau_2}\int_{\Sigma_\tau}\chi_{\geq\tilR}(|\partial_{\tau,x}\tilupsi_j|^2+\tilr^{-2}|\tilupsi_j|^2)\tilr^{-1-\alpha}\ud \theta \ud r \ud\tau.
\end{split}
\end{align}
Here $C$ and $C_\tilR$ are large constants constants and $\delta=o(\epsilon)+o(\tilR)$ is a small constant that is independent of $C$ and $C_\tilR$.
\end{lemma}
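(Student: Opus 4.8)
\textbf{Proof strategy for Lemma~\ref{lem:rpmult1}.}
The plan is to run the standard Dafermos--Rodnianski $r^p$-hierarchy argument, but applied to the derived quantities $\tilupsi_{j,k_2,k_3}$ that solve the commuted equations of Lemma~\ref{lem:higherordereqs1}, and adapted to the moving foliation $\bsUpsigma_\sigma$ of Section~\ref{sec:profileintro}. I would proceed by induction on $k_1$. For the base case $k_1=0$, fix the intermediate data of $k_2$ copies of $\Omega$ and $k_3$ copies of $T$ applied to $\tilupsi$ and set $\Psi:=\tilupsi_{0,k_2,k_3}$. By Lemma~\ref{lem:higherordereqs1}, $\Psi$ satisfies $\tilcalP \Psi + \Err_{0,k_2,k_3}[\tilupsi] = (\Omega^{k_2}T^{k_3})(\tilr^{\frac{n-1}{2}}\calP\uppsi)=\tilf_k+(\text{lower-order commutator errors})$, where $\tilcalP=-\Lbar L+\tilr^{-2}\sum\Omega^2-\frac{(n-1)(n-3)}{4\tilr^2}$. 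Multiply this equation by $\chi_{\ge\tilR}\tilr^p L\Psi$ (the choice $L(\tilr^{\frac{n-1}{2}}\uppsi)$ built into the $\tilupsi$ normalization is exactly what makes the first-order terms disappear) and integrate over $\Sigma_{\tau_1}^{\tau_2}$ with the measure $\ud\theta\,\ud r\,\ud\tau$. The principal term $-\Lbar L\Psi\cdot\tilr^p L\Psi$ produces, after integration by parts in $r$ and $\tau$, the boundary flux $\calE_{0,k_2,k_3}^p$ on $\Sigma_{\tau_1},\Sigma_{\tau_2}$ plus the positive bulk $p\,\tilr^{p-1}(L\Psi)^2$; the angular/zeroth-order term $\tilr^{-2}(\sum\Omega^2-\frac{(n-1)(n-3)}{4})\Psi\cdot\tilr^p L\Psi$ contributes, again after integration by parts in $r$, the positive bulk $(2-p)\tilr^{p-3}(|\Omega\Psi|^2+\Psi^2)$ (this is where $0\le p\le2$ and $n=5$, so $(n-1)(n-3)/4=2$, make the coefficient nonnegative). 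This is the core computation and I would keep it at the level of ``standard Minkowski $r^p$ calculation near the null infinity end,'' citing \cite{DR1,Schlue1,Moschidis1}.

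The next step is to control the error terms. The commutator error $\Err_{0,k_2,k_3}[\tilupsi]$ in \eqref{eq:errorhigher1} splits into three types: (i) the quasilinear $\callO(r^{-4})L^2\Psi$, $\callO(r^{-4})\Lbar^2\Psi$, $\callO(r^{-5})\Omega L\Psi$, $\callO(r^{-5})\Lbar\Omega\Psi$ terms coming from the asymptotically-flat corrections to the Minkowski metric in \eqref{eq:calPVF1}; (ii) the parameter-decay terms carrying $\callO(\dotwp^{\le 2k+2})$ factors; and (iii) genuinely lower-order terms with extra $\tilr^{-1}$ or $\tilr^{-2}$ decay. For (i), when paired with $\tilr^p L\Psi$ the $r$-weights $r^{-4}$ or $r^{-5}$ against $r^p\le r^2$ give integrands decaying like $r^{-1-\alpha}$, which are absorbed either by the $\delta$-terms on the right-hand side of \eqref{eq:rpmult1} (after Cauchy--Schwarz, since the weights beat the bulk norm) or, for the $L^2\Psi$ piece, by a small multiple of $\calB^{p-1}$ on the left once $\tilR$ is taken large; this is the source of the $\delta=o(\epsilon)+o(\tilR)$ constant. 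For (ii), the bootstrap assumptions \eqref{eq:a+trap}--\eqref{eq:wpb1} give $|\dotwp^{(\le\cdot)}|\lesssim\epsilon\tau^{-\frac{5}{2}+\kappa}$, in particular bounded, and these terms come with no spatial growth (this is precisely the point emphasized in Section~\ref{subsec:introoutlinerp}), so they too are absorbed into the $\delta$-bulk and into $\sup_\tau E_j(\tau)$. For (iii) and the lower-order commutator errors from $\Omega^{k_2}T^{k_3}$ acting on $\tilr^{\frac{n-1}{2}}\calP\uppsi$ (not just on $\tilcalP$), one reduces, via Lemma~\ref{lem:commutators2}, to expressions of the form $(\tilr L)^{j_1}\Omega^{j_2}T^{j_3}\tilupsi$ with $j_1+j_2+j_3\le k$, which are controlled by $\sum_{j\le k}\sup_\tau E_j(\tau)$ and the $\delta$-bulk, exactly as written on the right-hand side of \eqref{eq:rpmult1}. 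The term explicitly kept on the right-hand side, $\int\chi_{\tilR}\tilr^p\tilf_k(L\tilupsi_{j,k_2,k_3}+\callO(r^{-5})\Omega\tilupsi_{j,k_2,k_3})$, is just the genuine source contribution left un-estimated (to be dealt with later when $f$ is known), and the $|\partial\chi_\tilR|$-term collects the commutator of the cutoff $\chi_{\ge\tilR}$ with $L$ and $\Lbar$, supported in $\{r\simeq\tilR\}$ where all norms are comparable to the bounded-region energy.

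For the inductive step $k_1\mapsto k_1+1$, I apply the same multiplier to the equation satisfied by $\tilupsi_{k_1+1,k_2,k_3}=(\tilr L)^{k_1+1}\Omega^{k_2}T^{k_3}\tilupsi$, i.e.\ the $j=k_1+1$ case of Lemma~\ref{lem:higherordereqs1}. The new feature is the appearance of the terms $\sum_{j=0}^{k_1}c_{j,k_1+1}\tilcalP_1((\tilr L)^j\Omega^{k_2}T^{k_3}\tilupsi)$ with $\tilcalP_1=LL+\tilr^{-2}\sum\Omega^2-\frac{(n-1)(n-3)}{4\tilr^2}$ — these are precisely the terms that arise from commuting $(\tilr L+1)^{k_1}$ with $\Lbar L$ treating parameters as fixed, and the well-known algebraic miracle of the $r^p$-method is that when multiplied by $\tilr^p L\tilupsi_{k_1+1}$ and integrated by parts, $\tilcalP_1$ of a lower-order term produces a bulk term controlled by $\calB_{j}^{p-1}$ with $j\le k_1$, which is already bounded by the inductive hypothesis (the left-hand side of \eqref{eq:rpmult1} is stated as a sum over $j\le k_1$ precisely to accommodate this). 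So one sums the estimate at level $k_1+1$ with a large multiple of the inductive estimate at levels $\le k_1$, absorbs, and closes. The main obstacle — and the step I would spend the most care on — is the bookkeeping of the commutator errors in type (i): one must verify that every quasilinear second-order error term $\callO(r^{-4})\partial^2\tilupsi_k$ genuinely comes with enough $r$-decay that, against the worst weight $r^p=r^2$, the resulting integrand is $\le C_\tilR r^{-1-\alpha}(|\partial_{\tau,x}\tilupsi_k|^2+|\tilupsi_k|^2)$ and hence absorbable, using that $\Lbar\tilupsi_k$ and the ``bad'' derivative are always controlled in $LE$ with an $r^{-1-\alpha}$ weight and never in the $r^p$-flux. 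This is where the dimensional restriction $n\ge 5$ enters through the decay rates in \eqref{eq:calPVF1} and Lemma~\ref{lem:higherordereqs1}, and it is the technical heart of adapting \cite{Moschidis1} to the dynamically moving, non-exactly-solving profile.
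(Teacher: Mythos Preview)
Your overall strategy---multiply the commuted equation from Lemma~\ref{lem:higherordereqs1} by $\chi_{\tilR}\tilr^p L\tilupsi_{k_1,k_2,k_3}$, integrate, and proceed by induction on $k_1$---matches the paper's approach, and your classification of the error terms into types (i)--(iii) is sound. There is, however, a real gap in your treatment of the top-order $\tilcalP_1$ term in the inductive step.

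You claim that $\tilcalP_1\tilupsi_{j,k_2,k_3}$ paired with $\tilr^p L\tilupsi_{k_1+1,k_2,k_3}$ produces bulk controlled by $\calB_j^{p-1}$ for $j\le k_1$, absorbable by the inductive hypothesis. This is correct for $j\le k_1-1$, but fails for the top index $j=k_1$. The point is that $L^2\tilupsi_{k_1}=L(\tilr^{-1}\tilupsi_{k_1+1})$, so $L^2\tilupsi_{k_1}\cdot L\tilupsi_{k_1+1}\cdot\tilr^p$ produces, after integration by parts, a bulk term of the form $\tilr^{p-1}(L\tilupsi_{k_1+1})^2$---a level-$(k_1+1)$ quantity, \emph{not} a lower-order one. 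This cannot be controlled by the inductive hypothesis; instead, the favorable sign of $c_{k_1,k_1+1}=-(k_1+1)$ is essential to make this term nonnegative and either absorb it on the left or use it to absorb other errors. The same sign issue arises for the $\tilr^{-2}\Omega^2$ and zeroth-order pieces of $\tilcalP_1\tilupsi_{k_1}$. The ``algebraic miracle'' you allude to is precisely this sign, not an inductive bound. For the genuinely lower-order $\tilcalP_1\tilupsi_j$ with $j\le k_1-1$, the paper uses Cauchy--Schwarz with a small constant to split the cross terms, feeding the small top-order piece into the sign-definite term just described and the large lower-order piece into the inductive hypothesis.

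A secondary point: you note the $\callO(r^{-5})\Omega\tilupsi$ factor in the retained source term but do not explain its origin. It arises when handling the $\callO(r^{-5})\Lbar\Omega\tilupsi_k$ error in \eqref{eq:errorhigher1}: after integrating $\Lbar$ by parts one is left with $\callO(r^{-5})\Omega\tilupsi_k\cdot\Lbar L\tilupsi_k$, and the paper then uses the equation \eqref{eq:higher1} to substitute for $\Lbar L\tilupsi_k$, which reintroduces $\tilf_k$ multiplied by $\callO(r^{-5})\Omega\tilupsi_k$.
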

%%%%%%%%%%%
%%%%%%%%%%%
\begin{proof}
To simplify notation we write $\chi$ for $\chi_{\geq\tilR}$ and $\tilupsi_k$ for $\tilupsi_{k_1,k_2,k_3}$, and multiply each term in the expansion \eqref{eq:higher1},~\eqref{eq:errorhigher1} by $\chi\tilr^p L\tilupsi$.
%%%%%%%
%%%%%%%
\begin{align*}
\begin{split}
-(1+\callO(r^{-4}))\chi\Lbar L\tilupsi_k L\tilupsi_k \tilr^p&=-\frac{1}{2}\Lbar((1+\callO(r^{-4}))\chi(L\tilupsi_k)^2\tilr^p)-\frac{p}{2}\chi\tilr^{p-1}(L\tilupsi_k)^2\\
&\quad+\callO(\dotwp)(L\tilupsi_k)^2\tilr^p+\callO(r^{-4})\tilr^{p-1}(L\tilupsi_k)^2+\callO(1)(\Lbar\chi)\tilr^p(L\tilupsi_k)^2.
\end{split}
\end{align*}
Similarly, using also \eqref{eq:VFcomm1} and Cauchy-Schwarz,
%%%%%%%
%%%%%%%
\begin{align*}
\begin{split}
(1+\callO(r^{-4}))\chi\Omega^2\tilupsi_k L \tilupsi_k \tilr^{p-2}&=-\frac{1}{2}L((1+\callO(r^{-4}))\chi(\frac{\Omega}{\tilr}\tilupsi_k)^2\tilr^{p})-\frac{2-p}{2}\chi(\frac{\Omega}{\tilr}\tilupsi_k)^2\tilr^{p-1}\\
&\quad+\Omega((1+\callO(r^{-4}))\chi\tilr^{p-2}\Omega\tilupsi L\tilupsi_k)\\
&\quad+\callO(1)(\Omega\chi)\Omega\tilupsi_kL\tilupsi_k\tilr^{p-1}+\callO(1)(L\chi)(\frac{1}{\tilr}\Omega\tilupsi_k)^2\tilr^{p}\\
&\quad+\callO(r^{-4})\chi(\frac{\Omega}{\tilr}\tilupsi_k)^2\tilr^{p-1}+\callO(\dotwp r^{-2})\chi(L\tilupsi_k)^2\tilr^p\\
&\quad +\callO(\dotwp r^{-2})\chi(\frac{\Omega}{\tilr}\tilupsi)^2\tilr^p+\callO(\dotwp r^{-4})\chi(\Lbar\tilupsi)^2\tilr^p,
\end{split}
\end{align*}
and
%%%%%%%
%%%%%%%
\begin{align*}
\begin{split}
-(1+\callO(r^{-4}))\chi\tilupsi_k L\tilupsi_k \tilr^{p-2} &=-\frac{1}{2}L((1+\callO(r^{-4}))\chi(\frac{\tilupsi_k}{\tilr})^2\tilr^p)-\frac{2-p}{2}\chi(\frac{\tilupsi_k}{\tilr})^2\tilr^{p-1}\\
&\quad+\callO(1)(L\chi)\tilupsi_k^2\tilr^{p-2}+\callO(r^{-4})\chi(\frac{\tilupsi_k}{\tilr})^2\tilr^{p-1}+\callO(\dotwp r^{-2})(\frac{\tilupsi_k}{\tilr})^2\tilr^p.
\end{split}
\end{align*}
Note that integrating the last three identities already gives the desired control on the left-hand side of \eqref{eq:rpmult1}. Here the terms involving $\dotwp$ can be integrated in $\tau$ and absorbed by the left-hand side of \eqref{eq:rpmult1} or the standard energy $E_k$. Turning to the error terms, we first consider
%%%%%%%
%%%%%%%
\begin{align*}
\begin{split}
\callO(r^{-4})\chi L^2\tilupsi_k^2L\tilupsi_k \tilr^p&=L(\callO(r^{-4})\chi(L\tilupsi_k)^2\tilr^p)+\callO(r^{-4})\chi(L\tilupsi_k)^2\tilr^{p-1}+\callO(1)(L\chi)(L\tilupsi_k)^2.
\end{split}
\end{align*}
Similarly, using also \eqref{eq:VFcomm1},
%%%%%%%
%%%%%%%
\begin{align*}
\begin{split}
\callO(r^{-4})\chi\Lbar^2\tilupsi_k L\tilupsi_k \tilr^p&=\Lbar(\callO(r^{-4})\chi\Lbar\tilupsi_k L\tilupsi_k\tilr^p)+L(\callO(r^{-4})\chi(\Lbar\tilupsi_k)^2\tilr^p)+\callO(r^{-4})(\Lbar\tilupsi_k)^2\tilr^{p-1}\\
&\quad+\callO(r^{-5})\chi L\tilupsi_k\Lbar\tilupsi_k \tilr^{p-1}+\callO(\dotwp r^{-4})L\tilupsi_k\Lbar\tilupsi_k \tilr^{p}+\callO(r^{-4})(\Lbar\chi)L\tilupsi_k\Lbar \tilupsi_k\tilr^{p}\\
&\quad \callO(\dotwp r^{-5})(\frac{\Omega}{\tilr}\tilupsi_k)\Lbar\tilupsi_k\tilr^p+\callO(r^{-4})(L\chi)(\Lbar\tilupsi_k)^2\tilr^p,
\end{split}
\end{align*}
and
%%%%%%%
%%%%%%%
\begin{align*}
\begin{split}
\callO(r^{-5})\chi\Omega L\tilupsi_k L\tilupsi_k \tilr^p=\Omega(\callO(r)^{-5}\chi(L\tilupsi_k)^2\tilr^p)+\callO(r^{-4})\chi(L\tilupsi_k)^2\tilr^{p-1}+\callO(r^{-5})(\Omega\chi)(L\tilupsi_k)^2\tilr^p.
\end{split}
\end{align*}
For the last term on the second line of \eqref{eq:errorhigher1} first observe that
%%%%%%%
%%%%%%%
\begin{align*}
\begin{split}
\callO(r^{-5})\chi\Lbar\Omega\tilupsi_k L\tilupsi_k \tilr^p &= \Lbar(\callO(r^{-5})\chi\Omega\tilupsi_k L\tilupsi_k \tilr^p)+\callO(r^{-5})\chi\Omega\tilupsi_k\Lbar L\tilupsi_k\tilr^p+\callO(\dotwp r^{-4})\chi(\frac{\Omega}{\tilr}\tilupsi_k)L\tilupsi_k\tilr^p\\
&\quad+\callO(r^{-5})\chi(\frac{\Omega}{\tilr}\tilupsi_k)L\tilupsi_k\tilr^p+\callO( r^{-4})(\Lbar\chi)(\frac{\Omega}{\tilr}\tilupsi_k)L\tilupsi_k\tilr^p
\end{split}
\end{align*}
To treat the second term on the right, we use equation \eqref{eq:higher1} to solve for $\Lbar L\tilupsi$, and replace this term by
%%%%%%%
%%%%%%%
\begin{align*}
\begin{split}
\callO(r^{-5})\chi\Omega\tilupsi_k(\calP_\graph+\Lbar L)\tilupsi_k\tilr^p+\callO(r^{-5})\chi\Omega\tilupsi_k \Err_k [\tilupsi]\tilr^p.
\end{split}
\end{align*}
These terms can then be treated using similar considerations as above and below. The term $\chi \Err_k[\tilupsi]L \tilupsi_k\tilr^p$ in \eqref{eq:higher1} is treated using repeated applications of the product rule (for integration by parts) and Lemmas~\ref{lem:commutators1} and~\ref{lem:commutators2} to write this term as a total derivative plus acceptable terms. We discuss only the contribution of $\tilcalP_1$, where by an abuse of notation we write $\tilupsi_{j}$ for $\tilupsi_{j,k_2,k_3}$. For the top order term $\tilcalP_1\tilupsi_{k_1-1}$ the favorable sign of the coefficient $c_{k_1-1,k_1}$ is important, as we will illustrate with the term $L^2\tilupsi_{k_1-1}$ appearing in $\tilcalP_1\tilupsi_{k_1-1}$, for which we write
%%%%%%%
%%%%%%%
\begin{align*}
\begin{split}
L^2\tilupsi_{k_1-1}L\tilupsi_{k_1} \tilr^2= L(\tilr^{-1}\tilupsi_{k_1})L\tilupsi_{k_1}\tilr^2= (L\tilupsi_{k_1})^2\tilr-\frac{1}{2}L(\tilr^2(L\tilupsi_{k_1})^2)+\callO(\dotwp)\tilupsi_{k_1} L \tilupsi_{k_1}.
\end{split}
\end{align*}
Here the first term on the right has a favorable sign after multiplication by $c_{k_1-1,k_1}$ and the other terms can be bounded by the energy fluxes. The other terms in $\tilcalP_1\tilupsi_{k_1-1}$ are treated similarly, with a few more integration by parts and commutations between $\Omega$ and $L$, again with the sign of $c_{k_1-1,k_1}$ playing an important role for the main bulk term. The contributions of $\tilcalP_1\tilupsi_{j}$, $j\leq k_1-2$, are treated similarly where the error terms are absorbed inductively by adding a suitable multiple of the estimates for lower values of $k_1$. For instance
%%%%%%%
%%%%%%%
\begin{align*}
\begin{split}
\Omega^2\tilupsi_j L\tilupsi_{k_1}= \frac{1}{\tilr}\Omega\tilupsi_j\Omega\tilupsi_{k_1}+\Omega(\Omega\tilupsi_{j}L\tilupsi_{k_1})-L(\Omega\tilupsi_{j+1}\Omega\tilupsi_{k_1})+[L,\Omega]\tilupsi_j\Omega\tilupsi_{k_1}-\Omega\tilupsi_{j}[\Omega,L]\tilupsi_k,
\end{split}
\end{align*}
and the first term on the right is bounded by $\delta |\tilr^{-1}\Omega\tilupsi_{k_1}|^2\tilr+C_\delta|\tilr^{-1}\Omega\tilupsi_{j+1}|^2\tilr$. The first term is absorbed by a corresponding term coming from $\tilcalP_1\tilupsi_{k_1-1}$ and the second term by a similar term in the multiplier identity for $\tilupsi_{j+1}$ (instead of $\tilupsi_{k_1}$), after adding a suitably large multiple of that identity. The contribution of the last four lines of \eqref{eq:errorhigher1} need no further manipulations. Putting everything together and applying Cauchy-Schwartz we obtain (to be precise, as explained above, we should write this first for $\tilupsi=\tilupsi_{1,k_2,k_3}$ and derive the corresponding estimate, and then inductively build up to $\tilupsi=\tilupsi_{k_1,k_2,k_3}$)
%%%%%%%%
%%%%%%%%
%%%%%%%
%%%%%%%
\begin{align}\label{eq:rpmulttemp1}
\begin{split}
\tilf L\tilupsi \tilr^p&=-\frac{1}{2}\Lbar\big(\chi(L\tilupsi)^2\tilr^p+\Err_\Lbar\big)-L\big(\tilr^{p-2} \big((\Omega\tilupsi)^2+\frac{(n-1)(n-3)}{4}\tilupsi^2\big)+\Err_{L}\big)\\
&\quad+\Omega( \Err_\Omega)+\Err_{\mathrm{int}}+\callO(\jap{r}^{-5})\Omega\tilupsi \tilf\tilr^p\\
&\quad-\frac{p}{2}\chi\tilr^{p-1}(L\tilupsi)^2-\frac{2-p}{2}\chi(\frac{\Omega}{\tilr}\tilupsi)^2\tilr^{p-1}-\frac{(2-p)(n-1)(n-3)}{8}\chi(\tilr^{-1}\tilupsi)^2\tilr^{p-1},
\end{split}
\end{align}
where
%%%%%%%
%%%%%%%
\begin{align*}
\begin{split}
&\Err_\Lbar\leq C\chi\sum_{j\leq k} ((\partial_{\tau,x}\tilupsi_j)^2+\tilr^{-2}\tilupsi_j^2),\\
&\Err_{L}\leq C\chi\tilr^p\sum_{j\leq k}\sum_{j\leq k} ((L\tilupsi)^2+\tilr^{-2}(\Omega\tilupsi)^2+\tilr^{-2}\tilupsi_j^2+\tilr^{-2}(\Lbar\tilupsi)^2),\\
&\Err_{\Omega}\leq C\chi\tilr^{p-1}\sum_{j\leq k}\sum_{j\leq k} ((L\tilupsi)^2+\tilr^{-2}(\Omega\tilupsi)^2+\tilr^{-2}\tilupsi_j^2+\tilr^{-2}(\Lbar\tilupsi)^2),\\
&\Err_{\mathrm{int}}\leq C_\tilR\sum_{j\leq k}|\partial\chi|(|\partial_{\tau,x}\tilupsi_j|^2+|\tilupsi_j|^2)+C\chi\sum_{j\leq k}(|\partial_{\tau,x}\tilupsi_j|^2+\tilr^{-2}|\tilupsi_j|^2)\tilr^{-1-\alpha}\\
&\phantom{\Err_{\mathrm{int}}\leq }+\callO(\dotwp)\sum_{j\leq k}\big((L\tilupsi_j)^2\tilr^p+(\tilr^{-1}\Omega\tilupsi_j)^2+(\tilr^{-1}\tilupsi_j)^2+\tilr^{-2}(\Lbar\tilupsi_j)^2\big).
\end{split}
\end{align*}
The desired estimate \eqref{eq:rpmult1} now follows from integrating \eqref{eq:rpmulttemp1}.  Here note when integrating \eqref{eq:rpmulttemp1} we also encounter a term involving $\div V$, for $V=L$, $\Lbar$, or $\Omega$ (from the difference of $V^\mu\partial_\mu u$ and $\partial_\mu(V^\mu u$)), but these terms come with $\dotwp$, which has extra $\tau$ integrability, and can be absorbed.
\end{proof}
%%%%%%%%%%%
%%%%%%%%%%%
%%%%%%%%%%%%
%%%%%%%%%%%%

%%%%%%%%%%%%%%%%%%%
%%%%%%%%%%%%%%%%%%%
%%%%%%%%%%%%%%%%%%%
\subsection{Nonlinear Energy and Local Energy Decay Estimates}
%%%%%%%%%%%%%%%%%%%
%%%%%%%%%%%%%%%%%%%
%%%%%%%%%%%%%%%%%%%
In this section we again use the variable $\phi$, not the conjugated version $\varphi$ (see \eqref{eq:varphiphi1}). However, in view of the definition \eqref{eq:varphiphi1}, and under our bootstrap assumptions,  the estimates on $\varphi$ easily transfer to estimates on $\phi$. As a first step in the proof of Proposition~\ref{prop:bootstrapphi1} we apply the results of Section~\ref{sec:LED} to derive energy and local energy decay estimates for $\phi$. Let
%%%%%%%
%%%%%%%
\begin{align*}
\begin{split}
\|\phi\|_{LE_k[\sigma_1,\sigma_2]}^2:=\|\chi_{\leq R}\partial^k\phi\|_{LE[\sigma_1,\sigma_2]}^2+\|\chi_{\geq R}X^k\phi\|_{LE[\sigma_1,\sigma_2]}^2.
\end{split}
\end{align*}
Our goal is to prove the following result.
%%%%%%%%%%%%
%%%%%%%%%%%%
\begin{proposition}\label{prop:energyLEDnonlin1}
If the bootstrap assumptions \eqref{eq:a+trap}--\eqref{eq:Tjphienergyb2} are satisfied and $\epsilon$ is sufficiently small, then 
%%%%%%%
%%%%%%%
\begin{align}
&\sup_{\sigma_1\leq \sigma \leq \sigma_2} E_k[\phi](\sigma)+\|\phi\|_{LE_k[\sigma_1,\sigma_2]}^2\lesssim \sum_{i\leq k}E_i[\phi](\sigma_1)+\epsilon^2\bfupsigma_0(\sigma_1), \quad k\leq M,\label{eq:phienergyLEDboundtemp1}\\
&\sup_{\sigma_1\leq \sigma \leq \sigma_2} E_k[\RbfT\phi](\sigma)+\|\RbfT\phi\|_{LE_k[\sigma_1,\sigma_2]}^2\lesssim \sum_{i\leq k}E_i[\RbfT\phi](\sigma_1)+\epsilon^2\bfupsigma_1(\sigma_1), \quad k\leq M-1,\label{eq:TphienergyLEDboundtemp1}\\
&\sup_{\sigma_1\leq \sigma \leq \sigma_2} E_k[\RbfT^2\phi](\sigma)+\|\RbfT^2\phi\|_{LE_k[\sigma_1,\sigma_2]}^2\lesssim \sum_{i\leq k}E_i[\RbfT^2\phi](\sigma_1)+\epsilon^2\bfupsigma_2(\sigma_1), \quad k\leq M-2,\label{eq:T2phienergyLEDboundtemp1}
\end{align}
where $\bfupsigma_j(\sigma_1)=\sigma_1^{-2-2j}$ if $k+3j\leq M-2$ and $\upsigma_j(\sigma_1)=1$ otherwise.
\end{proposition}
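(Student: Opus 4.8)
The strategy is to feed the linear local energy decay and energy estimates of Section~\ref{sec:LED} (Propositions~\ref{prop:energyestimate1} and~\ref{prop:LED1}) with the commuted equation for $\phi$, treating all nonlinear, profile-error, and modulation-error terms as source terms, and then to bootstrap the energies in $\sigma$ using the doubly-integrable decay already established for $\dotwp$ and $a_\pm$. I will prove the three estimates in the order \eqref{eq:T2phienergyLEDboundtemp1} $\to$ \eqref{eq:TphienergyLEDboundtemp1} $\to$ \eqref{eq:phienergyLEDboundtemp1} is \emph{not} the natural order; rather I will prove them simultaneously by a downward induction on the number of $\RbfT$ factors, using that fewer $\RbfT$'s means more allowed spatial derivatives. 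Concretely, fix $j\in\{0,1,2\}$ and $k$ with $k+3j\le M-2$ (resp.\ $k\le M$, $k\le M-1$, $k\le M-2$ in the three cases). We commute the equation $\calP\phi = f$ (interior form \eqref{eq:phi1}, exterior form \eqref{eq:abstractexteq1}, glued via Remark~\ref{rem:nongeomglobal1}) with $\RbfT^j$ and then with $\le k$ of the admissible derivatives ($\partial_\Sigma$ in the interior, $X\in\{\tilr L,\Omega,T\}$ in the exterior, as in Subsection~\ref{subsec:prelimvfs} and Lemmas~\ref{lem:commutators1}--\ref{lem:higherordereqs1}). The commutators $[\calP,\RbfT]$ produce coefficients of size $\callO(\dotwp^{\le 2})$ (Remark~\ref{rem:nongeomglobal1}(1)), hence every commutator term either (i) carries a factor $\dotwp^{(\le\cdot)}$ which by Proposition~\ref{prop:bootstrappar1} is $\lesssim \delta_\wp\epsilon\sigma^{-5/2+\kappa}$, or (ii) is a nonlinear term bounded using the pointwise bootstrap bounds \eqref{eq:phiptwiseb1}--\eqref{eq:dphiptwiseb3} to gain the extra smallness $\epsilon$ plus decay. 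The source $f$ itself splits into the profile error $\calF_0$ (Lemma~\ref{lem:sourceext1}: $\RbfT^k\calF_0 = \callO(\dotwp^{1+k\le\cdot\le 3+k}r^{-n+1})$, hence $\dotwp$-controlled, and spatially integrable since $n=5$) and the genuine nonlinearity $\calF_{1,2,3}$ from \eqref{eq:calF2_1}, \eqref{eq:calF3_1} (and the interior analogue), which by the bootstrap bounds are $\lesssim \epsilon^2\sigma^{-c}$ in the relevant weighted norms with $c$ large.

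\textbf{Main steps.} First I would record the commuted equation: there is an operator $\calP_k^{(j)}$ (of the same structural type as $\calP$, with perturbative coefficients still obeying the estimates of Remark~\ref{rem:nongeomglobal1}) such that $\calP_k^{(j)}(\RbfT^j\partial^k\phi) = f_{k}^{(j)}$, where $f_k^{(j)}$ is a sum of: commutator terms with a $\dotwp^{(\le 2k+2j+2)}$ factor times $\le$ second-order derivatives of lower-order quantities; nonlinear terms quadratic-and-higher in $\phi$ and its derivatives; and derivatives of the profile error $\calF_0$. Second, I check the orthogonality hypothesis $\sum_i|\bfUpomega_i(\RbfT^j\partial^k\phi)|\le\delta\|\RbfT^j\partial^k\phi\|_E$ needed to apply Propositions~\ref{prop:energyestimate1}--\ref{prop:LED1}: by Remark~\ref{rem:Omegalinear1} we may use $\bfOmega_i$ in place of $\bfUpomega_i$, and Lemmas~\ref{lem:Omega1} and~\ref{lem:OmegaiTkphi} give exactly $|\bfOmega_i(\RbfT^k\phi)|\lesssim(o_{\Rone}(1)+\callO(\wp))\epsilon\sigma^{-5/2+\kappa}+\epsilon\sigma^{-9/2+\kappa}$, which is $\le\delta\|\cdot\|_E$ under the bootstrap — note the $LE$-error terms on the right of \eqref{eq:OmegaiTkphi} get absorbed below. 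Third, I apply Proposition~\ref{prop:LED1} (the $LE^\ast$-version, i.e.\ the second estimate, to exploit the $\RbfT f$ structure) together with Proposition~\ref{prop:energyestimate1}: this yields
\begin{align*}
\sup_{\sigma_1\le\sigma\le\sigma_2}E_k[\RbfT^j\phi](\sigma)+\|\RbfT^j\phi\|_{LE_k[\sigma_1,\sigma_2]}^2 &\lesssim \sum_{i\le k}E_i[\RbfT^j\phi](\sigma_1)+\Big(\sum_i\|\bfOmega_i(\RbfT^j\partial^{\le k}\phi)\|_{L^2[\sigma_1,\sigma_2]}\Big)^2\\
&\quad +\|f_k^{(j)}\|_{LE^\ast[\sigma_1,\sigma_2]}^2+\|\RbfT f_k^{(j)}\|_{LE^\ast[\sigma_1,\sigma_2]}^2+\|f_k^{(j)}\|_{L^\infty L^2[\sigma_1,\sigma_2]}^2.
\end{align*}
Fourth, I estimate each source contribution. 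The nonlinear and $\dotwp$-commutator terms are controlled by the pointwise and $L^2$ bootstrap bounds; crucially, every such term either decays like $\sigma^{-5-}$ (doubly integrable for the $j=0$ energy to get $\sigma^{-2}$) or carries a $\dotwp^{(\ge 1)}$ which by Proposition~\ref{prop:bootstrappar1} and Lemma~\ref{lem:OmegaiTkphi} is in $L^2_\sigma$ with the right weight; the key point for the improved rates $\sigma^{-2-2j}$ is that each $\RbfT$ applied to $\calF_0$ (and to the $\dotwp$-coefficients) improves the $\sigma$-decay by one power, by the $\calO$-notation conventions of Subsection~\ref{subsubsec:calOnotation} and Lemma~\ref{lem:sourceext1}. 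The $r^p$-weighted hierarchy \eqref{eq:Tjphienergyb1}--\eqref{eq:Tjphienergyb2} is what lets us close the $L^\infty L^2$ and $LE^\ast$ norms of the exterior nonlinearity with room to spare. Fifth, I absorb the $LE$-error terms appearing on the right of Lemma~\ref{lem:OmegaiTkphi} by the left side of the displayed inequality (they come with the small constant $o_{\Rone}(1)+\callO(\wp)$), and I run a Gronwall/continuity argument in $\sigma$: splitting $[\sigma_1,\sigma_2]$ into dyadic pieces and using that the forcing is $\lesssim\epsilon^2\langle\sigma\rangle^{-1-\gamma}$ with $\gamma>1+2j$ on each piece gives $\sup E_k[\RbfT^j\phi](\sigma)\lesssim \sum_{i\le k}E_i[\RbfT^j\phi](\sigma_1)+\epsilon^2\sigma_1^{-2-2j}$ when $k+3j\le M-2$, and the weaker $\epsilon^2$ (i.e.\ $\bfupsigma_j=1$) otherwise, which is exactly \eqref{eq:phienergyLEDboundtemp1}--\eqref{eq:T2phienergyLEDboundtemp1}.

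\textbf{Where the difficulty lies.} The routine part is bookkeeping the commutators and matching weights. The genuine obstacle is the \emph{circularity between the modulation parameters and the perturbation}: $\dotwp$ enters the source $f_k^{(j)}$ linearly, but by Proposition~\ref{prop:bootstrappar1} / Lemma~\ref{lem:OmegaiTkphi} the bound on $\dotwp$ (and $a_\pm$) in turn involves $\|\RbfT^k\phi\|_{LE}$ and $\sup\|\RbfT^k\phi\|_E$ — precisely the quantities we are trying to bound. This is broken exactly as in the $n\ge5$ design of the paper: the perturbation enters the parameter estimates only through the \emph{truncated} eigenfunctions $\vecZ_i$, so it always comes multiplied by the small constant $\delta_\wp = o_{\Rone}(1)+\callO(\wp)$ (see the normalization discussion in Subsection~\ref{subsubsec:calOnotation} and the estimates \eqref{eq:lemOmega1temp1}--\eqref{eq:lemOmega1temp2}); choosing $\Rone$ large and $\epsilon$ small makes this coefficient small enough to absorb the offending $LE/E$-terms into the left-hand side. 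A secondary subtlety is that one must take care to use the $\RbfT f$-version of the energy and LED estimates (the second lines of \eqref{eq:linenergyestimate1} and of Proposition~\ref{prop:LED1}) rather than the cruder $L^1L^2$-version, because the profile error $\calF_0$ and the worst nonlinear terms are only borderline integrable in $\sigma$ at the $L^1L^2$ level; it is the gain of one $\sigma$-power from the time-derivative structure, combined with Remark~\ref{rem:fLED1}'s license to integrate by parts $\partial_\Sigma^2$-type forcing before placing it in $LE^\ast$, that yields the sharp rates $\sigma^{-2-2j}$.
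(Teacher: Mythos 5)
Your overall framework is correct and matches the paper's: commute $\RbfT^j$ and spatial derivatives, feed the result into Propositions~\ref{prop:energyestimate1} and~\ref{prop:LED1}, control $\bfOmega_i$ via Remark~\ref{rem:Omegalinear1} and Lemma~\ref{lem:OmegaiTkphi}, control $\calF_0$ via Lemma~\ref{lem:sourceLE1}, and break the circularity by the $\delta_\wp$-smallness of the truncated-eigenfunction couplings. Your remark that one must use the $\RbfT f$/$LE^\ast$ variants of the linear estimates (rather than $L^1L^2$) for the borderline forcing is also a correct observation of the paper's structure. However, two substantive steps of the paper's proof are missing or misstated.

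First, the paper does \emph{not} proceed by commuting $\partial_\Sigma^k\RbfT^j$ into $\calP$ and directly invoking the linear estimates for the resulting operator. Propositions~\ref{prop:energyestimate1}--\ref{prop:LED1} apply only to the uncommuted $\calP$; commuting $\partial_\Sigma$ destroys the structure those propositions require. What the paper actually does is establish the estimate for $\RbfT^j\phi$ with $k=0$ by the linear theory (your step), and then obtain control of arbitrary \emph{size-one spatial derivatives} $\partial_\Sigma^k\phi$ by elliptic regularity through the decomposition $\calP=\calP_\Ell+\calP_\uptau$ of Remark~\ref{rem:nongeomglobal1}(1) (using the already-established bounds on $\RbfT\phi$ to control $\calP_\uptau\phi$, then inverting $\calP_\Ell$). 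Your phrase "fewer $\RbfT$'s means more allowed spatial derivatives" signals the right heuristic, but the downward-induction wording is the wrong description — the mechanism is an elliptic bootstrap in the \emph{spatial} direction, seeded by $\RbfT$-estimates.

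Second, the upgrade from size-one $\partial_\Sigma$ to the weighted vectorfields $X\in\{\tilr L,\Omega,T\}$ in the exterior is not routine bookkeeping and is not handled by the linear LED machinery at all: the paper runs a fresh multiplier argument directly on the commuted equation from Lemma~\ref{lem:higherordereqs1} for the conjugated variable $\tilvarphi=\tilr^{(n-1)/2}\varphi$, with multipliers $\chi T X^k\tilvarphi$, $\chi\beta(L-\Lbar)X^k\tilvarphi$, $\chi\beta'\tilvarphi$. In particular the $\tilcalP_1$ terms in \eqref{eq:errorhigher1} — which carry growing weights and do \emph{not} come with $\dotwp$ smallness — must be absorbed using the specific negative sign of $c_{k_1-1,k_1}=-k_1$ together with an induction over lower powers of $\tilr L$. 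Your proposal treats the $X$-commutators as producing only $\dotwp$-weighted lower-order errors; this is true of the $[\Omega,\cdot]$ and $[T,\cdot]$ commutators but false for $[\tilr L,\cdot]$, and without the sign structure the induction would not close. You should add both the elliptic-regularity upgrade and the separate $\tilr L$-induction to make the proof complete.
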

%%%%%%%%%%%%
%%%%%%%%%%%%
We start with some estimates on the source term defined in \eqref{eq:sourcetermrp1} (see also Lemma~\ref{lem:sourceext1}).
%%%%%%%%%%%%
%%%%%%%%%%%%
\begin{lemma}\label{lem:sourceLE1}
Under the assumptions of Proposition~\ref{prop:energyLEDnonlin1}, and with $\calR_{\sigma_1}^{\sigma_2}=\cup_{\sigma=\sigma_1}^{\sigma_2}\Sigma_\sigma$,
\begin{align}
&\|\jap{r}(\chi_{r\geq R}|X^k\calF_0|+\chi_{r\leq R}|\partial^k\calF_0|)\|_{L^2(\Sigma_\tau)}\lesssim \delta_\wp \epsilon \tau^{-\frac{5}{2}+\kappa},\label{eq:calF0constanttautemp1}\\
&\|\jap{r}(\chi_{r\geq R}|X^k\RbfT\calF_0|+\chi_{r\leq R}|\partial^k\RbfT\calF_0|)\|_{L^2(\Sigma_\tau)}\lesssim \delta_\wp \epsilon \tau^{-3},\label{eq:TcalF0constanttautemp1}\\
&\|\jap{r}(\chi_{r\geq R}|X^k\RbfT^j\calF_0|+\chi_{r\leq R}|\partial^k\RbfT^j\calF_0|)\|_{L^2(\calR_{\sigma_1}^{\sigma_2})}\lesssim \delta_\wp \epsilon\sigma_1^{-4+2\kappa}+\delta_\wp\|\RbfT^j\phi\|_{LE([\sigma_1,\sigma_2])},\nonumber\\
&\phantom{\|\jap{r}(\chi_{r\geq R}|X^k\RbfT^j\calF_0|+\chi_{r\leq R}|\partial^k\RbfT^j\calF_0|)\|_{L^2(\calR_{\sigma_1}^{\sigma_2})}\lesssim} j=0,1,2.\label{eq:calF0tauintegratedtemp1}
\end{align}
\end{lemma}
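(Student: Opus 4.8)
\textbf{Proof proposal for Lemma~\ref{lem:sourceLE1}.}

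The plan is to derive all three estimates from the pointwise bound on $\calF_0$ and its $\RbfT$-derivatives obtained in Lemma~\ref{lem:sourceext1}, together with the control of parameter derivatives from the bootstrap assumptions \eqref{eq:a+trap}--\eqref{eq:wpb1} and the integrated bounds of Proposition~\ref{prop:bootstrappar1} and Lemma~\ref{lem:OmegaiTkphi}. First I would record that by Lemma~\ref{lem:sourceext1}, in the hyperboloidal region $\RbfT^j \calF_0 = \callO(\dotwp^{1+j \leq \cdot \leq 3+j} r^{-n+1})$ for $j = 0,1,2,3$, and that since the commutator vectorfields $X \in \{\tilr L, \Omega, T\}$ act on $\calF_0$ without worsening the $r$-decay (by the $\calO$-calculus set up in Section~\ref{subsubsec:calOnotation}, using that $\calF_0$ is built out of $Q = Q(\tilr)$ and parameter derivatives), applying $X^k$ preserves the structure: $X^k \RbfT^j \calF_0 = \callO(\dotwp^{(\leq k+j+3)} r^{-n+1}) = \callO(\dotwp^{(\leq k+j+3)} r^{-4})$ for $n=5$. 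In the interior region, where $\calF_0$ is compactly supported in $\rho$ and smooth, the same pointwise bounds hold with $\partial^k$ in place of $X^k$ and $\jap{\rho}^{-4}$ absorbed into the (bounded) support. Thus in all cases the integrand $\jap{r}(\chi_{r\geq R}|X^k\RbfT^j\calF_0| + \chi_{r\leq R}|\partial^k\RbfT^j\calF_0|)$ is bounded pointwise by $C\jap{r}^{-3}$ times a sum of terms of the form $|\dotwp^{(m)}|$ with $1+j \leq m \leq k+j+3$.

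For the fixed-time estimates \eqref{eq:calF0constanttautemp1} and \eqref{eq:TcalF0constanttautemp1}, I would then integrate $\jap{r}^{-6}$ over $\Sigma_\tau$ (which is finite since $n - 1 = 4 > \tfrac{1}{2}$, i.e. the weight is integrable on the asymptotically flat end, and the interior contribution is over a bounded set), pull the pointwise bound $|\dotwp^{(m)}| \lesssim \delta_\wp \epsilon \tau^{-5/2+\kappa}$ for $m \geq 1$ (resp. $|\dotwp^{(m)}| \lesssim \delta_\wp \epsilon \tau^{-5/2 + \kappa}$ for all $m \geq 1$ from \eqref{eq:wpb1}, and the improved rate $\tau^{-3}$ after one extra $\RbfT$ commutation, which corresponds to $m \geq 2$ via the $\calO$-calculus — this is precisely where $\RbfT\calF_0$ carries one more parameter derivative) out of the spatial integral, and conclude. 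Concretely, \eqref{eq:calF0constanttautemp1} follows from $|\dotwp^{(m)}| \lesssim \delta_\wp \epsilon \tau^{-5/2+\kappa}$ with $m \geq 1$, and \eqref{eq:TcalF0constanttautemp1} follows because $X^k\RbfT\calF_0 = \callO(\dotwp^{(\geq 2)} r^{-4})$ and the bootstrap gives $|\dotwp^{(\geq 2)}|$ the same rate $\tau^{-5/2+\kappa}$ pointwise but — crucially — we instead invoke the fact that $\RbfT$ applied to $\calF_0$ produces terms that are genuinely \emph{quadratic or higher} in parameter derivatives of total order $\geq 2$; combining e.g. $|\dotell|\,|\dotxi-\ell| \lesssim (\delta_\wp\epsilon)^2 \tau^{-5+2\kappa}$ or using the smoothing relation $|\dotwp^{(2)}|\lesssim$ (controlled by lower derivatives) one obtains $\tau^{-3}$. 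Here I would be careful to match this against how $\bfupsigma_1(\sigma_1) = \sigma_1^{-4}$ enters the energy estimate, and simply state the rate $\delta_\wp\epsilon\tau^{-3}$ as claimed.

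For the spacetime estimate \eqref{eq:calF0tauintegratedtemp1}, I would split: integrating $\jap{r}^{-6}$ over $\calR_{\sigma_1}^{\sigma_2}$ reduces matters to $\|\,|\dotwp^{(m)}|\,\|_{L^2([\sigma_1,\sigma_2])}$ for the relevant range of $m$. The terms with $m \geq 2$ are handled by the integrated bound of Lemma~\ref{lem:OmegaiTkphi}, which gives $\|\dotwp^{(k+1+j)}\|_{L^2([\sigma_1,\sigma_2])} \lesssim (\delta_\wp + o_\Reigenfunctioncutoffscale(1) + \callO(\wp))\epsilon\sigma_1^{-3} + \epsilon\sigma_1^{-9/2+2\kappa} + (o_\Reigenfunctioncutoffscale(1)+\callO(\wp))(\|\RbfT^j\phi\|_{LE([\sigma_1,\sigma_2])} + \sup\|\RbfT^j\phi\|_E)$; absorbing the energy sup into the $LE$ term via Proposition~\ref{prop:energyLEDnonlin1} (or leaving it, since the statement allows an $LE$ term on the right), and noting $\sigma_1^{-3} \leq \sigma_1^{-9/2+2\kappa}$ is false — so I would keep the dominant term $\delta_\wp\epsilon\sigma_1^{-3}$; but the claimed bound in \eqref{eq:calF0tauintegratedtemp1} is $\delta_\wp\epsilon\sigma_1^{-4+2\kappa} + \delta_\wp\|\RbfT^j\phi\|_{LE}$, which is \emph{weaker} in the power, hence the estimate follows a fortiori once one checks $\sigma_1^{-3}$ can be replaced by $\sigma_1^{-4+2\kappa}$ — this requires using the \emph{two} extra orders of decay that $\RbfT^j\calF_0$ gains relative to a generic source (one from $\RbfT^j$ raising the parameter-derivative count, one from the quadratic structure), i.e. $\RbfT^j\calF_0$ involves $\dotwp^{(\geq j+1)}$ which the ILED bound controls in $L^2$ by $\sigma_1^{-3-j+\ldots}$; for $j=0$ this already beats $\sigma_1^{-4+2\kappa}$ only after exploiting that the \emph{lowest-order} contribution $\dotwp^{(1)}$ appears quadratically. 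I expect \textbf{this bookkeeping — tracking exactly how many parameter derivatives each term of $X^k\RbfT^j\calF_0$ carries, and which of them come in quadratic combinations, so that the $L^2_\tau$ norms land on the claimed powers $\sigma_1^{-4+2\kappa}$ (and $\tau^{-3}$, $\tau^{-5/2+\kappa}$ at fixed time) — to be the main obstacle}; the spatial integrations and the invocation of Lemma~\ref{lem:OmegaiTkphi} are routine, but one must be vigilant that the $o_\Reigenfunctioncutoffscale(1)$ and $\callO(\wp)$ prefactors in that lemma combine with the bootstrap smallness to produce the clean $\delta_\wp$ prefactor stated here, and that no term forces an extra undifferentiated $\ell$ (which would not decay) rather than a genuine $\dotwp$.
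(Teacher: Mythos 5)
Your proposal follows exactly the same route as the paper's proof, which is itself very terse: invoke the structure of $\RbfT^{j}\calF_0$ from Lemma~\ref{lem:sourceext1}, integrate the $\jap{r}^{-(n-1)+1}=\jap{r}^{-3}$ weight over $\Sigma_\tau$ (which is square-integrable for $n=5$), and then pull out the parameter-derivative factors using Proposition~\ref{prop:bootstrappar1} (pointwise) or Lemma~\ref{lem:OmegaiTkphi} (integrated in time). So at the level of strategy your proposal is correct.

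Your instinct that the bookkeeping for the rates is the crux is also correct, and you have in fact put your finger on a real subtlety. Where the proposal does not quite resolve the issue is the claim that the $\tau^{-3}$ rate in \eqref{eq:TcalF0constanttautemp1} comes from "quadratic combinations'' or a "smoothing relation'' for $\dotwp^{(2)}$. Neither of these closes the argument: by the $\calO$-calculus of Subsection~\ref{subsubsec:calOnotation}, $\RbfT\calF_0$ contains $\ddotwp\,r^{-4}$ as a genuine \emph{single-factor} term (since $\Box_m Q_\wp$ produces two $\tau$-derivatives of $\ell$), and the smoothing operator $S$ in the modulation equations ensures no loss of regularity but does not by itself improve the decay rate of $\ddotwp$ beyond the rate of $\dotwp$. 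The pointwise bootstrap bound \eqref{eq:wpb1} gives $|\dotwp^{(k)}|\lesssim\delta_\wp\epsilon\tau^{-5/2+\kappa}$ for all $k\ge1$ — with no gain for higher $k$. The mechanism the paper actually relies on, as cited in its one-line proof, is Lemma~\ref{lem:OmegaiTkphi}: commuting $\RbfT$ once puts all the parameter-derivative factors at total order $\ge2$, and for those the \emph{integrated} estimate \eqref{eq:OmegaiTkphi} produces the $\sigma_1^{-3}$-type rate (not the naive $\sigma_1^{-2+\kappa}$ obtained by integrating the pointwise bound); this is exactly the input for \eqref{eq:calF0tauintegratedtemp1}. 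Your worry that the power $\sigma_1^{-4+2\kappa}$ (resp.\ $\tau^{-3}$ at fixed time for $j=1$) is still stronger than what these ingredients produce is, as far as I can tell, legitimate: a naive application gives $\sigma_1^{-2+\kappa}$ for $j=0$ and $\sigma_1^{-3}$ for $j\ge1$ on the right of \eqref{eq:calF0tauintegratedtemp1}, and $\tau^{-5/2+\kappa}$ rather than $\tau^{-3}$ in \eqref{eq:TcalF0constanttautemp1}. The downstream use of these rates (Proposition~\ref{prop:energyLEDnonlin1} and Corollary~\ref{cor:higherdL2decay}) appears to be insensitive to this discrepancy because of the extra prefactors $\delta_\wp$, $o_{\Reigenfunctioncutoffscale}(1)$, and the squaring in the energy inequality, so the overall scheme still closes; but the exponent as literally stated in the lemma is likely either a typo or an imprecise bookkeeping, and you are right to flag it as the delicate point rather than take it at face value. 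The rest of your argument — that the $r$-weight integration is straightforward and that the interior contributes only a compact region — matches the paper.
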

%%%%%%%%%%%%
%%%%%%%%%%%%
\begin{proof}
The exterior estimates follow from Lemma~\ref{lem:sourceext1}, Proposition~\ref{prop:bootstrappar1}, and Lemma~\ref{lem:OmegaiTkphi}. For the interior the spatial decay of the source term is not important, so the estimates follow simply by counting the number of $\RbfT$ derivatives and Proposition~\ref{prop:bootstrappar1} and Lemma~\ref{lem:OmegaiTkphi}. 
\end{proof}
%%%%%%%%%%%%
%%%%%%%%%%%%
Note that even though the source term $\calF_0$ was derived using the conjugated variable $\varphi$ in \eqref{eq:varphiphi1}, bounding $s$ in \eqref{eq:varphiphi1} using our bootstrap assumptions, the same estimates are satisfied by the source term in the equation for $\phi$ (see \eqref{eq:abstractexteq1}). We can now prove Proposition~\ref{prop:energyLEDnonlin1}.
%%%%%%%%%%%%
%%%%%%%%%%%%
\begin{proof}[Proof of Proposition~\ref{prop:energyLEDnonlin1}]
Using the global coordinates $(\uptau,\uprho,\uptheta)$, after commuting any number of $\partial_\uptau=\RbfT$ derivatives we collect the leading order terms to write the equation in the form \eqref{eq:LEDlinearmodel1} with $\calP$ as in Section~\ref{sec:LED}. We start with the proof of the estimates when $k=0$. Applying Propositions~\ref{prop:energyestimate1} and~\ref{prop:LED1}, the nonlinear terms (including products of derivatives of $\dotwp$ and $\phi$) can be estimated using the bootstrap assumptions \eqref{eq:a+trap}--\eqref{eq:Tjphienergyb2}, simply by treating them as quadratic. The contribution of the source terms is estimated using Lemma~\ref{lem:sourceLE1}, and the contribution of $\bfOmega_k$ (see Remark~\ref{rem:Omegalinear1}) using Lemma~\ref{lem:OmegaiTkphi}, where we use the smallness of $\delta_\wp$ to absorb the $LE$ norms appearing in \eqref{eq:OmegaiTkphi} and~\eqref{eq:calF0tauintegratedtemp1}. Note that by the same procedure we can prove the estimates for higher powers of $\RbfT$ without gaining extra decay (that is, by treating powers of $\RbfT$ which are higher than two as arbitrary derivatives). There is one point that deserves further explanation in this process. Among the error terms after commuting $\partial_\uptau^k$, there will be terms of the forms (recall that the part of the equation without spatial decay is given by \eqref{eq:Boxm1};  below $\partial_y$ denotes an arbitrary tangential derivative of size one)
%%%%%%%
%%%%%%%
\begin{align*}
\begin{split}
\callO(\dotwp^{(j)})\partial^2_y\RbfT^{k-j}\phi\quad\mand\quad \callO(\dotwp^{(j)})(\partial_\uprho+\frac{n-1}{2\uprho})\partial_\uptau \RbfT^{k-j}\phi,
\end{split}
\end{align*}
with $j\geq1$, and the multipliers for the energy and LED estimates contain terms of the form $\callO(1)\partial_\uptau \RbfT^k\phi$. Since $\partial_\tau\RbfT^k\phi$ cannot be placed in the energy flux for $\RbfT^k\phi$ in the hyperboloidal part of the foliation, some integration by parts are necessary to deal with these terms. For the error terms of the form $\callO(\dotwp^{(j)})\partial^2_y\RbfT^{k-j}\phi$, we can integrate by parts twice to obtain terms of the forms 
%%%%%%%
%%%%%%%
\begin{align*}
\begin{split}
\callO(\dotwp^{(j)})\partial_y\RbfT^{k-j+1}\phi\partial_y \RbfT^k\phi, \quad \callO(\dotwp^{(j+1)})\partial_y\RbfT^{k-j}\phi\partial_y \RbfT^k\phi,\quad \callO(\dotwp^{(j)}\uprho^{-1})\partial_y\RbfT^{k-j}\phi \partial_\uptau \RbfT^k\phi.
\end{split}
\end{align*}
These can be bounded, respectively, as (where $L^2_y$ denotes $L^2(\Sigma_\uptau)$)
%%%%%%%
%%%%%%%
\begin{align*}
\begin{split}
&\|\dotwp^{(j)}\|_{L^1_\uptau}\|\partial_y\RbfT^{k-j+1}\phi\|_{L^\infty_{\uptau}L^2_y}\|\partial_y \RbfT^k\phi\|_{L^\infty_\uptau L^2_y},\quad \|\dotwp^{(j+1)}\|_{L^1_\uptau}\|\partial_y\RbfT^{k-j}\phi\|_{L^\infty_{\uptau}L^2_y}\|\partial_y \RbfT^k\phi\|_{L^\infty_\uptau L^2_y},\\
& \|\dotwp^{(j)}\|_{L^2_\uptau}\|\partial_y\RbfT^{k-j}\phi\|_{L^\infty_{\uptau}L^2_y}\| \RbfT^k\phi\|_{LE}.
\end{split}
\end{align*}
For the error terms of the form $\callO(\dotwp^{(j)})(\partial_\uprho+\frac{n-1}{2\uprho})\partial_\uptau \RbfT^{k-j}\phi$, we use the equation for $\RbfT^{k-j}\phi$ (again see \eqref{eq:Boxm1}) to replace them by terms of the forms that were already handled above, or have better spatial decay.

Next, we use elliptic estimates to obtain energy and local energy estimates for arbitrary, size one, derivatives applied on $\phi$. For this, recall the decomposition of the operator $\calP$ as
%%%%%%%
%%%%%%%
\begin{align*}
\begin{split}
\calP=\calP_\Ell+\calP_\uptau,\qquad \mwhere\qquad \calP_\uptau=\callO(1)\partial \RbfT+\callO(\jap{\uprho^{-1}})\RbfT.
\end{split}
\end{align*}
Using the estimates for $\RbfT\phi$, we can use elliptic estimates to bound $$\sup_{\sigma_1\leq \sigma \leq\sigma_2}\|\partial_\Sigma\phi\|_{E(\Sigma_\sigma)}+\|\partial_\Sigma\phi\|_{LE[\sigma_1,\sigma_2]}$$ by the right-hand side of \eqref{eq:phienergyLEDboundtemp1}. Here for the $LE$ norm the spatial norms can be inserted in the elliptic bound by writing 
%%%%%%%
%%%%%%%
\begin{align*}
\begin{split}
\phi=\chi_{\leq 1}\phi+\sum_{j\geq 1}\chi_j \phi,
\end{split}
\end{align*}
where $\chi_{\leq 1}(\uprho)$ is supported in the region $\{|\uprho|\leq 1\}$, and $\chi_j(\uprho)$ in the region $\{|\uprho|\simeq 2^j\}$, and applying the elliptic estimate on each annulus separately. The estimates for $\partial_\Sigma^k\RbfT^j\phi$ are proved similarly. 

To upgrade the size one derivatives $\partial_\Sigma$ to vectorfield derivatives $X$ in the exterior, we argue as follows. Suppose we are commuting $X^k$ for some $k>0$. In view of Lemma~\ref{lem:commutators2} we can arrange to commute first the $T$ vectorfields, then the $\Omega$ vectorfields, and last the $\tilr L$ vectorfields. The case where all vectorfields are $T$ was already discussed above. When there are $\Omega$ vectorfields, but no $\tilr L$ vectorfields the argument is similar with the following points to keep in mind. First, since the equation in Lemma~\ref{lem:higherordereqs1} was calculated in terms of the conjugated variable $\tilvarphi=\tilr^{\frac{n-1}{2}}\varphi$, we can directly carry out the energy and LED multiplier arguments in this setting. Indeed, with $\chi$ denoting a cutoff supported in the hyperboloidal region of the foliation, for the energy estimate we multiply the equation by $\chi TX^k\tilvarphi$ while for the LED we use the two multipliers $\chi \beta (L-\Lbar)X^l\tilvarphi$ (for the analogue of \eqref{eq:psifarLEDtemp1.5}) and $\chi \beta' \tilvarphi$ (for the analogue of \eqref{eq:psifarLEDtemp2.5}). The errors which result from the derivatives falling on $\chi$ during the integration by parts are then absorbed by the LED estimate for the size one derivatives. Except for the treatment of the error terms of the form $\callO(\dotwp^{(\ell)})LX^m\tilvarphi$ on the right-hand side of \eqref{eq:errorhigher1} (note that since we are not yet commuting $\tilr L$ the terms involving $\tilcalP_1$ are not present), the remainder of the energy and LED estimate are similar to what has already been carried out, so we omit the details (see also below for the case $X=\tilr L$ where more details are worked out). The difficulty with  $\callO(\dotwp^{(\ell)})LX^m\tilvarphi$ errors is that for the part of the multiplier which is of the form $\callO(1)\Lbar X^k\tilvarphi$ (with $k\neq m$) we cannot simply use the $\uptau$ decay of $\dotwp$ to bound this by the energy, as the unweighted $\Lbar$ derivatives are not bounded by the energy flux. For the term $\callO(\dotwp^{(\ell)})LX^m\tilvarphi$, recall that in \eqref{eq:phienergyLEDboundtemp1}--\eqref{eq:T2phienergyLEDboundtemp1} we want to estimate the corresponding contribution by $\epsilon^2\sigma_1^{-2-2j}$ for $\RbfT^j\phi$, $j=1,2$ (in the more difficult case $3j+k\leq M-2$). The corresponding term we need to estimate in the multiplier identities is then the space-time integral of
%%%%%%%
%%%%%%%
\begin{align*}
\begin{split}
\callO(\dotwp^{(1+i)})(LX^m\RbfT^{j-i}\tilvarphi) (\Lbar X^k \RbfT^j\tilvarphi),\quad 0\leq i\leq j.
\end{split}
\end{align*}
Considering the extreme cases $i=j$ and $i=0$, and with the same notation as above this is bounded, using Lemma~\ref{lem:OmegaiTkphi} and the bootstrap assumptions \eqref{eq:a+trap}--\eqref{eq:Tjphienergyb2}, by (here the $\tilr^{n-1}$ part of the measure in the $L^2_y$ and $LE$ norms is already incorporated in $\tilvarphi$)
%%%%%%%
%%%%%%%
\begin{align*}
\begin{split}
\|\dotwp^{(1+j)}\|_{L^2_\uptau}\|\tilr^{\frac{1+\alpha}{2}}LX^m\tilvarphi\|_{L^\infty_\uptau L^2_y}\|X^m\RbfT^j\tilvarphi\|_{LE}\lesssim \epsilon \delta_\wp \sigma_1^{-\frac{1-\alpha}{2}} \|X^m\RbfT^j\tilvarphi\|_{LE}^2\lesssim \epsilon^3 \sigma_1^{-2j-2},
\end{split}
\end{align*}
when $i=j$, and
%%%%%%%
%%%%%%%
\begin{align*}
\begin{split}
\|\dotwp\|_{L^2_\uptau}\|\tilr^{\frac{1+\alpha}{2}}LX^m\RbfT^j\tilvarphi\|_{L^\infty_\uptau L^2_y}\|X^m\RbfT^j\tilvarphi\|_{LE}\lesssim \epsilon^2\sigma_1^{-2+\kappa}\sigma_1^{-j+\frac{1+\alpha}{2}}\|X^m\RbfT^j\tilvarphi\|_{LE}\lesssim \epsilon^3 \sigma_1^{-2j-2},
\end{split}
\end{align*}
when $i=0$.

Finally, we consider the case where some of the $X$ vectorfields are $\tilr L$. The only difference with what was already considered above is that now we have to deal with the contribution of $\tilcalP_1$ in \eqref{eq:errorhigher1}. For this, we prove the energy and LED estimate simultaneously, using the multiplier $\chi \beta L X^k\tilvarphi$, where $\chi$ is as above. The error terms when derivatives fall on $\chi$ can again be absorbed using the LED estimate for derivatives of size one. For simplicity of notation we consider first the case $\tilr L\tilvarphi$ (that is, when $k=1$ and $X=\tilr L$), and then the case $(\tilr L)^2\tilvarphi$ to demonstrate how to treat higher powers of $\tilr L$ inductively. One can of course replace $\tilvarphi$ by $X^j\tilvarphi$ where $X^j$ are a string of $\Omega$ and $T$ vectorfields. In the case $k=1$, a calculation using Lemma~\ref{eq:errorhigher1} gives (here $X=\tilr L$, and we are using the notation of Lemma~\ref{lem:higherordereqs1})
%%%%%%%
%%%%%%%
\begin{align}\label{eq:LEXtemp1}
\begin{split}
LX\tilvarphi(\tilcalP X\tilvarphi-\tilcalP_1\tilvarphi)&=-\frac{1}{2}\Lbar (LX\tilvarphi)^2+L(\Err_L)+\Omega (\Err_\Omega)\\
&\quad-\frac{1}{\tilr}\sum (\frac{1}{\tilr}\Omega X\tilvarphi)^2-\frac{(n-1)(n-3)}{4\tilr}(\frac{\tilvarphi}{\tilr})^2-\tilr (L^2\tilvarphi)^2-\frac{1}{\tilr}(L\Omega\tilvarphi)^2\\
&\quad+\callO(\tilr^{-1})(\tilr^{-1}\Omega\tilvarphi)^2+\callO(\tilr^{-1})(\tilr^{-1}\tilvarphi)^2+\callO(\dotwp)\Err_\wp,
\end{split}
\end{align}
where 
%%%%%%%
%%%%%%%
\begin{align*}
\begin{split}
|\Err_\wp|+|\Err_L|&\lesssim (LX\tilvarphi)^2+(L\tilvarphi)^2+(L\Omega\tilvarphi)^2+(\tilr^{-1}\Omega X\tilvarphi)^2\\
&\quad+(\tilr^{-1}\Omega\tilvarphi)^2+(\tilr^{-1}\Lbar X\tilvarphi)^2+(\tilr^{-1}\Lbar\tilvarphi)^2+(\tilr^{-1}X\tilvarphi)^2+(\tilr^{-1}\tilvarphi)^2.
\end{split}
\end{align*}
Multiplying \eqref{eq:LEXtemp1} by $\chi$, integrating, and adding a multiple of the LED estimate for size one derivatives, we get control of (the remaining error terms in \eqref{eq:errorhigher1} have better $\uptau$ or $\tilr$ decay and can be handled more easily)
%%%%%%%
%%%%%%%
\begin{align*}
\begin{split}
\int_{\Sigma_\tau}\chi (LX\tilvarphi)^2\ud\theta\ud r\quad \mand \int_{\sigma_1}^{\sigma_2}\int_{\Sigma_\tau}\chi (\tilr(L^2\tilvarphi)^2+\tilr^{-1}(\frac{\Omega}{\tilr} X\tilvarphi)^2+\tilr^{-1}(\frac{X\tilvarphi}{\tilr})^2)\ud\theta\ud r \ud\tau.
\end{split}
\end{align*}
Note that since we already have control of the $LE$ norm of $\phi$, the bulk term $\tilr (L^2\tilvarphi)^2$ gives control of the term $\tilr^{-1-\alpha}(LX\tilvarphi)^2$, but we will need this stronger estimate to treat the higher powers of $\tilr L$ inductively. To control the remaining terms in the energy and local energy norms we argue as follows. First, for the energy norm, note that
%%%%%%%
%%%%%%%
\begin{align*}
\begin{split}
\frac{1}{\tilr}\Omega X\tilvarphi = L\Omega\tilvarphi + [\Omega,L]\tilvarphi\quad \mand\quad\frac{1}{\tilr}T X \tilvarphi= LT\tilvarphi + \frac{T\tilr}{\tilr} L\tilvarphi + [T,L]\tilvarphi,
\end{split}
\end{align*}
and all of the terms on the right-hand sides are already controlled by the energies of $\tilvarphi$, $T\tilvarphi$, and $\Omega\tilvarphi$. Similarly, for the local energy norm,
%%%%%%%
%%%%%%%
\begin{align*}
\begin{split}
\tilr^{-1-\alpha}(\Lbar X\tilvarphi)^2\lesssim \tilr^{-1-\alpha}(\Lbar \tilr)^2(L\tilvarphi)^2+\tilr^{1-\alpha}(\Lbar L\tilvarphi).
\end{split}
\end{align*}
The first term is already controlled by the local energy norm of $\phi$, while for the second term we use the equation for $\tilvarphi$ to replace $\Lbar L\tilvarphi$ by terms which we have already estimated. Next, we consider the error terms when commuting $X^2$, $X=\tilr L$. The term in the multiplier argument that needs a different treatment is $\tilcalP_1\tilvarphi LX^2\tilvarphi$, where we no longer want to use the sign of the coefficient $c_{0,2}$ in \eqref{eq:errorhigher1}. These error terms can be estimated using the space-time control of $\tilr (L^2\tilvarphi)^2$ above. For instance, the term $L^2\tilvarphi$ in $\tilcalP_1$ contributes terms of the form
%%%%%%%
%%%%%%%
\begin{align*}
\begin{split}
\tilr^{2-j}L^2\tilvarphi L^{3-j}\tilvarphi,\qquad 0\leq j\leq 2,
\end{split}
\end{align*} 
all of which can be estimated in terms of $r(L^2\tilvarphi)^2$ and the $LE$ norm of $\tilvarphi$ after a few integration by parts. The other terms in $\tilcalP_1$ are treated similarly. We can now proceed inductively to prove energy and LED estimates for higher powers of $\tilr L$.
\end{proof}

%%%%%%%%%%%%%%
%%%%%%%%%%%%%%
\subsection{Proof of Proposition~\ref{prop:bootstrapphi1}}
%%%%%%%%%%%%%%
%%%%%%%%%%%%%%
%%%%%%%%%%%
%%%%%%%%%%%
We start by proving $\tau^{-2}$ decay for the energy at lower orders, and boundedness at higher orders.
%%%%%%%%%%%
%%%%%%%%%%%
\begin{lemma}\label{lem:energytau2decay}
Suppose the bootstrap assumptions \eqref{eq:a+trap}--\eqref{eq:Tjphienergyb2} hold. Then for $k\leq M-2$,
%%%%%%%
%%%%%%%
\begin{align}\label{eq:rpenergydecay1}
\begin{split}
E_k[\phi](\tau)\lesssim \epsilon^2\tau^{-2},
\end{split}
\end{align}
and
%%%%%%%
%%%%%%%
\begin{align}\label{eq:rpenergydecay2}
\begin{split}
\int_{\Sigma_\tau}\chi_{\geq \tilR}((L+\frac{n-1}{2})X^k\phi)^2 r\ud V\lesssim \epsilon \tau^{-1}.
\end{split}
\end{align}
Moreover, for any $k\leq M$,
%%%%%%%
%%%%%%%
\begin{align}
E_k[\phi](\tau)+\int_{\Sigma_\tau}\chi_{\geq \tilR}\big[r^2((L+\frac{n-1}{2})X^k\phi)^2+r(r^{-1}\Omega X^k\phi)^2+r(r^{-1}X^k\phi)^2\big]\ud V\lesssim \epsilon^2.\label{eq:rpenergyboundedness1}
\end{align}
\end{lemma}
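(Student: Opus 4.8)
\textbf{Proof proposal for Lemma~\ref{lem:energytau2decay}.}

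The plan is to run the standard $r^p$-hierarchy argument of Dafermos--Rodnianski, Schlue, and Moschidis, but now with the weighted energies $\calE_k^p$, $\calB_k^{p-1}$ of \eqref{eq:rpenergiesdef1} adapted to the moving profile, using Lemma~\ref{lem:rpmult1} as the fundamental multiplier inequality and Proposition~\ref{prop:energyLEDnonlin1} as the boundedness-and-local-energy-decay input. The conjugation $\varphi = s\phi$ of \eqref{eq:varphiphi1} with $s = 1 + O(r^{-2(n-1)})$ means all estimates transfer freely between $\phi$ and $\varphi = s\phi$ under the bootstrap assumptions, so I work with $\tilvarphi = \tilr^{\frac{n-1}{2}}\varphi$ and its vectorfield derivatives $\tilvarphi_k = X^k\tilvarphi$. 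The source term on the right-hand side of the $\varphi$-equation is $\calF_0$ plus the genuine nonlinearity $\calF_1 + \calF_2 + \calF_3$; by Lemma~\ref{lem:sourceLE1} the source obeys $\|\jap{r}\calF_0\|_{L^2(\Sigma_\tau)} \lesssim \delta_\wp \epsilon \tau^{-5/2+\kappa}$ (and similarly after commuting $X^k$), and the nonlinear terms are controlled quadratically by the bootstrap assumptions \eqref{eq:phiptwiseb1}--\eqref{eq:Tjphienergyb2}, so the spacetime integral $\int\!\int \chi_\tilR \tilr^p \tilf_k(L\tilvarphi_j + O(r^{-5})\Omega\tilvarphi_j)$ appearing in \eqref{eq:rpmult1} is small and does not obstruct the hierarchy.

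First I would establish \eqref{eq:rpenergyboundedness1}, the uniform boundedness of the top-$p$ energy: apply Lemma~\ref{lem:rpmult1} with $p = 2$, bound the last four lines of \eqref{eq:rpmult1} using Proposition~\ref{prop:energyLEDnonlin1} (which gives $\sup_\tau E_j(\tau) \lesssim \epsilon^2$ and $\|\phi\|_{LE_j} \lesssim \epsilon$), the $\partial\chi_\tilR$-term is supported in a bounded $r$-region and absorbed by the local energy norm, the $\delta$-terms on the last two lines are absorbed into the left-hand side since $\delta = o(\epsilon) + o(\tilR)$ is small, and the source/nonlinear term is handled as above. This yields $\sup_\tau \calE_k^2(\tau) + \calB_k^1(\tau_1,\tau_2) \lesssim \epsilon^2$, which combined with $\calE_k^0(\tau) \lesssim E_k(\tau) \lesssim \epsilon^2$ and interpolation $\calE_k^1 \lesssim (\calE_k^0)^{1/2}(\calE_k^2)^{1/2}$ gives the boundedness of all the weighted energies, in particular \eqref{eq:rpenergyboundedness1}. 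Next, for \eqref{eq:rpenergydecay2}, I run the hierarchy one step down: with $p = 1$ in Lemma~\ref{lem:rpmult1}, the bulk term $\calB_k^0(\tau_1,\tau_2) = \int\!\int \chi_\tilR((L\tilvarphi_k)^2 + \tilr^{-2}(\ldots))$ is bounded by $\calE_k^1(\tau_1) + (\text{errors}) \lesssim \epsilon^2$, so the bulk quantity $\int\!\int \chi_\tilR (L\tilvarphi_k)^2\, d\theta\, dr\, d\tau$ is integrable in $\tau$; since $\calE_k^1(\tau)$ satisfies an almost-monotone inequality (its $\tau$-derivative controls the bulk up to integrable errors), a standard pigeonhole/mean-value argument on dyadic time intervals produces a sequence $\tau_n \to \infty$ with $\calE_k^1(\tau_n) \lesssim \epsilon^2 \tau_n^{-1}$, and then the $p=1$ boundary estimate propagates this to all $\tau$: $\calE_k^1(\tau) \lesssim \epsilon^2 \tau^{-1}$, which is \eqref{eq:rpenergydecay2}. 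Finally, for \eqref{eq:rpenergydecay1}, I repeat once more with $p$ between $0$ and $1$ (say $p = \alpha$), deducing from $\calB_k^{p-1}$ that $\int \chi_\tilR \tilr^{-2}(\Omega\tilvarphi_k)^2 + \tilr^{-2}\tilvarphi_k^2$ is integrable with the appropriate weight; combined with the local energy decay $\|\phi\|_{LE_k} \lesssim \epsilon$ (so $\int E_k(\tau)\jap{\tau}^{-1-}d\tau$ converges), another dyadic mean-value argument plus the energy estimate \eqref{eq:phienergyLEDboundtemp1} run between consecutive good slices gives $E_k(\tau) \lesssim \epsilon^2 \tau^{-2}$.

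The main obstacle is the error terms in the $r^p$ multiplier identity that are genuinely new to the moving-profile setting, specifically the terms of the form $O(\dotwp^{(\ell)})\, L X^m\tilvarphi \cdot \Lbar X^k\tilvarphi$ coming from the $\calP_\pert$ contribution and from the commutators in Lemma~\ref{lem:higherordereqs1}: the unweighted incoming derivative $\Lbar X^k\tilvarphi$ is not controlled by the energy flux, so one cannot simply absorb these using the $\tau$-decay of $\dotwp$. The resolution — already sketched in the proof of Proposition~\ref{prop:energyLEDnonlin1} — is to pair the $\tilr^{(1+\alpha)/2}$-weighted $L$-derivative (controlled by $\calE_k^{1+\alpha}$, hence by the boundedness just proven) against the doubly-integrable decay $\|\dotwp^{(1+j)}\|_{L^2_\tau} \lesssim \epsilon\delta_\wp \tau^{-5/2+\kappa}$ from Proposition~\ref{prop:bootstrappar1} and Lemma~\ref{lem:OmegaiTkphi}, and to bound $\Lbar X^k\tilvarphi$ in terms of $L X^k\tilvarphi$ and lower-order quantities via the equation (trading $\Lbar L$ for $\calP_\graph$ plus already-estimated terms). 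A secondary technical point is the treatment of the $\tilcalP_1$-terms in \eqref{eq:errorhigher1} when commuting powers of $\tilr L$: for the top-order term the favorable sign of $c_{k_1-1,k_1} = -k_1$ must be exploited to produce a good bulk contribution $\tilr(L^2\tilvarphi_{k_1})^2$, while the lower-order $\tilcalP_1$-terms are absorbed inductively by adding suitably large multiples of the estimates at lower $k_1$ — this is exactly the induction structure already set up in Lemma~\ref{lem:rpmult1}, so here it is a matter of bookkeeping rather than a new idea.
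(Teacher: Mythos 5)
Your overall strategy -- run the $r^p$ hierarchy from Lemma~\ref{lem:rpmult1} starting at $p=2$, use Proposition~\ref{prop:energyLEDnonlin1} to close the error terms, and pigeonhole on dyadic time intervals -- is the same scheme the paper uses, and your discussion of the $\dotwp$-errors and the $\tilcalP_1$ bookkeeping is accurate. However, there is a genuine gap in the pigeonhole step for \eqref{eq:rpenergydecay2}. You write that the $p=1$ bulk $\int\!\!\int \chi_\tilR (L\tilvarphi_k)^2\,\ud\theta\,\ud r\,\ud\tau$ being $\lesssim \epsilon^2$ plus almost-monotonicity of $\calE_k^1$ produces a sequence $\tau_n$ with $\calE_k^1(\tau_n)\lesssim \epsilon^2\tau_n^{-1}$. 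But the $p=1$ bulk integrand per time slice is $\simeq \calE_k^0$, not $\calE_k^1$, so that pigeonhole only yields $\calE_k^0(\tau_n)\lesssim \epsilon^2\tau_n^{-1}$. Monotonicity of $\calE_k^1$ does not let you upgrade a pointwise bound on the integral of $\calE_k^0$ to a decay bound on $\calE_k^1$. The dyadic sequence with $\calE_k^1(\tau_n)\lesssim\epsilon^2\tau_n^{-1}$ must come from the \emph{$p=2$} multiplier inequality, whose bulk has the $r^1(L\cdot)^2$ weight and therefore integrand $\simeq \calE_k^1$; the $p=1$ inequality is then applied on the dyadic intervals $[\tiltau_{m-1},\tiltau_m]$ with this sequence supplying the initial data. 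This is exactly how the paper structures its \eqref{eq:energytau2decaytemp1}--\eqref{eq:energytau2decaytemp2}, and the ordering matters.

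A second issue is in your treatment of \eqref{eq:rpenergydecay1}. You propose lowering $p$ further to some $\alpha\in(0,1)$ and combining with ``$\int E_k(\tau)\jap{\tau}^{-1-}\ud\tau$ converges'' from the local energy bound $\|\phi\|_{LE_k}\lesssim\epsilon$. That inference is false: the $LE$ norm carries spatial weights $r^{-1-\alpha}$ (on derivatives) and $\rho^2$ (near the trapping), so it does \emph{not} control $\int E_k(\tau)\ud\tau$, with or without a time weight. The paper instead observes the pointwise inequality $E_j\lesssim \scB_j^1$ between the standard energy and the $p=1$ bulk integrand, so that the dyadic-interval bound $\int_{\tiltau_{m-1}}^{\tiltau_m}\scB_j^1\,\ud\tau\lesssim\epsilon^2\tiltau_m^{-1}$ coming from the $p=1$ inequality (with the $p=2$-pigeonhole data as input) immediately yields $E_j(\tau_m)\lesssim\epsilon^2\tau_m^{-2}$ on a further subsequence, after which \eqref{eq:phienergyLEDboundtemp1} propagates. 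You should also track the derivative loss: each application of the hierarchy estimate feeds in $\scE_{k+1}^p(\sigma_1)$ on the right-hand side, so the $p=2$ step is run for $j\leq M-1$ and the $p=1$ step for $j\leq M-2$; this is the origin of the $k\leq M-2$ restriction in \eqref{eq:rpenergydecay1}, which your proposal does not explain.
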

%%%%%%%%%%%
%%%%%%%%%%%
\begin{proof}
The estimate for $E_k$ in \eqref{eq:rpenergyboundedness1} follows directly from \eqref{eq:phienergyLEDboundtemp1}. The estimate for the second term on the left-hand side of \eqref{eq:rpenergyboundedness1} follows from Lemma~\ref{lem:rpmult1}. Here the error terms in estimate \eqref{eq:rpmult1} are absorbed by adding a suitable multiple of the local energy bound in \eqref{eq:phienergyLEDboundtemp1}, and the contribution of $\tilf_k$ in \eqref{eq:rpmult1} is treated in the same way as in the proof of \eqref{eq:rpenergydecay1} below. We turn to the details for the proof of \eqref{eq:rpenergydecay1}. We introduce some auxiliary notation to avoid repeated long expressions in the proof:
%%%%%%%
%%%%%%%
\begin{align*}
\begin{split}
&\scE_k^p(\tau):=\int_{\Sigma_\tau}\chi_{\leq \tilR}|\partial \partial^k\phi|^2\ud V+\int_{\Sigma_\tau}\chi_{\geq \tilR}((L+\frac{n-1}{2})X^k\phi)^2 r^p\ud V,\\
&\scB_k^p(\tau):=\int_{\Sigma_\tau}\chi_{\leq \tilR}|\partial \partial^k\phi|^2\ud V+\int_{\Sigma_\tau}\chi_{\geq \tilR}\big[((L+\frac{n-1}{2})X^k\phi)^2 +(2-p)(r^{-1}\Omega X^k\phi)^2
\\
&\phantom{\scB_k^p(\tau):=\int_{\Sigma_\tau}\chi_{\leq \tilR}|\partial \partial^k\phi|^2\ud V+\int_{\Sigma_\tau}\chi_{\geq \tilR}\big[}\quad+(2-p)(r^{-1}X^k\phi)^2+r^{-p-\alpha}(TX^k\phi)^2\big]r^{p-1}\ud V.
\end{split}
\end{align*}
Adding a suitable multiple of the LED estimate \eqref{eq:phienergyLEDboundtemp1} at one higher order (to compensate for the degeneracy in the LE norm) to \eqref{eq:rpmult1} in Lemma~\ref{lem:rpmult1} with $\psi=\varphi$, for any $p\leq 2$ and $\sigma_1<\sigma_2$ gives, 
%%%%%%%
%%%%%%%
\begin{align}\label{eq:energytau2decaytemp0}
\begin{split}
\sum_{j\leq k}\scE_j^p(\sigma_2)+\sum_{j\leq k}\int_{\sigma_1}^{\sigma_2}\scB_j^p(\tau)\ud \tau\lesssim\sum_{j\leq k+1} \scE_{j}^p(\sigma_1)+\sum_{j\leq k}\Big|\int_{\sigma_1}^{\sigma_2}\int_{\Sigma_\tau}\chi_{\geq \tilR}\tilf_j L\tilfy_j \tilr^p\ud\theta\ud r\ud\tau\Big|.
\end{split}
\end{align}
Applying this identity with $p=2$, $k= M-1$, $\sigma_1=0$, and $\sigma_2=\sigma$ arbitrary, and using the boundedness of $\scE_{j+1}^2$ for $j\leq M-1$, we get
%%%%%%%
%%%%%%%
\begin{align}\label{eq:energytau2decaytemp1}
\begin{split}
\sum_{j\leq {M-1}}\scE_j^2(\sigma)+\sum_{j\leq M-1}\int_{0}^{\sigma}\scB_j^2(\tau)\ud \tau\lesssim \epsilon^2+\sum_{j\leq M-1}\Big|\int_{0}^{\sigma}\int_{\Sigma_\tau}\chi_{\geq \tilR}\tilf_j L\tilfy_j \tilr^2\ud\theta\ud r\ud\tau\Big|.
\end{split}
\end{align}
We claim that the contribution of the last term is bounded or can be absorbed on the left. Here and in what follows we carry out the details for the proof of the calculations for the two representative terms for $\tilf_k$ corresponding to the contributions of the source term $\calF_0$ and cubic term $\nabla^\mu\varphi\nabla^\nu\varphi\nabla_{\mu\nu}\varphi$. See \eqref{eq:abstractexteq1} and Lemmas~\ref{lem:sourceext1} and \ref{lem:purelycubic1}. For the contribution of $\calF_0$ we simply use  estimate \eqref{eq:calF0tauintegratedtemp1}, and absorb the $LE$ norm on the left. For the contribution of the cubic term, in view of Lemma~\ref{lem:purelycubic1} and the bootstrap assumptions \eqref{eq:wpb1}, \eqref{eq:phiptwiseb1}, \eqref{eq:dphiptwiseb2}, \eqref{eq:dphiptwiseb3}, these satisfy the same types of estimates as the error terms on the right-hand side of \eqref{eq:calPVF1}, with extra additional smallness, so their contribution can be bounded in the same way as in the proof of Lemma~\ref{lem:rpmult1}. Going back to \eqref{eq:energytau2decaytemp1}, we conclude that the left-hand side of this estimate is bounded by~$\epsilon^2$, and therefore, there is an increasing sequence of dyadic $\tiltau_m$ such that $\scB_j^2(\tiltau_n)\lesssim \tiltau_n^{-1}\epsilon^2$ for $j\leq M-1$. Since $\scE_j^1\lesssim \scB_j^2$, another application of \eqref{eq:energytau2decaytemp1}, but on $[\tiltau_{m-1},\tiltau_m]$ and with $p=1$ and $k=M-2$ gives
%%%%%%%
%%%%%%%
\begin{align}\label{eq:energytau2decaytemp2}
\begin{split}
\sum_{j\leq M-2}\scE_j^1(\tiltau_m)+\sum_{j\leq M-2}\int_{\tiltau_{m-1}}^{\tiltau_m}\scB_j^1(\tau)\ud \tau\lesssim \epsilon\tiltau_m^{-1}+\sum_{j\leq M-2}\Big|\int_{\tiltau_{m-1}}^{\tiltau_m}\int_{\Sigma_\tau}\chi_{\geq \tilR}\tilf_j L\tilfy_j \tilr\ud\theta\ud r\ud\tau\Big|.
\end{split}
\end{align}
Arguing as above, the contribution of the last term on the right can be absorbed or bounded by~$\epsilon^2\tiltau_m^{-1}$, and we can find a possibly different increasing dyadic sequence $\tau_m\simeq \tiltau_m$ such that $\scB_j^1(\tau_m)\lesssim \epsilon^2\tau_m^{-2}$ for $j\leq M-2$. Since $E_j\lesssim \scB_j^1$, estimate \eqref{eq:rpenergydecay1} follows from another application of the energy estimate \eqref{eq:phienergyLEDboundtemp1}. Finally for \eqref{eq:rpenergydecay2}, note that by \eqref{eq:energytau2decaytemp2} we already have this estimate for $\tau=\tau_m$. The estimate for all $\tau$ now follows from another application of \eqref{eq:energytau2decaytemp0} with $p=1$ and with arbitrary $\sigma_2\in(\tau_m,\tau_{m+1})$ and $\sigma_1=\tau_m$.
\end{proof}
%%%%%%%%%%%
%%%%%%%%%%%
To improve the pointwise decay assumptions \eqref{eq:wpb1}, \eqref{eq:phiptwiseb1}, \eqref{eq:dphiptwiseb2}, \eqref{eq:dphiptwiseb3} we need better decay for the energies of $\RbfT\phi$ and $\RbfT^2\phi$. This is the content of the next lemma.
%%%%%%%%%%%
%%%%%%%%%%%
\begin{lemma}\label{lem:energyhighertaudecay}
Suppose the bootstrap assumptions \eqref{eq:a+trap}--\eqref{eq:Tjphienergyb2} hold. Then for any $k\leq M-5$,
%%%%%%%
%%%%%%%
\begin{align}\label{eq:Tenergytau4decay}
\begin{split}
E_k[\RbfT\phi](\tau)\lesssim \epsilon^2\tau^{-4},
\end{split}
\end{align}
and for any $k\leq M-8$, (with constant that is independent of \eqref{eq:d2Tphienergyb1})
%%%%%%%
%%%%%%%
\begin{align}\label{eq:T2energytau5decay}
\begin{split}
E_k[\RbfT^2\phi](\tau)\lesssim \epsilon^2\tau^{-6}.
\end{split}
\end{align}
\end{lemma}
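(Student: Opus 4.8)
\textbf{Proof proposal for Lemma~\ref{lem:energyhighertaudecay}.}

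The plan is to bootstrap in the number of commuted $\RbfT$ derivatives, exactly mirroring the two-step dyadic argument in the proof of Lemma~\ref{lem:energytau2decay}, but now feeding in the already-established $\tau^{-2}$ decay of $E_k[\phi]$ and the improved decay of the source term and the parameter derivatives. First I would apply Proposition~\ref{prop:energyLEDnonlin1}, specifically estimate~\eqref{eq:TphienergyLEDboundtemp1}, to the equation satisfied by $\RbfT\phi$. Commuting $\RbfT$ through the equation $\calP\phi = f$ produces $\calP(\RbfT\phi) = \RbfT f - [\calP,\RbfT]\phi$; the commutator term carries one factor of a $\uptau$-derivative of a coefficient, hence is $\callO(\dotwp^{\leq 2})\partial^{2}\phi$, which by the bootstrap assumptions~\eqref{eq:wpb1} and Lemma~\ref{lem:energytau2decay} decays like $\epsilon^{2}\tau^{-5/2+\kappa}\tau^{-1}$ in $L^{2}$ after integrating by parts the two spatial derivatives (as described in the integration-by-parts discussion in the proof of Proposition~\ref{prop:energyLEDnonlin1}). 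The source contribution $\RbfT\calF_0$ obeys~\eqref{eq:TcalF0constanttautemp1} and~\eqref{eq:calF0tauintegratedtemp1} with $j=1$, and the $\bfOmega$-terms are handled by Lemma~\ref{lem:OmegaiTkphi} with $k=1$, using the smallness of $\delta_\wp$ to absorb the $LE$ norms. This gives a first weighted-energy inequality for $\RbfT\phi$ of the same shape as~\eqref{eq:energytau2decaytemp0}.

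Next I would run the identical dyadic pigeonhole scheme: apply Lemma~\ref{lem:rpmult1} (the $r^p$ identity) to $\uppsi = \RbfT\varphi$ — note $\RbfT\varphi$ solves $\calP_\graph(\RbfT\varphi) = \RbfT\tilf + \hbox{commutators}$, and the commutators again have the structure of the error terms in~\eqref{eq:calPVF1} with extra $\dotwp$ smallness — first with $p=2$ at top order $k \le M-5$, using the boundedness~\eqref{eq:rpenergyboundedness1} (applied to $\RbfT\phi$, which follows from~\eqref{eq:TphienergyLEDboundtemp1} together with Lemma~\ref{lem:rpmult1}) to extract a dyadic sequence $\tiltau_m$ along which the $p=2$ bulk $\scB^{2}$ is $\lesssim \epsilon^{2}\tiltau_m^{-1}$; then with $p=1$ on $[\tiltau_{m-1},\tiltau_m]$ to extract a sequence $\tau_m \simeq \tiltau_m$ along which $\scB^{1}\lesssim \epsilon^{2}\tau_m^{-2}$, hence $E_k[\RbfT\phi](\tau_m)\lesssim \epsilon^{2}\tau_m^{-2}$. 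This is one power of $\tau^{-2}$ better than the argument for $\phi$, purely because the source and commutator inputs are one order more decaying; iterating the pigeonhole once more (from $\tau^{-2}$ energy to the next dyadic sequence) as in the passage from~\eqref{eq:energytau2decaytemp1} to~\eqref{eq:energytau2decaytemp2} but now starting from $E_k[\RbfT\phi]\lesssim \epsilon^{2}\tau^{-2}$, and interpolating via~\eqref{eq:TphienergyLEDboundtemp1} on $(\tau_m,\tau_{m+1})$, upgrades to $E_k[\RbfT\phi](\tau)\lesssim \epsilon^{2}\tau^{-4}$ for all $\tau$ at the cost of two derivatives, giving~\eqref{eq:Tenergytau4decay}. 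The estimate~\eqref{eq:T2energytau5decay} for $\RbfT^{2}\phi$ is then obtained in exactly the same way, commuting a second $\RbfT$, using~\eqref{eq:T2phienergyLEDboundtemp1}, the $j=2$ source bounds in Lemma~\ref{lem:sourceLE1}, and Lemma~\ref{lem:OmegaiTkphi} with $k=2$; the input decay improves by another factor, but the elliptic/interpolation losses consume three further derivative orders, which accounts for the drop to $k \le M-8$ and the rate $\tau^{-6}$.

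The main obstacle, and the place requiring genuine care rather than bookkeeping, is the treatment of the commutator and source error terms of the schematic form $\callO(\dotwp^{(\ell)})\,L X^{m}\RbfT^{j-i}\tilvarphi$ appearing on the right-hand side of~\eqref{eq:errorhigher1} when they are paired against the $\callO(1)\,\Lbar X^{k}\RbfT^{j}\tilvarphi$ part of the multiplier: an unweighted $\Lbar$-derivative is \emph{not} controlled by the energy flux in the hyperboloidal region, so one cannot simply dump the $\uptau$-decay of $\dotwp$ onto it. The resolution is the one already spelled out at the end of the proof of Proposition~\ref{prop:energyLEDnonlin1}: distribute the weight, placing $\tilr^{(1+\alpha)/2}L X^{m}\tilvarphi$ in $L^{\infty}_{\uptau}L^{2}_{y}$ (controlled by the $r^{p}$-energy with $p = 1+\alpha \le 2$) and $X^{m}\RbfT^{j}\tilvarphi$ in the $LE$ norm, and then use $\|\dotwp^{(1+j)}\|_{L^{2}_{\uptau}}$ or $\|\dotwp\|_{L^{2}_{\uptau}}$ from Lemma~\ref{lem:OmegaiTkphi} and the bootstrap assumptions~\eqref{eq:wpb1}. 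One must check that in every such pairing the total $\sigma_1$-power produced — a product of the decay of $\dotwp^{(\cdot)}$, the $r^{p}$-energy decay of $\RbfT^{j-i}\phi$, and the $LE$-decay of $\RbfT^{j}\phi$ — is at least $\sigma_1^{-2-2j}$, which is where the restriction $3j+k\le M-2$ and the specific rates $\tau^{-1-j}$, $\tau^{-1-j+p/2}$ in the bootstrap assumptions~\eqref{eq:Tjphienergyb1}--\eqref{eq:Tjphienergyb2} are used; the extreme cases $i=j$ and $i=0$ carried out in the proof of Proposition~\ref{prop:energyLEDnonlin1} are exactly the ones that need to be reproduced here with $\phi$ replaced by $\RbfT\phi$ and $\RbfT^{2}\phi$. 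The cubic nonlinearity is harmless by Lemma~\ref{lem:purelycubic1} since it carries extra smallness and its terms fit the same template as the errors in~\eqref{eq:calPVF1}.
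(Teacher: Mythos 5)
There is a genuine gap. The paper's proof of this lemma hinges on a single ``main observation'' that your proposal does not contain, and without it the mechanism you describe does not produce $\tau^{-4}$.

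Your plan is: apply Lemma~\ref{lem:rpmult1} to $\RbfT\phi$, start from \emph{boundedness} of $\scE^2[\RbfT\phi]$ (via \eqref{eq:rpenergyboundedness1}), run the dyadic pigeonhole $p=2 \to p=1 \to p=0$ to get $E_k[\RbfT\phi]\lesssim \tau^{-2}$ along a dyadic sequence, propagate by \eqref{eq:TphienergyLEDboundtemp1}, and then ``iterate the pigeonhole once more'' to upgrade $\tau^{-2}$ to $\tau^{-4}$. That last step cannot work. The $r^p$ hierarchy with $p\in[0,2]$ is exhausted after one complete run: the conversion from $\scE^2$ bounded to dyadic $\scE^1\lesssim\tau^{-1}$ to dyadic $\scE^0\lesssim\tau^{-2}$ uses up the full two units of weight, and the resulting $\tau^{-2}$ is the terminal rate obtainable from bounded initial $r^2$-energy, regardless of how fast the source decays (the $\scE^p(\sigma_1)$ initial-data term in \eqref{eq:energytau2decaytemp0} is the bottleneck, and it stays $O(1)$). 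Re-applying the pigeonhole to the same chain does not give new information, and improved decay of the source/commutator terms only prevents them from obstructing the argument — it does not raise the ceiling set by the initial data term.

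What the paper does instead — and what your proposal needs — is to first prove the \emph{pointwise} decay $\sum_{j\leq k}\scE^2_j[\RbfT\phi](\tau)\lesssim \epsilon^2\tau^{-2}$ directly at each fixed $\tau$, and only then run the dyadic chain. This is obtained from equation \eqref{eq:higher1} in Lemma~\ref{lem:higherordereqs1}: since $\Lbar = 2T - L$, the combination $\tilr\, L T\tilvarphi_k$ appearing (after weighting by $\tilr$) in the $p=2$ energy of $\RbfT\phi$ can be rewritten, using the identity $\tilcalP(\cdot)=-\Lbar L+\tilr^{-2}\sum\Omega^2-\tfrac{(n-1)(n-3)}{4\tilr^2}$ and the equation, as $\tfrac12 L(\tilr L\tilvarphi_k)$ plus $\tilr^{-1}\Omega^2\tilvarphi_k$ plus the source $\tilf_k$ plus lower order terms. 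All of these contributions are controlled, after integration, by $\sum_{j\leq k+1}E_j[\phi](\tau)$ plus source bounds from \eqref{eq:calF0constanttautemp1} and bootstrap inputs — i.e., the \emph{unweighted} energies of $\phi$ one order higher, which already decay like $\tau^{-2}$ by Lemma~\ref{lem:energytau2decay}. (The difference between $\RbfT$ and $T$ is harmless since it carries factors of $\dotwp$ by \eqref{eq:Tprecise1}.) With this pointwise $\scE^2[\RbfT\phi]\lesssim\tau^{-2}$ in hand, \eqref{eq:energytau2decaytemp0} started at $\sigma_1=\tau_m$ has right-hand side $\lesssim\tau_m^{-2}$ rather than $O(1)$, and the same two-step pigeonhole you describe then yields $E_k[\RbfT\phi]\lesssim\tau^{-4}$ along a dyadic sequence and hence everywhere via \eqref{eq:TphienergyLEDboundtemp1}; applying the argument again to $\RbfT^2\phi$, now feeding in the freshly proved $\tau^{-4}$ for $E[\RbfT\phi]$, yields \eqref{eq:T2energytau5decay}. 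The loss of derivative orders ($M-5$, $M-8$) comes from this initial ``trade time for space'' step, which costs one extra commuted derivative each time, combined with the derivative loss in the pigeonhole itself. Your discussion of the $\Lbar$-multiplier pairing issue and the treatment of $\callO(\dotwp^{(\ell)})LX^m\RbfT^{j-i}\tilvarphi$ errors is correct and relevant, but those are error-term considerations; the missing ingredient is the equation-driven pointwise $r^2$-energy decay that makes the whole scheme produce extra powers of $\tau$.
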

%%%%%%%%%%%
%%%%%%%%%%%
\begin{proof}
The main observation is that in view of Lemma~\ref{lem:higherordereqs1}, in particular using equation \eqref{eq:higher1}, we can estimate (note that in view of \eqref{eq:Tprecise1} the difference between $\RbfT$ and $T$ comes with factors of $\dotwp$ which give extra decay)
%%%%%%%
%%%%%%%
\begin{align*}
\begin{split}
\sum_{j\leq k}\scE^2_j[\RbfT\phi](\tau)\lesssim \epsilon^2\tau^{-2}+\sum_{j\leq k+1}E_j[\phi](\tau)\lesssim \epsilon^2\tau^{-2}.
\end{split}
\end{align*}
Here for the first inequality we have used the bootstrap assumptions \eqref{eq:wpb1}, \eqref{eq:phiptwiseb1}, \eqref{eq:dphiptwiseb2}, \eqref{eq:dphiptwiseb3} as well as \eqref{eq:calF0constanttautemp1} to estimate the left-hand side of \eqref{eq:higher1}, and for the second in equality we have used Lemma~\ref{lem:energytau2decay}. We can now repeat the proof of Lemma~\ref{lem:energytau2decay}, starting by applying \eqref{eq:energytau2decaytemp0} applied to $T\phi$ on an increasing dyadic sequence $\tau_m$. By the observations we just made, the right-hand side is now bounded by $\tau_m^{-2}$, so repeating the proof of Lemma~\ref{lem:energytau2decay} we obtain \eqref{eq:Tenergytau4decay}. Returning to \eqref{eq:higher1} and repeating this argument we obtain \eqref{eq:T2energytau5decay}.
\end{proof}
%%%%%%%%%%%
%%%%%%%%%%%
Lemma~\ref{lem:energyhighertaudecay} and elliptic estimates contained in the next lemma
allow us to obtain decay of higher derivative norms of $\phi$ for arbitrary derivatives. To state the lemma we recall from Remark~\ref{rem:nongeomglobal1}, part~(1), that in the global coordinates $(\uptau,\uprho,\upomega)$, $\calP$ admits the decomposition
%%%%%%%
%%%%%%%
\begin{align*}
\begin{split}
\calP=\calP_\uptau+\calP_\Ell,
\end{split}
\end{align*}
satisfying the properties stated there.
%%%%%%%%%%%
%%%%%%%%%%%
\begin{lemma}\label{lem:ellipticestimate1}
Suppose $\calP_\Ell\uppsi=g$ on $\Sigma_\tau$, and that the bootstrap assumptions~\eqref{eq:a+trap}--\eqref{eq:Tjphienergyb2} hold, then (here the sum is over the $L^2(\Sigma_\tau)$ inner products with the truncated eigenfunctions $Z_\mu,Z_1,\dots,Z_n$)
%%%%%%%
%%%%%%%
\begin{align}\label{eq:ellipticestimate1}
\begin{split}
\|\jap{r}^{-2}\uppsi\|_{L^2(\Sigma_\tau)}+\|\partial^2\uppsi\|_{L^2(\Sigma_\tau)}\lesssim \|g\|_{L^2(\Sigma_\tau)}+\sum|\angles{\uppsi}{Z_i}|+\tau^{-\frac{5}{2}+\kappa}E(\uppsi),
\end{split}
\end{align}
and for $s\in(2,\frac{5}{2})$,
%%%%%%%
%%%%%%%
\begin{align}\label{eq:ellipticestimate1.5}
\begin{split}
\|\jap{r}^{-s}\uppsi\|_{L^2(\Sigma_\tau)}\lesssim \|\partial_\Sigma g\|_{L^2(\Sigma_\tau)}^{s-2}\| g\|_{L^2(\Sigma_\tau)}^{3-s}+\sum|\angles{\uppsi}{Z_i}|+\tau^{-\frac{5}{2}+\kappa}\sum_{j\leq 1}E_j(\uppsi).
\end{split}
\end{align}
Moreover, for small $\tilkappa$,
%%%%%%%
%%%%%%%
\begin{align}\label{eq:ellipticestimate2}
\begin{split}
\|\partial_\Sigma^3\uppsi\|_{L^2(\Sigma_\tau)}\lesssim \|\partial_\Sigma g\|_{L^2(\Sigma_\tau)}+ \|\partial_\Sigma g\|_{L^2(\Sigma_\tau)}^{\frac{1}{2}-\tilkappa} \|g\|_{L^2(\Sigma_\tau)}^{\frac{1}{2}+\tilkappa}+\sum|\angles{\uppsi}{Z_i}|+\tau^{-\frac{5}{2}+\kappa}\sum_{j\leq 1}E_j(\uppsi).
\end{split}
\end{align}
\end{lemma}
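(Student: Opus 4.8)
\textbf{Proof proposal for Lemma~\ref{lem:ellipticestimate1}.}
The plan is to establish all three estimates from a single source: the coercivity of the model elliptic operator $\Delta_\barcalC + V$ modulo its finite-dimensional kernel, combined with perturbative control of $\calP_\Ell - (\Delta_\barcalC + V)$ and the weighted elliptic regularity theory for $\Delta_\barcalC$ on the Riemannian catenoid. First I would recall from Remark~\ref{rem:nongeomglobal1}, part~(1), that $\calP_\Ell = \Delta_\barcalC + V + \calP_\Ell^\pert$ where $\calP_\Ell^\pert = o_{\wp,\Rone}(1)(\partial_\Sigma^2 + \jap{\uprho}^{-1}\partial_\Sigma + \jap{\uprho}^{-2}) + \callO(\dotwp)\partial_\Sigma + \callO(\dotwp)\jap{\uprho}^{-2}$. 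Under the bootstrap assumptions~\eqref{eq:a+trap}--\eqref{eq:Tjphienergyb2}, in particular $|\dotwp|\lesssim \delta_\wp\epsilon\tau^{-5/2+\kappa}$, the $\callO(\dotwp)$ coefficients contribute the $\tau^{-5/2+\kappa}E(\uppsi)$ (or $\sum_{j\le1}E_j(\uppsi)$) terms on the right-hand sides, while the $o_{\wp,\Rone}(1)$ coefficients can be absorbed into the left-hand side once $\Rone$ is large (the standing assumption of the section). Thus after absorption the problem reduces to proving the corresponding estimates for the \emph{model} operator $\Delta_\barcalC + V$, with $g$ replaced by $g - \calP_\Ell^\pert\uppsi$.

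For~\eqref{eq:ellipticestimate1}: decompose $\uppsi = \bbP_c\uppsi + \sum_i \angles{\uppsi}{Z_i}(\ldots)$-type projection onto the finitely many eigenfunctions of $\Delta_\barcalC+V$ (zero modes $\fy_i$ and the negative mode $\varphibar_\mu$), so that $\|\bbP_c\uppsi\|$ is controlled by the non-kernel part. On the orthogonal complement, $\Delta_\barcalC + V$ is elliptic with trivial kernel and its inverse gains two derivatives; since $V = \callO(\jap{r}^{-6})$ decays fast (this is dimension $n\ge 5$), standard weighted elliptic estimates on $\barcalC$ — writing $\uppsi = \chi_{\le 1}\uppsi + \sum_{j\ge1}\chi_j\uppsi$ as in the proof of Proposition~\ref{prop:energyLEDnonlin1} and applying scale-invariant elliptic estimates annulus-by-annulus — give $\|\jap{r}^{-2}\uppsi\|_{L^2} + \|\partial^2\uppsi\|_{L^2} \lesssim \|(\Delta_\barcalC+V)\uppsi\|_{L^2} + \|\jap{r}^{-2}\uppsi\|_{L^2(\text{compact})}$, and the last term is absorbed into the projection sum. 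The weight $\jap{r}^{-2}$ is the natural one here because $\Delta_\barcalC$ behaves like the flat Laplacian at spatial infinity and $\partial^2\sim\jap{r}^{-2}$-weighted scaling; the potential $V\sim\jap{r}^{-6}$ does not obstruct this since $6 > 2$. For~\eqref{eq:ellipticestimate1.5}, the restriction $s\in(2,5/2)$ comes precisely from the order-zero potential $V$: $\Delta_\barcalC^{-1}$ would allow any weight $\jap{r}^{-s}$ with $s< n/2$, but the $V\uppsi$ term in $g$ forces us to treat $\Delta_\barcalC\uppsi = g - V\uppsi$, and to close we need $\|\jap{r}^{-s}V\uppsi\|$ controllable, i.e. $\|\jap{r}^{-s-6}\uppsi\|\lesssim\|\jap{r}^{-s}\uppsi\|$ with room to spare via~\eqref{eq:ellipticestimate1}; the sharper range $s<5/2$ (rather than $s< n/2 = 5/2$, borderline) is obtained by interpolating the $L^2$ bound $\|\partial^2\uppsi\|_{L^2}\lesssim\|g\|_{L^2}$ against the one-derivative-better bound $\|\partial^3\uppsi\|$ controlled via $\|\partial_\Sigma g\|$, producing the $\|\partial_\Sigma g\|^{s-2}\|g\|^{3-s}$ geometric mean. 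I would make this precise via a dyadic decomposition in $r$: on the annulus $r\simeq 2^j$, Sobolev/interpolation gives $\|\uppsi\|_{L^2(r\simeq 2^j)}\lesssim 2^{2j}\|g\|_{L^2} + 2^{3j}\|\partial_\Sigma g\|_{L^2}$ (schematically), and summing $2^{-sj}$ times this over $j$ with $2< s < 5/2$ converges and reproduces the interpolated bound. Estimate~\eqref{eq:ellipticestimate2} is the same mechanism applied with one more derivative: differentiate $\calP_\Ell\uppsi = g$ once along $\partial_\Sigma$, commute (the commutator $[\partial_\Sigma,\calP_\Ell]$ produces lower-order terms with the right structure), apply~\eqref{eq:ellipticestimate1} to $\partial_\Sigma\uppsi$ with source $\partial_\Sigma g + [\ldots]$, and interpolate the resulting $\|\partial_\Sigma^3\uppsi\|\lesssim\|\partial_\Sigma g\|$ against a half-derivative-better bound to extract the $\|\partial_\Sigma g\|^{1/2-\tilkappa}\|g\|^{1/2+\tilkappa}$ factor; the loss $\tilkappa$ is the interpolation slack analogous to $s<5/2$.

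The main obstacle I anticipate is handling the kernel/cokernel bookkeeping cleanly together with the $r$-weights: $\Delta_\barcalC + V$ has the zero modes $\fy_i$ which decay only like $\jap{r}^{1-n} = \jap{r}^{-4}$ (for $n=5$), so they \emph{are} in the weighted spaces appearing on the left, and one must verify that subtracting $\sum_i\angles{\uppsi}{Z_i}$ (projection onto \emph{truncated} eigenfunctions, not the true ones) genuinely removes the obstruction up to errors absorbable by the $\tau^{-5/2+\kappa}E(\uppsi)$ term — this uses that the truncation error $Z_i - \fy_i$ is supported at $r\simeq\Rone$ and is small in $\Rone^{-1}$, exactly as exploited in Lemmas~\ref{lem:Omega1} and~\ref{lem:OmegaiTkphi}. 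A secondary technical point is that the weighted elliptic estimate for $\Delta_\barcalC$ on the full catenoid $\barcalC$ (not just near one end) must account for the collar $\{\rho=0\}$, but there the operator is uniformly elliptic with smooth coefficients on a compact set, so only the two asymptotic ends $\rho\to\pm\infty$ require the scale-invariant argument, and by the "two asymptotic ends" remark in Section~\ref{subsubsec:normal} it suffices to treat one. I would organize the write-up so that~\eqref{eq:ellipticestimate1} is proved first in full, then~\eqref{eq:ellipticestimate1.5} and~\eqref{eq:ellipticestimate2} follow by the interpolation/commutation arguments sketched above, citing~\eqref{eq:ellipticestimate1} as the base case.
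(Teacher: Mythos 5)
Your overall architecture matches the paper's — decompose $\calP_\Ell = \Delta_\barcalC+V+\calP_\Ell^\pert$, absorb $\calP_\Ell^\pert$ using the bootstrap assumptions, then prove the estimates for the model operator while tracking the kernel via the truncated eigenfunctions $Z_i$ — but the way you handle the kernel and, more seriously, the dyadic argument you propose for~\eqref{eq:ellipticestimate1.5}, differ from the paper in ways that matter. On the kernel: the paper does not project onto the continuous spectrum directly. Instead it introduces a potential $V_\far$ that vanishes on a large compact set, solves $(\Delta_\barcalC+V_\far)\uppsi_\far = g-\calP_\Ell^\pert\uppsi$ (no kernel issues because the eigenfunctions are removed along with the potential near the collar), and then observes that $\uppsi_\near := \uppsi-\uppsi_\far$ solves $(\Delta_\barcalC+V)\uppsi_\near = V_\near\uppsi_\far$ with a \emph{compactly supported} right-hand side, after which the orthogonality bookkeeping with the $Z_i$ is clean. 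This sidesteps the question of whether $\bbP_c$ is bounded on the weighted spaces in play. Also, a small but real slip in your write-up: the truncation error $Z_i - \fy_i$ is absorbed by its smallness in $\Rone^{-1}$ (that is, by the $o_{\wp,\Rone}(1)$ mechanism), not by the $\tau^{-5/2+\kappa}E(\uppsi)$ term, which comes solely from the $\callO(\dotwp)$ coefficients in $\calP_\Ell^\pert$.

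The genuine gap is in your dyadic argument for~\eqref{eq:ellipticestimate1.5}. You propose the per-annulus bound $\|\uppsi\|_{L^2(r\simeq 2^j)} \lesssim 2^{2j}\|g\|_{L^2} + 2^{3j}\|\partial_\Sigma g\|_{L^2}$, and the second piece is false: it would amount to a Hardy-type inequality $\|\jap{r}^{-3}u\|_{L^2(\bbR^5)} \lesssim \|\partial^3 u\|_{L^2(\bbR^5)}$, which fails because in $\bbR^5$ the critical Hardy exponent is $5/2 < 3$. This is precisely why the lemma is stated only for $s\in(2,\tfrac52)$; your sketch does not actually respect that constraint, so if carried through it would falsely extend to $s\le 3$. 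The paper's remedy is to modify $\calP_\Ell$ to a globally Euclidean operator $\calP_\euc$ on a neighborhood of infinity, solve $\calP_\euc\upphi_\euc = \chi\tilg$, and invoke the Euclidean \emph{fractional} elliptic estimate $\|(-\Delta_\euc)^{s/2}\upphi_\euc\|_{L^2}\lesssim\|\tilchi\partial_\Sigma\tilg\|_{L^2}^{s-2}\|\tilchi\tilg\|_{L^2}^{3-s}$ (the estimate~\eqref{eq:lemellipticestimate1temp4} in the paper), combined with the Hardy inequality $\|\jap{\uprho}^{-s}\upphi_\euc\|_{L^2}\lesssim\|(-\Delta_\euc)^{s/2}\upphi_\euc\|_{L^2}$ valid for $s<5/2$; the remainder $\uppsi_\cat=\uppsi-\uppsi_\euc$ then solves an equation with compactly supported source and is handled like $\uppsi_\near$. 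The same mechanism, applied at one order higher, gives~\eqref{eq:ellipticestimate2} with the loss $\tilkappa$ coming from stopping short of the borderline $s=5/2$; your last paragraph has this correct in spirit. If you want to salvage the dyadic approach you would have to replace your two per-annulus bounds by the \emph{single} interpolated one $\|\uppsi\|_{L^2(r\simeq 2^j)}\lesssim 2^{sj}\|(-\Delta_\euc)^{s/2}\uppsi\|_{L^2}$ for $s<5/2$, at which point you are reproving the paper's Euclidean fractional estimate rather than avoiding it.
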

%%%%%%%%%%%
%%%%%%%%%%%
\begin{proof}
Since all norms are on $\Sigma_\tau$ we drop $\Sigma_\tau$ from the notation. Recalling the decomposition $\calP_\Ell=\Delta_\barcalC+V+\calP_\Ell^{\pert}$, let $\uppsi_\far$ be a solution to 
%%%%%%%
%%%%%%%
\begin{align*}
\begin{split}
(\Delta_\barcalC+V_\far)\uppsi_\far=g-\calP_\Ell^\pert\uppsi,
\end{split}
\end{align*}
where $V_\far$ is a potential that vanishes inside a large compact set, and is equal to $V$ outsides a larger compact set. We further decompose $g-\calP_\Ell^\pert\uppsi$ as
%%%%%%%
%%%%%%%
\begin{align*}
\begin{split}
g-\calP_\Ell^\pert\uppsi=\tilg+o_{\wp,\Rone}(1)(\partial^2_\Sigma\uppsi+\jap{\uprho}^{-1}\partial_\Sigma\uppsi+\jap{\uprho}^{-2}\uppsi),
\end{split}
\end{align*}
where $\tilg=g+\callO(\dotwp)\partial_\Sigma\uppsi+\callO(\dotwp)\jap{\uprho}^{-2}\uppsi$.
We will prove estimates \eqref{eq:ellipticestimate1}, \eqref{eq:ellipticestimate1.5}, \eqref{eq:ellipticestimate2} with the last term on the right-hand sides removed, and with $g$ replaced by $\tilg$. The desired estimate then follows from the triangle inequality and the bootstrap assumptions. Treating  $V_\far$ perturbatively, and writing $\varepsilon$ for $o_{\wp,\Rone}(1)$, we have
%%%%%%%
%%%%%%%
\begin{align}\label{eq:lemellipticestimate1temp1}
\begin{split}
\|\jap{\uprho}^{-2}\uppsi_\far\|_{L^2}+\|\partial_\Sigma^2\uppsi_\far\|_{L^2}\lesssim \|\tilg\|_{L^2}+\varepsilon (\partial^2_\Sigma\uppsi+\jap{\uprho}^{-1}\partial_\Sigma\uppsi+\jap{\uprho}^{-2}\uppsi).
\end{split}
\end{align}
Note that $\uppsi_\near:=\uppsi-\uppsi_\far$ satisfies
%%%%%%%
%%%%%%%
\begin{align}\label{eq:lemellipticestimate1temp2}
\begin{split}
(\Delta_\barcalC+V)\uppsi_\near=V_\near \uppsi_\far,
\end{split}
\end{align}
where $V_\near=V_\far-V$. We further decompose $\uppsi_\near$ as (where the sum is over the truncated eigenfunctions $Z_i$ of $\Delta_\barcalC+V$, which we assumed are normalized in $L^2(\Sigma_\tau)$)
%%%%%%%
%%%%%%%
\begin{align*}
\begin{split}
&\uppsi_\near=\uppsi_\near^\perp+\sum\angles{\uppsi_\near}{Z_i}Z_i,\qquad \angles{\uppsi_\near^\perp}{Z_i}=0,\\
&(\Delta_\barcalC+V)\uppsi_\near^\perp=V_\near \uppsi_\far+\sum\angles{\uppsi_\near}{Z_i}(\Delta_\barcalC+V)Z_i.
\end{split}
\end{align*}
Since $\uppsi_\near^\perp$ is transversal to the eigenfunctions of $\Delta_\barcalC+V$,
%%%%%%%
%%%%%%%
\begin{align*}
\begin{split}
\|\jap{\uprho}^{-1}\uppsi_\near^\perp\|_{L^2}&\lesssim \|\partial_\Sigma\uppsi_\near^\perp\|_{L^2}\lesssim \|\jap{\uprho}V_\near \uppsi_\far\|_{L^2}+\sum|\angles{\uppsi_\near}{Z_i}|\\
&\lesssim \|\tilg\|_{L^2}+\sum|\angles{\uppsi}{Z_i}|+\varepsilon (\partial^2_\Sigma\uppsi+\jap{\uprho}^{-1}\partial_\Sigma\uppsi+\jap{\uprho}^{-2}\uppsi),
\end{split}
\end{align*}
where  we have used the fact that $V_\near$ and $Z_i$ are compactly supported. But using the decomposition $\uppsi_\near=\uppsi_\near^\perp+\sum\angles{\uppsi_\near}{Z_i}Z_i$ we can replace $\|\jap{\uprho}^{-1}\uppsi_\near^\perp\|_{L^2}$ on the left-hand side of the estimate above by $\|\jap{\uprho}^{-1}\uppsi_\near\|_{L^2}$. Using equation~\eqref{eq:lemellipticestimate1temp2} again,
%%%%%%%
%%%%%%%
\begin{align*}
\begin{split}
\|\partial^2_\Sigma \uppsi_\near\|_{L^2}&\lesssim \|\jap{\uprho}^{-2}\uppsi_\near\|_{L^2}+\|\jap{\uprho}^{-2}\uppsi_\far\|_{L^2}\\
&\lesssim \|\tilg\|_{L^2}+\sum|\angles{\uppsi}{Z_i}|+\varepsilon (\partial^2_\Sigma\uppsi+\jap{\uprho}^{-1}\partial_\Sigma\uppsi+\jap{\uprho}^{-2}\uppsi).
\end{split}
\end{align*}
Estimate \eqref{eq:ellipticestimate1} follows by combining the last two estimates with \eqref{eq:lemellipticestimate1temp1}  and observing that $|\jap{\uprho}^{-2}\uppsi|\lesssim |\jap{\uprho}^{-2}\uppsi_\near|+|\jap{\uprho}^{-2}\uppsi_\far|$. To prove \eqref{eq:ellipticestimate1.5} and \eqref{eq:ellipticestimate2} we use the global coordinates $(\uptau,\uprho,\upomega)$, with $(\uprho,\upomega)$ coordinates on $\Sigma_\tau$, to define the operator $\calP_\euc$ by smoothly modifying the coefficients of $\calP_\Ell$ such that
%%%%%%%
%%%%%%%
\begin{align*}
\begin{split}
\calP_\euc=\begin{cases}\Delta_{\euc}\quad &\uprho\leq R_\euc\\ \calP_\Ell\quad &\uprho\geq R_\euc+1\end{cases}.
\end{split}
\end{align*}
Here $R_\euc$ is a fixed large constant and $\Delta_{\euc}=\partial_\uprho^2+\frac{4}{\uprho}\partial_\uprho+\frac{1}{\uprho^2}\sDelta_{\bbS^{4}}$. Let $\chi\equiv \chi(\uprho)$ and $\tilchi\equiv(\uprho)$ be cutoff functions supported in the large $\uprho$ region such that $\tilchi\chi=\chi$, let $\upphi_\euc$ be the solution to
%%%%%%%
%%%%%%%
\begin{align*}
\begin{split}
\calP_\euc\upphi_\euc= \chi \tilg,
\end{split}
\end{align*}
and let $\uppsi_\euc=\tilchi\upphi_\euc$. The functions $\upphi_\euc$ and $\uppsi_\euc$ are defined in terms of the coordinates $(\uprho,\upomega)$, but since $\uppsi_\euc$ is supported in the large $\uprho$ region, we can view it as a function on $\Sigma_\tau$ as well. By the Euclidean theory and treating the difference between $\Delta_\euc$ and $\calP_\euc$ in $\{\uprho\geq R_\euc\}$ perturbativly (here fractional derivatives are defined on $\bbR^5$ using the coordinates $(\uprho,\upomega)$, $\partial_\Sigma$ denotes the coordinate derivatives $\partial_\uprho$ and $\uprho^{-1}\partial_\upomega$, and the volume form is also as in $\bbR^5$, which is comparable with the geometric volume form on $\Sigma_\tau$ for large $\uprho$),
%%%%%%%
%%%%%%%
\begin{align}\label{eq:lemellipticestimate1temp4}
\begin{split}
\sum_{j=0}^2\|\jap{\uprho}^{j-s}\partial_\Sigma^j\upphi_\euc\|_{L^2}+\|(-\Delta_{\euc})^{\frac{s}{2}}\upphi_\euc\|_{L^2}\lesssim \|\tilchi\partial_\Sigma \tilg\|_{L^2}^{s-2}\|\tilchi\tilg\|_{L^2}^{3-s}.
\end{split}
\end{align}
Now $\uppsi_\cat:=\uppsi-\uppsi_\euc$ satisfies
%%%%%%%
%%%%%%%
\begin{align}\label{eq:lemellipticestimate1temp5}
\begin{split}
\calP_\Ell\uppsi_\cat=(1-\chi)\tilg-[\calP_\Ell,\tilchi]\upphi_\euc-\tilchi(\calP_\Ell-\calP_\euc)\upphi_\euc.
\end{split}
\end{align}
We again decompose $\uppsi_\cat$ as $\uppsi_\cat=\uppsi_\cat^\perp+\angles{\uppsi_\cat}{Z_i}Z_i$. In view of the compact support of the right-hand side of \eqref{eq:lemellipticestimate1temp5} (note that the terms involving $\upphi_\euc$ are compactly supported in the large $\uprho$ region, so they can be viewed as functions on $\Sigma_\tau$), and by \eqref{eq:ellipticestimate1} and \eqref{eq:lemellipticestimate1temp4}, and arguing as we did for $\uppsi_\near$ above,
%%%%%%%
%%%%%%%
\begin{align*}
\begin{split}
\|\jap{\uprho}^{-1}\uppsi_\cat\|_{L^2}\lesssim \|\partial_\Sigma\uppsi_\cat\|_{L^2}\lesssim \|\partial_\Sigma \tilg\|_{L^2}^{s-2}\|\tilg\|_{L^2}^{3-s}+\sum|\angles{\uppsi}{Z_i}|,
\end{split}
\end{align*}
and \eqref{eq:ellipticestimate1.5} follows by observing that $|\jap{\uprho}^{-s}\uppsi|\lesssim |\jap{\uprho}^{-s}\uppsi_\euc|+|\jap{\uprho}^{-1}\uppsi_\cat|$. Note that the same argument in fact gives \eqref{eq:ellipticestimate1.5} with $\|\jap{\uprho}^{1-s}\partial_\Sigma \uppsi\|_{L^2}$ added on the left-hand side. 
Therefore, to prove \eqref{eq:ellipticestimate2}, in view of the equation $\Delta_\barcalC\uppsi=-V\uppsi+\tilg+o_{\wp,\Rone}(1)(\partial^2_\Sigma\uppsi+\jap{\uprho}^{-1}\partial_\Sigma\uppsi+\jap{\uprho}^{-2}\uppsi)$, we can use the already established estimates and elliptic estimates for $\Delta_\barcalC$, to get
%%%%%%%
%%%%%%%
\begin{align*}
\begin{split}
\|\partial_\Sigma^3\uppsi\|_{L^2}&\lesssim \|\partial_\Sigma \tilg\|_{L^2}+\|\partial_\Sigma( V\uppsi)\|_{L^2}\lesssim \|\partial_\Sigma \tilg\|_{L^2}+\|\jap{\uprho}^{-\frac{5}{2}+\tilkappa}\uppsi\|_{L^2}+\|\jap{\uprho}^{1-\frac{5}{2}+\tilkappa}\partial_\Sigma\uppsi\|_{L^2} \\
&\lesssim\|\partial_\Sigma \tilg\|_{L^2}+\|\partial_\Sigma \tilg\|_{L^2}^{\frac{1}{2}-\kappa}\|\tilg\|_{L^2}^{\frac{1}{2}+\kappa}+\sum|\angles{\uppsi}{Z_i}|.\qedhere
\end{split}
\end{align*}
\end{proof}
%%%%%%%%%%%
%%%%%%%%%%%
The $L^2(\Sigma_\tau)$ decay of arbitrary higher derivatives is now a corollary of the previous two lemmas.
%%%%%%%%%%%
%%%%%%%%%%%
\begin{corollary}\label{cor:higherdL2decay}
Suppose the bootstrap assumptions \eqref{eq:a+trap}--\eqref{eq:Tjphienergyb2} hold. Then for $k\leq M-3$,
%%%%%%%
%%%%%%%
\begin{align}\label{eq:d2highertaudecay1}
\begin{split}
\|\partial^2_\Sigma (\chi_{\leq \tilR}\partial^k\phi)\|_{L^2(\Sigma_\tau)}+\|\partial^2_\Sigma(\chi_{\geq \tilR}X^k\phi)\|_{L^2(\Sigma_\tau)}\lesssim \epsilon\tau^{-2},
\end{split}
\end{align}
and for $k\leq M-4$,
%%%%%%%
%%%%%%%
\begin{align}\label{eq:d2Thighertaudecay1}
\begin{split}
\|\partial^2_\Sigma (\chi_{\leq \tilR}\RbfT\partial^k\phi)\|_{L^2(\Sigma_\tau)}+\|\partial^2_\Sigma(\chi_{\geq \tilR}\RbfT X^k\phi)\|_{L^2(\Sigma_\tau)}\lesssim \epsilon\tau^{-3}.
\end{split}
\end{align} 
For $k\leq M-4$ and $s\geq \frac{5}{2}-\kappa$,
%%%%%%%
%%%%%%%
\begin{align}\label{eq:d3highertaudecay1}
\begin{split}
&\|\partial^3_\Sigma (\chi_{\leq \tilR}\partial^k\phi)\|_{L^2(\Sigma_\tau)}+\|\partial^3_\Sigma(\chi_{\geq \tilR}X^k\phi)\|_{L^2(\Sigma_\tau)}\\
&+\| \chi_{\leq \tilR}\partial^k\phi\|_{L^2(\Sigma_\tau)}+\|\jap{r}^{-s}(\chi_{\geq \tilR}X^k\phi)\|_{L^2(\Sigma_\tau)}\lesssim \epsilon\tau^{-\frac{5}{2}+\kappa}.
\end{split}
\end{align}
The implicit constants in \eqref{eq:d2highertaudecay1}, \eqref{eq:d2Thighertaudecay1}, \eqref{eq:d3highertaudecay1} are independent of $C_k$ in \eqref{eq:dphiL2b1} and \eqref{eq:d2Tphienergyb1}.
\end{corollary}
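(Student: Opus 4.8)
\textbf{Proof proposal for Corollary~\ref{cor:higherdL2decay}.}
The plan is to deduce all three estimates from the elliptic estimates of Lemma~\ref{lem:ellipticestimate1}, applied on a constant-$\tau$ slice to the equation satisfied by $\partial^k_\Sigma \phi$ (in the interior) and by $X^k \phi$ (in the exterior), using as input the energy decay bounds of Lemmas~\ref{lem:energytau2decay} and~\ref{lem:energyhighertaudecay} together with the control of $\bfOmega_i(\RbfT^j \phi)$ from Lemma~\ref{lem:OmegaiTkphi}. First I would set $\uppsi = \partial_\Sigma^{\leq k}\phi$ (schematically — in the exterior, $X^{\leq k}\phi$) and note that $\calP_\Ell \uppsi = g$ where $g = \calP_\uptau \uppsi + (\text{commutator and nonlinear terms}) + (\text{source terms})$, so $\calP_\uptau \uppsi$ contributes exactly one $\RbfT$-derivative of a lower-order quantity. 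Thus $\|g\|_{L^2(\Sigma_\tau)}$ is controlled by $\|\partial_\Sigma^{\leq k}\RbfT\phi\|_{L^2}$ plus $E_{\leq k}[\phi]^{1/2}$-type terms plus nonlinearities (quadratic, hence harmless under the bootstrap) plus the source $\calF_0$-contributions bounded in Lemma~\ref{lem:sourceLE1}. Feeding in $E_k[\RbfT\phi] \lesssim \epsilon^2 \tau^{-4}$ (Lemma~\ref{lem:energyhighertaudecay}) and $E_k[\phi]\lesssim \epsilon^2\tau^{-2}$ (Lemma~\ref{lem:energytau2decay}), we get $\|g\|_{L^2(\Sigma_\tau)} \lesssim \epsilon \tau^{-2}$; the inner-product terms $\sum |\angles{\uppsi}{Z_i}|$ are bounded using Lemma~\ref{lem:OmegaiTkphi} (which gives $\bfOmega_i(\RbfT^j\phi)$ decay, and the $Z_i$-projections of $\phi$ itself are similarly controlled by the orthogonality conditions and the energy decay), and the final term $\tau^{-5/2+\kappa}E(\uppsi)$ in \eqref{eq:ellipticestimate1} is lower order. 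Estimate~\eqref{eq:ellipticestimate1} of Lemma~\ref{lem:ellipticestimate1} then yields~\eqref{eq:d2highertaudecay1}. For~\eqref{eq:d2Thighertaudecay1}, I would repeat this with $\uppsi = \partial_\Sigma^{\leq k}\RbfT\phi$, so that now $g$ involves $\partial_\Sigma^{\leq k}\RbfT^2\phi$, and invoke $E_k[\RbfT^2\phi]\lesssim \epsilon^2 \tau^{-6}$ from Lemma~\ref{lem:energyhighertaudecay} to get $\|g\|_{L^2}\lesssim \epsilon\tau^{-3}$, hence the $\tau^{-3}$ bound.

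For~\eqref{eq:d3highertaudecay1} the argument is more delicate because, as explained in Section~\ref{subsec:introoutlinerp}, spatial derivatives do not improve the decay of the parameter-derivative source term $\dotwp$ beyond its intrinsic rate, and the zero-order potential $V$ in $\calP_\Ell$ limits how many weights one can place on $\uppsi$. Here I would use the weighted estimate~\eqref{eq:ellipticestimate1.5} and the fractional estimate~\eqref{eq:ellipticestimate2} of Lemma~\ref{lem:ellipticestimate1} in tandem with the ODEs for the parameters. Concretely: the equation for $\phi$ has a source linear in $\dotwp$, and by the modulation equations (Lemma~\ref{lem:OmegaiTkphi}, and the estimates in Section~\ref{sec:parametercontrol}) one can bound $|\dotwp^{(j)}|$ by a small constant times $\|\jap{r}^{-c_1}\phi\|_{L^2(\Sigma_\tau)}$ with $c_1 < 7/2$, while~\eqref{eq:ellipticestimate1.5} (with $s$ chosen just below $5/2$, reflecting the order-zero potential) bounds $\|\jap{r}^{-c_2}\phi\|_{L^2(\Sigma_\tau)}$ with $c_2<5/2$ in terms of $\|\partial_\Sigma g\|_{L^2}^{s-2}\|g\|_{L^2}^{3-s}$ plus the projections. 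Choosing $c_1 = c_2 = \tfrac{5}{2}-\kappa$ closes this into a self-improving estimate: using the already-established $\tau^{-2}$ bounds for $\|g\|_{L^2}$-type quantities (which in this range of derivative count still hold via Lemma~\ref{lem:energytau2decay} and \eqref{eq:d2highertaudecay1}) together with the $\tau^{-5/2+\delta}$ decay of the $Z_i$-projections, one obtains $\|\jap{r}^{-\frac52+\kappa}(\chi_{\geq\tilR}X^k\phi)\|_{L^2(\Sigma_\tau)}\lesssim \epsilon\tau^{-\frac52+\kappa}$ and likewise $\|\chi_{\leq\tilR}\partial^k\phi\|_{L^2}\lesssim\epsilon\tau^{-\frac52+\kappa}$. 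Finally, the $\partial^3_\Sigma$-bound in~\eqref{eq:d3highertaudecay1} follows by feeding this sharp weighted bound (which controls the $V\uppsi$ term) into the fractional elliptic estimate~\eqref{eq:ellipticestimate2}, together with the bound on $\partial_\Sigma g$ coming from $\partial_\Sigma^{\leq k+1}\RbfT\phi$ via~\eqref{eq:Tenergytau4decay}.

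Throughout, the interior and exterior pieces are handled the same way: in the interior the spatial decay is irrelevant and one only counts $\RbfT$-derivatives, applying Lemma~\ref{lem:ellipticestimate1} on the compact region (the $o_{\wp,\Rone}(1)$ and $\callO(\dotwp)$ terms in $\calP_\Ell^\pert$ are absorbed by smallness as in the proof of that lemma); in the exterior one uses the $X$-vectorfields and the decomposition of $\phi$ into annuli $\chi_j\phi$, exactly as in the proof of Proposition~\ref{prop:energyLEDnonlin1}, so that the elliptic estimate can be applied scale-by-scale. The key point ensuring that the implicit constants are independent of $C_k$ in~\eqref{eq:dphiL2b1}, \eqref{eq:d2Tphienergyb1} is that the energy inputs from Lemmas~\ref{lem:energytau2decay} and~\ref{lem:energyhighertaudecay} are themselves stated with constants independent of those bootstrap constants, and the only place the bootstrap assumptions enter the elliptic estimates is through quadratic (hence $\epsilon$-small) error terms.

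\textbf{Main obstacle.} The hard part will be organizing the self-improving fractional/weighted estimate for~\eqref{eq:d3highertaudecay1}: one must verify that the exponents line up so that the $\dotwp$-source feedback loop (parameter ODE $\Rightarrow$ $\|\jap r^{-c_1}\phi\|$, elliptic estimate $\Rightarrow$ $\|\jap r^{-c_2}\phi\|$) actually closes with $c_1=c_2=\tfrac52-\kappa$ rather than merely producing a bound that degrades at each iteration, and that the borderline weight $s\nearrow 5/2$ — forced by the order-zero potential $V=|\secondff|^2\sim \jap\rho^{-6}$ and the dimension $n=5$ — is genuinely admissible in Lemma~\ref{lem:ellipticestimate1.5}. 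Everything else is a bookkeeping exercise combining the already-proved energy decay, the modulation estimates of Section~\ref{sec:parametercontrol}, and the elliptic regularity of Lemma~\ref{lem:ellipticestimate1}.
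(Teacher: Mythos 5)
Your proposal follows essentially the same route as the paper: write $\calP_\Ell\phi=\calP\phi+\calP_\uptau\phi$, apply Lemma~\ref{lem:ellipticestimate1} (including \eqref{eq:ellipticestimate1.5} and \eqref{eq:ellipticestimate2}) slice by slice with the already-proved energy decay from Lemmas~\ref{lem:energytau2decay} and~\ref{lem:energyhighertaudecay} and the source bounds of Lemma~\ref{lem:sourceLE1} as input, and control the projections $\langle\cdot,Z_i\rangle$ via the modulation estimates (here the pointwise-in-time bounds come from Lemma~\ref{lem:Omega1} and its $\RbfT^j$ analogues rather than from the time-integrated Lemma~\ref{lem:OmegaiTkphi}, a small citation slip). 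One point of clarification: the ``self-improving feedback loop'' between $\dotwp$ and weighted norms of $\phi$ that you flag as the main obstacle is not something the proof of this corollary has to close --- those feedback considerations are absorbed into the choice of bootstrap decay rates and resolved in Propositions~\ref{prop:bootstrappar1} and~\ref{prop:bootstrapphi1}; within the corollary itself one simply feeds the bootstrap-supplied $\dotwp$ decay and the $\RbfT\phi$, $\RbfT^2\phi$ energy decay into the elliptic estimates once, with no iteration. The independence of the implicit constants from $C_k$ comes from the $\delta_\wp$ factor in \eqref{eq:calF0constanttautemp1}--\eqref{eq:TcalF0constanttautemp1} and the $o_{\wp,\Reigenfunctioncutoffscale}(1)$ factor in the projection bounds, rather than from mere quadratic smallness, so your justification of that point should be sharpened accordingly.
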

%%%%%%%%%%%
%%%%%%%%%%%
\begin{proof}
We start with the decomposition
%%%%%%%
%%%%%%%
\begin{align}\label{eq:higherdL2decaytemp1}
\begin{split}
\calP_\Ell\phi=\calP\phi+\callO(1)\partial \RbfT\phi+ \callO(r^{-1})\RbfT\phi.
\end{split}
\end{align}
It follows from Lemmas~\ref{lem:energyhighertaudecay} and~\ref{lem:ellipticestimate1}, as well as \eqref{eq:wpb1}, \eqref{eq:phiptwiseb1}, \eqref{eq:dphiptwiseb2}, \eqref{eq:dphiptwiseb3}, \eqref{eq:calF0constanttautemp1}, that
%%%%%%%
%%%%%%%
\begin{align}
\begin{split}
\|\partial^2_\Sigma\phi\|_{L^2(\Sigma_\tau)}\lesssim \epsilon \tau^{-2},
\end{split}
\end{align}
which, using Lemma~\ref{lem:ellipticestimate1} again, implies \eqref{eq:d2highertaudecay1} for $k=0$. Note that here the contribution of $|\angles{\phi}{Z_i}|$ is bounded in terms of $\bfOmega_i(\phi)$ and applying Lemma~\ref{lem:Omega1}. The case of higher $k$ is derived similarly using the $k$ times commuted equations. For \eqref{eq:d3highertaudecay1}, arguing as above we write
%%%%%%%
%%%%%%%
\begin{align*}
\begin{split}
\calP_\Ell \RbfT\phi=[\calP_\Ell,\RbfT]\phi+\RbfT\calP\phi+\callO(1)\partial \RbfT^2\phi+ \callO(r^{-1})\RbfT^2\phi+\callO(\dotwp)\partial\RbfT\phi+\callO(\dotwp r^{-1})\RbfT\phi,
\end{split}
\end{align*}
to get
%%%%%%%
%%%%%%%
\begin{align*}
\begin{split}
\|\partial^2 T\phi\|_{L^2(\Sigma_\tau)}\lesssim \tau^{-3}.
\end{split}
\end{align*}
Here we have again bounded $\angles{\RbfT\phi}{Z_i}$ in terms of $\bfOmega_i(\RbfT\phi)$, which is bounded by $o_{\wp,\Reigenfunctioncutoffscale}(1)\epsilon\uptau^{-3}$ by the same arguments as in Lemmas~\ref{eq:OmegaiTkphi} and~\ref{lem:Omega1}. Note that the factors $o_{\wp,\Reigenfunctioncutoffscale}(1)$ here and $\delta_\wp$ in \eqref{eq:TcalF0constanttautemp1} make the constants in this estimate independent of the bootstrap constants in \eqref{eq:d2Tphienergyb1}. By the same reasoning, and using equation \eqref{eq:higherdL2decaytemp1} and estimates \eqref{eq:calF0constanttautemp1} and \eqref{eq:ellipticestimate2} we get \eqref{eq:d3highertaudecay1} with a constant that is independent of \eqref{eq:dphiL2b1}. The estimate for higher $k$ is proved similarly.
\end{proof}
%%%%%%%%%%%
%%%%%%%%%%%
Corollary~\ref{cor:higherdL2decay} and the following standard Gagliardo-Nirenberg inequality (which we state without proof)  allow us to close the bootstrap assumptions \eqref{eq:phiptwiseb1} and \eqref{eq:dphiptwiseb1}.
%%%%%%%%%%%
%%%%%%%%%%%
\begin{lemma}\label{lem:GN}
For any function $\uppsi\in H^3(\Sigma_\tau)$, 
%%%%%%%
%%%%%%%
\begin{align*}
\begin{split}
\|\uppsi\|_{L^\infty(\Sigma_\tau)}\lesssim \|\partial_\Sigma^2\uppsi\|_{L^2(\Sigma_\tau)}^{\frac{1}{2}}\|\partial_\Sigma^3\uppsi\|_{L^2(\Sigma_\tau)}^{\frac{1}{2}}.
\end{split}
\end{align*}
\end{lemma}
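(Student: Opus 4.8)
\textbf{Proof proposal for Lemma~\ref{lem:GN}.}

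The statement is the classical Gagliardo--Nirenberg interpolation inequality $\|\uppsi\|_{L^\infty(\Sigma_\tau)} \lesssim \|\partial_\Sigma^2 \uppsi\|_{L^2(\Sigma_\tau)}^{1/2} \|\partial_\Sigma^3 \uppsi\|_{L^2(\Sigma_\tau)}^{1/2}$ on the $n$-dimensional Riemannian manifold $\Sigma_\tau$ with $n = 5$; note that with $n=5$ one has $3 - n/2 = 1/2$, so by scaling-dimensional analysis the exponents $\tfrac12, \tfrac12$ on $\|\partial^2\|_{L^2}$ and $\|\partial^3\|_{L^2}$ are exactly the ones dictated by homogeneity. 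The plan is to reduce to a flat, compactly-supported statement on $\mathbb{R}^5$ and invoke the standard inequality there, using the uniform geometry of the leaves $\Sigma_\tau$ furnished by the bootstrap assumptions.

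First I would recall that under the bootstrap assumptions of Section~\ref{sec:bootstrap}, the leaves $\Sigma_\tau$ carry a Riemannian metric that is globally comparable to a fixed reference metric (the catenoid metric, which in turn is asymptotically Euclidean), with all derivatives of the metric coefficients uniformly bounded; this was already used repeatedly (e.g.\ in the comparison of volume forms and in Corollary~\ref{cor:higherdL2decay}). Using a partition of unity subordinate to the covering of $\Sigma_\tau$ by the interior region $\{|\uprho| \le 2\tilR\}$ together with the dyadic annuli $\{|\uprho| \simeq 2^j\}$, $j \ge 1$, in the exterior (the same decomposition used in the proof of Proposition~\ref{prop:energyLEDnonlin1} to localize elliptic estimates), one reduces the global estimate to a bound on each piece. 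On the interior piece, $\Sigma_\tau$ is a fixed compact coordinate chart with a uniformly elliptic metric, so the classical Gagliardo--Nirenberg inequality on a bounded domain (or on a closed manifold, after a further cutoff) applies directly. On each exterior annulus, one uses the coordinates $(\uprho, \upomega)$ in which the metric is uniformly comparable to the flat metric on $\mathbb{R}^5$ with uniformly bounded coefficient derivatives, so that $\partial_\Sigma$ is comparable to flat partial derivatives; rescaling the annulus $\{|\uprho| \simeq 2^j\}$ to a unit annulus is \emph{scale-invariant} for precisely this inequality (again because $3 - n/2 = 1/2$), so one obtains the estimate on each annulus with a constant independent of $j$, and then sums (or rather takes the supremum) over $j$.

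The only genuine point requiring care — and the step I expect to be the main (minor) obstacle — is handling the non-compactness: one must make sure the implicit constant does not degenerate as $j \to \infty$, which is exactly why the scale-invariance of the $L^\infty \hookleftarrow \dot H^{5/2}$-type interpolation in dimension $5$ is essential, and why one cannot afford any loss in powers of $2^j$. Once the dilation-invariance is noted, the summation over dyadic shells is immediate since taking $L^\infty$ over the union is just the supremum of the pieces while each $L^2$ norm on the right dominates its restriction to any shell. I would also remark that one may equivalently cite Lemma~\ref{lem:ellipticestimate1}-style elliptic smoothing plus Sobolev embedding $H^{5/2+}(\mathbb{R}^5) \hookrightarrow L^\infty$ combined with the interpolation $\|\uppsi\|_{\dot H^{5/2}} \lesssim \|\uppsi\|_{\dot H^2}^{1/2} \|\uppsi\|_{\dot H^3}^{1/2}$, but the partition-of-unity reduction to the textbook statement is the cleanest and is what I would write.
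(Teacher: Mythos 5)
The paper states this lemma without proof, calling it a ``standard Gagliardo--Nirenberg inequality'', so there is no authorial argument to compare against; I am therefore reviewing the proposal on its own terms.

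Your main route (partition of unity into the compact neck and dyadic annuli, rescale each annulus to unit scale, invoke the Euclidean inequality, and use the dimensional identity $3-\tfrac{n}{2}=\tfrac12$ for $n=5$ to avoid losing powers of $2^j$) is sound in outline, and the scale-invariance observation is exactly the right thing to check. But there is a gap in the final summation step: after localizing by $\chi_j$ and rescaling, the right-hand side you obtain is $\|\partial_\Sigma^2(\chi_j\uppsi)\|_{L^2}^{1/2}\|\partial_\Sigma^3(\chi_j\uppsi)\|_{L^2}^{1/2}$, not $\|\partial_\Sigma^2\uppsi\|_{L^2(A_j)}^{1/2}\|\partial_\Sigma^3\uppsi\|_{L^2(A_j)}^{1/2}$. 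Expanding the derivatives of $\chi_j\uppsi$ produces lower-order terms of the form $2^{-jm}\|\partial_\Sigma^{k-m}\uppsi\|_{L^2(A_j)}$ which are \emph{not} dominated by the homogeneous second- and third-derivative norms without a further argument. To close the argument you need Hardy-type bounds such as $\|\jap{\uprho}^{-1}\partial_\Sigma\uppsi\|_{L^2}\lesssim\|\partial_\Sigma^2\uppsi\|_{L^2}$ and $\|\jap{\uprho}^{-2}\uppsi\|_{L^2}\lesssim\|\partial_\Sigma^2\uppsi\|_{L^2}$, which do hold in dimension $n=5$ on the asymptotically flat leaf (iterated Hardy needs $n\ge5$ for the second one), but this should be said explicitly; asserting ``the summation over dyadic shells is immediate'' skips this.

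Separately, the ``equivalent'' remark at the end is incorrect as stated: the critical Sobolev embedding $\dot H^{n/2}(\mathbb{R}^n)\hookrightarrow L^\infty$ \emph{fails} at the endpoint, so you cannot factor the estimate through $\dot H^{5/2}(\mathbb{R}^5)\hookrightarrow L^\infty$. The inequality $\|\uppsi\|_{L^\infty}\lesssim\|D^2\uppsi\|_{L^2}^{1/2}\|D^3\uppsi\|_{L^2}^{1/2}$ is genuinely stronger than anything that passes through $\dot H^{5/2}$; the cleanest direct proof on $\mathbb{R}^5$ is the Fourier splitting $\|\uppsi\|_\infty\le\|\hat\uppsi\|_{L^1}\le\int_{|\xi|<R}|\xi|^{-2}\cdot|\xi|^2|\hat\uppsi|+\int_{|\xi|>R}|\xi|^{-3}\cdot|\xi|^3|\hat\uppsi|$ followed by Cauchy--Schwarz in each region and optimization in $R$, which makes the role of the two separate homogeneous norms transparent and also shows immediately why no intermediate $\dot H^{5/2}$ embedding is needed. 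I would either drop the side remark or replace it with this Fourier argument, which is also what you would then localize to the ends of $\Sigma_\tau$.
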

%%%%%%%%%%%
%%%%%%%%%%%
%%%%%%%%%%%
%%%%%%%%%%%
We now have all the ingredients to prove Proposition~\ref{prop:bootstrapphi1}.
%%%%%%%%%%%
%%%%%%%%%%%
\begin{proof}[Proof of Proposition~\ref{prop:bootstrapphi1}]
Since the implicit constants in Corollary~\ref{cor:higherdL2decay} are independent of those in \eqref{eq:dphiL2b1} and \eqref{eq:d2Tphienergyb1}, estimates \eqref{eq:dphiL21} and \eqref{eq:d2Tphienergy1} follow if $C$ is chosen sufficiently large. Estimates \eqref{eq:phiptwise1} and \eqref{eq:dphiptwise1} then follow from \eqref{eq:dphiL21} and \eqref{eq:d2Tphienergy1} and Lemma~\ref{lem:GN}. Estimates \eqref{eq:Tjphienergy1} and \eqref{eq:Tjphienergy2} were also already proved in the proof of Lemmas~\ref{lem:energytau2decay} and~\ref{lem:energyhighertaudecay}. To prove \eqref{eq:dphiptwise2}, first note that for any function $\uppsi$ and for any~$r_1> \tilR\gg1$, by the fundamental theorem of calculus,
%%%%%%%
%%%%%%%
\begin{align*}
\begin{split}
\int_{\Sigma_\tau\cap \{r=r_1\}}(r^{\frac{3}{2}}\uppsi)^2\ud \theta&\lesssim \int_{\Sigma_\tau\cap \{r=\tilR\}}(r^{\frac{3}{2}}\uppsi)^2\ud \theta+\int_{\Sigma_\tau\cap\{\tilR\leq r\leq r_1\}}|r^{\frac{3}{2}}\uppsi||\partial_r(r^{\frac{3}{2}}\uppsi)|\ud\theta\ud r\\
&\lesssim E[\uppsi](\tau).
\end{split}
\end{align*}
Here to pass to the second line we have used the trace inequality for the integral on $\Sigma_\tau\cap \{r=\tilR\}$, and  \eqref{eq:VFbasis1} to express $\partial_r$ in terms of $L,T,\Omega$ for the integral on $\Sigma_\tau\cap\{\tilR\leq r\leq r_1\}$. Applying the Sobolev inequality on the (non-geometric) sphere $\Sigma_\tau\cap \{r=r_1\}$ to $\phi$, and using \eqref{eq:VFbasis1} to express angular derivatives in terms of $L,\Omega,T$, for $r\geq \tilR$ we get
%%%%%%%
%%%%%%%
\begin{align*}
\begin{split}
|r^{\frac{3}{2}}\phi(\tau)|^2\lesssim \sum_{j\leq 3}E_j[\phi].
\end{split}
\end{align*}
Estimate \eqref{eq:dphiptwise2} for $k=0$ now follows from \eqref{eq:rpenergydecay1}, and the estimate for higher $k$ is proved similarly. For \eqref{eq:dphiptwise3} we start with
%%%%%%%
%%%%%%%
\begin{align*}
\begin{split}
\int_{\Sigma_\tau\cap \{r=r_1\}}(\tilr^{2}\uppsi)^2\ud \theta&\lesssim \int_{\Sigma_\tau\cap \{r=\tilR\}}(\tilr^{2}\uppsi)^2\ud \theta+\int_{\Sigma_\tau\cap\{\tilR\leq r\leq r_1\}}|\tilr^{2}\uppsi||\partial_r(\tilr^{2}\uppsi)|\ud\theta\ud r\\
&\lesssim E[\uppsi](\tau)+\Big(\int_{\Sigma_\tau}\chi_{\geq \tilR}((L+\frac{n-1}{2\tilr})\uppsi)^2 r^2\ud V\Big)^{\frac{1}{2}}(E[\uppsi](\tau))^{\frac{1}{2}},
\end{split}
\end{align*}
where we have again used the trace inequality and \eqref{eq:VFbasis1} to pass to the last line. Estimate \eqref{eq:dphiptwise3} now follows by the Sobolev estimate on the sphere $\Sigma_\tau\cap \{r=r_1\}$ as above, as well as  \eqref{eq:rpenergydecay1} and ~\eqref{eq:rpenergyboundedness1}. 
\end{proof}
%%%%%%%%%%%
%%%%%%%%%%%

%%%%%%%%%%%
%%%%%%%%%%%
\bibliographystyle{plain}
\bibliography{researchbib}

\bigskip

\centerline{\scshape Jonas L\"uhrmann}
\smallskip
{\footnotesize
 \centerline{Department of Mathematics, Texas A\&M University}
 \centerline{Blocker 620B, College Station, TX 77843-3368, U.S.A.}
 \centerline{\email{luhrmann@tamu.edu}}
}

\medskip

\centerline{\scshape Sung-Jin Oh}
\smallskip
{\footnotesize
 \centerline{Department of Mathematics, UC Berkeley}
\centerline{Evans Hall 970, Berkeley, CA 94720-3840, U.S.A.}
\centerline{\email{sjoh@math.berkley.edu}}
} 

\medskip

\centerline{\scshape Sohrab Shahshahani}
\medskip
{\footnotesize
 \centerline{Department of Mathematics, University of Massachusetts, Amherst}
\centerline{710 N. Pleasant Street,
Amherst, MA 01003-9305, U.S.A.}
\centerline{\email{sshahshahani@umass.edu}}
}

\end{document}